\numberwithin{equation}{section}
\newtheorem{theorem}{Theorem}[section]
\newtheorem{definition}[theorem]{Definition}
\let\olddefinition\definition
\renewcommand{\definition}{\olddefinition\normalfont}
\theoremstyle{definition}
\newtheorem{example}[theorem]{Example}
\theoremstyle{plain}
\newtheorem{lemma}[theorem]{Lemma}
\newtheorem*{plateau}{Plateau's problem for normal currents}
\newtheorem{open problem}{Open problem}
\newtheorem{claim}[theorem]{Claim}
\newtheorem{corollary}[theorem]{Corollary}
\newtheorem{proposition}[theorem]{Proposition}
\newtheoremstyle{mytheorem}
{}
{}
{\it}
{}
{\bf}
{.}
{ }
{\thmnumber{#2.~}\thmname{#1}\thmnote{~\rm#3}}
\newtheoremstyle{myremark}
{}
{}
{}
{}
{\it}
{.}
{ }
{\thmnumber{#2.~}\thmname{#1}\thmnote{~\rm#3}}
\newtheoremstyle{myparagraph}
{}
{}
{\rm}
{\parindent}
{\bf}
{.}
{ }
{\thmnumber{#2.~}\thmname{#1}\thmnote{#3}}
\newtheoremstyle{named}{}{}{\itshape}{}{\bfseries}{.}{.5em}{\thmnote{#3 }#1}
\theoremstyle{named}
\newtheorem*{namedproblem}{Problem}
\newtheoremstyle{named}{}{}{\itshape}{}{\bfseries}{.}{.5em}{\thmnote{#3 }#1}
\theoremstyle{named}
\theoremstyle{myremark}
\newtheorem{remark}[theorem]{Remark}
\theoremstyle{myparagraph}
\newtheorem*{parag*}{}
\newcommand\myfontsize{\fontsize{16pt}{20pt}\selectfont}
\newcommand\mysecondfontsize{\fontsize{12pt}{20pt}\selectfont}
\newenvironment{itemizeb}
{\begin{itemize}\itemsep=2pt}{\end{itemize}}
\let\headruleORIG\headrule
\renewcommand{\headrule}{\color{black} \headruleORIG}
\newcommand\res{\mathop{\hbox{\vrule height 7pt width .3pt depth 0pt
\vrule height .3pt width 5pt depth 0pt}}\nolimits}
\newcommand{\Flat}{\mathbb{F}}
\newcommand{\Mass}{\mathbb{M}}
\newcommand{\TP}{\textbf{TP}}
\newcommand{\OTP}{\textbf{OTP}}
\newcommand{\AMC}{\textbf{AMC}}
\newcommand{\BR}{\textbf{BR}}
\newcommand{\MM}{\mathbb{M}^\alpha}
\newcommand{\R}{\mathbb{R}}
\newcommand{\I}{\mathcal{I}}
\newcommand{\Ex}{\mathbf{E}}
\newcommand{\N}{\mathbb{N}}
\newcommand{\Z}{\mathbb{Z}}
\newcommand{\Po}{\mathbb{P}}
\newcommand{\Haus}{\mathcal{H}}
\newcommand{\eps}{\varepsilon}
\newcommand{\Lip}{\mathrm{Lip}}
\newcommand{\dist}{\mathrm{dist}}
\newcommand{\supp}{\mathrm{supp}}
\newcommand{\dV}{d_V\kern-1pt}
\begin{document}

\thispagestyle{empty}
\newgeometry{left=1.65cm, right=1.25cm, bottom=0.1cm}
\begin{center}
 \vspace{2cm}

\includegraphics[scale=1]{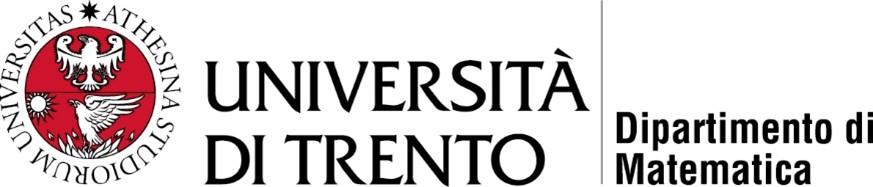}

\par
\vspace{2cm}

\Large{DEPARTMENT OF MATHEMATICS} \

\vspace{0.5cm}

MASTER DEGREE IN MATHEMATICS \

\par
\vspace{3.25cm}

\textbf{\myfontsize Well-posedness properties of geometric variational problems:}

\vspace{0.15cm}

\textbf{\myfontsize existence, regularity and uniqueness results}

\vspace{3.25cm}

\Large

\begin{minipage}{3in}
\textbf{Supervisor} \

Prof.\hspace{1mm}Andrea\hspace{1mm}Marchese
\end{minipage}
\hfill
\begin{minipage}{2.33in}
\textbf{Candidate} \

Gianmarco\hspace{1mm}Caldini
\end{minipage}

%

\vspace{2cm}

\raggedright

\center
\vspace{1cm}
\mysecondfontsize
ACADEMIC YEAR: 2021/2022

\vspace{1cm}

DATE OF DISCUSSION: OCTOBER 21, 2022
\end{center}

\pagenumbering{Roman}
\setcounter{page}{1} 
\chapter*{}
\thispagestyle{empty}
\newpage

\restoregeometry

\chapter*{Abstract}
This thesis is devoted to the study of well-posedness properties of some \textit{geometric variational problems}: existence, regularity and uniqueness of solutions. We study two specific problems arising in the context of geometric calculus of variations and sharing strong analogies: the \emph{Plateau's problem} and the \emph{optimal branched transport problem}. The first part of the thesis discusses the existence theory. Both problems are formulated in the language of Federer and Fleming's \emph{theory of currents}. After an exposition of the main results, we will present the core ideas of the (interior) regularity theory for \emph{area-minimizing currents} and for \emph{optimal transport paths}. The last part of the thesis contains two original results: the \textit{generic uniqueness} of solutions both for the Plateau's problem (in any dimension and codimension) and for the optimal branched transport problem. \newpage




\leavevmode\thispagestyle{empty}\newpage
\thispagestyle{empty}

\newpage
\tableofcontents

\clearpage{\thispagestyle{empty}\cleardoublepage}

\chapter*{Introduction} 
\addcontentsline{toc}{chapter}{Introduction}

\section*{Motivation and comments}

In the last four or five decades there has been growing interest in building robust theories to address \textit{geometric variational problems}. The archetypal geometric variational problem is the celebrated \textit{Plateau's problem}, named in honor of the Belgian physicist Joseph Plateau, who extensively studied the structure of soap films. In fact, the Plateau's problem had been formulated much earlier in the second half of the eighteenth century by Lagrange, who asked the following question: 
\begin{center}
{\it``Given a closed curve in $\R^3$, can one always find a surface of minimal area among all surfaces that bound the curve?"}
\end{center}
Many versions of this problem have been developed through the years, addressing the Plateau's problem as a \textit{boundary value problem for area-minimizing surfaces}. The first successful solution to the Plateau's problem in a concrete case was delivered by Schwarz in 1865. Nevertheless, only in 1930 a general existence theory was finally achieved by Rad\'o [\ref{Rado30}] and Douglas [\ref{Douglas31}].

The extension of the Plateau's problem to any dimension and codimension took many more years and the joint effort of some of the most brilliant mathematicians of the twentieth century, including Whitney, Reifenberg, Almgren, De Giorgi and Bombieri, just to mention a few. The ultimate solution to the Plateau's problem can be considered the work by Federer and Fleming [\ref{FF}], developing a measure-theoretic notion of generalized surfaces called \textit{integral currents}. The ``generalized Plateau's problem" can be stated as follows\footnote{For the precise definitions we refer to Chapter 1.}:
\begin{center}
{\it ``Given a $k$-dimensional integral current $S$ without boundary in $\R^d$, find a $(k+1)$-dimensional integral current $T_1$ such that $\partial T_1=S$ and $\mathbb{M}(T_1)$ is minimized among all $(k+1)$-dimensional integral currents $T$ with the same boundary $\partial T = S$."}
\end{center}
As it is often the case in the calculus of variations, in order to gain ``enough compactness" to solve a variational problem one has to enlarge the class of competitors, giving up some a priori regularity assumptions. Following this fundamental idea, many other notions of generalized surfaces have been developed: besides Federer and Fleming's \textit{theory of integral currents}, it is worth mentioning Caccioppoli and De Giorgi's \textit{finite perimeter sets}, see [\ref{Degiorgiperimetri}], and Almgren and Allard's \textit{rectifiable varifolds}, see [\ref{AAvarifolds}].

Remarkably, it turns out that area-minimizing surfaces may exhibit singularities. Hence, a whole theory has been developed to understand how regular the aforementioned area-minimizing generalized surfaces are. In cases where singularities appear, one would like to estimate their ``size" and structure and, possibly, to develop a full classification. There is a huge difference between the regularity theory for generalized area-minimizing surfaces depending on the codimension of the ambient space: indeed, the regularity theory deeply differs between the codimension one, see [\ref{Almgrencod1}, \ref{Degiorgifrontiere}, \ref{Degiorgibernstein}, \ref{Federercod1}, \ref{Flemingcod1}, \ref{Simonscod1}], and the codimension higher-than-one cases, see [\ref{Big}, \ref{Almgren2000}, \ref{Delellisnote}, \ref{DLSQ}, \ref{DLS1}, \ref{DLSsns}, \ref{DLS2}, \ref{DLS3}].

Furthermore, the very innocent question of ``how many minimal surfaces can be spanned by a given closed curve" turns out to be one of the most challenging related to the Plateau's problem. Indeed, the answer to this question is still not known in full generality. The first partial answers go back to the first decades of the twentieth century, due to the works by Rad\'{o}, Courant, Tromba, Nitsche, Tomi and many others. Uniqueness for a particular boundary curve in $\R^3$ is known only in very restrictive cases and many examples have been provided of boundaries admitting even infinitely many minimizers, see [\ref{Courant}] and [\ref{Morganinfinite}]. As a result, different approaches have been developed to study uniqueness questions. Arguably, the most fruitful was by means of Baire categories, as shown by B\"ohme and Tromba [\ref{BTromba}] and, more recently, by Morgan [\ref{Morganinventiones}, \ref{Morganindiana}, \ref{MorganARMA}]. Their main results establish that under some (rather restrictive) conditions, the set of curves which bound a unique minimizer is \textit{topologically large}. 

Several variants of the Plateau's problem have been developed in the last decades, each aiming at the minimization of different notions of \textit{energy}. One example comes from \textit{optimal transport}: the problem to find the best way to carry a given source onto a given target. Such problem witnessed an impressive progression in the last thirty years, developing deep connections with many fields of mathematics and serving as a model for biological and human-designed systems. Extensions of the original formulation of the optimal transport problem, due to Monge [\ref{Monge}], have been studied for transportation systems that privilege group flows rather than spread-out processes, leading to optimal transport networks with peculiar ramified structures: this class of problems is nowadays known as \textit{optimal branched transport}. Starting from the work by Xia [\ref{Xia2003}], it has been possible to develop the modern theory of optimal branched transport as a Plateau-type problem, see for instance [\ref{BF}, \ref{BrBuSa}, \ref{BW}, \ref{BW1}, \ref{brabutsan}, \ref{BrGaReSun}, \ref{BrPaSun}, \ref{BrSun}, \ref{CDRMjmpa}, \ref{MMST}, \ref{MMT}, \ref{Morsant}, \ref{PaoliniStepanov}]. More precisely, the optimal branched transport problem can be formulated as a boundary value problem for 1-dimensional currents minimizing a fractional power of the mass functional, called $\alpha$-\textit{mass.} Hence, it is not surprising how the theory of currents is a fruitful tool to study well-posedness properties in optimal branched transport theory.

The purpose of this thesis is to study fundamental properties of the two aforementioned examples of geometric variational problems, in particular existence, regularity and uniqueness results. We will always try to emphasize the (numerous) analogies between the theories. We conclude by mentioning that the original results presented here are part of the work done by the author during his Master studies and have been obtained in collaboration with A. Marchese and S. Steinbr\"uchel. We conclude this brief introduction by remarking that the well-posedness of geometric variational problems is still a live and flourishing research field, full of open questions that need to be investigated for years to come.

\section*{Guide to the thesis}

We briefly summarize the content of each chapter below.

\subsubsection{Chapter 1: Existence results}
The main goal of Chapter 1 is to present the existence theory for the solutions to the Plateau's problem (referred to as \textit{area-minimizing integral currents}) and to the optimal branched transport problem (referred to as \textit{optimal transport paths}). Both theories rely on the general notion of \textit{rectifiable} and \textit{normal currents} in the sense of Federer and Fleming. Hence, in Section 1.1 we will introduce the main notation and recall some preliminaries about measure theory and multilinear algebra. In Section 1.2 we will introduce the theory of currents (in the sense of de Rham), stating Federer and Fleming's celebrated \textit{closure theorem}, see [\ref{FF}], and showing the existence of solutions to the Plateau's problem for integral currents. In Section 1.3 we will then introduce the main framework of optimal branched transport theory, proving the existence of an optimal transport path with finite cost.

\subsubsection{Chapter 2: Regularity results}
The main goal of Chapter 2 is to present the regularity theory for area-minimizing integral currents and for optimal transport paths. In Section 2.1 we will investigate the interior regularity theory for area-minimizing integral currents. In Section 2.1.1 we will highlight the main ideas behind the proof of the De Giorgi-Allard $\varepsilon$\textit{-regularity theorem}, see [\ref{Degiorgifrontiere}] and [\ref{AAvarifolds}]. We are going to prove a simplified version of it in the language of Federer and Fleming's theory of currents that will highlight the main ideas of the theory such as the \textit{excess decay} and the \textit{harmonic approximation}. In doing so, we aim at a more accessible introduction to this theory, avoiding on purpose some technical details. In Section 2.1.2, we will survey the main difficulties to extend De Giorgi-Allard's regularity theory to any codimension. We will present Almgren's theory of {\em Dir-minimizing $Q$-valued functions} and describe the main issues in passing from this (linear) setting to the nonlinear version of Almgren's partial regularity theorem. At the end we collect some of the most interesting open problems in the field. In this presentation we will mostly follow [\ref{Delellisnote}, \ref{DLSQ}]. Finally, in Section 2.2, we will present the main result in the regularity of optimal branched transport, which is due to Xia [\ref{Xiaregularity}]. We will observe that a technical passage, which is only partially justified in the current literature, can be obtained as a consequence of the recent \textit{stability property} for optimal transport paths, see [\ref{CDRMcpam}].

\subsubsection{Chapter 3: Uniqueness results}

The main goal of Chapter 3 is to present the uniqueness theory for the Plateau's problem and for the optimal branched transport problem. After a brief discussion about the main uniqueness and nonuniqueness theorems for solutions of these two geometric variational problems, we will pass to the most original contributions of this thesis: in Section 3.1 we exploit Almgren's regularity theory in higher codimension to prove that, \textit{generically} (in the sense of Baire categories), every integral $(m-1)$-current without boundary spans a unique minimizer in $\mathbb{R}^{m+n}$. In Section 3.2 we prove the generic uniqueness of minimizers of the optimal branched transport problem.

\subsubsection{Acknowledgments}
I would like to spend few words to thank Prof. Andrea Marchese, who kindly led me during the study of all these topics. I would like to thank Andrea and Simone for having introduced me to my first steps into mathematical research. I would like to thank all the participants to the reading seminars ``An introduction to the regularity theory for generalized minimal surfaces" for all the valuable discussions and suggestions.

\cleardoublepage
\chapter{Existence results} 
\pagenumbering{arabic}

The main goal of Chapter 1 is to present the existence theory for the solutions to the Plateau's problem and to the optimal branched transport problem, both relying on Federer and Fleming's \textit{theory of currents}.

We do not aim to be exhaustive and most of the proofs will be omitted: we will focus on definitions and results that will be relevant for the sequel. In Section 1.1 we introduce the main notation and recall some preliminaries about measure theory and multilinear algebra. In Section 1.2 we introduce the theory of currents (in the sense of de Rham), stating Federer and Fleming's celebrated \textit{closure theorem}, see [\ref{FF}], which leads to the solution of the Plateau's problem for integral currents. In Section 1.3 we will introduce the main framework of optimal branched transport theory, proving the existence of an optimal transport path with finite cost. For a complete treatise on the subjects, we refer the reader to [\ref{GMS}], [\ref{Simonbook}] or [\ref{Federerbook}] for the theory of currents and to [\ref{BCM}] for the theory of optimal branched transport.

\section{Preliminaries}
\subsubsection{Preliminaries in measure theory}
We denote by $\mathcal{B}(\R^d)$ the Borel $\sigma$-algebra of $\R^d$, that is the smallest $\sigma$-algebra containing all open sets of $\R^d$. We will denote by $\mu$ a positive \textit{Borel} measure, which is a measure such that all Borel sets are measurable. 
\begin{definition}
A measure $\mu$ is called \textit{Borel regular} if it is Borel and if for every $\mu$-measurable set $A$ there exists $B$ Borel set such that $B \supset A$ and $\mu(A)=\mu(B)$. The measure $\mu$ is said to be a \textit{Radon} measure if it is Borel-regular and $\mu(K) < \infty$ for every compact subset $K$ of $\R^d$.
\end{definition}

Given $A \subset \R^d$, the \textit{restriction} of $\mu$ on $A$ is the measure $$(\mu \res A)(B) := \mu(A \cap B) \quad \text{for every Borel set } B. $$

We denote by $L^1(X,\mu)$ the space of all (equivalence classes of) functions $f: X \rightarrow \R$ which are $\mu$-integrable.

\begin{definition}
An extended real valued set function $\nu: \mathcal{B}(\R^d) \rightarrow \overline{\R}$ is a \textit{signed measure} if $\nu$ assumes at most one of the values $+\infty$, $-\infty$, $\nu(\emptyset)=0$ and if 

$$\nu \left(\cup_{i=1}^{\infty} A_i \right)= \sum_{i=1}^{\infty}\nu(A_i)$$ for each sequence of disjoint sets $(A_i)_i \in \mathcal{B}(\R^d)^{\N}$, where the series either converges absolutely or diverges to $+\infty$ or $-\infty$.\footnote{From now on we will adopt the slight abuse of notation writing $(A_i)_i \in \mathcal{B}(\R^d)$ for sequences with values in $\mathcal{B}(\R^d).$}
\end{definition}

Given a convex compact set $K \subset \R^d$, we denote by $\mathcal{M}(K)$ the space of signed Radon measures on $K$ and by $\mathcal{M}_{+}(K)$ the subspace of positive measures.

\begin{definition}
If $\mu \in \mathcal{M}(K)$ we define its \textit{total variation measure} $||\mu||: \mathcal{B}(\R^d) \rightarrow [0, \infty]$ as follows:

$$||\mu||(A):=\sup \left\{\sum_{i=1}^{\infty}\left|\mu\left(A_i\right)\right|:\left(A_i\right)_i \in \mathcal{B}(\R^d), \text { $A_i$ pairwise disjoint, } A=\cup_{i=1}^{\infty} A_i\right\}$$
\end{definition}
If $\mu$ is a real measure, that is $\mu$ takes values in $\R$, we define its \textit{positive} and \textit{negative parts} respectively as $$\mu^+:=\frac{||\mu|| + \mu}{2} \text{ and } \mu^-:=\frac{||\mu|| - \mu}{2}.$$ This gives $\mu=\mu^{+}-\mu^{-}$ and $||\mu||=\mu^{+}+\mu^{-}$. The pair $(\mu^+,\mu^-)$ is usually called \textit{Jordan decomposition} of $\mu$.
The \textit{mass} of $\mu$ is the quantity $\mathbb{M}(\mu) := ||\mu||(K).$

\begin{definition}
Let $\mu \in \mathcal{M}_{+}(K)$, the \textit{support} of $\mu$ is defined as 
$$\text{supp}(\mu):=\{x\in K : \mu(U) >0 \text{ for every neighbourhood } U \text{ of } x\}.$$ If $\mu \in \mathcal{M}(K)$ we call the \textit{support of} $\mu$ the support of its total variation measure $||\mu||$. We say that a measure $\mu$ is \textit{finite atomic} if its support is a finite set.
\end{definition}

\begin{theorem}[Lusin's Approximation Theorem] 
Let $K$ be a locally compact and separable metric space and $\mu$ a Borel measure on $K$. Let $f: K \rightarrow \mathbb{R}$ be a $\mu$-measurable function vanishing outside of a set with finite measure. Then for any $\varepsilon>0$ there exists a continuous function $g: K \rightarrow \mathbb{R}$ such that $$
\mu(\{x \in K: g(x) \neq f(x)\})<\varepsilon.
$$\end{theorem}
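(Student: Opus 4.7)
The plan is to reduce Lusin's theorem to two classical ingredients: the inner/outer regularity of $\mu$ on a locally compact separable metric space (so that Borel sets of finite measure can be squeezed between a compact set and an open set of nearly equal measure), and Urysohn's lemma, which produces a continuous interpolant between such a compact and such an open set. Throughout, I denote by $E:=\{f\neq 0\}$ the set outside which $f$ vanishes, so by hypothesis $\mu(E)<\infty$.

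First I would reduce to the case in which $f$ is bounded. Since $\mu(E)<\infty$, the sets $\{|f|>N\}\subset E$ decrease to a $\mu$-negligible set, so $\mu(\{|f|>N\})<\varepsilon/3$ for $N$ large enough. Redefining $f$ to be $0$ on $\{|f|>N\}$ changes it on a set of measure at most $\varepsilon/3$; hence, up to paying this error at the end, I may assume $f$ is bounded and supported in $E$. I can then write $f$ as the uniform limit of a sequence of simple functions $s_n=\sum_{i=1}^{k_n}c_{n,i}\chi_{A_{n,i}}$ with $A_{n,i}\subset E$ Borel.

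The key step is to show that every Borel set $A\subset E$ admits, for any $\eta>0$, a continuous function $g_A\colon K\to[0,1]$ with $\mu(\{g_A\neq\chi_A\})<\eta$. Using that on a locally compact separable metric space every locally finite Borel measure is Radon, I can pick a compact $C\subset A$ and an open $U\supset A$ with $\mu(U\setminus C)<\eta$; local compactness then lets me choose $U$ with compact closure, and Urysohn's lemma produces a continuous $g_A$ equal to $1$ on $C$ and $0$ off $U$. Then $\{g_A\neq\chi_A\}\subset U\setminus C$ has measure less than $\eta$. Applying this to each set $A_{n,i}$ with $\eta$ of the form $\varepsilon/(3\cdot 2^n k_n)$ and taking linear combinations, each simple function $s_n$ agrees with some continuous function $\tilde g_n$ off a set $B_n$ with $\mu(B_n)<\varepsilon/(3\cdot 2^n)$.

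Finally I would pass to the limit. Since $s_n\to f$ uniformly, on $K\setminus\bigcup_n B_n$ the continuous functions $\tilde g_n$ converge uniformly to a continuous function $g$, which by standard extension (e.g. Tietze on a larger open set, or simply by uniform convergence of continuous functions) can be defined on all of $K$ with $g=f$ off a set of measure at most $\varepsilon/3+\sum_n\varepsilon/(3\cdot 2^n)<\varepsilon$. The main obstacle is the regularity step: it requires unpacking that the hypotheses (locally compact and separable metric space, Borel measure finite on $E$) indeed force Radon-type regularity on the subsets of $E$, which is what permits the Urysohn construction. Everything else is bookkeeping of the errors in a geometric series.
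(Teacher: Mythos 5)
Your overall architecture is the standard proof of Lusin's theorem (truncate to a bounded function, approximate uniformly by simple functions, approximate indicator functions by Urysohn functions via regularity, combine), and the first four steps are sound. The gap is in the passage to the limit. You observe that $\tilde g_n$ converges uniformly on $K\setminus B$, where $B=\bigcup_n B_n$ and $B_n=\{\tilde g_n\neq s_n\}$; the uniform limit is continuous in the subspace topology of $K\setminus B$, but that is not automatically a continuous function on $K$, and neither of your two suggested routes closes the gap. Tietze's extension theorem needs a \emph{closed} domain, while $K\setminus B$ is not closed for the $B_n$ you have defined; and the ``uniform convergence of continuous functions'' route would require the $\tilde g_n$ to converge uniformly on \emph{all} of $K$, which you have not arranged and which the construction does not yield. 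The missing observation is that you should take $B:=\bigcup_{n,i}\bigl(U_{n,i}\setminus C_{n,i}\bigr)$, which is open because each $U_{n,i}\setminus C_{n,i}$ is the intersection of the open set $U_{n,i}$ with the open set $K\setminus C_{n,i}$, and discard the truncation set $\{|f|>N\}$ at the first step rather than folding it into $B$. With this choice $K\setminus B$ is closed, $f|_{K\setminus B}$ is continuous as a uniform limit of continuous functions on a closed subspace, and Tietze then gives a continuous $g$ on $K$ with $\{g\neq f\}\subset B\cup\{|f|>N\}$, which has measure less than $\varepsilon$.

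A secondary point: the regularity you invoke is not supplied by the stated hypotheses. In the paper's terminology a Borel measure is merely one for which Borel sets are measurable; without local finiteness it need not be inner regular by compacts nor outer regular by opens, and the theorem itself can then fail (take counting measure on $\R$ and $f=\chi_{\{0\}}$: $f$ vanishes outside a set of counting measure $1$, yet no continuous $g$ agrees with $f$ off a set of small counting measure). The fact you use — that a locally finite Borel measure on a second-countable locally compact Hausdorff space is Radon — is correct, but local finiteness is an extra hypothesis you are silently importing. Either state it, or restrict $\mu$ to $E$ and invoke Ulam's tightness theorem for finite Borel measures on Polish spaces, together with local compactness, to produce the required compact inner approximations and finite-measure open outer approximations of Borel subsets of $E$.
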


Let $\mathcal{C}_c^0\left(\mathbb{R}^d , \mathbb{R}^k\right)$ be the space of continuous functions from $\mathbb{R}^d$ to $\mathbb{R}^k$ with compact support. We endow the space $\mathcal{C}_c^0\left(\mathbb{R}^d , \mathbb{R}^k\right)$ with the topology of uniform convergence on compact sets, that can be described as follows: 

\begin{definition}\label{uniformconvg}
A sequence $\left(\varphi_h\right)_h \in \mathcal{C}_c^0\left(\mathbb{R}^d , \mathbb{R}^k\right)$ converges to $\varphi$ in $\mathcal{C}_c^0$ if there exists a compact open set $A$ such that $\operatorname{supp}\left(\varphi_h\right) \subset A$ for all $h$, and $\varphi_h \rightarrow f$ uniformly on $A$.
\end{definition}

The \textit{total variation} of a linear functional $L: \mathcal{C}_c^0\left(\mathbb{R}^d , \mathbb{R}^k\right) \rightarrow \mathbb{R}$ is the quantity $$|L|(A):=\sup \left\{L(\varphi): \varphi \in \mathcal{C}_c^0\left(\mathbb{R}^d , \mathbb{R}^k\right),\|\varphi\|_{\infty} \leqslant 1, \operatorname{supp}(\varphi) \subset A\right\}.$$ A linear functional $L$ is \textit{bounded} if $|L|(A)< \infty.$

There is a close link between bounded linear functionals on $\mathcal{C}_c^0\left(\mathbb{R}^d , \mathbb{R}^k\right)$ and Radon measures.

\begin{theorem}[Riesz representation theorem]\label{t:Riesz}
Let $L$ be a bounded linear functional on $\mathcal{C}_c^0\left(\mathbb{R}^d , \mathbb{R}^k\right)$. Then there exist a Radon measure $\mu$ and a Borel function $f: \mathbb{R}^d \rightarrow \mathbb{R}^k$ such that $|f|=1$ $\mu$-almost everywhere and $$L(\varphi)= \int \varphi \cdot f d\mu, \text{  for all } \varphi \in \mathcal{C}_c^0\left(\mathbb{R}^d , \mathbb{R}^k\right),$$
where $\varphi \cdot f$ denotes $\sum_{j=1}^{k}\varphi_j \, f_j$. Moreover, $\mu(U)=|L|(U)$ for every open set $U$.
\end{theorem}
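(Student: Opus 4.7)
The plan is to first construct the Radon measure $\mu$ from the total variation $|L|$, and then to obtain the vector-valued density $f$ by applying a scalar duality argument coordinate by coordinate.

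For the construction of $\mu$, I would set $\mu(U) := |L|(U)$ for every open $U \subset \mathbb{R}^d$ and extend this to an outer measure via $\mu^*(A) := \inf\{\mu(U) : A \subset U,\ U \text{ open}\}$. The crucial verification is that $\mu^*$ is a metric outer measure: given test vector fields $\varphi \in \mathcal{C}_c^0(\mathbb{R}^d, \mathbb{R}^k)$ with $\|\varphi\|_\infty \leq 1$ and $\supp \varphi$ contained in a union of separated open sets $U_1, U_2$, a partition-of-unity decomposition combined with the linearity of $L$ produces the required additivity. Carathéodory's criterion then makes every Borel set $\mu^*$-measurable; boundedness of $L$ on test functions with compact support yields $\mu(K) < \infty$ for every compact $K$, and Borel regularity follows by construction. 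This simultaneously delivers the final assertion $\mu(U) = |L|(U)$ on open sets.

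For the density, I would define for each $j \in \{1, \dots, k\}$ the scalar functional $L_j(\varphi) := L(\varphi e_j)$ on $\mathcal{C}_c^0(\mathbb{R}^d, \mathbb{R})$. The estimate $|L_j(\varphi)| \leq \int |\varphi|\, d\mu$, obtained by approximating characteristic functions of $\supp \varphi$ from outside by Urysohn-type cut-offs and invoking $|L|(U) = \mu(U)$, shows that $L_j$ extends uniquely to a bounded linear functional on $L^1(\mu)$. The duality $(L^1(\mu))^* \cong L^\infty(\mu)$ then provides a Borel function $f_j \in L^\infty(\mu)$ with $L_j(\varphi) = \int \varphi\, f_j\, d\mu$, and setting $f := (f_1, \dots, f_k)$ and using linearity of $L$ yields the integral representation. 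The inequality $|f| \geq 1$ $\mu$-almost everywhere is immediate: for $\psi \in \mathcal{C}_c^0(\mathbb{R}^d, \mathbb{R}^k)$ with $\|\psi\|_\infty \leq 1$ and $\supp \psi \subset U$, one has $L(\psi) = \int \psi \cdot f\, d\mu \leq \int_U |f|\, d\mu$, so taking the supremum gives $\mu(U) \leq \int_U |f|\, d\mu$. For the reverse bound $|f| \leq 1$, I would use Lusin's Approximation Theorem (stated above) to approximate the Borel vector field $f/|f|$ on $U$ by continuous fields $v_\eps$ with $|v_\eps| \leq 1$, then test against $\varphi\, v_\eps$ with a scalar $\varphi$ approximating the indicator of a compact subset of $U$, passing to the limit to conclude $\int_U |f|\, d\mu \leq \mu(U)$.

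The main obstacle is the construction of $\mu$: extending $|L|$ from open sets to a genuine Borel-regular measure requires the partition-of-unity/metric-outer-measure argument and careful control of $|L|$ on compact sets via continuous approximations. Once $\mu$ is in hand, the density step is essentially a direct application of Radon--Nikodym-type reasoning in $L^\infty$, and the normalization $|f|=1$ reduces to the Lusin approximation sketched above.
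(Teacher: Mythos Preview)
The paper does not give a proof of this theorem: it is stated without proof in the preliminaries section, where the author explicitly remarks that most proofs are omitted and refers to standard references such as [\ref{Simonbook}] and [\ref{Federerbook}]. So there is no ``paper's own proof'' to compare against.

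Your proposal follows the standard textbook route (essentially the one in Simon's lecture notes): build $\mu$ as the Carath\'eodory extension of $|L|$ on open sets, then extract the density $f$ componentwise via the duality $(L^1(\mu))^* \cong L^\infty(\mu)$, and finally normalize using Lusin. The outline is correct. One small point worth tightening: the inequality $|L_j(\varphi)| \leq \int |\varphi|\, d\mu$ is more cleanly obtained by noting directly that for any scalar $\varphi$ with $|\varphi|\leq 1$ and $\supp\varphi\subset U$ one has $|L(\varphi e_j)|\leq |L|(U)=\mu(U)$, and then passing to general $\varphi$ by a layer-cake or simple-function approximation; your Urysohn cut-off sketch is fine but slightly indirect. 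Otherwise the plan is sound and would constitute a complete proof.
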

In what follows, we shall refer to $f\mu$ as a vector-valued Radon measure and we denote the space of Radon measures on $\R^d$ with values in $\R^k$ by $\mathcal{M}(\R^d,\R^k)$. 
Thanks to the identification of $\mathcal{M}(\R^d,\R^k)$ with the dual space of a separable Banach space, it is natural to endow it with the weak$^{*}$-topology. In particular we say that a sequence $(\mu_h)_h \in \mathcal{M}(\R^d,\R^k)$ converges weakly$^{*}$ to $\mu$, and we write $\mu_h \stackrel{\ast}{\rightharpoonup} \mu$, if $$
\lim _{h \rightarrow \infty} \int \varphi \cdot d \mu_h=\int \varphi \cdot d \mu, \quad \forall \varphi \in \mathcal{C}_c^0\left(\mathbb{R}^d, \mathbb{R}^k\right).
$$ 

By classical functional analysis, the weak$^{*}$-topology on a space with separable predual enjoys sequential compactness:

\begin{theorem}[Sequential compactness for measures]
Let $\left(\mu_h\right)_h \in \mathcal{M}(\R^d,\R^k)$. Assume that $$
\sup _h\left|\mu_h\right|(A)<+\infty \quad \forall A \subset\joinrel\subset \mathbb{R}^d \text { open. }
$$ Then there exists a subsequence $\left(\mu_{h_j}\right)$ and a Radon measure $\mu$, such that $\mu_{h_j} \stackrel{\ast}{\rightharpoonup} \mu$.
\end{theorem}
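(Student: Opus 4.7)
The plan is to combine the classical Banach--Alaoglu principle with a diagonal extraction argument, exploiting the separability of $\mathcal{C}_c^0(\mathbb{R}^d, \mathbb{R}^k)$ in the topology of uniform convergence on compact sets (Definition \ref{uniformconvg}). Concretely, I would produce a candidate linear functional $L$ on a countable dense family of test functions, extend it by a uniform-equicontinuity argument based on the local mass bound to a bounded linear functional on all of $\mathcal{C}_c^0(\mathbb{R}^d, \mathbb{R}^k)$, and finally invoke the Riesz representation theorem (Theorem \ref{t:Riesz}) to recover the limit measure.

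First, I would fix an exhausting sequence of open sets $\Omega_1 \subset\joinrel\subset \Omega_2 \subset\joinrel\subset \cdots$ with $\bigcup_n \Omega_n = \mathbb{R}^d$, and set $M_n := \sup_h |\mu_h|(\Omega_n) < \infty$ by hypothesis. I would then pick a countable family $\{\varphi_j\}_{j\in\mathbb{N}} \subset \mathcal{C}_c^0(\mathbb{R}^d,\mathbb{R}^k)$ such that, for every $n$, the subfamily of those $\varphi_j$ with $\supp(\varphi_j) \subset \Omega_n$ is uniformly dense in the continuous $\mathbb{R}^k$-valued functions supported in $\Omega_n$; this follows from the standard separability of $C(\overline{\Omega_n},\mathbb{R}^k)$. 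For each $j$, the numerical sequence $\int \varphi_j \cdot d\mu_h$ is bounded in absolute value by $\|\varphi_j\|_\infty \, M_{n(j)}$, where $n(j)$ is any index with $\supp(\varphi_j) \subset \Omega_{n(j)}$. A standard Cantor diagonal extraction then yields a subsequence $(\mu_{h_k})_k$ such that the limit $L(\varphi_j) := \lim_k \int \varphi_j \cdot d\mu_{h_k}$ exists for every $j$.

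Next, I would extend $L$ to every $\varphi \in \mathcal{C}_c^0(\mathbb{R}^d,\mathbb{R}^k)$. Given $\varphi$ with $\supp(\varphi) \subset \Omega_n$, I would choose $\varphi_{j_\ell} \to \varphi$ uniformly with common support in $\overline{\Omega_n}$, and use the key uniform estimate
\begin{equation*}
\left| \int (\varphi - \varphi_{j_\ell}) \cdot d\mu_h \right| \leq \|\varphi - \varphi_{j_\ell}\|_\infty \, M_n,
\end{equation*}
valid uniformly in $h$, to conclude via a three-epsilon argument that $\int \varphi \cdot d\mu_{h_k}$ converges and that the limit is independent of the approximating sequence. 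The resulting $L$ is linear and satisfies $|L(\varphi)| \leq \|\varphi\|_\infty \, M_n$ whenever $\supp(\varphi) \subset \Omega_n$, so it qualifies as a bounded linear functional in the sense of Theorem \ref{t:Riesz}. Applying that theorem produces a Radon measure (equivalently, an element of $\mathcal{M}(\mathbb{R}^d,\mathbb{R}^k)$) $\mu$ such that $L(\varphi) = \int \varphi \cdot d\mu$ for every $\varphi \in \mathcal{C}_c^0(\mathbb{R}^d,\mathbb{R}^k)$, which is exactly the claim $\mu_{h_k} \stackrel{\ast}{\rightharpoonup} \mu$.

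The main obstacle is the density step: one must make sure that the approximating test functions can be chosen with supports inside a common relatively compact set, so that the local total-variation bound $M_n$ controls the approximation error \emph{uniformly} in $h$. Once this equicontinuity on each ``slice'' $\{\varphi \in \mathcal{C}_c^0(\mathbb{R}^d,\mathbb{R}^k) : \supp(\varphi) \subset \Omega_n\}$ is secured, everything else is routine functional analysis. A minor point is that Theorem \ref{t:Riesz} is phrased in terms of a pair $(f,\mu)$ with $|f|=1$ $\mu$-a.e.; the vector-valued Radon measure in the statement is to be read as $f\mu \in \mathcal{M}(\mathbb{R}^d,\mathbb{R}^k)$, consistently with the convention set up just after that theorem.
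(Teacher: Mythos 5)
The paper does not actually prove this statement; it is invoked as a classical fact (the preceding remark appeals to sequential weak$^*$-compactness for duals of separable Banach spaces). Your argument is a correct, self-contained execution of the standard route: Cantor diagonal extraction over a countable family dense on each slice $\{\varphi : \operatorname{supp}(\varphi)\subset\Omega_n\}$, extension via the uniform-in-$h$ local total-variation bound, and Riesz representation. Two small points worth making explicit in a polished version: when approximating a $\varphi$ with $\operatorname{supp}(\varphi)\subset\Omega_n$ by members of your countable family you should allow the approximants to have support in $\overline{\Omega_n}\subset\Omega_{n+1}$ and use $M_{n+1}$ in the estimate (a uniform limit of functions supported in $\Omega_n$ may touch $\partial\Omega_n$); and the ``bounded'' hypothesis in Theorem \ref{t:Riesz} is a local one (finiteness of $|L|(A)$ for relatively compact $A$), which is precisely what your bound $|L(\varphi)|\leq\|\varphi\|_\infty M_n$ supplies — it need not and does not give a single global operator norm.
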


We now recall the notions of \textit{Hausdorff measure} and the one of \textit{rectifiable set}. Hausdorff measures are among the most important measures. They allow us to define a notion of dimension of sets in $\R^d$ and provide us with $k-$dimensional measures in $\R^d$ for any $k$, $0 \le k \le d$ (and also in any metric space).

Let $k$ be a nonnegative real number. We denote by $\omega_k$ the volume of the unit ball in $\mathbb{R}^k$ for $k=1,2,3, \ldots$, we set $\omega_0:=1$ and we let $\omega_k$ any convenient fixed constant for nonintegers $k$. Since the measure of the unit $d$-ball is given (for $d=1,2, \ldots $) by
$$\omega_d=\frac{\pi^{d / 2}}{\Gamma(1+d / 2)},$$
where $\Gamma(t)$ is the Euler's gamma function $\Gamma(t):=\int_0^{+\infty} s^{t-1} e^{-s} d s$, we can take as $\omega_k$ the following definition:

$$\omega_k:=\frac{\pi^{k / 2}}{\Gamma(1+k / 2)}.$$

Hence, for $k \ge 0, 0<\delta \leq \infty$ and $E \subset \mathbb{R}^d$, we define the Hausdorff $k$-dimensional $\delta$-premeasure as $$\mathcal{H}_\delta^k(E):=\frac{\omega_k}{2^k} \inf \left\{\sum_i\left(\operatorname{diam} F_i\right)^k: \operatorname{diam}\left(F_i\right)<\delta, E \subset \bigcup_i F_i\right\}.$$ Since $\mathcal{H}_\delta^k$ is a decreasing function of $\delta$ we can define the Hausdorff $k$-dimensional measure in $\mathbb{R}^d$ as $$\mathcal{H}^k(E):=\sup _{\delta>0} \mathcal{H}_\delta^k(E)=\lim _{\delta \rightarrow 0^{+}} \mathcal{H}_\delta^k(E).$$

Recall that on $\R^d$ we have $\mathcal{H}^d = \mathcal{L}^d$ as measures, where $\mathcal{L}^d$ is the $d-$dimensional Lebesgue measure, see [\ref{KP}].

Rectifiable sets are a family of sets that generalize the notion of $\mathcal{C^1}$ surface.

\begin{definition}\label{rectifiable}
A set $E \subset \mathbb{R}^d$ is called $\mathcal{H}^k$-\textit{countably} $k$-\textit{rectifiable} (or simply $k$-\textit{rectifiable}) if it is $\mathcal{H}^k-$measurable and $E \subset \bigcup_{j=0}^{\infty} E_j$, where
\begin{enumerate}
\item $\mathcal{H}^k\left(E_0\right)=0$;
\item for $j \geq 1$ we have that $E_j=F_j\left(\mathbb{R}^k\right)$, where $F_j: \mathbb{R}^k \rightarrow \mathbb{R}^d$ is a Lipschitz function.
\end{enumerate}
\end{definition}

\begin{remark}
The second condition in Definition \ref{rectifiable} can be equivalently replaced with one of the following conditions:
\begin{enumerate}
\item[(2.1)]  $E_i \subseteq f\left(A_i\right)$, where $A_i \subseteq \mathbb{R}^k$ is an open set and $f_i \in \mathcal{C}^1(A_i, \mathbb{R}^d)$;
\item[(2.2)]  $E_i \subseteq \Sigma_i$, where each $\Sigma_i$ is a $k$-dimensional submanifold of class $\mathcal{C}^1$ in $\mathbb{R}^d$.
\end{enumerate}
\end{remark}

In the following we will always assume that a $k-$rectifiable set $E$ has locally finite $\mathcal{H}^k$-measure.

Rectifiable sets have nice tangential properties and they are considered a natural generalization of $\mathcal{C^1}$ surfaces since for rectifiable sets a well-defined notion of tangent space can be given for almost all points.

We denote by $\operatorname{Gr}(k, d)$ the Grassmannian manifold of the (unoriented) $k-$dimensional linear subspaces in $\R^d$, that is $$\operatorname{Gr}(k, d):=\{\text {vector subspaces } W \subset \R^d \mid \operatorname{dim} W=k\}.$$ 
Take $V \in \operatorname{Gr}(k, d)$ and $E \subseteq \mathbb{R}^d$ Borel and $\mathcal{H}^k$-locally finite. For all $x \in E$ and for all $r>0$, we denote
$$
E_{x, r}:=\frac{1}{r}(E-x)$$ the set obtained applying to $E$ the homothety that maps $B(x,r)$, which is the ball of center $x$ and radius $r$, into $B(0,1)$.

\begin{definition}\label{approximatetg}
Given $x \in E$, we say that $V$ is an \textit{approximate tangent space} to $E$ at $x$ if $\mathcal{H}^k \res E_{x, r} \stackrel{\ast}{\rightharpoonup} \mathcal{H}^d \res V$ locally in the sense of measures as $r \rightarrow 0^+$, that is
$$
\lim _{r \rightarrow 0^+} \int_{E_{x, r}} g(y) d \mathcal{H}^k(y)=\int_V g(y) d \mathcal{H}^k(y) \quad \forall g \in \mathcal{C}^0_c(\mathbb{R}^d).
$$
\end{definition}

\begin{remark}\label{r:unique}
When the approximate tangent plane exists at some point $x \in E$ it is unique (since it is a limit) and will be denoted by $T_xE$. 
\end{remark}

\begin{remark}
If the set $E$ is a submanifold of class $\mathcal{C}^1$, then the tangent space and the approximate tangent space coincide at every point. Hence there is no ambiguity between the classical definition of $T_x E$ and Definition \ref{approximatetg}.
\end{remark}

It is also remarkable the fact that the existence of the approximate tangent space $T_xE$ at $\mathcal{H}^k$-almost all points characterizes $k$-rectifiable sets:

\begin{theorem}{\normalfont [\ref{Simonbook}, Theorem 11.6]}
Suppose that $E$ is $\mathcal{H}^k$-measurable with locally finite $\mathcal{H}^k$-measure. Then $E$ is $k$-rectifiable if and only if $E$ admits the approximate tangent space $T_xE$ for $\mathcal{H}^k$-a.e. $x \in E.$
\end{theorem}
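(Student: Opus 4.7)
The proof splits into two implications with very different flavors. For the direct implication, I would reduce to studying a single piece $E_j$ of the decomposition in Definition \ref{rectifiable}, using the alternative form (2.2) in which $E_j \subset \Sigma_j$ for a $k$-dimensional $\mathcal{C}^1$-submanifold. At every point of $\Sigma_j$ the classical tangent plane satisfies Definition \ref{approximatetg} after a change of variables via the area formula. To transfer the conclusion from $\mathcal{H}^k \res \Sigma_j$ to $\mathcal{H}^k \res E$, I would use the fact that at $\mathcal{H}^k$-a.e.\ $x \in E_j$ one has $\lim_{r \to 0^+} r^{-k}\mathcal{H}^k((\Sigma_j \setminus E_j) \cap B(x,r)) = 0$, which implies that the rescalings $E_{x,r}$ are asymptotically indistinguishable, in the weak$^*$ sense, from $(\Sigma_j)_{x,r}$. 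The contributions from the remaining pieces $E_i$ with $i \neq j$ are similarly negligible by density, so $T_x E$ exists and equals the classical tangent space $T_x \Sigma_j$.

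For the converse, the plan is to cover $E$, up to an $\mathcal{H}^k$-null set, by countably many Lipschitz graphs over $k$-planes. I fix a countable dense family $\{V_n\}_{n \in \mathbb{N}}$ in $\Gr(k,d)$, and for each $n \in \mathbb{N}$ and each pair of rational parameters $\eta, \rho \in (0,1)$, I define $E_{n,\eta,\rho}$ as the set of $x \in E$ at which the approximate tangent space exists, lies within distance $\eta$ of $V_n$ in the Grassmannian, and satisfies the following uniform quantitative estimate: for every $r \in (0,\rho]$, one has $\mathcal{H}^k(E \cap B(x,r) \setminus C_{x,r}) \leq \eta \omega_k r^k$ and $\mathcal{H}^k(E \cap B(x,r)) \geq (1-\eta)\omega_k r^k$, where $C_{x,r} := \{y \in \mathbb{R}^d : \dist(y - x, V_n) \leq \eta r\}$. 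By the a.e.\ existence of the approximate tangent space, Egorov's theorem applied to a countable sequence of radii, and the density of $\{V_n\}$, one shows that $\mathcal{H}^k$-almost every $x \in E$ belongs to some $E_{n,\eta,\rho}$.

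The main obstacle is then to upgrade the measure-theoretic tangent approximation into a pointwise \emph{cone condition} on $E_{n,\eta,\rho}$: for all $x, y \in E_{n,\eta,\rho}$ with $|y - x| \leq \rho / 2$, the displacement $y - x$ must lie in a cone of opening $O(\eta)$ around $V_n$. I would argue by contradiction: if $y$ were too far from the affine plane $x + V_n$, then a ball $B(y, c|y-x|)$ with $c$ a small absolute constant would lie entirely outside the cylinder $C_{x,|y-x|}$; but the lower density bound at $y$ would force this ball to carry $\mathcal{H}^k$-mass of $E$ of order $|y - x|^k$, contradicting the outer estimate at $x$. Once this cone condition is established, on each piece of $E_{n,\eta,\rho}$ of diameter at most $\rho/2$ the orthogonal projection onto $V_n$ is injective with Lipschitz inverse, so this piece is the graph of a Lipschitz function defined on a subset of $V_n \cong \mathbb{R}^k$. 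A final application of McShane's extension theorem extends each such function to a Lipschitz map $\mathbb{R}^k \to \mathbb{R}^d$, and the countable collection of images covers $E$ up to an $\mathcal{H}^k$-null set, yielding $k$-rectifiability.
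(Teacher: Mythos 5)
The paper does not prove this statement but defers to [\ref{Simonbook}, Theorem 11.6], and your proposal is essentially the argument given there: the forward implication via $\mathcal{C}^1$-pieces together with the density-zero facts $\Theta^{*k}(\Sigma_j\setminus E_j,\cdot)=0$ and $\Theta^{*k}(E\setminus E_j,\cdot)=0$ at $\mathcal{H}^k$-a.e.\ point of $E_j$, and the converse via the stratification into sets $E_{n,\eta,\rho}$, the cone condition, and Lipschitz extension. The structure is sound.

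The one place you should be careful is the claimed opening $O(\eta)$ for the cone; as written the sketch is internally inconsistent, since you take a ball of radius $c|y-x|$ around $y$ with ``$c$ a small absolute constant'' but then assert a cone of opening $O(\eta)$, whereas what that argument actually yields is a cone of opening comparable to $c$, not to $\eta$. Quantitatively, suppose $\dist(y-x,V_n)>\lambda|y-x|$ and set $r:=|y-x|$. A ball $B(y,cr)$ missing the cylinder $C_{x,2r}$ (which forces $c<\lambda-2\eta$) sits inside $B(x,2r)\setminus C_{x,2r}$; the inner estimate at $y$ gives $\mathcal{H}^k(E\cap B(y,cr))\geq(1-\eta)\omega_k(cr)^k$, the outer estimate at $x$ gives $\mathcal{H}^k(E\cap B(x,2r)\setminus C_{x,2r})\leq\eta\,\omega_k(2r)^k$, and a contradiction needs $(1-\eta)c^k>\eta\,2^k$. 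Since $c<\lambda$, this forces $\lambda\gtrsim(\eta/(1-\eta))^{1/k}$, which is \emph{not} $O(\eta)$ for $k\geq2$; conversely, a cone of opening $O(\eta)$ would make the inner estimate much too weak to contradict the outer one, since $\eta^k\ll\eta$. The repair is standard and does not change the architecture of the proof: fix the cone opening $\lambda$ to be an absolute constant, say $\lambda=1/2$, take $c=\lambda/2$, and then choose $\eta$ small enough (depending only on $k$) that both $\eta<\lambda/4$ and $(1-\eta)(\lambda/2)^k>\eta\,2^k$ hold. Any fixed $\lambda<1$ already makes the orthogonal projection onto $V_n$ injective with Lipschitz inverse on pieces of $E_{n,\eta,\rho}$ of diameter at most $\rho/2$, which is all the graph decomposition requires, so with this correction your proof goes through and matches the cited reference.
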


\subsubsection{Multilinear algebra and Stokes' theorem}

Now we briefly introduce the main notions in multilinear algebra that allow us to define the notion of \textit{current} in the sense of de Rham. We highlight that we do not aim at full generality, nor at the best algebraic way to describe the exterior algebra of differential forms; instead, we are going to limit ourselves to the minimum instrumental requirements to be able to describe Federer and Fleming theory of normal and integral currents, see [\ref{FF}], keeping always in mind our goal: the solution of the Plateau's problem.

In the following let $V$ be a finite dimensional vector space over $\R$ and $V^*$ its dual. Denote $\mathcal{S}_n$ the group of permutations of $\{1, \ldots, n\}$; given $\sigma \in \mathcal{S}_n$, we denote $\operatorname{sgn}(\sigma)$ the sign of the permutation $\sigma$.

\begin{definition}
A $k$-linear alternating form (or $k$-\textit{covector}) on $V$ is a multilinear function $$\alpha: \underbrace{V \times \ldots \times V}_{k \text { times }} \rightarrow \mathbb{R}$$ with the following property: for all $v_1, \ldots, v_k \in V$, for all permutation $\sigma$, there holds
$$
\alpha\left(v_{\sigma(1)}, \ldots, v_{\sigma(k)}\right)=\operatorname{sgn}(\sigma) \alpha\left(v_1, \ldots, v_k\right) .
$$
The space of $k$-covectors on $V$ is denoted by $\Lambda^k(V)$. 
\end{definition}

\begin{remark}
If $k=0$, we set $\Lambda^0(V):=\mathbb{R}$ identified with the constant functions and $\Lambda^1(V)\simeq V^*$. One can immediately see that $\Lambda^k(V)$ is a vecotr space. If $k>\operatorname{dim}(V)$, then $\Lambda^k(V)=\{0\}$ and if $k=\operatorname{dim}(V)$, then $\operatorname{dim}\left(\Lambda^k(V)\right)=1$.
\end{remark}

Considering the graded vector space structure naturally induced for every $k \in \N $ by the subspaces $\Lambda^k(V)$, it is possible to promote it into a graded algebra defining an internal product, called $\textit{wedge product}$, defined as follows:

\begin{definition}
Let $\alpha \in \Lambda^h(V)$ and $\beta \in \Lambda^k(V)$ be covectors. We define $\alpha \wedge \beta$ to be the element of $\Lambda^{h+k}$ given by
$$
\alpha \wedge \beta\left(v_1, \ldots, v_{h+k}\right):=\frac{1}{\left(h+k\right) !} \sum_{\sigma \in \mathcal{S}_{h+k}} \operatorname{sgn}(\sigma) \alpha\left(v_{\sigma(1)}, \ldots, v_{\sigma(h)}\right) \beta\left(v_{\sigma(h+1)}, \ldots, v_{\sigma(h+k)}\right)
$$
\end{definition}

If $V = \mathbb{R}^d$ let $e_1, \ldots, e_d$ be the standard basis with dual basis $d x_1, \ldots, d x_d$ so that $d x_i\left(e_j\right)=\delta_{i j}$. We shall use the standard notations for ordered multi-indices: for every positive integer $k \le d$, we denote $I(k, d):=\left\{ \left(i_1, \ldots, i_k\right) \mid 1 \leq i_1<\ldots<i_k \leq d\right\}$. For every index $I \in I(k, d)$ we define $d x_I:=d x_{i_1} \wedge \ldots \wedge d x_{i_k}$. Analogously we will do for $e_I:=e_{i_1} \wedge \ldots \wedge e_{i_k}$.

\begin{proposition}
The collection $\{d x_I\}_{I \in I(k,d)}$ is a basis for $\Lambda^k(\R^d)$. In particular, for all $\alpha \in \Lambda^k(\R^d)$ we can write $$\alpha = \sum_{I \in I(k,d)}\alpha_I dx_I$$ where $\alpha_I :=\alpha (e_{i_1}, \ldots, e_{i_k})$ for all $I=(i_1, \ldots, i_k)$. Moreover, dim $\Lambda^k(\R^d) =$$ d \choose k$.
\end{proposition}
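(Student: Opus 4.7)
The plan is to prove the three assertions in order: first compute the duality pairing between the wedge products $dx_I$ and the wedge products $e_J$ (or equivalently the evaluation of $dx_I$ on $k$-tuples of basis vectors), then use this to extract the coefficient formula, and finally obtain linear independence and the dimension count essentially for free.

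First I would compute $dx_I(e_{j_1},\ldots,e_{j_k})$ for $I=(i_1,\ldots,i_k)\in I(k,d)$ and any $k$-tuple $(j_1,\ldots,j_k)$. Expanding the iterated wedge product according to the definition (an induction on $k$ reduces to the binary definition given in the excerpt), the sum over $\sigma\in\mathcal{S}_k$ picks up $\operatorname{sgn}(\sigma)\prod_p dx_{i_p}(e_{j_{\sigma(p)}})=\operatorname{sgn}(\sigma)\prod_p\delta_{i_p,j_{\sigma(p)}}$, so only permutations matching $I$ to $(j_1,\ldots,j_k)$ contribute. In particular, whenever $J=(j_1,\ldots,j_k)$ is itself an ordered multi-index in $I(k,d)$, the pairing $dx_I(e_{j_1},\ldots,e_{j_k})$ is proportional to $\delta_{IJ}$, with a proportionality constant that only depends on $k$ and on the normalization chosen in the definition of $\wedge$.

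Second, the spanning property: take $\alpha\in\Lambda^k(\R^d)$ and any $v_1,\ldots,v_k\in\R^d$. Write $v_h=\sum_j v_h^j e_j$; by $k$-linearity
\[
\alpha(v_1,\ldots,v_k)=\sum_{j_1,\ldots,j_k} v_1^{j_1}\cdots v_k^{j_k}\,\alpha(e_{j_1},\ldots,e_{j_k}).
\]
The alternating property kills all terms in which two indices coincide and identifies $\alpha(e_{j_{\sigma(1)}},\ldots,e_{j_{\sigma(k)}})=\operatorname{sgn}(\sigma)\,\alpha(e_{j_1},\ldots,e_{j_k})$, so $\alpha$ is determined by the scalars $\alpha_I:=\alpha(e_{i_1},\ldots,e_{i_k})$ for $I\in I(k,d)$. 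Plugging the step-one pairing formula into $\sum_I \alpha_I\, dx_I$ and comparing against $\alpha$ on each ordered tuple $(e_{i_1},\ldots,e_{i_k})$ yields the desired expansion (up to the normalization constant carried by the wedge).

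Third, linear independence and dimension: if $\sum_I c_I\, dx_I=0$, evaluating both sides on $(e_{j_1},\ldots,e_{j_k})$ for each fixed $J\in I(k,d)$ and using the step-one duality gives $c_J=0$. Hence $\{dx_I\}_{I\in I(k,d)}$ is a basis, and its cardinality is the number of strictly increasing $k$-tuples in $\{1,\ldots,d\}$, which equals $\binom{d}{k}$.

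The only subtle point is bookkeeping of the combinatorial factor arising from the normalization $1/(h+k)!$ in the definition of $\wedge$: it must be tracked through the induction on $k$ so that the coefficient formula $\alpha_I=\alpha(e_{i_1},\ldots,e_{i_k})$ matches exactly. Everything else is a direct consequence of multilinearity, antisymmetry, and the relation $dx_i(e_j)=\delta_{ij}$.
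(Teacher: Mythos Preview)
The paper states this proposition without proof, so there is nothing to compare against; your outline is the standard argument and is correct for establishing that $\{dx_I\}_{I\in I(k,d)}$ spans, is linearly independent, and hence that $\dim\Lambda^k(\R^d)=\binom{d}{k}$.

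Your caution about the normalization is well placed and in fact points to a genuine inconsistency in the paper's conventions rather than a difficulty in your argument. With the definition $\alpha\wedge\beta(v_1,\ldots,v_{h+k})=\frac{1}{(h+k)!}\sum_\sigma\operatorname{sgn}(\sigma)\,\alpha(\ldots)\beta(\ldots)$ given in the paper, an induction on $k$ yields $dx_I(e_{i_1},\ldots,e_{i_k})=1/k!$, not $1$. Consequently the exact coefficient formula $\alpha_I=\alpha(e_{i_1},\ldots,e_{i_k})$ as stated would be off by a factor of $k!$; the formula is correct under the alternative convention $\frac{1}{h!\,k!}$ (or equivalently $\frac{(h+k)!}{h!\,k!}\cdot\frac{1}{(h+k)!}$). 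This does not affect the basis claim or the dimension count, only the precise scalar in the expansion. You were right to flag it; just be aware that ``tracking it through'' will not recover the stated formula under the paper's normalization without an extra $k!$.
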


The dual space of $\Lambda^k(\R^d)$ is called the space of $k$-\textit{vectors} and it is denoted by $\Lambda_k(\R^d)$. Before defining it properly we define a subclass of $k$-vectors that has a geometric meaning: \textit{simple} $k$-vectors.

\begin{definition}
Let $k$ be a positive integer. Given $\left(v_1, \ldots, v_k\right)$, $\left(\tilde{v}_1, \ldots, \tilde{v}_k\right) \in (\R^d)^k$, we define the equivalence relation $\left(v_1, \ldots, v_k\right) \sim\left(\tilde{v}_1, \ldots, \tilde{v}_k\right)$ if
$$
\alpha\left(v_1, \ldots, v_k\right)=\alpha\left(\tilde{v}_1, \ldots, \tilde{v}_k\right) \quad \forall \alpha \in \Lambda^k(\R^d).
$$
The elements of the quotient set with respect to $\sim$, denoted $[v_1,\dots,v_k]$, are called \textit{simple} $k$-vectors.
\end{definition}

\begin{remark}
$(\R^d)^k/\sim$ is not in general a vector space.
\end{remark}

The following proposition shows the geometric meaning of simple $k$-vectors. Two elements in the same equivalence class span the same $k$-dimensional space.

\begin{proposition} Let $\left(v_1, \ldots, v_k\right)$ and $\left(\tilde{v}_1, \ldots, \tilde{v}_k\right) \in (\R^d)^k$ then: \begin{enumerate}
\item $\left(v_1, \ldots, v_k\right) \sim(0, \ldots, 0)$ if and only if $v_1, \ldots, v_k$ are linearly dependent.
\item $\left(v_1, \ldots, v_k\right) \sim\left(\tilde{v}_1, \ldots, \tilde{v}_k\right) \nsim(0, \ldots, 0)$ if and only if $$\operatorname{Span}\left\{v_1, \ldots, v_k\right\}=\operatorname{Span}\left\{\tilde{v}_1, \ldots, \tilde{v}_k\right\}$$ and the change of basis matrix $M$ (\textit{i.e.} for all $i$, $\tilde{v}_i=$ $\sum_j M_{i, j} \,v_j$) has $\operatorname{det}(M)=1$.
\end{enumerate}
\end{proposition}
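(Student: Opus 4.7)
The plan is to reduce the proposition to linear-algebraic computations via multilinearity and alternation. Since $\{dx_I\}_{I \in I(k,d)}$ is a basis of $\Lambda^k(\mathbb{R}^d)$, the equivalence $(v_1,\ldots,v_k) \sim (\tilde v_1,\ldots,\tilde v_k)$ holds if and only if $dx_I(v_1,\ldots,v_k) = dx_I(\tilde v_1,\ldots,\tilde v_k)$ for every $I \in I(k,d)$. A direct computation from the definition of the wedge product shows that, up to a global factor $1/k!$, these coordinates are the Plücker minors $P^I(v_1,\ldots,v_k) := \det(A_I)$, where $A$ is the $k \times d$ matrix whose $i$-th row is $v_i$ in the canonical basis and $A_I$ is the $k \times k$ submatrix with columns indexed by $I$.

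For part (1), the implication ($\Leftarrow$) is immediate: if the $v_i$ are linearly dependent I write, say, $v_k = \sum_{i<k} c_i v_i$, expand $\alpha(v_1,\ldots,v_k)$ by multilinearity, and use the alternation property of $\alpha$ to see that each resulting term has two equal arguments and therefore vanishes. For the converse, assuming linear independence, I extend $v_1,\ldots,v_k$ to a basis $v_1,\ldots,v_d$ of $\mathbb{R}^d$, consider the associated dual basis $v_1^*,\ldots,v_d^*$, and pick $\alpha := v_1^* \wedge \cdots \wedge v_k^*$. Unwinding the definition of the wedge product gives $\alpha(v_1,\ldots,v_k) = 1/k! \neq 0$, hence $(v_1,\ldots,v_k) \nsim (0,\ldots,0)$.

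For part (2), the hypothesis that neither side is equivalent to $0$, combined with part (1), guarantees that both tuples are linearly independent. The implication ($\Leftarrow$) follows by substituting $\tilde v_i = \sum_j M_{ij} v_j$ into $\alpha(\tilde v_1,\ldots,\tilde v_k)$ and collecting the nonzero contributions: alternation forces the multi-indices in the expansion to be permutations of $\{1,\ldots,k\}$, leaving exactly $\det(M) \cdot \alpha(v_1,\ldots,v_k)$; so $\det M = 1$ yields $\alpha(\tilde v) = \alpha(v)$ for every $\alpha \in \Lambda^k(\mathbb{R}^d)$.

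The forward direction of (2) is the main obstacle and I would handle it as follows. Fix a basis $\{v_1,\ldots,v_k,u_{k+1},\ldots,u_d\}$ of $\mathbb{R}^d$ adapted to $V := \operatorname{Span}\{v_1,\ldots,v_k\}$, and write each $\tilde v_i = \sum_{j \le k} c_{ij}\, v_j + \sum_{j > k} d_{ij}\, u_j$; then the matrix of $(\tilde v_1,\ldots,\tilde v_k)$ in this basis takes the block form $[\,C \mid D\,]$, whereas $(v_1,\ldots,v_k)$ corresponds to $[\,I_k \mid 0\,]$. The equivalence implies that all Plücker minors of $[\,C \mid D\,]$ equal those of $[\,I_k \mid 0\,]$, so $\det(C) = 1$ and every other $k \times k$ minor vanishes. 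In particular $C$ is invertible, and the vanishing of the minor obtained from $C$ by replacing its $\ell$-th column with the $j$-th column of $D$ forces, via Cramer's rule, $(C^{-1} D)_{\ell j} = 0$ for every $\ell$ and every $j$; hence $D = 0$. Thus $\tilde v_i \in V$, so $\operatorname{Span}\{\tilde v_1,\ldots,\tilde v_k\} = V$ by a dimension count, and finally the change-of-basis matrix $M = C$ (read off from $\tilde v_i = \sum_j c_{ij} v_j$) satisfies $\det M = \det C = 1$, concluding the proof.
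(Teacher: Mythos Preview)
The paper states this proposition without proof, treating it as a standard preliminary fact in multilinear algebra, so there is no original argument to compare against. Your proof is correct and is essentially the standard one via Pl\"ucker coordinates.

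One small point worth tightening in part~(2), forward direction: the Pl\"ucker minor of $[\,C\mid D\,]$ associated to the index set $\{1,\dots,k\}\setminus\{\ell\}\cup\{k+j\}$ is the determinant of the matrix whose columns are $c_1,\dots,c_{\ell-1},c_{\ell+1},\dots,c_k,d_j$ \emph{in that order}, which differs from ``$C$ with its $\ell$-th column replaced by $d_j$'' by the sign $(-1)^{k-\ell}$. This does not affect your conclusion, since you only use that the minor vanishes, but it is worth saying explicitly so the Cramer's-rule step reads cleanly.
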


Given $v_1, \ldots, v_k \in \R^d$, let $R\left(v_1, \ldots, v_k\right):=\left\{\sum_{j=1}^k \lambda_j v_j \mid \lambda_j \in[0,1]\right\}$ be the rectangle generated by $v_1, \ldots, v_k$. Note that if $\left(v_1, \ldots, v_k\right) \sim\left(\tilde{v}_1, \ldots, \tilde{v}_k\right) \nsim(0, \ldots, 0)$ then by the area formula $\mathcal{H}^k\left(R\left(\tilde{v}_1, \ldots, \tilde{v}_k\right)\right) = \mathcal{H}^k\left(R\left(v_1, \ldots, v_k\right)\right)$ since $\operatorname{det}(M)=1$. Hence we can define the following map that is called the ``norm'' of the simple $k$-vector\footnote{Which is actually not a norm, since it is not defined on a vector space.} $[v_1,\dots,v_k]$:

\begin{equation}\label{simplenorm}
\big|\left[v_1, \ldots, v_k\right]\big|:=\mathcal{H}^k\left(R\left(v_1, \ldots, v_k\right)\right).
\end{equation}

Recall that an orientation of a vector space is an equivalence class of bases with respect to the following equivalence relation: let $\left(v_1, \ldots, v_n\right), \left(\tilde{v}_1, \ldots, \tilde{v}_n\right) \in (\R^d)^k$, then $\left(v_1, \ldots, v_n\right) \approx\left(\tilde{v}_1, \ldots, \tilde{v}_n\right)$ if and only if the change of basis matrix has positive determinant. Hence we can state the following proposition telling us that simple unitary (in norm) $k$-vectors represent oriented $k$-planes:

\begin{proposition}
Consider the map $\psi$ which associates to a simple unitary $k$-vector $\left[v_1, \ldots, v_k\right]$ the $k$-dimensional subspace $\operatorname{Span}\left\{v_1, \ldots, v_k\right\}$ oriented by $\left(v_1, \ldots, v_k\right)$. This map is well-defined and it is a bijection.
\end{proposition}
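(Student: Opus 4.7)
I would prove this by verifying in turn the three properties: well-definedness, surjectivity, and injectivity, each relying on the previous proposition characterizing the equivalence relation $\sim$ and on the formula \eqref{simplenorm} for the norm of a simple $k$-vector.

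For \textbf{well-definedness}, I need to show that if $[v_1,\ldots,v_k] = [\tilde v_1, \ldots, \tilde v_k]$ is a simple unitary $k$-vector, then the two representatives span the same oriented $k$-plane. Unitarity implies the class is not zero (since $|\cdot|$ vanishes only on the class of $(0,\ldots,0)$, by the first part of the previous proposition). Hence the second part of that proposition applies, giving $\operatorname{Span}\{v_1,\ldots,v_k\} = \operatorname{Span}\{\tilde v_1,\ldots,\tilde v_k\}$ and a change-of-basis matrix $M$ with $\det(M)=1$. In particular $\det(M)>0$, so the bases $(v_1,\ldots,v_k)$ and $(\tilde v_1,\ldots,\tilde v_k)$ determine the same orientation on the common span. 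Thus $\psi$ is well-defined.

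For \textbf{surjectivity}, given an oriented $k$-dimensional subspace $W \subset \R^d$, I would fix an orthonormal basis $(u_1,\ldots,u_k)$ of $W$ representing the prescribed orientation. Then $R(u_1,\ldots,u_k)$ is an isometric copy of the unit cube in $\R^k$, so $\mathcal{H}^k(R(u_1,\ldots,u_k))=1$, and therefore $[u_1,\ldots,u_k]$ is a simple unitary $k$-vector; clearly $\psi([u_1,\ldots,u_k]) = W$ with the correct orientation.

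For \textbf{injectivity}, suppose $[v_1,\ldots,v_k]$ and $[\tilde v_1,\ldots,\tilde v_k]$ are simple unitary $k$-vectors with the same image under $\psi$. Then the spans coincide, so there is a change-of-basis matrix $M$ with $\tilde v_i = \sum_j M_{i,j} v_j$, and since the two bases induce the same orientation, $\det(M)>0$. By the area formula, $\mathcal{H}^k(R(\tilde v_1,\ldots,\tilde v_k)) = |\det(M)|\, \mathcal{H}^k(R(v_1,\ldots,v_k))$; combined with the unitarity assumption $|[v_1,\ldots,v_k]| = |[\tilde v_1,\ldots,\tilde v_k]|=1$, this forces $|\det(M)|=1$, hence $\det(M)=1$. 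By the previous proposition this is exactly the condition for the two tuples to be $\sim$-equivalent, so the simple $k$-vectors coincide.

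The main subtlety, rather than the main obstacle, is making sure that unitarity is really used at the injectivity step: without it, two representatives with the same oriented span may differ by a positive scaling of the rectangle volume, and the $\sim$-equivalence would fail; the norm hypothesis is exactly what rules this out.
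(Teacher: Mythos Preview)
Your proof is correct. The paper states this proposition without proof, so there is no argument to compare against; your approach---checking well-definedness, surjectivity, and injectivity via the previous proposition on the equivalence $\sim$ together with the area formula---is exactly the natural one and fills the gap cleanly.
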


We define now a vector space that includes the space of simple $k$-vectors. The space of $k$-vectors on $\R^d$ is defined as $\Lambda_k(\R^d):=\Lambda^k\left((\R^d)^*\right)$. We define the duality pairing of $\Lambda^k(\R^d)$ and $\Lambda_k(\R^d)$ as the bilinear form $\langle\cdot, \cdot\rangle: \Lambda^k(\R^d) \times \Lambda_k(\R^d) \rightarrow \mathbb{R}$ by setting for all $I,J \in I(k,d)$
$$
\langle dx_I, e_J\rangle:=\delta_{I,J} 
$$
In particular, $\langle\cdot, \cdot\rangle$ gives an isomorphism between $\Lambda_k(\R^d)$ and $\left(\Lambda^k(\R^d)\right)^*$. Moreover by bilinearity of $\langle\cdot, \cdot\rangle$ we can also write for all $\alpha \in \Lambda^k(\R^d)$ and for all $(v_1, \ldots, v_k) \in (\R^d)^k$ that $\langle\alpha, v_1 \wedge \cdots \wedge v_k\rangle=\alpha\left(v_1, \ldots, v_k\right)$.

\begin{remark}
Simple $k$-vectors can be embedded in $\Lambda_k(\R^d)$ by identifying $\left[v_1, \ldots, v_k\right]$ with $v_1 \wedge \cdots \wedge v_k$. Indeed it easy to see that $\left(v_1, \ldots, v_k\right) \sim\left(\tilde{v}_1, \ldots, \tilde{v}_k\right)$ if and only if $v_1 \wedge \cdots \wedge v_k=\tilde{v}_1 \wedge \cdots \wedge \tilde{v}_k$.
\end{remark}

\begin{remark}
There are $k$-vectors that are not simple: $\tau= e_1 \wedge e_2 + e_3 \wedge e_4 \in \Lambda_2(\R^4)$ is not simple since $\tau \wedge \tau \neq 0$.
\end{remark}

There are two natural choices of norms on $\Lambda_k(\R^d)$. The first is the Euclidean norm $|\cdot|$ defined for $w=\sum_{I\in I_{k, d}} w_{I} e_I \in  \Lambda_k(\R^d)$ as
$$
|w|:=\sqrt{\sum_{I \in I_{k, d}} w_{I}^2}.
$$

\begin{remark}
The Euclidean norm of a simple $k$-vector agrees with the ``norm'' defined in \eqref{simplenorm}. More precisely, given $v_1 \wedge \cdots \wedge v_k \in \Lambda_k(\R^d)$  we have that $$\left|v_1 \wedge \cdots \wedge v_k\right|=\mathcal{H}^k\left(R\left(v_1, \ldots, v_k\right)\right).$$
\end{remark}

The second norm is the so-called \textit{mass norm}, defined to be the largest norm $\|\cdot\|$ on $\Lambda_k(\R^d)$ such that $$
\|v_1 \wedge \cdots \wedge v_k\|=\left|v_1 \wedge \cdots \wedge v_k\right| \quad \forall v_1, \ldots, v_k \in \R^d.
$$

Mass norm and Euclidean norm agree on simple $k$-vectors and one can see that the unit ball with respect to $\|\cdot\|$ is the convex envelope of the set of simple $k$-vectors with unit ``norm". Once a norm is defined on $\Lambda_k(\R^d)$, we get a dual norm on $\Lambda^k(\R^d)$, defined as follows:

\begin{definition}
The dual norm $\|\cdot\|^*$ induced by the mass norm is called \textit{comass norm}, that is
$$
\|\alpha\|^*=\sup \{\langle \alpha, w \rangle\mid \|w\| \leq 1\} \quad \forall \alpha \in \Lambda^k(\R^d).
$$
\end{definition}

Now we can introduce differential forms and define a norm on the space of differential forms, making it into a well-behaved topological space.

Let $\Omega \subset \R^d$ be an open set. A $k$-form $\omega$ on $\Omega$ is a section of the $k$-alternating tensor bundle of $\Omega$. Since we are in $\R^d$, we can particularize the above definition saying that a \textit{differential k-form} $\omega$ on $\mathbb{R}^d$ is a $k$-covector field, that is a map
$$
\omega: \mathbb{R}^d \rightarrow \Lambda^k(\mathbb{R}^d),
$$ that, in other words, means $\omega(x)$ is just is a $k$-alternating multilinear map from $\mathbb{R}^d$ to $\mathbb{R}$, for all $x \in \Omega$. We call $k$ the \textit{degree} of the form.

Fixed the standard basis of $\Lambda^k(\mathbb{R}^d)$, we can write $\omega$ in local coordinates as
$$
\omega(x)=\sum_{I \in  I(k, d)} \omega_{I}(x) d x_{I},
$$
where the coefficients $\omega_{I}(x)$ are real-valued functions on $\R^d$: depending on their regularity we say that the $k$-differential form has that regularity; if we say that the form is smooth we mean that $\omega_I(x)$ are $\mathcal{C}^{\infty}$ functions. The support of $\omega$ is defined as the closure of the set $\left\{x \in \mathbb{R}^d: \omega(x) \neq 0\right\}$ and we will denote it as $\operatorname{supp}(\omega)$; we say that the form has \textit{compact support} if $\operatorname{supp}(\omega)$ is compact.

\begin{definition}
The \textit{exterior derivative} of a smooth differential $k$-form $\omega$ is the differential $(k+1)$-form:
$$
d \omega(x)=\sum_{I \in I(k, d)} d \omega_{I} \wedge d x_{I},
$$
where
$$
d\omega_I(x)=\sum_{j=1}^d \frac{\partial w_I}{\partial x_j}(x) d x_j.
$$
\end{definition}

A $k$-form $\omega$ is said to be \textit{closed} if $d \omega=0$, while if there exists a $(k-1)$-form $\eta$ such that $d \eta=\omega$, then $\omega$ is said to be \textit{exact}. One of the most important properties of the exterior derivative $d$ is that $d \circ d = 0$ (giving rise to the beautiful and elegant theory of de Rham cohomology).

Towards the statement of Stokes' theorem and exploiting the geometric meaning of simple $k$-vectors, we define a notion of \textit{orientation} of a $k$-submanifold\footnote{It can be proven to be equivalent to the classical definition of orientation given through transition maps which preserve the orientation of the tangent space.}.

\begin{definition}\label{d:orientation}
Let $\Sigma$ be smooth $k$-submanifold in $\R^d$. An \textit{orientation} of $\Sigma$ is a continuous map $\tau_\Sigma: \Sigma \rightarrow \Lambda_k(\mathbb{R}^d)$ such that $\tau_\Sigma(x)$ is unitary and spans $T_x \Sigma$ for every $x \in \Sigma$. 
\end{definition}

An orientation of $\Sigma$ induces a canonical orientation of $\partial \Sigma$ the boundary of $\Sigma$, namely the one such that
\begin{equation}\label{boundaryorient}
\tau_\Sigma(x)=\nu(x) \wedge \tau_{\partial \Sigma}(x) \text { for every } x \in \partial \Sigma,
\end{equation}
where $\nu$ is the outer normal to $\partial \Sigma$.

Assume $\Sigma$ is a $k$-surface oriented by $\tau_\Sigma$. The integral of a differential $k$-form $\omega$ on it can be defined as
$$
\int_\Sigma \omega:=\int_\Sigma\left\langle\omega(x) , \tau_\Sigma(x)\right\rangle d \mathcal{H}^k(x),
$$ where for every $x \in \R^d$, $\langle\omega(x) , \tau_\Sigma(x)\rangle$ is the duality pairing of a $k$-covector $\omega(x)$ acting on a simple $k$-vector $\tau_\Sigma(x)$.

For its relevance in the sequel, we introduce the notion of \textit{pull-back} of a differential $k$-form on $\mathbb{R}^{m}$ according to a smooth map $f: \mathbb{R}^d \rightarrow \mathbb{R}^{m}$.

\begin{definition}
For any differential $k$-form $\omega$ on $\mathbb{R}^{m}$, we define its \textit{pull-back} $f^* \omega$ on $\mathbb{R}^d$ by setting for all $p \in \mathbb{R}^d$ and for all $v_1 \wedge \ldots \wedge v_k \in \Lambda_k(\mathbb{R}^d)$
$$
\left\langle f^* \omega(p) , v_1 \wedge \ldots \wedge v_k\right\rangle:=\left\langle\omega(f(p)) , df(p) v_1 \wedge \ldots \wedge df(p) v_k \right\rangle.
$$
This map is extended to all $k$-vectors by linearity. In the case $k=0$, the formula reduces to the composition of functions $f^* \omega = \omega \circ f$.
\end{definition}

Merging together two differential operators (the exterior derivative $d$ and the integral $\int$) with a topological one (the boundary $\partial$), Stokes' theorem can be considered one of the most important (and elegant) results in the theory of integration. 

\begin{theorem}[Stokes' theorem]\label{t:stokes}
Let $\Sigma$ be a compact oriented submanifold of dimension $k$ of class $\mathcal{C}^1$ with $\partial \Sigma$ of class $\mathcal{C}^1$ and let $\omega$ be a $(k-1)$-form of class $\mathcal{C}^1$. Then $$\int_\Sigma d \omega=\int_{\partial \Sigma} \omega.$$
\end{theorem}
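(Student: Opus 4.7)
The plan is to prove Stokes' theorem by a standard partition-of-unity reduction to a Euclidean computation, followed by the fundamental theorem of calculus. First, I would cover $\Sigma$ by finitely many $\mathcal{C}^1$ coordinate charts of two types: \emph{interior charts}, diffeomorphic to open subsets of $\R^k$ and covering $\Sigma \setminus \partial\Sigma$, and \emph{boundary charts}, diffeomorphic to relatively open subsets of the closed upper half-space $\mathbb{H}^k := \{x \in \R^k : x_k \ge 0\}$ and sending the slice $\{x_k = 0\}$ into $\partial\Sigma$. Compactness of $\Sigma$ yields a finite subcover, and I pick a subordinate $\mathcal{C}^1$ partition of unity $\{\rho_i\}$. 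Since $d$ and $\int$ are linear and $\omega = \sum_i \rho_i \omega$, it suffices to prove the identity for each $\omega_i := \rho_i \omega$, each of which is compactly supported inside a single chart.

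For a chart $\varphi : U \to \varphi(U) \subset \Sigma$, the definition of the integral on $\Sigma$ via the pairing $\langle \omega(x)\,;\,\tau_\Sigma(x)\rangle$ combined with the area formula translates $\int_\Sigma \omega_i$ into the Euclidean integral $\int_U \varphi^*\omega_i$, and similarly for $\int_{\partial\Sigma}\omega_i$. Together with the naturality relation $\varphi^*(d\omega_i) = d(\varphi^*\omega_i)$, this reduces the problem to proving Stokes' identity for a compactly supported $\mathcal{C}^1$ $(k-1)$-form $\eta$ on $\R^k$ or $\mathbb{H}^k$. Writing
$$
\eta = \sum_{j=1}^k (-1)^{j-1} f_j\, dx_1 \wedge \cdots \wedge \widehat{dx_j} \wedge \cdots \wedge dx_k,
$$
a direct computation gives $d\eta = \bigl(\sum_j \partial_j f_j\bigr)\, dx_1 \wedge \cdots \wedge dx_k$, and Fubini combined with the fundamental theorem of calculus in each variable kills all boundary contributions except, in the half-space case, the slice $\{x_k = 0\}$, producing the desired local identity.

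The main obstacle is the orientation bookkeeping: the convention \eqref{boundaryorient} for the induced boundary orientation, $\tau_\Sigma = \nu \wedge \tau_{\partial\Sigma}$, must correspond under $\varphi^*$ to the standard orientation of $\R^{k-1} = \partial\mathbb{H}^k$ with outer normal $-e_k$, so that the sign produced by the fundamental theorem of calculus matches. A careful check confirms this (one may always reduce to orientation-preserving charts by composing with a reflection), after which summing the local identities over the partition of unity and using
$$
\sum_i d\rho_i \wedge \omega = d\Bigl(\textstyle\sum_i \rho_i\Bigr) \wedge \omega = 0
$$
to recover $d\omega = \sum_i d\omega_i$ on $\Sigma$ yields the global statement. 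The $\mathcal{C}^1$ regularity of both $\omega$ and $\partial\Sigma$ is precisely what makes all ingredients in this argument — existence of the outer normal, validity of the change-of-variables formula, and the fundamental theorem of calculus for the coefficients of $\varphi^*\omega_i$ — simultaneously available.
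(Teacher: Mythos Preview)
Your argument is the standard and correct proof of Stokes' theorem via partition of unity and reduction to the Euclidean half-space, with the orientation check against the convention \eqref{boundaryorient} handled correctly. The paper itself does not give a proof at all: it simply refers the reader to the literature (specifically [\ref{KP}]), so there is no in-text argument to compare against. Your outline is essentially what one finds in such references, so there is nothing missing on your side.
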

See for instance [\ref{KP}] for the proof.

\begin{corollary}
Let $\Sigma$ be a $k$-dimensional oriented submanifold with $\partial \Sigma=\emptyset$ and let $\omega$ be a $(k-1)$-form of class at least $\mathcal{C}^1$. Then $\int_\Sigma d \omega=0$.
\end{corollary}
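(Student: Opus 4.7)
The plan is to deduce this immediately from Stokes' theorem (Theorem \ref{t:stokes}). The key observation is that the hypothesis $\partial\Sigma=\emptyset$ forces the boundary integral to vanish trivially, since integration over the empty set yields zero by convention. Concretely, assuming the compactness required by Theorem \ref{t:stokes} (either built into the statement or provided by the compact support of $\omega$), one writes
\[
\int_\Sigma d\omega \;=\; \int_{\partial\Sigma}\omega \;=\; \int_\emptyset \omega \;=\; 0.
\]

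The only point meriting a bit of care is reconciling the compactness assumption in the statement of Stokes' theorem with the statement of the corollary, which does not explicitly mention it. If $\Sigma$ is already compact, there is nothing to do. Otherwise, I would interpret the statement under the natural assumption that $\omega$ (restricted to $\Sigma$) has compact support, and then exhaust $\Sigma$ by a family of compact oriented $k$-submanifolds with boundary $\Sigma_j\subset\Sigma$ chosen so that $\supp(\omega)\cap\Sigma\subset\Int(\Sigma_j)$ for $j$ large enough. Applying Stokes' theorem on each $\Sigma_j$ gives $\int_{\Sigma_j} d\omega = \int_{\partial\Sigma_j}\omega$, and the right-hand side is zero because $\partial\Sigma_j$ does not meet $\supp(\omega)$; passing to the limit yields $\int_\Sigma d\omega = 0$.

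There is essentially no genuine obstacle: the result is a one-line consequence of Stokes' theorem combined with the convention on integration over the empty set, and the only subtlety is the (technical, standard) compactness/support bookkeeping sketched above.
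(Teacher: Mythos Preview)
Your proposal is correct and matches the intended approach: the paper states the corollary immediately after Stokes' theorem without proof, and your one-line deduction $\int_\Sigma d\omega = \int_{\partial\Sigma}\omega = \int_\emptyset \omega = 0$ is exactly what is meant. The additional compactness/support discussion you include is a reasonable elaboration but not required here, since the paper's version of Stokes' theorem already assumes $\Sigma$ compact.
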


For the next section, it is important to consider the space $\mathcal{D}^k(U)$ of smooth compactly supported differential $k$-forms on $U$ where $U \subset \R^d$ is an open set\footnote{In the literature there are many notations to refer to the space of smooth compactly supported differential $k$-forms, including the most common $\Omega^k_c(U)$. We will stick to the notation mostly used in the context of geometric measure theory to highlight the duality with currents.}. Since the space is sequential, we can characterize its topology by means of converging sequences. There are more abstract ways to define the following topology on $\mathcal{D}^k(U)$, but we consider useful to provide an explicit description of converging sequences to highlight the similarities with the one on the set $\mathcal{C}^0_c$ in Definition \ref{uniformconvg}.

\begin{definition}\label{convforms}
A sequence $(\omega^{n})_{n \in \mathbb{N}} \in \mathcal{D}^k(U)$, that in local coordinates takes the form
$$
\omega^{n}(x)=\sum_{I \in I(k, d)} \omega_I^{n}(x) d x_I,$$ is said to \textit{converge} to $\omega \in \mathcal{D}^k(U)$ as $n \rightarrow \infty$ if there exists a compact set $K \subset U$ such that
\begin{enumerate}
\item $\operatorname{supp} (\omega_{I}^{n} )\subset K$ for any $I \in I(k, d)$ and for any $n \in \N$,
\item for every choice of the multi-index $\alpha$ we have $D^{\alpha} \omega_{I}^{n} \rightarrow D^{\alpha} \omega_I$ uniformly in $K$ for every $I \in I(k, d)$.
\end{enumerate}
\end{definition}

\begin{remark}
Note that in local coordinates we are just equipping $\mathcal{D}^k(U)$ with the topology on the predual of the space of distributions $\mathcal{D}^0(U)$: there is indeed the following identification $$\mathcal{D}^k(U) \simeq \left(\mathcal{D}^0(U)\right)^{|I(k, d)|}.$$ The topology in Definition \ref{convforms} turns $\mathcal{D}^k(U)$ into a locally convex metrizable and separable topological vector space: nice properties to be able to deal with compactness in its dual space.
\end{remark}

\section{The Federer-Fleming theory of integral currents}

The first appearance of the notion of \textit{current}\footnote{The choice of the term ``current" is motivated by the fact that in a 3-dimensional space ``1-dimensional currents" can be interpreted as electrical currents and indeed, in [\ref{deRham}] and [\ref{deRham2}], de Rham thought of them as cables carrying an electrical current of unit intensity.}, in a less general and less precise form, was at the beginning of the thirties by Georges de Rham in [\ref{deRham}] and [\ref{deRham2}]. It was only after Schwartz's introduction of the concept of distribution in 1945, see [\ref{Schwartz}], that de Rham reframed its definition from the one dealing with homologies on forms to the cleaner one that we are going to define now.

In this section, nevertheless, we are more interested in the notion of current seen as a measure-theoretic generalization of an oriented surface. From this point of view, its theory was brought to fruition by Federer and Fleming in the late fifties to prove the existence of an area-minimizing surface spanning a given contour: the Plateau's problem.

We will now focus on the introduction of the space of currents as the dual space of smooth and compactly supported differential forms and we will define a generalized version of the Plateau's problem in the language of currents. Then we will introduce all the needed machinery to solve the generalized Plateau's problem and to describe further applications in the sequel of this thesis.

\subsection{Currents}

Let $U \subset \R^d$ be an open set and $0 \le k \le d$.
\begin{definition}\label{d:current}
A $k$-\textit{dimensional current} in $U$ is a continuous linear functional on $\mathcal{D}^k(U)$, endowed with the topology in Definition \ref{convforms}. 
\end{definition}
The space of $k$-dimensional currents in $U$ is denoted by $\mathcal{D}_k(U)$. We will often use the notation $\langle T, \omega \rangle$ to emphasize the duality pairing with a form $\omega$, but later we are going to simply write the action of $T$ as $T(\omega)$.

\begin{definition}\label{d:boundary}
Given $T \in \mathcal{D}_k\left(U\right)$, we define the \textit{boundary} of $T$ as the $(k-1)$-current defined as
$$
\langle\partial T, \omega\rangle:=\langle T, d \omega\rangle \hspace{0.3cm} \text{for all }\,\, \omega \in \mathcal{D}^{k-1}(U).
$$
\end{definition}

\begin{remark}
The functional $\partial T$ is well-defined, linear and continuous. Note that $\partial$ on $k$-currents is just the adjoint operator of $d$ on smooth, compactly supported $k$-forms. The counterpart of the fact that $d \circ d =0$ is that $\partial (\partial T)=0$ for all $T \in \mathcal{D}_k(U)$.
\end{remark}

\begin{definition}\label{d:mass}
Given $T \in \mathcal{D}_k\left(U\right)$, we define the \textit{mass} of $T$ as
$$
\mathbb{M}(T):=\sup \left\{\langle T, \omega \rangle |\, \omega \in \mathcal{D}^k\left(U\right), \|\omega(x)\|^* \leq 1 \,\,\, \forall x \in U\right\},
$$
where $\|\omega(x)\|^*$ is the comass norm\footnote{Some authors prefer to take the euclidean norm $|\cdot|$ instead; clearly the value of the mass changes and in general $\mathbb{M}(T)$ is strictly greater than the corresponding value replacing the comass norm with $|\cdot|$, but most of the theory remains consistent by equivalence of the norms.} of $\omega(x)$.
\end{definition}

\begin{remark}
Definitions \ref{d:current}, \ref{d:boundary} and \ref{d:mass} can be considered as generalized concepts for the notions of manifold, boundary of a manifold and volume of a manifold respectively. Indeed, let $\Sigma$ be a smooth, oriented, $k$-dimensional submanifold in $\mathbb{R}^d$. In the spirit of Poincaré duality, we define the following linear functional $T_{\Sigma}(\cdot)$ on $\mathcal{D}^k\left(U\right)$:
$$
\omega \mapsto \int_{\Sigma} \omega.
$$ Such a current is often denoted by $\llbracket \Sigma \rrbracket$. Note that $T_{\Sigma}$ is uniquely determined by $\Sigma$ in the sense that $\Sigma \neq \Sigma^{\prime}$ (as oriented submanifolds) implies $T_{\Sigma} \neq T_{\Sigma^{\prime}}.$ We can rewrite Stokes' theorem in the following way $$\partial T_{\Sigma}=T_{\partial \Sigma}$$ and it is possible to prove that $$\mathcal{H}^k(\Sigma)=\sup \left\{T_{\Sigma}(\omega)\, |\, \omega \in \mathcal{D}^k\left(U\right), \|\omega(x)\|^* \leq 1 \,\,\, \forall x \in U\right\},$$ justifying the terminology of ``generalized surfaces''.
\end{remark}

Since we are interested in a variational problem, as a dual space, $\mathcal{D}_k(U)$ is naturally equipped with the weak$^*$-topology. 

\begin{definition}
Given a sequence of $k$-currents $(T_n)_n$ and a $k$-current $T$, we say that $(T_n)_n$ \textit{converges to} $T$ \textit{in the sense of currents} if the sequence converges to $T$ with respect to the weak-* topology, that is
$$
\lim _{n \rightarrow \infty}\langle T_n, \omega \rangle=\langle T, \omega \rangle \quad \text{for all} \,\,\,  \omega \in \mathcal{D}^k\left(U\right).
$$
\end{definition}

\begin{remark}\label{r:lsc}
The boundary operator is continuous and the mass is a lower semicontinuous functional, both with respect to the weak$^*$-convergence of currents. More formally, if $(T_n)_n$ is a sequence of $k$-currents converging to a $k$-currents $T$, then we have:
\begin{enumerate}
\item $(\partial T_n)_n$ converges to $\partial T$ in the sense of currents,
\item $\mathbb{M}(T) \leq \liminf _{n \rightarrow \infty} \mathbb{M}(T_n).$
\end{enumerate}
\end{remark}

As for distributions, we have a notion of support of a current.

\begin{definition}
The \textit{support} of a $k$-current $T$ in $\mathcal{D}_k(U)$, with $U \subset \mathbb{R}^d$ open, is the set
$$\operatorname{supp}(T):=\mathbb{R}^d \setminus \bigcup\left\{V \subset U, V \text{ open} :\, \omega \in \mathcal{D}^k(U), \,\operatorname{supp}(\omega) \subset V \Rightarrow T(\omega)=0\right\}.$$
\end{definition}

\begin{remark}
The notion of current is so general that it is possible to define for every $k=1,2, \dots, d$ a $k$-current $T \in \mathcal{D}_k(\R^d)$ whose support is a singleton: calling them $k$-\textit{dimensional} objects does not encode the existence of a surrounding $k$-dimensional geometry.
\end{remark}

Important subclasses of currents need to be defined to obtain a reasonable geometric object in the solution of the generalized Plateau's problem.

\begin{definition}
We say that a $k$-current $T$ has \textit{finite mass} if $\mathbb{M}(T)<\infty.$
\end{definition}

\begin{remark}
Note that $0$-currents with finite mass can be identified with signed measures. The definition of current with finite mass is not trivial remarking the fact that the topology on $\mathcal{D}^k(\R^d)$ in Definition \ref{convforms} to which currents are dual is finer than the topology induced by the norm $$\|\omega(x)\|_{\infty}:=\sup \left\{\|\omega(x)\|^*: x \in \mathbb{R}^d\right\}.$$ The usual examples are derivatives of the Dirac delta at a point $x_0 \in U$: pick the $0$-current $T$ on $\R$ such that, fixing $x_0 \in U \subset \R$ $$T(\varphi) = \varphi'(x_0) \hspace{0.3cm} \text{for all } \, \varphi \in \mathcal{C}^{\infty}_c(U).$$
\end{remark}

Given $T \in \mathcal{D}_k(\mathbb{R}^d)$ with finite mass we obtain
$$
|\langle T, \omega \rangle| \leq \mathbb{M}(T)\|\omega\|_{\infty} \quad \forall \omega \in \mathcal{D}^k(\mathbb{R}^d).
$$ Hence, $T$ can be extended by density to a linear continuous functional defined on the space of continuous and infinitesimal at infinity $k$-forms $\omega \in \mathcal{C}_0\left(\mathbb{R}^d, \Lambda^k(\mathbb{R}^d)\right)$. By the Riesz representation theorem\footnote{We are actually invoking a slightly more general version than the one stated in Theorem \ref{t:Riesz}.}, $T$ can be represented by integration with respect to a measure with values in $\Lambda^k(\mathbb{R}^d)^*=\Lambda_k(\mathbb{R}^d)$. Thus, there exists a positive and finite measure $\mu$ on $\mathbb{R}^d$ and a vector field $\tau: \mathbb{R}^d \rightarrow \Lambda_k(\mathbb{R}^d)$ in $L^1(\mathbb{R}^d, \mu)$, unitary (in mass norm) $\mu$-a.e. such that \begin{equation}\label{e:finitemass}\langle T, \omega \rangle=\int_{\mathbb{R}^d}\langle \omega(x), \tau(x) \rangle d\mu(x) \quad \text{ for all } \omega \in \mathcal{D}^k(\R^d).\end{equation} We denote by $T= \tau \cdot \mu$ a $k$-current whose action on a form is as in \eqref{e:finitemass}. We will often call $\mu$ the \textit{total variation measure} associated with $T$ and denote it by $\|T\|$. It can be checked that $\mathbb{M}(T) = \mu(\R^d)$, that is the mass of $T$ equals the mass of the measure $\|T\|$. In particular, if $T \in \mathcal{D}_k(U)$ and with finite mass, if $A \subset U$ is any Borel set we can define the \textit{restriction} of $T$ on $A$, denoted with $T \res A$, by
$$
T \res A(\omega):=\int_A \langle \omega(x), \tau(x) \rangle d\mu(x).
$$

\begin{definition}
A $k$-current $T \in \mathcal{D}_k(U)$ is called \textit{normal} if both $T$ and $\partial T$ have finite mass, that is $$\mathbb{M}(T)<\infty \hspace{0.3cm} \text{and} \hspace{0.3cm} \mathbb{M}(\partial T)<\infty.$$ The space of normal $k$-currents is denoted by $\mathbb{N}_k(U)$.
\end{definition}

Normal currents have good compactness properties: 
\begin{proposition}\label{p:compactnessnormal}
Let $(T_n)_n$ be a sequence of normal $k$-currents s.t.
$$
\sup _n \left(\mathbb{M}(T_n)+ \mathbb{M}(\partial T_n)\right)<\infty .
$$
Then, up to subsequences, $(T_n)_n$ converges in the sense of currents to a $k$-current $T$. Moreover, we have that
$$
\mathbb{M}(T) \leq \liminf _{n \rightarrow\infty} \mathbb{M}(T_n) \hspace{0.3cm} \text{and} \hspace{0.3cm}  \mathbb{M}(\partial T) \leq \liminf _{n \rightarrow\infty} \mathbb{M}(\partial T_n) .
$$
In particular, $T$ is a normal $k$-current.
\end{proposition}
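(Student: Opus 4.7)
The plan is to deduce the result from the sequential weak-$*$ compactness theorem for vector-valued Radon measures stated in Section 1.1, combined with the (extended) Riesz representation theorem and the continuity/lower-semicontinuity properties recorded in Remark \ref{r:lsc}.

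First I would represent each normal current via the extended Riesz representation theorem: since $\mathbb{M}(T_n) < \infty$, there exist a positive Radon measure $\mu_n = \|T_n\|$ with $\mu_n(\R^d) = \mathbb{M}(T_n)$ and a $\mu_n$-a.e.\ unit (in mass norm) $k$-vector field $\tau_n$ such that $T_n = \tau_n \cdot \mu_n$; likewise $\partial T_n = \sigma_n \cdot \nu_n$ with $\nu_n(\R^d) = \mathbb{M}(\partial T_n)$. Identifying $\Lambda_k(\R^d)$ and $\Lambda_{k-1}(\R^d)$ with finite-dimensional Euclidean spaces, I view $(T_n)$ and $(\partial T_n)$ as sequences in $\mathcal{M}(\R^d,\Lambda_k(\R^d))$ and $\mathcal{M}(\R^d,\Lambda_{k-1}(\R^d))$, respectively. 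By hypothesis their total variations are uniformly bounded, so the sequential compactness theorem for Radon measures yields, after two successive extractions, a single subsequence $(n_j)$ and vector-valued Radon measures $\mu^\infty$, $\sigma^\infty$ such that $T_{n_j} \wstarto \mu^\infty$ and $\partial T_{n_j} \wstarto \sigma^\infty$ in the measure-theoretic sense. The extended Riesz representation reinterprets $\mu^\infty$ and $\sigma^\infty$ as currents $T \in \mathcal{D}_k(U)$ and $S \in \mathcal{D}_{k-1}(U)$ of finite mass.

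Next, I would argue that the measure-theoretic weak-$*$ convergence just obtained implies convergence in the sense of currents. For any $\omega \in \mathcal{D}^k(U)$, extending $\omega$ by zero to $\R^d$ produces an admissible test function for the measure-theoretic weak-$*$ topology, hence $\langle T_{n_j}, \omega\rangle \to \langle T, \omega\rangle$, which is precisely convergence in the sense of currents; the same argument gives $\partial T_{n_j} \to S$. By the continuity of $\partial$ stated in Remark \ref{r:lsc}, the convergence $T_{n_j} \to T$ also forces $\partial T_{n_j} \to \partial T$ in $\mathcal{D}_{k-1}(U)$. Uniqueness of the weak-$*$ limit then gives $S = \partial T$.

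Finally, the lower-semicontinuity inequalities in Remark \ref{r:lsc} yield
$$ \mathbb{M}(T) \leq \liminf_{j \to \infty} \mathbb{M}(T_{n_j}), \qquad \mathbb{M}(\partial T) \leq \liminf_{j \to \infty} \mathbb{M}(\partial T_{n_j}), $$
and both right-hand sides are finite by assumption, so $T$ is a normal $k$-current. The only mildly delicate point is making the measure-theoretic and current-theoretic weak-$*$ limits coincide: this is handled by the density of $\mathcal{D}^k(U)$ in the relevant space of compactly supported continuous forms, together with choosing a single subsequence that works simultaneously for $(T_n)$ and $(\partial T_n)$.
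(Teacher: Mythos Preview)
Your argument is correct and is the standard one. The paper states Proposition~\ref{p:compactnessnormal} without proof, so there is nothing to compare against; your proof is precisely the one the paper's preliminary material (the Riesz representation in \eqref{e:finitemass}, the sequential compactness theorem for vector-valued Radon measures, and Remark~\ref{r:lsc}) is set up to support.
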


At this point one could formulate a very general version of the Plateau's problem.

\begin{plateau}
Given $S \in  \mathbb{N}_{k-1}(U)$ such that $S=\partial T$ for some $T\in  \mathbb{N}_k(U)$, does there exist $T'\in  \mathbb{N}_k(U)$ such that $\partial T'=S$ and $\mathbb{M}(T')$ is minimized?
\end{plateau}

\begin{remark}
By Proposition \ref{p:compactnessnormal} it is possible to employ the direct methods in the calculus of variations: the functional $\mathbb{M}(\cdot)$ is lower semicontinuous by \ref{r:lsc} with respect to convergence in the sense of currents and by sequential compactness in \ref{p:compactnessnormal} we solve the very general Plateau's problem.
\end{remark}

An interesting result for normal currents states that they cannot concentrate on small closed sets; more precisely we have the following theorem:

\begin{theorem}{\normalfont [\ref{GMS}, Section 2.3, Theorem 2]}
Let $T \in \mathbb{N}_{k}(U)$ with $U \subset \mathbb{R}^d$ open. For any $I= (i_1,\dots,i_k) \in I(k, d)$ let $\pi_I$ denote the orthogonal projection
$$
\pi_I:\left(x_1, \ldots, x_d\right) \rightarrow \left(x_{i_1}, \ldots, x_{i_k}\right) .
$$
Then, for any closed set $C \subset U$ such that $\mathcal{H}^k\left(\pi_I(C)\right)=0$ for all $I \in I(k, d)$, we have
$$
\|T\|(C)=0 \hspace{0.3cm} \text{that is} \hspace{0.3cm} T \res C=0.$$
\end{theorem}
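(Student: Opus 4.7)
By the representation \eqref{e:finitemass} for normal currents, write $T = \tau \cdot \|T\|$ with $\tau(x) = \sum_{I \in I(k,d)} \tau_I(x)\, e_I$ and $\|\tau(x)\| = 1$ in the mass norm at $\|T\|$-a.e.\ $x$. Since each $e_I$ is a simple unit $k$-vector, the triangle inequality for the mass norm yields
\[
\|T\|(C) = \int_C \|\tau\|\, d\|T\| \leq \sum_{I \in I(k,d)} \int_C |\tau_I|\, d\|T\|,
\]
so it suffices to show $\int_C |\tau_I|\, d\|T\| = 0$ for every fixed $I \in I(k,d)$.

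I would prove this via the Federer--Fleming slicing of normal currents along the coordinate projection $\pi_I: \mathbb{R}^d \to \mathbb{R}^k$ (see [\ref{GMS}, Section 2.5] or [\ref{Federerbook}, 4.3]). Since $T \in \mathbb{N}_k(U)$ and $\pi_I$ is $1$-Lipschitz, for $\mathcal{L}^k$-a.e.\ $y \in \mathbb{R}^k$ there is a $0$-current $\langle T, \pi_I, y\rangle$ supported on the fiber $\pi_I^{-1}(y)$, and one has the coarea identity
\[
\int_{\mathbb{R}^k} \|\langle T, \pi_I, y\rangle\|(A) \, dy = \int_A |\tau_I| \, d\|T\| \qquad \text{for every Borel } A \subset U.
\]
The appearance of $|\tau_I|$ is the Jacobian of $\pi_I$ along the orienting direction $\tau$: the induced map $(d\pi_I)^{\wedge k}$ annihilates $e_J$ for each $J \neq I$ (some wedge factor lies outside the image of $\pi_I$) and sends $e_I$ to the canonical unit top $k$-vector of $\mathbb{R}^k$, giving $(d\pi_I)^{\wedge k}(\tau) = \tau_I \cdot e_1 \wedge \cdots \wedge e_k$.

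Now I specialize to $A = C$. For $y \notin \pi_I(C)$ the fiber $\pi_I^{-1}(y)$ is disjoint from $C$, so $\|\langle T, \pi_I, y\rangle\|(C) = 0$, and the left-hand integrand is concentrated on $\pi_I(C)$. Taking instead $A = U$ shows that this integrand belongs to $L^1(\mathbb{R}^k)$, while by hypothesis $\mathcal{L}^k(\pi_I(C)) = \mathcal{H}^k(\pi_I(C)) = 0$, so its integral over this null set vanishes. Hence $\int_C |\tau_I|\, d\|T\| = 0$ for every $I$, and the first display forces $\|T\|(C) = 0$, which for a finite-mass current is equivalent to $T \res C = 0$.

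The main obstacle is the coarea identity itself: Federer--Fleming slicing is not developed in the excerpt. Its proof is built on the one-codimensional slice formula $\langle T, f, t\rangle = \partial(T \res \{f < t\}) - (\partial T) \res \{f < t\}$, whose well-posedness for $\mathcal{L}^1$-a.e.\ $t$ relies crucially on finite mass of both $T$ and $\partial T$; the vector-valued statement for $\pi_I$ then follows by iterating over its $k$ scalar components and identifying the resulting Jacobian factor as $|\tau_I|$.
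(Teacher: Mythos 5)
The paper cites this result from [GMS, Section 2.3, Theorem 2] without supplying a proof, so there is no internal argument to compare against; your slicing proof is correct and is the standard route (it is essentially the one carried out in GMS and Federer). One refinement worth making: you state the coarea relation $\int_{\mathbb{R}^k}\|\langle T,\pi_I,y\rangle\|(A)\,dy = \int_A|\tau_I|\,d\|T\|$ as an equality, but the argument only needs the inequality ``$\geq$'', which follows cheaply from the defining Fubini property of the slice. Indeed, writing $S_I := T\res dx_I$ (so $S_I = \tau_I\,\|T\|$ as a signed measure), the slicing relation $\int_{\mathbb{R}^k}\langle T,\pi_I,y\rangle(g)\,dy = T(g\,dx_I) = \int g\,\tau_I\,d\|T\|$ for bounded Borel $g$ gives $S_I(B)=\int_{\mathbb{R}^k}\langle T,\pi_I,y\rangle(B)\,dy$ for Borel $B$; summing over a partition of $A$ and taking the supremum yields $\int_A|\tau_I|\,d\|T\|=\|S_I\|(A)\leq\int_{\mathbb{R}^k}\|\langle T,\pi_I,y\rangle\|(A)\,dy$. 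That is the only direction you use, and it sidesteps any worry about whether equality (i.e.\ absence of cancellation among the fiber measures) holds for a general non-rectifiable normal current. The remaining steps are all in order: $\pi_I(C)$ closed hence measurable with $\mathcal{L}^k=\mathcal{H}^k$ null, the fiber disjointness killing $\|\langle T,\pi_I,y\rangle\|(C)$ off $\pi_I(C)$, integrability of $y\mapsto\mathbb{M}\langle T,\pi_I,y\rangle$ from the coarea bound (which the paper records for codimension-one slices and which iterates), and finally the mass-norm triangle inequality $\|\tau\|\leq\sum_I|\tau_I|$ to assemble the conclusion. You also correctly identify that the existence and Fubini property of iterated coordinate slices is exactly where the hypothesis $T\in\mathbb{N}_k(U)$ is used.
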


Nevertheless, the Plateau's problem formulation for normal currents is still not satisfactory since the space $\mathbb{N}_k(U)$ contains again elements that do not have the geometric meaning of being $k$-dimensional (for any reasonable notion of dimension). Indeed we can construct the following example.

\begin{example}\label{e:geodesics}{\normalfont [\ref{Delellisnote}, Example 2.8]}
Consider the south and north poles $S$ and $N$ in the sphere $\mathbb{S}^2 \subset \mathbb{R}^3$ and let $Z$ be the 0-dimensional current $\llbracket N \rrbracket-\llbracket S \rrbracket$. For any meridian $\gamma$ joining $S$ to $N$ the corresponding current $\llbracket \gamma \rrbracket$ is a minimizer of the mass among all currents $T$ with $\partial T=Z$ and $\operatorname{supp}(T) \subset \mathbb{S}^2$. However the same holds if we parametrize the meridians as a one-parameter family $\left\{\gamma_t\right\}_{t \in \mathbb{S}^1}$, where $t$ is the intersection of $\gamma_t$ with the equator $\left\{x_3=0\right\} \cap \mathbb{S}^2$. If $\mu$ is a probability measure on $\mathbb{S}^1$, then the current $$T_1(\omega):=\int_{\mathbb{S}^1} \langle \llbracket \gamma_t \rrbracket ,\omega \rangle d \mu(t)$$ is also mass-minimizing among all currents $T$ with $\operatorname{supp}(T) \subset \mathbb{S}^2$ and $\partial T=Z$.
\end{example} 

One could argue that Example \ref{e:geodesics} may be considered not so troublesome by noticing that at least, among all minimizers, it is somehow possible to find ``classical minimizers''. In fact, deeper issues arise as the following theorem shows (see [\ref{Whitelavrentiev}] for a short proof):

\begin{theorem}[Lavrentiev gap]
For every smooth closed embedded curve $\gamma$ in $\mathbb{R}^4$ define
$$
\begin{aligned}
M(\gamma)&:=\inf \left\{\operatorname{Area}(\Sigma): \Sigma \text { is immersed, oriented and } \partial \Sigma=\gamma\right\}, \\
m(\gamma)&:=\min \{\mathbb{M}(T): \partial T=\llbracket \gamma \rrbracket\}.
\end{aligned}
$$
Then there are $\gamma$'s for which $m(\gamma) < M(\gamma)$.
\end{theorem}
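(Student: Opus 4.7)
The plan is to exhibit a specific smooth embedded $\gamma \subset \mathbb{R}^4$ where the gap is realized, using a calibrated but branched area-minimizing current. Identify $\mathbb{R}^4$ with $\mathbb{C}^2$ and consider the complex algebraic curve
\[
V := \{(z,w) \in \mathbb{C}^2 \,:\, z^2 = w^3\},
\]
which is smooth off the cuspidal singularity at the origin. Set $\gamma := V \cap \partial B_1(0)$: it is a smooth closed embedded curve (the $(2,3)$-torus knot), and integration of $2$-forms over $V \cap \overline{B_1(0)}$ with its canonical complex orientation defines an integral current $T_V$ with $\partial T_V = \llbracket \gamma \rrbracket$.

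The first step is to show that $m(\gamma) = \mathbb{M}(T_V)$ via the Kähler calibration. Take $\omega := \frac{i}{2}(dz \wedge d\bar z + dw \wedge d\bar w)$, which is a closed $2$-form on $\mathbb{R}^4$ satisfying Wirtinger's inequality $\langle \omega(x), \tau\rangle \leq \|\tau\|$ for every simple $2$-vector $\tau$, with equality iff $\tau$ orients a complex line with its natural orientation. Since every regular point of $V$ has a complex tangent plane, this equality holds $\|T_V\|$-a.e., so $\mathbb{M}(T_V) = \langle T_V, \omega\rangle$. On the contractible space $\mathbb{R}^4$ one has $\omega = d\eta$ for a smooth primitive $\eta$, and for any integral current $T$ with $\partial T = \llbracket \gamma \rrbracket$ Definition~\ref{d:boundary} gives
\[
\langle T, \omega\rangle = \langle \partial T, \eta\rangle = \langle \llbracket \gamma \rrbracket, \eta\rangle = \langle T_V, \omega\rangle,
\]
while Wirtinger yields $\langle T, \omega\rangle \leq \mathbb{M}(T)$. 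Chaining these gives $\mathbb{M}(T_V) \leq \mathbb{M}(T)$ for every competitor, so $m(\gamma) = \mathbb{M}(T_V)$.

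The second step, and the difficult one, is the strict inequality $M(\gamma) > m(\gamma)$: no minimizing sequence of smoothly immersed oriented surfaces bounded by $\gamma$ can have area converging to $\mathbb{M}(T_V)$. I would argue by contradiction: given such a sequence $(\Sigma_n)_n$ with $\operatorname{Area}(\Sigma_n) \to m(\gamma)$, the associated currents $T_{\Sigma_n}$ have uniformly bounded mass and boundary mass, so Proposition~\ref{p:compactnessnormal} supplies a subsequential weak$^*$ limit $T_\infty$ with $\partial T_\infty = \llbracket \gamma \rrbracket$ and, by Remark~\ref{r:lsc}, $\mathbb{M}(T_\infty) \leq m(\gamma)$. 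Rerunning the calibration chain forces equality in Wirtinger $\|T_\infty\|$-a.e., pinning the tangent plane of $T_\infty$ to a complex line almost everywhere; together with the prescribed boundary this identifies $T_\infty = T_V$ by uniqueness of the calibrated minimizer.

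The main obstacle is converting $T_{\Sigma_n} \wstarto T_V$ into a quantitative contradiction with the immersion hypothesis. The decisive fact is that $T_V$ carries a genuine algebraic branch point at the origin: parametrizing $V$ by $t \mapsto (t^3, t^2)$ one computes that the $2$-dimensional density of $\|T_V\|$ at $0$ equals $2$ and that the tangent cone there is the complex line $\{z=0\}$ counted with multiplicity $2$, realized as a single cuspidal sheet rather than as two planes meeting transversely. Because a smoothly immersed oriented surface in $\mathbb{R}^4$ can only exhibit isolated transverse double points as singularities, any $\Sigma_n$ approximating $T_V$ must resolve the cusp either by inserting a smooth sheet (which changes the homological multiplicity detected by the linking of $\gamma$ with a generic complex line through $0$ and therefore, via the calibration inequality, produces a strictly positive excess of area) or by a cluster of transverse double points (whose local area strictly exceeds that of the cusp). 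Turning this dichotomy into a uniform lower bound on $\operatorname{Area}(\Sigma_n) - \mathbb{M}(T_V)$, as carried out in [\ref{Whitelavrentiev}], contradicts $\operatorname{Area}(\Sigma_n) \to m(\gamma)$ and establishes the strict gap $M(\gamma) > m(\gamma)$.
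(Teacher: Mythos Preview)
The paper does not prove this theorem; it only states it and cites White's paper for a short proof. So there is no argument in the paper to compare against, and your attempt must stand on its own.

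Your Step 1 is correct and cleanly done: the K\"ahler form $\omega$ calibrates $T_V$ by Wirtinger's inequality, and the chain $\mathbb{M}(T_V) = \langle T_V,\omega\rangle = \langle \llbracket\gamma\rrbracket,\eta\rangle = \langle T,\omega\rangle \leq \mathbb{M}(T)$ yields $m(\gamma)=\mathbb{M}(T_V)$. This is exactly the calibration argument of Theorem~\ref{t:holomorphicsubvariety}.

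Step 2, however, is incomplete at two points. First, the uniqueness claim $T_\infty = T_V$: equality in Wirtinger $\|T_\infty\|$-a.e.\ forces the approximate tangent plane to be a complex line almost everywhere, but concluding that $T_\infty$ equals the specific holomorphic chain $T_V$ requires a boundary-uniqueness theorem for positive holomorphic chains (of Harvey--Shiffman type), which you invoke without justification.

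Second, and this is the genuine gap, the closing contradiction is only a heuristic. Weak-$*$ convergence $T_{\Sigma_n}\to T_V$ together with $\mathrm{Area}(\Sigma_n)\to\mathbb{M}(T_V)$ is not, by itself, incompatible with the $\Sigma_n$ being immersed: nothing you have written rules out immersions of unbounded genus or with increasingly intricate neck structure near the origin whose Wirtinger deficit $\int_{\Sigma_n}(dA-\omega)$ tends to zero. Your dichotomy---either a change in the linking data or a cluster of transverse double points, each allegedly costing definite area---is plausible but is never made quantitative, and you then explicitly defer the uniform lower bound to White. That deferral does not close the gap: whatever White's short argument actually does, the work is not carried out in your text, and the strict inequality $M(\gamma)>m(\gamma)$ remains unproved here.
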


As a result, we need to add more structure to enclose some reasonable geometric meaning in the notion of current.

\subsection{Existence for the generalized Plateau's problem}

To rule out minimizers as in Example \ref{e:geodesics} we need to add more structure to the previous notions of currents. We will be mainly interested in what is known as \textit{integer multiplicity rectifiable currents} which can be thought as the natural extension of currents of the type $\llbracket \Sigma \rrbracket$ in which we allow $\Sigma$ to be a rectifiable set and we also allow it to carry an ``integer multiplicity''.

Given a $k$-rectifiable set $E$ in $\R^d$, one can define an \textit{orientation} of $E$ as a Borel function $\tau_E : E \rightarrow \Lambda_k(\R^d)$ such that $\tau_E(x)$ is a simple unit $k$-vector spanning $T_xE$ for $\mathcal{H}^k$-a.e. $x \in E$.

\begin{definition}
Let $U$ be an open set in $\R^d$. We call a $k$-current $T$ \textit{rectifiable} if $T$ admits the following representation
$$
\langle T , \omega\rangle=\int_E\left\langle\omega(x) , \tau_E(x)\right\rangle \theta(x) d \mathcal{H}^k(x),
$$ where $E$ is a $k$-rectifiable set, $\tau_E$ is an orientation of $E$, and $\theta$ is a real-valued function $\theta \in L^1(U, \mathcal{H}^k \res E)$. The function $\theta$ is often called \textit{multiplicity} of the current $T$. We will often use the notation $T=\llbracket E, \tau_E, \theta \rrbracket$ and the set of rectifiable $k$-currents in $U$ is denoted by $\mathscr{R}_k(U)$.
\end{definition}

\begin{remark}
Given $T=\llbracket E, \tau_E, \theta \rrbracket$ a $k$-rectifiable current, then $E, \tau_E, \theta$ in the representation of $T$ are not uniquely determined: one could write equivalently $\llbracket E, -\tau_E, -\theta \rrbracket$ instead of $\llbracket E, \tau_E, \theta \rrbracket$. If we require in addition that $\theta > 0$ for $\mathcal{H}^k$-a.e. $x \in E$, then $E, \tau_E, m$ are uniquely determined (up to $\mathcal{H}^k$-null sets).
\end{remark}

In particular we have that \begin{equation}\label{e:massrectif}
\mathbb{M}(T)=\int_E|\theta(x)| d \mathcal{H}^k(x)= \|\theta\|_{L^1(U, \mathcal{H}^k\res E)}.\end{equation}

\begin{definition}
A rectifiable current $T=\llbracket E, \tau_E, \theta \rrbracket$ whose multiplicity $\theta$ takes values in $\mathbb{Z}$ is called an \textit{integer multiplicity rectifiable current}. The set of integer multiplicity rectifiable $k$-currents in $U$ is denoted by $\mathcal{R}_k(U)$.
\end{definition}

\begin{remark}
Any $T \in \mathcal{R}_0(U)$ can be written as finite sum of weighted Dirac masses. More formally, let $x_i \in \R^d$, $\theta_i \in \mathbb{Z}$ and $\delta_{x_i}$ the Dirac mass at point $x_i$ for $i=1, \dots, N$. Then we can write $$T=\sum_{i=1}^N \theta_i \delta_{x_i}.$$ Indeed, a function $\theta \in L^1(U,\mathcal{H}^0\res E)$ with values in $\mathbb{Z}$ is a function that attains a finite number of values in a finite number of points, vanishing elsewhere.
\end{remark}

\begin{remark}
Unlike the space of rectifiable currents, the set of integer multiplicity rectifiable currents is not a real vector space since in general $\lambda T \in \mathcal{R}_k(U)$ only if $T \in \mathcal{R}_k(U)$ and $\lambda$ is an integer. As a result, there is no hope to invoke any simple functional-analytic principle to obtain good compactness properties in $\mathcal{R}_k(U)$. Still, $\mathcal{R}_k(U)$ maintains the algebraic structure of abelian group with respect to the sum.
\end{remark}

\begin{definition}
If both $T$ and $\partial T$ are integer multiplicity rectifiable currents, then $T$ is called an \textit{integral current}. The corresponding space is denoted by $\mathcal{I}_k(U)$.
\end{definition}

The following theorem by Federer and Fleming can be considered as one of the cornerstones in the theory of geometric variational problems, see [\ref{FF}]. 

It is worth pointing out that given a bounded sequence of integral currents $(T_n)_n$, the existence of a converging subsequence and a limit current $T$ follows from Proposition \ref{p:compactnessnormal}. The fact that the limit current $T$ is not just normal but integral deeply relies on the geometry of integral currents: hence the name \textit{closure} theorem, telling us that bounded sets (with respect to the mass and the mass of the boundary) in the space $\mathcal{I}_k(U)$ are (sequentially) weak$^*$-closed in $\mathbb{N}_k(U)$.

\begin{theorem}[Closure theorem]\label{t:closure}
Let $(T_n)_n$ be a sequence of integral $k$-currents in $U \subset \mathbb{R}^d$ such that $$\operatorname{sup}_n\big(\mathbb{M}(T_n)+\mathbb{M}(\partial T_n) \big)<\infty.$$ Then there exist $T \in \mathcal{I}_k(U)$ and a subsequence $(T_{n_j})_j$ such that $T_{n_j}$ converges in the sense of currents to $T$.
\end{theorem}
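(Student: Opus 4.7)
The plan is to first invoke Proposition \ref{p:compactnessnormal} to extract a subsequence (still denoted $(T_n)_n$) weak$^*$-converging to a normal $k$-current $T$ with $\mathbb{M}(T) \le \liminf_n \mathbb{M}(T_n)$ and $\mathbb{M}(\partial T) \le \liminf_n \mathbb{M}(\partial T_n)$. All the content of the statement is then concentrated in the upgrade from normal to integral: both the rectifiability of the support of $T$ and the integrality of its multiplicity must survive the passage to the limit, and this is where the entire difficulty lies.

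The natural strategy combines two ingredients. First, on the geometric side, I would use the Federer--Fleming deformation theorem: for every $\varepsilon>0$ and every cubical grid of side $\varepsilon$ each $T_n$ decomposes as
\[
T_n = P_n^{\varepsilon} + \partial R_n^{\varepsilon} + Q_n^{\varepsilon},
\]
where $P_n^{\varepsilon}$ is an integer polyhedral chain on the grid and the remainders satisfy $\mathbb{M}(R_n^{\varepsilon}) + \mathbb{M}(Q_n^{\varepsilon}) \le C\varepsilon\big(\mathbb{M}(T_n) + \mathbb{M}(\partial T_n)\big)$. At fixed $\varepsilon$ the chains $P_n^{\varepsilon}$ live in the finite-dimensional lattice of integer chains supported on the grid and are weak$^*$-compact among integer polyhedral currents; letting first $n\to\infty$ and then $\varepsilon\to 0$ exhibits $T$ as a flat-norm limit of integer polyhedral chains of uniformly bounded mass. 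Second, on the algebraic side, I would use the slicing theory for normal currents: for a Lipschitz map $f\colon U \to \mathbb{R}^k$, the slices $\langle T_n, f, y\rangle$ are integer $0$-currents for a.e. $y$ and satisfy
\[
\int \mathbb{M}(\langle T_n, f, y\rangle)\, dy \le \mathrm{Lip}(f)^k\, \mathbb{M}(T_n);
\]
since the slicing operator is continuous with respect to weak$^*$-convergence, the a.e. slices of $T$ inherit integrality from those of the $T_n$.

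From these two inputs the conclusion would follow in the classical way. The polyhedral approximation, combined with the uniform bound on $\mathbb{M}(\partial T)$, guarantees via the rectifiability criterion that $\|T\|$ is concentrated on a countably $k$-rectifiable set $E$ and that $T$ admits a representation $\llbracket E, \tau_E, \theta \rrbracket$ with $\theta \in L^1(\mathcal{H}^k \res E)$; the slicing-integrality then forces $\theta(x) \in \mathbb{Z}$ for $\mathcal{H}^k$-a.e. $x \in E$. Applying the same scheme to $\partial T$, which is itself normal by Proposition \ref{p:compactnessnormal}, yields $\partial T \in \mathcal{R}_{k-1}(U)$, so $T \in \mathcal{I}_k(U)$.

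The main obstacle is the simultaneous control of the geometric support and the integer multiplicity of the approximants as the grid size $\varepsilon$ is sent to zero: the deformation theorem is quantitative and scales cleanly, but its iteration must be performed without losing either the rectifiability of the support or the integrality of the density. In Federer--Fleming's original work this is codified by the \emph{boundary rectifiability theorem} and the \emph{rectifiability theorem}, two nontrivial intermediate results that together imply the closure statement; any self-contained exposition is expected to devote substantial work to these two steps.
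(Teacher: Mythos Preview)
The paper does not give its own proof of the closure theorem: it states the result and then surveys the available approaches (Federer--Fleming via the structure theorem for sets of finite Hausdorff measure, Solomon and Almgren via multivalued functions, and White's more elementary argument in the spirit of De Giorgi). So there is no in-house proof to compare against; your proposal has to be judged on its own.

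As a roadmap your outline is broadly faithful to the classical Federer--Fleming strategy, and you correctly isolate the two hard intermediate results (the rectifiability theorem and the boundary rectifiability theorem). One technical point deserves correction: the assertion that ``the slicing operator is continuous with respect to weak$^*$-convergence'' is not accurate as stated. What is true is that for a.e.\ level $y$ the slices $\langle T_n, f, y\rangle$ converge \emph{in flat norm} to $\langle T, f, y\rangle$, and even this uses the uniform mass bounds together with an integral estimate of the type $\int \Flat(\langle T_n - T, f, y\rangle)\,dy \le C\,\Flat(T_n - T)$; pointwise weak$^*$-continuity of the slice map fails in general. If you intend to conclude integrality of the multiplicity via slices, the cleanest packaging is precisely Theorem~\ref{t:white99} (White's criterion), which the paper does record: a normal current is integer rectifiable if and only if almost every $0$-dimensional slice along coordinate projections is. Your deformation-theorem step would then provide rectifiability of the underlying set and the slicing step the integrality of the density; but, as you yourself acknowledge, turning this into a self-contained proof still requires the substantial work that the paper deliberately chose not to reproduce.
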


The closure theorem was initially proved for codimension one currents, i.e. $(d-1)$-dimensional currents in $\mathbb{R}^d$ by De Giorgi, see [\ref{Degiorgiperimetri}] and [\ref{Degiorginuovi}], in the language of \textit{sets of finite perimeter} few years before the appearence of Federer and Fleming foundational article [\ref{FF}]. It is worth mentioning this since the ideas introduced by De Giorgi in [\ref{Degiorgiperimetri}] and [\ref{Degiorginuovi}] still play an important role in the general case. 

Federer and Fleming's proof of closure theorem is based on a rectifiability argument that goes under the name of \textit{structure theorem} for sets of finite Hausdorff measure, the proof of which is rather demanding. Several years later, two different proofs of the closure theorem were proposed by Solomon [\ref{Solomon}] and by Almgren [\ref{Almgrenclosure}], without employing the machinery of the structure theorem: their proofs relied on various facts about multivalued functions, a tool that will be introduced in Chapter 2 and that still requires some preliminary work. Moreover, another different proof of the closure theorem without the structure theorem nor multivalued functions and that develops in the same spirit as De Giorgi's codimension 1 proof is due to White [\ref{Whiteclosure}].

As a consequence of the closure theorem, it is important to state another fundamental theorem, telling us that $$\mathcal{R}_k(U) \cap \mathbb{N}_k(U)=\mathcal{I}_k(U).$$

\begin{theorem}[Boundary rectifiability theorem]
Let $T$ be an integer multiplicity rectifiable current with $\mathbb{M}(\partial T)<\infty$. Then $T$ is an integral\,\footnote{Arguably the terminology comes from merging the two words \textit{integer} and \textit{normal}.}current.
\end{theorem}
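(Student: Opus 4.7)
The plan is to reduce the theorem to showing that $\partial T \in \mathcal{R}_{k-1}(U)$, since by hypothesis $T$ is already integer multiplicity rectifiable. Once we establish this, the definition of integral current gives $T \in \mathcal{I}_k(U)$ immediately. Note that the boundary operator applied twice gives $\partial(\partial T)=0$, so if we can just show $\partial T$ is integer rectifiable, then $\partial T$ is automatically integral, and $T$ is integral as well.

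First I would rely on an approximation result in the spirit of the Federer--Fleming \emph{deformation theorem}: given an integer multiplicity rectifiable current $T$ with $\mathbb{M}(\partial T)<\infty$, and any $\varepsilon>0$, one can construct a polyhedral $k$-current $P_\varepsilon$ with integer multiplicities, supported on the $k$-skeleton of a cubical grid of size $\varepsilon$, together with an integer rectifiable current $Q_\varepsilon$ of dimension $k+1$ such that
\[
T-P_\varepsilon=\partial Q_\varepsilon+R_\varepsilon,
\]
with uniform bounds $\mathbb{M}(P_\varepsilon)\le C(\mathbb{M}(T)+\varepsilon\,\mathbb{M}(\partial T))$, $\mathbb{M}(\partial P_\varepsilon)\le C\mathbb{M}(\partial T)$, and $\mathbb{M}(R_\varepsilon)\to 0$ as $\varepsilon\to 0$. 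Choosing $\varepsilon_n\to 0$ yields a sequence $P_n:=P_{\varepsilon_n}$ of polyhedral currents with integer multiplicities. Since polyhedra have polyhedral boundaries, both $P_n$ and $\partial P_n$ are automatically integer multiplicity rectifiable, hence integral. Moreover, from the estimates above, $P_n\to T$ in the sense of currents and $\partial P_n\to\partial T$ (using $\partial R_n\to 0$ in mass).

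Now I would apply the Closure Theorem (Theorem \ref{t:closure}) to the sequence $(\partial P_n)_n \subset \mathcal{I}_{k-1}(U)$. Since $\mathbb{M}(\partial P_n)$ is uniformly bounded by $C\,\mathbb{M}(\partial T)<\infty$ and $\mathbb{M}(\partial(\partial P_n))=0$, the hypothesis of the closure theorem is satisfied. Therefore the weak$^*$-limit $\partial T$ lies in $\mathcal{I}_{k-1}(U)$, and in particular in $\mathcal{R}_{k-1}(U)$. This shows $\partial T$ is integer multiplicity rectifiable, which combined with $T\in\mathcal{R}_k(U)$ gives $T\in\mathcal{I}_k(U)$.

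The main obstacle is clearly the deformation theorem itself, which is a substantial and technical construction: it requires pushing $T$ via Lipschitz projections onto the $k$-skeleton of a cubical grid, choosing the center of each cube carefully (via an averaging/pigeonhole argument) so that the projected current has integer multiplicity and controlled mass. All of the delicate mass estimates for $P_\varepsilon, Q_\varepsilon, R_\varepsilon$ follow from explicit computation of the Jacobians of the radial projections, but these bounds are precisely what makes the deformation theorem nontrivial. Once granted, the argument above is a clean application of the closure theorem and requires no additional geometric structure.
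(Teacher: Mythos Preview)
The paper does not actually prove this theorem; it only states it, noting that it is ``a consequence of the closure theorem'' and records the identity $\mathcal{R}_k(U)\cap\mathbb{N}_k(U)=\mathcal{I}_k(U)$. Your proposal supplies the standard argument behind that remark: approximate $T$ by integer polyhedral currents via the deformation theorem, then apply the closure theorem to the boundaries. This is exactly the classical route (as in Simon's \emph{Lectures on Geometric Measure Theory}, \S30), and is consistent with the paper's hint.

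One small imprecision worth fixing: from $T-P_\varepsilon=\partial Q_\varepsilon+R_\varepsilon$ you get $\partial T-\partial P_\varepsilon=\partial R_\varepsilon$, and the deformation theorem gives $\mathbb{M}(R_\varepsilon)\leq C\varepsilon\,\mathbb{M}(\partial T)\to 0$, not $\mathbb{M}(\partial R_\varepsilon)\to 0$. What you actually obtain is $\mathbb{F}_K(\partial P_\varepsilon-\partial T)\leq\mathbb{M}(R_\varepsilon)\to 0$, i.e.\ flat (hence weak$^*$) convergence of $\partial P_n$ to $\partial T$, which is all the closure theorem needs. Also note that the version of the deformation theorem you invoke must be the one requiring only $T\in\mathcal{R}_k$ with $\mathbb{M}(\partial T)<\infty$ (as in Simon or Federer), not the polyhedral approximation Theorem~\ref{t:polyapprox} quoted later in the paper, which is stated for $T\in\mathcal{I}_k$ and would be circular here.
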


As a corollary of Federer and Fleming closure theorem we can prove existence of a solution of the generalized Plateau's problem in the class of integral currents\footnote{It is fair to remark that in the case of hypersurfaces Hardt and Pitts solved the Plateau's problem for integral currents without employing the closure theorem, using instead basic facts about $BV$ functions in $\R^d$ and normal currents, see [\ref{HardtPitts}].}; it will be the main object of study in Chapter $2$ to investigate how \textit{regular} a solution is.

\begin{theorem}[Generalized Plateau's solution]\label{t:plateausolution}
Given $S \in \mathcal{I}_k(\R^d)$ with $\partial S=0$, there exists a $(k+1)$-dimensional integral current $T_1$ such that $\partial T_1=S$ and $\mathbb{M}(T_1)$ is minimized among all integral currents $T \in \mathcal{I}_{k+1}(\R^d)$ satisfying $\partial T = S$. We call such a solution an \emph{area-minimizing integral current}.
\end{theorem}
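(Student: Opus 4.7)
The plan is to apply the direct method in the calculus of variations, leveraging three ingredients already established in the preceding material: the compactness guaranteed by the closure theorem (Theorem \ref{t:closure}), the continuity of the boundary operator, and the lower semicontinuity of the mass (both recorded in Remark \ref{r:lsc}). The argument proceeds in four steps: (i) exhibit at least one competitor so that the infimum is finite; (ii) extract a subsequential limit from any minimizing sequence; (iii) verify that the limit satisfies the boundary constraint; and (iv) verify its minimality by semicontinuity.

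For step (i), I would first observe that $S \in \mathcal{I}_k(\R^d)$ has support contained in some closed ball $\overline{B_R}$ (one can reduce to this case because a rectifiable current of finite mass may be intersected with a large enough ball without affecting the hypothesis $\partial S = 0$ once $R$ is large). Choosing any vertex $x_0 \in \overline{B_R}$, the cone $C := x_0 \cone S$ is a standard construction producing an integral $(k+1)$-current satisfying
\[
\partial C = S - x_0 \cone \partial S = S
\]
(using $\partial S = 0$), with finite mass controlled by $R \cdot \mathbb{M}(S)$ up to a dimensional constant. Consequently the competitor class is nonempty and
\[
\mu := \inf\bigl\{\mathbb{M}(T) : T \in \mathcal{I}_{k+1}(\R^d),\ \partial T = S\bigr\} < +\infty.
\]

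For step (ii), I pick a minimizing sequence $(T_n)_n \subset \mathcal{I}_{k+1}(\R^d)$ with $\partial T_n = S$ and $\mathbb{M}(T_n) \to \mu$. For $n$ large enough $\mathbb{M}(T_n) \leq \mu + 1$, while $\mathbb{M}(\partial T_n) = \mathbb{M}(S)$ is constant in $n$, so
\[
\sup_n\bigl(\mathbb{M}(T_n) + \mathbb{M}(\partial T_n)\bigr) < +\infty.
\]
Thus the hypotheses of the closure theorem (Theorem \ref{t:closure}) are met on $U = \R^d$, yielding a (non-relabeled) subsequence that converges in the sense of currents to some $T \in \mathcal{I}_{k+1}(\R^d)$.

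For steps (iii) and (iv), continuity of the boundary operator (Remark \ref{r:lsc}(1)) gives $\partial T = \lim_n \partial T_n = S$, so $T$ is admissible; lower semicontinuity of the mass (Remark \ref{r:lsc}(2)) gives $\mathbb{M}(T) \leq \liminf_n \mathbb{M}(T_n) = \mu$. Combined with the trivial bound $\mathbb{M}(T) \geq \mu$, this forces $\mathbb{M}(T) = \mu$, so $T$ is the desired area-minimizing integral current. The only step whose proof is not already available in the preceding exposition is the cone construction used to exhibit a competitor; this is the main (mild) obstacle, since all the deep content of the argument is absorbed into Theorem \ref{t:closure}, whose proof is itself highly nontrivial.
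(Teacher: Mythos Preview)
Your proof is correct and follows essentially the same approach as the paper: both use the cone construction to show the infimum is finite, apply the closure theorem (Theorem \ref{t:closure}) to extract a convergent subsequence from a minimizing sequence, and then invoke Remark \ref{r:lsc} for continuity of the boundary and lower semicontinuity of the mass. The paper's proof is terser (it cites the cone construction from Simon's book rather than sketching it), but the logical skeleton is identical.
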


\begin{proof}
Let $$m:= \operatorname{inf}\{\mathbb{M}(T) \,|\,  T \in \mathcal{I}_{k+1}(\R^d) : \partial T= S \}$$ and let $(T_n)_n$ be a minimizing sequence. Note that $m$ is finite, due to the \textit{cone construction}, see [\ref{Simonbook}, equation 26.26]. Since $\mathbb{M}(T_n)$ is bounded and $\mathbb{M}(\partial T_n)$ is constant, we can apply Theorem \ref{t:closure} to the sequence $(T_n)_n$ getting a subsequence converging in the sense of currents to $T_1 \in \mathcal{I}_{k+1}(\R^d)$. 

By Remark \ref{r:lsc} we get $$\partial T_1=S \,\,\text{ and }\,\, \mathbb{M}(T_1) \le m,$$ concluding the proof.
\end{proof}

We end this subsection with the following important compactness theorem of area-minimizing currents:
\begin{theorem}\label{t:compactnessforam}{\normalfont [\ref{Simonbook}, Theorem 34.5]}
Suppose $(T_j)_j$ is a sequence of area-minimizing $k$-currents in $U\subset \R^d$ with respect to their own boundaries $ (\partial T_j)_j$ such that $$\sup_{j}\big(\mathbb{M}(T_j) + \mathbb{M}(\partial T_j)\big) < \infty,$$ and suppose that $T_j $ converges to $T$ in the sense of currents for some $T \in \mathcal{D}_k(U)$. Then $T$ is area-minimizing in $U$.
\end{theorem}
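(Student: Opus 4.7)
The plan is to fix an arbitrary competitor $S\in \mathcal{I}_k(U)$ for $T$, meaning $\partial S=\partial T$ with $\mathrm{supp}(S-T)$ a compact subset of $U$, and to deduce $\mathbb{M}(T)\le \mathbb{M}(S)$ by deforming $S$ into an admissible competitor $X_j$ for each $T_j$ and invoking the area-minimality of $T_j$.

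First, by the closure theorem (Theorem~\ref{t:closure}) applied to $(T_j)_j$, the uniform bounds on $\mathbb{M}(T_j)+\mathbb{M}(\partial T_j)$ together with $T_j \to T$ in the sense of currents already imply $T\in \mathcal{I}_k(U)$; by the continuity of the boundary operator (Remark~\ref{r:lsc}), one also has $\partial T_j \to \partial T$. The central technical step is to produce, for each $j$, a pair of integral currents $A_j\in \mathcal{I}_k(U)$ and $B_j\in \mathcal{I}_{k+1}(U)$, both supported in a fixed compact subset of $U$ that contains $\mathrm{supp}(S-T)$, such that
\[
T_j - T = A_j + \partial B_j, \qquad \mathbb{M}(A_j) + \mathbb{M}(B_j) \to 0 \ \text{ as } j\to\infty.
\]
This is the upgrade from weak to \emph{flat} convergence for integral currents with uniform bounds on mass and mass of boundary, and it rests on the Federer--Fleming deformation theorem (together with a cut-off to localize the fillings). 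Applying $\partial$ to the decomposition gives, in particular, $\partial A_j = \partial T_j - \partial T$.

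Given such $A_j$, I would set $X_j := S + A_j \in \mathcal{I}_k(U)$. Then $\partial X_j = \partial S + \partial A_j = \partial T + (\partial T_j - \partial T) = \partial T_j$, and $X_j - T_j = (S-T) - \partial B_j$ is still compactly supported in $U$, so $X_j$ is an admissible competitor for $T_j$ with respect to its own boundary. The area-minimality of $T_j$ and the subadditivity of the mass yield $\mathbb{M}(T_j) \le \mathbb{M}(X_j) \le \mathbb{M}(S) + \mathbb{M}(A_j)$. Passing to the liminf and using lower semicontinuity of the mass together with $\mathbb{M}(A_j)\to 0$:
\[
\mathbb{M}(T) \le \liminf_{j\to\infty} \mathbb{M}(T_j) \le \mathbb{M}(S).
\]
As $S$ was arbitrary, this is exactly the area-minimality of $T$ in $U$.

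I expect the main obstacle to be the construction of the fillings $A_j, B_j$ with the two simultaneous requirements of vanishing mass and localization in a fixed compact subset of $U$. Weak convergence alone gives no control on $\mathbb{M}(T_j-T)$, so the deformation theorem is needed to extract the flat decomposition; in addition, one typically restricts to a generic large ball $B_r\Subset U$ on whose boundary the slices $\langle T_j-T, d_x, r\rangle$ have controlled (vanishing along a subsequence) mass, in order to confine $A_j$ and $B_j$ inside $\overline{B_r}$ and thereby guarantee that the competitor $X_j$ is admissible in the local sense demanded by the definition of area-minimality in $U$.
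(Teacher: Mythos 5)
The paper cites this result from Simon's book without providing its own proof, so there is no in-paper argument to compare against; what you propose is, in spirit, the standard argument. Your overall scheme --- transfer a competitor $S$ for $T$ to a competitor $X_j$ for $T_j$ with a vanishing mass defect, invoke minimality of $T_j$, conclude via lower semicontinuity --- is the right one.

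There is, however, a genuine gap in the ``central technical step'' as you formulate it. If $A_j$ and $B_j$ are both supported in a fixed compact $K \subset U$ and $T_j - T = A_j + \partial B_j$ holds as an identity of currents on $U$, then $T_j - T$ is itself supported in $K$, forcing $T_j = T$ on $U \setminus K$ for every $j$; nothing in the hypotheses gives this, since the $T_j$ need not have supports contained in any fixed compact. The slicing you mention at the end is therefore not a refinement of the flat decomposition but is where it actually lives. The available statement is: for a.e.\ $r$ with $\operatorname{supp}(S-T) \subset B_r \subset\joinrel\subset U$, and along a subsequence chosen so that the slices of $T_j$ on $\partial B_r$ have bounded mass and a weak-$*$ limit of $\|T_j\|$ gives no mass to $\partial B_r$, one has
$$T_j \res B_r - T \res B_r = A_j + \partial B_j,$$
with $A_j, B_j$ integral, supported in $\overline{B_r}$, and $\mathbb{M}(A_j) + \mathbb{M}(B_j) \to 0$ --- this is the equivalence of weak and flat convergence for uniformly bounded integral currents with a common compact support (cf.\ the remark after Definition~\ref{d:flatnorm1}). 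But with only this restricted decomposition your competitor $X_j := S + A_j$ no longer has the right boundary: now $\partial A_j = \partial(T_j \res B_r) - \partial(T \res B_r)$, which differs from $\partial T_j - \partial T$ by slice terms on $\partial B_r$ and by the restriction of $\partial T_j - \partial T$ to $U\setminus B_r$. The correct competitor is $X_j := T_j + (S - T) - \partial B_j$, which equals $T_j \res (U \setminus B_r) + S \res B_r + A_j$: then $\partial X_j = \partial T_j$, $X_j - T_j = (S-T) - \partial B_j$ is supported in $\overline{B_r}$, and comparing masses in $B_r$ gives $\mathbb{M}(T_j \res B_r) \le \mathbb{M}(S \res B_r) + \mathbb{M}(A_j)$, whence $\mathbb{M}(T \res B_r) \le \mathbb{M}(S \res B_r)$ by lower semicontinuity. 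Your two writings $X_j = S + A_j$ and $X_j = T_j + (S-T) - \partial B_j$ coincide only under the unavailable global decomposition.
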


\subsubsection{Further topics on currents}

As it always happens in mathematics, different constructions may lead to the definition of the same object. This is the case, for instance, for the definition of Sobolev spaces, that can be equivalently defined as a particularization of the abstract point of view of the theory of distributions or by a completion procedure with respect to a suitable norm. Analogous is the case for the theory of currents.

\begin{definition}
A $k$-dimensional \textit{polyhedral} current (or $k$-polyhedral chain) is a $k$-current $P$ of the form  
\begin{equation}\label{ee:poly}
P:=\sum_{i=1}^N\theta_i\llbracket \sigma_i\rrbracket,
\end{equation}
where $\theta_i\in\R\setminus\{0\}$, $\sigma_i$ are nontrivial $k$-dimensional simplexes in $\R^d$ with disjoint relative interiors and oriented by constant $k$-vectors $\tau_i$ such that $\llbracket \sigma_i \rrbracket=\llbracket\sigma_i,\tau_i, 1 \rrbracket$ is the multiplicity-one rectifiable current naturally associated to the simplex $\sigma_i$. 
The vector space of polyhedral $k$-currents with support in $U \subset \R^d$ is denoted by $\mathbb{P}_k(U)$. A polyhedral current with integer coefficients $\theta_i$ is called \textit{integer polyhedral}.
\end{definition}

We now introduce a very useful norm on the space of currents: the \textit{flat norm}.

\begin{definition}\label{d:flatnorm1}
If $T$ is a $k$-current then its \textit{flat norm} is $$\Flat(T):=\inf \left\{\mathbb{M}(R)+\mathbb{M}(S): T=R+\partial S\right\}$$ where $R \in \mathcal{D}_k(U)$ and $S \in \mathcal{D}_{k+1}(U)$.
\end{definition}

\begin{remark}
It is immediate to see that the flat norm induces a coarser topology than the one induced by the mass norm. It is important to recall that if $(T_n)_n$ is a sequence of $k$-currents such that $\Flat(T_n - T) \rightarrow 0$ for some $T \in \mathcal{D}_k(U),$ then $T \stackrel{\ast}{\rightharpoonup} T$. The converse implication is also true, provided there exists $K$ compact set such that $\operatorname{supp}(T_n) \subset K$ for all $n$ and $T_n,T \in \mathbb{N}_k(U)$ with a uniform bound $\sup_n(\mathbb{M}(T_n) + \mathbb{M}(\partial T_n)) < \infty.$ Hence, the flat norm metrizes the dual topology of (normal, with uniform bound on their masses and the ones of their boundaries) currents: this may be regarded as the analogous of the classical fact that $L^1$ convergence and distributional convergence are equivalent for equibounded sequences of Sobolev $W^{1,1}$ functions.
\end{remark}

Denote by $\text{int}(K)$ the interior of the set $K$. The main motivation for introducing polyhedral chains and the flat norm is the following theorem, the proof of which relies on deeper machinery.


\begin{theorem}[Polyhedral approximation]\label{t:polyapprox}{\normalfont [\ref{Federerbook}, 4.2.21]}
If $T \in \mathcal{I}_k(\mathbb{R}^d),$ $\varepsilon>0$, $K \subset \mathbb{R}^d$ is a compact set such that $\text{supp}(T) \subset \text{int}\, K$, then there exists $P \in \mathbb{P}_k(\mathbb{R}^d)$, with $\text{supp}(P) \subset K$, such that
$$
\mathbb{F}_K(T-P)<\varepsilon, \quad \mathbb{M}(P) \leq \mathbb{M}(T)+\varepsilon, \quad \mathbb{M}(\partial P) \leq \mathbb{M}(\partial T)+\varepsilon. $$ Moreover, $P$ can be taken with integer coefficients. \end{theorem}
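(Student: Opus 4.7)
The plan is to reduce the statement to the Federer--Fleming Deformation Theorem, the content of which I would recall as follows. Given $\eta > 0$ and the standard cubical grid $G_\eta$ of mesh $\eta$ in $\R^d$, any $T \in \mathcal{I}_k(\R^d)$ can be decomposed as
\[
T = P_0 + \partial Q + R,
\]
where $P_0 \in \mathbb{P}_k(\R^d)$ has integer coefficients and is supported on the $k$-skeleton of $G_\eta$, $Q \in \mathcal{I}_{k+1}(\R^d)$, $R \in \mathcal{I}_k(\R^d)$, and there is a dimensional constant $c = c(d) > 1$ such that $\Mass(Q) \le c\eta\,\Mass(T)$, $\Mass(R) \le c\eta\,\Mass(\partial T)$, $\Mass(P_0) \le c\,\Mass(T)$, $\Mass(\partial P_0) \le c\,\Mass(\partial T)$. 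Choosing $\eta$ small enough that the $\sqrt{d}\,\eta$-neighborhood of $\supp(T)$ sits inside $\Int(K)$ forces $\supp(P_0) \subset K$, and taking $\eta$ still smaller bounds the flat distance $\Flat_K(T - P_0) \le \Mass(Q) + \Mass(R) < \eps$.

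The issue is that the Deformation Theorem yields only the multiplicative mass bound $\Mass(P_0) \le c\,\Mass(T)$, whereas the statement requires the additive bound $\Mass(P) \le \Mass(T) + \eps$. To sharpen this, I would exploit the rectifiable structure $T = \a{E, \tau_E, \theta}$, with $\theta$ integer-valued. Using $\Haus^k$-a.e.\ existence of approximate tangent planes together with Lusin's Approximation Theorem, I decompose $E$, up to a small exceptional set, into finitely many pairwise disjoint compact pieces $E_1, \ldots, E_N$ on each of which $\theta$ equals a constant integer $\theta_j$ and $E_j$ lies inside a Lipschitz graph, with small Lipschitz constant, over an affine $k$-plane $V_j$; the excluded part satisfies $\Mass(T \res (E \setminus \bigcup_j E_j)) < \eps/(4c)$. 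On each nearly-planar piece $T_j := T \res E_j$, I apply the Deformation Theorem with respect to a cubical grid aligned with $V_j$ at a scale $\eta_j$ so fine that the resulting polyhedral piece $P_j$ satisfies $\Mass(P_j) \le (1 + \delta_j)\Mass(T_j)$ with $\sum_j \delta_j \Mass(T_j) < \eps/4$. This is possible because for a piece of the plane $V_j$ itself the retraction onto its grid-aligned $k$-skeleton preserves mass up to lower-dimensional boundary effects, and the Lipschitz graph structure transfers the estimate to $T_j$. The exceptional part is deformed by the standard theorem, contributing at most $c \cdot \eps/(4c) = \eps/4$ in mass. Setting $P := \sum_j P_j + P_{\mathrm{exc}}$ yields an integer polyhedral chain with $\Mass(P) \le \Mass(T) + \eps$; the analogous decomposition applied to $\partial T$ (integral by the boundary rectifiability theorem) gives $\Mass(\partial P) \le \Mass(\partial T) + \eps$, and the flat bound follows by summing the local error identities $T_j - P_j = \partial Q_j + R_j$.

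The hard part will be the additive mass estimate. The constant $c$ in the Deformation Theorem reflects the maximum Jacobian expansion of the radial retraction pushing a cube onto its $k$-skeleton, and it genuinely exceeds $1$; eliminating it requires really using the near-planarity of $T$ on large portions of $E$ and aligning the grid with the approximate tangent plane on each piece. Bookkeeping the errors so that they accumulate to $\eps$ rather than $c\eps$, and simultaneously controlling $\Mass(P)$ and $\Mass(\partial P)$---so that the aligned grids chosen to sharpen the estimate for $T$ are also compatible with a sharp estimate for $\partial T$, which has its own rectifiable decomposition---is the technical core of the argument, and is what makes the full proof (carried out in detail in \emph{[Federerbook, 4.2.21]}) considerably more delicate than the plain Deformation Theorem.
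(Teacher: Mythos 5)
The paper does not actually prove Theorem~\ref{t:polyapprox}: it only cites [\ref{Federerbook}, 4.2.21] and remarks that the proof relies on deeper machinery, so your sketch can only be assessed on its own terms. Your overall strategy---reduce by the rectifiability of $T$ to nearly planar Lipschitz-graph pieces $T_j = T\res E_j$ with constant integer multiplicity, and deform each with a fine grid aligned to the approximate tangent plane $V_j$ so that the mass dilation factor drops from the dimensional constant $c$ to $1+\delta_j$---is the right heuristic, and you correctly identify why the plain Deformation Theorem is insufficient.

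The genuine gap is in the control of $\Mass(\partial P)$. Setting $P=\sum_j P_j + P_{\mathrm{exc}}$, where each $P_j$ is produced by deforming $T_j$ onto its own $V_j$-aligned grid, destroys the cancellation of the internal boundaries. The current $\partial T_j=\partial(T\res E_j)$ consists of $(\partial T)\res E_j$ together with the slice of $T$ along $\partial E_j$; the slice parts for adjacent pieces $E_j,E_{j'}$ cancel in $\sum_j \partial T_j=\partial T$, but after pushing $T_j$ and $T_{j'}$ onto two different, non-matching tilted grids, $\partial P_j + \partial P_{j'}$ no longer cancels along the shared interface. Consequently $\Mass(\partial P)$ is a priori of the order of $\sum_j \Mass(\partial T_j)$, which is dominated by the total slice mass along the interfaces; this quantity is finite (by coarea, one can choose the cuts so that each slice has finite mass), but it is \emph{not} small and in particular is not bounded by $\Mass(\partial T)+\eps$ --- indeed, if the pieces are chosen fine enough to make the $\delta_j$ small, the total interface measure typically grows. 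Your last paragraph gestures at coordinating the grid choice with the rectifiable structure of $\partial T$, but the true obstruction is the mismatch between adjacent $P_j$'s along interior interfaces, not the alignment with the tangent planes of $\partial T$: no choice of grid for $\partial T$ alone makes $\partial P_j$ and $\partial P_{j'}$ fit together once $P_j$ and $P_{j'}$ live on different complexes. Until the pieces are glued so that the interface slices cancel after deformation (which in Federer's treatment requires substantially more bookkeeping than independent per-piece deformation), the estimate $\Mass(\partial P)\le \Mass(\partial T)+\eps$ is not established, and in fact the bound $\Mass(P)\le\Mass(T)+\eps$ is also in question because the per-piece flat identities $T_j-P_j=\partial Q_j+R_j$ force a trade-off between the fineness needed for $1+\delta_j$ and the growth of the uncontrolled interface terms.
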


It is useful to work with currents with support in a compact set. Let $K$ be a compact set such that $K \subset U \subset \R^d$. It is customary to introduce the notations such as $\mathcal{D}_k(K)$ for the class of $k$-currents with support in $K$, \textit{i.e.} $$\mathcal{D}_k(K):=\{T \in \mathcal{D}_k(U) \, | \, \operatorname{supp}(T) \subset K\}$$ and $\mathcal{D}_{k,cpt}(U)$ as the class of all such compactly supported currents, \textit{i.e.} $$\mathcal{D}_{k,\text{cpt}}(U):=\cup \{T \in \mathcal{D}_k(K) \, | \, K \subset U, K \text{ compact} \}.$$

Analogously we do for all previously defined classes of currents and we will denote by $\Flat_K$ the flat norm of a current $T \in \mathcal{D}_k(K)$.

We briefly mention the notion of \textit{flat chains}, which originated in the work of Whitney [\ref{Whitneybook}].

\begin{definition}
Let $U \subset \mathbb{R}^d$ be an open set and let $K \subset U$ be a compact set. The class of \textit{integral flat chains} supported in $K$ is defined by
$$
\mathscr{F}_{k}(K):=\left\{R+\partial S \mid R \in \mathcal{R}_{k}(K), S \in \mathcal{R}_{k+1}(K)\right\}.
$$
\end{definition}

\begin{proposition}
We have the following results: \begin{enumerate}
\item $\mathscr{F}_{k}(K)$ is a complete metric space with respect to the metric induced by $\Flat_K$;
\item The set of $k$-integral currents with support in $K$ is $\mathbb{F}_K$-dense in $\mathscr{F}_{k}(K)$\footnote{In fact, the same result holds more in general whenever the ambient space is a closed convex subset $C$ of a Banach space, working with the general theory of currents in metric spaces introduced by Ambrosio and Kirchheim in [\ref{AKmetric}], inspired by one of the last ideas by De Giorgi [\ref{Degiorgiplateau}].}.
\end{enumerate}
\end{proposition}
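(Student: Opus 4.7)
The two claims rest on a single backbone: writing elements of $\mathscr{F}_k(K)$ as $R+\partial S$ with $R\in\mathcal{R}_k(K)$ and $S\in\mathcal{R}_{k+1}(K)$ reduces both completeness and density to (i) the mass-norm completeness of $\mathcal{R}_\ell(K)$ for $\ell=k,k+1$ and (ii) an approximation of integer rectifiable currents by integer polyhedral chains in the flat norm, strengthening Theorem~\ref{t:polyapprox}.

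For completeness, let $(T_n)\subset\mathscr{F}_k(K)$ be $\Flat_K$-Cauchy and pass to a subsequence with $\Flat_K(T_{n+1}-T_n)<2^{-n-1}$. By a quantitative form of the flat norm on $\mathscr{F}_k(K)$ (obtained from the deformation theorem: in the infimum defining $\Flat_K$ one may restrict to integer rectifiable competitors, losing only a universal factor), find $T_{n+1}-T_n=R_n+\partial S_n$ with $R_n\in\mathcal{R}_k(K)$, $S_n\in\mathcal{R}_{k+1}(K)$ and $\mathbb{M}(R_n)+\mathbb{M}(S_n)<C\cdot 2^{-n}$. Since $\mathcal{R}_\ell(K)$ is complete under the mass norm --- rectifiable currents correspond to $\Lambda_\ell$-valued Radon measures of finite total variation, concentrated on a rectifiable set with integer density, all properties preserved under mass limits --- the series $R^\infty:=\sum_n R_n$ and $S^\infty:=\sum_n S_n$ converge in mass. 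The telescoping $T_n=T_1+\sum_{j<n}(R_j+\partial S_j)$ then converges in flat norm to $T_1+R^\infty+\partial S^\infty\in\mathscr{F}_k(K)$.

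For density, fix $T=R+\partial S\in\mathscr{F}_k(K)$ and $\varepsilon>0$. By the flat-norm density of integer polyhedral chains in $\mathcal{R}_\ell(K)$, pick integer polyhedral $P\in\mathbb{P}_k$ and $Q\in\mathbb{P}_{k+1}$ with $\Flat_K(R-P)<\varepsilon/2$ and $\Flat_K(S-Q)<\varepsilon/2$. Set $T_\varepsilon:=P+\partial Q$: this is an integer polyhedral chain, hence integer rectifiable with polyhedral (finite-mass) boundary $\partial P$, so $T_\varepsilon\in\mathcal{I}_k(K)$. Using the trivial estimate $\Flat_K(\partial U)\leq\Flat_K(U)$ (obtained in the defining infimum by taking $R''=0$, $S''=U$),
$$\Flat_K(T-T_\varepsilon)\leq\Flat_K(R-P)+\Flat_K(\partial(S-Q))\leq\Flat_K(R-P)+\Flat_K(S-Q)<\varepsilon.$$

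The genuine difficulty in both parts is the auxiliary statement (ii): the density of integer polyhedral chains inside $\mathcal{R}_\ell(K)$ under the flat norm. Theorem~\ref{t:polyapprox} as stated here covers only integral currents, so one must first remove the finite-mass-boundary assumption. The standard route is a Lipschitz slicing argument --- one cuts $R$ by level sets of a proper Lipschitz function and uses the slicing identity for mass and boundary to convert $R$ into an integral current at small flat cost --- after which Theorem~\ref{t:polyapprox} applies. This slicing-plus-deformation argument is precisely the ``deeper machinery'' alluded to in the text just before Theorem~\ref{t:polyapprox}, and it is also what legitimates the quantitative form of $\Flat_K$ used in the completeness argument.
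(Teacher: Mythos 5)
Your overall architecture---telescoping series over mass-complete $\mathcal{R}_\ell(K)$ for completeness, and polyhedral approximation of $R$ and $S$ separately for density---is the standard one, and you have correctly pinpointed the crux: the comparison between $\Flat_K$ and the ``integral flat norm'' $\inf\{\mathbb{M}(R)+\mathbb{M}(S): T=R+\partial S,\ R\in\mathcal{R}_k(K),\ S\in\mathcal{R}_{k+1}(K)\}$ on $\mathscr{F}_k(K)$ (in fact this is an \emph{equality}, [\ref{Federerbook}, 4.1.12], not merely comparability up to a constant, although either suffices for your telescoping argument). Both that equality and the polyhedral approximation inside $\mathcal{R}_\ell(K)$ rest on the Federer--Fleming deformation theorem, which is indeed the ``deeper machinery'' alluded to before Theorem~\ref{t:polyapprox}; it is fair to cite it as a black box.

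The sketched reduction from $\mathcal{R}_\ell(K)$ to $\mathcal{I}_\ell(K)$ via ``Lipschitz slicing,'' however, does not work as written. The slicing identity $\partial(R\res\{f\le y\})=(\partial R)\res\{f\le y\}+\langle R,f,y\rangle$ (Proposition~\ref{p:simonslice}) is proved under the hypothesis $R\in\mathbb{N}_k$, i.e.\ it already assumes $\mathbb{M}(\partial R)<\infty$, which is exactly what you are trying to produce. If $\mathbb{M}(\partial R)=\infty$ then $(\partial R)\res\{f\le y\}$ is not even a current, and $R\res\{f\le y\}$ need not have finite boundary mass. Concretely, take $R=\sum_j\llbracket[a_j,b_j]\rrbracket\in\mathcal{R}_1([0,1])$ with $a_j=q_j$ an enumeration of $\mathbb{Q}\cap[0,1]$ and $b_j=\min\{a_j+2^{-j},1\}$: the total length is finite, but every nondegenerate subinterval of $[0,1]$ contains infinitely many whole segments, so for \emph{every} ambient Lipschitz $f$ and \emph{every} $y$ the restriction $R\res\{f\le y\}$ has infinite boundary mass. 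The genuine reduction first decomposes $E$ via the structure theorem into countably many pieces $E_j$ carried by $C^1$ submanifolds $\Sigma_j$ with $\sum_j\int_{E_j}|\theta|\,d\mathcal{H}^\ell=\mathbb{M}(R)$, truncates to finitely many $j$, and only then slices \emph{inside each $\Sigma_j$} (e.g.\ level sets of the multiplicity by the coarea formula) to produce finite-perimeter pieces; Theorem~\ref{t:polyapprox} enters only at that stage. Finally, a small slip: your justification of $\Flat_K(\partial U)\le\Flat_K(U)$ (take $R''=0,\ S''=U$) only yields $\Flat_K(\partial U)\le\mathbb{M}(U)$. For the stated inequality take any decomposition $U=A+\partial B$, observe $\partial U=\partial A$ so that $\Flat_K(\partial U)\le\mathbb{M}(A)\le\mathbb{M}(A)+\mathbb{M}(B)$, and pass to the infimum over $A,B$.
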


Consequently $\mathscr{F}_{k}(K)$ can be regarded as the $\mathbb{F}_K$-completion of the space of integral currents with compact supports in $K$. Moreover one can note that the mass $\mathbb{M}$ is lower semicontinuous with respect to the $\Flat_K$-convergence in $\mathscr{F}_{k}(K)$. We define now the space of (real) $k$-\textit{flat chain}, denoted as $\mathbf{F}_k$.

\begin{definition}
We say that $T$ belongs to $\mathbf{F}_{k}(K)$ if there exists a sequence of normal currents $T_n \in \mathbb{N}_{k}(K)$ such that $\Flat_K\left(T-T_n\right) \rightarrow 0$. In other words we can write:
$$\mathbf{F}_{k}(K) := \overline{\mathbb{N}_{k}(K)}^{\Flat_K}.$$
\end{definition}

\begin{remark}
Note that by \ref{t:polyapprox} we get that:  $$\mathbf{F}_{k}(K) = \overline{\mathbb{N}_{k}(K)}^{\Flat_K} = \overline{\mathbb{P}_{k}(K)}^{\Flat_K}.$$ 

\end{remark}

In the spirit of operations that are usually done with manifolds, we conclude this section defining some operations on currents that will be useful in the sequel.

%

\begin{definition}
If $f: U \subset \mathbb{R}^d \rightarrow V \subset \mathbb{R}^{m}$ is a smooth, proper\footnote{We define a function $f:X\rightarrow Y$ between two topological spaces \textit{proper} if the preimage of every compact set in $Y$ is compact in $X$.} map, then it is possible to define the \textit{push-forward} (or \textit{image}) of a $k$-current $T$ on $U \subset \mathbb{R}^d$ as the $k$-current $f_{*} T$ on $\mathbb{R}^{m}$ defined by
$$
\left\langle f_{*} T , \omega\right\rangle:=\left\langle T , f^{*} \omega\right\rangle,
$$
for any $\omega \in \mathcal{D}^k\left(V\right)$. Note that the boundary operator commutes with the push-forward operation, that is $\partial\left(f_{*} T\right)=$ $f_{*}(\partial T)$ and that $\operatorname{supp}(f_{*}T) \subset f (\operatorname{supp}(T))$.

\end{definition}

In general there is no natural definition for the notion of intersection of two currents, since even for the intersection theory for smooth manifolds some ``safety" conditions are required. However, it is possible to define the intersection of a normal $k$-current $T$ and a level set $f^{-1}(y)$ of a smooth map $f: \mathbb{R}^d \rightarrow \mathbb{R}^m$ (with $k \leq m \leq d$ ) for almost every $y$, resulting in a normal current $T_y$ with the expected dimension $m-k$. This operation is called \textit{slicing} and to define it properly we need to recall two important results.

\begin{theorem}[Sard theorem]
Let $f: \mathbb{R}^d \rightarrow \mathbb{R}^m$ be of class $\mathcal{C}^k$ for some $k \geq \max \{d-m, 1\}$. Denote by
$$
C_f:=\left\{x \in \mathbb{R}^d: \operatorname{rank}(D f(x))<m\right\}
$$
the set of critical points of $f$. Then $\mathcal{L}^m(f(C_f))=0$, that is the set of critical values of $f$ is of null $\mathcal{L}^m$-measure.
\end{theorem}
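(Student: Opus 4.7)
The plan is to use the classical stratification-plus-Taylor argument, proceeding by induction on the domain dimension $d$. By countable subadditivity of $\mathcal{L}^m$ it suffices to prove $\mathcal{L}^m(f(C_f\cap Q))=0$ for a fixed compact cube $Q\subset\R^d$. The case $d<m$ is trivial: every point of $\R^d$ is critical, but $f$ is locally Lipschitz (being $\mathcal{C}^1$) and a standard covering estimate shows that the image of a bounded set of Hausdorff dimension at most $d<m$ has $\mathcal{L}^m$-measure zero. From now on I assume $d\ge m$ and argue by induction on $d$, the case $d=0$ being vacuous.

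For the inductive step I stratify the critical set as $C_f=C_0\supset C_1\supset\cdots\supset C_N$, where
$$
C_i:=\{x\in Q:D^j f(x)=0\text{ for all }1\le j\le i\},
$$
and split $f(C_f)$ into the pieces $f(C_{i-1}\setminus C_i)$ for $1\le i\le N$ together with $f(C_N)$, for a depth $N$ to be chosen. On $C_{i-1}\setminus C_i$, at any point some $(i-1)$-th order mixed partial derivative $g$ of some component of $f$ vanishes with nonzero gradient, so by the implicit function theorem the zero set $\{g=0\}$ is locally a $\mathcal{C}^{k-i+1}$ hypersurface $\Sigma$ containing $C_{i-1}$; applying the inductive hypothesis to the restriction of $f$ to a chart of $\Sigma$ (a map from an open set of $\R^{d-1}$ into $\R^m$) and covering $C_{i-1}\setminus C_i$ by countably many such neighborhoods yields $\mathcal{L}^m(f(C_{i-1}\setminus C_i))=0$.

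It remains to bound $f(C_N)$. Taking $N+1\le k$ so that $f\in\mathcal{C}^{N+1}$, Taylor's formula together with the vanishing of $D^jf$ on $C_N$ for $1\le j\le N$ gives
$$
|f(y)-f(x)|\le C\,|y-x|^{N+1}\qquad\text{for every }x\in C_N,\ y\in Q\cap B(x,1).
$$
Covering $Q$ by $\mathcal{O}(r^{-d})$ cubes of side $r$, each cube meeting $C_N$ has image of diameter $\lesssim r^{N+1}$ and hence $\mathcal{L}^m$-measure $\lesssim r^{m(N+1)}$; summing yields $\mathcal{L}^m(f(C_N))\lesssim r^{\,m(N+1)-d}\to 0$ as $r\to 0^+$ provided $m(N+1)>d$, i.e., $N+1>d/m$. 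The main obstacle I anticipate is the bookkeeping between the regularity consumed at each inductive step (one derivative per unit drop in $d$) and the Taylor exponent required at the bottom of the induction: balancing these two demands is exactly what forces the sharp smoothness threshold, and an explicit check of the combinatorics must confirm compatibility with the assumed condition $k\ge\max\{d-m,1\}$.
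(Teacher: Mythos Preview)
The paper does not supply a proof of Sard's theorem; it is quoted as a classical preliminary result without argument. Your outline follows the standard Morse--Sard stratification (essentially Milnor's presentation), and the Taylor-plus-covering step for the deepest stratum $C_N$ is correct in spirit.

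There is, however, a genuine gap in your treatment of the top stratum. For $i=1$ your claim reads: on $C_0\setminus C_1=C_f\setminus C_1$, some $(i-1)$-th order partial---that is, a component $f_j$ of $f$ itself---``vanishes with nonzero gradient''. This is false: points of $C_f\setminus C_1$ satisfy $\mathrm{rank}(Df)<m$ and $Df\neq 0$, but nothing forces any component $f_j$ to vanish, nor does $C_f$ sit inside any level set $\{f_j=0\}$. Even if you could trap $C_f\setminus C_1$ in a hypersurface $\Sigma$, its points need not be critical for the restriction $f|_\Sigma$, precisely because $Df\neq 0$ there; your implicit-function reduction only applies to the strata with $i\geq 2$, where $Df$ vanishes. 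The correct handling of $C_f\setminus C_1$ is a genuinely different step: near a point where some $\partial f_j/\partial x_\ell\neq 0$, one straightens coordinates so that $f$ takes the form $(t,y)\mapsto(t,g(t,y))$ with $g:\R\times\R^{d-1}\to\R^{m-1}$; critical points of $f$ then correspond to critical points of $g(t,\cdot)$ for each fixed $t$, and one concludes via the inductive hypothesis for the pair $(d-1,m-1)$ together with Fubini. This is also the step that makes the regularity bookkeeping close---here $(d,m)$ drops to $(d-1,m-1)$, keeping $d-m$ and the required smoothness constant---and without it the combinatorics you defer at the end cannot be completed.
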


\begin{corollary}\label{c:sard}
Let $0<m \leq k \leq d$. Let $M$ be a smooth $k$-surface in $\mathbb{R}^d$ and $f: \mathbb{R}^d \rightarrow \mathbb{R}^m$ be smooth. Denote $M_y:=M \cap f^{-1}(y)$. Then for $\mathcal{L}^m$-a.e. $y$, $M_y$ is a smooth surface of dimension $k-m$ (or it is empty).
\end{corollary}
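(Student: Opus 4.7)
The plan is to reduce the statement to an application of Sard's theorem to the restriction $g := f|_M$, and then to use the regular value theorem to produce the submanifold structure on $M_y$.

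First I would set $g := f|_M : M \to \mathbb{R}^m$. Since $M$ is a smooth $k$-dimensional submanifold of $\mathbb{R}^d$ and $f$ is smooth on $\mathbb{R}^d$, the restriction $g$ is smooth on $M$. Working in local charts $\varphi : U \subset \mathbb{R}^k \to M$, the composition $g \circ \varphi : U \to \mathbb{R}^m$ is a smooth map between Euclidean spaces, so Sard's theorem applies: the set of its critical values has $\mathcal{L}^m$-measure zero (the required regularity threshold $\max\{k-m,1\}$ in Sard's theorem is trivially met since $g \circ \varphi$ is $\mathcal{C}^{\infty}$). Covering $M$ with countably many such charts (possible because $M$, as a smooth submanifold of $\mathbb{R}^d$, is second countable) and taking the union of the negligible critical-value sets, I conclude that the set $N \subset \mathbb{R}^m$ of critical values of $g$ satisfies $\mathcal{L}^m(N) = 0$.

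Next, I would fix $y \in \mathbb{R}^m \setminus N$, which happens for $\mathcal{L}^m$-a.e. $y$. If $M_y = M \cap f^{-1}(y)$ is empty there is nothing to prove. Otherwise, for every $x \in M_y$, the differential $dg(x) = df(x)|_{T_x M} : T_x M \to \mathbb{R}^m$ is surjective by the definition of regular value. The (manifold version of the) implicit function theorem then yields a neighbourhood $V \subset M$ of $x$ such that $M_y \cap V = g^{-1}(y) \cap V$ is a smooth submanifold of $V$ of dimension $k - m$. Since $x \in M_y$ was arbitrary, $M_y$ is a smooth $(k-m)$-dimensional submanifold of $\mathbb{R}^d$.

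There is no real obstacle in this argument; the only points to be careful about are the notational overlap between the $k$ of Sard's theorem (the order of differentiability) and the $k$ of the corollary (the dimension of $M$), and the fact that Sard's theorem as stated is for maps between Euclidean spaces, so the transfer to the manifold $M$ is done via local parametrizations and a countable union. Both are standard, so the proof reduces to quoting Sard's theorem and the implicit function theorem.
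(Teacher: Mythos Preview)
Your proof is correct and follows exactly the approach the paper intends: the corollary is stated immediately after Sard's theorem without an explicit proof, so the implied argument is precisely what you wrote---apply Sard's theorem to $g=f|_M$ via local charts to get that almost every $y$ is a regular value, then invoke the regular value (implicit function) theorem to obtain the smooth $(k-m)$-dimensional structure on $M_y$. There is nothing to add.
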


In order to extend Corollary \ref{c:sard} where $M$ is replaced by a rectifiable set $E$ and $f$ is Lipschitz we need to recall the following version of the \textit{coarea formula}.

\begin{theorem}[Coarea formula]\label{t:coarea}
Let $E \subset \mathbb{R}^d$ be $k$-rectifiable and $f : \mathbb{R}^d \rightarrow \mathbb{R}^m$ a Lipschitz function. For $y \in \mathbb{R}^m$, denote $E_y:=E \cap f^{-1}(y)$. For $\mathcal{H}^k$-a.e. $x \in E$ denote $D_\tau f(x)$ the tangential gradient of $f$ at $x$ and denote the tangential Jacobian as
$$
J_\tau f(x):=\left|D_\tau f_1(x) \wedge \cdots \wedge D_\tau f_m(x)\right|
$$
Then, for every Borel function $g: E \rightarrow[0,+\infty]$, we have 
\begin{equation}
\int_{\mathbb{R}^m}\left(\int_{E_y} g(x) \,d \mathcal{H}^{k-m}(x)\right) d \mathcal{L}^m(y)=\int_{E} g(x)\, J_\tau f(x) \, d \mathcal{H}^k(x). \end{equation}
\end{theorem}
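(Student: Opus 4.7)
The plan is to prove the coarea formula by progressively reducing to simpler situations: first from a rectifiable set to a smooth submanifold, then from the submanifold to flat $\mathbb{R}^k$ via local parametrizations, and finally to the case of an affine map where the identity becomes Fubini's theorem.

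First, using the characterization of $k$-rectifiable sets recalled in the Remark after Definition \ref{rectifiable}, point (2.2), I would decompose $E = N \cup \bigcup_{j\geq 1} E_j$ with $\mathcal{H}^k(N)=0$ and each $E_j$ a Borel subset of a $C^1$ $k$-submanifold $\Sigma_j$, the $E_j$ pairwise disjoint. Both sides of the claimed identity are countably additive in $E$ (the right-hand side trivially, the left-hand side by monotone convergence applied to $g$ times characteristic functions of the pieces), so it suffices to prove separately that (i) the formula holds when $E$ is a Borel subset of a $C^1$ submanifold and (ii) both sides vanish when $\mathcal{H}^k(E)=0$. For (ii), one must check that for $\mathcal{L}^m$-a.e.\ $y$ the slice $E_y$ is $\mathcal{H}^{k-m}$-null; this follows from the one-sided coarea \emph{inequality} $\int_{\mathbb{R}^m} \mathcal{H}^{k-m}(E\cap f^{-1}(y))\, d\mathcal{L}^m(y) \leq C(m,k)\,\mathrm{Lip}(f)^m\,\mathcal{H}^k(E)$, itself obtained by a direct covering argument using $\mathrm{Lip}(f)$ to control the diameters of slices of small balls.

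For (i), fix a smooth $k$-submanifold $\Sigma$ and a Borel set $A \subset \Sigma$. Choose a countable family of $C^1$ local parametrizations $\varphi_i: U_i \to V_i \subset \Sigma$ with $U_i \subset \mathbb{R}^k$ open, and a disjoint Borel partition of $A$ subordinate to them. By additivity it suffices to prove the formula after pullback by a single $\varphi=\varphi_i$. The area formula gives that $\mathcal{H}^k\res\Sigma$ pulls back to $J\varphi\, d\mathcal{L}^k$ on $U$, while the chain rule combined with the Cauchy--Binet identity yields the pointwise compatibility $(J_\tau f\circ\varphi)\, J\varphi = J(f\circ \varphi)$ on $U$. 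Hence (i) reduces to the classical coarea formula for the Lipschitz map $f\circ\varphi: U \to \mathbb{R}^m$ applied to the Borel integrand $g\circ\varphi$ supported on $\varphi^{-1}(A)$.

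Finally, for the Euclidean coarea formula on $U\subset\mathbb{R}^k$, I would approximate $f\circ\varphi$ in the sense of Lusin by $C^1$ maps (exploiting Rademacher's theorem plus a Whitney extension), then cover $U$ up to an $\mathcal{L}^k$-null set by countably many Borel pieces $U_i$ of small diameter on which the $C^1$ approximation is so close in $C^1$-norm to an affine map $L_i$ that a bilipschitz comparison forces $(1-\varepsilon)$-to-$(1+\varepsilon)$ control between the level-set measures of the two maps and between their Jacobians. For affine $L_i:\mathbb{R}^k\to\mathbb{R}^m$ the identity is Fubini's theorem after an orthogonal change of coordinates that puts $L_i$ in standard diagonal form. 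Sending $\varepsilon\to 0$ and summing over $i$ yields the Euclidean statement, hence (i), and hence the theorem. The main obstacle I expect is the bilipschitz-comparison step: one must show quantitatively that a $C^1$ perturbation of an affine map produces slices whose $\mathcal{H}^{k-m}$-measures differ from those of the affine model by a factor $1+o(1)$ as the domain shrinks, which requires a uniform inverse-function-type control on the level sets and depends crucially on the good pieces from Lusin/Whitney interacting well with a Vitali-type covering of $U$.
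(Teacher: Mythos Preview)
The paper does not prove this theorem: it is stated in the preliminaries section as a classical result, consistent with the authors' declaration at the opening of Chapter~1 that ``most of the proofs will be omitted.'' There is therefore no paper proof to compare against.

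Your outline is the standard route to the coarea formula for rectifiable sets (see e.g.\ Federer, Simon, or Krantz--Parks cited in the paper): reduce to Borel pieces of $C^1$ submanifolds via the structure of rectifiable sets, pull back to $\R^k$ by local parametrizations using the area formula and Cauchy--Binet to match the Jacobians, then handle the Euclidean case by $C^1$/affine approximation and Fubini. The logic is sound, and the two ingredients you flag---the one-sided coarea inequality to dispose of the $\mathcal{H}^k$-null part, and the bilipschitz stability of slice measures under small $C^1$ perturbations---are indeed the technical crux. The latter is usually handled not by a direct inverse-function argument on level sets but by reducing to the case where the map is close to a coordinate projection and applying the area formula on each fiber; either way it requires care but is standard.
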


By Sard theorem and coarea formula we have the following proposition that allows us to define the notion of \textit{slice of a rectifiable current}:

\begin{proposition}[Slicing of rectifiable currents]\label{p:slice}
Let $T=\llbracket E, \tau, \theta \rrbracket$ be a rectifiable $k$-current in $\mathbb{R}^d$, with $\mathbb{M}(T)<\infty$. Let $f: \mathbb{R}^d \rightarrow \mathbb{R}^m$ be a Lipschitz function, with $0<m \leq k \leq d$. Denote
$$
\tilde{E}:=\left\{x \in E: D_\tau f(x) \,\text{ is defined and has rank \textit{m}} \,\right\}.
$$
For $y \in \mathbb{R}^m$, denote $E_y:=E \cap f^{-1}(y)$. Then the following hold true:
\begin{enumerate}
\item $\mathcal{H}^{k-m}(E_y \backslash \tilde{E})=0$, for $\mathcal{L}^m$-a.e. $y$;
\item $E_y$ is $(k-m)$-rectifiable for $\mathcal{L}^m$-a.e. $y$;
\item Denoting $\eta(x):=D_\tau f_1(x) \wedge \cdots \wedge D_\tau f_m(x),$
we have that
$$
T_xE=T_xE_y \oplus \operatorname{Span}\{\eta(x)\},
$$
for $\mathscr{L}^m$-a.e. $y$ and for $\mathcal{H}^{k-m}$-a.e. $x \in E_y$. Hence, for $\mathcal{L}^m$-a.e. $y$, we can define the orientation on $E_y$ as the $(k-m)$-vector $\tilde{\tau}$ such that
$$
\frac{\eta(x)}{|\eta(x)|} \wedge \tilde{\tau}(x)=\tau(x), \quad \text { for } \mathcal{H}^{k-m}\text{-a.e.} \,y \in E_y.
$$
\end{enumerate}
\end{proposition}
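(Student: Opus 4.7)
My plan is to prove the three assertions in order, leveraging the coarea formula (Theorem~\ref{t:coarea}) as the central tool together with a Lusin-type approximation of the Lipschitz map $f$ by $C^1$ maps on the $C^1$-rectifiable pieces of $E$.

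\textbf{Step (1).} I would apply Theorem~\ref{t:coarea} to the Borel function $g := \chi_{E \setminus \tilde{E}}$, obtaining
\[
\int_{\mathbb{R}^m} \mathcal{H}^{k-m}(E_y \setminus \tilde{E})\, d\mathcal{L}^m(y) \;=\; \int_{E \setminus \tilde{E}} J_\tau f(x)\, d\mathcal{H}^k(x).
\]
On $E \setminus \tilde{E}$, either $D_\tau f(x)$ fails to exist (an $\mathcal{H}^k$-null subset of $E$, by Rademacher's theorem for Lipschitz functions on rectifiable sets), or it exists but has rank strictly less than $m$, in which case the vectors $D_\tau f_1(x), \dots, D_\tau f_m(x)$ are linearly dependent and so $J_\tau f(x)=|D_\tau f_1 \wedge \dots \wedge D_\tau f_m|=0$. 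The right-hand side therefore vanishes, yielding (1).

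\textbf{Step (2).} By the equivalent characterization of rectifiability recalled after Definition~\ref{rectifiable}, I may write, modulo an $\mathcal{H}^k$-null set, $E = \bigcup_{j \ge 1} \Sigma_j$ where each $\Sigma_j$ is a $k$-dimensional $C^1$ submanifold of $\mathbb{R}^d$. On each $\Sigma_j$, the restriction $f|_{\Sigma_j}$ is Lipschitz into $\mathbb{R}^m$, and a Lusin-type approximation theorem produces a sequence of $C^1$ maps $\tilde f_{j,n}: \Sigma_j \to \mathbb{R}^m$ each agreeing with $f$ outside a set of $\mathcal{H}^k$-measure at most $1/n$, with matching tangential gradients on the agreement set. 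Sard's theorem applied to each $\tilde f_{j,n}$ ensures that for $\mathcal{L}^m$-a.e.~$y$ the set $\tilde f_{j,n}^{-1}(y) \cap \{\operatorname{rank} D\tilde f_{j,n} = m\}$ is a $(k-m)$-dimensional $C^1$ submanifold of $\Sigma_j$. Covering $E_y \cap \tilde E$ by a countable union of such submanifolds (indexed by $j$ and $n$) and combining with (1), which rules out $E_y \setminus \tilde{E}$ for a.e.~$y$, yields the $(k-m)$-rectifiability of $E_y$.

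\textbf{Step (3).} Fix $x \in \tilde{E}$. Since $D_\tau f(x) : T_x E \to \mathbb{R}^m$ has rank $m$, its kernel has dimension $k-m$ and its orthogonal complement in $T_x E$ is precisely $\operatorname{Span}\{D_\tau f_1(x), \dots, D_\tau f_m(x)\} = \operatorname{Span}\{\eta(x)\}$. Working in a $C^1$ chart of a $\Sigma_j$ containing $x$ and exploiting the implicit function theorem applied to the $C^1$ approximant of Step~(2), one identifies $T_x E_y = \ker D_\tau f(x)$ for $\mathcal{H}^{k-m}$-a.e.~$x \in E_y$ and $\mathcal{L}^m$-a.e.~$y$, giving the direct sum decomposition $T_xE = T_x E_y \oplus \operatorname{Span}\{\eta(x)\}$. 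The orientation $\tilde{\tau}(x)$ is then uniquely determined by the algebraic requirement $\frac{\eta(x)}{|\eta(x)|} \wedge \tilde\tau(x) = \tau(x)$, because wedging by the unit simple $m$-vector $\eta/|\eta|$ defines a linear isomorphism from $\Lambda_{k-m}(\ker D_\tau f(x))$ onto $\Lambda_k(T_xE)$, both being one-dimensional.

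The hard part will be (2): bridging from the mere Lipschitz regularity of $f$ to the $C^1$ hypotheses required by Sard's theorem. This is handled cleanly via the Federer-type Lusin approximation for Lipschitz functions on $C^1$ manifolds, which produces $C^1$ approximants agreeing with $f$ outside a set of arbitrarily small $\mathcal{H}^k$-measure and with the same tangential gradient there. The error so introduced is controlled precisely by the coarea inequality already exploited in (1), and a diagonal argument then collects a set of full $\mathcal{L}^m$-measure of $y$'s for which all three conclusions hold simultaneously.
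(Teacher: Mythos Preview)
The paper does not give a proof of this proposition; it only introduces it with the sentence ``By Sard theorem and coarea formula we have the following proposition'', leaving the details to the reader. Your proposal is a correct and complete fleshing out of exactly that outline: you use the coarea formula (Theorem~\ref{t:coarea}) with $g=\chi_{E\setminus\tilde E}$ for part (1), the $C^1$-rectifiable decomposition together with Lusin-type approximation and the implicit function theorem for part (2), and the linear-algebraic identification of $\ker D_\tau f(x)$ with $T_xE_y$ for part (3). One minor remark: in Step~(2) what you actually need on the set $\{\operatorname{rank} D\tilde f_{j,n}=m\}$ is the $C^1$ implicit function theorem rather than Sard---the level sets there are $(k-m)$-dimensional $C^1$ submanifolds for \emph{every} $y$, and the ``a.e.\ $y$'' enters only through part~(1), which disposes of $E_y\setminus\tilde E$. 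With that small clarification your argument is sound and matches the route the paper indicates.
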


\begin{definition}\label{d:slice}
Under the assumptions of Proposition \ref{p:slice}, the $(k-m)$-rectifiable current $T_y:=\llbracket E_y, \tilde{\tau}, \theta \res E_y\rrbracket$ is well-defined for $\mathcal{L}^m$-a.e. $y$ and it is defined as the \textit{slice} of $T$ at $y$ according to $f$. Sometimes the notation $\langle T,f,y\rangle$ is used to highlight $f$.
\end{definition}

By the coarea formula with $g=|\theta|$ we get:

\begin{corollary}\label{c:simon1} Under the assumptions of Proposition \ref{p:slice}, let $T_y:=\llbracket E_y, \tilde{\tau}, \theta \res E_y\rrbracket$. Then
$$\int_{\mathbb{R}^m} \mathbb{M}(T_y) d \mathcal{L}^m(y)=\int_{ E}|\theta(x)| J_\tau f(x) d \mathcal{H}^k(x) \leq [\operatorname{Lip}(f)]^m \mathbb{M}(T) .$$
\end{corollary}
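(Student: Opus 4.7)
The plan is to reduce the corollary to a direct application of the coarea formula (Theorem \ref{t:coarea}) together with the mass formula \eqref{e:massrectif} for rectifiable currents, and then bound the tangential Jacobian by $[\operatorname{Lip}(f)]^m$.

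First, I would compute $\mathbb{M}(T_y)$. By Proposition \ref{p:slice}, for $\mathcal{L}^m$-a.e.\ $y$ the slice $T_y = \llbracket E_y, \tilde{\tau}, \theta\res E_y\rrbracket$ is a well-defined $(k-m)$-rectifiable current, so formula \eqref{e:massrectif} gives
$$\mathbb{M}(T_y) = \int_{E_y} |\theta(x)|\, d\mathcal{H}^{k-m}(x).$$
Integrating in $y$ and applying the coarea formula (Theorem \ref{t:coarea}) to the Borel function $g = |\theta|$ on the rectifiable set $E$ (which is legal since $|\theta| \in L^1(\mathcal{H}^k\res E)$), one obtains
$$\int_{\mathbb{R}^m} \mathbb{M}(T_y)\, d\mathcal{L}^m(y) = \int_{\mathbb{R}^m}\!\!\int_{E_y} |\theta(x)|\, d\mathcal{H}^{k-m}(x)\, d\mathcal{L}^m(y) = \int_E |\theta(x)|\, J_\tau f(x)\, d\mathcal{H}^k(x).$$
This proves the equality in the statement.

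For the inequality, the key observation is a pointwise bound on the tangential Jacobian. Since $f$ is Lipschitz with constant $\operatorname{Lip}(f)$, each tangential gradient $D_\tau f_j(x)$, viewed as a vector in $T_xE$, has Euclidean norm at most $\operatorname{Lip}(f)$ at $\mathcal{H}^k$-a.e.\ $x \in E$. Using that the norm of a simple $m$-vector $v_1 \wedge \cdots \wedge v_m$ equals the $m$-volume of the parallelepiped they span, and that this volume is bounded by $\prod_j |v_j|$ (by Hadamard's inequality), one gets
$$J_\tau f(x) = |D_\tau f_1(x) \wedge \cdots \wedge D_\tau f_m(x)| \le \prod_{j=1}^m |D_\tau f_j(x)| \le [\operatorname{Lip}(f)]^m$$
for $\mathcal{H}^k$-a.e.\ $x \in E$. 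Plugging this into the right-hand side of the coarea identity and recalling that $\mathbb{M}(T) = \int_E |\theta(x)|\, d\mathcal{H}^k(x)$ by \eqref{e:massrectif} yields
$$\int_E |\theta(x)|\, J_\tau f(x)\, d\mathcal{H}^k(x) \le [\operatorname{Lip}(f)]^m \int_E |\theta(x)|\, d\mathcal{H}^k(x) = [\operatorname{Lip}(f)]^m\, \mathbb{M}(T).$$

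The only nontrivial step is the Jacobian bound, but it is purely pointwise linear algebra (Hadamard) combined with the Rademacher-type fact that $|D_\tau f_j| \le \operatorname{Lip}(f)$ almost everywhere on the rectifiable set $E$; everything else is a direct substitution into formulas already proved in the excerpt.
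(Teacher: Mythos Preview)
Your proof is correct and follows exactly the approach the paper indicates: the paper's entire proof is the one-line remark ``By the coarea formula with $g=|\theta|$ we get:'', and you have simply spelled this out, together with the Hadamard bound $J_\tau f \le [\operatorname{Lip}(f)]^m$ that the paper leaves implicit.
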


\begin{proposition}\label{p:simonslice}
Let $T \in \mathbb{N}_k(\R^d)$ and rectifiable. Let $f: \mathbb{R}^d \rightarrow \mathbb{R}$ be a Lipschitz and $\mathcal{C}^1$ function. Let $T_y$ be as in Definition \ref{d:slice}, then for $\mathcal{L}^1$-a.e. $y \in \mathbb{R}$ we have:
$$
T_y=\partial(T\res\{f \leq y\})-\partial T\res\{f \leq y\}.
$$
\end{proposition}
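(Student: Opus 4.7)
My plan is to approximate the sharp cutoff $\mathbf{1}_{\{f\leq y\}}$ by a Lipschitz one, apply the Leibniz rule for the exterior derivative, and pass to the limit. The three resulting pieces are designed to match exactly the three terms in the claim: $T_y$, $\partial(T\res\{f\leq y\})$, and $\partial T\res\{f\leq y\}$. All approximation arguments are enabled by the fact that $T$ is normal (so $\|T\|$ and $\|\partial T\|$ are finite Radon measures) and rectifiable (so the slice $T_y$ is defined via Proposition \ref{p:slice} and computable through the coarea formula).

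Concretely, fix a test form $\omega\in\mathcal{D}^{k-1}(\R^d)$ and let $\chi^{y,\varepsilon}\colon\R\to[0,1]$ be the piecewise linear cutoff equal to $1$ on $(-\infty,y]$, vanishing on $[y+\varepsilon,\infty)$, and linear on $[y,y+\varepsilon]$. Extend $T\res\psi$ to Lipschitz scalar functions $\psi$ in the obvious way, first via mollification on smooth $\psi$ and then passing to the limit thanks to the finite mass of $T$ and $\partial T$. The key algebraic identity, obtained from the definition of $\partial T$ and $d(\chi^{y,\varepsilon}(f)\omega)=(\chi^{y,\varepsilon})'(f)\,df\wedge\omega+\chi^{y,\varepsilon}(f)\,d\omega$, reads
\begin{equation*}
\partial\!\left(T\res\chi^{y,\varepsilon}(f)\right)\!(\omega)=T(\chi^{y,\varepsilon}(f)\,d\omega)=\partial T(\chi^{y,\varepsilon}(f)\,\omega)-T\!\left((\chi^{y,\varepsilon})'(f)\,df\wedge\omega\right).
\end{equation*}
For every $y$ with $\|T\|(\{f=y\})=\|\partial T\|(\{f=y\})=0$ (hence for $\mathcal{L}^1$-a.e.\ $y$ by finiteness of both measures), dominated convergence as $\varepsilon\to 0^+$ gives $T(\chi^{y,\varepsilon}(f)\,d\omega)\to\partial(T\res\{f\leq y\})(\omega)$ and $\partial T(\chi^{y,\varepsilon}(f)\,\omega)\to(\partial T\res\{f\leq y\})(\omega)$.

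The crux is to identify the limit of the remaining Leibniz term with $T_y(\omega)$. Writing $T=\llbracket E,\tau,\theta\rrbracket$ and choosing at $\mathcal{H}^k$-a.e.\ $x\in E$ a basis of $T_xE$ starting with $u_0=\eta/|\eta|$, where $\eta=D_\tau f$, Proposition \ref{p:slice} gives $\tau=u_0\wedge\tilde\tau$ with $\tilde\tau$ orienting $E_y$. Since $df$ vanishes on vectors tangent to $E_y$, expanding the pairing produces
\begin{equation*}
\langle df\wedge\omega,\tau\rangle\;=\;df(u_0)\,\langle\omega,\tilde\tau\rangle\;=\;J_\tau f\cdot\langle\omega,\tilde\tau\rangle.
\end{equation*}
Applying the coarea formula (Theorem \ref{t:coarea}) with $g=\psi(f)\langle\omega,\tilde\tau\rangle\theta$ then yields, for every bounded Borel $\psi\colon\R\to\R$,
\begin{equation*}
T\!\left(\psi(f)\,df\wedge\omega\right)=\int_\R\psi(s)\,T_s(\omega)\,ds.
\end{equation*}
Specialising to $\psi=\mathbf{1}_{(y,y+\varepsilon)}$ (noting $(\chi^{y,\varepsilon})'=-\varepsilon^{-1}\mathbf{1}_{(y,y+\varepsilon)}$) and invoking Lebesgue's differentiation theorem, we obtain $T\!\left((\chi^{y,\varepsilon})'(f)\,df\wedge\omega\right)\to -T_y(\omega)$ for $\mathcal{L}^1$-a.e.\ $y$. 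Collecting the three limits in the displayed identity above gives
\begin{equation*}
\partial(T\res\{f\leq y\})(\omega)=(\partial T\res\{f\leq y\})(\omega)+T_y(\omega),
\end{equation*}
and arbitrariness of $\omega$ concludes the proof as an equality of currents.

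The main obstacle is the orientation-theoretic identification in the third paragraph: one must match the Leibniz remainder $T(\psi(f)\,df\wedge\omega)$ with an integral of slices, which relies on the precise convention relating $\tau$, $\tilde\tau$, and the tangential gradient of $f$ used in Definition \ref{d:slice}. The rest---dominated convergence for the boundary terms, the a.e.\ vanishing of $\|T\|$ and $\|\partial T\|$ on level sets, and the Lebesgue point argument for the slice---is routine once the rectifiable structure of $T$ has been exploited and the Leibniz identity is in place.
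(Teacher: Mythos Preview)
The paper states Proposition~\ref{p:simonslice} without proof; it is used only as a structural fact motivating the subsequent definition of slices for normal currents. Your argument is the standard one (essentially the proof in Simon's \emph{Lectures on Geometric Measure Theory}, \S28): approximate the indicator by a Lipschitz cutoff, apply the Leibniz rule, and identify the three limiting pieces via dominated convergence and the coarea formula. The orientation computation $\langle df\wedge\omega,\tau\rangle=J_\tau f\,\langle\omega,\tilde\tau\rangle$ is correct under the convention $\tau=(\eta/|\eta|)\wedge\tilde\tau$ adopted in Proposition~\ref{p:slice}, and Corollary~\ref{c:simon1} guarantees that $s\mapsto T_s(\omega)$ lies in $L^1$, so Lebesgue differentiation applies. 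The only point worth tightening is the justification of the Leibniz identity $\partial(T\res\chi^{y,\varepsilon}(f))(\omega)=\partial T(\chi^{y,\varepsilon}(f)\,\omega)-T((\chi^{y,\varepsilon})'(f)\,df\wedge\omega)$ when $\chi^{y,\varepsilon}$ is merely Lipschitz: you gesture at mollification, and indeed one should take a smooth $\chi^{y,\varepsilon}_\delta\to\chi^{y,\varepsilon}$ in $W^{1,\infty}$, use the honest definition of $\partial$ on smooth forms, and pass to the limit using the finite mass of $T$ and $\partial T$. With that detail filled in, the proof is complete.
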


As anticipated at the beginning and motivated by the above characterization for codimension-one slices of rectifiable and normal currents, it is possible to define the \textit{slice} of a normal current as well.

\begin{definition}
Let $T \in \mathbb{N}_k(\mathbb{R}^d)$ and $f: \mathbb{R}^d \rightarrow \mathbb{R}$ be a Lipschitz and $\mathcal{C}^1$ function. For every $y \in \mathbb{R}$ we define the \textit{slice of a normal current} as
$$T_y:=\partial(T\res\{f \leq y\})-(\partial T)\res\{f \leq y\}.$$
\end{definition}

It is a bit more complicated to deal with normal currents slices of codimension $m>1$.

\begin{definition}
Let $T \in \mathbb{N}_k(\mathbb{R}^d)$ and $f: \mathbb{R}^d \rightarrow \mathbb{R}^m$ be a Lipschitz and $\mathcal{C}^1$ function. Denote $f_1, \ldots, f_m$ the components of $f$. For every $y \in \mathbb{R}^m, y=\left(y_1, \ldots, y_m\right)$ we define recursively
$$
\begin{aligned}
T_{y_1}&:=\partial(T\res\{f_1 \leq y_1\})-(\partial T)\res\{f_1 \leq y_1\}, \\
T_{y_1, y_2}&:=\partial(T_{y_1}\res\{f_2 \leq y_2\})-(\partial T_{y_1})\res\{f_2 \leq y_2\}, \\
&\cdots \\
\langle T,f,y\rangle = T_y&:=\partial(T_{y_1, \ldots, y_{m-1}}\res\{f_m \leq y_m\})-(\partial T_{y_1, \ldots, y_{m-1}})\res\{f_m \leq y_m\}.
\end{aligned}
$$

\end{definition}

\begin{remark}
When $m>1$ we have to ensure that after every iteration the slices are still normal currents for $T_y$ to be well-defined.
\end{remark}

We conclude with two useful properties of slicing for normal currents and a cornerstone theorem in the theory of currents proved by White in [\ref{Whiterectifiabilityflatchains}].

\begin{proposition}
Let $T \in \mathbb{N}_k(\mathbb{R}^d)$ and $f: \mathbb{R}^d \rightarrow \mathbb{R}$ be a Lipschitz and $\mathcal{C}^1$ function. Then for every $y \in \mathbb{R}$ we have

$$\partial(T_y)=-(\partial T)_y$$
and
$$\int_{\mathbb{R}} \mathbb{M}(T_y) d y \leq \operatorname{Lip}(f) \, \mathbb{M}(T).$$

\end{proposition}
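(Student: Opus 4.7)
The plan is to handle the two assertions in sequence: the boundary identity is a purely algebraic consequence of $\partial\circ\partial=0$, while the mass inequality requires a smooth approximation argument combined with a Leibniz-type commutator formula and the lower semicontinuity of mass.

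For the boundary identity, I would simply unwind the definition. Starting from
\[
T_y=\partial(T\res\{f\le y\})-(\partial T)\res\{f\le y\},
\]
applying $\partial$ and using $\partial\partial=0$ yields $\partial(T_y)=-\partial\bigl((\partial T)\res\{f\le y\}\bigr)$. On the other hand, the same definition applied to the normal current $\partial T$ (whose boundary vanishes) gives $(\partial T)_y=\partial\bigl((\partial T)\res\{f\le y\}\bigr)$, so the identity follows.

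For the mass inequality, the strategy is to approximate $\chi_{(-\infty,y]}$ by smooth cutoffs and exploit a commutator identity. Define the auxiliary $(k-1)$-current $T\mathop{\lrcorner} df$ on test forms $\omega\in\mathcal{D}^{k-1}(\R^d)$ by $(T\mathop{\lrcorner} df)(\omega):=T(df\wedge\omega)$. Writing $T=\tau\cdot\|T\|$ and using the pointwise estimate $\|df\wedge\omega\|^{*}\le|df|\,\|\omega\|^{*}$, one immediately obtains the a priori bound
\[
\mathbb{M}\bigl(h\cdot T\mathop{\lrcorner} df\bigr)\le\Lip(f)\int|h(x)|\,d\|T\|(x)
\]
for every bounded Borel $h$. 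A direct Leibniz computation, together with the definition of the boundary of a current, yields, for every smooth bounded $g:\R\to\R$, the key commutator identity
\[
\partial(g(f)\,T)=g(f)(\partial T)-g'(f)\cdot T\mathop{\lrcorner} df.
\]

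Now fix a smooth non-decreasing $\phi:\R\to[0,1]$ equal to $0$ on $(-\infty,0]$ and to $1$ on $[1,\infty)$, set $\phi_\varepsilon(t):=\phi(t/\varepsilon)$, and let $g_y^\varepsilon(t):=1-\phi_\varepsilon(t-y)$, which converges pointwise to $\chi_{(-\infty,y]}$. Plugging $g=g_y^\varepsilon$ into the commutator gives
\[
\phi_\varepsilon'(f-y)\cdot T\mathop{\lrcorner} df=\partial\bigl(g_y^\varepsilon(f)T\bigr)-g_y^\varepsilon(f)(\partial T).
\]
Since $\|T\|$ and $\|\partial T\|$ are finite, they can charge $\{f=y\}$ for at most countably many $y$; for every other $y$, dominated convergence yields $g_y^\varepsilon(f)T\to T\res\{f\le y\}$ and $g_y^\varepsilon(f)(\partial T)\to(\partial T)\res\{f\le y\}$ in mass, and continuity of $\partial$ on currents then implies the right-hand side converges weakly to $-T_y$. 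By lower semicontinuity of the mass and the a priori bound,
\[
\mathbb{M}(T_y)\le\liminf_{\varepsilon\to0}\Lip(f)\int|\phi_\varepsilon'(f(x)-y)|\,d\|T\|(x)\quad\text{for a.e. }y.
\]
Integrating in $y$, applying Fatou and Fubini, and using $\int_\R|\phi_\varepsilon'(s)|\,ds=1$ gives the desired inequality $\int_\R\mathbb{M}(T_y)\,dy\le\Lip(f)\,\mathbb{M}(T)$.

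The main obstacle is the justification of the weak convergence $\phi_\varepsilon'(f-y)\,T\mathop{\lrcorner} df\to-T_y$ for almost every $y$: one must be careful about the exceptional set where $\|T\|$ or $\|\partial T\|$ charges the level set $\{f=y\}$, and the passage from pointwise convergence of the scalar cutoffs to the mass convergence of the corresponding currents. Once this approximation step is in place, Fatou and Fubini close the argument cleanly.
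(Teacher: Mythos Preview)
The paper states this proposition without proof, as part of the preliminary background on slicing, so there is no argument to compare against. Your approach is the standard one (essentially that of Federer or Simon): the boundary identity drops out of $\partial\circ\partial=0$, and the mass inequality follows from the commutator formula $\partial(\phi T)=\phi\,\partial T - T\mathop{\lrcorner} d\phi$ combined with mollification of the characteristic function, Fatou, and Fubini.

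Two small remarks. First, there is a harmless sign slip: with your conventions the right-hand side $\partial\bigl(g_y^\varepsilon(f)T\bigr)-g_y^\varepsilon(f)(\partial T)$ converges weakly to $T_y$, not $-T_y$; since only the mass enters the subsequent estimate, nothing changes. Second, to integrate $y\mapsto\mathbb{M}(T_y)$ you implicitly use its measurability, which follows for instance from the lower semicontinuity of the mass together with the fact that $y\mapsto T_y$ is a measurable current-valued map (or directly from the approximation you set up). With these cosmetic points addressed, the argument is complete.
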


\begin{theorem}[Rectifiability of flat chains with finite mass]\label{t:white99}
Let $T \in \mathbb{N}_k(\mathbb{R}^d)$. For every multi-index $I=\left(i_1, \ldots, i_k\right) \in I(k, d)$, we set
$$
\pi_I:(x_1, \ldots, x_d) \mapsto (x_{i_1}, \ldots, x_{i_k}) \in \mathbb{R}^k.
$$
$T$ is integer rectifiable if and only if, for every $I$, $\mathcal{L}^k$-almost every slice $T_y$ of $T$ according to $\pi_I$ is integer rectifiable.
\end{theorem}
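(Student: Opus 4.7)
The forward implication is a direct consequence of the slicing theory developed above. Given $T = \llbracket E, \tau_E, \theta \rrbracket \in \mathcal{I}_k(\mathbb{R}^d)$ with integer-valued multiplicity $\theta$, and any $I \in I(k,d)$, the projection $\pi_I$ is smooth and Lipschitz, and its tangential Jacobian along $E$ is positive at $\mathcal{H}^k$-a.e.\ point whose approximate tangent is transverse to $\ker \pi_I$. Proposition \ref{p:slice} then yields, for $\mathcal{L}^k$-a.e.\ $y$, the slice $T_y = \llbracket E_y, \tilde{\tau}, \theta \res E_y \rrbracket$, and Corollary \ref{c:simon1} guarantees that $\sum_{x\in E_y}|\theta(x)| < \infty$; thus $T_y$ is a summable integer combination of Dirac masses, hence an element of $\mathcal{R}_0(\mathbb{R}^d)$.

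For the converse, since $T \in \mathbb{N}_k(\mathbb{R}^d)$ has finite mass, the plan is to first write $T = \tau \cdot \|T\|$ and decompose the Radon measure $\|T\|$ into a $k$-rectifiable part $\mu_r$ (concentrated on a countably $k$-rectifiable Borel set) and a purely $k$-unrectifiable part $\mu_u$. The first and harder stage is to show $\mu_u \equiv 0$. The Besicovitch-Federer structure theorem says that the image of a purely $k$-unrectifiable set of locally finite $\mathcal{H}^k$ measure under almost every orthogonal projection onto a $k$-plane has zero Lebesgue measure. Combined with Corollary \ref{c:simon1} and a Fubini argument, the hypothesis that the coordinate slices $T_y$ are carried on discrete sets (being integer $0$-rectifiable) for $\mathcal{L}^k$-a.e.\ $y$ propagates upward to kill any $\mu_u$ contribution along each $\pi_I$; varying $I$ over a sufficient family of projections forces $\mu_u \equiv 0$. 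In the second stage, with $\|T\|$ now supported on a $k$-rectifiable set $R$, I would write $T = \llbracket R, \tau_R, \theta \rrbracket$ for some real $\theta \in L^1(\mathcal{H}^k \res R)$, and partition $R$ into Borel pieces $R^I$ on which $J_\tau \pi_I > 0$, which covers $\mathcal{H}^k$-a.e.\ point by rectifiability. On each $R^I$, Proposition \ref{p:slice} identifies the $\pi_I$-slice of $T \res R^I$ with $\llbracket R^I_y, \tilde{\tau}, \theta \res R^I_y \rrbracket$, which by hypothesis is integer-valued for $\mathcal{L}^k$-a.e.\ $y$; the coarea formula (Theorem \ref{t:coarea}) applied to the set $\{x : \theta(x) \notin \mathbb{Z}\}$ then forces $\theta \in \mathbb{Z}$ for $\mathcal{H}^k$-a.e.\ $x \in R^I$, and hence on all of $R$.

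The main obstacle lies entirely in the first stage of the converse: excluding a purely unrectifiable part of $\|T\|$ from only the atomicity of coordinate slices. The cleanest route uses the Besicovitch-Federer structure theorem, which is itself a cornerstone of geometric measure theory with a proof considerably more involved than the rest of the argument. White's original approach in [\ref{Whiterectifiabilityflatchains}] bypasses this by working at the level of flat chains: one shows via polyhedral approximation (in the spirit of Theorem \ref{t:polyapprox}) that flat chains with integer slices are flat limits of integer polyhedral chains, whence the desired rectifiability. The upgrade from real to integer multiplicity in the second stage, by contrast, is a comparatively routine Fubini/coarea consequence of the slicing formula.
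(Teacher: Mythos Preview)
The paper does not actually prove this theorem: it is stated as a cornerstone result due to White [\ref{Whiterectifiabilityflatchains}] and used as a black box, with no argument given. So there is no ``paper's own proof'' to compare against.

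Regarding your sketch on its own merits: the forward implication and the second stage of the converse (upgrading real to integer multiplicity via coarea once rectifiability is known) are fine. The genuine gap is exactly where you locate it, but your outline of the first stage does not close it. The Besicovitch--Federer structure theorem concerns purely unrectifiable \emph{sets} of locally finite $\mathcal{H}^k$-measure, whereas for a general normal current the measure $\|T\|$ need not be of the form $\theta\,\mathcal{H}^k\res E$ at all; your ``decompose $\|T\|$ into rectifiable and purely unrectifiable parts'' is not a well-defined step without further work (one needs rectifiability criteria for measures, e.g.\ via densities, not for sets). Moreover, even granting that $\|T\|$ is concentrated on a $k$-rectifiable set $R$, you still have to show that the $k$-vector field $\tau$ is a.e.\ a \emph{simple} unit vector orienting $T_xR$ --- this is an additional step that does not follow from the support condition alone and is where the normality of $T$ (or the slice hypothesis) must be used again. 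White's actual proof avoids both issues by a different mechanism (deformation/polyhedral approximation at the level of flat chains), which is why you were right to flag that route as the cleaner one; your Besicovitch--Federer sketch, as written, is not yet a proof.
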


\section{Optimal branched transport theory}

Optimal transport aims to find the best way to carry a given source into a given target. Such topic witnessed an impressive progression in the last thirty years, developing deep connections with many fields of mathematics and serving as a model for many biological and human-designed systems. 

The classical Monge's problem dates back in its original discrete formulation to 1781, see [\ref{Monge}], and can be stated in the following general modern form:

\begin{namedproblem}[Monge's]
Given two probability measures $\mu$ and $\nu$, defined on the measurable spaces $X$ and $Y$, find a measurable map $T: X \rightarrow Y$ such that
$$
T_{*} \mu=\nu,
$$
i.e.
$$
\nu(A)=\mu\left(T^{-1}(A)\right) \quad \text{ for all } A \subset Y \text { measurable, }
$$
and in such a way that $T$ minimizes the transportation cost, that is
$$
\int_X c(x, T(x)) d \mu(x)=\min _{S_* \mu=\nu}\left\{\int_X c(x, S(x)) d \mu(x)\right\},
$$
where $c: X \times Y \rightarrow \mathbb{R} \cup\{+\infty\}$ is a given cost function.
\end{namedproblem}

A huge literature has been developed out of this problem, with many books and surveys such as [\ref{Villani}]. Nevertheless, two main objections may be raised to the Monge's problem. In primis, the model does not take into account the trajectories of the moving particles, assuming implicitly that they are segments. In addition, the above formulation of Monge's problem does not take into account possible dynamic effects and interactions among the moving particles, focussing only on the best coupling between initial points and final distribution of mass. Nevertheless, from modeling purposes there may be the need to look at the transport as a dynamic process, allowing for both nonlinear trajectories and interactions among particles. Just to mention a few examples, systems for which such features are relevant appear in nature, \textit{e.g.} in roots systems of trees and leaf nerves, the nervous, the bronchial and the cardiovascular systems, and in human-designed structures like supply-demand distribution networks, irrigation networks and electric power supply systems.

Mainly for these reasons, extensions of the Monge's problem have been studied for transportation systems that privilege group flows rather than spread-out processes, leading to optimal transport networks with peculiar ramified structures: this class of problems is nowadays known as \textit{optimal branched transport.} In all of the many different formulations of the problem, the main feature is the fact that the cost functional is designed in order to privilege large flows and to prevent diffusion; indeed the transport actually happens on a 1-dimensional network.

The aim of this section is to introduce both the discrete formulation of optimal branched transport, dating back to an embryonal version due to Gilbert, see [\ref{Gilbert}], and the variational formulations, which aim at transporting ``diffuse" measures by means of a ramified structure. There are two main formulations of the variational optimal branched transport problem: the \textit{Eulerian} formulation and the \textit{Lagrangian} formulation, see [\ref{Xia2003}] and [\ref{MSM}] respectively. The equivalence between both model formulations was shown in [\ref{Pegon}].

We will introduce the Eulerian formulation which, from a geometric point of view, can be described by means of the theory of currents, while we will only recall some basic facts about the Lagrangian formulation. The Eulerian verison of the optimal branched transport problem can be stated as a 1-dimensional Plateau-type problem in which one aims to minimize a fractional power of the mass functional.

Eventually, we will describe the existence theory for the Eulerian formulation of the optimal branched transport problem, proving the existence of solutions with finite costs.

\subsection{The discrete model}

The \textit{discrete} model is the simplest possible model of optimal branched transport and it takes its name from the fact the initial measure $\mu_-$ and the target measure $\mu_{+}$ are finite atomic, that is $$\mu_-:= \sum_{i=1}^k a_i \delta_{x_i} \hspace{0.3cm} \text{ and } \hspace{0.3cm} \mu_+:= \sum_{j=1}^l b_j \delta_{y_j}$$ with $a_i, b_j \in \R$. 

\begin{definition}\label{d:discrete}
A \textit{transportation network} from $\mu_-$ to $\mu_{+}$ is a finite weighted oriented graph $G$ embedded in $\R^d$, consisting of a set of vertices $V(G)$, a set of oriented edges $E(G)$ and a weight function
$$
w: E(G) \rightarrow \R
$$

such that
\begin{enumerate}
\item $\left\{x_1, x_2, \ldots, x_k\right\} \cup\left\{y_1, y_2, \ldots, y_l\right\} \subset V(G)$.
\item For each initial vertex $x_i$ with $ i=1, \ldots, k$,
$$
\sum_{e \in E(G)\,:\, x_i=e_-} w(e) - \sum_{e \in E(G)\,:\, x_i=e_+} w(e)  = a_i,
$$
where $e_{-}$ and $e_{+}$ denote respectively the first and second endpoint of the oriented edge $e \in E(G)$.

\item For each target vertex $y_j$ with $j=1, \ldots, l$,
$$
\sum_{e \in E(G)\,:\, y_j=e_+} w(e) - \sum_{e \in E(G)\,:\, y_j=e_-} w(e)  = b_j.
$$

\item For any vertex $v \in V(G)\setminus \{x_1, x_2, \ldots, x_k, y_1, y_2, \ldots, y_l\}$.
$$
\sum_{e \in E(G)\,: \,v=e_+} w(e) = \sum_{e \in E(G)\,:\, v=e_-} w(e).
$$
In other words, we say that $(G,w)$ satisfies the ``Kirchoff's laws" at each of its vertices.
\end{enumerate}
\end{definition}

We define now the energy that defines the cost of the transportation.

\begin{definition}
Fix $\alpha \in [0,1)$, we define the $\alpha$-\textit{energy} of $G$ as \begin{equation}\label{e:alphaegy}
\mathbb{E}^{\alpha}(G):= \sum_{e \in E(G)}|w(e)|^{\alpha}\, \mathcal{H}^1(e).
\end{equation}
\end{definition}

We can now formulate what is known as the {\em Gilbert's problem}:

\begin{namedproblem}[Gilbert's]
Fix $\alpha \in [0,1)$ and $\mu_-$, $\mu_+$ finite atomic with the same total mass. Find a transportation network $G_1$ between $\mu_-$ and $\mu_+$ such that \begin{equation}\label{GP} \mathbb{E}^{\alpha}(G_1) \le \mathbb{E}^{\alpha}(G)\end{equation} among all transportation networks $G$ between $\mu_-$ and $\mu_+$. We call $G_1$ an \emph{optimal transportation network}.
\end{namedproblem}

\begin{figure}[h]
    \centering
    \includegraphics[width=0.625\textwidth]{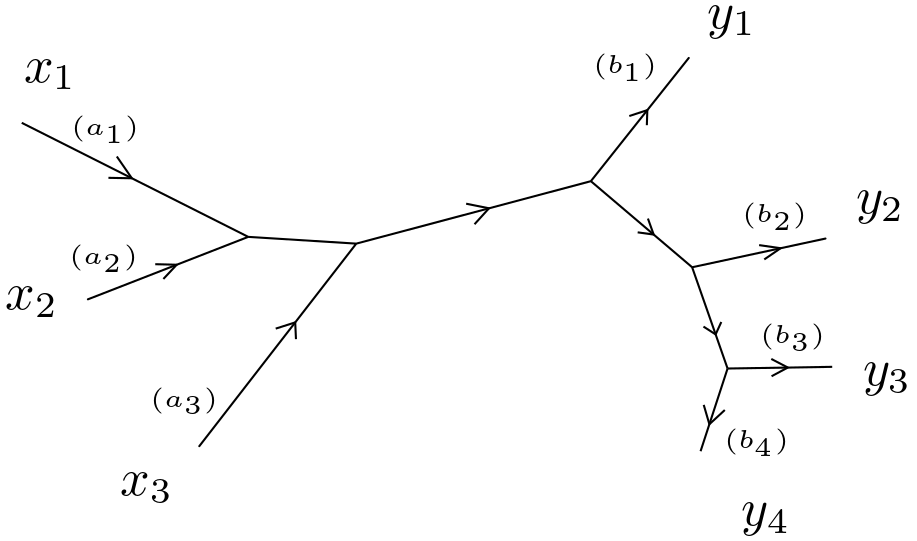}
    \caption{Representation of Definition \ref{d:discrete}}
   \end{figure}

Minimizers of the Gilbert's Problem turn out to satisfy some necessary topological condition; to state it formally, we need to recall some basic definitions from graph theory. We call a \textit{cycle} a closed chain $(e_1, \dots, e_n)$ of consecutive\footnote{Where consecutive means that $e_{i^+}=e_{{i+1}^-}$ for $i=1,\dots,n-1$.} distinct oriented edges. A \textit{loop} is a chain of consecutive distinct oriented edges which can be turned into a cycle possibly switching the orientation of some edges. An oriented weighted graph is \textit{acyclic} if it does not contain any cycle and it is a \textit{tree} if it does not contain any loop.

\begin{remark}
By definition it is immediate to check that a tree is an acyclic graph but the converse is not true.
\end{remark}

\begin{proposition}\label{p:tree}{\normalfont [\ref{BW1}, Lemma 2.6]}
Let $(G,w)$ be an optimal transportation network. Then $G$ is a tree.
\end{proposition}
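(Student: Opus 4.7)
The plan is to argue by contradiction: supposing $G$ contains a loop $L$, I will construct a one-parameter family of transportation networks $(G, w_\epsilon)$ with the same underlying graph but perturbed weights along $L$, and exhibit a value $\epsilon \neq 0$ for which $\mathbb{E}^\alpha(G, w_\epsilon) < \mathbb{E}^\alpha(G, w)$, contradicting optimality.

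The construction goes as follows. Write $L = (e_1, \ldots, e_n)$, traversing vertices $v_1, \ldots, v_n, v_{n+1} := v_1$ cyclically (indices read modulo $n$), and set $\sigma_i := +1$ if $e_i$ is oriented from $v_i$ to $v_{i+1}$ and $\sigma_i := -1$ otherwise. Any edge with zero weight contributes nothing to the $\alpha$-energy and can be discarded, so I assume $w(e_i) \neq 0$ for every $i$. For $\epsilon \in \mathbb{R}$ define
$$w_\epsilon(e) := \begin{cases} w(e_i) + \sigma_i \epsilon & \text{if } e = e_i \in L, \\ w(e) & \text{otherwise.}\end{cases}$$
The first thing to verify is that $(G, w_\epsilon)$ still satisfies Kirchhoff's laws. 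At any vertex $v_i \in L$ only the weights of the two incident loop edges $e_{i-1}$ and $e_i$ have been modified; a short case analysis on the four sign combinations of $\sigma_{i-1}, \sigma_i$ shows that their contributions to the signed flow balance $\sum_{e : v_i = e_+} w(e) - \sum_{e : v_i = e_-} w(e)$ change by $+\epsilon$ and $-\epsilon$ respectively, which cancel. Hence $(G, w_\epsilon)$ is a valid transportation network from $\mu_-$ to $\mu_+$ for every $\epsilon$.

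It remains to find $\epsilon$ that strictly decreases the energy. For $|\epsilon| < \epsilon_0 := \min_i |w(e_i)|$ each $w(e_i) + \sigma_i \epsilon$ keeps its sign, so
$$f(\epsilon) := \mathbb{E}^\alpha(G, w_\epsilon) = \sum_{i=1}^n |w(e_i) + \sigma_i \epsilon|^\alpha\, \mathcal{H}^1(e_i) + C,$$
with $C$ independent of $\epsilon$, is smooth on $(-\epsilon_0, \epsilon_0)$. Since $t \mapsto t^\alpha$ is strictly concave on $(0, \infty)$ for $\alpha \in (0,1)$, each summand is strictly concave in $\epsilon$, and hence so is $f$. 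Strict concavity gives
$$f(0) \,>\, \tfrac{1}{2}\bigl(f(\delta) + f(-\delta)\bigr)$$
for any $\delta \in (0, \epsilon_0)$, so at least one of $f(\pm\delta)$ is strictly less than $f(0)$, producing the desired contradiction. The degenerate case $\alpha = 0$ is handled even more directly by choosing $\epsilon := -\sigma_{i_0} w(e_{i_0})$ with $|w(e_{i_0})| = \epsilon_0$: this zeroes out $e_{i_0}$, effectively removing one edge and strictly decreasing the total length of used edges.

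The main obstacle I expect is the sign bookkeeping needed to certify Kirchhoff's laws for the perturbed weights along the loop; once that is in place, the energy decrease is a direct consequence of the strict concavity of $t \mapsto t^\alpha$.
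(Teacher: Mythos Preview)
Your proof is correct and follows exactly the approach the paper sketches: the paper only states the main idea (``argue by contradiction: assume there is a loop and build a competitor \dots\ such that $\mathbb{E}^{\alpha}(G') < \mathbb{E}^{\alpha}(G)$ by exploiting concavity of the function $x\mapsto |x|^\alpha$''), citing [\ref{BW1}, Lemma 2.6] for the details, and your perturbation $w_\epsilon$ along the loop together with the strict-concavity midpoint argument is precisely the standard implementation of that idea.
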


The main idea to prove Proposition \ref{p:tree} is to argue by contradiction: assume there is a loop and build a competitor of the problem (\textit{i.e.} another graph $G'$ satisfying the Kirchhoff's laws) such that $\mathbb{E}^{\alpha}(G') < \mathbb{E}^{\alpha}(G)$ by exploiting concavity of the function $x\mapsto |x|^\alpha$.

\begin{remark}
If we know the basic topological property that optimal transportation networks need to be trees, then we get an a priori bound on the number of vertices a solution can have. Hence, the problem is equivalent to a finite number of finite-dimensional optimization problems, providing existence of a solution to the discrete formulation of the optimal branched transport problem.
\end{remark}

It is possible to show a rigidity property of the geometry of optimal transportation networks: whenever three edges come together at a vertex, the angle they form must satisfy some necessary conditions. More formally:

\begin{proposition}{\normalfont [\ref{BCM}, Lemma 12.1]}\label{p:12punto1}
Let $x_1, x_2, y_1$ be three distinct points in $\mathbb{R}^d$, $\mu_{-}=a_1 \delta_{x_1}+a_2 \delta_{x_2}$ and $\mu_{+}=b_1 \delta_{y_1}$ with $b_1=a_1+a_2$ and $a_1, a_2>0$.
If $x_1, x_2, y_1$ are aligned, an optimal transportation network from $\mu_{-}$ to $\mu_{+}$ is the minimal segment containing $x_1, x_2, y_1$. If $x_1, x_2, y_1$ are not aligned, an optimal transportation network lies in the triangle $x_1, x_2, y_1$. In addition, it is a graph with two or three edges.
\end{proposition}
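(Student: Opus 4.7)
The plan is to combine two ingredients: Proposition~\ref{p:tree} tells us that an optimizer must be a tree, and orthogonal projections onto convex sets never increase the $\alpha$-energy while preserving the Kirchhoff laws at the terminal atoms $x_1,x_2,y_1$. First I would let $\pi$ denote the orthogonal projection onto the affine hull $A$ of $\{x_1,x_2,y_1\}$ (a line in the aligned case, a plane otherwise). Since $\pi$ fixes $x_1,x_2,y_1$, applying $\pi$ edge-by-edge to any admissible transportation network $(G,w)$ from $\mu_-$ to $\mu_+$ yields a graph $(\pi(G),w')$ that still satisfies the Kirchhoff conditions at $x_1,x_2,y_1$: weights are inherited from $G$ and, when several edges collapse onto the same image segment, summed with the signs prescribed by their orientations. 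Because $t\mapsto t^\alpha$ is subadditive on $[0,\infty)$ for $\alpha\in[0,1)$, such collapsing can only decrease $\sum |w|^\alpha$-contributions; since $\pi$ is $1$-Lipschitz, one also has $\mathcal{H}^1(\pi(e))\le \mathcal{H}^1(e)$ for every edge $e$. Hence $\E(\pi(G))\le \E(G)$, with strict inequality unless $G\subset A$, and every optimizer must be supported on $A$.

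For the aligned case, a further projection onto the minimal segment $\sigma$ containing $\{x_1,x_2,y_1\}$ is again $1$-Lipschitz and fixes the three terminals, so the same argument forces the optimizer to be supported on $\sigma$; any tree supported on $\sigma$ with leaves in $\{x_1,x_2,y_1\}$ and Kirchhoff-compatible weights is then precisely $\sigma$, weighted as prescribed by the ordering of the three points. For the non-aligned case, the closed triangle $\Delta$ spanned by $x_1,x_2,y_1$ is convex inside the plane $A$, so the nearest-point projection onto $\Delta$ is $1$-Lipschitz and fixes the three terminals. Applying it to any network already contained in $A$ produces a network supported in $\Delta$ with no larger $\alpha$-energy, proving that an optimizer must lie in the triangle.

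For the bound on the number of edges I would argue as follows. In an optimal tree $T^\ast$ every leaf must belong to $\{x_1,x_2,y_1\}$, for any other leaf $v$ would carry zero flux by the Kirchhoff laws, so the edge incident to $v$ has weight $0$ and may be removed, strictly decreasing $\E$. Moreover, every internal vertex must have degree at least $3$: a degree-$2$ internal vertex can be suppressed by replacing its two incident edges (which share the same flux by Kirchhoff) with the single segment joining their far endpoints, which keeps the weight and, by the triangle inequality, does not increase the length. A tree with at most three leaves and all internal vertices of degree $\ge 3$ has at most one internal vertex and hence at most three edges, and at least two because both $x_1$ and $x_2$ must be connected to $y_1$; the two-edge case corresponds to the Steiner point coalescing with one of $x_1,x_2,y_1$. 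The main technical care is purely combinatorial bookkeeping: one must verify that both the projection and the simplification steps produce a genuine transportation network in the sense of Definition~\ref{d:discrete}, in particular that Kirchhoff's laws continue to hold when edges collapse or internal vertices are suppressed, and that the concavity of $t\mapsto t^\alpha$ gives the desired monotonicity of $\E$ in each case.
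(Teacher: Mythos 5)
The paper does not prove this proposition; it is cited from [\ref{BCM}, Lemma 12.1] without argument, so there is no in-paper proof to compare against. Judged on its own, your projection-based argument is the standard route and is essentially sound, but there are two places where you glide past a genuine issue.

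First, the projection onto the convex sets $\sigma$ and $\Delta$. Your first step, orthogonal projection onto the affine hull $A$, really does act ``edge-by-edge'': it is an affine map, so segments go to segments and the image is again a polyhedral graph (after resolving overlaps, using subadditivity of $t\mapsto t^\alpha$ exactly as you say). But the nearest-point projection onto the segment $\sigma$ or the closed triangle $\Delta$ is \emph{not} affine outside the target set. Applying it pointwise to an edge of $G$ produces a curve that can bend along $\partial\Delta$, so the image is not a transportation network in the sense of Definition~\ref{d:discrete}, and your sentence ``Applying it to any network already contained in $A$ produces a network supported in $\Delta$'' is literally false. The fix is to project \emph{vertex-wise}: replace each vertex $v$ of $G$ by $\pi_\Delta(v)$ and keep the combinatorial structure. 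Each new edge is the segment $[\pi_\Delta(v_1),\pi_\Delta(v_2)]\subset\Delta$ (by convexity of $\Delta$), its length is $\le$ the original by $1$-Lipschitzness of $\pi_\Delta$, weights and hence Kirchhoff's laws are unchanged, and the terminals are fixed. If some vertex lies outside $\Delta$, the last edge along a path from it into $\Delta$ strictly shortens (variational characterization of projection onto convex sets), which gives the strict inequality you need. This is more than ``combinatorial bookkeeping'': as written the projection step produces an object outside the competition class.

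Second, the degree argument. You assert that ``every internal vertex must have degree at least $3$'' and deduce at most one internal vertex. As stated this is wrong if ``internal'' means ``non-leaf'': in the aligned case with $x_1,x_2,y_1$ in that order the optimizer is $x_1$--$x_2$--$y_1$ and $x_2$ has degree $2$, yet it cannot be suppressed because its two incident edges carry fluxes $a_1$ and $a_1+a_2$ (Kirchhoff at $x_2$ imposes the jump $a_2$, so they do not ``share the same flux''). Your suppression lemma only applies to degree-$2$ vertices that are \emph{not} among $x_1,x_2,y_1$. The counting still closes, but you must separate the cases: every leaf is a terminal, every non-terminal vertex has degree $\ge 3$, and terminals may have any degree $\ge 1$. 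With $|S|$ Steiner (non-terminal) points and $3$ terminals one has $V=3+|S|$, $E=2+|S|$, and comparing $2E=4+2|S|$ with the lower bound $3|S|+3$ on the degree sum gives $|S|\le 1$, hence $E\le 3$, with $E\ge 2$ as you note. State the hypothesis of the counting lemma correctly and the edge bound survives.
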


\begin{proposition}{\normalfont [\ref{BCM}, Lemma 12.2]}\label{p:BCMangoli}
Let $(G,w)$ be an optimal transportation network between $\mu_{-}=a_1 \delta_{x_1}+a_2 \delta_{x_2}$ and $\mu_{+}=b_1 \delta_{y_1}$ with $b_1=a_1+a_2$ and $a_1, a_2>0$ and suppose it has 3 edges. With the notation of Figure \ref{f:angoli}, then the vertex $V$ must satisfy the following angle conditions: 

$$\begin{aligned} \cos \left(\theta_1\right) &=\frac{k_1^{2 \alpha}+1-k_2^{2 \alpha}}{2 k_1^\alpha}, \\ \cos \left(\theta_2\right) &=\frac{k_2^{2 \alpha}+1-k_1^{2 \alpha}}{2 k_2^\alpha}, \\ \cos \left(\theta_1+\theta_2\right) &=\frac{1-k_2^{2 \alpha}-k_1^{2 \alpha}}{2 k_1^\alpha k_2^\alpha}, \end{aligned}$$

where $k_1=\frac{a_1}{a_1+a_2}, k_2=\frac{a_2}{a_1+a_2}$.
\end{proposition}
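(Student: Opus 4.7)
The plan is to reduce the problem to a variational calculation in a single free point. Since the optimal network has three edges and $x_1, x_2, y_1$ are three distinct prescribed terminals, there must be a fourth vertex $V \notin \{x_1, x_2, y_1\}$ at which the three edges meet (if $V$ coincided with one of the terminals, the network would have at most two edges). Moreover, by Proposition \ref{p:12punto1} we may assume $V$ lies in the open triangle $x_1 x_2 y_1$, and the Kirchhoff's laws of Definition \ref{d:discrete} force the weights on the three edges to be exactly $a_1$ on $\overline{x_1 V}$, $a_2$ on $\overline{x_2 V}$, and $a_1+a_2=b_1$ on $\overline{V y_1}$. Hence the combinatorial type of $(G,w)$ is entirely fixed, and the only free parameter is the position of $V \in \mathbb{R}^d$.

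With $M = a_1+a_2$, $k_1=a_1/M$, $k_2=a_2/M$, I would rewrite the $\alpha$-energy \eqref{e:alphaegy} as
$$\mathbb{E}^\alpha(G) = M^\alpha\Big(k_1^\alpha |V-x_1| + k_2^\alpha |V-x_2| + |V-y_1|\Big) =: M^\alpha\, g(V).$$
Since $V \notin \{x_1,x_2,y_1\}$, the function $g$ is smooth at $V$, and it is strictly convex as a positive linear combination of Euclidean norms of affine functions. Optimality of $V$ is therefore equivalent to $\nabla g(V) = 0$. Setting $\mathbf{v}_i$ to be the unit vector from $V$ toward $x_i$ (for $i=1,2$) and $\mathbf{v}_3$ the unit vector from $V$ toward $y_1$, the critical point equation reads
\begin{equation}\label{eq:force}
k_1^\alpha \mathbf{v}_1 + k_2^\alpha \mathbf{v}_2 + \mathbf{v}_3 = 0.
\end{equation}

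According to the figure, $\theta_1$ is the angle at $V$ between the edge $\overline{Vx_1}$ and the oriented direction of outflow (opposite to $\mathbf{v}_3$), and similarly $\theta_2$, so that
$$\mathbf{v}_1\cdot(-\mathbf{v}_3)=\cos\theta_1,\quad \mathbf{v}_2\cdot(-\mathbf{v}_3)=\cos\theta_2,\quad \mathbf{v}_1\cdot\mathbf{v}_2=\cos(\theta_1+\theta_2).$$
The three formulas then follow by taking squared norms of the three equivalent rearrangements of \eqref{eq:force}. From $k_1^\alpha\mathbf{v}_1 + k_2^\alpha\mathbf{v}_2 = -\mathbf{v}_3$ one gets $k_1^{2\alpha}+k_2^{2\alpha}+2k_1^\alpha k_2^\alpha\cos(\theta_1+\theta_2)=1$, which yields the third identity. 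From $k_1^\alpha\mathbf{v}_1 = -k_2^\alpha\mathbf{v}_2 - \mathbf{v}_3$ one gets $k_1^{2\alpha}=k_2^{2\alpha}+1+2k_2^\alpha(\mathbf{v}_2\cdot\mathbf{v}_3)$, and substituting $\mathbf{v}_2\cdot\mathbf{v}_3=-\cos\theta_2$ produces the formula for $\cos\theta_2$; the symmetric manipulation gives the formula for $\cos\theta_1$.

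The only delicate point is the a priori exclusion of degenerate configurations: one must verify that $V$ is separated from each terminal so that $g$ is differentiable at $V$, and check that the angles $\theta_1,\theta_2$ are actually defined by the geometry of Figure \ref{f:angoli} in the way stated. The former is handled as above by the ``3 edges'' hypothesis together with Proposition \ref{p:12punto1} placing $V$ in the interior of the triangle; the latter is a matter of identifying the sign conventions from the picture. Once this is set up, the proof is a one-line first-order optimality computation followed by three instances of the law of cosines.
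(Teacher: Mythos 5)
Your argument is correct and is exactly the standard first-variation derivation. The paper itself only cites \cite{BCM} for this proposition without reproducing a proof, but the route you take — observing that the three-edge hypothesis forces a Steiner vertex $V$ distinct from the terminals, that the Kirchhoff constraints of Definition \ref{d:discrete} pin the edge weights at $a_1$, $a_2$, $a_1+a_2$, so the only free datum is the position of $V$, and that the vanishing gradient of $g(V)=k_1^\alpha|V-x_1|+k_2^\alpha|V-x_2|+|V-y_1|$ yields the balance equation $k_1^\alpha\mathbf{v}_1+k_2^\alpha\mathbf{v}_2+\mathbf{v}_3=0$, from which the three cosine identities follow by squaring the obvious rearrangements — is precisely the argument in the cited source. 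One further observation worth recording: the fact that the angle between $\mathbf{v}_1$ and $\mathbf{v}_2$ really is $\theta_1+\theta_2$ (rather than, say, $|\theta_1-\theta_2|$) is not just a matter of reading the figure; it is a consequence of the balance equation itself, since $-\mathbf{v}_3=k_1^\alpha\mathbf{v}_1+k_2^\alpha\mathbf{v}_2$ is a strictly positive combination and hence $-\mathbf{v}_3$ lies in the open convex cone spanned by $\mathbf{v}_1$ and $\mathbf{v}_2$. Also, your strict-convexity remark, while correct in the nonaligned case (no single line passes through $x_1,x_2,y_1$, so along every line at least one of the three distance functions is strictly convex), is not needed for the conclusion: the optimality of $(G,w)$ as a global minimizer already gives that $V$ minimizes $g$, and smoothness of $g$ at $V$ then forces $\nabla g(V)=0$ without invoking convexity at all.
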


\begin{figure}[h]
    \centering
    \includegraphics[width=0.323\textwidth]{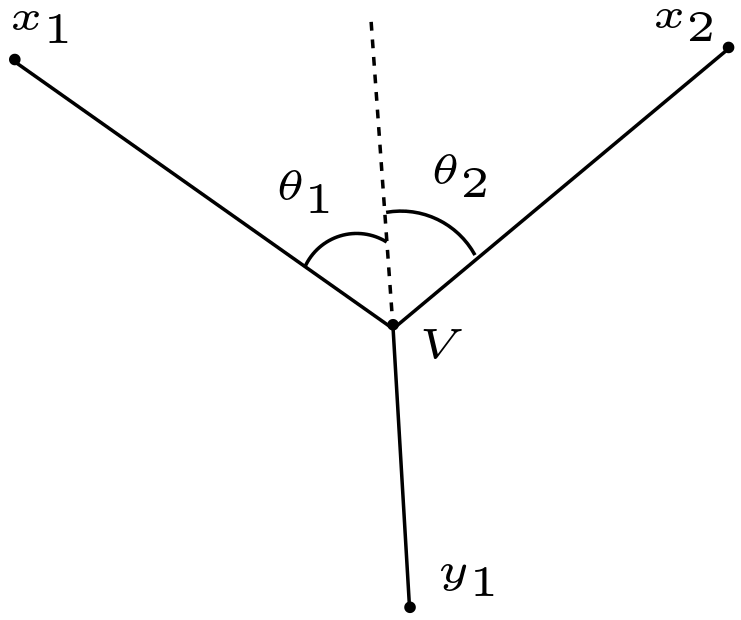}
    \caption{}
    \label{f:angoli}
\end{figure}

\begin{remark}
Once $a_1, a_2$ are fixed, the exponent $\alpha$ determines the aperture of the angles made by the two joining edges. Moreover, if $a_1=a_2$, then we have $\theta_1=\theta_2=\arccos(2^{2\alpha-1}-1)/2.$
\end{remark}

\subsection{The variational formulation}
Now we turn to the Eulerian variational formulation of the optimal branched transport problem. Our aim is to generalize the main objects of the discrete case to suit a continuous framework: we aim to transport an initial measure $\mu_-$ to a target measure $\mu_+$, where now $\mu_-$ and $\mu_+$ are probability measures with compact support. Hence, we need to generalize the notion of oriented weighted graph, the Kirchhoff's laws contraint and the $\alpha$-energy of the graph: to do so we will employ the theory of $1$-dimensional currents, following Xia [\ref{Xia2003}], see also [\ref{PaoliniStepanov}].

\begin{definition}
Let $K\subset$ $\R^d$ denote a convex compact set. If $\mu_-$ and $\mu_+$ are elements of $\mathcal{M}_+(K)$ such that $\Mass(\mu_-)=\Mass(\mu_+)$, we call a \emph{transport path} a $1$-dimensional normal current $T \in \mathbb{N}_1(K)$ with $\partial T= \mu_+ - \mu_-$. We denote by $\TP(b)$ the set of \textit{transport paths} with boundary $b$, that is $$\TP(b):=\{T\in\mathbb{N}_1(K) : \partial T=b\}.$$
\end{definition}

\begin{remark}
Since 1-dimensional normal currents are in one-to-one correspondence with vector-valued Radon measures with distributional divergence which is a measure, then we can always think of a transport path $T$ as a vector-valued measure such that $\text{div}(T)= \mu_- -\mu_+$. The condition $\partial T= \mu_+ - \mu_-$ (or, equivalently, $\text{div}(T)= \mu_- - \mu_+$) is the generalization of the discrete model Kirchhoff's laws.
\end{remark}

\begin{remark}\label{r:graphsdense}
We note that finite, oriented, weighted graphs can be identified with $1$-polyhedral chains of the form \eqref{ee:poly} simply by taking $\theta_i = w(e)$ for every $e \in E(G)$, $\llbracket \sigma_i \rrbracket = \llbracket e_i \rrbracket $ and taking as orientations $\tau_i$ the one given by the tangent vectors to each point $x \in G \subset \R^d$. 
\end{remark}

By Remark \ref{r:graphsdense}, we can now define the cost functional for the variational formulation of the optimal branched transport problem as the lower semicontinuous relaxation of the $\alpha$-energy $\mathbb{E}^\alpha$ with respect to the flat norm. More formally:

\begin{definition}\label{d:lscrelaxmass}
Let $\mathbb{E}^\alpha : \mathbb{P}_1(\R^d) \rightarrow [0,\infty)$ be the $\alpha$-energy defined as in \eqref{e:alphaegy}, having identified the graph $G$ with the corresponding polyhedral 1-chain. For every $T \in \mathbf{F}_1(\mathbb{R}^d)$ we call the $\alpha$-\textit{mass} the functional $\mathbb{M}^{\alpha}: \mathbf{F}_1(\mathbb{R}^d) \rightarrow[0, \infty]$ defined as
$$
\mathbb{M}^{\alpha}(T):=\inf \left\{\liminf _{j \rightarrow \infty} \mathbb{E}^\alpha(G_j): G_j \in \mathbb{P}_1(\mathbb{R}^d) \text { with } \mathbb{F}\left(T-G_j\right) \rightarrow 0\right\}.
$$
\end{definition}

It turns out that the $\alpha$-mass admits the following representation, see [\ref{CDRMS}].

\begin{proposition}\label{p:stuvard}
Given $\alpha \in [0,1)$ and a 1-current $T \in \mathbb{N}_1(K) \cup \mathscr{R}_1(K)$, we have that
$$
\mathbb{M}^\alpha(T)= \begin{cases}\int_E|\theta|^\alpha d \mathcal{H}^1, & \text { if } T=\llbracket E, \tau, \theta \rrbracket \in \mathscr{R}_1(K) ; \\ +\infty, & \text { otherwise. }\end{cases}
$$
\end{proposition}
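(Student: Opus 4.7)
The plan is to split the argument into two cases: (I) $T \in \mathscr{R}_1(K)$ rectifiable, where I must prove both $\leq$ and $\geq$; and (II) $T \in \mathbb{N}_1(K) \setminus \mathscr{R}_1(K)$, where I must prove $\mathbb{M}^\alpha(T) = +\infty$.

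For the upper bound in Case (I), the plan is to construct a recovery sequence. Writing $T = \llbracket E, \tau, \theta\rrbracket$, I would first approximate $\theta$ in $L^1(\mathcal{H}^1\res E)$ by a simple function $\theta_n = \sum_{i=1}^{N_n} c_i^n \mathbbm{1}_{E_i^n}$ using Lusin's theorem plus truncation, which produces $T_n := \sum_i c_i^n \llbracket E_i^n, \tau, 1\rrbracket \to T$ in flat norm with $\int |\theta_n|^\alpha \, d\mathcal{H}^1 \to \int |\theta|^\alpha \, d\mathcal{H}^1$ (by dominated/monotone convergence applied to the concave function $|\cdot|^\alpha$). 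Then, for each multiplicity-one piece $\llbracket E_i^n, \tau, 1 \rrbracket$, I invoke the polyhedral approximation theorem \ref{t:polyapprox} to obtain $P_i^n \in \mathbb{P}_1(\mathbb{R}^d)$ with $\Flat(P_i^n - \llbracket E_i^n,\tau,1\rrbracket) < \varepsilon$ and $\mathbb{M}(P_i^n) \leq \mathcal{H}^1(E_i^n) + \varepsilon$. The polyhedral chain $\sum_i c_i^n P_i^n$ then converges to $T_n$ in flat norm, and by direct computation on polyhedral chains its $\alpha$-energy is bounded by $\sum_i |c_i^n|^\alpha (\mathcal{H}^1(E_i^n) + \varepsilon)$, which yields $\mathbb{M}^\alpha(T) \leq \int_E|\theta|^\alpha\,d\mathcal{H}^1$ by a diagonal extraction.

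For the lower bound in Case (I), I argue by $1$-dimensional slicing. On $0$-currents of the form $\sum_i \lambda_i \delta_{x_i}$ (with distinct $x_i$) define $\mathbb{M}_0^\alpha := \sum_i |\lambda_i|^\alpha$. For any polyhedral chain $G = \sum_j \theta_j \llbracket e_j\rrbracket$ with unit tangents $\tau_j$ and any coordinate projection $\pi_\ell$, the slice $G_y := \langle G,\pi_\ell,y\rangle$ is, for a.e. $y$, a finite atomic $0$-current with multiplicities among the $\pm\theta_j$, and the coarea formula gives
\[
\int_{\mathbb{R}} \mathbb{M}_0^\alpha(G_y)\,dy \;=\; \sum_j |\theta_j|^\alpha \, |\langle \tau_j,e_\ell\rangle| \, \mathcal{H}^1(e_j).
\]
Summing over $\ell = 1,\dots,d$ and using $\sum_\ell|\langle\tau_j,e_\ell\rangle| \geq 1$ yields $\sum_\ell \int \mathbb{M}_0^\alpha(G_y)\,dy \geq \mathbb{E}^\alpha(G)$. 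The key subadditivity $|a+b|^\alpha \leq |a|^\alpha + |b|^\alpha$ (for $\alpha \in [0,1)$) ensures that $\mathbb{M}_0^\alpha$ is lower semicontinuous under flat convergence of $0$-currents: cancellations among atoms can only decrease the $\alpha$-mass. Now given any polyhedral $G_n \to T$ with $\Flat(G_n - T) \to 0$, for each $\ell$ one passes to a subsequence so that $(G_n)_y \to T_y$ in flat norm for a.e.\ $y$ (using that slicing is continuous under flat convergence on a null set of $y$'s, and Proposition \ref{p:simonslice} to identify the limit with the slice of $T$); Fatou combined with the $0$-dimensional lower semicontinuity gives $\liminf_n \mathbb{E}^\alpha(G_n) \geq \sum_\ell\int \mathbb{M}_0^\alpha(T_y)\,dy = \int_E |\theta|^\alpha \sum_\ell|\langle\tau,e_\ell\rangle|\,d\mathcal{H}^1 \geq \int_E |\theta|^\alpha\,d\mathcal{H}^1$ (using the evaluation of slices for rectifiable $T$ from Proposition \ref{p:slice}).

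For Case (II), I argue by contradiction. Suppose $\mathbb{M}^\alpha(T) < +\infty$, with $T$ normal. Take polyhedral $G_n \to T$ in flat norm with $\sup_n \mathbb{E}^\alpha(G_n) < \infty$. By the coarea inequality above, for every coordinate direction $\ell$ the slices $(G_n)_y$ are finite atomic $0$-currents with $\int \mathbb{M}_0^\alpha((G_n)_y)\,dy$ uniformly bounded; by Fatou, a.e.\ $y$ admits a subsequence with $\liminf_n \mathbb{M}_0^\alpha((G_n)_y) < \infty$. Since $\alpha < 1$, any flat-norm limit of $0$-currents with bounded $\mathbb{M}_0^\alpha$ is itself atomic (diffusion of mass over many atoms drives the $\alpha$-mass to infinity at rate $n^{1-\alpha}$), and again Proposition \ref{p:simonslice} identifies the limit with $T_y$. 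Hence every coordinate slice of $T$ is, for a.e.\ $y$, a rectifiable $0$-current. Theorem \ref{t:white99} (in its real-coefficient form, which applies here since $T$ is normal) then forces $T \in \mathscr{R}_1(K)$, contradicting the hypothesis of Case (II).

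The main obstacle I expect is the rigorous lower semicontinuity of $\mathbb{M}_0^\alpha$ under flat convergence of $0$-currents together with the control of cancellations: one must ensure that the bookkeeping of signed multiplicities in the slices passes to the limit correctly, and that the reduction to atomic limits in Case (II) is robust enough to invoke the rectifiability criterion \ref{t:white99}.
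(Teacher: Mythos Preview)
Your overall architecture matches the ingredients the paper highlights (slicing, White's rectifiability criterion), and your upper bound and Case~(II) are essentially correct in spirit. However, there is a genuine gap in your lower bound for Case~(I), and it is exactly the missing third ingredient the paper names: the integral-geometric formula.

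Concretely, your own computation for a polyhedral $G$ and coordinate projections $\pi_\ell$ gives
\[
\sum_{\ell=1}^{d}\int_{\R}\Mass_0^\alpha(G_y^\ell)\,dy \;=\; \sum_j |\theta_j|^\alpha\Big(\sum_\ell|\langle\tau_j,e_\ell\rangle|\Big)\Haus^1(e_j)\;\geq\;\E(G),
\]
using $\sum_\ell|\langle\tau_j,e_\ell\rangle|\geq 1$. This inequality points the \emph{wrong way}: to bound $\liminf_n\E(G_n)$ from below via slices you would need $\E(G_n)\geq \sum_\ell\int\Mass_0^\alpha((G_n)_y^\ell)\,dy$, which is false in general (the factor $\sum_\ell|\langle\tau_j,e_\ell\rangle|$ can be as large as $\sqrt{d}$). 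Your claimed chain $\liminf_n\E(G_n)\geq\sum_\ell\int\Mass_0^\alpha(T_y^\ell)\,dy\geq\int_E|\theta|^\alpha\,d\Haus^1$ therefore breaks at the first step. Using a single direction instead gives only $\int_E|\theta|^\alpha|\langle\tau,e_\ell\rangle|\,d\Haus^1\leq\liminf_n\E(G_n)$, which is not sharp.

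The repair is to average over \emph{all} directions rather than summing over the $d$ coordinate axes. The integral-geometric identity (see [\ref{KP}, Section 2.1.4]) yields, for unit $\tau$, a constant $\beta_d>0$ with $\int_{S^{d-1}}|\langle\tau,v\rangle|\,d\sigma(v)=\beta_d$, hence for every polyhedral $G$
\[
\E(G)\;=\;\beta_d^{-1}\int_{S^{d-1}}\!\int_{\R}\Mass_0^\alpha\big(\langle G,\pi_v,y\rangle\big)\,dy\,d\sigma(v),
\]
an \emph{equality}. Fatou plus the lower semicontinuity of $\Mass_0^\alpha$ on $0$-currents (which you correctly identify) then gives
\[
\liminf_n\E(G_n)\;\geq\;\beta_d^{-1}\int_{S^{d-1}}\!\int_{\R}\Mass_0^\alpha\big(\langle T,\pi_v,y\rangle\big)\,dy\,d\sigma(v)\;=\;\int_E|\theta|^\alpha\,d\Haus^1,
\]
the last equality again by the integral-geometric formula applied to the rectifiable $T$. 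This is precisely the route of [\ref{CDRMS}] that the paper cites; your coordinate-slicing shortcut cannot recover the sharp constant.
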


\begin{remark} It is very important to use the flat norm. Indeed if the lower semicontinuous relaxation had been taken with respect to the convergence in the sense of currents, then it would trivialize to the zero functional. To prove this, it sufficies to consider the current $\llbracket \ell\rrbracket$ associated to a line segment of length 1. It is easy to see that $\ell$ can be approximated in the sense of currents by a sequence of currents $(T_n)_n$, where each $T_n$ is made by $n$ segments of length $1/n^2$ and multiplicity $n$. Note that $\mathbb{E}^{\alpha}(T_n)=n^{\alpha-1}$, converging to $0$ for $n\rightarrow \infty$ since $\alpha < 1$.
\end{remark}

\begin{remark} The proof of Proposition \ref{p:stuvard} is relying mostly on three fundamental ingredients in geometric measure theory: the slicing technique for rectifiable currents, the so-called integralgeometric formula, see [\ref{KP}, Section 2.1.4] and White's rectifiability Theorem [\ref{Whiterectifiabilityflatchains}].
\end{remark}

\begin{remark}
The $\alpha$-mass $\Mass^\alpha$ is a subadditive functional, namely
$$
\mathbb{M}^\alpha(T_1+T_2) \leq \mathbb{M}^\alpha(T_1)+\mathbb{M}^\alpha(T_2) \text { for every } T_1, T_2 \in \mathscr{R}_1(\mathbb{R}^d) \cup \mathbb{N}_1(\mathbb{R}^d) .
$$
Indeed, the inequality is trivial if $T_1$ or $T_2$ is not rectifiable. On the other hand, if $T_i=\llbracket E_i, \tau_i, \theta_i\rrbracket$, $i=1,2$, the multiplicity $\theta$ of the rectifiable current $T_1+T_2$ is obtained as the sum of the multiplicities of $T_1$ and $T_2$ with possible signs, so that $|\theta| \leq |\theta_1|+|\theta_2|$. Hence we deduce that
\begin{equation}\label{e:subadditivity}
\mathbb{M}^\alpha(T_1+T_2) \leq \int_{E_1 \cup E_2}|\theta_1+\theta_2|^\alpha d \mathcal{H}^1 \leq \int_{E_1 \cup E_2}|\theta_1|^\alpha+|\theta_2^\alpha| d \mathcal{H}^1=\mathbb{M}^\alpha(T_1)+\mathbb{M}^\alpha(T_2).
\end{equation}
\end{remark}

\begin{definition}
A current $T$ with finite mass is called \emph{acyclic} if there exists no nontrivial current $S$ such that
$$
\partial S=0 \quad \text { and } \quad \mathbb{M}(T)=\mathbb{M}(T-S)+\mathbb{M}(S).
$$
\end{definition}

\subsection{Existence for the optimal branched transport problem}

We can now state the (Eulerian) \emph{optimal branched transportation problem} with boundary $b=\mu_+-\mu_-$.

\begin{namedproblem}[Optimal Branched Transport]
Find a normal current $T\in \mathbb{N}_1(K)$ which minimizes the $\alpha$-mass $\Mass^\alpha$ among all normal 1-currents $S$ with boundary $\partial S=b$. 
\end{namedproblem}

\begin{proof}
Let $(T_j)_j \in \TP(b)$ be a minimizing sequence for the $\alpha$-mass. Then, by [\ref{MW}] for each $T_j$ there exists a polyhedral chain $G_j$ such that $$\mathbb{M}^\alpha (G_j) \leq \mathbb{M}^\alpha (T_j),\,\,\, \mathbb{M}(\partial G_j) \leq \mathbb{M}(\partial T_j)= \mathbb{M}(b) \text{ and } \mathbb{F}(\partial T_j-\partial G_j)\leq 2^{-j}.$$ One can replace the $G_j$'s with acyclic polyhedral chains $G'_j$ enjoying the above properties, since this operation decreases the $\alpha$-mass, see [\ref{BW1}, Lemma 2.6]. Since, by the Kirchhoff's laws, the multiplicity $\theta$ of each $G'_j$ is bounded by $\mathbb{M}(b)$ we have $\mathbb{M}(G'_j) \leq C\, \mathbb{M}^\alpha (G_j) \leq C \, \mathbb{M}^\alpha (T_j),$ getting a uniform bound. Hence by Proposition \ref{p:compactnessnormal} and lower semicontinuity of the $\alpha$-mass we conclude the proof.
\end{proof}

For notational purposes, we denote the \textit{least transport energy} associated to $b$ as $$\mathbf{E}^{\alpha}(b):= \inf\{\MM(T): T \in \TP(b)\}.$$ We define the set of \textit{optimal transport paths} with boundary $b$ by $$\OTP(b):= \{T\in \TP(b) :\MM(T)=\mathbf{E}^{\alpha}(b)\}.$$ 

The first observation is that the existence of elements with finite $\alpha$-mass in $\TP(b)$ is not guaranteed in general. For example in [\ref{DevSol}] it is proved that if $\alpha \le 1- 1/d$ then there are boundaries $b$ such that $\OTP(b)$ degenerates to the set of all currents $T$ with boundary $\partial T = b$, since there is no 1-current $T$ with $\partial T = b$ and $\MM(T)<\infty$. On the other hand, under the assumption $\alpha > 1- 1/d$, then the existence of traffic paths with finite $\alpha$-mass is guaranteed and, moreover, there is also a quantitative upper bound on the minimal transport energy.

\begin{theorem}[Existence of transports with finite cost]
Let $K \subset \R^d$ be a convex and compact set with diameter $L$, $\alpha>1-1/d$ and $\mu_{-}, \mu_{+} \in \mathcal{M}_{+}(\mathbb{R}^d)$ be two measures with equal mass. Then there exists a normal $1$-current $T \in \mathbb{N}_1(K)$ such that $\partial T = \mu_+ - \mu_-$ and
$$
\mathbb{M}^\alpha(T) \leq C_{\alpha, d} \, L\, \| \mu_+\|^\alpha ,
$$
where $C_{\alpha, d}$ is a geometric constant depending only on the exponent $\alpha$ and the dimension $d$.
\end{theorem}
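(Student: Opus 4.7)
The plan is to reduce to the case where $K$ is a closed cube of side $L$ and to construct a transport by a dyadic approximation scheme that is standard in optimal branched transport (see Xia). Since both measures have the same total mass $M=\|\mu_+\|$, and by linearity of $\partial$, the explicit transport will be obtained level by level, and then passed to the limit via the compactness provided by Proposition \ref{p:compactnessnormal} and the lower semicontinuity of $\Mass$ stated in Proposition \ref{p:stuvard} (or rather its relaxation definition).

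First I would fix a dyadic decomposition of $K$: at level $n\in\N$, partition $K$ into $2^{nd}$ essentially disjoint sub-cubes $\{Q^n_i\}_{i=1}^{2^{nd}}$ of side $L/2^n$, and pick a center $x^n_i$ for each. Define the atomic approximations
\[
\mu_\pm^n := \sum_{i=1}^{2^{nd}} \mu_\pm(Q^n_i)\,\delta_{x^n_i},
\]
so that $\mu_\pm^n\wstarto \mu_\pm$ as $n\to\infty$ and $\|\mu_\pm^n\|=M$ for every $n$. The key step is to build, for each $n$, an explicit polyhedral $1$-chain $T_n$ with $\partial T_n = \mu_+^n - \mu_-^n$ and to control $\Mass(T_n)$. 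I would do this by telescoping: set $\mu^0_\pm=M\delta_{x_0}$ for some fixed $x_0\in K$ and write
\[
T_n = S_n^- + S_n^+, \qquad S_n^\pm = \sum_{k=0}^{n-1} R_k^\pm,
\]
where $R_k^\pm$ is a polyhedral transport between $\mu^k_\pm$ and $\mu^{k+1}_\pm$ obtained by connecting each $x^k_i$ to the centers $x^{k+1}_j$ of its $2^d$ dyadic children by oriented segments whose multiplicity equals $\mu_\pm(Q^{k+1}_j)$. Then $\partial R_k^\pm = \mu^{k+1}_\pm-\mu^k_\pm$ and the Kirchhoff laws are automatically satisfied.

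The main estimate is then purely numerical. Each segment in $R_k^\pm$ has length at most $c_d L/2^k$ (diameter of a dyadic cube), so
\[
\Mass(R_k^\pm) \le c_d\,\frac{L}{2^k}\sum_{j=1}^{2^{(k+1)d}} \mu_\pm(Q^{k+1}_j)^\alpha.
\]
By the concavity of $t\mapsto t^\alpha$ and Jensen's inequality applied to $N=2^{(k+1)d}$ cubes with total mass $M$,
\[
\sum_j \mu_\pm(Q^{k+1}_j)^\alpha \le N^{1-\alpha}M^\alpha = 2^{(k+1)d(1-\alpha)}M^\alpha,
\]
hence $\Mass(R_k^\pm)\le C_{\alpha,d}\,L\,M^\alpha\,2^{k(d(1-\alpha)-1)}$. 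By the subadditivity \eqref{e:subadditivity} of $\Mass$,
\[
\Mass(T_n) \le 2\sum_{k=0}^{n-1} C_{\alpha,d}\,L\,M^\alpha\,2^{k(d(1-\alpha)-1)}.
\]
This is where the threshold $\alpha>1-1/d$ enters: the exponent $d(1-\alpha)-1$ is strictly negative, so the geometric series converges and we obtain a bound $\Mass(T_n)\le C'_{\alpha,d}\,L\,M^\alpha$ independent of $n$.

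Finally, since $\Mass(\partial T_n)=\Mass(\mu^n_+-\mu^n_-)\le 2M$ is also uniformly bounded, Proposition \ref{p:compactnessnormal} gives, up to subsequences, a weak$^*$ limit $T\in\mathbb{N}_1(K)$ with $\partial T=\mu_+-\mu_-$ (using that $\mu^n_\pm\wstarto\mu_\pm$, and that $\partial$ is weak$^*$-continuous by Remark \ref{r:lsc}). The lower semicontinuity of $\Mass$ with respect to flat convergence, together with the fact that the $T_n$ are already polyhedral so $\Mass(T_n)=\mathbb{E}^\alpha(T_n)$, yields $\Mass(T)\le\liminf_n\Mass(T_n)\le C_{\alpha,d}\,L\,M^\alpha$. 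The main obstacle is really the bookkeeping in the dyadic construction and identifying the critical exponent: once the Jensen bound is in place, the threshold $\alpha>1-1/d$ is exactly what makes the series summable, and everything else is a routine compactness-plus-lower-semicontinuity argument.
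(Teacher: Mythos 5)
Your argument is essentially the paper's own dyadic construction; the only cosmetic difference is that you telescope both $\mu_+$ and $\mu_-$ down to a common atom $M\delta_{x_0}$, whereas the paper first reduces (by a separate remark, using the subadditivity \eqref{e:subadditivity}) to the case $\mu_-=\delta_0$ and then only telescopes $\mu_+$. The Jensen bound and the identification of the threshold $d(1-\alpha)-1<0\iff\alpha>1-1/d$ match the paper's \eqref{e:existence2} exactly.

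One genuine, though small, gap: the quantity you bound by Jensen, namely $c_d(L/2^k)\sum_j\mu_\pm(Q_j^{k+1})^\alpha$, is the $\alpha$-mass $\mathbb{M}^\alpha(R_k^\pm)$, not the mass $\mathbb{M}(R_k^\pm)$ (in the paper's notation $\Mass$ means $\mathbb{M}$, so you are conflating $\Mass$ and $\MM$ throughout). Your display therefore gives $\mathbb{M}^\alpha(T_n)\le C_{\alpha,d}\,L\,M^\alpha$, which is exactly the desired cost bound, but to invoke Proposition \ref{p:compactnessnormal} you must separately supply a uniform bound on $\mathbb{M}(T_n)$, which you never establish. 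That bound is even easier and requires no Jensen and no restriction on $\alpha$: each segment of $R_k^\pm$ has length $\le c_dL2^{-k}$ and the multiplicities sum to $M$, so $\mathbb{M}(R_k^\pm)\le c_dLM2^{-k}$, and the sum over $k$ is a convergent geometric series (this is precisely how the paper obtains Cauchy-ness in mass in \eqref{e:existence1}). Alternatively one could deduce $\mathbb{M}(T_n)\le M^{1-\alpha}\,\mathbb{M}^\alpha(T_n)$ from the fact that the multiplicities of $T_n$ are bounded by $M$. With either fix the remainder of your argument is sound, including the final step, since $\mathbb{M}^\alpha$ is lower semicontinuous with respect to flat convergence by Definition \ref{d:lscrelaxmass} and $\mathbb{M}^\alpha=\mathbb{E}^\alpha$ on polyhedral chains. (Also note the sign slip: you need $T_n=S_n^+-S_n^-$ rather than $S_n^-+S_n^+$ to get $\partial T_n=\mu_+^n-\mu_-^n$.)
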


\begin{remark}
We aim to transport $\mu_-$ to $\mu_+$ and without loss of generality we assume $\mu_-:= \delta_0$, since if we can transport $\delta_0$ to $\mu_+$ with finite $\alpha$-mass $\mathbb{M}^{\alpha}$, then we can start from an arbitrary measure $\mu_-$ and transport it to $\delta_0$ again with finite $\mathbb{M}^{\alpha}(T)$. Concatenating the two transports we get that the sum of the two traffic paths will have cancellation of the boundary $\delta_0$ and, by \eqref{e:subadditivity}, the $\alpha$-mass $\mathbb{M}^{\alpha}$ of the sum of the two traffic paths will still have finite cost.
\end{remark}

\begin{proof}
Assume without loss of generality that $K :=[0,1]^d$, $\mu_-:= \delta_0$ and $\mu_+$ is a probability measure. Consider a dyadic decomposition of $K$ and up to a translation we may assume that $\mu_+(\partial Q)=0$ for any dyadic cube $Q$ of any generation, see [\ref{CDRMcpam}, Lemma 3.1].

For every $n$ let $$\mu_+^n:= \sum_{i=1}^{2^{nd}}a_i^{(n)} \delta_{x_i^{(n)}},$$ where $a_i^{(n)}:= \mu_+(Q_i^{(n)})$ with $Q_i^{(n)}$ dyadic cubes of $n$-th generation and $x_i^{(n)}$ representing the center of each $Q_i^{(n)}$. 

Let $P_n$ be the polyhedral chain transporting $\mu_+^{n-1}$ onto $\mu_+^{n}$ by connecting each $x_i^{(n-1)}$ to the centers $\{x_j^{(n)}\}_{j=1}^{2^d}$ of the dyadic cubes of the $n$-th generation contained in $Q_i^{(n-1)}$, see Figure \ref{f:dyadic}.

We have \begin{equation}\label{e:existence1}
\mathbb{M}(P_n) = \sum_{j=1}^{2^{nd}} c_d \, 2^{-n} \, a_j^{(n) }= c_d \, 2^{-n},
\end{equation}
where $c_d$ is a constant depending only on the dimension $d$. The last passage follows from the fact $\sum_{j=1}^{2^{nd}} a_j^{(n)}=1$ since $\mu_+^n$ is a probability measure. Moreover, we can write \begin{equation}\label{e:existence2}
\begin{aligned}
\mathbb{E}^\alpha(P_n) =\sum_{j=1}^{2^{nd}} c_d \, 2^{-n} \, (a_j^{(n)})^{\alpha} &= c_d \, 2^{-n}2^{nd}\Big(2^{-nd}\sum_{j=1}^{2^{nd}}(a_j^{(n)})^{\alpha}\Big) \\
&\leq c_d \, 2^{n(d-1)}\Big(2^{-nd}\underbrace{\sum_{j=1}^{2^{nd}}a_j^{(n)}}_{=1}\Big)^{\alpha} = c_d \, 2^{n(d-1-d\alpha)},
\end{aligned}
\end{equation} where $d-1-d\alpha <0$ since by assumption $\alpha > 1- 1/d$.
By \eqref{e:existence1} we have that $T_m:= \sum_{n=1}^{m}P_n$ is a Cauchy sequence in mass and, a fortiori, with respect to the flat norm. Hence, there exists a flat chain $T$ such that $T_m \stackrel{\Flat}{\rightarrow} T$ and $\mathbb{M}(T)$ is finite\footnote{Since the masses of the partial sums were equibounded and $\mathbb{M}$ is lower semicontinuous with respect to the $\Flat$-convergence.}. Moreover, we get $\mu_+^m - \mu_- = \partial T_m $ and, by continuity of the boundary operator $$\partial T_m \stackrel{\Flat}{\rightarrow} \partial T.$$ Since $\mu_+^m \stackrel{\Flat}{\rightarrow} \mu_+$, we conclude that $$\partial T = \mu_+ - \mu_-.$$ 

Finally, we show $\mathbb{M}^{\alpha}(T) <\infty$. By \eqref{e:existence2}, Definition \ref{d:lscrelaxmass} and subadditivity of $\mathbb{E}^{\alpha}$ we conclude:

$$\mathbb{M}^{\alpha}(T) \leq \liminf _{m \rightarrow \infty} \big(\mathbb{E}^\alpha(T_m)\big) \le \lim_{m \rightarrow \infty}\left(\sum_{n=1}^m \mathbb{E}^\alpha(P_n)\right)\leq C_{\alpha, d}.$$
\end{proof}

\begin{remark}
The constant $C_{\alpha, d}$ tends to $\infty$ when $\alpha \rightarrow (1- 1/d)$. Clearly, if $\alpha \le 1- 1/d$ then the series in \eqref{e:existence2} diverges, but this would not tell us that the threshold $\alpha > 1- 1/d$ is sharp. Nevertheless, it turns out that $\alpha > 1- 1/d$ is indeed a sharp bound for the existene of a transport path with finite cost and the ideas used to prove it are not far from the above argument; we refer to [\ref{DevSol}] for a detailed discussion.
\end{remark}

\begin{figure}[h]
    \centering
    \includegraphics[width=0.4\textwidth]{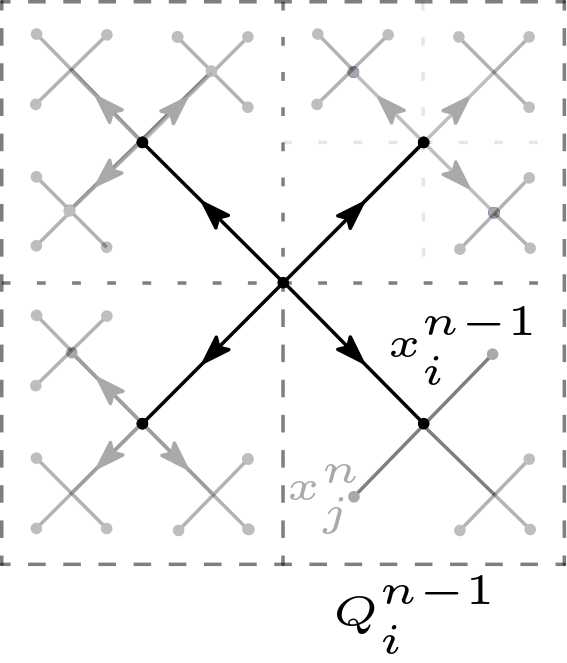}
    \caption{}
    \label{f:dyadic}
\end{figure}

\subsubsection{Useful properties}

We collect some results in optimal branched transport theory that will be useful in the sequel.

We now introduce some language from the \textit{Lagrangian formulation} of the optimal branched transport problem. In this formulation, see [\ref{MSM}], transport paths are modeled as probability measures on the space of Lipschitz curves, where each curve represents the trajectory of a single particle. The Eulerian and the Lagrangian formulations have been proved to be equivalent (see [\ref{Pegon}]) and, in particular, the link between these two formulations of the optimal branched transport problem is encoded in a deep result due to Smirnov on the structure of acyclic normal 1-currents, see [\ref{Smirnov}].

\begin{definition}
We denote by Lip the space of 1-Lipschitz curves $\gamma:[0, \infty) \rightarrow \mathbb{R}^d$. For $\gamma \in$ Lip we denote by $T_0(\gamma)$ the value
$$T_0(\gamma):=\sup \{t: \gamma \text{ is constant on } [0, t]\}$$
and by $T_{\infty}(\gamma)$ the (possibly infinite) value
$$T_{\infty}(\gamma):=\inf \{t: \gamma \text{  is constant on } [t, \infty)\}.$$ Given a Lipschitz curve with finite length $\gamma:[0, \infty) \rightarrow \mathbb{R}^d$, we denote $$\gamma(\infty):=\lim _{t \rightarrow \infty} \gamma(t).$$
\end{definition}

\begin{definition}
 We say that a curve $\gamma \in$ Lip of finite length is \textit{simple} if $\gamma(s) \neq \gamma(t)$ for every $T_0(\gamma) \leq s<$ $t \leq T_{\infty}(\gamma)$ such that $\gamma$ is nonconstant in the interval $[s, t]$.
\end{definition}

To a Lipschitz simple curve with finite length $\gamma:[0, \infty) \rightarrow \mathbb{R}^d$, we associate canonically the following rectifiable 1-current
$$
R_\gamma:=\left\llbracket\operatorname{Im}(\gamma), \frac{\gamma^{\prime}}{\left|\gamma^{\prime}\right|}, 1\right\rrbracket .
$$

\begin{remark}
It follows from \eqref{e:massrectif} that $\mathbb{M}\left(R_\gamma\right)=\mathcal{H}^1(\operatorname{Im}(\gamma))$
and it is immediate to check that $
\partial R_\gamma=\delta_{\gamma(\infty)}-\delta_{\gamma(0)}.$
Since $\gamma$ is simple, if it is also nonconstant, then $\gamma(\infty) \neq \gamma(0)$ and $\mathbb{M}\left(\partial R_\gamma\right)=2$.
\end{remark}

\begin{definition}\label{d:albmarch}
Let $I$ be a finite measure space and, for each $\lambda \in I$, let $T_\lambda$ be a 1-current such that \begin{itemize}
\item[(a)] the function $\lambda \mapsto \langle T_\lambda , \omega \rangle$ is measurable for every $\omega \in \mathcal{D}^1(\R^d),$
\item[(b)] $\int_I\mathbb{M}(T_{\lambda}) \,d\lambda < \infty.$
\end{itemize}
Then we denote by $T:=\int_I T_\lambda \,d\lambda$ the 1-current defined by $$\langle T, \omega \rangle = \int_I\langle T_\lambda , \omega \rangle \, d\lambda \, \text{ for every } \omega \in \mathcal{D}^1(\R^d).$$
\end{definition}

\begin{remark}
Note that assumption (a) and the definition of mass imply that the function $\lambda \mapsto \mathbb{M}(T_{\lambda})$ is measurable, thus the integral in assumption (b) is well-defined.
\end{remark}

\begin{definition}
Let $T \in \mathbb{N}_1(\mathbb{R}^d)$ and let $\pi \in \mathcal{M}_{+}$(Lip) be a finite positive measure supported on the set of curves with finite length such that
\begin{equation}\label{e:integration}
T=\int_{\text {Lip }} R_\gamma \, d \pi(\gamma),
\end{equation}
in the sense of Definition \ref{d:albmarch}. We say that $\pi$ is a \textit{good decomposition} of $T$ if $\pi$ is supported on nonconstant, simple curves and satisfies the equalities
$$
\begin{aligned}
&\mathbb{M}(T)=\int_{\text{Lip}} \mathbb{M}(R_\gamma) d \pi(\gamma),\\
&\mathbb{M}(\partial T)=\int_{\text{Lip}} \mathbb{M}(\partial R_\gamma) d \pi(\gamma)=2 \pi(\text{Lip}).
\end{aligned}
$$
\end{definition}

We recall a fundamental result by Smirnov [\ref{Smirnov}], which establishes that every acyclic normal $1$-current can be written as a weighted average of simple Lipschitz curves, \textit{i.e.} it admits a good decomposition.

\begin{theorem}\label{t:smirnov}
Let $T \in \mathbb{N}_1(\mathbb{R}^d)$ be an acyclic normal 1-current. Then there is a Borel finite measure $\pi$ on Lip such that $T$ can be decomposed as
$$
T=\int_{\mathrm{Lip}} R_\gamma \,d \pi(\gamma)
$$
and $\pi$ is a good decomposition of $T$.
\end{theorem}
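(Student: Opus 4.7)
The plan is to decompose $T$ by approximating it with integer polyhedral 1-chains, decomposing each polyhedron into a finite sum of simple paths and cycles via a greedy graph-theoretic recipe, and then passing to the limit via weak compactness of measures on $\mathrm{Lip}$. Acyclicity of $T$ enters only at the end, where it rules out the survival of closed components and self-intersections in the limiting measure.

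First, I would apply the polyhedral approximation theorem (Theorem~\ref{t:polyapprox}) to obtain a sequence $P_n \in \mathbb{P}_1(\mathbb{R}^d)$ of integer polyhedral 1-chains with $\Flat(T - P_n) \to 0$, $\mathbb{M}(P_n) \le \mathbb{M}(T) + 1/n$ and $\mathbb{M}(\partial P_n) \le \mathbb{M}(\partial T) + 1/n$. For each $P_n$, viewed as an oriented weighted graph, a greedy argument produces a decomposition
\[
P_n \;=\; \sum_i m_i^{(n)} \, \llbracket \gamma_i^{(n)} \rrbracket \;+\; C_n ,
\]
in which the $\gamma_i^{(n)}$ are simple oriented paths connecting atoms of $(\partial P_n)^-$ to atoms of $(\partial P_n)^+$ and $C_n$ is a sum of elementary closed loops. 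The recipe is: pick an atom of the current negative boundary, follow consistently oriented outgoing edges until either an atom of the positive boundary is reached (record the path weighted by the minimum multiplicity along it and subtract) or a vertex is revisited (in which case a cycle is extracted and subtracted). Each iteration strictly decreases the total mass, so the procedure terminates.

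Next, associate to $P_n$ the finite atomic measure $\pi_n := \sum_i m_i^{(n)} \delta_{\tilde\gamma_i^{(n)}}$ on $\mathrm{Lip}$, after reparametrizing each $\gamma_i^{(n)}$ to be $1$-Lipschitz with a constant tail beyond its length. Since $2 \pi_n(\mathrm{Lip}) \le \mathbb{M}(\partial P_n)$ is uniformly bounded, the curves are supported in a common compact set, and each has length at most $\mathbb{M}(P_n)$, the family $\{\pi_n\}$ is tight by Ascoli--Arzelà. Prokhorov's theorem then yields, along a subsequence, $\pi_n \stackrel{*}{\rightharpoonup} \pi$ for some finite positive measure $\pi$ on $\mathrm{Lip}$. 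Testing the identity $P_n - C_n = \int_{\mathrm{Lip}} R_\gamma \, d\pi_n(\gamma)$ against a fixed $\omega \in \mathcal{D}^1(\mathbb{R}^d)$ produces a bounded continuous functional $\gamma \mapsto \langle R_\gamma, \omega \rangle$ on $\mathrm{Lip}$, so passing to the limit yields $T - C = \int_{\mathrm{Lip}} R_\gamma \, d\pi(\gamma)$, where $C$ is the flat limit (along a further subsequence) of the $C_n$, which have equibounded mass.

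The hard part will be using acyclicity of $T$ to conclude that $C=0$ and that the decomposition is good. For the first, each $C_n$ is closed and the splitting $P_n = (P_n - C_n) + C_n$ is mass-additive by construction, so lower semicontinuity of the mass on both summands combined with $\mathbb{M}(P_n) \to \mathbb{M}(T)$ forces $\mathbb{M}(T) = \mathbb{M}(T - C) + \mathbb{M}(C)$; acyclicity then yields $C = 0$. For the good-decomposition equalities, lower semicontinuity gives $\mathbb{M}(T) \le \int \mathbb{M}(R_\gamma)\, d\pi$ and $\mathbb{M}(\partial T) \le \int \mathbb{M}(\partial R_\gamma)\, d\pi$, while the opposite inequalities follow from the subadditivity of mass under superposition. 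The most delicate point, and the technical heart of Smirnov's original argument, is showing that $\pi$-a.e.\ curve is simple and nonconstant: a measurable selection argument removes loops from each $\gamma$ in a candidate positive-measure set of self-intersecting curves, producing a nontrivial closed current $S$ with $\mathbb{M}(T) = \mathbb{M}(T - S) + \mathbb{M}(S)$, which would contradict acyclicity of $T$.
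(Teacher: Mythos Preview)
The paper does not prove Theorem~\ref{t:smirnov}; it is merely recalled as a fundamental result of Smirnov and cited without argument. There is therefore no proof in the paper to compare your proposal against.

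That said, your sketch follows the broad architecture of the standard proofs (Smirnov's original, or the variant in Paolini--Stepanov), and the overall plan is sound. A few points deserve attention. First, Theorem~\ref{t:polyapprox} as stated in the paper applies to \emph{integral} currents, while $T$ here is only normal; you need the real-coefficient polyhedral approximation (the paper records that $\mathbf{F}_k(K)=\overline{\mathbb{P}_k(K)}^{\Flat_K}$, which is what you actually want), and your greedy decomposition should accordingly produce real, not integer, weights. Second, you have the two inequalities for the good-decomposition identities reversed: $\mathbb{M}(T)\le\int\mathbb{M}(R_\gamma)\,d\pi$ is the trivial direction, following from subadditivity of mass under superposition, whereas the reverse inequality is the substantive one and comes from lower semicontinuity of $\gamma\mapsto\mathbb{M}(R_\gamma)$ together with $\int\mathbb{M}(R_\gamma)\,d\pi_n=\mathbb{M}(P_n-C_n)\to\mathbb{M}(T)$. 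Finally, your treatment of the cyclic part $C$ and of simplicity of $\pi$-a.e.\ curve is where the genuine work lies; your outline is correct in spirit but would require the careful measurable-selection and no-cancellation arguments that form the technical core of Smirnov's paper.
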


\begin{proposition}{\normalfont [\ref{PaoliniStepanov}, Theorem 10.1]}\label{p:paolstep}
Let $\mu_{-}, \mu_{+} \in \mathcal{M}_{+}(\mathbb{R}^d)$ and $T \in \OTP(\mu_+-\mu_-)$ with finite $\alpha$-mass. Then $T$ is acyclic.
\end{proposition}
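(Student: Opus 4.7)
The plan is to argue by contradiction, exploiting the rectifiability of $T$ guaranteed by finite $\alpha$-mass together with the strict monotonicity of $x \mapsto x^\alpha$. Suppose $T$ is not acyclic. Then there exists a nontrivial normal $1$-current $S$ with $\partial S=0$ and $\mathbb{M}(T)=\mathbb{M}(T-S)+\mathbb{M}(S)$. The natural competitor is $T-S$: since $\partial(T-S)=\partial T-\partial S=\partial T=\mu_+-\mu_-$ and the mass of $T-S$ is finite, $T-S$ is a transport path with the same boundary as $T$. The goal is therefore to show that $\mathbb{M}^\alpha(T-S)<\mathbb{M}^\alpha(T)$, which will contradict $T\in\OTP(\mu_+-\mu_-)$.

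The first step is to extract the structure of $S$ forced by the no-cancellation equality $\mathbb{M}(T)=\mathbb{M}(T-S)+\mathbb{M}(S)$. Since $\mathbb{M}^\alpha(T)<\infty$, Proposition \ref{p:stuvard} gives that $T$ is rectifiable, $T=\llbracket E,\tau,\theta\rrbracket$, and the associated vector-valued measure is $\tau\,\theta\,\mathcal{H}^1\res E$ with total variation $|\theta|\,\mathcal{H}^1\res E$. The equality $\|T\|=\|T-S\|+\|S\|$ (as total variations of vector measures) forces, by the $\mathbb{R}^d$-valued Radon-Nikodym/polar decomposition argument, that $S$ is absolutely continuous with respect to $\|T\|$ and that the density vector of $S$ is a.e. a scalar multiple of $\tau$ of the same sign as $\theta$. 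In our notation this means $S$ is rectifiable with the same underlying set, orientation and a multiplicity $\theta_S$ satisfying
\[
\theta_S(x)\,\theta(x)\ge 0\quad\text{and}\quad |\theta_S(x)|\le|\theta(x)|\qquad\mathcal{H}^1\text{-a.e. on }E,
\]
so that $T-S=\llbracket E,\tau,\theta-\theta_S\rrbracket$ with $|\theta-\theta_S|=|\theta|-|\theta_S|$ a.e. This step is the main technical obstacle: although intuitive, it requires care to deduce that a general normal current $S$ with no a priori rectifiability must inherit the rectifiable structure of $T$ from the mass additivity alone.

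Once this structural lemma is in place, the conclusion is immediate from strict monotonicity of $x\mapsto x^\alpha$ for $\alpha\in(0,1)$: by Proposition \ref{p:stuvard},
\[
\mathbb{M}^\alpha(T-S)=\int_E\bigl(|\theta|-|\theta_S|\bigr)^\alpha\,d\mathcal{H}^1
\le \int_E|\theta|^\alpha\,d\mathcal{H}^1=\mathbb{M}^\alpha(T),
\]
with strict inequality on the set $\{|\theta_S|>0\}$, which has positive $\mathcal{H}^1$-measure because $S$ is nontrivial and rectifiable. Hence $\mathbb{M}^\alpha(T-S)<\mathbb{M}^\alpha(T)$, contradicting $T\in\OTP(\mu_+-\mu_-)$, and proving that $T$ must be acyclic. (The degenerate case $\alpha=0$ can either be excluded or handled separately by observing that any finite $\mathbb{M}^0$-minimizer with the given boundary cannot contain a nontrivial closed subcurrent without strictly decreasing the length of its support.)
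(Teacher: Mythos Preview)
The paper does not give its own proof of this proposition; it is quoted from [\ref{PaoliniStepanov}, Theorem 10.1] and used as a black box. Your argument is correct and is essentially the standard one.

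Two small points to make it airtight. First, the passage from the scalar identity $\mathbb{M}(T)=\mathbb{M}(T-S)+\mathbb{M}(S)$ to the identity of measures $\|T\|=\|T-S\|+\|S\|$ deserves one line of justification: since $\|T\|\le\|T-S\|+\|S\|$ always holds and both sides have the same finite total mass, equality must hold as measures. Second, once $\|S\|\le\|T\|=|\theta|\,\mathcal{H}^1\res E$ is known, the pointwise equality case of the triangle inequality in $\R^d$ for the vector densities $\theta\tau$ and $g\sigma$ (with $g$ the Radon--Nikodym density of $\|S\|$ and $\sigma$ the polar of $S$) forces $g\sigma=c\,\theta\tau$ with $0\le c\le 1$ at $\mathcal{H}^1$-a.e.\ point; this is exactly the structural lemma you flag, and your sketch of it is accurate, so $S=\llbracket E,\tau,\theta_S\rrbracket$ with $\theta_S=c\theta$. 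The strict inequality $\mathbb{M}^\alpha(T-S)<\mathbb{M}^\alpha(T)$ then follows for $\alpha\in(0,1)$ exactly as you wrote, and your caveat about $\alpha=0$ is appropriate since strict monotonicity of $t\mapsto t^\alpha$ fails there.
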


\begin{remark}
By Proposition \ref{p:paolstep} any optimal transport path admits a good decomposition.
\end{remark}

We recall some useful properties of good decompositions and we refer to [\ref{CDRMcalcvar}] for a proof.

\begin{proposition}\label{pp:gooddec}
If $T \in \mathbb{N}_1(\mathbb{R}^d)$ has a good decomposition $\pi$, the following statements hold:
\begin{enumerate}
\item The positive and the negative parts of the signed measure $\partial T$ are
\begin{equation}
\partial_{-} T=\int_{\text{Lip}} \delta_{\gamma(0)} \,d \pi(\gamma) \,\,\text { and } \,\, \partial_{+} T=\int_{\text{Lip}} \delta_{\gamma(\infty)} \,d \pi(\gamma).
\end{equation}
\item If $T=\llbracket E, \tau, \theta \rrbracket$ is rectifiable, then
\begin{equation}
|\theta(x)|=\pi\left(\{\gamma: x \in \operatorname{Im}(\gamma)\}\right) \text { for } \mathcal{H}^1 \text {-a.e. } x \in E.
\end{equation}
\item For every $\pi^{\prime}$ such that $\pi^{\prime}(A) \leq \pi(A)$ for every Borel set $A$, the representation
\begin{equation}
T^{\prime}:=\int_{\text{Lip }} R_\gamma \,d \pi^{\prime}(\gamma)
\end{equation}
is a good decomposition of $T^{\prime}$. Moreover, if $T=\llbracket E, \tau, \theta \rrbracket$ is rectifiable, then $T^{\prime}$ can be written as $T^{\prime}=\llbracket E, \theta^{\prime}, \tau \rrbracket$ with $\theta^{\prime} \leq \min \left\{\theta, \pi^{\prime}(\text{Lip})\right\}$.
\end{enumerate}
\end{proposition}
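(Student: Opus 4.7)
My plan is to leverage the defining equalities of a good decomposition—that mass and boundary mass are preserved under the integral representation—to force pointwise identifications, working statement by statement.

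\textbf{For (1).} Using linearity and continuity of the boundary operator together with the identity $\partial R_\gamma = \delta_{\gamma(\infty)} - \delta_{\gamma(0)}$, which holds for the nonconstant simple curves forming a $\pi$-full-measure set, I would write $\partial T = \mu_+ - \mu_-$ where $\mu_+ := \int \delta_{\gamma(\infty)} d\pi$ and $\mu_- := \int \delta_{\gamma(0)} d\pi$. Both are positive measures of total mass $\pi(\mathrm{Lip})$. By the good-decomposition identity $\mathbb{M}(\partial T) = 2\pi(\mathrm{Lip})$, the sum of their masses equals $\mathbb{M}(\mu_+ - \mu_-)$; since in general $\mathbb{M}(\mu_+ - \mu_-) \leq \mathbb{M}(\mu_+) + \mathbb{M}(\mu_-)$ with equality if and only if $\mu_+ \perp \mu_-$, this forces mutual singularity, so $(\mu_+, \mu_-)$ is the Jordan decomposition of $\partial T$.

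\textbf{For (2).} I would introduce the auxiliary positive Borel measure $\nu(A) := \int_{\mathrm{Lip}} \mathcal{H}^1(A \cap \operatorname{Im}(\gamma)) \, d\pi(\gamma)$. Testing $T$ against forms $\omega \in \mathcal{D}^1$ with $\operatorname{supp}(\omega) \subset A$ and $\|\omega\|^* \leq 1$ gives, by the triangle inequality for the duality pairing applied inside the $\pi$-integral, $\|T\|(A) \leq \nu(A)$ for every Borel $A$. The good-decomposition equality then yields $\|T\|(\mathbb{R}^d) = \mathbb{M}(T) = \int \mathbb{M}(R_\gamma) d\pi = \nu(\mathbb{R}^d)$, upgrading the inequality to $\|T\| = \nu$ as measures. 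Because $\pi$-a.e.\ curve is simple, a Fubini/unfolding computation evaluates $\nu(A) = \int_A \pi(\{\gamma : x \in \operatorname{Im}(\gamma)\}) \, d\mathcal{H}^1(x)$. Comparing with $\|T\|(A) = \int_A |\theta| \, d\mathcal{H}^1$ for rectifiable $T$ yields the desired pointwise identity.

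\textbf{For (3).} Setting $T'' := \int R_\gamma \, d(\pi - \pi')$, additivity of the integral representation gives $T = T' + T''$. The inequality $\|S\| \leq \int \mathcal{H}^1 \res \operatorname{Im}(\gamma) \, d\rho$ established in the proof of (2) applies to both $(T',\pi')$ and $(T'',\pi-\pi')$, so combined with subadditivity of mass,
\[
\mathbb{M}(T) \leq \mathbb{M}(T') + \mathbb{M}(T'') \leq \int \mathbb{M}(R_\gamma) d\pi' + \int \mathbb{M}(R_\gamma) d(\pi-\pi') = \mathbb{M}(T),
\]
which forces $\mathbb{M}(T') = \int \mathbb{M}(R_\gamma) d\pi'$. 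An analogous splitting for $\partial T$, using (1), yields $\mathbb{M}(\partial T') = 2\pi'(\mathrm{Lip})$; since $\pi'$ inherits from $\pi$ concentration on nonconstant simple curves, $\pi'$ is a good decomposition of $T'$. Applying (2) to $T'$ gives $|\theta'(x)| = \pi'(\{\gamma : x \in \operatorname{Im}(\gamma)\})$, which is majorized both by $\pi'(\mathrm{Lip})$ and, by monotonicity of $\pi \mapsto \pi(\cdot)$, by $|\theta(x)|$.

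\textbf{Main obstacle.} The delicate point lies in (2): passing from the global mass equality to a pointwise density identification requires ruling out orientation cancellation among the curves through a given $x \in E$. The good-decomposition mass equality is exactly what prevents such cancellation (any cancellation would make $\mathbb{M}(T) < \int \mathbb{M}(R_\gamma) d\pi$), and encoding this via the auxiliary measure $\nu$ together with the equality case of the triangle inequality is the crux of the argument; it is also what underlies the nonnegativity of $\theta'$ in (3), so the whole proof hinges on this rigidity.
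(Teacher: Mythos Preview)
The paper does not actually prove this proposition: it simply states the result and refers the reader to [\ref{CDRMcalcvar}] for the proof. Your argument is correct and is in fact the standard one underlying that reference; the rigidity mechanism you identify (global mass equality forcing no cancellation, encoded via the auxiliary measure $\nu$) is precisely the key point.

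Two small technicalities worth flagging. In (2), the Fubini step $\nu(A)=\int_A \pi(\{\gamma: x\in\operatorname{Im}(\gamma)\})\,d\mathcal{H}^1(x)$ requires integrating against $\mathcal{H}^1$, which is not $\sigma$-finite on $\mathbb{R}^d$; you should first use the established equality $\nu=\|T\|$ to conclude that $\nu$ (and hence $\pi$-a.e.\ curve image, up to $\mathcal{H}^1$-null sets) is concentrated on the $1$-rectifiable set $E$, where $\mathcal{H}^1\res E$ is $\sigma$-finite and Fubini applies cleanly. In (3), before invoking (2) for $T'$ you need to know that $T'$ is rectifiable with the same orientation $\tau$; this follows from $\|T'\|\leq\nu'\leq\nu=|\theta|\,\mathcal{H}^1\res E$ together with the equality $\mathbb{M}(T')=\int\mathbb{M}(R_\gamma)\,d\pi'$, which forces the tangent of $\pi'$-a.e.\ curve to agree $\mathcal{H}^1$-a.e.\ with $\tau$ on $E$. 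You essentially acknowledge this in your final paragraph, so the logic is sound.
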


\begin{proposition}{\normalfont [\ref{BCM}, Proposition 7.4]}\label{p:singlepath}
Let $\alpha \in [0,1)$ and $T \in \OTP(b)$ such that $\mathbb{M}^{\alpha}(T) < \infty$. Then $T$ satisfies the \emph{single path property}, namely for every $x, y \in \text{supp}(T)$ $\pi$-a.e. $\gamma$ passing through $x,y$ follows the same trajectory in between (with the same orientation).
\end{proposition}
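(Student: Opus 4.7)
The plan is to argue by contradiction using a rerouting construction that leverages the strict concavity of $t \mapsto t^\alpha$ on $[0,\infty)$ for $\alpha \in [0,1)$. First I would collect the structural consequences of optimality: since $T \in \OTP(b)$ has finite $\alpha$-mass, Proposition \ref{p:paolstep} gives that $T$ is acyclic, and Smirnov's Theorem \ref{t:smirnov} provides a good decomposition $\pi$ supported on simple, nonconstant Lipschitz curves. The orientation clause in the single-path property is then essentially free: if two curves in $\supp(\pi)$ traversed a common sub-arc with opposite orientations, there would be cancellation in the associated current, contradicting the multiplicity formula in Proposition \ref{pp:gooddec}(2).

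Next, suppose toward contradiction that there exist $x,y \in \supp(T)$ for which the single-path property fails. For $\pi$-a.e. curve $\gamma$ that visits both $x$ and $y$, simplicity lets me extract an oriented sub-arc $\gamma_{xy}$ connecting them. Using the measurability of $\gamma \mapsto R_{\gamma_{xy}}$ in the flat topology (a consequence of the Borel structure on $\mathrm{Lip}$ and the measurability hypothesis of Definition \ref{d:albmarch}), I can find Borel sets $A_1, A_2 \subset \mathrm{Lip}$ with $m_i := \pi(A_i) > 0$ and two distinct simple oriented paths $\eta_1 \neq \eta_2$ from $x$ to $y$ such that $\gamma_{xy}$ coincides with $\eta_i$ as a current for every $\gamma \in A_i$. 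I then define a competitor by rerouting: for each $\gamma \in A_2$, I replace the middle segment $\gamma_{xy}$ by $\eta_1$ (reparametrized to keep the Lipschitz constant), obtaining $\tilde\gamma$ with the same endpoints. Setting $\tilde\pi := \pi\llcorner (\mathrm{Lip}\setminus A_2) + (\gamma\mapsto \tilde\gamma)_\ast \pi\llcorner A_2$ and $T' := \int R_\gamma \, d\tilde\pi(\gamma)$ produces $T' \in \TP(b)$, since the boundary depends only on the endpoints of the curves; equivalently, at the level of currents, $T' = T + m_2(\llbracket \eta_1\rrbracket - \llbracket \eta_2\rrbracket)$.

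Finally I would compare $\alpha$-masses via Proposition \ref{p:stuvard}. With $\theta_i$ denoting the multiplicity of $T$ on the non-shared portion of $\eta_i$ (of length $L_i > 0$, since $\eta_1 \neq \eta_2$ are both simple with the same endpoints, so neither can contain the other), the change reads
\[
\mathbb{M}^\alpha(T')-\mathbb{M}^\alpha(T) = \int_{\eta_1\setminus\eta_2}\!\!\big[(\theta_1+m_2)^\alpha - \theta_1^\alpha\big]\,d\mathcal{H}^1 - \int_{\eta_2\setminus\eta_1}\!\!\big[\theta_2^\alpha - (\theta_2-m_2)^\alpha\big]\,d\mathcal{H}^1.
\]
To extract strict negativity I would parametrize the rerouting by $\varepsilon \in [0, \min(m_1,m_2)]$ and allow both directions ($A_2\to\eta_1$ and $A_1\to\eta_2$). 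The resulting $\varepsilon$-dependent change is strictly concave in $\varepsilon$ with value $0$ at $\varepsilon=0$, so unless its derivative at $0^+$ vanishes in both directions (in which case strict concavity produces strict decrease for every $\varepsilon>0$), one of the two directions has a negative derivative and decreases the cost for small $\varepsilon$. Either way, some $T' \in \TP(b)$ satisfies $\mathbb{M}^\alpha(T') < \mathbb{M}^\alpha(T)$, contradicting $T \in \OTP(b)$.

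The main obstacle I expect is the measurable selection of $A_1, A_2$ and $\eta_1, \eta_2$, together with checking that the rerouted curves $\tilde\gamma$ remain simple so that the push-forward $\tilde\pi$ still yields a good decomposition to which Proposition \ref{p:stuvard} applies cleanly; this is handled by shrinking $A_2$ to avoid the (negligible) set of curves whose outer portions meet $\eta_1$, or alternatively by working directly with the current-level identity $T' = T + m_2(\llbracket\eta_1\rrbracket - \llbracket\eta_2\rrbracket)$ and appealing to the subadditivity \eqref{e:subadditivity} together with the concavity estimate to bound $\mathbb{M}^\alpha(T')$ without recourse to a new decomposition.
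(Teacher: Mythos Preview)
The paper does not give its own proof of this proposition; it is quoted from [\ref{BCM}, Proposition 7.4] and followed only by a remark comparing it to the discrete tree property (Proposition \ref{p:tree}). So there is nothing in the text to compare your argument against line by line. That said, your strategy---assume two distinct sub-arcs $\eta_1,\eta_2$ between $x$ and $y$ each carry positive $\pi$-mass, reroute a portion $\varepsilon$ of the curves from one arc to the other, and use the strict concavity of $t\mapsto t^\alpha$ to force $\MM(T')<\MM(T)$---is the standard one and is essentially how the cited reference proceeds. Your two-direction derivative trick ($\Delta_1'(0)+\Delta_2'(0)=0$, so one is nonpositive; if both vanish, strict concavity of $\Delta_i$ finishes) is a clean way to organize the inequality.

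Two points deserve care. First, you assert strict concavity of $t\mapsto t^\alpha$ for all $\alpha\in[0,1)$, but at $\alpha=0$ the map is constant on $(0,\infty)$ and your second-derivative argument collapses: $\Delta_1''\equiv 0$ there. The case $\alpha=0$ (the Steiner-type problem) has to be treated separately, e.g.\ by rerouting \emph{all} of the $\eta_2$-mass and checking that some portion of $\eta_2\setminus\eta_1$ then drops to multiplicity zero, or by invoking directly that an $\alpha=0$ optimum has loop-free support. Second, the measurable selection of $A_1,A_2,\eta_1,\eta_2$ that you flag is indeed the delicate step; your fallback of working at the current level via $T'=T+\varepsilon(\llbracket\eta_1\rrbracket-\llbracket\eta_2\rrbracket)$ together with Proposition \ref{p:stuvard} avoids having to re-verify that the rerouted curves are simple, and is the cleaner route.
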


\begin{remark}
Proposition \ref{p:singlepath} is a necessary condition in the same spirit of the tree property of discrete graphs as in Proposition \ref{p:tree}.
\end{remark}

We now state one of the most important well-posedness properties in the theory of optimal branched transport, establishing that optima are \textit{stable} with respect to variations of the initial and final distributions of mass. In its full generality, the validity of such property was still an open problem in the field since few years ago, see [\ref{CDRMcpam}].

\begin{theorem}[Stability of minimizers]\label{t:stabilityCPAM}
Let $\alpha \in(0,1)$, $\mu_{-}$ and  $\mu_{+}$ be mutually singular positive measures on $\overline{B_r(0)}$ for some $r>0$, satisfying $\mu_{-}(\mathbb{R}^d)= \mu_{+}(\mathbb{R}^d)$. Let $(\mu_{-}^{n})_{n \in \mathbb{N}},(\mu_{+}^{n})_{n \in \mathbb{N}}$ be two sequences of positive measures on $\overline{B_r(0)}$ such that for every $n \in \mathbb{N}$ we have $\mu_{-}^{n}(\mathbb{R}^d)=\mu_{+}^{n}(\mathbb{R}^d)$ and
$$
\mu_{\pm}^{n} \stackrel{*}{\rightharpoonup} \mu_{\pm}.
$$
Assume there exist $T_n$ optimal transport paths with boundaries $b_n= \mu_{+}^{n} - \mu_{-}^{n}$ satisfying
$$
\sup _{n \in \mathbb{N}} \mathbb{M}^\alpha(T_n)<\infty.
$$
Then, the (nonempty) family of subsequential weak$^{*}$-limits of $T_n$ is contained in $\OTP(b)$, where $b=\mu_{+} - \mu_{-}.$
\end{theorem}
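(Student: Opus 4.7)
The plan is to combine compactness, lower semicontinuity of the $\alpha$-mass, and a recovery-sequence argument (in the style of a $\Gamma$-limsup construction) to show that any subsequential weak$^{*}$-limit $T$ of $(T_n)_n$ is optimal for the boundary $b=\mu_+-\mu_-$. I organize the argument in four steps.

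\emph{Step 1: Compactness and identification of the boundary.} First I turn the uniform $\alpha$-mass bound into a uniform mass bound. Since $T_n\in\OTP(b_n)$ has finite $\alpha$-mass, Proposition \ref{p:paolstep} says it is acyclic, so Theorem \ref{t:smirnov} provides good decompositions $\pi_n$, and Proposition \ref{pp:gooddec}\,(2) gives the pointwise bound $|\theta_n|\le \pi_n(\mathrm{Lip})=\mathbb{M}(b_n)/2$ on the multiplicity. Combining this with Proposition \ref{p:stuvard} yields $\mathbb{M}(T_n)\le \mathbb{M}(b_n)^{1-\alpha}\mathbb{M}^\alpha(T_n)$, which stays bounded because $\mathbb{M}(b_n)\to\mathbb{M}(b)$. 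Since $\mathbb{M}(\partial T_n)=\mathbb{M}(b_n)$ is also bounded, Proposition \ref{p:compactnessnormal} extracts a subsequence $T_{n_j}\stackrel{*}{\rightharpoonup} T\in \mathbb{N}_1(\overline{B_r(0)})$; continuity of $\partial$ together with $b_n\stackrel{*}{\rightharpoonup} b$ forces $\partial T=b$, so $T\in\TP(b)$.

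\emph{Step 2: Lower semicontinuity of $\mathbb{M}^\alpha$.} Under the uniform mass and boundary-mass bounds, the remark following Definition \ref{d:flatnorm1} makes weak$^{*}$-convergence equivalent to flat convergence, hence $\mathbb{F}(T-T_{n_j})\to 0$. Definition \ref{d:lscrelaxmass} constructs $\mathbb{M}^\alpha$ as the flat lower semicontinuous envelope of $\mathbb{E}^\alpha$ on polyhedral chains, and a standard diagonal argument extends this to flat lower semicontinuity on all flat chains, giving $\mathbb{M}^\alpha(T)\le \liminf_j \mathbb{M}^\alpha(T_{n_j})$.

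\emph{Step 3: Recovery sequence.} For each competitor $S\in\TP(b)$ with $\mathbb{M}^\alpha(S)<\infty$ I would build $S_n\in \TP(b_n)$ of the form $S_n=S+C_n$, where $C_n$ is a ``correction'' normal $1$-current with $\partial C_n=b_n-b$ and $\mathbb{M}^\alpha(C_n)\to 0$; subadditivity \eqref{e:subadditivity} then gives $\limsup_n \mathbb{M}^\alpha(S_n)\le \mathbb{M}^\alpha(S)$. I construct $C_n$ by adapting the dyadic scheme of the existence theorem: fix a dyadic decomposition of $\overline{B_r(0)}$ charging no boundary of any cube in any relevant measure (possible after a translation), let $\mu_{\pm}^{(k)}$ and $\mu_{\pm}^{n,(k)}$ be the atomic projections of $\mu_\pm$ and $\mu_\pm^n$ onto the centers of the $k$-th generation cubes, and write $C_n=C_n^{\mathrm{fine},+}+C_n^{\mathrm{coarse}}+C_n^{\mathrm{fine},-}$ corresponding to (i) a transport $\mu_\pm^n \to \mu_\pm^{n,(k)}$, (ii) a transport $\mu_\pm^{n,(k)} \to \mu_\pm^{(k)}$ along segments joining the fixed grid centers, and (iii) a transport $\mu_\pm^{(k)} \to \mu_\pm$. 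The $\alpha$-masses of (i) and (iii) are controlled by the geometric series estimate of the existence theorem and are therefore $\le C_{\alpha,d}\,2^{-k(d\alpha-d+1)}$, which is small for $k$ large because $\alpha>1-1/d$; piece (ii) moves mass $\sum_i |\mu_\pm(Q_i^{(k)})-\mu_\pm^n(Q_i^{(k)})|\to 0$ as $n\to\infty$ at fixed $k$, by the weak$^{*}$-convergence and the choice of cubes. A diagonal extraction in $(k,n)$ yields $\mathbb{M}^\alpha(C_n)\to 0$.

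\emph{Step 4: Conclusion.} Choosing $S\in\OTP(b)$ and chaining the three estimates,
\begin{equation*}
\mathbb{M}^\alpha(T)\le \liminf_j \mathbb{M}^\alpha(T_{n_j})\le \liminf_n \mathbb{M}^\alpha(S+C_n)\le \mathbb{M}^\alpha(S)=\mathbf{E}^\alpha(b),
\end{equation*}
so $T\in \OTP(b)$. The delicate point is unquestionably Step 3: because $x\mapsto x^\alpha$ is strictly concave, the $\alpha$-mass is very sensitive to cancellations of multiplicities, and one must use the mutual singularity of $\mu_+$ and $\mu_-$ to argue that the positive and negative branches of $C_n$ have essentially disjoint fine-scale supports, which prevents the positive multiplicity of piece (i) from interacting destructively with the negative multiplicity of piece (iii) inside a common fine cube and thereby forcing additional transport that would spoil the estimate $\mathbb{M}^\alpha(C_n)\to 0$.
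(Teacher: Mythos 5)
The paper does not reprove this theorem; it is quoted from [\ref{CDRMcpam}], and the proof there is considerably more involved than your proposal. Your strategy---compactness plus lower semicontinuity plus a recovery sequence---is the natural $\Gamma$-convergence template, and Steps 1, 2 and 4 are sound: the acyclicity/good-decomposition bound on the multiplicity converts the uniform $\alpha$-mass bound into a uniform mass bound, weak$^*$ convergence becomes flat convergence under those bounds, and the diagonal argument upgrading lower semicontinuity of $\mathbb{M}^\alpha$ from polyhedral chains to flat chains is fine. The logical chain in Step 4 is also correct.

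The genuine gap is in Step 3, and you have in fact written it down yourself: your estimate for the fine-scale corrections is $\lesssim 2^{-k(d\alpha-d+1)}$, ``which is small for $k$ large because $\alpha>1-1/d$.'' But the theorem asserts stability for \emph{every} $\alpha\in(0,1)$, with no lower bound on $\alpha$. For $\alpha\le 1-1/d$ the geometric series in the dyadic construction diverges: the $\alpha$-cost of transporting a generic positive measure to its generation-$k$ atomic projection is not small---it is in general infinite. This is not a technicality; it is precisely the reason the stability question remained open for years. The range $\alpha>1-1/d$ was already handled in essence by the classical works, and the content of [\ref{CDRMcpam}] is the regime $\alpha\le 1-1/d$. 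The proof there abandons the free-standing dyadic correction: instead it uses Smirnov's good decomposition of the optimal paths $T_n$ themselves and, cube by cube, \emph{reroutes} fibers of $T_n$ onto a fixed competitor $S\in\OTP(b)$, so the correction currents inherit the finite $\alpha$-mass of $T_n$ and $S$ rather than being built from scratch. Note also that the hypothesis ``$\sup_n\MM(T_n)<\infty$'' is not vacuous below the threshold---for instance it holds whenever the $\mu_\pm^n$ are finite atomic---so the case $\alpha\le 1-1/d$ cannot be dismissed.

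A smaller point: your closing remark about mutual singularity is misplaced. Subadditivity $\MM(C_n^{\mathrm{fine},+}+C_n^{\mathrm{coarse}}+C_n^{\mathrm{fine},-})\le\MM(C_n^{\mathrm{fine},+})+\MM(C_n^{\mathrm{coarse}})+\MM(C_n^{\mathrm{fine},-})$ holds unconditionally, and cancellations between the positive and negative branches can only \emph{decrease} the $\alpha$-mass, never spoil it. The role of mutual singularity in [\ref{CDRMcpam}] is elsewhere, inside the cube-decomposition estimates for their rerouting construction; and Theorem \ref{t:stability_new} of this paper observes that even there it can be removed.
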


Finally we mention a slight improvement of Theorem \ref{t:stabilityCPAM}, relaxing the assumption requiring $\mu_-$ and $\mu_+$ to be mutually singular, pointing out that it is actually irrelevant in the proof given in [\ref{CDRMcpam}]. More precisely, we prove the following result which will be important in the sequel.

\begin{definition}
We denote the set of boundaries by
\[ {\mathscr{B}}_{0}(K) := \{b\in{\mathcal{D}}_{0}(K): \text{there is an } S \in {\mathcal{D}}_{1}(K) \text{ with } \partial S= b\}\,. \]

Fix an arbitrary constant $C>0$ and define 
\begin{equation}
    A_C:=\{b\in{\mathscr{B}}_{0}(K): \mathbb{M}(b)\leq C\;{\rm{and}}\;\MM(T)\leq C\mbox{ for every }T\in\OTP(b)\}.
\end{equation}
\end{definition}
We metrize $A_C$ with the \emph{flat norm} $\Flat_K$ and we observe that the set $A_C$ endowed with the induced distance is a nontrivial complete metric space.

\begin{theorem}\label{t:stability_new}
Let $b_n\in A_C$ and let $S_n \in \OTP(b_n)$. For every subsequential limit $T$ of $S_n$ we have $T\in\OTP(\partial T)$.
\end{theorem}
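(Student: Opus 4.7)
The plan is to reduce to Theorem \ref{t:stabilityCPAM} after observing (by inspection of the proof in [CDRMcpam]) that its mutual singularity hypothesis is never actually used. First, one secures compactness: since $b_n \in A_C$ we have $\mathbb{M}(b_n) \leq C$ and $\MM(S_n) \leq C$ for every $n$. Because $S_n \in \OTP(b_n)$ is acyclic (Proposition \ref{p:paolstep}), its multiplicity is pointwise bounded by $\mathbb{M}(b_n)/2 \leq C/2$, so $\mathbb{M}(S_n) \leq (C/2)^{1-\alpha}\MM(S_n) \leq (C/2)^{1-\alpha} C$ is uniformly bounded as well. Together with the uniform bound on $\mathbb{M}(\partial S_n) = \mathbb{M}(b_n)$, Proposition \ref{p:compactnessnormal} yields subsequential weak$^{*}$-limits of $(S_n)_n$ in $\mathbb{N}_1(K)$; on such uniformly mass-bounded sequences, weak$^{*}$-convergence is equivalent to $\Flat_K$-convergence. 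Let $T$ be any such subsequential limit, say $S_{n_k} \to T$.

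Next, decompose the boundaries via Jordan: $b_n = (b_n)^+ - (b_n)^-$ with $(b_n)^+, (b_n)^-$ mutually singular positive measures of total mass at most $C$. Passing to a further subsequence along $(n_k)$, sequential weak$^{*}$-compactness of positive Radon measures on $K$ yields $(b_{n_k})^+ \wstarto \mu_+$ and $(b_{n_k})^- \wstarto \mu_-$ for some positive measures $\mu_\pm$ on $K$ (\emph{not} necessarily mutually singular). Since $\partial$ is continuous for $\Flat_K$ (Remark \ref{r:lsc}), $b_{n_k} \to \partial T$ in $\Flat_K$, hence also weakly$^{*}$, so $\partial T = \mu_+ - \mu_-$.

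At this point we are precisely in the hypotheses of Theorem \ref{t:stabilityCPAM} with $\mu_\pm^n := (b_n)^\pm$, except that $\mu_\pm$ need not be mutually singular. The proof in [CDRMcpam] proceeds by good-decomposition (Theorem \ref{t:smirnov}), the single-path property (Proposition \ref{p:singlepath}), and an associated ``chain'' comparison argument: at no step does one exploit that the limit measures $\mu_\pm$ sit on disjoint supports; the mutual singularity of the approximating $\mu_\pm^n$ (automatic from the Jordan decomposition here) is all that ever enters. Reading the proof with this in mind therefore produces verbatim the stronger statement with no mutual singularity hypothesis on $\mu_\pm$. Applying this strengthened version to our sequence delivers $T \in \OTP(\mu_+-\mu_-) = \OTP(\partial T)$, as required.

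The main obstacle is the last paragraph: one must actually go through the proof of Theorem \ref{t:stabilityCPAM} in [CDRMcpam] and verify line-by-line that the mutual singularity of the limit measures is never invoked. The formal bookkeeping above (compactness, Jordan decomposition, continuity of $\partial$) is standard and short; the substance of the result is the observation about the [CDRMcpam] proof.
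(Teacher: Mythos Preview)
Your approach is essentially the same as the paper's: both reduce to the stability theorem of [\ref{CDRMcpam}] and argue that the mutual singularity hypothesis on the limit measures $\mu_\pm$ can be dropped. The setup (compactness, Jordan decomposition, continuity of $\partial$) is standard and matches.

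There is one inaccuracy worth flagging. You assert that in the proof of [\ref{CDRMcpam}, Theorem 1.1] the mutual singularity of $\mu_\pm$ is ``never actually used'' and that only the mutual singularity of the approximants $\mu_\pm^n$ enters. The paper's own analysis shows this is not quite true: the assumption on the \emph{limit} measures does appear at three specific places (equation (4.9), page 852 line 7, and equations (4.16)--(4.17)), and at two of these it is genuinely needed as written. The point is not that the assumption is unused, but that with small, explicit modifications at those spots (e.g.\ replacing (4.9) by a weaker identity that still yields (4.23), and noting that (4.16)--(4.17) may fail on the cubes $Q^h$ but (4.18) only requires them on the remaining cubes) the argument still goes through. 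Your final paragraph correctly identifies that a line-by-line check is the real content here; you should just be aware that this check does turn up places where the hypothesis is invoked and must be worked around, rather than confirming it was vacuous.
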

\begin{proof}
The subsequential convergence $\Flat(S_n-T)\to 0$ implies $\Flat(b_n-\partial T)\to 0$ and writing $b_n=\mu_+^n-\mu_-^n$ (being $\mu_+^n$ and $\mu_-^n$ respectively the positive and the negative part of the signed measure $b_n$) and $\mu_\pm:=\lim_{n\to\infty}\mu_\pm^n$, we have $\partial T=\mu_+-\mu_-$, where $\mu_+$ and $\mu_-$ are not necessarily mutually singular.

Hence, with respect to [\ref{CDRMcpam}, Theorem 1.1] we simply need to remove the assumption that $\mu_-$ and $\mu_+$ are mutually singular. In fact we observe that such assumption does not have a fundamental role in the proof already given in [\ref{CDRMcpam}] and, more precisely, we analyze all the points where such assumption is relevant.
\begin{itemize}
    \item In [\ref{CDRMcpam}, equation (4.9)] the assumption is used, but we observe that if we do not assume that $\mu_-$ and $\mu_+$ are mutually singular, [\ref{CDRMcpam}, equation (4.9)] would be replaced by 
    $$\partial T^{ij}=\int_{\Lip(Q^i,Q^j)}\delta_{\gamma(\infty)}-\delta_{\gamma(0)}\,dP(\gamma),$$
    which suffices to obtain [\ref{CDRMcpam}, equation (4.23)], which is the only point where [\ref{CDRMcpam}, equation (4.9)] is (implicitly) used.   
    \item In [\ref{CDRMcpam}, page 852, line 7], the fact that $\mu_-$ and $\mu_+$ are mutually singular is actually not necessary.
    \item The fact that $\mu_-$ and $\mu_+$ are mutually singular is necessary to obtain [\ref{CDRMcpam}, equations (4.16), (4.17)] and more precisely without such assumption the validity of those equations might fail in the cubes $\{Q^h:h=1,\dots,N\}$ but it remains true (with the same argument) in the remaining cubes $Q^i\in\Lambda(Q,k)$. However, we observe that [\ref{CDRMcpam}, equations (4.16), (4.17)] are only used to obtain [\ref{CDRMcpam}, equation (4.18)], which remains valid, precisely because it is stated only for the cubes $Q^i\in\Lambda(Q,k)\setminus \{Q^h:h=1,\dots,N\}$.
\end{itemize}
In conclusion, with the minor modifications listed above, the proof of [\ref{CDRMcpam}, Theorem 1.1] remains valid even without the assumption that $\mu_-$ and $\mu_+$ are mutually singular, thus concluding our proof. 
\end{proof}

\chapter{Regularity results} 

The main goal of Chapter 2 is to present the regularity theory for area-minimizing integral currents and for optimal transport paths. Indeed, once we have established under which conditions the Plateau's problem and the optimal branched transport problem admit a solution, the natural following question is how \textit{regular} this solution actually is. 

In Section 2.1 we will investigate the interior regularity theory for area-minimizing integral currents. Following the analogy with Sobolev functions that minimize the Dirichlet energy, one could hope that area-minimizing integral currents are, a posteriori, ``everywhere regular" manifolds. It turns out that this is not the case since area-minimizing surfaces are substantially more complicated geometric objects than functions, allowing for the presence of singularities; this fact makes the study of the regularity of area-minimizing integral currents one of the most difficult and exciting topics in the field of geometric variational problems. We will investigate the \textit{interior regularity} theory, but it is very important to mention that \textit{boundary regularity} for area-minimizing integral currents is a well-studied and live topic of research as well, witnessing major developments in the last years, see [\ref{Borda}] and full of widely open conjectures. Our main references for Section 2.1.1 and Section 2.1.2 are [\ref{Delellisnote}, \ref{DLSQ}].

Finally, in Section 2.2, we will present the main results in the regularity theory for optimal transport paths, pointing out that one of the main ingredients in the present theory, which is mostly due to Xia [\ref{Xiaregularity}], can be reread as a consequence of the recent \textit{stability} result, see Theorem \ref{t:stabilityCPAM} or [\ref{CDRMcpam}].

\section{Regularity theory for area-minimizing currents}

Let us assume that $T$ is an area-minimizing integral $m$-current in $\mathbb{R}^d$. Let $n:=d-m$ be the codimension of $T$. Let $\mathbf{B}_r(p) \subset \R^{n+m}$ denote the ball of radius $r$ centered at $p$. We say that $p \in \operatorname{supp}(T) \setminus \operatorname{supp}(\partial T)$ is an \textit{interior regular point} if there is a positive radius $r>0$, a smooth embedded submanifold $\Sigma \subset \R^{n+m}$ and a positive integer $Q$ such that $T \res \mathbf{B}_r(p)=Q \llbracket \Sigma \rrbracket$. The set of interior regular points, which is relatively open in $\operatorname{supp}(T) \setminus \operatorname{supp}(\partial T)$, is denoted by $\operatorname{Reg}(T)$. Its complement $\operatorname{supp}(T) \setminus(\operatorname{supp}(\partial T) \cup \operatorname{Reg}(T))$ is denoted by $\operatorname{Sing}(T)$ and is called the \textit{interior singular set} of $T$. 

The problem is to understand whether $\operatorname{Sing}(T)$ is empty and, in case it is not, to estimate how ``large" it can be in terms of reasonable notions of dimension, like the Hausdorff dimension. Surprisingly, the answer strongly depends on the codimension $n$. 
Indeed, if $n=1$, $\operatorname{Sing}(T)$ of an area-minimizing integral current $T$ has Hausdorff dimension at most $m-7$ or, in other words, area-minimizing integral currents are smooth (and also real analytic) submanifolds exept for a closed set of Hausdorff dimension at most $m-7$. On the other hand, if $n\ge 2$, solutions to the generalized Plateau's problem exhibit singularities already in dimension $m=2$. Indeed, in higher codimension, $\operatorname{Sing}(T)$ of an area-minimizing integral $m$-current $T$ has Hausdorff dimension at most $m-2$. More formally, we can state the following theorems.

\begin{theorem}[Regularity in codimension $n=1$] \label{t:regularity1}
Let $\Omega \subset \mathbb{R}^{m+1}$ an open set and let $T$ an area-minimizing $m$-dimensional integer rectifiable current in $\Omega$. Then
\begin{itemize}
\item[\textit{i)}] for $m \le 6$, \emph{Sing($T$)} $\cap$ $\Omega$ is empty (see \emph{[\ref{Degiorgifrontiere}, \ref{Degiorgibernstein}, \ref{Flemingcod1}]} for $m=2$, \emph{[\ref{Almgrencod1}]} for $m=3$, \emph{[\ref{Simonscod1}]} for $4 \le m \le 6$),
\item[\textit{ii)}] for $m=7$, \emph{Sing($T$)} $\cap$ $\Omega$ consists of isolated points (see \emph{[\ref{Federercod1}]}),
\item[\textit{iii)}] for $m \ge 8$, \emph{Sing($T$)} $\cap$ $\Omega$ has Hausdorff dimension not larger than $m-7$ (see \emph{[\ref{Federercod1}]}), it is $(m-7)$-rectifiable and of locally finite $\mathcal{H}^{m-7}$-measure (see \emph{[\ref{Simoncod1}, \ref{Nabervaltorta}]}),
\item[\textit{iv)}] the above results are optimal: for every $m \ge 7$ there are area-minimizing integral currents $T$ in $\mathbb{R}^{m+1}$ for which \emph{Sing($T$)} has positive $\mathcal{H}^{m-7}$-measure (see \emph{[\ref{BDGcodim1}]}).
\end{itemize}
\end{theorem}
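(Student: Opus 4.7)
The plan is to follow the classical blueprint common to all four parts: monotonicity-formula plus tangent cone analysis, an $\varepsilon$-regularity theorem, classification of minimizing hypercones, and Federer's dimension reduction. First, for any area-minimizing $T$ in $\Omega$, I would establish the monotonicity of $r \mapsto r^{-m}\|T\|(\mathbf{B}_r(p))$, so that the density $\Theta^m(T,p)$ is well-defined and upper semicontinuous in $p$. Combining monotonicity with the compactness Theorem \ref{t:compactnessforam}, any sequence of rescalings $T_{p,r_j} := (\eta_{p,r_j})_\ast T$ converges (up to subsequences) to a locally area-minimizing cone $C$ with vertex at the origin, a so-called tangent cone at $p$. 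The De Giorgi--Allard $\varepsilon$-regularity theorem, whose codimension-one version will be discussed in Section 2.1.1, then asserts that if one tangent cone at $p$ is a multiplicity-one hyperplane, then $p\in\mathrm{Reg}(T)$; hence any $p\in\mathrm{Sing}(T)$ must admit only nonflat tangent cones.

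The heart of the argument in codimension one is the classification of area-minimizing hypercones. In codimension one, integer rectifiable currents correspond to reduced boundaries of sets of finite perimeter, and by stability together with the Simons identity for the second fundamental form of the link, one proves \emph{Simons' theorem}: any area-minimizing hypercone $C\subset\mathbb{R}^{m+1}$ with an isolated singularity at $0$ must be a hyperplane as soon as $m\le 6$ (lower-dimensional cases being handled by De Giorgi, Almgren and Fleming, the case $m\le 6$ by Simons). For $m=7$ this fails dramatically: the Simons cone $C_{3,3}=\{x_1^2+\cdots+x_4^2=x_5^2+\cdots+x_8^2\}$ is area-minimizing by Bombieri--De Giorgi--Giusti, which is precisely what drives part (iv).

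Next I would invoke \emph{Federer's dimension reduction}. Arguing by contradiction, if $\mathcal{H}^s(\mathrm{Sing}(T))>0$ for some $s>m-7$, then by iterated blow-ups at points of positive $\mathcal{H}^s$-density of $\mathrm{Sing}(T)$ (using upper semicontinuity of the density and compactness of minimizers) one produces a minimizing tangent cone $C_0$ whose singular set contains a linear subspace $V$ of dimension $\lfloor s\rfloor$; splitting off $V$ yields a minimizing cone in $\mathbb{R}^{m+1-\lfloor s\rfloor}$ with an isolated singularity at the origin. By Simons' theorem this cone is a hyperplane whenever $m-\lfloor s\rfloor<7$, contradicting $\lfloor s\rfloor>m-7$. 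This gives (i)--(iii), with the additional $(m-7)$-rectifiability and local finiteness of $\mathcal{H}^{m-7}$-measure requiring the quantitative stratification and Reifenberg-type machinery of Naber--Valtorta on top of Simon's earlier work. Statement (iv) is obtained by taking products of the Bombieri--De Giorgi--Giusti Simons cone with Euclidean factors $\mathbb{R}^{m-7}$, producing area-minimizing currents in $\mathbb{R}^{m+1}$ whose singular set has positive $\mathcal{H}^{m-7}$-measure.

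The main obstacle is Simons' classification of stable minimizing hypercones: this is where the dimension threshold $7$ genuinely appears, and it requires the full strength of the stability inequality combined with the Simons identity on the link of the cone, a delicate spectral analysis that is nontrivial and, in fact, the ingredient whose failure in codimension greater than one forces the completely different (and much harder) approach of Almgren discussed in Section 2.1.2. The dimension reduction argument, the tangent cone compactness and the $\varepsilon$-regularity step are comparatively more structural and carry over to broader settings, but they all feed into, and rely on, the cone classification as the analytic keystone.
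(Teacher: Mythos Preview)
The paper does not actually prove this theorem; it is stated as a survey result with references to the original literature, and only one ingredient---a simplified ``baby'' version of the De~Giorgi--Allard $\varepsilon$-regularity theorem, under the strong a~priori assumption that the current is already a Lipschitz graph---is developed in Section~2.1.1. Your outline is the correct classical strategy (monotonicity, tangent cones, $\varepsilon$-regularity, Simons' classification of stable minimal hypercones, Federer dimension reduction, and the Bombieri--De~Giorgi--Giusti example), and it matches what the cited references do.

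One point worth sharpening: you write that if one tangent cone is a \emph{multiplicity-one} hyperplane then $p$ is regular, and then separately invoke the correspondence with sets of finite perimeter. In codimension one the crucial extra step is that any area-minimizing integral hypercurrent decomposes locally as a sum of boundaries of nested area-minimizing Caccioppoli sets (this is the codimension-one ``sheeting'' that fails for $n\ge 2$); this is what upgrades Allard's multiplicity-one hypothesis to the statement that \emph{any} flat tangent cone, of whatever integer multiplicity, forces regularity (cf.\ Corollary~\ref{c:4punto4}). Without making this decomposition explicit, the passage from ``nonflat tangent cone'' to the dimension-reduction argument has a gap, since a~priori a singular point could have a flat tangent cone of multiplicity $Q\ge 2$. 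Otherwise your sketch is sound.
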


\begin{theorem}[Regularity in codimension $n\ge2$] \label{t:regularityh}
Let $\Omega \subset \mathbb{R}^{m+n}$ an open set and let $T$ an area-minimizing $m$-dimensional integer rectifiable current in $\Omega$. Suppose that $n \ge 2$, then
\begin{itemize}
\item[\textit{i)}] for $m =1 $, \emph{Sing($T$)} $\cap$ $\Omega$ is empty,
\item[\textit{ii)}] for $m=2$, \emph{Sing($T$)} $\cap$ $\Omega$ consists of isolated points (see \emph{[\ref{Chang}, \ref{DLSS1}, \ref{DLSS2}, \ref{DLSS3}]}),
\item[\textit{iii)}] for $m \ge 3$, \emph{Sing($T$)} $\cap$ $\Omega$ has Hausdorff dimension not larger than $m-2$ (see \emph{[\ref{Big}, \ref{Almgren2000}, \ref{DLSQ}, \ref{DLSsns}, \ref{DLS1}, \ref{DLS2}, \ref{DLS3}]}),
\item[\textit{iv)}] the above results are optimal: for every $m \ge 2$ there are area-minimizing integral currents $T$ in $\mathbb{R}^{m+n}$ for which \emph{Sing($T$)} has positive $\mathcal{H}^{m-2}$-measure (see \emph{[\ref{Federer65}]}).
\end{itemize}
\end{theorem}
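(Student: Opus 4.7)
The plan is to treat the four items separately, since they correspond to distinct layers of the theory. Items (i) and (iv) admit short, essentially explicit arguments, whereas (ii) and (iii) are the genuine regularity statements and require very substantial machinery.

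For (i), any $1$-dimensional area-minimizing integral current in a ball $\mathbf{B}_r(p) \subset \Omega \setminus \operatorname{supp}(\partial T)$ must be a finite union of line segments with constant integer multiplicities: any nonstraight arc between atoms of the boundary can be replaced by the chord joining its endpoints, strictly decreasing the mass while preserving $\partial T$. Hence $\operatorname{Sing}(T) \cap \Omega = \emptyset$. For (iv), the strategy is Federer's [\ref{Federer65}] observation that every holomorphic subvariety of $\mathbb{C}^N$ is area-minimizing as an integral current; the variety $\{z_1^2 = z_2^3\} \subset \mathbb{C}^2$ already produces a $2$-dimensional area-minimizing current in $\mathbb{R}^4$ with an isolated singular point, and taking appropriate products of such examples with flat discs yields, for every $m \ge 2$, an area-minimizing integral current whose singular set has positive $\mathcal{H}^{m-2}$-measure.

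For (ii), which is Chang's theorem [\ref{Chang}], the plan is to combine an $\varepsilon$-regularity statement with a branched center manifold construction. The $\varepsilon$-regularity result (a higher-codimension analogue of the De Giorgi--Allard theorem) reduces the problem to analyzing density-$Q$ points with $Q \ge 2$, i.e., genuine branch points; at such points one builds a branched center manifold and a $Q$-valued normal approximation that models $T$, to high order, as the graph of a Dir-minimizing $Q$-valued function. Almost-monotonicity of the frequency function then yields uniqueness of the tangent cone, and since in dimension $m=2$ the singular set of a planar Dir-minimizer is discrete, the singularities of $T$ must be isolated. The complete modern argument filling in Chang's original sketch is given by De Lellis, Spadaro and Spolaor [\ref{DLSS1}, \ref{DLSS2}, \ref{DLSS3}].

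For (iii), the plan is to follow Almgren's scheme [\ref{Big}, \ref{Almgren2000}], as rewritten and simplified by De Lellis and Spadaro in [\ref{DLSQ}, \ref{DLSsns}, \ref{DLS1}, \ref{DLS2}, \ref{DLS3}]. The key steps are: \emph{(a)} develop the linear theory of Dir-minimizing $Q$-valued maps and establish the bound $\dim_{\mathcal{H}} \operatorname{Sing} \le m-2$ for the linear model; \emph{(b)} approximate $T$ in flat regions by Lipschitz $Q$-valued graphs and then refine the approximation by constructing a \emph{center manifold} which captures the weighted average of the sheets of $T$ to a higher order of flatness; \emph{(c)} introduce Almgren's frequency function for the normal approximation built on the center manifold and prove its almost-monotonicity; \emph{(d)} extract tangent Dir-minimizers from rescaled blow-ups of the normal approximation and transfer the linear dimension estimate of step \emph{(a)} to the nonlinear setting via a Federer--Almgren dimension reduction on the singular stratification. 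The main obstacle is steps \emph{(b)}--\emph{(c)}: constructing a center manifold accurate enough to make the frequency of the associated $Q$-valued approximation behave as in the linear theory is the technical heart of Almgren's program, and it is precisely the point at which the genuinely nonlinear interaction between sheets of $T$ enters; once this is accomplished, the rest is essentially a corrected version of the linear statement of step \emph{(a)}.
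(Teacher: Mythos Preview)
Your proposal is essentially correct and follows the same approach the paper takes: this theorem is a survey statement, and the paper does not give a self-contained proof but rather (a) shows optimality (iv) via Wirtinger's inequality and the holomorphic curve $\{z^2=w^3\}$, (b) develops the linear theory of Dir-minimizing $Q$-valued functions (your step (a)), and (c) outlines Almgren's program in the ``Hints to the nonlinear case'' subsection, whose six steps correspond closely to your steps (b)--(d). Your sketch for (i) matches the paper's footnoted remark that a $1$-dimensional area-minimizer is locally a finite union of nonintersecting segments; note however that ``finite union of segments'' alone is not quite enough---you need the nonintersection to conclude $\operatorname{Sing}(T)=\emptyset$ under the paper's definition of regular point, so that clause should be made explicit.
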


Not only are the theorems different in terms of results, but also they differ in terms of level of difficulty and techniques involved in the proofs. Even though regularity theory in codimension 1 can be considered as a delicate topic, it is by now well-studied and fairly understood both in terms of results and employed techniques. On the other hand, the presence in higher codimension of what are known as \textit{branching singularities} required the development of a full new theory and more sophisticated machinery that was initially contained in a 1728-page long typewritten monograph by Frederick J. Almgren [\ref{Big}]. As mentioned, Theorems \ref{t:regularity1} and \ref{t:regularityh} are sharp as shown by the following two celebrated examples: if $n=1$ cosider what is called \textit{Simons' cone} in $\mathbb{R}^8$:
$$
S=\left\{x \in \mathbb{R}^{8}: x_{1}^{2}+x_{2}^{2}+x_{3}^{2}+x_{4}^{2}=x_{5}^{2}+x_{6}^{2}+x_{7}^{2}+x_{8}^{2}\right\},
$$ which clearly has a singularity in the origin. Much more complicated was instead to show that $S$ was (locally) area-minimizing in $\mathbb{R}^8$, which was proved by Bombieri, De Giorgi and Giusti, see [\ref{BDGcodim1}].

If $n\ge2$, then it is easier to show that Theorem \ref{t:regularityh} is optimal. Consider the following holomorphic curve $$\Gamma = \{(z,w) \in \mathbb{C}^2 \simeq \mathbb{R}^{4} : z^2=w^3\}.$$
One can see that the origin is a singular point for $\Gamma$. Moreover, once the so-called \textit{Wirtinger's inequality}, see [\ref{Federerbook}, 1.8.2] has been established, it is fairly easy to prove that $\Gamma$ is area-minimizing. To do so, we introduce some basic facts about complex geometry.

Recall that \textit{holomorphic subvarieties} of $\mathbb{C}^{n}$, namely zeros of holomorphic maps $u: \mathbb{C}^{n} \rightarrow \mathbb{C}^{n-k}$ (with $k$ and $n-k$ the complex dimension and codimension of the variety respectively) can be given a natural orientation. We identify $\mathbb{C}^{n}$ with $\mathbb{R}^{2n}$ as usual: if $z_1, \ldots, z_{n}$ are complex coordinates and $x_j=\operatorname{Re} z_j$, $y_j=\operatorname{Im} z_j$, we let $x_1, y_1, \ldots, x_{n}, y_{n}$ be the standard coordinates of $\mathbb{R}^{2n}$. We remind that a holomorphic subvariety $\Gamma$ of $\mathbb{C}^{n}$ of complex dimension $k$ is an oriented real analytic submanifold of $\mathbb{R}^{2n} \setminus \operatorname{Sing}(\Gamma)$ of (real) dimension $m=2 k$, where $\operatorname{Sing}(\Gamma)$ is a holomorphic subvariety of complex dimension $k-1$. At each point $p \in \Gamma \setminus \operatorname{Sing}(\Gamma)$, the (real) tangent $2k$-dimensional plane $T_p \mathit{\Gamma}$ can be identified with a complex $k$-dimensional plane of $\mathbb{C}^n$. If $v_1, \ldots, v_k$ is a complex basis of $T_p \mathit{\Gamma}$, we can then define a canonical orientation for $T_p \mathit{\Gamma}$ using the simple $2k$-vector $\operatorname{Re} v_1 \wedge \operatorname{Im} v_1 \wedge \ldots \wedge \operatorname{Re} v_k \wedge \operatorname{Im} v_k$. From this, we can now define the current $\llbracket \Gamma \rrbracket$ by integrating forms over the oriented submanifold $\Gamma \setminus \operatorname{Sing}(\Gamma)$. We refer to [\ref{Joyce}] for an extended discussion about K\"alher manifolds and complex geometry.

The discussion can be localized to holomorphic subvarieties in open subsets $\Omega$ of $\mathbb{C}^{n}$ and note that, if $\Omega^{\prime}$ is a bounded open subset of the domain $\Omega$ where $\Gamma$ is defined, then $\llbracket \Gamma \rrbracket$ has finite mass in $\Omega^{\prime}$ and it is thus an integer rectifiable current.

\begin{proposition}[Wirtinger's inequality]\label{p:wirtinger}
Let $\Sigma$ be a K\"alher manifold\footnote{Recall a K\"ahler manifold is a symplectic manifold equipped with a compatible integrable almost-complex structure.} with K\"alher form $\omega$ and denote $v$ a unitary simple $2k$-vector. Then 
$$
\langle \underbrace{\omega \wedge \cdots \wedge \omega}_{k \text { times }}, v \rangle \leq k !
$$
\end{proposition}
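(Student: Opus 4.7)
The plan is to reduce Wirtinger's inequality to a pointwise statement and then exploit a canonical form for skew-symmetric forms on Euclidean spaces. First I would observe that the inequality depends only on the data at a single point $p \in \Sigma$: by definition of K\"ahler manifold, the tuple $(T_p\Sigma, g_p, J_p, \omega_p)$ is isometrically isomorphic to the standard model $(\mathbb{R}^{2n}, \langle \cdot, \cdot\rangle, J_0, \omega_0)$, where $J_0$ is the standard complex structure and $\omega_0(X,Y) = \langle J_0 X, Y \rangle$. Thus one may work in this standard model. Writing the unitary simple $2k$-vector as $v = u_1 \wedge \cdots \wedge u_{2k}$ with $\{u_i\}_{i=1}^{2k}$ orthonormal, the problem is reduced to bounding $\omega^k(u_1, \ldots, u_{2k})$, where $V := \mathrm{span}\{u_1, \ldots, u_{2k}\}$.

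Next I would put the restriction $\omega|_V$ into a canonical form. The antisymmetric matrix $A_{ij} := \omega(u_i, u_j)$ can be block-diagonalized by an orthogonal change of basis of $V$, which is the standard normal form for real skew-symmetric operators. This yields an adapted orthonormal basis $e_1, f_1, \ldots, e_k, f_k$ of $V$ and nonnegative reals $\lambda_1, \ldots, \lambda_k$ such that
\[
\omega(e_i, f_j) = \lambda_i \delta_{ij}, \qquad \omega(e_i, e_j) = \omega(f_i, f_j) = 0.
\]
Since $\omega(X, Y) = \langle J_0 X, Y\rangle$ and $|J_0| = 1$, Cauchy--Schwarz gives $\lambda_i = \langle J_0 e_i, f_i\rangle \le |J_0 e_i|\,|f_i| = 1$.

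Finally, I would compute $\omega^k$ on the adapted basis. A direct combinatorial expansion shows that the only nonzero contributions to $\omega^k(e_1, f_1, \ldots, e_k, f_k)$ come from pairings of the $2k$ basis vectors into the canonical pairs $(e_i, f_i)$, and the signs and combinatorial factors combine to give
\[
\omega^k(e_1, f_1, \ldots, e_k, f_k) = k! \, \prod_{i=1}^k \lambda_i.
\]
Since $\omega^k|_V$ is proportional to the volume form of $V$, the two orthonormal bases $\{u_i\}$ and $\{e_i, f_i\}$ of $V$ yield the same value up to a sign $\pm 1$, and so
\[
\langle \omega^k, v \rangle = \omega^k(u_1, \ldots, u_{2k}) \le k! \prod_{i=1}^k \lambda_i \le k!.
\]

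The main obstacle will be the canonical-form step: one must verify that the spectral theorem applies to the correct operator (the skew self-adjoint endomorphism of $V$ representing $\omega|_V$ through the induced inner product on $V$), and that the resulting basis simultaneously orthonormalizes $V$ and produces nonnegative scalars $\lambda_i$. Once this is in place, the bound $\lambda_i \le 1$ is a one-line Cauchy--Schwarz argument, and the identity for $\omega^k$ on the adapted basis reduces to bookkeeping of signs and permutations.
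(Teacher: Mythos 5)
The paper does not actually prove this proposition; it states it and cites Federer's book (Section 1.8.2) for the argument, so there is no internal proof of the paper to compare against. Your proof is correct and is in fact the classical argument. The key steps all check out: the reduction to the standard linear model of $(T_p\Sigma, g_p, J_p, \omega_p)$ is legitimate since the statement is pointwise; the normal form for the restricted skew pairing $\omega|_V$ (an orthogonal change of basis of $V$ bringing the skew-symmetric matrix $A_{ij}=\omega(u_i,u_j)$ to block-diagonal form with nonnegative entries $\lambda_i$) is the right key lemma and is standard, for instance by applying the spectral theorem to the symmetric positive semidefinite operator $-A^2$ and organizing eigenvectors into pairs; the Cauchy--Schwarz bound $\lambda_i\le 1$ is immediate because $J_0$ is a linear isometry and $e_i, f_i$ are unit vectors; and the identity $\omega^k(e_1,f_1,\dots,e_k,f_k)=k!\prod_i\lambda_i$ follows from writing $\omega|_V=\sum_i\lambda_i\,e_i^*\wedge f_i^*$, expanding the $k$-th power by the multinomial theorem for commuting degree-two forms, and using $(e_i^*\wedge f_i^*)^2=0$ to kill all terms except the one with multi-index $(1,\dots,1)$, whose coefficient is $k!$. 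Passing from the adapted orthonormal basis back to the $u_i$'s costs only a sign, so $\omega^k(u_1,\dots,u_{2k})=\pm\,k!\prod_i\lambda_i\le k!$, which is what you want. I see no gaps. Two cosmetic points: state explicitly $\omega|_V=\sum_i\lambda_i\,e_i^*\wedge f_i^*$ before the exterior-power computation, and record the $\pm$ sign in the last step rather than writing it as an inequality; both are implicit in what you wrote but worth making precise. As a byproduct your argument also gives the equality case $\prod_i\lambda_i=1$ iff all $\lambda_i=1$ iff $J_0 V=V$, which the paper records in the remark following the proposition.
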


\begin{remark}
In other words, Wirtinger's inequality tells that the $k$-th exterior power of the K\"alher form $\omega$ is bounded above by $k!$, when evaluated on unitary simple $2k$-vectors. Equality holds if and only if the $2k$-plane spanned by $v$ is a complex $k$-plane.
\end{remark}

\begin{theorem}\label{t:holomorphicsubvariety} Any compact portion of a holomorphic subvariety in $ \mathbb{C}^n$ is area-minimizing.
\end{theorem}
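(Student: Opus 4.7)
The plan is to prove the theorem via a standard calibration argument based on the K\"ahler form of $\mathbb{C}^n$. Let $\omega := \sum_{j=1}^n dx_j \wedge dy_j$ denote the standard K\"ahler $2$-form on $\mathbb{R}^{2n} \simeq \mathbb{C}^n$, with $z_j = x_j + i y_j$, and set $\Omega := \omega^{\wedge k}/k!$, where $k$ is the complex dimension of $\Gamma$. First I would record the two key properties of $\Omega$: (i) it is closed, since $\omega = d\lambda$ for $\lambda := \tfrac{1}{2}\sum_j (x_j\, dy_j - y_j\, dx_j)$; and (ii) by Wirtinger's inequality (Proposition \ref{p:wirtinger}), $\|\Omega(p)\|^* \leq 1$ at every $p \in \mathbb{C}^n$, with equality if and only if the associated simple unit $2k$-vector spans a complex $k$-plane. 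These two properties together are precisely what qualify $\Omega$ as a calibration in the sense of Harvey--Lawson.

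Next I would compute the pairing $\langle \llbracket \Gamma \rrbracket, \Omega \rangle$. Since $\operatorname{Sing}(\Gamma)$ has complex dimension at most $k-1$, its real Hausdorff dimension is at most $2k - 2$, so it is $\mathcal{H}^{2k}$-negligible and $\llbracket \Gamma \rrbracket$ is a well-defined integer rectifiable $2k$-current with finite mass on any bounded open set. At every regular point $p \in \Gamma \setminus \operatorname{Sing}(\Gamma)$ the tangent space $T_p \Gamma$ is a complex $k$-plane, so the equality case of Wirtinger yields $\langle \Omega(p), \tau_\Gamma(p) \rangle = 1$, and hence
$$\langle \llbracket \Gamma \rrbracket, \Omega \rangle \,=\, \int_{\Gamma \setminus \operatorname{Sing}(\Gamma)} 1 \, d\mathcal{H}^{2k} \,=\, \mathbb{M}(\llbracket \Gamma \rrbracket).$$

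To close the argument I would invoke Stokes' theorem. Let $T \in \mathcal{I}_{2k}(\mathbb{C}^n)$ be any competitor with $\partial T = \partial \llbracket \Gamma \rrbracket$; one may assume that $\operatorname{supp}(T)$ is contained in a large open ball $B$ also containing $\Gamma$ (otherwise the cone construction over $\partial T$ already produces a better competitor). Since $B$ is contractible, the Poincar\'e lemma furnishes a smooth $(2k-1)$-form $\eta$ on $B$ with $d\eta = \Omega$, and applying Definition \ref{d:boundary} gives
$$\langle T - \llbracket \Gamma \rrbracket, \Omega \rangle \,=\, \langle T - \llbracket \Gamma \rrbracket, d\eta \rangle \,=\, \langle \partial(T - \llbracket \Gamma \rrbracket), \eta \rangle \,=\, 0.$$
Combining this with the pointwise comass bound on $\Omega$ yields the chain
$$\mathbb{M}(T) \,\geq\, \langle T, \Omega \rangle \,=\, \langle \llbracket \Gamma \rrbracket, \Omega \rangle \,=\, \mathbb{M}(\llbracket \Gamma \rrbracket),$$
which is precisely the area-minimality of $\llbracket \Gamma \rrbracket$.

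The main technical obstacle is the last display, because Definition \ref{d:boundary} is formulated against compactly supported smooth test forms, whereas the primitive $\eta$ furnished by the Poincar\'e lemma is only smooth on $B$. I would handle this through the standard cutoff trick: multiply $\eta$ by a smooth function $\chi$ that equals $1$ on a neighborhood of $\operatorname{supp}(T) \cup \operatorname{supp}(\llbracket \Gamma \rrbracket)$ and vanishes near $\partial B$. Then $\chi\eta$ is a legitimate test form, $d(\chi\eta) = \Omega$ on both supports, and the error term $d\chi \wedge \eta$ pairs trivially with $T$ and $\llbracket \Gamma \rrbracket$ since it is supported outside both, so it does not affect the computation.
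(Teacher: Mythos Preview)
Your proof is correct and follows essentially the same calibration argument as the paper: both use Wirtinger's inequality to show that $\Omega = \omega^{\wedge k}/k!$ is a calibration, use exactness of $\Omega$ on $\mathbb{R}^{2n}$ together with Stokes' theorem to compare $\llbracket\Gamma\rrbracket$ against competitors, and conclude via the comass bound. Your version is in fact more careful than the paper's (you work with integral-current competitors rather than just smooth submanifolds, and you address the compact-support issue for the primitive via a cutoff), though your justification for restricting to competitors with bounded support is slightly imprecise---the cleaner fix is simply to note that $\Omega$ has constant coefficients and a global polynomial-growth primitive, so the pairing identities extend to any finite-mass $T$ with compactly supported boundary by a routine approximation.
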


\begin{proof}
On $\mathbb{C}^{n} \simeq \mathbb{R}^{2 n}$ consider the K\"ahler form
$$
\omega:=d x_{1} \wedge d y_{1}+\cdots+d x_{n} \wedge d y_{n}.
$$

Note that $d \omega= 0$ and, by Wirtinger's inequality \ref{p:wirtinger}, the $2k$-form $$\varphi:=\frac{1}{k !}\, \omega^{k}$$ satisfies $\|\varphi(v)\|_{\infty} \leq 1$ for all $2k$-planes\footnote{We say that $\omega$ is a \textit{calibration}, that is a closed $m$-form such that $\|\omega\|_{\infty} \leq 1$.} identified by $v$, with equality if and only if the $2k$-plane is a complex $k$-plane. Let $\Sigma$ be a compact portion of a $k$-dimensional holomorphic variety and let $\Gamma$ be any other submanifold such that $\partial \Sigma =  \partial \Gamma$. By the Poincaré lemma in $\mathbb{R}^{2n}$ we have that $\omega$ is exact. Indeed we can even write explicitly $$\omega=  d (x_1dy_1 \wedge ... \wedge x_n dy_n) = : d\eta .$$

By applying twice Stokes's theorem (Theorem \ref{t:stokes}) we get
\begin{equation}
\mathbb{M}\big(\llbracket\Sigma\rrbracket\big)=\int_\Sigma 1 =\int_\Sigma \omega =  \int_{ \partial \Sigma}\eta=  \int_{\partial \Gamma} \eta = \int_{\Gamma}\omega  \leq \mathbb{M}\big(\llbracket\Gamma\rrbracket\big).
\end{equation}
\end{proof}

\begin{remark}
As a corollary we get that the holomorphic curve $$\Gamma = \{(z,w) \in \mathbb{C}^2 \simeq \mathbb{R}^{4} : z^2=w^3\}$$ is a (locally) area-minimizing current of  dimension $2$ in $\R^4$, showing that Theorem \ref{t:regularityh} is sharp. We will comment more later on the ``branch point" singularity that appears at the origin. We remark that it has been rather simple to provide examples of area-minimizing integral currents with singularities in higher codimension: this should be the first warning about the degree of difficulty of developing a full regularity theory in that setting.
\end{remark}

Before turning to the analysis of codimension 1 theory, we recall a fundamental formula that plays a crucial role in (almost) any regularity theory for ``weak objects": the so-called \textit{monotonicity formula}.

Let $T$ be an area-minimizing $m$-current in $\mathbb{R}^{m+n}$ and suppose $T$ represents (by integration) a smooth surface $\Sigma$ (that is $T=\llbracket \Sigma \rrbracket)$. Fix a point $p \in \Sigma \setminus \partial \Sigma$ and a radius $r<\operatorname{dist}(p, \partial \Sigma)$. Assume further that $\partial \mathbf{B}_r(p)$ intersects $\Sigma$ transversally. If we replace $\Sigma$ in the ball $\mathbf{B}_r(p)$ with the cone having vertex $p$ and boundary $\Sigma \cap \partial \mathbf{B}_r(p)$ we increase the volume of $\Sigma$. More formally: $$
\mathcal{H}^m(\Sigma \cap \mathbf{B}_r(p)) \leq \frac{r}{m} \mathcal{H}^{m-1}(\Sigma \cap \partial \mathbf{B}_r(p)).
$$
On the other hand coarea formula (Theorem \ref{t:coarea}) implies that
$$
\mathcal{H}^{m-1}( \Sigma \cap \partial \mathbf{B}_r(p)) \leq \frac{d}{d t} \Big|_{t=r} \mathcal{H}^m(\mathbf{B}_t(p) \cap \Sigma)
$$
and solving the resulting differential inequality we can conclude that
$$
\frac{d}{d r} \frac{\mathcal{H}^{m}(\Sigma \cap \mathbf{B}_r(p))}{r^m} \geq 0 .
$$
A more careful computation, exploiting the stationarity of $\Sigma$, provides the following much more precise formula:
\begin{equation}\label{e:monotonicity}
\frac{\mathcal{H}^{m}(\Sigma \cap \mathbf{B}_r(p))}{r^m}-\frac{\mathcal{H}^{m}(\Sigma \cap \mathbf{B}_s(p))}{s^m}=\int_{\Sigma \cap\left(\mathrm{B}_r(p) \setminus \mathrm{B}_s(p)\right)} \frac{\left|(x-p)^{\perp}\right|^2}{|x-p|^{m+2}} d \mathcal{H}^m(x),
\end{equation}
for $0<s<r$, where $(x-p)^{\perp}$ denotes the component of the vector $(x-p)$ which is orthogonal to the tangent space $T_x \Sigma$. 
The formula in \eqref{e:monotonicity} is still valid for area-minimizing integer rectifiable currents as well. Indeed, note that the right-hand side of \eqref{e:monotonicity} is well-defined for $T$ (replacing $\mathcal{H}^m$ with $\|T\|$) since at $\|T\|$-a.e. $x$ we have a well-defined tangent plane, allowing us to define $(x-p)^{\perp}$ for $\|T\|$-a.e. $x$. 

Recall the following Proposition:
\begin{proposition}
If $T$ is an integer rectifiable current, then the number
$$
\Theta(T, p):=\lim _{r \rightarrow 0} \frac{\|T\|\left(\mathbf{B}_r(p)\right)}{\omega_m r^m}
$$
exists and it is a positive integer for $\|T\|$-a.e. point $p$. 
\end{proposition}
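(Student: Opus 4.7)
The plan is to reduce the statement to a density computation for the multiplicity function, combining the approximate tangent plane property of rectifiable sets with a Lebesgue-type differentiation theorem for $\mathcal{H}^m \res E$. First I would write $T = \llbracket E, \tau_E, \theta \rrbracket$ with $\theta > 0$ $\mathcal{H}^m$-a.e.\ on $E$ (after a possible reorientation), so that $\|T\| = \theta \, \mathcal{H}^m \res E$ and
$$\frac{\|T\|(\mathbf{B}_r(p))}{\omega_m r^m} = \frac{1}{\omega_m r^m} \int_{E \cap \mathbf{B}_r(p)} \theta(x) \, d\mathcal{H}^m(x).$$
Since $\|T\|$ and $\mathcal{H}^m \res E$ share the same null sets, it suffices to verify the claim for $\mathcal{H}^m$-a.e.\ $p \in E$, which allows me to restrict to points where simultaneously (i) the approximate tangent plane $T_pE$ exists (full-measure by the rectifiability characterization, [\ref{Simonbook}, Theorem 11.6]) and (ii) $p$ is a Lebesgue point of $\theta$ with respect to $\mathcal{H}^m \res E$.

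The core computation is the splitting
$$\frac{\|T\|(\mathbf{B}_r(p))}{\omega_m r^m} = \theta(p) \cdot \frac{\mathcal{H}^m(E \cap \mathbf{B}_r(p))}{\omega_m r^m} + \frac{1}{\omega_m r^m} \int_{E \cap \mathbf{B}_r(p)} \big(\theta(x) - \theta(p)\big)\, d\mathcal{H}^m(x).$$
For the first summand, I would use the weak-$*$ convergence $\mathcal{H}^m \res E_{p,r} \stackrel{*}{\rightharpoonup} \mathcal{H}^m \res T_pE$ together with the rescaling identity $\mathcal{H}^m(E \cap \mathbf{B}_r(p)) = r^m \, \mathcal{H}^m(E_{p,r} \cap \mathbf{B}_1(0))$. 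Sandwiching $\mathbf{1}_{\mathbf{B}_1(0)}$ between continuous compactly supported functions (legitimate because $\mathcal{H}^m \res T_pE$ assigns no mass to the sphere $\partial \mathbf{B}_1(0)$) gives $\mathcal{H}^m(E \cap \mathbf{B}_r(p))/(\omega_m r^m) \to \mathcal{H}^m(T_pE \cap \mathbf{B}_1(0))/\omega_m = 1$. For the second summand, the Lebesgue point property
$$\frac{1}{\mathcal{H}^m(E \cap \mathbf{B}_r(p))} \int_{E \cap \mathbf{B}_r(p)} |\theta(x) - \theta(p)| \, d\mathcal{H}^m(x) \to 0,$$
combined with the asymptotics just established, kills the remainder. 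Hence $\Theta(T,p) = \theta(p)$, a positive integer.

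The main subtlety lies in justifying the Lebesgue differentiation theorem for $\theta$ with respect to $\mathcal{H}^m \res E$, since this Radon measure is not a priori doubling on $\mathbb{R}^d$. However, the approximate tangent plane property implies an asymptotic doubling behaviour, namely $\mathcal{H}^m(E \cap \mathbf{B}_{\lambda r}(p))/\mathcal{H}^m(E \cap \mathbf{B}_r(p)) \to \lambda^m$ at good points, which is enough to run a Vitali-type covering argument and deduce pointwise convergence of averages for $L^1_{\mathrm{loc}}(\mathcal{H}^m \res E)$ functions; this is classical material on Radon measures on $\mathbb{R}^d$. A secondary technical point is extracting convergence of measures evaluated on the closed unit ball from the weak-$*$ convergence on $\mathcal{C}^0_c$: this is standard once one observes that the limiting measure $\mathcal{H}^m \res T_pE$ is concentrated on an $m$-plane and therefore charges no sphere. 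All remaining manipulations are routine.
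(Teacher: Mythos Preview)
The paper does not actually prove this proposition; it is stated as a recalled fact (introduced with ``Recall the following Proposition'') and no argument is given. Your proposal is therefore not comparable to any proof in the paper, but it is a correct and standard derivation: writing $T=\llbracket E,\tau_E,\theta\rrbracket$, one shows $\Theta(T,p)=\theta(p)$ for $\mathcal{H}^m\res E$-a.e.\ $p$ via the approximate tangent plane property plus a Lebesgue-point argument for $\theta$.

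One minor remark on your discussion of the ``main subtlety'': you worry that $\mathcal{H}^m\res E$ might not be doubling, and you invoke asymptotic doubling at good points. This is fine, but in fact no doubling is needed at all in $\mathbb{R}^d$: the Besicovitch covering theorem yields the Lebesgue differentiation theorem for any locally finite Radon measure on $\mathbb{R}^d$, so $\mathcal{H}^m$-a.e.\ point of $E$ is automatically a Lebesgue point of $\theta\in L^1(\mathcal{H}^m\res E)$. This slightly streamlines your argument. The rest (the sandwich argument for $\mathbf{1}_{\mathbf{B}_1(0)}$ using that the limit plane does not charge the sphere) is exactly right.
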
 

Given a current $T$ we will denote by $T_{p, r}$ the \textit{blow-up} of $T$, that is the result of translating $T$ so that $p$ becomes the origin and enlarging it of a factor $r^{-1}$. More formally:

\begin{definition}\label{d:miserveblowup}
Let $\iota_{p, r}$ denote the map $ x \mapsto (x-p) / r$, then we define $T_{p, r}:=\left(\iota_{p, r}\right)_{*} T$. Note that when $T=\llbracket \Sigma \rrbracket$ for some smooth surface $\Sigma$ then $T_{p, r}=\llbracket \iota_{p, r}(\Sigma) \rrbracket$.
\end{definition}

We recall now a very important definition.
\begin{definition}\label{d:cone}
An area minimizing \textit{cone} of dimension $m$ is an integer rectifiable $m$-current $S$ such that $\partial S=0$ and $S_{0, r}=S$ for every positive $r$ and $S\res \Omega$ is area-minimizing for any bounded open set $\Omega$.
In addition, if $T$ and $S$ are two currents such that, for some $p \in \operatorname{supp}(T)$ and some $r_k \rightarrow 0$, $T_{p, r_k}$ converges to $S$, we say that $S$ is \textit{tangent} to $T$ at $p$.
\end{definition}

The monotonicity formula has several important consequences that we recall here.
\begin{proposition}\label{p:subseqtgcone}
Let $T \in \mathcal{I}_m(\R^{m+n})$ be area-minimizing. Then \begin{itemize}
\item[$i)$] The density $\Theta(T, p)$ is well-defined at every $p \notin \operatorname{supp}(\partial T)$, it is at least 1 at each point $p \in \operatorname{supp}(T) \setminus \operatorname{supp}(\partial T)$ and it is upper semicontinuous.
\item[$ii)$] For every $p \notin \operatorname{supp}(\partial T)$ and every sequence $r_k \rightarrow 0$ there is a subsequence, not relabeled, and an area-minimizing cone $T_0$ such that $T_{p, r_k} \rightarrow T_0.$ Moreover, $T_0 \neq 0$ if and only if $p \in \operatorname{supp}(T)$.
\end{itemize}
\end{proposition}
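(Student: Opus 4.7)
The plan is to derive both assertions from the monotonicity formula \eqref{e:monotonicity}, coupled with the two compactness results already at hand: the closure theorem \ref{t:closure} and the compactness of area-minimizers, Theorem \ref{t:compactnessforam}.

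For part (i), I observe first that \eqref{e:monotonicity} immediately implies that, on any interval $(0,r_0)$ with $r_0<\operatorname{dist}(p,\operatorname{supp}(\partial T))$, the quotient $r\mapsto \|T\|(\mathbf{B}_r(p))/(\omega_m r^m)$ is nondecreasing, since its right-hand side integrand is nonnegative; hence the limit defining $\Theta(T,p)$ exists. For upper semicontinuity, given $p_k\to p$ and $\varepsilon>0$, the inclusion $\mathbf{B}_r(p_k)\subset \mathbf{B}_{r+\varepsilon}(p)$ holds eventually, so by monotonicity
\[
\Theta(T,p_k)\le \frac{\|T\|(\mathbf{B}_r(p_k))}{\omega_m r^m}\le \frac{\|T\|(\mathbf{B}_{r+\varepsilon}(p))}{\omega_m r^m};
\]
taking $\limsup_{k\to\infty}$, then $\varepsilon\to 0$, then $r\to 0$ yields $\limsup_k \Theta(T,p_k)\le \Theta(T,p)$. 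Finally, at $\|T\|$-a.e.\ point $T$ admits an integer multiplicity approximate tangent plane with $\Theta(T,\cdot)=|\theta(\cdot)|\ge 1$; since such points form a dense subset of $\operatorname{supp}(T)$ (every open ball intersecting the support has positive mass), upper semicontinuity then extends the bound $\Theta\ge 1$ to every point of $\operatorname{supp}(T)\setminus\operatorname{supp}(\partial T)$.

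For part (ii), I combine the scaling identity $\|T_{p,r}\|(\mathbf{B}_R(0))=r^{-m}\|T\|(\mathbf{B}_{rR}(p))$ with monotonicity to obtain, uniformly in small $r$, a bound of the form $\|T_{p,r}\|(\mathbf{B}_R(0))\le C(p)\,R^m$ on every fixed ball. Moreover $\operatorname{supp}(\partial T_{p,r})=r^{-1}(\operatorname{supp}(\partial T)-p)$ escapes to infinity, so $\partial T_{p,r}\res \mathbf{B}_R(0)=0$ for $r$ sufficiently small. Hence Theorem \ref{t:closure}, applied locally together with a diagonal argument on an exhaustion by balls, extracts a subsequence $T_{p,r_k}\to T_0$ with $T_0\in \mathcal{I}_m(\mathbb{R}^{m+n})$ and $\partial T_0=0$; Theorem \ref{t:compactnessforam} then ensures that $T_0$ is area-minimizing in every open set. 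The dichotomy on triviality is immediate from (i): if $p\in\operatorname{supp}(T)$ then $\Theta(T,p)\ge 1$, so $\|T_0\|(\mathbf{B}_1(0))\ge \omega_m>0$, whereas if $p\notin\operatorname{supp}(T)$ then $T$ vanishes in a neighborhood of $p$ and every blow-up vanishes.

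It remains to show that $T_0$ is a cone, which I expect to be the main obstacle. Applying \eqref{e:monotonicity} to $T_0$ itself and rescaling the identity for $T$ yields
\[
\frac{\|T_0\|(\mathbf{B}_R(0))}{R^m}-\frac{\|T_0\|(\mathbf{B}_S(0))}{S^m}=\lim_{k\to\infty}\left(\frac{\|T\|(\mathbf{B}_{r_kR}(p))}{\omega_m(r_kR)^m}-\frac{\|T\|(\mathbf{B}_{r_kS}(p))}{\omega_m(r_kS)^m}\right)\omega_m=0
\]
for every $R>S>0$, so the right-hand side of \eqref{e:monotonicity} for $T_0$ vanishes and $|x^\perp|=0$ at $\|T_0\|$-a.e.\ $x$, i.e.\ the approximate tangent plane contains the radial direction almost everywhere. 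The genuine difficulty lies in upgrading this pointwise identity to the global invariance $(\iota_{0,\lambda})_*T_0=T_0$ for every $\lambda>0$; this is the step that requires the structural theory of normal currents, typically through a first-variation/slicing computation showing that the radial push-forward preserves rectifiable structure and multiplicities. Once this cone property is in hand, the proof is complete.
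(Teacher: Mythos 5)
Your plan follows the standard route for this proposition (the paper itself only states it as a consequence of \eqref{e:monotonicity}, deferring to references), and most of the steps are right: the monotonicity of $r\mapsto\|T\|(\mathbf{B}_r(p))/(\omega_m r^m)$ does give existence of $\Theta(T,\cdot)$; the upper semicontinuity argument is correct (with the usual caveat of choosing $r$ so that $\|T\|(\partial\mathbf{B}_r(p))=0$); propagating $\Theta\geq 1$ from a dense set of rectifiability points to all of $\operatorname{supp}(T)$ via upper semicontinuity is the right idea; and the compactness-plus-diagonal extraction of $T_0$ from $T_{p,r_k}$ is correct, with Theorems~\ref{t:closure} and~\ref{t:compactnessforam} supplying integrality and minimality of the limit.

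There is, however, a genuine gap that you use twice without justifying: you assume $\|T_{p,r_k}\|(\mathbf{B}_R(0))\to\|T_0\|(\mathbf{B}_R(0))$. Convergence in the sense of currents only gives lower semicontinuity of mass, which is the \emph{wrong} direction both for the claim $\|T_0\|(\mathbf{B}_1(0))\geq\omega_m$ (nontriviality of $T_0$ when $p\in\operatorname{supp}(T)$) and for the telescoping computation showing that the density ratio of $T_0$ is constant equal to $\Theta(T,p)$. What one actually needs is that for sequences of area-minimizers converging in the flat topology the associated measures $\|T_{p,r_k}\|$ converge as Radon measures to $\|T_0\|$ — a fact contained in the full statement of Simon's Theorem~34.5 but not in the paper's abridged version as Theorem~\ref{t:compactnessforam}. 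You should invoke it explicitly; without it the dichotomy ``$T_0\neq 0$ iff $p\in\operatorname{supp}(T)$'' and the constancy of the density ratio for $T_0$ both collapse.

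The second issue you have correctly diagnosed but not resolved: passing from ``$x^{\perp}=0$ for $\|T_0\|$-a.e.\ $x$'' to $(\iota_{0,\lambda})_*T_0=T_0$ for all $\lambda>0$. This is exactly the content of the ``cone lemma'' (Simon, Theorem~19.3): one slices $T_0$ by the radial function $|x|$, uses that $\vec{T_0}(x)$ contains the radial direction as a factor (by the vanishing of the orthogonal component) to recover $T_0\res\mathbf{B}_R$ as the cone over its slice $\langle T_0,|\cdot|,R\rangle$ by a push-forward of $\llbracket(0,R)\rrbracket\times\langle T_0,|\cdot|,1\rangle$ under $(t,x)\mapsto tx$, and then verifies scale invariance. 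It would be worth at least a sentence recognizing that this reconstruction step is where the coarea formula (Theorem~\ref{t:coarea}) and the slicing toolbox (Proposition~\ref{p:slice}) enter, rather than leaving it as a black box labeled ``structural theory of normal currents.''
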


Note that if $T \in \mathcal{R}_m(\R^{m+n})$ then, by Remark \ref{r:unique}, $\|T\|$-a.e. $p \in \operatorname{supp}(T) \setminus \operatorname{supp}(\partial T)$ there is a \textit{unique} tangent cone, which is an integer multiple (where the multiple is $\Theta(T, p)$) of an $m$-dimensional plane $\pi(p)$. This motivates the following definition.

\begin{definition}
A tangent cone $S$ is called \textit{flat} if it is a (nonzero integer) multiple of an $m$-dimensional plane.
\end{definition}

Note that at every $p \in \operatorname{Reg}(T)$ there is a unique tangent cone and it is flat so that, equivalently, if there is a single tangent cone at $p$ which is not flat, then $p \in \operatorname{Sing}(T)$. Hence, one should be tempted to guess that a possible characterization of a regular point is a point where at least one tangent cone is flat: this is true in the very particular case of codimension 1, but not necessarily in higher codimension as the holomorphic curve $$\Gamma=\left\{(z, w) \in \mathbb{C}^2: z^2=w^3\right\}$$ already shows. 

We conclude this part by mentioning what can be considered the most challenging open problem in the regularity theory of generalized area-minimizing surfaces.

\begin{open problem}\label{op:utgcone} It is not known whether the tangent cone to an area-minimizing current $T$ is unique at every point $p \in \operatorname{supp}(T) \setminus \operatorname{supp}(\partial T)$.\end{open problem}

\begin{remark}
Proposition \ref{p:subseqtgcone} does not imply equality for the subsequential limits. Some partial results are known, such as for $2$-dimensional currents in any codimension by White [\ref{Whitetgcone}] and in codimension 1 at any isolated singularity by Simon [\ref{Simontgcone}]. More refined analyses of the structure of the singular set have been developed relying on the uniqueness of tangent cones for $2$-dimensional currents, as an example see [\ref{DLSS1}, \ref{DLSS2}, \ref{DLSS3}]. The remaining cases are widely open.
\end{remark}

\subsection{Area-minimizing currents in codimension one}

In this section we aim to highlight the main ideas behind the proof of what is known in the literature as the De Giorgi-Allard $\varepsilon$\textit{-regularity theorem}. Indeed, the first breakthrough in regularity theory for generalized area-minimizing surfaces is due to De Giorgi, see [\ref{Degiorgifrontiere}], by means of \textit{finite perimeter sets} (or \textit{Caccioppoli sets}); in his pioneering work he realizes that the existence of a flat tangent plane at a point $p$ is enough to conclude that $p$ is a regular point \textit{in codimension} $1$. In the framework of generalized area-minimizing surfaces, the most important generalization of De Giorgi's $\varepsilon$-regularity theorem is due to Allard in [\ref{AAvarifolds}] using the concept of integer rectifiable varifolds with sufficiently integrable ``generalized mean curavature". 

We will start stating the De Giorgi-Allard $\varepsilon$-regularity theorem in all dimensions and codimensions, emphasizing why it is valid in codimension 1 only. Then, for the sake of the exposition, we will prove a simplified version of the De Giorgi-Allard $\varepsilon$-regularity theorem in the language of Federer and Fleming's theory of currents. We will not prove it in full generality, but we will start from the very strong assumption that the (support of the) area-minimizing currents is already a graph of a Lipschitz function; nevertheless, we consider more useful to avoid the technical part of the approximation of area-minimizing currents by means of Lipschitz graphs with the scope of highlighting the main ideas of an $\varepsilon$-regularity theorem such as the \textit{excess decay} and the \textit{harmonic approximation}.

\begin{remark}
When we say $\varepsilon$\textit{-regularity theorem} what we mean is a statemant the following type: \begin{center}{\it``If some suitable quantity, usually an integral quantity, is sufficiently small at a given scale, then we can conclude that our object is regular at a smaller scale".}\end{center} Hence, when we have an $\varepsilon$-regularity theorem the main question to ask is under which assumptions this suitable integral quantity is small enough.
\end{remark}

The main quantity which turns out to be fundamental in the codimension one theory is the \textit{excess} of the current $T$, which we now define. We introduce the following notations: let $(p + \pi)$ be the affine plane passing through $p$, $\pi^{\perp}$ the orthogonal complement of $\pi$ and we denote $B_r(p, \pi):= \mathbf{B}_r(p) \cap (p + \pi)$.
\begin{definition}
Given a smooth $m$-dimensional submanifold $\Sigma \subset \R^{n+m}$ and $\pi$ an $m$-plane, we call the \textit{excess} of the submanifold $\Sigma$ at point $p$ in the ball $\mathbf{B}_r(p)$ of radius $r$ with respect to $\pi$ the quantity
\begin{equation}
   \mathbf{E}(\Sigma,p,r,\pi):=  \frac{1}{r^m}\int_{\Sigma\cap \mathbf{B}_r(p)}\|T_x\Sigma - \pi\|^2 d\mathcal{H}^m(x),
\end{equation}
where we identify an $m$-plane $\pi$ with the orthogonal projection onto it, which is a linear map $\pi:\R^{n+m} \rightarrow \R^{n+m}$ and we use the Hilbert-Schmidt norm for linear maps\footnote{It is important to choose the Hilbert-Schmidt norm since, under particular assumptions that will be made clear in the sequel, it will Taylor expand to a Dirichlet energy term that will be crucial in the rest of the proof.}. The \textit{excess} of the submanifold $\Sigma$ at point $p$ in the ball $\mathbf{B}_r(p)$ of radius $r$ is 
\begin{equation}
   \mathbf{E}(\Sigma,p,r):= \min\big\{\mathbf{E}(\Sigma,p,r,\pi) \,|\, \pi \, \text{is an oriented $m$-plane} \big\}.
\end{equation}
\end{definition}

\begin{remark}
Note that $\mathbf{E}$ is scale-invariant, \textit{i.e.} invariant under dilations, thanks to the presence of the factor $r^{-m}$. Informally, $\mathbf{E}$ is an integral measure of the oscillation of the tangent plane to the submanifold.
\end{remark}

Provided we use the notion of weak tangent space, see Definition \ref{approximatetg}, we have a well-defined notion of excess for integer rectifiable $m$-currents.

\begin{definition}
The \textit{excess} of an integer rectifiable $m$-current $S$ at $p$ on the ball $\mathbf{B}_r(p)$ of radius $r$ with respect to $\pi$ is:

\begin{equation}
   \mathbf{E}(S,p,r,\pi):=  \frac{1}{r^m}\int_{\mathbf{B}_r(p)}\|T_xS - \pi\|^2 d\|S\|(x).
\end{equation}
The \textit{excess} of $S$ at point $p$ in the ball $\mathbf{B}_r(p)$ of radius $r$ is 
\begin{equation}
   \mathbf{E}(S,p,r):= \min\big\{\mathbf{E}(S,p,r,\pi) \,|\, \pi \, \text{is an oriented $m$-plane} \big\}.
\end{equation}
\end{definition}

\begin{remark}
For notation purposes, we denote by $\operatorname{gr}(f)$ the graph of a function $f$ and if $f$ is Lipschitz continuous we denote by $\text{Lip}(f)$ its Lipschitz constant. When we say that the current $T$ \textit{is} a graph of a function, what it formally means is that $T = \llbracket\operatorname{gr}(f) \rrbracket$. We also remark that, in general, $\mathbf{E}(T,p,r)$ may be achieved on a $m$-plane $\pi_0$ so that the function $f$ that parametrizes the submanifold as a graph is defined as $ f: B_r(p, \pi_0)  \rightarrow \pi_0^{\perp}$. Nevertheless, as we are going to remark later, since $\text{Lip}(f)$ is small, it is without loss of generality to assume $\pi_0$ is the (oriented) horizontal plane $\R^m \times \{0\}$ so that we can write $ f: B_r(p,\R^m) \rightarrow \R^n$. We will simply write $B_r(p)$ for $B_r(p,\R^m)$.
\end{remark}
To highlight the importance of the codimension $n$, we state now the following ``theorem".

\begin{theorem}[False De Giorgi-Allard]\label{t:falseda}
There exists $\varepsilon_0>0$ such that if $\Ex(T,p,r)<\varepsilon_0$ and if $T \in \mathcal{R}_m(\R^{n+m})$ is area-minimizing, then $T$ is a single $\mathcal{C}^{1,\alpha}$-submanifold in $\mathbf{B}_{r/2}(p)$.
\end{theorem}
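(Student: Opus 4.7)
The plan is to exploit the working assumption stated before the theorem, namely that $T = \a{\operatorname{gr}(f)}$ for some Lipschitz $f \colon B_r(p) \to \R^n$, and upgrade the Lipschitz regularity of $f$ to $\mathcal{C}^{1,\alpha}$. After translating and rescaling, I would assume $p = 0$, $r = 1$, and that the $m$-plane achieving the minimum in the definition of the excess is the horizontal one $\R^m \times \{0\}$. Since the excess is exactly the $L^2$-oscillation of the tangent plane, smallness of $\Ex(T,0,1)$ forces $\operatorname{Lip}(f)$ to be small too, and a direct computation on the graph yields, to leading order,
$$\Ex(T,0,1) \;\simeq\; \int_{B_1} |Df(x)|^2\, dx.$$

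The area-minimality of $T$ then translates into the weak formulation of the minimal surface system for $f$,
$$\int_{B_1} \sum_{i,j} a^{ij}(Df)\, \partial_j f \cdot \partial_i \varphi \, dx \;=\; 0 \qquad \forall\, \varphi \in \mathcal{C}^\infty_c(B_1, \R^n),$$
with smooth coefficients $a^{ij}(\xi) = \delta^{ij} + O(|\xi|^2)$. When $\|Df\|_\infty$ is small this is a smooth elliptic perturbation of $-\Delta$. From this I would prove the \emph{harmonic approximation lemma}: for every $\eta>0$ there is $\varepsilon_1 = \varepsilon_1(\eta)>0$ such that, if $\Ex(T,0,1)<\varepsilon_1$, one finds a harmonic map $h \colon B_{1/2} \to \R^n$ with $\int_{B_{1/2}}|Df-Dh|^2 \leq \eta\, \Ex(T,0,1)$. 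The standard route is a compactness argument on the rescalings $g_k := (f_k - \bar f_k)/\sqrt{\Ex_k}$ of a hypothetical counterexample sequence: their weak $H^1$-limit must solve $\Delta g = 0$, and a Caccioppoli-type inequality derived from minimality of $T_k$ upgrades the convergence from weak to strong in $H^1_\loc$, contradicting the failed approximation.

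I would then combine this with the interior gradient estimate $\sup_{B_\rho}|Dh - Dh(0)|^2 \leq C\rho^2\, \Ex(T,0,1)$ for harmonic $h$, valid on $B_\rho$ with $\rho \in (0,1/4)$, to deduce the \emph{excess decay}: using the graph of $x\mapsto Dh(0)x$ as the trial plane at scale $\rho$, and fixing first $\rho$ so that $C\rho^2 \leq 1/4$ and then $\eta$ so that $C\eta \leq 1/4$, one obtains
$$\Ex(T, 0, \rho) \;\leq\; \tfrac{1}{2}\, \Ex(T, 0, 1).$$
The smallness hypothesis is preserved under the rescaling, hence the same estimate applies at every $q \in B_{1/2}$ and every dyadic scale $\rho^k$, giving $\Ex(T, q, \rho^k) \leq C\tau^k \Ex(T, 0, 1)$ with $\tau = 1/2$. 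Since the tilt between the optimal planes at two consecutive scales is bounded by $C\sqrt{\Ex}$, these planes form a Cauchy sequence, the tangent $T_qT$ exists and depends Hölder-continuously on $q$ with exponent $\alpha = -\log\tau/(2\log(1/\rho))$, and a standard Campanato–Morrey argument promotes this to $f \in \mathcal{C}^{1,\alpha}(B_{1/2})$, yielding the desired single $\mathcal{C}^{1,\alpha}$-submanifold in $\mathbf{B}_{1/2}(0)$.

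The hard part is the harmonic approximation step: the whole mechanism rests on promoting the weak $H^1$-convergence of the rescaled graphs $g_k$ to strong convergence, which requires enough elliptic regularity for the minimal surface system satisfied by $f$. In the simplified setting where $T$ is already a Lipschitz graph, this can in principle be pushed through in any codimension, but in the full De Giorgi–Allard theorem — where the Lipschitz graph representation itself must be deduced from smallness of the excess — it is precisely the scalar nature of $f$ in codimension one that unlocks the De Giorgi–Nash–Moser estimates and the strong maximum principle, making the Lipschitz approximation possible in the first place. In higher codimension no Lipschitz representation over any single plane is in general available: branch points, as in the holomorphic curve $\{z^2=w^3\}$, produce genuinely multi-sheeted configurations, which is why the statement is labelled \emph{false} in the general setting and motivates passing to Almgren's multi-valued functions in the next section.
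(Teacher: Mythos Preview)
The paper does not prove this statement: it immediately follows it with a remark declaring the ``theorem'' to be \emph{false} in general, and exhibits the counterexample $\Gamma=\{(z,w)\in\mathbb{C}^2:z^2=w^3\}$, for which $\Ex(\llbracket\Gamma\rrbracket,0,r)\to 0$ as $r\to 0$ while the origin is singular. So there is no proof in the paper for you to be compared against; the paper's treatment of Theorem~\ref{t:falseda} is precisely the counterexample.

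Your proposal rests on ``the working assumption stated before the theorem, namely that $T=\llbracket\operatorname{gr}(f)\rrbracket$''. No such assumption appears before Theorem~\ref{t:falseda} in the paper; that hypothesis is introduced only in the \emph{next} theorem, the Baby De~Giorgi--Allard (Theorem~\ref{t:babyda}). What you have written is, in substance, a proof sketch for Theorem~\ref{t:babyda}, not for Theorem~\ref{t:falseda}. Under the genuine hypotheses of Theorem~\ref{t:falseda} (only $T\in\mathcal{R}_m(\R^{n+m})$ area-minimizing with small excess), the very first step of your argument --- writing $T$ as a single-valued Lipschitz graph --- already fails for $n\geq 2$, as the holomorphic curve shows.

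If one reads your proposal as an argument for Theorem~\ref{t:babyda}, it is a reasonable alternative to the paper's route. The paper argues by contradiction: it rescales a hypothetical sequence violating the excess decay, shows the weak limit is harmonic (Claim~\ref{claim1}), upgrades to strong $W^{1,2}_{\rm loc}$ convergence via a Caccioppoli-type identity obtained by testing with $\varphi f_k$ (Claim~\ref{claim2}), and then contradicts the sharp decay for harmonic functions proved via spherical harmonics (Claim~\ref{claim3}). Your version packages the same ingredients as a direct ``harmonic approximation lemma'' followed by iteration; this is the standard alternative presentation and is equally valid, though your claim that small excess \emph{forces} $\operatorname{Lip}(f)$ to be small is not justified (and is not assumed in the baby version either, where smallness of $\operatorname{Lip}(f)$ is a hypothesis). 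Your closing paragraph correctly identifies where the full (non-baby) argument breaks in higher codimension, but this is commentary on why the theorem is false, not a proof of it.
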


\begin{remark}
The above ``theorem" is, in general, false: a counterexample is the holomorphic curve $$\Gamma = \{(z,w) \in \mathbb{C}^2 : z^2=w^3\}.$$ In Section 2.1.2 we will show that $\Ex(\llbracket \Gamma \rrbracket,p,r) \rightarrow 0$ for $r \rightarrow 0$ but it is not possible to write it as a graph of a (single-valued) function. Theorem \ref{t:falseda} turns out to be true only if we assume $n=1$.
\end{remark}

\begin{remark}
Once we have established that in $\mathbf{B}_r(p)$ the current $T$ is a single $\mathcal{C}^{1,\alpha}$-submanifold, then we can write the associated partial differential system of equations for the function $f$ that parametrizes the submanifold as a graph $f: B_{r/2}(p) \rightarrow \R^n$ and by classical regularity theory for second order elliptic systems of differential equations we conclude that the submanifold is smooth (in fact analytic, using the classical result by Morrey, see [\ref{Morreysystem}]).
\end{remark}

\subsubsection{Baby version of De Giorgi-Allard's Theorem}

\begin{theorem}[Baby De Giorgi-Allard]\label{t:babyda}
Let $T \in \mathcal{R}_m(\R^{n+m})$ area-minimizing. Assume $T$ is a graph of a Lipschitz function $f:B_{r}(p,\pi_0)  \rightarrow \pi_0^\perp$ and assume $Lip(f)$ is sufficiently small in its domain, then $f\in \mathcal{C}^{1,\alpha}(B_{r/2}(p,\pi_0), \R^n)$.
\end{theorem}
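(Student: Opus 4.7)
The plan is a classical excess-decay scheme via harmonic approximation, iterated to conclude $\mathcal{C}^{1,\alpha}$ regularity through Campanato's criterion. Since $T = \llbracket\operatorname{gr}(f)\rrbracket$ has small Lipschitz constant, I may assume without loss of generality that $\pi_0 = \R^m \times \{0\}$ and identify the mass of $T$ over a cylinder with the area functional
$$\mathcal{A}_\rho(g) := \int_{B_\rho(q)} \sqrt{\det(I + Dg^\top Dg)}\, dx.$$
Area-minimality of $T$ forces $f$ to minimize $\mathcal{A}_\rho$ against any Lipschitz competitor $g$ with the same boundary trace: indeed, the graph of $g$ and the graph of $f$ share the same boundary in $\mathbf{B}_\rho(q) \times \R^n$, so both are admissible for the Plateau problem solved by $T$. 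The Taylor expansion $\sqrt{\det(I + A^\top A)} = 1 + \tfrac{1}{2}|A|^2 + O(|A|^4)$ shows that the Euler--Lagrange system for $f$ is a small perturbation of $\Delta f = 0$.

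First I would establish a Caccioppoli-type inequality: for every affine map $\ell$ and every $q \in B_{r/2}(p)$,
$$\int_{B_{s/2}(q)} |D(f-\ell)|^2\, dx \leq \frac{C}{s^2}\int_{B_s(q)} |f-\ell|^2\, dx,$$
obtained by testing minimality against the competitor $(1-\eta)f + \eta\ell$ for a standard cutoff $\eta$ and absorbing higher-order terms in $|Df|$ using smallness of $\Lip(f)$. Second, I would prove the \emph{harmonic approximation lemma}: for every $\delta > 0$ there is $\varepsilon_1 > 0$ such that, whenever $\Lip(f) + \Ex(T, q, s) < \varepsilon_1$, there exists a harmonic map $h: B_{s/2}(q) \to \R^n$ agreeing with $f$ on $\partial B_{s/2}(q)$ and satisfying
$$\frac{1}{s^{m+2}} \int_{B_{s/2}(q)} |f - h|^2\, dx \leq \delta\, \Ex(T, q, s).$$
This is the heart of the argument and is proved by contradiction: if it fails, one rescales a bad sequence $f_k$ by $\Ex(T, q, s)^{1/2}$, extracts a weak $H^1$ limit $f_\infty$, and uses area-minimality together with the expansion above to show $f_\infty$ is harmonic with the right boundary data, a contradiction.

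Next I would combine these with the classical decay for harmonic functions,
$$\int_{B_{\theta s}(q)} |Dh - (Dh)_{\theta s}|^2\, dx \leq C_\star \theta^{m+2} \int_{B_{s/2}(q)} |Dh|^2\, dx, \qquad \theta \in (0, 1/4),$$
applied with the affine map $\ell$ in the Caccioppoli step taken to be the linearization of $h$ at $q$ with gradient $(Dh)_{\theta s}$. Some bookkeeping yields the excess-decay estimate
$$\Ex(T, q, \theta s) \leq \bigl(C_\star \theta^2 + C\,\delta\, \theta^{-m-2}\bigr)\Ex(T, q, s).$$
Choosing $\theta$ small so that $C_\star\theta^2 \leq \tfrac{1}{4}\theta^{2\alpha}$ for some $\alpha \in (0,1)$, and then $\delta$ even smaller, gives $\Ex(T, q, \theta s) \leq \theta^{2\alpha}\Ex(T, q, s)$. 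Iterating along the dyadic sequence $s = 2^{-k} r$, uniformly in $q \in B_{r/2}(p)$, produces $\Ex(T, q, \rho) \leq C(\rho/r)^{2\alpha}\Ex(T, q, r)$ for every $\rho < r/2$. By Campanato's characterization of Hölder spaces, this is equivalent to $Df \in \mathcal{C}^{0,\alpha}(B_{r/2}(p))$, hence $f \in \mathcal{C}^{1,\alpha}(B_{r/2}(p), \R^n)$.

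The main obstacle is the harmonic approximation step. The subtle point is that admissible competitors to $f$ must match its boundary trace: producing a harmonic function with prescribed (possibly irregular) boundary data whose Dirichlet energy is controlled by the excess requires a careful extension of Luckhaus type, together with a quantitative comparison of the nonlinear area functional and the Dirichlet energy. Both are valid \emph{only} thanks to the smallness of $\Lip(f)$, which is precisely what keeps the argument linear to leading order and is the reason the theorem can be proved under the strong hypothesis that $T$ is already a Lipschitz graph.
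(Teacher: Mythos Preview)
Your scheme is correct and follows the standard modular route: Caccioppoli inequality, a separate harmonic approximation lemma in $L^2$, harmonic decay, and then excess decay with a free scale parameter $\theta$. The paper reaches the same conclusion but packages the argument differently. It does not isolate a Caccioppoli inequality or an $L^2$ approximation lemma; instead it runs a single contradiction argument directly on the decay estimate at the fixed ratio $1/2$. Rescaling a putative bad sequence so that the Dirichlet energy is one, it shows (i) the weak $W^{1,2}$ limit is harmonic via the first variation, (ii) the convergence is actually \emph{strong} in $W^{1,2}_{\mathrm{loc}}$ by testing the first variation with $\varphi f_k$ rather than $\varphi$, and (iii) harmonic functions satisfy a sharper decay via an explicit spherical-harmonics expansion. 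Step (ii) replaces both your Caccioppoli step and your $L^2$ approximation lemma: strong gradient convergence lets one pass to the limit in the decay inequality without going through $|f-h|$ at all. Your modular version is the one most textbooks present and makes the role of each ingredient transparent; the paper's version is leaner but relies on the slightly slick testing trick for strong convergence. Your closing worry about Luckhaus-type extensions is unnecessary here, since $f$ is already Lipschitz and the harmonic competitor with trace $f|_{\partial B_{s/2}}$ is obtained simply by solving the Dirichlet problem.
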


\begin{remark}
As always in the theory of partial differential equations, the proof of ${C}^{1,\alpha}$-regularity comes with some estimates. We will see that, in the end, $\|f\|_{{C}^{1,\alpha}}$ will be estimated in terms of the excess, showing also from this baby version that there is hope to relax the two very strong assumptions of being parametrized by a Lipschitz graph with small Lipschitz constant\footnote{The assumption of a sufficiently small Lipschitz constant is necessary: in general, there exist Lipschitz solutions to the minimal surface system which are not of class $\mathcal{C}^1$, see [\ref{LawsonOsserman}].}.
\end{remark}

The core of the proof of Theorem \ref{t:babyda} is a decay estimate, usually called \textit{excess decay estimate}: under the assumption that the Lipschitz graph has sufficiently small Lipschitz constant we want to show that the excess at every point $p$ in $\mathbf{B}_r(p)$ decays as \begin{equation}\label{e:exdecay}\Ex(T,p,r)\leq C\, r^{2\alpha}\end{equation} where $C$ is a constant depending only on the excess at the largest scale. 

\begin{remark}
To obtain \eqref{e:exdecay} several novel ideas were combined together: the crucial point is to note that a similar decay estimate holds for harmonic function and if the current $T$ satisfies the assumptions of Theorem \ref{t:babyda}, then $T$ is close to the graph of a harmonic function and we can transfer such harmonic decay to $T$.
\end{remark}

To prove Theorem \ref{t:babyda} we now state some preliminary geometric lemmas that will highlight the connections between the excess $\Ex(T,p,r)$ and the regularity of the Lipschitz function $f:B_{r}(p,\pi_0) \rightarrow \pi_0^\perp$.

\begin{lemma}\label{l:comparable}
On a Lipschitz graph $T$, there exist two constants $\overline{C}$ and $\tilde{C}$ such that
\begin{equation}\label{affine0}
\overline{C} \min \fint_{P_{\R^m}}|Df(x)-A|^2 dx \leq \Ex(T,p,r)
\end{equation} and

\begin{equation}\label{affine}
 \Ex(T,p,r) \leq  \tilde{C} \min \fint_{P_{\R^m}}|Df(x)-A|^2 dx 
\end{equation}

where the minimum is taken among all affine functions $A(x):\mathbb{R}^m\rightarrow \mathbb{R}^n$, $P_{\R^m}$ is the projection of $B_r(p,\pi_0)$ onto $\R^m$ and by $\fint$ we mean the average integral.
\end{lemma}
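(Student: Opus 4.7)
The strategy is to exploit the fact that, for a Lipschitz graph $T = \llbracket \operatorname{gr}(f) \rrbracket$ with $\Lip(f) \ll 1$, the tangent plane at $\|T\|$-a.e. point is explicitly the graph of $Df$. Under this parametrization, both sides of \eqref{affine0}--\eqref{affine} become integrals of $|Df - L|^2$ over the base domain, differing only by the Jacobian of the graph map (which is comparable to $1$) and by a purely linear-algebraic factor comparing the Hilbert--Schmidt distance between graph-type $m$-planes and the distance between their generating matrices.

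First, by Rademacher's theorem $f$ is differentiable $\mathcal{L}^m$-a.e., and at each point of differentiability the approximate tangent plane $T_{(y,f(y))} T$ coincides with the $m$-plane $\pi_{Df(y)}$ spanned by the vectors $(e_i, \partial_i f(y))$ for $i=1,\dots,m$. Then I apply the area formula: setting $J(f)(y) := \sqrt{\det(I_m + Df(y)^T Df(y))}$,
\[\int_{\mathbf{B}_r(p)} F\, d\|T\| = \int_{\{y \,:\, (y,f(y))\in\mathbf{B}_r(p)\}} F(y,f(y))\, J(f)(y)\, dy\]
for every Borel $F \geq 0$, and from $|Df| \leq \Lip(f) \ll 1$ one gets $1 \leq J(f) \leq 1 + C\,\Lip(f)^2$. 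The domain on the right differs from $P_{\R^m}$ by a set of negligible measure when $\Lip(f)$ is small, which is harmless for the comparison. Next, a direct linear-algebraic computation, using the explicit formula for the orthogonal projector onto the graph of a linear map $L : \R^m \to \R^n$, yields the two-sided estimate
\[c_1(M)\,|L - L'|^2 \;\leq\; \|\pi_L - \pi_{L'}\|^2 \;\leq\; c_2(M)\,|L - L'|^2\]
whenever $\|L\|, \|L'\| \leq M$, with constants $c_1(M), c_2(M) > 0$ depending only on $M$; applied pointwise with $L' = Df(y)$, this controls the integrand on each side of \eqref{affine0}--\eqref{affine}.

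It remains to combine the pieces. For \eqref{affine}, I pick the linear $L^\ast$ achieving the infimum on the right-hand side and use $\pi_{L^\ast}$ as a competitor in the definition of the excess: the inequalities above give
\[\Ex(T,p,r) \leq \Ex(T,p,r,\pi_{L^\ast}) \leq \tilde C\, \min_L \fint_{P_{\R^m}} |Df - L|^2\, dy,\]
upon recalling $|P_{\R^m}| = \omega_m r^m$. For \eqref{affine0}, let $\pi^\ast$ attain the minimum in the definition of $\Ex(T,p,r)$; smallness of $\Lip(f)$ forces the average Hilbert--Schmidt distance from $T_x T$ to $\R^m\times\{0\}$ to be small, so $\pi^\ast$ is close to the horizontal plane and can itself be written as $\pi_{L^\ast}$ for some linear map $L^\ast$ of small operator norm. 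The lower inequality then yields $\Ex(T,p,r) \geq \overline{C}\, \min_L \fint_{P_{\R^m}} |Df - L|^2\, dy$.

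The main obstacle is the linear-algebraic comparison $\|\pi_L - \pi_{L'}\| \sim |L - L'|$: writing out the orthogonal projector onto the graph of $L$ in the $\R^m \oplus \R^n$ block decomposition (which involves the block $(I+L^T L)^{-1}$ together with $L(I+L^T L)^{-1}$ and similar terms) and obtaining clean constants that are uniform in the smallness regime requires a careful, though elementary, expansion. A secondary subtlety is the argument that the optimal plane $\pi^\ast$ appearing in $\Ex(T,p,r)$ is itself a graph over $\R^m$: this is precisely the place where the hypothesis $\Lip(f) \ll 1$ is used in an essential way.
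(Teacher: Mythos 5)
Your approach---parametrizing the current via the graph map, applying the area formula to produce a Jacobian factor $1 + O(\Lip(f)^2)$, and comparing $\|\pi_L - \pi_{L'}\|$ with $|L-L'|$ by a two-sided linear-algebraic estimate---is the natural mechanism here, and it is what the paper implicitly relies on (no proof of this lemma is given in the text; the discussion passes directly to the working estimates \eqref{e:stime1}--\eqref{e:stime2}). For the upper bound \eqref{affine} your sketch is sound: the set $\tilde P := \{y : (y,f(y))\in\mathbf{B}_r(p)\}$ is contained in $P_{\R^m}$, and enlarging the domain of a nonnegative integrand only helps, so taking $\pi_{L^\ast}$ as a competitor for the excess gives exactly what you claim.

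The lower bound \eqref{affine0} is where your ``harmless domain mismatch'' hides a genuine gap. After the area formula, the excess only controls $\int_{\tilde P}|Df-L^\ast|^2$, while the right-hand side of \eqref{affine0} integrates over the larger set $P_{\R^m}$. The annulus $P_{\R^m}\setminus\tilde P$ has relative measure of order $\Lip(f)^2$, but $Df$ is unconstrained there. If $Df\equiv L_0$ on $\tilde P$ while taking a different value on a portion of that annulus, then every tangent plane to $\operatorname{gr}(f)$ inside $\mathbf{B}_r(p)$ is $\pi_{L_0}$, so $\Ex(T,p,r)=0$, yet $\min_A\fint_{P_{\R^m}}|Df-A|^2>0$. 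Hence no fixed $\overline C>0$ can make \eqref{affine0} hold literally as written. This is precisely why the paper's usable version \eqref{e:stime1} shrinks the domain on the Dirichlet side to $B_{1-c\sqrt{\bar\varepsilon}}$ and carries the $1+C\bar\varepsilon^2$ correction; the Remark following \eqref{e:stime2} explains this shrinkage. To make your sketch rigorous, you should state \eqref{affine0} with the domain on the right correspondingly shrunk (or explicitly absorb the annulus contribution into the error), rather than asserting the mismatch is negligible.
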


\begin{remark}
The constants $\overline{C}$ and $\tilde{C}$ bounding $\Ex(T,p,r)$ can be shown to converge to $1$ when $\text{Lip}(f) \rightarrow 0$. Moreover, note that the quantity $$\min \fint_{P_{\R^m}}|Df(x)-A|^2 dx $$ is the $L^2$-norm of some function minus a constant: hence we can write
\begin{equation}\label{e:minaverage}
\min \fint_{P_{\R^m}}|Df(x)-A|^2 dx  = \int_{P_{\R^m}}\Big|Df(x)-\fint_{P_{\R^m}}Df\Big|^2.
\end{equation}
\end{remark}

Hence, if we are able to prove that the excess decay \eqref{e:exdecay} holds true, then this would imply that $Df$ decays as well at the same rate. More formally, by \eqref{e:exdecay}, Lemma \ref{l:comparable} and \eqref{e:minaverage} we would get

\begin{equation}\label{e:morrey}
    \fint_{P_{\R^m}}\Big|Df(x)-\fint_{P_{\R^m}} Df\Big|^2\leq C\, r^{2\alpha}
\end{equation} and \eqref{e:morrey} is known as a {\em Morrey-Campanato estimate}. We recall the following result from elliptic regularity theory for partial differential equations.

\begin{theorem}[Morrey-Campanato]\label{t:morreycampanato}
Let $\Omega \subseteq \mathbb{R}^m$ be an open set. If $g\in L^1(\Omega, \R^{mn})$ and there exists a constant $C$ independent of $r$ and $x$ such that $$\fint_{B_r(x)}\Big|\,g-\fint_{B_r(x)}g\,\Big|^2\leq C\, r^{2\alpha} \hspace{0.2cm} \text{ for every } B_r(x)\subset \Omega$$ then $g \in \mathcal{C}^{0,\alpha}(\Omega,\R^{mn})$.
\end{theorem}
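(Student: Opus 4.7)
The plan is to show that the local averages $g_{x,r} := \fint_{B_r(x)} g$ converge as $r\to 0$ to a Hölder-continuous representative of $g$. I would carry out the argument in three main steps.

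\textbf{Step 1 (dyadic Cauchy estimate for averages).} First I would prove that if $B_r(x)\subset\Omega$, then for every integer $k\ge 1$,
$$|g_{x,r}-g_{x,r/2^k}|\leq C'\, r^\alpha,$$
where $C'$ depends only on $C$, $m$, $\alpha$. The building block is the one-scale estimate: since $g_{x,r}-g_{x,r/2}$ is constant,
$$|g_{x,r}-g_{x,r/2}|^2=\fint_{B_{r/2}(x)}|g_{x,r}-g_{x,r/2}|^2\,dz\le 2\fint_{B_{r/2}(x)}\bigl(|g-g_{x,r}|^2+|g-g_{x,r/2}|^2\bigr)\,dz.$$
Bounding the first integrand by enlarging the domain of integration to $B_r(x)$ (which costs a factor $2^m$) and applying the hypothesis to both balls yields $|g_{x,r}-g_{x,r/2}|\le C''r^\alpha$. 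Summing the resulting geometric series in $r\cdot 2^{-k}$ proves the claim and, in particular, shows that $(g_{x,r/2^k})_k$ is Cauchy.

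\textbf{Step 2 (pointwise definition of the continuous representative).} I would then define $\tilde{g}(x):=\lim_{r\to 0^+}g_{x,r}$ wherever the limit exists. By the Lebesgue differentiation theorem, $\tilde g(x)=g(x)$ for $\mathcal{L}^m$-a.e.\ $x\in\Omega$. Passing to the limit $s\to 0^+$ in the telescoping estimate from Step 1 gives
$$|\tilde g(x)-g_{x,r}|\le C'r^\alpha\qquad\text{whenever }B_r(x)\subset\Omega,$$
so $\tilde g$ is defined everywhere on $\Omega$ and approximates the averages with rate $r^\alpha$.

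\textbf{Step 3 (Hölder comparison between two points).} The core geometric step is to estimate $|\tilde g(x)-\tilde g(y)|$ for $x,y\in\Omega$ at distance $d:=|x-y|$ small enough that $B_{2d}(x),B_{2d}(y)\subset\Omega$. Since $B_d(x)\cap B_d(y)$ has measure comparable to $d^m$, the fact that $g_{x,2d}-g_{y,2d}$ is constant gives
$$|g_{x,2d}-g_{y,2d}|^2\,|B_d(x)\cap B_d(y)|\le 2\int_{B_d(x)\cap B_d(y)}\!\!\bigl(|g-g_{x,2d}|^2+|g-g_{y,2d}|^2\bigr)\,dz,$$
and enlarging each integration domain to $B_{2d}(x)$, respectively $B_{2d}(y)$, lets me apply the hypothesis to obtain $|g_{x,2d}-g_{y,2d}|\le C_1 d^\alpha$. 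Combined with the two bounds $|\tilde g(x)-g_{x,2d}|\le C'(2d)^\alpha$ and $|\tilde g(y)-g_{y,2d}|\le C'(2d)^\alpha$ from Step 2, this yields $|\tilde g(x)-\tilde g(y)|\le C_2\,|x-y|^\alpha$. Hence $\tilde g\in\mathcal{C}^{0,\alpha}(\Omega,\mathbb{R}^{mn})$ and, since $\tilde g=g$ a.e., the statement is proved.

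The only nontrivial difficulty is bookkeeping: one has to track how enlarging the domain of integration from a smaller ball to a bigger one costs a dimensional factor, and one has to impose the a priori condition $|x-y|<\tfrac12\mathrm{dist}(x,\partial\Omega)$, so the Hölder estimate is really obtained on compact subsets of $\Omega$ and the constant depends only on $C$, $m$, $\alpha$. Conceptually, everything is driven by the ``Campanato embedding'' philosophy: the hypothesized decay forces the oscillation of the averages to be summable, and the limit of those averages is the continuous representative of $g$.
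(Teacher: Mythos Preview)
Your proof is correct and is essentially the standard argument for the Campanato embedding into $\mathcal{C}^{0,\alpha}$. Note, however, that the paper does not actually prove this theorem: it is merely stated there with the phrase ``We recall the following result from elliptic regularity theory for partial differential equations,'' and is then used as a black box to conclude H\"older regularity from the excess decay. So there is no proof in the paper to compare against; what you have written is exactly the classical proof one finds in textbooks on Campanato spaces, and it would serve perfectly well if one wanted to make the paper self-contained at this point.
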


\begin{remark}
To be able to apply Theorem \ref{t:morreycampanato} to \eqref{e:morrey} we need a uniform control with respect to $r$ and $x$ in the excess decay proof. Nevertheless, going through the proof carefully, one realizes that the constant $C$ in \eqref{e:morrey} blows up when $x$ approaches the boundary of the domain $\partial \mathbf{B}_r(p)$. Therefore, if we restric our analysis to a smaller ball  $B_{r/2}(x, \pi_0) \subset B_r(x, \pi_0) $ then we obtain a uniform control of $C$: this is why we conclude $f\in \mathcal{C}^{1,\alpha}$ in $B_{r/2}(p, \pi_0)$.
\end{remark}

We state formally the excess decay we want to prove.

\begin{proposition}[Excess decay]\label{p:excessdecay}
There exist $\eta, \Bar{\varepsilon} <1$ such that if $\text{Lip}(f) <\Bar{\varepsilon}$, then 
\begin{equation}\label{e:edecayreal}
\Ex\left(T,p,\frac{r}{2}\right)\leq \eta \, \Ex(T,p,r).\end{equation}
\end{proposition}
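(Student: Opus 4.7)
The plan is to combine three ingredients: (i) the excess is essentially the mean square oscillation of $Df$, by Lemma \ref{l:comparable}; (ii) under the smallness assumption on $\operatorname{Lip}(f)$, minimality of $T$ forces $f$ to be close (in $L^2$, and in $L^2$ of the gradient) to a harmonic $\R^n$-valued function on $B_r(p)$; (iii) harmonic functions enjoy a scale-invariant Campanato-type decay for the mean oscillation of the gradient, with a \emph{dimensional} constant that we can beat. Comparing $f$ to the harmonic comparison map and passing from $r$ to $r/2$ yields the decay with factor strictly less than $1$, provided the "closeness to harmonic" error is small enough, which is ensured by choosing $\bar\varepsilon$ small.

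\textbf{Step 1 (harmonic approximation).} The key lemma to establish is the following: for every $\delta>0$ there exists $\bar\varepsilon=\bar\varepsilon(\delta,m,n)>0$ such that, if $T=\llbracket \operatorname{gr}(f)\rrbracket$ is area-minimizing with $\operatorname{Lip}(f)<\bar\varepsilon$ on $B_r(p,\pi_0)$, then there is a harmonic map $u\colon B_r(p)\to\R^n$ with
\[
\fint_{B_r(p)}|Df-Du|^2\,dx\le \delta\,\mathbf{E}(T,p,r).
\]
I would prove this by contradiction and compactness: suppose it fails, then there is a sequence $f_k$ with $\operatorname{Lip}(f_k)\to 0$ and a fixed $\delta_0>0$ witnessing failure for every harmonic competitor. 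Rescale by the excess so that the normalized functions $g_k:=(f_k-\ell_k)/\sqrt{\mathbf{E}(T_k,p,r)}$, where $\ell_k$ is the affine function achieving the minimum in Lemma \ref{l:comparable}, have $\fint|Dg_k|^2$ bounded. By Rellich, extract a weak limit $g$ in $W^{1,2}$. The minimal surface system for $f_k$, when linearized at the flat plane (which is legitimate since $\operatorname{Lip}(f_k)\to 0$, so the higher order terms are $o(|Df_k|^2)$) gives $\Delta g=0$ in the limit, and weak convergence combined with minimality of $f_k$ upgrades to strong convergence of the gradients, contradicting the assumed failure. This is the main technical obstacle and where the area-minimality is really used.

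\textbf{Step 2 (Campanato decay for harmonic maps).} For any harmonic $u\colon B_r(p)\to\R^n$, each component of $Du$ is harmonic, so by the mean value property and standard interior estimates there exists a dimensional constant $C_0=C_0(m)$ with
\[
\fint_{B_{r/2}(p)}\bigl|Du-(Du)_{B_{r/2}(p)}\bigr|^2\,dx
\;\le\; C_0\Bigl(\tfrac{1}{2}\Bigr)^{\!2}\fint_{B_r(p)}\bigl|Du-(Du)_{B_r(p)}\bigr|^2\,dx.
\]
This is exactly the Campanato-type decay underlying Theorem \ref{t:morreycampanato}; the gain comes from the fact that $Du$, being harmonic, has Lipschitz continuous components with quantitative interior bounds.

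\textbf{Step 3 (combining).} By Lemma \ref{l:comparable} and elementary properties of the mean,
\[
\mathbf{E}(T,p,r/2)\le \tilde C\fint_{B_{r/2}(p)}\bigl|Df-(Df)_{B_{r/2}(p)}\bigr|^2\,dx.
\]
Writing $Df=Du+(Df-Du)$ and applying the triangle inequality together with the fact that subtracting the mean decreases the $L^2$ norm,
\[
\fint_{B_{r/2}(p)}\!\!\bigl|Df-(Df)_{B_{r/2}(p)}\bigr|^2
\le 2\fint_{B_{r/2}(p)}\!\!\bigl|Du-(Du)_{B_{r/2}(p)}\bigr|^2 + 2\fint_{B_{r/2}(p)}\!\!|Df-Du|^2.
\]
The first term is controlled by Step 2 and by Lemma \ref{l:comparable} (applied at scale $r$, using that the mean-zero part dominates the optimal affine error up to a constant) by $\tfrac{C_0}{2\,\overline C}\,\mathbf{E}(T,p,r)$; the second term, by Step 1 and the comparison of averages on $B_{r/2}$ and $B_r$, is bounded by $2^{m+1}\delta\,\mathbf{E}(T,p,r)$. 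Hence
\[
\mathbf{E}(T,p,r/2)\le \tilde C\Bigl(\tfrac{C_0}{\overline C}+2^{m+2}\delta\Bigr)\mathbf{E}(T,p,r).
\]
Choosing first $\delta$ so that $2^{m+2}\tilde C\delta<\tfrac{1}{4}$ and then $\bar\varepsilon=\bar\varepsilon(\delta,m,n)$ as in Step 1, we obtain the conclusion with $\eta:=\tilde C\,C_0/\overline C+\tfrac{1}{4}$, which can be made strictly smaller than $1$ because $\tilde C/\overline C\to 1$ and we are free to further shrink $\bar\varepsilon$.

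The main obstacle is Step 1: the quantitative harmonic approximation requires both the linearization of the minimal surface system around the flat plane (where $\operatorname{Lip}(f)$ small is essential to discard the non-linear remainder) and the upgrade from weak to strong $W^{1,2}$-convergence for minimizing sequences, which is where area-minimality of $T$ genuinely enters the argument.
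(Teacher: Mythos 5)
Your proposal reorganizes the paper's argument into the now-standard harmonic approximation framework, and the two are close in substance: your Step 1 (contradiction and compactness producing a harmonic comparison map with strong $W^{1,2}$ convergence) corresponds to the paper's Claims \ref{claim1} and \ref{claim2}, while your Step 2 corresponds to Claim \ref{claim3}. The paper does not isolate a quantitative harmonic approximation lemma; it reduces directly to a single inequality (Lemma \ref{l:lastlemma}) and proves it by one contradiction argument that produces the harmonic limit and then contradicts the sharp decay. Your packaging is more modular, but the limiting objects and the role of area-minimality (linearization of the minimal surface system, upgrade from weak to strong $W^{1,2}$ convergence) are identical.

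There is, however, a real gap in Step 2. You claim the decay
\[
\fint_{B_{r/2}}\bigl|Du-(Du)_{B_{r/2}}\bigr|^2 \le C_0\Bigl(\tfrac12\Bigr)^{\!2}\fint_{B_r}\bigl|Du-(Du)_{B_r}\bigr|^2
\]
with ``$C_0=C_0(m)$ a dimensional constant,'' credited to the mean value property and ``standard interior estimates.'' A soft interior estimate (for instance $\|D^2u\|_{L^\infty(B_{1/2})}\le C_m\|Du\|_{L^2(B_1)}$) does give such a decay, but with a constant $C_0$ that may well be large, and your Step 3 closes only if $\tilde{C}\,C_0/(2\overline{C})<1$; as $\bar\varepsilon\to 0$ (so $\tilde{C}/\overline{C}\to 1$) this forces $C_0<2$. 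Nothing in ``standard interior estimates'' guarantees this. At the fixed radius ratio $1/2$ you actually need the sharp constant $C_0\le 1$, which is a genuine lemma and not a black box: it follows from the $L^2$-orthogonality of homogeneous harmonic polynomials restricted to spheres, which is exactly the content of the paper's Claim \ref{claim3}. You must either prove this sharp constant (as the paper does), or abandon the ratio $1/2$ and take $r/\theta$ with $\theta$ large enough to beat the soft constant, which proves a slightly different statement than Proposition \ref{p:excessdecay}. Apart from this, Steps 1 and 3 are sound, and your Step 1 correctly identifies where minimality of $T$ genuinely enters the argument.
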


\begin{remark}
Note that from Theorem \ref{p:excessdecay} and the discussion above, we can easily get \eqref{e:morrey} by iteration: apply \eqref{e:edecayreal} $k$-times\footnote{The monotonicity formula plays a fundamental role in this step.} to get
$$\Ex\left(T,p,\frac{r}{2^k}\right)\leq \eta^k \Ex(T,p,r)=2^{-k(-\log_2\eta)}\Ex(T,p,r).$$
Denoting $2\alpha:=-\log_2\eta$ we conclude getting
$$\Ex\left(T,p,\frac{r}{2^k}\right)\leq (2^{-k})^{2\alpha} \Ex(T,p,r)$$ where $\rho:=r/2^k \text{ and } (2^{-k})^{2\alpha}  = \left(\rho/r\right)^{2\alpha}$.
\end{remark}

Hence if we prove Proposition \ref{p:excessdecay} we conclude the proof of Theorem \ref{t:babyda}. 
In order to prove Proposition \ref{p:excessdecay} we will divide the proof into three main steps that we summarize here: \begin{enumerate}
\item By a Taylor expansion of the Hilbert-Schmidt norm we will state some useful estimates of the excess. Exploiting the assumption that $\text{Lip}(f)$ is small enough, we will then rephrase the problem in terms of a \textit{decay estimate} that, once proved, will imply Proposition \ref{p:excessdecay}.
\item We are going to prove two lemmas that will show De Giorgi's fundamental idea: when $Df$ is very small, the area functional is well-approximated by the Dirichlet energy, telling us that minimizers of the area functional are, in some sense, very close to harmonic functions.
\item Finally we will prove (by means of spherical harmonics expansion) that harmonic functions satisfy a stronger \textit{decay estimate} and we will transfer such decay to the Lipschitz minimizer of the area functional $f$.
\end{enumerate}

\begin{remark}\label{r:wlog}
Recall that the excess was defined as $$\Ex(\text{gr}(f),p,r) = \min_{\pi} \frac{1}{r^m}\int_{\text{gr}(f) \cap \mathbf{B}_r(p)}\|T_x\text{gr}(f) - \pi\|^2 \,d\mathcal{H}^m(x).$$

Note that, up to rotating the coordinate system and increasing Lip$(f)$ by a controlled small amount, we can assume without loss of generality that the plane $\pi_0$ which minimizes the excess is the (oriented) horizontal plane $\R^m \times \{0\}$; indeed, under the condition that $T$ is a graph of a Lipschitz function with small $\text{Lip}(f)$, then the ``tilt" of $\pi_0$ to the horizontal plane is controlled.

Moreover, without loss of generality we can assume $r=1$. Indeed, we can apply a homothety mapping $x \mapsto x/r$, that is sending $\mathbf{B}_r(p)$ into $\mathbf{B}_1(p)$: the homothetic submanifold is still area-minimizing among the class of new competitors and the excess is scale-invariant under homotheties.
\end{remark}

By Remark \ref{r:wlog} we can write $$\|T_x\text{gr}(f) - \pi_0\|^2= \|T_x\text{gr}(f) - \R^{m}\times \{0\}\|^2.$$ Now note that since we used the Hilbert-Schmidt norm for linear functions, by a simple Taylor expansion we get \begin{equation}\label{e:hsnormtaylor}
\begin{aligned}
\|T_x\text{gr}(f) - \R^{m}\times \{0\}\|^2 &= |Df|^2 + \mathcal{O}(|Df|^4) \\
&= |Df|^2 + \mathcal{O}\big((\text{Lip}(f))^2|Df|^2\big)\\ 
&\sim  |Df|^2 (1 + C\,(\text{Lip}(f))^2).
\end{aligned}
\end{equation} Hence, denoting $\overline{\varepsilon}:=\text{Lip}(f)$, by \eqref{e:hsnormtaylor} we can write

\begin{equation}\label{e:stime1}
\frac{1}{2}\int_{B_{(1-c\sqrt{\Bar{\varepsilon}})}}|Df|^2\leq \Ex\big(\text{gr}(f),p,1\big)\, [1+C\Bar{\varepsilon}^2].
\end{equation}
where $C$ is a geometric constant, depending only on the dimension. In a similar way\footnote{That is, by Taylor expansion and by observing that the affine plane minimizing $\Ex\big(\text{gr}(f),p,1/2\big)$ is not far from the average.}, the following inequality can be proved:
\begin{equation}\label{e:stime2}
\Ex\big(\text{gr}(f),p,1/2\big)\leq\frac{1}{2}\Big( \int_{B_{1/2}}\Big|Df-\fint_{B_{1/2}}Df\Big|^2\Big) \, [1+C\Bar{\varepsilon}^2].
\end{equation}

\begin{remark}
In the left-hand side of \eqref{e:stime1} we integrate on the ball of radius $1-c\sqrt{\Bar{\varepsilon}}$. Roughly speaking, this comes from a fine comparison between the intersection of $\text{gr}(f)$ with a cylinder and with the sphere. Since $f$ is not in general a constant function, we need to control by a factor that takes into account the fact that $f$ may change, but this change is controlled by the Lipschitz constant $\Bar{\varepsilon}$. On the other hand, we do not write $1-c\sqrt{\Bar{\varepsilon}}$ on the right-hand side of \eqref{e:stime2}, since in this case we look at the excess in the $1/2$-ball which is fully inside the cylinder of radius $1/2$.
\end{remark}

Summarizing, with $\pi_0 = \R^{m} \times \{0\}$, $r=1$ and for $\eta < 1$ then \eqref{e:edecayreal} becomes:
\begin{equation}\label{e:stime3}
\Ex\left(T,p,\frac{1}{2}\right)\leq \eta \, \Ex(T,p,1).\end{equation} Putting together \eqref{e:stime3} with \eqref{e:stime1} and \eqref{e:stime2} we realize that if we are able to prove that, for $\Tilde{C}<1$, the following is true
\begin{equation}\label{e:cbarra}
\Bigg(\frac{1}{2}\int_{B_{1/2}}|Df-\fint_{B_{1/2}}Df|^2\Bigg)\leq \Tilde{C}\Bigg(\frac{1}{2}\int_{B_{(1-C\sqrt{\Bar{\varepsilon}})}}|Df|^2\Bigg),
\end{equation} then we prove \eqref{e:edecayreal} as well since $\overline{\varepsilon}$ can be chosen sufficiently small.

We can state the following Lemma, that will allow us to conclude the proof of Theorem \ref{t:babyda}.

\begin{lemma}\label{l:lastlemma}
For every $\sigma>0$ there exists $\varepsilon>0$ such that if $\Bar{\varepsilon}<\varepsilon$ then
\begin{equation}
    \int_{B_{1/2}}\Big| Df-\fint Df\Big|^2\leq \bigg(\frac{1}{4}+\sigma \bigg)\int_{B_{(1-c\sqrt{\Bar{\varepsilon}})}}|Df|^2.
\end{equation}
\end{lemma}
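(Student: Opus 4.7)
The strategy is a contradiction/compactness argument centred on De Giorgi's \emph{harmonic approximation}: when $\Lip(f)$ is very small the area functional is well approximated by the Dirichlet energy, so the rescaled graph is close to a harmonic function and inherits the sharp decay enjoyed by harmonic functions. Suppose the conclusion fails: there exist $\sigma_0>0$ and a sequence of area-minimising Lipschitz graphs $f_k:B_1\to\R^n$ with $\overline{\varepsilon}_k:=\Lip(f_k)\to 0$ and
\[
\int_{B_{1/2}}\Big|Df_k-\fint_{B_{1/2}}Df_k\Big|^2 \;>\; \Big(\tfrac{1}{4}+\sigma_0\Big)\int_{B_{1-c\sqrt{\overline{\varepsilon}_k}}}|Df_k|^2.
\]
Let $E_k^2:=\int_{B_{1-c\sqrt{\overline{\varepsilon}_k}}}|Df_k|^2$ (which we may assume positive, else the inequality is trivial) and set $h_k:=(f_k-(f_k)_{B_1})/E_k$, so that $(h_k)_{B_1}=0$ and $\int_{B_{1-c\sqrt{\overline{\varepsilon}_k}}}|Dh_k|^2=1$. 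By Poincaré, $(h_k)$ is bounded in $W^{1,2}(B_1,\R^n)$, hence along a subsequence $h_k\rightharpoonup h$ weakly in $W^{1,2}(B_1)$ and $h_k\to h$ strongly in $L^2(B_1)$.

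The conceptual core is the claim that $h$ is harmonic. The area-minimality of $T_k=\llbracket\mathrm{gr}(f_k)\rrbracket$ yields the Euler-Lagrange system for $f_k$, namely the non-parametric minimal surface system, whose leading term is $\Delta f_k=0$ and whose nonlinear remainder is polynomial in $Df_k$ of order at least three. Dividing the equation by $E_k$ and using $\overline{\varepsilon}_k\to 0$, the rescaled remainder is pointwise controlled by $\overline{\varepsilon}_k^2|Dh_k|$ and therefore vanishes in $L^2_{\mathrm{loc}}(B_1)$, while the leading term converges distributionally to $\Delta h$. Testing against $\varphi\in\mathcal{C}^\infty_c(B_1,\R^n)$ and passing to the limit gives $\int\nabla h\cdot\nabla\varphi=0$, i.e.\ $h$ is componentwise harmonic.

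Apply now the sharp decay for vector-valued harmonic functions to $u:=Dh$: each component is harmonic, so $|u-u(0)|^2$ is subharmonic and $r\mapsto\fint_{B_r}|u-u(0)|^2$ is non-decreasing in $r$. Since $u(0)=\fint_{B_{1/2}}u=\fint_{B_1}u$ by the mean value property, one obtains
\[
\int_{B_{1/2}}\Big|Dh-\fint_{B_{1/2}}Dh\Big|^2 = |B_{1/2}|\fint_{B_{1/2}}|u-u(0)|^2 \leq 2^{-m}|B_1|\fint_{B_1}|u-u(0)|^2 \leq 2^{-m}\int_{B_1}|Dh|^2,
\]
which is bounded by $\tfrac14\int_{B_1}|Dh|^2$ for $m\geq 2$. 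Weak lower semicontinuity yields $\int_{B_1}|Dh|^2\leq\liminf_k\int_{B_{1-c\sqrt{\overline{\varepsilon}_k}}}|Dh_k|^2=1$, and strong $L^2(B_{1/2})$ convergence $Dh_k\to Dh$ — the main technical obstacle, obtained via a Caccioppoli-type inequality for $h_k-h$ coming from testing area-minimality against a smooth cutoff — lets us pass to the limit in the assumed failed inequality, yielding $\int_{B_{1/2}}|Dh-\fint_{B_{1/2}}Dh|^2\geq\tfrac14+\sigma_0$, in contradiction with the harmonic decay bound $\leq\tfrac14$. The principal difficulty is the combined harmonic approximation step (identification of $h$ as harmonic together with the strong convergence of the gradients): both genuinely exploit the nonlinear structure of the minimal surface system and the smallness of $\overline{\varepsilon}_k$, and constitute De Giorgi's decisive insight.
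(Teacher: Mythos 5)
Your proposal follows the paper's proof essentially step for step: argue by contradiction along a sequence $f_k$ with $\Lip(f_k)\to 0$ that violates the estimate, normalize by the Dirichlet energy on $B_{1-c\sqrt{\bar\varepsilon_k}}$, extract a weak $W^{1,2}$ limit, prove the limit is harmonic by Taylor-expanding the first variation of the area functional, upgrade to strong $W^{1,2}_{\rm loc}$ convergence, and conclude by the decay estimate for gradients of harmonic functions. The two small deviations are both local. First, you prove the decay step (the paper's Claim 3) by observing that $|Dh-Dh(0)|^2$ is subharmonic and that ball averages of subharmonic functions increase in the radius, obtaining the bound $2^{-m}\int_{B_1}|Dh|^2$; the paper instead expands the harmonic limit in homogeneous harmonic polynomials and uses their $L^2$-orthogonality on spheres to get the sharper bound $2^{-(m+2)}\int_{B_1}|Dg|^2$. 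Your bound suffices for the contradiction exactly when $m\geq 2$ (it equals $\tfrac14$ at $m=2$, still below $\tfrac14+\sigma_0$), which is the relevant range, whereas the spherical-harmonics argument covers all $m$ uniformly — a small loss of generality in exchange for a more elementary computation. Second, for strong convergence of gradients (the paper's Claim 2) you invoke a ``Caccioppoli-type inequality for $h_k-h$,'' which is slightly misleading: since $h_k$ solves the nonlinear minimal-surface system while $h$ solves the Laplace equation, a Caccioppoli estimate for the difference is not immediate. What the paper actually does is test the first-variation identity with $\varphi g_k$ to obtain convergence of energies $\int\varphi|Dg_k|^2\to\int\varphi|Dg|^2$, which together with weak convergence yields the strong convergence. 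Your underlying idea — area-minimality tested against a cutoff times the rescaled function — is the same mechanism, but the cleaner framing is ``convergence of energies,'' not Caccioppoli, and you would want to spell it out in that language. As a minor bonus, your passage to the limit produces $\int_{B_{1/2}}|Dh-\fint Dh|^2\geq\tfrac14+\sigma_0$ directly, which sidesteps the paper's extra observation that the limiting harmonic map must be nontrivial.
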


\begin{remark}
Note that Lemma \ref{l:lastlemma} remarks that the constant $\Tilde{C}<1$ in \eqref{e:cbarra} can be achieved to be as close as we want to $1/4$. This is important since we are going to prove a similar stronger decay for harmonic functions.
\end{remark}

\begin{proof}

We argue by contradiction: we want to show that there exists $\Bar{\sigma}>0$ such that there exists a sequence of Lipschitz functions $(f_k)_k$ with
\begin{itemize}
    \item[$i)$] $f_k(0)=0$ for all $k$ (up to a translation);
    \item[$ii)$] $\text{Lip}(f_k)=:\Bar{\varepsilon}_k\rightarrow 0$ for $k\rightarrow \infty$;
    \item[$iii)$] $\text{gr}(f_k)$ is an area-minimizing\footnote{One will realize that \textit{stationary} would have been enough.} submanifold for all $k$ such that\end{itemize}
  \begin{equation}\label{contra}
        \int_{B_{1/2}}\Big|Df_k-\fint Df\Big|^2 \geq \bigg(\frac{1}{4}+\Bar{\sigma}\bigg)\int_{B_{(1-C\sqrt{\Bar{\varepsilon}_k})}}|Df_k|^2.
    \end{equation}

Now rescale $(f_k)_k$ in such a way each Dirichlet energy is unitary. To do so, define

$$g_k:=\frac{f_k}{\Big(\int_{B_{(1-C\sqrt{\Bar{\varepsilon}_k})}}|Df_k|^2\Big)^{1/2}}$$ and notice $g_k$ has a uniform bound in $W^{1,2}(B_1, \R^n)$. Hence, by Rellich's theorem \begin{equation}\label{e:rellichtog}
g_k \rightarrow g \,\text{ in } \,L^2(B_1, \R^n), \text{ for some }  g\in W^{1,2}(B_1, \R^n).
\end{equation}

We now state two key steps in the proof.
\begin{claim}\label{claim1}
$g$ is harmonic.
\end{claim}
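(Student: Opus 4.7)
The plan is to exploit the Euler--Lagrange identity satisfied by the area-minimizing graphs and to pass to the limit using the uniform $W^{1,2}$-bound on $(g_k)_k$. Since $\mathrm{gr}(f_k)$ is stationary for the area functional, testing the first variation against any $\varphi \in \mathcal{C}^\infty_c(B_1,\R^n)$ yields, from the expansion $\sqrt{\det(I + Du^T Du)} = 1 + \tfrac{1}{2}|Du|^2 + \mathcal{O}(|Du|^4)$ of the area integrand, an identity of the form
\begin{equation*}
\int_{B_1} \langle Df_k , D\varphi \rangle \, dx = \int_{B_1} \langle R_k(Df_k) , D\varphi \rangle \, dx,
\end{equation*}
where the remainder satisfies the pointwise bound $|R_k(Df_k)| \leq C\, |Df_k|^2$.

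First I would divide the previous identity by the normalization $c_k := \big(\int_{B_{(1-C\sqrt{\bar\varepsilon_k})}}|Df_k|^2\big)^{1/2}$ so as to rewrite it in terms of $g_k = f_k/c_k$. The left-hand side becomes $\int_{B_1} \langle Dg_k , D\varphi \rangle$, while using $|Df_k|\leq \mathrm{Lip}(f_k) = \bar\varepsilon_k$ together with Cauchy--Schwarz the right-hand side is controlled by
\begin{equation*}
\frac{1}{c_k}\int_{B_1} |R_k(Df_k)||D\varphi| \, dx \leq \frac{C\,\bar\varepsilon_k}{c_k} \int_{B_1} |Df_k||D\varphi| \, dx \leq C\,\bar\varepsilon_k\, \|Dg_k\|_{L^2(B_1)} \|D\varphi\|_{L^2(B_1)}.
\end{equation*}
Since $\|Dg_k\|_{L^2(B_1)}$ is uniformly bounded by construction of $g_k$ and $\bar\varepsilon_k\to 0$, this remainder vanishes in the limit.

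Next, from the uniform $W^{1,2}$-bound, up to a subsequence $g_k \rightharpoonup \tilde g$ weakly in $W^{1,2}(B_1,\R^n)$, and the strong $L^2$-convergence in \eqref{e:rellichtog} forces $\tilde g = g$ by uniqueness of the weak $L^2$-limit. Weak convergence of the gradients then gives $\int_{B_1} \langle Dg_k , D\varphi\rangle \to \int_{B_1} \langle Dg , D\varphi \rangle$ for every admissible $\varphi$, and combining this with the vanishing of the remainder produces
\begin{equation*}
\int_{B_1} \langle Dg , D\varphi \rangle \, dx = 0 \qquad \text{for every } \varphi \in \mathcal{C}^\infty_c(B_1,\R^n),
\end{equation*}
so each component of $g$ is weakly harmonic, and Weyl's lemma allows us to conclude that $g$ is harmonic.

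The main obstacle is making the first-variation identity rigorous with explicit control on the remainder $R_k$: one must carefully expand the minimal surface system around $Du=0$ and check that the nonlinear correction is genuinely quadratic in $Df_k$, so that after dividing by $c_k$ it can be absorbed thanks to the smallness of $\bar\varepsilon_k$. Once this expansion is available, the rest is the standard linearization-by-compactness argument that lies at the heart of De Giorgi's harmonic approximation.
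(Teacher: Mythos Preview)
Your proof is correct and follows essentially the same route as the paper: write the first variation of the area functional, isolate the linear Dirichlet term, estimate the nonlinear remainder using the Lipschitz bound $|Df_k|\leq\bar\varepsilon_k$, renormalize by $c_k$, and pass to the limit via weak $W^{1,2}$-convergence of $g_k$. The only cosmetic differences are that the paper records the sharper cubic bound $|R_k|\leq C|Df_k|^3$ (which yields an $\bar\varepsilon_k^2$ factor rather than your $\bar\varepsilon_k$) and controls $D\varphi$ in $\mathcal{C}^0$ rather than $L^2$; neither affects the argument.
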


\begin{claim}\label{claim2}
$g_k$ converges strongly in $W^{1,2}(B, \R^n)$ for every ball $B \subset\joinrel\subset B_1$. 
\end{claim}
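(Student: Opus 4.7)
The plan is to upgrade the weak convergence $g_k \rightharpoonup g$ in $W^{1,2}(B_1,\R^n)$ (which follows from Rellich and the uniform bound $\|Dg_k\|_{L^2}\le 1$) to strong convergence on every $B\subset\joinrel\subset B_1$. Since $g_k\to g$ in $L^2$ by \eqref{e:rellichtog}, it suffices to prove the energy convergence $\lim_k\int_B|Dg_k|^2=\int_B|Dg|^2$. The lower bound $\int_B|Dg|^2\le\liminf_k\int_B|Dg_k|^2$ is weak lower semicontinuity, so I focus on
\[
\limsup_{k\to\infty}\int_{B_r}|Dg_k|^2\,dx \le \int_{B_r}|Dg|^2\,dx \qquad \text{for every } B_r\subset\joinrel\subset B_1.
\]

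The strategy is a harmonic-competitor argument exploiting the area minimality of $T_k=\llbracket\operatorname{gr}(f_k)\rrbracket$. Fix $r<r'<1$ and write $d_k:=\big(\int_{B_{1-C\sqrt{\bar\varepsilon_k}}}|Df_k|^2\big)^{1/2}$, so that $g_k=f_k/d_k$. Since $\{g_k\}$ is bounded in $W^{1,2}(B_{r'})$ with $g_k\to g$ in $L^2(B_{r'})$, a Fubini slicing argument plus extraction of a subsequence provides, for each $\sigma>0$ small, a radius $s\in(r,r')$ and an annulus $A_\sigma:=B_s\setminus B_{s-\sigma}\subset B_{r'}$ such that
\[
\int_{\partial B_s}\!\big(|Dg|^2+|Dg_k|^2\big)\,d\mathcal H^{m-1}\le M,\quad \int_{\partial B_s}|g-g_k|^2\,d\mathcal H^{m-1}\to 0,\quad \int_{A_\sigma}|Dg_k|^2\le \eta(\sigma),
\]
with $\eta(\sigma)\to 0$ as $\sigma\to 0$ uniformly in $k$. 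I would then define the Lipschitz competitor $\tilde f_k:B_s\to\R^n$ by a radial cutoff interpolation: $\tilde f_k\equiv d_k g$ on $B_{s-\sigma}$, $\tilde f_k\equiv f_k$ on $\partial B_s$, and radial-linear interpolation on $A_\sigma$. The rectifiable current obtained by gluing $\llbracket\operatorname{gr}(\tilde f_k)\rrbracket$ on $B_s\times\R^n$ with $T_k\res\big((\R^m\setminus B_s)\times\R^n\big)$ shares the boundary of $T_k$, so area minimality yields $\Mass\big(T_k\res(B_s\times\R^n)\big)\le\Mass\big(\llbracket\operatorname{gr}(\tilde f_k)\rrbracket\big)$.

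Because $\operatorname{Lip}(f_k)\to 0$ and, for fixed $\sigma$, $\operatorname{Lip}(\tilde f_k)\to 0$ as $k\to\infty$, the Taylor expansion \eqref{e:hsnormtaylor} applied to both sides of the mass inequality and then divided by $d_k^2/2$ produces
\[
\int_{B_s}|Dg_k|^2\,(1+o(1))\;\le\;\int_{B_s}|D\tilde g_k|^2\,(1+o(1)), \qquad \tilde g_k:=\tilde f_k/d_k.
\]
On $B_{s-\sigma}$ one has $\tilde g_k\equiv g$, contributing exactly $\int_{B_{s-\sigma}}|Dg|^2$. On $A_\sigma$ the gradient of $\tilde g_k$ is controlled by $|Dg|$, $|Dg_k|$ and $|g-g_k|/\sigma$, so using the three bounds above one estimates the annular contribution by $\int_{A_\sigma}|Dg|^2+\eta(\sigma)+o(1)$ as $k\to\infty$. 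Letting first $k\to\infty$, then $\sigma\to 0$, and finally $r'\to r^+$ yields $\limsup_k\int_{B_r}|Dg_k|^2\le\int_{B_r}|Dg|^2$, which together with the weak convergence $Dg_k\rightharpoonup Dg$ gives strong convergence in $W^{1,2}(B_r,\R^n)$.

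The main technical obstacle lies in the annulus $A_\sigma$: the radial derivative of $\tilde f_k$ carries a factor $1/\sigma$ multiplying $|f_k-d_k g|$, so a naive bound on the annular energy blows up as $\sigma\to 0$. The cure is the careful Fubini-type selection of the radius $s$ and thickness $\sigma$ outlined above, ensuring simultaneously that $\int_{A_\sigma}|Dg_k|^2$ is small uniformly in $k$ and that the trace $g-g_k$ on $\partial B_s$ is small in $L^2$. Granted this, the area-minimality inequality combined with the Taylor expansion reduces the problem to the elementary harmonic comparison described above.
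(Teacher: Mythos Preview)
Your competitor argument is correct and classical, but it takes a genuinely different route from the paper. The paper does not build a competitor at all: it simply re-uses the first-variation identity \eqref{e:test}, this time testing with $\varphi f_k$ (which is again a compactly supported Lipschitz perturbation). After the same Taylor expansion as in \eqref{e:taylorlong} and dividing by $d_k^2$, one obtains
\[
\int \varphi\,|Dg_k|^2 \;=\; -\int g_k\,\langle Dg_k, D\varphi\rangle \;+\; o(1).
\]
Since $g_k\to g$ strongly in $L^2$ and $Dg_k\rightharpoonup Dg$ weakly, the product $g_k\,Dg_k$ converges weakly to $g\,Dg$, so the right-hand side tends to $-\int g\,\langle Dg, D\varphi\rangle$, which by Claim~\ref{claim1} (harmonicity of $g$) equals $\int \varphi\,|Dg|^2$. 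Thus $\int\varphi|Dg_k|^2\to\int\varphi|Dg|^2$ for every test function $\varphi$, and strong convergence on any $B\subset\joinrel\subset B_1$ follows. This is a Caccioppoli-type argument using only \emph{stationarity}, and it sidesteps completely the Fubini slicing and annular interpolation you need.

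What each approach buys: yours uses the full \emph{minimality} of $T_k$ and is in the spirit of De Giorgi's original excess-decay proofs; it is self-contained but requires the delicate radius/thickness selection you flagged. The paper's approach is much shorter, needs only the Euler--Lagrange condition, but relies on Claim~\ref{claim1} being already established. One small point you glossed over: for $\operatorname{Lip}(\tilde f_k)\to 0$ you need $g$ to be locally Lipschitz, which is not automatic from $g\in W^{1,2}$; you should either invoke Claim~\ref{claim1} (so $g$ is harmonic, hence smooth) or first replace $g$ by a smooth approximation in the competitor.
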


As an immediate corollary of Claim \ref{claim2}, we have $$ \int_{B}|Dg_k|^2\rightarrow \int_{B} |Dg|^2,$$ as long as $B$ is compactly contained in $B_1$. Assume Claim \ref{claim1} and Claim \ref{claim2} to be true and note \eqref{contra} holds for $(g_k)_k$ as well since \eqref{contra} is invariant under multiplications of $f_k$ by constants. By Claim \ref{claim2} we can write

  \begin{equation}
        \int_{B_{1/2}}\Big|Dg_k-\fint Dg_k\Big|^2\rightarrow \int_{B_{1/2}}\Big|Dg-\fint Dg\Big|^2.
    \end{equation}
 Moreover we know that, up to a nonrelabeled subsequence,
\begin{equation}\label{e:indicator}
        Dg_k \, \mathbbm{1}_{B_{(1-C\sqrt{\Bar{\varepsilon}_k})}} \rightharpoonup Dg \quad \text{ in } L^2(B_1, \R^{mn}),
    \end{equation} where $\mathbbm{1}$ denotes the indicator function. By lower semicontinuity of the norm with respect to weak convergence we can write
 \begin{equation} \label{16}
    \int_{B_{1/2}}\Big|Dg-\fint Dg\Big|^2\geq \bigg(\frac{1}{4}+\Bar{\sigma}\bigg)\int_{B_1}|Dg|^2.
\end{equation} By Claim \ref{claim1} and the mean value property of harmonic functions we can rewrite \eqref{16} as
\begin{equation} \label{17}
    \int_{B_{1/2}}|Dg-Dg(0)|^2\geq \bigg(\frac{1}{4}+\Bar{\sigma}\bigg)\int_{B_1}|Dg|^2.
\end{equation}

In particular, \eqref{17} shows that the harmonic map $g$ is $\neq 0$, since\footnote{This is an important remark since the harmonic function $g\equiv 0$ would satisfy \eqref{16}, not allowing us to conclude the contradiction argument.} $$\int_{B_{1/2}}|Dg- Dg(0)|^2\ge \bigg(\frac{1}{4}+\Bar{\sigma}\bigg) >0.$$

At this point, we show that harmonic functions admits the following decay estimate:
\begin{claim}\label{claim3}
\begin{equation}
    \int_{B_{1/2}}|Dg-Dg(0)|^2\leq \frac{1}{2^m}\frac{1}{4}\int_{B_1}|Dg|^2.
\end{equation}
\end{claim}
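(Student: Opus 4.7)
The plan is to prove the sharp decay estimate by expanding each harmonic scalar component $u := \partial_j g^i$ in \emph{homogeneous harmonic polynomials} and exploiting their mutual $L^2$-orthogonality on solid balls. Note first that, by Claim \ref{claim1} and elliptic regularity, $u$ is smooth and harmonic in $B_1$, so $Dg(0)$ is defined pointwise and $u(0)=\fint_{B_r}u$ by the mean-value property.

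The key ingredient is the classical spherical harmonics decomposition: any harmonic $u\in L^2_{\mathrm{loc}}(B_1)$ admits a unique expansion $u=\sum_{k\geq 0}P_k$ in homogeneous harmonic polynomials $P_k$ of degree $k$, with convergence in $L^2(B_r)$ for every $r<1$. The polynomials $P_k$ are pairwise $L^2$-orthogonal on every ball $B_r(0)$ (because their restrictions to $\partial B_\rho$ are orthogonal spherical harmonics in distinct degrees, and one integrates in $\rho$), and their homogeneity yields the scaling relation
$$\int_{B_r}|P_k|^2\,dx = r^{m+2k}\int_{B_1}|P_k|^2\,dx.$$
Since $P_0\equiv u(0)$, applying orthogonality on $B_{1/2}$ and on $B_1$, and using that the geometric factor $(1/2)^{m+2k}$ attains its maximum $2^{-(m+2)}$ at $k=1$ among $k\geq 1$, gives
$$\int_{B_{1/2}}|u-u(0)|^2 = \sum_{k\geq 1}\Big(\tfrac12\Big)^{m+2k}\int_{B_1}|P_k|^2 \leq \frac{1}{2^{m+2}}\sum_{k\geq 1}\int_{B_1}|P_k|^2 \leq \frac{1}{4\cdot 2^m}\int_{B_1}|u|^2.$$
Summing this inequality over $i=1,\dots,n$ and $j=1,\dots,m$ produces Claim \ref{claim3}.

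There is no serious technical obstacle: the spherical harmonics expansion and the orthogonality on solid balls are standard (see e.g.\ Stein--Weiss). What I want to stress is that the \emph{sharp} constant $2^{-(m+2)}$ is essential: it must strictly beat the factor $1/4+\sigma$ produced by Lemma \ref{l:lastlemma} so that the contradiction argument sketched above closes, which in turn drives the iterative excess decay in Proposition \ref{p:excessdecay} and ultimately the $\mathcal{C}^{1,\alpha}$-regularity via Morrey--Campanato. A softer route based on interior gradient estimates and a Taylor expansion of $u$ around $0$ would work in principle but would not produce this sharp constant, and so would be insufficient to conclude.
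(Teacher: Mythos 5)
Your argument is correct and follows essentially the same route as the paper: expand in homogeneous harmonic polynomials, use their $L^2$-orthogonality on balls together with the scaling relation $\int_{B_r}|P_k|^2 = r^{m+2k}\int_{B_1}|P_k|^2$, and observe that subtracting the value at the origin (or, equivalently, reducing WLOG to $Dg(0)=0$) kills the degree-zero term so that the decay factor is at worst $(1/2)^{m+2}$. The only cosmetic difference is that you keep the $k=0$ term and identify it with $u(0)$ via the mean value property, whereas the paper first normalizes $Dg(0)=0$ and then begins the sum at degree one; these are the same step.
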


Once Claim \ref{claim3} is proved, we contradict \eqref{17}, concluding the proof of Theorem \ref{t:babyda}. Hence, all we need to prove is Claim \ref{claim1}, Claim \ref{claim2} and Claim \ref{claim3}.

\vspace{0.5cm}
{\it Proof of Claim \ref{claim1}.}

By assumption $f_k$ is a minimizer of area functional for all $k$. Fix a test function $\varphi \in \mathcal{C}_c^\infty(B_1, \R^n)$ so that we can write

\begin{equation}\label{e:test}
    \frac{d}{d\varepsilon}\mathbb{M}(\text{gr}(f_k+\varepsilon \varphi))|_{\varepsilon=0}=0.
\end{equation}
The mass of $\text{gr}(f_k+\varepsilon \varphi)$ is given (in general codimension $n$) by:
\begin{equation}
    \mathbb{M}(\text{gr}(f_k+\varepsilon \varphi))=\int\sqrt{1+|D(f_k+\varepsilon \varphi)|^2+\sum_{\text{k-minors}}(\operatorname{det}(M))^2},
\end{equation} where the sum is over all $k \times k$ minors $M$ of $D(f_k+\varepsilon \varphi)$. Hence we have
\begin{equation}\label{e:taylorlong}
\begin{aligned}
    0=\frac{d}{d\varepsilon}\mathbb{M}(\text{gr}(f_k+\varepsilon \varphi))|_{\varepsilon=0}&= \int\frac{\langle Df_k,D\varphi\rangle+\mathcal{O}(|Df_k|^3\cdot|D\varphi|)}{\sqrt{1+|D(f_k+\varepsilon \varphi)|^2+\sum_{\text{k-minors}}(\operatorname{det}(M))^2}} \\
&\stackrel{(a)}=\int \langle Df_k, D\varphi\rangle+\mathcal{O}(|Df_k|^3\cdot|D\varphi|)\\
&\stackrel{(b)}=\int\langle Df_k, D\varphi\rangle+\mathcal{O}(|Df_k|\cdot |D\varphi|\cdot \hspace{0.04cm} \Bar{\varepsilon}_k^2)\\
&\stackrel{(c)}=\int\langle Dg_k, D\varphi\rangle+\mathcal{O}\left(\int|Dg_k|\cdot \hspace{0.04cm} \Bar{\varepsilon}_k^2 \cdot \|D\varphi\|_{\mathcal{C}^0}\right),
\end{aligned}
\end{equation} where in (a) we Taylor expanded the denominator, in (b) we noted that $|Df_k|$ can be controlled since $|Df_k| \hspace{0.04cm} \leq \Bar{\varepsilon}_k$ and in (c) we divided by a constant passing to $g_k$.

Now, since $k\rightarrow \infty$ we know $g_k \rightharpoonup g$ in $W^{1,2}(B_1, \R^n)$, $\mathcal{O}$ converges to 0\footnote{Since we can easily uniformly control $|Dg_k|$ by H\"older's inequality, by the fact its Dirichlet energy is equal to $1$ and by the fact $\Bar{\varepsilon} \rightarrow 0$ as $k\rightarrow \infty$.} and $D\varphi$ is a fixed test function, then we get
\begin{equation}
    0=\int\langle Dg,D\varphi\rangle \quad \text{ for all }\varphi \in \mathcal{C}_c^\infty(B_1, \R^n).
\end{equation}
We conclude that $g$ is harmonic.

\vspace{0.5cm}
{\it Proof of Claim \ref{claim2}.}

Note that we can test equality \eqref{e:test} with $(\varphi f_k)$ instead of $\varphi$, since $(\varphi f_k)$ is still a (Lipschitz) compactly supported perturbation. Hence, \eqref{e:taylorlong} is analogous and we reach

\begin{equation}\label{e:this}
    0=\int\langle Dg_k,D(\varphi g_k)\rangle + \mathcal{O}\left(\int|Dg_k|\cdot \hspace{0.04cm} \Bar{\varepsilon}_k^2 \cdot \|D(\varphi g_k)\|_{\mathcal{C}^0}\right).
\end{equation} Expanding \eqref{e:this}:
\begin{equation}\label{e:this2}
    \int\varphi|Dg_k|^2=-\int g_k\langle Dg_k,D\varphi\rangle + \ \text{vanishing terms as}\  k\rightarrow \infty.
\end{equation} Now let $\Bar{\varepsilon}_k\stackrel{k \rightarrow \infty}\rightarrow 0$ and note that by \eqref{e:rellichtog} and \eqref{e:indicator} we get
 \begin{equation}\label{e:that}
g_k Dg_k \rightharpoonup gDg \,\text{ in } L^2.\end{equation} So by \eqref{e:this2} and \eqref{e:that} we can write 
\begin{equation}\begin{aligned}
    \lim_{k\rightarrow \infty}\int \varphi|Dg_k|^2=-\int g\langle Dg, D\varphi\rangle &= -\left(\int\langle Dg,D(g\varphi)\rangle-\int |Dg|^2\varphi\right)\\
& = \int |Dg|^2\varphi,
\end{aligned}
\end{equation} where the last passage follows from Claim \ref{claim1}. Hence we get
\begin{equation}
    \lim_{k\rightarrow \infty}\int |Dg_k|^2 \varphi=\int|Dg|^2\varphi,
\end{equation}
concluding strong $L^2$-convergence of $Dg_k$ in a compactly contained ball $B\subset\joinrel\subset B_1$.
Now we prove Claim \ref{claim3}, concluding the proof of Theorem \ref{t:babyda}.

\vspace{0.5cm}
{\it Proof of Claim \ref{claim3}.}

Recall that we want to prove that if $g$ is harmonic then
\begin{equation} \begin{aligned}
    \int_{B_{1/2}}|Dg(x)-Dg(0)|^2 &\leq \frac{1}{4\cdot2^m}\int_{B_1}|Dg(x)-Dg(0)|^2  \\
&\leq \frac{1}{4\cdot2^m}\int_{B_1}|Dg(x)|^2.
\end{aligned}
\end{equation} Note that without loss of generality we can assume $Dg(0)=0$\footnote{Since $g$ is harmonic, then $Dg(x)- Dg(0)$ is harmonic as well.} Moreover, it is enough to prove\footnote{Since we want to prove Claim \ref{claim3} for any arbitrary harmonic function and if $g$ is harmonic then $Dg$ is also harmonic.}
\begin{equation}\label{e:afterwlogs}
\int_{B_{1/2}(x)}|g(x)|^{2} \leq \left(\frac{1}{2}\right)^{m+2} \int_{B_{1}(x)}|g(x)|^{2}.\end{equation} Harmonic functions are real analytic, so we can write $$g(x)= \sum_{i=1}^{\infty}P_i\,,$$ where $P_i$ are (vectors of) homogeneous polynomials of degree $i$. If $g$ is harmonic, then $P_i$ are harmonic polynomials for every $i$. Since harmonic polynomials of different degrees are $L^2$-orthogonal when restricted to the unit sphere (see [\ref{SteinWeiss}, Ch.5 Section 2]), for any fixed $r$ we can write the following identity:
\begin{equation}
\int_{B_r} |g(x)|^2 dx = \sum_{i=1}^{\infty}\int_{B_r} |P_i(x)|^2 = \sum_{i=1}^{\infty} c_i \,r^{m +2i}, \end{equation} where $c_i$ are constants given by the integration of $|P_i(x)|^2$ on the ball of radius $r$. In particular, for $r=1$ and $r=1/2$ we have
\begin{equation}\label{e:finalone1}
\int_{B_{1/2}} |g(x)|^2 = \sum_{i=1}^{\infty} c_i \left(\frac{1}{2}\right)^{m +2i}= \frac{1}{2^{m}}\sum_{i=1}^{\infty} c_i \left(\frac{1}{2}\right)^{2i}\end{equation} and \begin{equation}\label{e:finalone2}
\int_{B_1} |g(x)|^2 = \sum_{i=1}^{\infty} c_i .\end{equation} From \eqref{e:finalone1} and \eqref{e:finalone2}, we can easily show that \begin{equation} \int_{B_{1/2}(x)}|g(x)|^{2} \leq \left(\frac{1}{2}\right)^{m+2} \int_{B_{1}(x)}|g(x)|^{2},\end{equation}
concluding the proof of Theorem \ref{t:babyda}.
\end{proof}

We end this section with few remarks and two corollaries of Theorem \ref{t:babyda}.

\begin{remark}
Note that the proof of Theorem \ref{t:babyda} works \textit{in every codimension}, since we never used the assumption $n=1$. Hence, everytime it is possible to ``well"-approximate an area-minimizing current with a graph of a Lipschitz function with small Lipschitz constant, then some perturbation of the proof of Theorem \ref{t:babyda} can be still applied. Unfortunately, it is not always the case that an integer rectifiable area-minimizing current with sufficiently small excess is ``close" to the graph of a (\textit{single-value}) Lipschitz function; this is true in codimension $n=1$ only. We will investigate better the case $n\ge2$ in the next section.
\end{remark}

\begin{remark}
We highlight the importance of the quadratic decay of the excess in \eqref{e:exdecay}. In codimension higher than 1, De Giorgi's variational idea will still play a very important role, together with an almost quadratic decay of the excess.
\end{remark}

\begin{remark}
Arguably, the most important insight of the proof of Theorem \ref{t:babyda} is the harmonicity of the limit $g$. Indeed, $(g_k)_k$ are minimizers of the area functional with $\text{Lip}(g_k)\rightarrow 0$. We remark once again that the fundamental idea relies on the observation that if one computes the area functional on a graph (assume for simplicity that $n=1$) 
$$F(u)=\int\sqrt{1+|Du|^2}$$ then the Taylor expansion of the integrand is the following: $$1+\frac{|Du|^2}{2}+ \text{ higher order terms.}$$
Roughly speaking, the area functional is ``very close" to be the Dirichlet energy when the gradients are very small. Hence, an area-minimizing current $T$ is ``very close" to the graph of a harmonic function.
\end{remark}

As a consequence of De Giorgi-Allard $\varepsilon$-regularity theorem it is possible to prove the following corollary.

\begin{corollary}\label{c:4punto4}
If $T$ is an area-minimizing current of dimension $m$ in $\R^{m+1}$,\footnote{Or, more generally, in a $\mathcal{C}^2$-submanifold $\Sigma$ of dimension $m+1$.} then any point $p$, at which there is a flat tangent cone, is a regular point. In particular, we conclude that $\|T\|(\emph{Sing}(T))=0.$
\end{corollary}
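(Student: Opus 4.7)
The plan is to exploit the fact that when $n=1$ the $\varepsilon$-regularity statement of Theorem \ref{t:falseda} is genuinely true, and to argue that at a point with a flat tangent cone the excess is eventually small. Let $p \in \operatorname{supp}(T) \setminus \operatorname{supp}(\partial T)$ admit a flat tangent cone, so there exist $r_k \downarrow 0$, a positive integer $Q$ and an oriented $m$-plane $\pi$ such that $T_{p, r_k} \to Q\llbracket \pi \rrbracket$ in the sense of currents. The weak-$^*$ convergence together with the fact that $\|Q\llbracket\pi\rrbracket\|(\partial \mathbf{B}_1(0)) = 0$ first yields mass convergence $\|T_{p,r_k}\|(\mathbf{B}_1(0)) \to Q\omega_m$, hence $\Theta(T,p) = Q$; by the monotonicity formula \eqref{e:monotonicity} the density ratio $\|T\|(\mathbf{B}_r(p))/(\omega_m r^m)$ is monotone in $r$ and tends to $Q$ as $r \downarrow 0$ along every scale, not merely the subsequence $(r_k)$.

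Next I would show $\Ex(T_{p,r_k}, 0, 1, \pi) \to 0$. Using the identity $|\tau - \pi|^2 = 2 - 2\langle \tau, \pi\rangle$ valid for unit simple $m$-vectors, one rewrites
\[\Ex(T_{p,r_k}, 0, 1, \pi) = 2\|T_{p,r_k}\|(\mathbf{B}_1(0)) - 2\langle T_{p,r_k}, \chi_{\mathbf{B}_1(0)} \pi^* \rangle,\]
where $\pi^*$ is the constant $m$-covector dual to $\pi$. The first summand tends to $2Q\omega_m$ by the previous step; the second, approximated by smooth compactly supported forms, also tends to $2Q\omega_m$ by weak-$^*$ convergence together with the vanishing of the limit mass on $\partial\mathbf{B}_1(0)$ (the error from smoothing is controlled by $\|T_{p,r_k}\|(\mathbf{B}_{1+\varepsilon}(0) \setminus \mathbf{B}_{1-\varepsilon}(0))$, uniformly small by monotonicity). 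So the excess vanishes. Choosing $k$ with $\Ex(T,p,r_k) < \varepsilon_0$, Theorem \ref{t:falseda}, which is genuine for $n=1$, gives that $T\res \mathbf{B}_{r_k/2}(p)$ is a single $\mathcal{C}^{1,\alpha}$ submanifold, necessarily of constant multiplicity $Q$ since $\Theta(T,p) = Q$. Classical elliptic regularity (Morrey) for the minimal surface equation upgrades this to real-analyticity; hence $p \in \operatorname{Reg}(T)$.

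For the second assertion, rectifiability of $T$ gives at $\|T\|$-almost every point $p$ a well-defined approximate tangent plane $T_pE$ (Remark \ref{r:unique}) and an integer density $\Theta(T,p) \geq 1$; the standard consequence of \eqref{e:monotonicity}, namely the vanishing of the integral term as $s \downarrow 0$, forces every subsequential tangent cone at such $p$ to coincide with the flat cone $\Theta(T,p)\llbracket T_pE\rrbracket$. Hence $\|T\|$-almost every point admits a flat tangent cone and, by the first part, lies in $\operatorname{Reg}(T)$, which gives $\|T\|(\operatorname{Sing}(T)) = 0$. The principal technical obstacle is the transition from weak-$^*$ convergence of currents to smallness of the cylindrical excess in the second step: one must verify that current-theoretic convergence plus mass convergence controls the linear functional $\langle T_{p,r_k}, \chi_{\mathbf{B}_1(0)} \pi^*\rangle$, which is where a varifold-type compactness argument is implicitly used and where a fully rigorous treatment would either invoke Allard's compactness for stationary varifolds or perform the truncation argument sketched above with care.
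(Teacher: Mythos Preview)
The paper does not actually supply a proof of this corollary: it is simply stated as ``a consequence of De Giorgi--Allard's $\varepsilon$-regularity theorem'', immediately after the baby version Theorem~\ref{t:babyda}. Your outline is the standard argument that fills in this omission, and the overall strategy---show the excess at $p$ tends to zero along a blow-up sequence, invoke the codimension-one $\varepsilon$-regularity, then use that $\|T\|$-a.e.\ point of a rectifiable current has a flat tangent cone (the fact recorded in the paper just before the definition of \emph{flat})---is correct.

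One justification should be tightened. You write that ``weak-$^*$ convergence together with $\|Q\llbracket\pi\rrbracket\|(\partial\mathbf{B}_1(0))=0$ yields mass convergence $\|T_{p,r_k}\|(\mathbf{B}_1(0))\to Q\omega_m$''. Weak-$^*$ convergence of \emph{currents} only gives lower semicontinuity of mass, not convergence, so this step as written does not stand on its own. The clean route is to use monotonicity directly: since $\|T_{p,r}\|(\mathbf{B}_1(0))=r^{-m}\|T\|(\mathbf{B}_r(p))=\omega_m\cdot(\text{density ratio at scale }r)$, the monotonicity formula alone already gives $\|T_{p,r}\|(\mathbf{B}_1(0))\to\omega_m\Theta(T,p)$ along \emph{every} sequence $r\to 0$. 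The identification $\Theta(T,p)=Q$ then needs the reverse inequality $\Theta(T,p)\le Q$, which is where one genuinely invokes that no mass is lost in the limit---either via Allard's varifold compactness for stationary varifolds, or via the observation (standard for tangent cones) that the density of the limit cone at the origin equals $\Theta(T,p)$. You flag this correctly in your closing paragraph as the ``principal technical obstacle''; just be aware that it enters already in the first step, not only in the second.
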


Even if in higher codimension things change dramatically, it is still possible to prove the following result, which was the best higher codimension regularity theorem available before Almgren's theorem (Theorem \ref{t:regularityh}).

\begin{corollary}{\normalfont [\ref{Simonbook}, Theorem 36.2]}\label{c:4punto5}
If $T$ is an integer rectifiable area-minimizing current of dimension $m$ in $\R^{m+n}$ and $n\ge1$, then $\operatorname{Reg}(T)$ is dense in $\operatorname{supp}(T) \setminus \operatorname{supp}(\partial T)$\footnote{This statement has been recently extended to any Hilbert space, see [\ref{ADLS}], using the tool of metric currents, see [\ref{AKmetric}].}.
\end{corollary}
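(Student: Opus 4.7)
The strategy is to show that points of density one are dense in $\operatorname{supp}(T)\setminus\operatorname{supp}(\partial T)$. Since by the higher-codimension $\varepsilon$-regularity theorem for currents (the multiplicity-one analogue of Corollary~\ref{c:4punto4}) every such point belongs to $\operatorname{Reg}(T)$, the claim follows. That regularity ingredient is itself an upgrade of the baby De~Giorgi--Allard argument: at a density-one point, the monotonicity formula \eqref{e:monotonicity} forces $T$ to be flat-close at every sufficiently small scale to a \emph{single} multiplicity-one $m$-plane, from which a single-valued Lipschitz graphical approximation of $T$ with small Lipschitz constant can be extracted, and then the excess-decay plus harmonic-approximation plus Morrey--Campanato scheme of Theorem~\ref{t:babyda} transfers unchanged (the codimension played no role in it).

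Fix $p\in\operatorname{supp}(T)\setminus\operatorname{supp}(\partial T)$ and $r>0$ with $\overline{\mathbf{B}_r(p)}\cap\operatorname{supp}(\partial T)=\emptyset$. By upper semicontinuity of $\Theta(T,\cdot)$ (Proposition~\ref{p:subseqtgcone}), its integrality $\|T\|$-a.e., and compactness of $\operatorname{supp}(T)\cap\overline{\mathbf{B}_r(p)}$, the quantity
\[
Q_0:=\inf\bigl\{\Theta(T,q):q\in\operatorname{supp}(T)\cap\overline{\mathbf{B}_r(p)}\bigr\}
\]
is a positive integer, attained at some interior $q_0$; the goal is to show $Q_0=1$. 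Assuming by contradiction $Q_0\ge 2$, extract a subsequential tangent cone $C$ at $q_0$ (Proposition~\ref{p:subseqtgcone}); by Theorem~\ref{t:compactnessforam} it is area-minimizing with $\Theta(C,0)=Q_0$. Monotonicity yields $\Theta(C,y)\le Q_0$ on $\operatorname{supp}(C)$, while the minimality of $Q_0$ in $\operatorname{supp}(T)$ near $q_0$, transported along the defining blow-up sequence via upper semicontinuity of density, gives $\Theta(C,y)\ge Q_0$. Hence $\Theta(C,\cdot)\equiv Q_0$ on $\operatorname{supp}(C)$; the equality case of \eqref{e:monotonicity} then forces translation invariance of $C$ in every direction of $\operatorname{supp}(C)$, and integer-valuedness pins down $C=Q_0\llbracket\pi\rrbracket$ for some $m$-plane $\pi$.

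The main obstacle is now to obtain a density-one point in $\mathbf{B}_r(p)$ from this higher-multiplicity flat tangent cone. Heuristically, a tangent cone $Q_0\llbracket\pi\rrbracket$ with $Q_0\ge 2$ means that $T$ near $q_0$ consists of $Q_0$ smooth sheets meeting at $q_0$; wherever the sheets separate, each isolated sheet carries density one and yields the desired regular point (contradicting the minimality of $Q_0$). Making this rigorous is exactly the content of Almgren's Lipschitz $Q$-valued graphical approximation theorem, the core tool of Section~2.1.2: once the $Q_0$-valued Dir-minimizer modeling $T$ near $q_0$ is available, it either is an honestly $Q_0$-fold constant map — so $q_0$ itself is a multiplicity-$Q_0$ regular point, supplying the regular point inside $\mathbf{B}_r(p)$ — or it splits somewhere, producing points of density strictly less than $Q_0$. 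The example of the branch point of the holomorphic curve $z^2=w^3$, with flat multiplicity-two tangent plane yet genuinely singular origin, makes vivid why this $Q$-valued machinery is unavoidable in higher codimension and why the codimension-one proof does not transfer directly.
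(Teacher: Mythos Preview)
The paper does not prove this corollary itself but cites [\ref{Simonbook}, Theorem 36.2], and explicitly flags it as ``the best higher codimension regularity theorem available \emph{before} Almgren's theorem.'' Invoking Almgren's $Q$-valued Lipschitz approximation in your final paragraph is therefore both anachronistic and far heavier than what is needed.

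There is a genuine gap earlier. You write ``the goal is to show $Q_0=1$,'' but this is false: for $T=Q\llbracket\pi\rrbracket$ with $\pi$ an $m$-plane and $Q\ge 2$ one has $\Theta(T,\cdot)\equiv Q$ and $Q_0=Q$. You then try to salvage the argument through an informal ``Almgren dichotomy'' (either the $Q_0$-valued approximation is constant, giving a multiplicity-$Q_0$ regular point, or it splits, contradicting minimality of $Q_0$), but no precise theorem of this form is available at this stage of the thesis, and in any case the corollary is supposed to precede that theory.

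The standard argument closes the proof exactly where you abandoned it. Your derivation that every tangent cone at $q_0$ equals $Q_0\llbracket\pi\rrbracket$ is correct and is the heart of the matter. What you are missing is that Allard's $\varepsilon$-regularity theorem for stationary varifolds works in every codimension and for every integer multiplicity: from $\Theta(T,\cdot)\ge Q_0$ on $\operatorname{supp}(T)$ near $q_0$ (minimality of $Q_0$) together with $\|T\|(\mathbf{B}_\rho(q_0))\le (Q_0+\varepsilon)\omega_m\rho^m$ for small $\rho$ (flat tangent cone plus monotonicity), Allard gives directly that $\operatorname{supp}(T)$ is a $C^{1,\alpha}$ submanifold $\Sigma$ near $q_0$; the constancy theorem then forces $T=Q_0\llbracket\Sigma\rrbracket$ there, so $q_0\in\operatorname{Reg}(T)$ regardless of whether $Q_0=1$. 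The higher-codimension pathology (flat tangent cone at a singular point, as for $z^2=w^3$) is ruled out precisely by the density lower bound $\Theta\ge Q_0$, which is available only at a point of \emph{minimum} density.
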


\subsection{Area-minimizing currents in higher codimension}

If $n\ge2$, we have already seen that it is somehow easy to show Theorem \ref{t:regularityh} is optimal. Recall we considered the following holomorphic curve \begin{equation}\label{e:holobadguy} \Gamma = \{(z,w) \in \mathbb{C}^2 : z^2=w^3\}.\end{equation} One can see that the origin belongs to $\operatorname{Sing}(\llbracket \Gamma \rrbracket)$ and we proved in Theorem \ref{t:holomorphicsubvariety} that the current $ \llbracket \Gamma \rrbracket$ is (locally) area-minimizing. If we identify $\mathbb{C}$ with $\mathbb{R}^{2}$, it is simple to see that $\Gamma$ is an immersed (real) 2-dimensional submanifold, globally parametrized, for instance, by the map:
$$
u:\{(\rho, \theta): \rho>0, \theta \in[0,2 \pi)\} \longrightarrow \mathbb{R}^{4}
$$
given by
\begin{equation}\label{e:holobadguyparam}
(\rho, \theta) \mapsto\left(\rho \cos \theta, \rho \sin \theta, \rho^{\frac{3}{2}} \cos (3 \theta), \rho^{\frac{3}{2}} \sin (3 \theta)\right) .
\end{equation}
Nevertheless, $\Gamma$ is not an embedded submanifold in a neighborhood of the origin, because it is not a graph over the plane \begin{equation}\label{e:planebadguy}
\pi:=\{(z,w) \in \mathbb{C}^2: z=0\} \subset \mathbb{R}^{4}, \end{equation}
no matter how small is the neighborhood $U$ of the origin that we choose. 
The unique flat tangent cone at $0$ is given by $2 \llbracket \pi \rrbracket$ for $\pi$ as in \eqref{e:planebadguy} and the density $\Theta(\llbracket \Gamma \rrbracket,0) =2$.

The origin is a typical example of \textit{branch point} (sometimes called \textit{ramification point}). Moreover, it is important to notice that if we evaluate the excess $\Ex(\llbracket \Gamma \rrbracket, 0,r)$, which was the main parameter to detect regularity in a point in codimension 1, one can show that $$\Ex\left(\llbracket \Gamma \rrbracket, 0,r\right)\rightarrow 0 \text{  for  }  r\rightarrow 0. $$ We conclude that, even if the excess at a point can be made arbitrarily small (hence smaller than the $\varepsilon$-threshold that would have ensured the point $0$ to be a regular point in the codimension 1 $\varepsilon$-regularity theorem), this does not guarantee anymore that $\llbracket \Gamma \rrbracket$ is well-approximated by a single graph of a Lipschitz function. Hence Corollary \ref{c:4punto4} is false for 2-dimensional area minimizing currents in $\mathbb{R}^4$: $\llbracket \Gamma \rrbracket$ is singular at the origin in spite of the existence of a flat tangent cone. However, in examples like \eqref{e:holobadguy}, the current turns out to be a ``multivalued" graph, where the number of values is in fact determined by the multiplicity $Q=\Theta(\llbracket \Gamma \rrbracket, 0)=2$.

The main goal is to write a non-parametric problem for objects like the complex curve in \eqref{e:holobadguy}. The starting point of Almgren's Big Regularity Paper, see [\ref{Almgren2000}] and [\ref{DLSQ}, \ref{DLS1}, \ref{DLSsns}, \ref{DLS2}, \ref{DLS3}] for a shorter and improved version, is indeed to replace harmonic (single-valued) functions with \textit{multiple-valued} functions minimizing a \textit{suitable} notion of ``Dirichlet energy", developing a whole new theory and a first-order calculus for these peculiar maps.

\subsubsection{The space of $Q$-points $\mathcal{A}_Q(\R^n)$}

Consider again the current $\llbracket \Gamma \rrbracket$ in \eqref{e:holobadguy}. The support of such current, namely the complex curve $\Gamma$, can be viewed as the graph of a function which associates to any $w \in \mathbb{C}$ two points in the $z$-plane:
\begin{equation}\label{e:primaqvalori}
w \mapsto\left\{z_1(w), z_2(w)\right\} \quad \text { with } z_i(w)^2=w^3 \text { for } i=1,2.
\end{equation}
The map in \eqref{e:primaqvalori} is an example of a $2$\textit{-valued} function. From now on, let $Q \geqslant 1$ be a fixed positive integer. Roughly speaking, $Q$-valued functions can be considered as mappings taking their values in the \textit{unordered} sets of $Q$-points of $\mathbb{R}^n$, taking into account the fact that we may have a multiplicity. More precisely, we have the following definition.

\begin{definition}\label{d:qpoints}
Denote by $\llbracket P_{i} \rrbracket$ the Dirac delta in $P_{i} \in \mathbb{R}^{n}$ and define the space of $Q$\textit{-points} as $$\mathcal{A}_{Q}\left(\mathbb{R}^{n}\right):=\left\{\sum_{i=1}^{Q} \llbracket P_{i} \rrbracket: P_{i} \in \mathbb{R}^{n} \text { for every } i=1, \ldots, Q\right\} .$$
\end{definition}

\begin{remark} Observe that the notation $\llbracket P_{i} \rrbracket$ to denote the Dirac delta $\delta_{P_{i}}$ is consistent with the notion of current associated to a submanifold. Indeed, if $P \in \mathbb{R}^n$ then the action of the $0$-current associated to $P$ is precisely given by
$$
\llbracket P \rrbracket(f)=f(P) \,\text { for every } f \in \mathcal{C}_{c}^{\infty}(\mathbb{R}^n).
$$
\end{remark}

\begin{remark}
In other words, Definition \ref{d:qpoints} identifies the space of $Q$ unordered points in $\mathbb{R}^n$ with the set of positive atomic measures of mass $Q$ and note that the points $P_{i} \in \mathbb{R}^{n}$ are not necessarily different (for example, $Q \llbracket P \rrbracket \in \mathcal{A}_Q\left(\mathbb{R}^n\right)$). Moreover, we remark that the absence of the order for points in $\mathcal{A}_{Q}\left(\mathbb{R}^{n}\right)$ is fundamental: $\mathcal{A}_2(\mathbb{R}^n)$ cannot be identified with $\mathbb{R}^n \times \mathbb{R}^n$.
\end{remark}

We will sometimes use the notations $\mathcal{A}_Q$ and $\sum_i \llbracket P_i \rrbracket$ when $n$ and $Q$ are clear from the context.
\begin{remark}
Note that $\mathcal{A}_Q\left(\mathbb{R}^n\right)$ is just the quotient of $\left(\mathbb{R}^n\right)^Q$ via the action of the group of permutations of $Q$ indexes $\mathcal{S}_Q$. In other words, defining the equivalence relation
$$
\left(P_1, \ldots, P_Q\right) \sim\left(P_{\sigma(1)}, \ldots, P_{\sigma(Q)}\right) \quad \text{ for all } \sigma \in \mathcal{S}_Q,
$$
then $$\mathcal{A}_Q \simeq\left(\mathbb{R}^n\right)^Q / \sim.$$ It follows then that the space of $Q$-points, though it is not a linear space\footnote{Unless the trivial case $Q=1$.}, inherits many properties from the Euclidean space. \end{remark}

\begin{remark}
One of the major novelties of De Lellis and Spadaro's works with respect to Almgren's theory is that they avoid lots of combinatorial arguments, just considering $\mathcal{A}_Q$ as an abstract metric space. For this reason De Lellis and Spadaro's metric approach to $Q$-valued functions is sometimes named \textit{intrinsic theory}, as opposed to Almgren's \textit{extrinsic} one. Indeed, in [\ref{Almgren2000}], Almgren developed the theory of $Q$-valued function mostly using two maps: the first one is a bi-Lipschitz embedding $\xi$ of $\mathcal{A}_Q\left(\mathbb{R}^n\right)$ into $\mathbb{R}^{N(Q, n)}$, where $N(Q, n)$ is a sufficiently large integer. By means of the map $\xi$ one can define a Sobolev theory for $Q$-valued functions as classical $\mathbb{R}^N$-valued Sobolev maps taking values in $\xi(\mathcal{A}_Q).$ The second map $\rho$ is a Lipschitz retract of $\mathbb{R}^{N(Q, n)}$ onto $\xi(\mathcal{A}_Q)$, which is useful in various approximation arguments.
\end{remark}

\begin{definition} For every $T_{1}, T_{2} \in \mathcal{A}_{Q}\left(\mathbb{R}^{n}\right)$, with $T_{1}=\sum_{i} \llbracket P_{i} \rrbracket$ and $T_{2}=\sum_{i} \llbracket S_{i} \rrbracket$, we
define
\begin{equation}
\mathcal{G}\left(T_{1}, T_{2}\right):=\min _{\sigma \in \mathcal{S}_{Q}} \sqrt{\sum_{i}\left|P_{i}-S_{\sigma(i)}\right|^{2}}.
\end{equation}
\end{definition}

\begin{remark}
One can realize that $\mathcal{G}$ coincides with the $L^2$-Wasserstein distance on the space of positive measures with finite second moment (see [\ref{Villani}]). It is immediate to see that $(\mathcal{A}_{Q}\left(\mathbb{R}^{n}\right), \mathcal{G})$ is a complete, locally compact and separable metric space.
\end{remark}

\begin{definition}
Let $\Omega \subset \mathbb{R}^m$ open, bounded with smooth boundary. A $Q$\textit{-valued function} is a map $$f: \Omega \rightarrow\left(\mathcal{A}_{Q}\left(\mathbb{R}^{n}\right), \mathcal{G}\right).$$
\end{definition}

\begin{remark}
We say that a $Q$\textit{-valued function} $f$ is \textit{continuous} (\textit{Lipschitz, Hölder} and \textit{measurable} respectively) is it is so as function between metric spaces. Similarly, $u \in L^{p}\left(\Omega, \mathcal{A}_{Q}\right), 1\le p \le \infty,$ if $x \mapsto \mathcal{G}(u(x), Q \llbracket 0 \rrbracket) \in L^{p}(\Omega)$\footnote{Since $\Omega$ is bounded, this is equivalent to ask that $\|\mathcal{G}(u, T)\|_{L^p}$ is finite for every $T \in \mathcal{A}_Q$.}.
\end{remark}

Any measurable $Q$-valued function admits the following representation in \textit{measurable selections}. 

\begin{proposition}\label{p:measurableselections}
Let $B \subset \mathbb{R}^{m}$ be a measurable set and let $f: B \rightarrow \mathcal{A}_{Q}$ be a measurable function. Then, there exist $f_{1}, \ldots, f_{Q}$ measurable $\R^n$-valued functions such that
$$
f(x)=\sum_{i=1}^{Q} \llbracket f_{i}(x) \rrbracket \quad \text { for a.e. } x \in B.
$$ We call such a representation \emph{measurable selection} (or \emph{measurable superposition}).
\end{proposition}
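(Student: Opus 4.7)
The plan is to reduce the proposition to exhibiting a single Borel \emph{selection map} $\sigma\colon\mathcal{A}_Q(\mathbb{R}^n)\to(\mathbb{R}^n)^Q$ that is a right inverse of the quotient projection
$$q\colon(\mathbb{R}^n)^Q\to\mathcal{A}_Q(\mathbb{R}^n),\qquad q(P_1,\dots,P_Q):=\sum_{i=1}^Q\a{P_i}.$$
Once such a $\sigma$ is available, the composition $\sigma\circ f\colon B\to(\mathbb{R}^n)^Q$ is measurable (as a measurable function composed with a Borel map), its coordinates $f_i:=\pi_i\circ\sigma\circ f$ are measurable $\mathbb{R}^n$-valued functions, and by construction $f(x)=q(\sigma(f(x)))=\sum_{i=1}^Q\a{f_i(x)}$ for every $x\in B$. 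Everything therefore reduces to producing $\sigma$.

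To build $\sigma$ I would use a canonical sorting. Fix the lexicographic total order $\preceq$ on $\mathbb{R}^n$ and set $\sigma(T):=(Q_1,\dots,Q_Q)$, where $(Q_1,\dots,Q_Q)$ is the unique rearrangement of any tuple $(P_1,\dots,P_Q)$ with $q(P_1,\dots,P_Q)=T$ satisfying $Q_1\preceq\cdots\preceq Q_Q$ (repeated atoms are listed consecutively, and the output is independent of the chosen representative, hence is well defined on $\mathcal{A}_Q(\mathbb{R}^n)$). This is a section of $q$ by construction; the nontrivial point is its Borel measurability. For each $\tau\in\mathcal{S}_Q$ consider the set
$$E_\tau:=\bigl\{(P_1,\dots,P_Q)\in(\mathbb{R}^n)^Q:P_{\tau(1)}\preceq\cdots\preceq P_{\tau(Q)}\bigr\},$$
which is Borel because the relation $\{(P,Q)\in\mathbb{R}^n\times\mathbb{R}^n:P\preceq Q\}$ is a countable boolean combination of closed half-space conditions on the coordinates. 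The family $\{E_\tau\}_{\tau\in\mathcal{S}_Q}$ covers $(\mathbb{R}^n)^Q$, and by fixing an enumeration of $\mathcal{S}_Q$ one refines it to a Borel partition $\{\widetilde E_\tau\}_\tau$ on each piece of which the lifted map $\sigma\circ q$ coincides with the continuous coordinate permutation $(P_1,\dots,P_Q)\mapsto(P_{\tau(1)},\dots,P_{\tau(Q)})$. Hence $\sigma\circ q$ is Borel from $(\mathbb{R}^n)^Q$ to itself. Since $q$ is a continuous open surjection between Polish spaces whose fibers are the finite $\mathcal{S}_Q$-orbits, Borel measurability descends through $q$, yielding that $\sigma$ itself is Borel.

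The main obstacle here is topological rather than analytic: no \emph{continuous} section of $q$ exists for $Q\geq 2$, because perturbations across a diagonal $\{P_i=P_j\}$ can swap which atom is ``first'' in any chosen ordering and produce genuine jumps of $\sigma$. The stratification into the Borel pieces $\widetilde E_\tau$ is precisely what absorbs these discontinuities and is the key combinatorial input of the argument; once it is in place, the proposition follows from elementary properties of composition of measurable maps.
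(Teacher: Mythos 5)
Your proof is correct but takes a genuinely different route from the one the paper follows. The paper (following [\ref{DLSQ}, Proposition~0.4]) argues by induction on $Q$ combined with a localization argument: one first handles the subset of $B$ where all $Q$ sheets collapse, then covers the complement by countably many measurable pieces on each of which $f(x)$ splits measurably into two sub-collections of sizes $Q_1,Q_2<Q$, and applies the inductive hypothesis to each piece. You instead construct a single global Borel section $\sigma$ of the quotient $q\colon(\R^n)^Q\to\mathcal{A}_Q$ by lexicographic sorting, and then read off $f_i=\pi_i\circ\sigma\circ f$. Your approach is conceptually cleaner in that it isolates the ``real'' content --- the quotient map admits a Borel right inverse --- and produces a canonical selection independent of $f$; it also correctly identifies that no continuous section can exist for $Q\ge 2$, which is why one needs the Borel stratification by ordering type. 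The price is that the final descent step (``$\sigma\circ q$ Borel and $q$ a finite-to-one continuous open surjection between Polish spaces implies $\sigma$ Borel'') is a nontrivial piece of descriptive set theory --- it is the Lusin--Souslin theorem, or equivalently the Lusin--Novikov uniformization theorem that countable-to-one Borel images of Borel sets are Borel --- whereas the paper's inductive argument stays entirely within elementary measure theory and needs only that a countable measurable cover suffices. Both arguments are valid; if you present yours, you should name the theorem you are invoking at the last step rather than appeal to an unreferenced ``descent'' principle, since that is precisely where the difficulty is hiding.
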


\begin{remark}
The proof of Proposition \ref{p:measurableselections} is done by induction on the number of values of $Q$, making use of the following observation: if $\bigcup_{i \in \mathbb{N}} B_i$ is a covering of $B$ by measurable sets, then it is sufficient to find a measurable selection of $f_{|_{B_i \cap B}}$ for every $i$.
\end{remark}

\begin{remark}
Roughly speaking, by Proposition \ref{p:measurableselections} we can see every measurable $Q$-valued function as a ``sum" (very far from being unique) of $Q$ measurable\footnote{In general, even if $u$ is regular, one cannot expect (globally) more than measurability of the selections.} functions, called \textit{selections} (or \textit{superpositions}). Hence, $\llbracket f_{i}(x) \rrbracket$ are Dirac deltas at points $f_i(x)$ and they have to be though as just labels to name each single value.
\end{remark}

Since the final goal is to analyze minimizers of the area functional, we need to introduce a notion of derivative for such functions and to develop a first-order calculus. 

\subsubsection{Sobolev $Q$-valued functions}

We introduce the Sobolev spaces of functions taking values in the metric space of $Q$-points. The approach is based on the pioneering work by Ambrosio in [\ref{Ambrosio90}], see also [\ref{Resh97}, \ref{Resh04}, \ref{Resh06}], by looking at composition with Lipschitz functions.

\begin{definition}
A measurable function $f: \Omega \rightarrow \mathcal{A}_{Q}$ is in the Sobolev class $W^{1, p}$ $(1 \leq p \leq \infty)$ if there exist $m$ positive functions $\varphi_j \in L^{p}(\Omega)$ such that for every $T \in \mathcal{A}_{Q}$ we have:
\begin{itemize}
\item[\textit{i)}] $x \mapsto \mathcal{G}(f(x), T) \in W^{1, p}(\Omega)$,
\item[\textit{ii)}] $|\partial_j \mathcal{G}(f(x), T)| \leq \varphi_j(x)$ a.e. in $\Omega$ for all $j=1,\dots,m$.
\end{itemize}
\end{definition}

\begin{remark}
One can show $f \in W^{1, p}(\Omega,\mathcal{A}_Q)$ if and only if there exists $\psi \in L^p(\Omega)$ such that, for every $F: \mathcal{A}_Q \rightarrow \mathbb{R}$ Lipschitz,
$$
F \circ f \in W^{1, p}(\Omega) \text { and }|D(F \circ f)| \leq \operatorname{Lip}(F) \,\,\psi \text { a.e. in } \Omega.$$ One implication of the proof is trivial, the converse follows by fixing $\{T_i\}_{i \in \N}$ a countable dense subset, by McShane's extension theorem (see, for instance [\ref{ATilli}, Theorem 3.1.2]) and by choosing $$\psi:=\left(\sum_j \left(\sup _{i \in \mathbb{N}}\left|\partial_j \mathcal{G}\left(f, T_i\right)\right|\right)^2\right)^{1 / 2}.$$
\end{remark}

A first step in the analysis of the area functional is to study its linearized version. Hence, we need to introduce a notion of ``modulus" of the gradient of a Sobolev function in order to define a \textit{suitable} notion of Dirichlet energy in this framework. What we mean by \textit{suitable} is that it appears as the first nontrivial term in the Taylor expansion of the mass of (the current associated to) a multivalued graph.

\begin{definition}\label{d:metricmodulus}
Let $f: \Omega \rightarrow \mathcal{A}_{Q}$. Fix a countable dense subset $\left\{T_{i}\right\}_{i \in \mathbb{N}}$ of $\mathcal{A}_{Q}$. For every $j=1, \ldots, m$, we define
$$
\left|\partial_{j} f\right|:=\sup _{i \in \mathbb{N}}\left|\partial_{j} \mathcal{G}\left(f, T_{i}\right)\right| \quad \text { and } \quad|Df|^{2}:=\sum_{j=1}^{m}\left|\partial_{j} f\right|^{2}.
$$
\end{definition}

\begin{remark}
Definition \ref{d:metricmodulus} is well-posed since it does not depend on the choice of the countable set. Note that in Definition \ref{d:metricmodulus} $|Df|^{2}$ is just a positive quantity depending only on the metric structure of $\mathcal{A}_{Q}$.
\end{remark}

\begin{remark}\label{r:coordinates}
For functions on a general Riemannian manifold $(M,g)$, we choose an orthonormal frame $X_1, \ldots X_m$ and set $|D f|^2:=\sum\left|\partial_{X_i} f\right|^2$. This definition is independent of the choice of coordinates (respectively, of the frames), see [\ref{DLSQ}, Proposition 2.17].
\end{remark}

All the metric notions we have introduced so far would still be well-defined for maps with values in any complete separable metric space. In the special case of $Q$-valued maps, it is also possible to give a notion of pointwise derivative.

\begin{definition}\label{d:pointwisediff}
Let $f: \Omega \rightarrow \mathcal{A}_{Q}$ and $x_{0} \in \Omega$. We say that $f$ is \textit{differentiable at} $x_{0}$ if there exist $Q$ matrices $L_{1}, \dots, L_{Q}$ and a selection $f_{1}, \dots, f_{Q}$ such that: \begin{itemize}
\item[\textit{(i)}] $\lim\limits_{x\rightarrow x_0}(x-x_0)^{-1}\mathcal{G}\left(f(x), T_{x_{0}} f\right)=0,$ where
$$
T_{x_{0}} f (x):=\sum_{i=1}^Q \llbracket L_{i} \cdot\left(x-x_{0}\right)+f_{i}\left(x_{0}\right) \rrbracket ,
$$
\item[\textit{(ii)}] $f_{i}\left(x_{0}\right)=f_{j}\left(x_{0}\right)$ implies $L_{i}=L_{j}$ for all $i,j \in \{1,\dots,Q\}$\footnote{This second condition is sometimes called ``no-crossing condition".}. \end{itemize}
The $Q$-valued map $T_{x_{0}} f$ is called the \textit{first-order approximation} of $f$ at $x_0$.
\end{definition}

\begin{remark}
It is useful to fix the notation $D f_i$ for $L_i$ in Definition \ref{d:pointwisediff}. Note that by $(ii)$ this is unambiguous: namely, if $g_1, \ldots, g_Q$ is a different selection for $f$, $x_0$ a point of differentiability and $\sigma$ a permutation such that $g_i(x_0)=f_{\sigma(i)}(x_0)$ for all $i \in\{1, \ldots, Q\}$, then $D g_i(x_0)=D f_{\sigma(i)}(x_0)$. When the $f_i$'s are a differentiable functions and $f$ is differentiable, then the $D f_i$'s coincide with the classical differentials.
\end{remark}

\begin{remark}
Note that according to Definition \ref{d:pointwisediff}, the origin in \eqref{e:holobadguy} is a point of differentiability for the multivalued function having $\Gamma$ as a graph.
\end{remark}

The pointwise differentiability property would be an empty definition unless there exist some functions that satisfy it. Hence we have the following theorems.

\begin{theorem}[Generalized Rademacher]\label{t:qrademacher}
Let $f: \Omega \rightarrow \mathcal{A}_{Q}$ be a Lipschitz function. Then, $f$ is differentiable almost everywhere in $\Omega$ with respect to the $m$-dimensional Lebesgue measure.
\end{theorem}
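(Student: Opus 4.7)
The plan is to argue by induction on $Q$, using the Lipschitz (hence continuous) nature of $f$ to reduce to the ``fully degenerate'' case $f(x_0)=Q\llbracket P\rrbracket$, and then to extract the first-order approximation at $\mathcal{L}^m$-a.e.~such point by combining classical Rademacher with a density-point argument. The base case $Q=1$ is exactly the classical Rademacher theorem.

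For the inductive step, I would fix $x_0\in\Omega$ and write $f(x_0)=\sum_{k=1}^{K}\mu_k\llbracket P_k\rrbracket$ with the $P_k$ pairwise distinct and $\sum_k\mu_k=Q$. If $K\ge 2$, Lipschitz continuity of $f$ ensures that on a sufficiently small ball $B_r(x_0)$ the $Q$ values of $f$ remain split into $K$ clusters around the $P_k$'s, producing a canonical decomposition
\[f(y)=\sum_{k=1}^{K}f^{(k)}(y),\qquad f^{(k)}:B_r(x_0)\to\mathcal{A}_{\mu_k}(\R^n),\]
with each $f^{(k)}$ Lipschitz and $f^{(k)}(x_0)=\mu_k\llbracket P_k\rrbracket$. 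Since each $\mu_k<Q$, the inductive hypothesis gives differentiability at $\mathcal{L}^m$-a.e.~point of $B_r(x_0)$ for every $f^{(k)}$. At any common point of differentiability, I would obtain a first-order approximation of $f$ by summing those of the $f^{(k)}$'s; the no-crossing condition is automatic because selections attached to different clusters take distinct values at $x_0$. This reduces the problem to the case $K=1$, i.e., $f(x_0)=Q\llbracket P\rrbracket$.

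In this fully degenerate case I would introduce the \emph{collapse set}
\[S:=\{x\in\Omega:\ f(x)=Q\llbracket g(x)\rrbracket\text{ for some }g(x)\in\R^n\}.\]
The single-valued function $g$ is $(\Lip(f)/\sqrt{Q})$-Lipschitz on $S$ since $\mathcal{G}(Q\llbracket a\rrbracket,Q\llbracket b\rrbracket)=\sqrt{Q}\,|a-b|$, hence extends by McShane's theorem to a Lipschitz map $\tilde g:\Omega\to\R^n$. Classical Rademacher applied to $\tilde g$ and Lebesgue's density theorem restrict attention to the full-measure subset of $S$ made of density-$1$ points at which $\tilde g$ is classically differentiable. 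At any such point $x_0$ I claim $f$ is differentiable in the sense of Definition~\ref{d:pointwisediff} with
\[T_{x_0}f(x)=Q\llbracket \tilde g(x_0)+D\tilde g(x_0)(x-x_0)\rrbracket,\]
which trivially satisfies the no-crossing condition since all selections collapse onto a single affine branch.

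The main obstacle will be to control $f(x)$ for $x\notin S$. The key input is a standard density-point estimate: since $x_0$ is a Lebesgue density-$1$ point of $S$, for every $\varepsilon>0$ there exists $r_\varepsilon>0$ such that every $x\in B_{r_\varepsilon}(x_0)$ admits some $x'\in S$ with $|x-x'|\le\varepsilon|x-x_0|$ (otherwise a ball of radius $\varepsilon|x-x_0|$ around $x$ avoiding $S$ would force a uniform positive-density bite out of $\Omega\setminus S$ inside $B_{2|x-x_0|}(x_0)$, contradicting density $1$). Combining this with the triangle inequality for $\mathcal{G}$, the Lipschitz bound $\mathcal{G}(f(x),f(x'))\le\Lip(f)|x-x'|$, and the above identity for $\mathcal{G}$ on collapsed points, I would obtain
\[\mathcal{G}\bigl(f(x),T_{x_0}f(x)\bigr)\le \Lip(f)\,|x-x'|+\sqrt{Q}\,\bigl|\tilde g(x')-\tilde g(x_0)-D\tilde g(x_0)(x-x_0)\bigr|=o(|x-x_0|),\]
using the classical differentiability of $\tilde g$ at $x_0$ together with $|x-x'|=o(|x-x_0|)$, which yields $|x'-x_0|=(1+o(1))|x-x_0|$. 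The most delicate parts of the argument will be this density estimate and the careful tracking of the two error terms; everything else follows by the induction.
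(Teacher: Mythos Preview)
Your proposal is correct and follows essentially the same strategy as the paper's sketch: induction on $Q$, splitting locally into lower-multiplicity pieces where the values are separated, and handling the fully collapsed set via a McShane extension of the single selection combined with a density-one argument. Your write-up is in fact more detailed than the paper's outline (which only spells out $Q=2$), but the ideas coincide.
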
 We summarize below the main steps of the proof.

\begin{proof}
The main idea to treat multiple-valued functions is to distinguish among their multiplicities and apply an induction argument on $Q$.
\begin{itemize}

\item For $Q=2$, consider $\tilde{\Omega}:=\left\{x \in \Omega: f_{1}(x)=f_{2}(x)\right\}$, the set of points where $f$ takes a single value with multiplicity $2$.
\item It is easy to show that in $\Omega \setminus \tilde{\Omega}$ one can apply the classical Rademacher's theorem.
\item Then one considers all (``well-behaved") points $x \in \tilde{\Omega}$ such that $\tilde{\Omega}$ has density 1 and (a Lipschitz extension $g$ of) $f_1$ is differentiable: on all these points one proves that $f$ is differentiable in terms of $Q$-valued map with first-order approximation given by $$T_{x}f(y)=2\llbracket Dg(x)(y-x) +g(x)\rrbracket.$$
\item By the Lebesgue differentiation theorem and a projection argument, one concludes the case $Q=2$. For further $Q$'s one proceeds by induction. \qedhere
\end{itemize} 
\end{proof}
It is now rather simple to prove that every Sobolev $Q$-valued function is in fact ``approximately" pointwise differentiable at almost every point. Indeed, as for the classical theory, one can prove that a Lusin-type approximation holds for $Q$-valued Sobolev functions.

\begin{definition}
A $Q$-valued function $f$ is \textit{approximately differentiable} in $x_0$ if there exists a measurable subset $\tilde{\Omega} \subset \Omega$ containing $x_0$ such that $\tilde{\Omega}$ has density 1 at $x_0$ and $f_{|_{\tilde{\Omega}}}$ is differentiable at $x_0$.
\end{definition}

\begin{proposition}[Lipschitz Approximation]{\normalfont [\ref{DLSQ}, Proposition 4.4]}
There exists a constant $C=C(m, \Omega, Q)$ with the following property. For every $f \in W^{1, p}(\Omega, \mathcal{A}_Q)$ and every $\lambda>0$, there exists a $Q$-function $f_\lambda$ such that $\operatorname{Lip}(f_\lambda) \leq C \lambda$ and
$$
\mathcal{L}^m\left(\left\{x \in \Omega: f(x) \neq f_\lambda(x)\right\}\right) \leq \frac{C}{\lambda^p}\int_{\Omega}|Df|^p,$$ where $|Df|$ is defined as in Definition \ref{d:metricmodulus}. \end{proposition}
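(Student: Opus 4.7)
The approach is the natural generalization of the classical Lusin-type Lipschitz approximation for $\R^n$-valued Sobolev functions to the $\mathcal{A}_Q$-valued setting, based on truncating via the maximal function of the gradient. The plan is the following: first I would prove a metric Poincaré inequality stating that for $f \in W^{1,p}(\Omega,\mathcal{A}_Q)$ there exists a representative (still denoted $f$) and a negligible set $N\subset \Omega$ such that for every $x,y\in\Omega\setminus N$
\[
\mathcal{G}(f(x),f(y)) \;\leq\; C\, |x-y|\bigl(M(|Df|)(x) + M(|Df|)(y)\bigr),
\]
where $M$ is the (uncentered) Hardy--Littlewood maximal function. Fixing a countable dense $\{T_i\}_{i\in\N}\subset\mathcal{A}_Q$, apply the classical real-valued Poincaré/maximal-function pointwise estimate to each $W^{1,p}$-function $x\mapsto \mathcal{G}(f(x),T_i)$ (whose weak gradient is controlled by $|Df|$ by definition), then take the supremum in $i$ and use density to get the estimate above.

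Next I would define the truncation set
\[
E_\lambda:=\{x\in\Omega : M(|Df|)(x) \leq \lambda\}.
\]
By the metric Poincaré inequality, $f$ restricted to $E_\lambda$ is $2C\lambda$-Lipschitz as a map into $(\mathcal{A}_Q,\mathcal{G})$. Next, extend $f|_{E_\lambda}$ to a Lipschitz $Q$-valued map $f_\lambda:\Omega\to\mathcal{A}_Q$ with $\Lip(f_\lambda)\leq C'\lambda$. This uses the fact that $\mathcal{A}_Q$ is a Lipschitz absolute retract (up to multiplicative constants): one can either invoke Almgren's bi-Lipschitz embedding $\xi:\mathcal{A}_Q\hookrightarrow \R^{N(Q,n)}$ together with the Lipschitz retract $\rho:\R^{N(Q,n)}\to \xi(\mathcal{A}_Q)$ and apply componentwise McShane extension to $\xi\circ f|_{E_\lambda}$ before composing with $\xi^{-1}\circ\rho$, or prove a direct intrinsic McShane-type extension for $\mathcal{A}_Q$-valued maps. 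Either way one obtains $f_\lambda$ with the prescribed Lipschitz bound and $f_\lambda=f$ on $E_\lambda$.

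Finally I would estimate the bad set $\{f\neq f_\lambda\}\subset \Omega\setminus E_\lambda$. By the weak type $(p,p)$ bound for the Hardy--Littlewood maximal operator applied to $|Df|\in L^p(\Omega)$,
\[
\mathcal{L}^m(\Omega\setminus E_\lambda) \;=\; \mathcal{L}^m\bigl(\{M(|Df|)>\lambda\}\bigr) \;\leq\; \frac{C}{\lambda^p}\int_\Omega |Df|^p,
\]
which yields the claimed inequality. The extension to a domain with smooth boundary $\Omega$ (rather than all of $\R^m$) is standard once one extends $|Df|$ by zero outside $\Omega$ and interprets the maximal function suitably; the constant $C$ then depends also on $\Omega$ (through the extension operator), explaining the dependence $C=C(m,\Omega,Q)$.

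The main obstacle is the metric Poincaré inequality together with the resulting Lipschitz property of $f|_{E_\lambda}$: the supremum over the countable dense family $\{T_i\}$ is harmless at the level of norms (by Definition \ref{d:metricmodulus}) but must be handled carefully on an exceptional null set common to all $T_i$, and one must use a Lebesgue-point type argument to identify a good representative of $f$ on $E_\lambda$. The Lipschitz extension step is essentially bookkeeping once Almgren's embedding/retraction pair (or a direct metric construction) is available.
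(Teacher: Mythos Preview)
Your proposal is correct and matches the approach indicated in the paper. The paper does not give a full proof but only a remark sketching exactly the strategy you describe: truncation along the maximal function of the gradient (citing [\ref{HKST}]) combined with a generalized Lipschitz extension theorem for $\mathcal{A}_Q$-valued maps; your write-up simply fills in the details of that sketch.
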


\begin{remark}
In the classical theory, one of the most efficient ways to approximate Sobolev functions by smooth functions is via regularization by convolution. Nevertheless, this method cannot be applied in the metric framework since $\mathcal{A}_Q$ is not a linear space, hence it is not possible to integrate a kernel against a function.
What still works in a much more general setting is another method usually known as ``truncation along the maximal function of the gradient", see [\ref{HKST}]. Coupling this procedure with a generalized Lipschitz extension theorem and Theorem \ref{t:qrademacher}, one concludes that any $f \in W^{1, p}(\Omega, \mathcal{A}_Q)$ is approximately differentiable at almost every point. 
\end{remark}

At this point one could show that the ``modulus of the gradient", as defined in Definition \ref{d:metricmodulus}, and the pointwise differential are linked.

\begin{proposition}\label{p:samederivative}{\normalfont [\ref{DLSQ}, Proposition 2.17]}
For every $f \in W^{1,2}\left(\Omega, \mathcal{A}_{Q}\right)$ and every $j=1, \ldots, m$, we have
$$
|D f|^{2}=\sum_{i}\left|D f_{i}\right|^{2} \quad \text { a.e., }
$$
where $|L_i|$ denotes the Hilbert-Schmidt norm of the matrix $L_i$.
\end{proposition}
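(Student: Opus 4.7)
My plan is to prove the identity pointwise at a.e. $x_0\in\Omega$ by establishing the per-direction equality $|\partial_j f|(x_0)^2=\sum_i|\partial_j f_i(x_0)|^2$ for each $j=1,\dots,m$, and then summing over $j$. Throughout I work on the full measure set on which $f$ is approximately differentiable (which exists by the Lipschitz approximation combined with the generalized Rademacher theorem) and on which every composition $\mathcal{G}(f,T_k)$ with the countable dense family $\{T_k\}\subset\mathcal{A}_Q$ is classically differentiable in the approximate sense.

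The upper bound $|\partial_j f|(x_0)\le\sqrt{\sum_i|\partial_j f_i(x_0)|^2}$ is the easier half. Fix such an $x_0$, write the first-order expansion $f_i(x)=f_i(x_0)+Df_i(x_0)(x-x_0)+o(|x-x_0|)$ coming from Definition 2.1.7, and fix $T=\sum_i\llbracket P_i\rrbracket$. Restricting to the density-one set on which the selections are continuous, the optimal permutation $\sigma$ realizing $\mathcal{G}(f(x),T)^2=\sum_i|f_i(x)-P_{\sigma(i)}|^2$ is locally constant (and the no-crossing condition of Definition 2.1.7 handles the degenerate case where several $f_i(x_0)$ coincide). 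Differentiating the squared-distance identity at $x_0$ and applying Cauchy–Schwarz gives
\[
|\partial_j\mathcal{G}(f,T)(x_0)|\;\le\;\frac{\sum_i|f_i(x_0)-P_{\sigma(i)}|\,|\partial_j f_i(x_0)|}{\mathcal{G}(f(x_0),T)}\;\le\;\Big(\sum_i|\partial_j f_i(x_0)|^2\Big)^{1/2}.
\]
Taking the supremum over the countable family $\{T_k\}$, which does not affect the inequality, yields $|\partial_j f|(x_0)\le\sqrt{\sum_i|\partial_j f_i(x_0)|^2}$.

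For the reverse inequality I would engineer near-optimal choices of $T_k$ that saturate Cauchy–Schwarz. If $\sum_i|\partial_j f_i(x_0)|^2=0$ there is nothing to prove; otherwise set $v_i:=\partial_j f_i(x_0)/\bigl(\sum_\ell|\partial_j f_\ell(x_0)|^2\bigr)^{1/2}$, so that $\sum_i|v_i|^2=1$. For $\varepsilon>0$ small enough that the points $f_i(x_0)-\varepsilon v_i$ are pairwise distinct, the point $T^\varepsilon:=\sum_i\llbracket f_i(x_0)-\varepsilon v_i\rrbracket$ has $\mathcal{G}(f(x_0),T^\varepsilon)=\varepsilon$ and admits the identity matching, so the computation above gives
\[
\partial_j\mathcal{G}(f,T^\varepsilon)(x_0)\;=\;\sum_i\langle v_i,\partial_j f_i(x_0)\rangle\;=\;\Big(\sum_i|\partial_j f_i(x_0)|^2\Big)^{1/2}.
\]
Since $T^\varepsilon$ itself need not belong to the fixed countable dense set $\{T_k\}$, I would then invoke density to pick $T_{k_n}\to T^\varepsilon$; by the uniform Lipschitz dependence $|\mathcal{G}(f(x),T)-\mathcal{G}(f(x),T')|\le\mathcal{G}(T,T')$, for $T_{k_n}$ close enough to $T^\varepsilon$ the matching is still the identity and the derivative at $x_0$ differs from the expression above by at most $O(\mathcal{G}(T_{k_n},T^\varepsilon))$. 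Letting $\mathcal{G}(T_{k_n},T^\varepsilon)\to0$ proves $|\partial_j f|(x_0)\ge\sqrt{\sum_i|\partial_j f_i(x_0)|^2}$.

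The main technical obstacle is the fact that the supremum defining $|\partial_j f|$ is taken over a fixed countable dense set rather than over all $T\in\mathcal{A}_Q$, so the derivative at $x_0$ is \emph{not} a priori continuous in $T$; the whole argument has to be carried out on the common full-measure set where $\partial_j\mathcal{G}(f,T_k)$ exists simultaneously for every $k$, and the density-in-$T$ approximation must be done pointwise at $x_0$ using the Lipschitz estimate $|\mathcal{G}(f(x),T)-\mathcal{G}(f(x),T')|\le\mathcal{G}(T,T')$ rather than passing to a limit of measurable functions. A secondary subtlety, handled exactly as in the proof of Theorem 2.1.5 (generalized Rademacher), is the degenerate situation where the selections $f_i(x_0)$ collide or coincide with some $P_i$; the no-crossing condition in Definition 2.1.7 ensures $Df_i(x_0)=Df_j(x_0)$ whenever $f_i(x_0)=f_j(x_0)$, which makes the differentiation of the squared-distance function unambiguous after an arbitrarily small perturbation of the target $T$.
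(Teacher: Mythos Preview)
The paper does not give its own proof of this proposition; it simply records the statement and cites [\ref{DLSQ}, Proposition 2.17]. So there is no in-text argument to compare against, only the referenced one.

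Your approach is essentially the standard one from the De Lellis--Spadaro memoir and is correct. You work at a fixed point $x_0$ of approximate differentiability, use the first-order expansion $T_{x_0}f$ to compute $\partial_j\mathcal{G}(f,T)$ explicitly, obtain the upper bound via Cauchy--Schwarz, and saturate it with the tailored target $T^\varepsilon=\sum_i\llbracket f_i(x_0)-\varepsilon v_i\rrbracket$. The point you correctly flag as the delicate one --- that the supremum in Definition \ref{d:metricmodulus} is over a fixed countable set while $T^\varepsilon$ is chosen depending on $x_0$ --- is also the only place where your writeup is slightly informal. Your resolution is the right one: at $x_0$ the approximate derivative $\partial_j\mathcal{G}(f,T)(x_0)$ is given by the explicit quotient $\mathcal{G}(f(x_0),T)^{-1}\sum_i\langle f_i(x_0)-P_{\sigma(i)},\partial_j f_i(x_0)\rangle$, and this expression is continuous in $T$ on the open set where the optimal matching is unique (and matching-independent, hence still continuous, where coincidences force $Df_i(x_0)=Df_j(x_0)$ by the no-crossing condition). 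This is what justifies passing from $T^\varepsilon$ to nearby $T_{k_n}$ in the dense family; the 1-Lipschitz bound on $\mathcal{G}(\cdot,T)$ alone would only give $C^0$ closeness and is not by itself enough, so it would be worth stating explicitly that you are using continuity of the derivative formula in $T$, not of the function.
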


\begin{remark}
Proposition \ref{p:samederivative} justifies the metric definition $|Df|^2$ in \ref{d:metricmodulus}. Indeed, when the $Q$-valued function $f$ is the superposition of $Q$ smooth functions $f_1,\dots, f_Q$, then the first-order expansion of the area functional is given by $|Df_i|^2$, for each $i$.
\end{remark}

The usual notion of trace at the boundary can be easily generalized in this setting.

\begin{definition}\label{d:qtrace}
Let $\Omega \subset \mathbb{R}^m$ be a bounded open set with Lipschitz boundary and $f \in W^{1, p}\left(\Omega, \mathcal{A}_Q\right)$. A function $g$ belonging to $L^p\left(\partial \Omega, \mathcal{A}_Q\right)$ is said to be the \emph{trace of $f$ at} $\partial \Omega$ (and we denote it by $f_{|_{\partial \Omega}}$) if, for every $T \in \mathcal{A}_Q$, the trace of the real-valued Sobolev function $\mathcal{G}(f, T)$ coincides with $\mathcal{G}(g, T)$.
\end{definition}

A Morrey-Campanato estimate in the spirit of Theorem \ref{t:morreycampanato} holds for $Q$-valued functions.

\begin{theorem}[Morrey-Campanato]\label{t:qmorreycampanato}
Let $f \in W^{1,2}(B_1, \mathcal{A}_Q)$ and $\alpha \in(0,1]$ be such that $$\int_{B_r(y)}|D f|^2 \leq A r^{m-2+2 \alpha} \, \text{ for every }\, y \in B_1 \, \text{ and }\, a.e.\, r \in( 0,1-|y|).$$
Then, for every $0<\delta<1$, there exists a constant $C=C(m, n, Q, \delta)$ such that
$$
[f]_{\mathcal{C}^{0, \alpha}(\overline{B_\delta})}:=\sup _{x, y \in \overline{B_\delta}} \frac{\mathcal{G}(f(x), f(y))}{|x-y|^\alpha} \leq C \sqrt{A}.
$$
\end{theorem}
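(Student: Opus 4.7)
The strategy is to reduce the $\mathcal{A}_Q$-valued statement to the classical scalar Morrey--Campanato result (Theorem~\ref{t:morreycampanato}) via the $1$-Lipschitz test functions $u_T(x):=\mathcal{G}(f(x),T)$, for $T\in\mathcal{A}_Q$. By the definition of $W^{1,2}(B_1,\mathcal{A}_Q)$, together with the definition of $|Df|$ as a sum of suprema over a countable dense family $\{T_i\}\subset\mathcal{A}_Q$, one has $u_T\in W^{1,2}(B_1)$ and, using the $1$-Lipschitz continuity of $T\mapsto\mathcal{G}(\,\cdot\,,T)$ to pass from the $T_i$'s to an arbitrary $T$,
\[
|\nabla u_T(x)|\;\le\;|Df|(x)\qquad\text{for a.e.\ }x\in B_1,
\]
uniformly in $T$.

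The analytic core is then the classical chain Poincaré $+$ Campanato. For every ball $B_r(y)\subset B_1$, Poincaré's inequality and the standing hypothesis yield
\[
\fint_{B_r(y)}\bigl|u_T-(u_T)_{y,r}\bigr|^{2}\,dx
\;\le\;C_m\,r^{2}\fint_{B_r(y)}|Df|^{2}\,dx
\;\le\;C'\,A\,r^{2\alpha},
\]
with $C'$ depending only on $m$ and independent of $T$. Applying Theorem~\ref{t:morreycampanato} to each $u_T$ produces a Hölder continuous representative of $u_T$ with $[u_T]_{\mathcal{C}^{0,\alpha}(\overline{B_\delta})}\le C(m,\delta)\sqrt{A}$, uniformly in $T$. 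Fixing the countable dense family $\{T_i\}$ once and for all, the identity $\mathcal{G}(f(x),f(y))=\sup_{i}|u_{T_i}(x)-u_{T_i}(y)|$ (which follows from the $1$-Lipschitz property of $\mathcal{G}(\,\cdot\,,\,\cdot\,)$ and density) allows us to redefine $f$ on a Lebesgue null set so as to obtain a canonical $\mathcal{G}$-continuous representative on $\overline{B_\delta}$.

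For the conclusion, pick any $x,y\in\overline{B_\delta}$ and apply the previous step to the specific choice $T:=f(y)$: since $u_T(y)=0$ and $u_T(x)=\mathcal{G}(f(x),f(y))$,
\[
\mathcal{G}(f(x),f(y))\;=\;|u_T(x)-u_T(y)|\;\le\;C(m,\delta)\sqrt{A}\,|x-y|^{\alpha},
\]
which is exactly the asserted bound on $[f]_{\mathcal{C}^{0,\alpha}(\overline{B_\delta})}$. The main (mild) subtlety is not the Poincaré--Campanato step, which is entirely classical, but rather that the comparison point $T=f(y)$ varies with $y$; this is handled precisely by the $\{T_i\}$-reduction above, which exploits the metric structure of $\mathcal{A}_Q$ directly rather than relying on any measurable selection of $f$, in line with the intrinsic viewpoint emphasized in the chapter.
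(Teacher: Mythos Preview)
The paper does not actually supply a proof of this theorem; it is stated and then the reader is referred to [\ref{DLSQ}] for the surrounding Sobolev theory. Your argument is correct and is precisely the standard proof given in [\ref{DLSQ}, Proposition~2.14]: reduce to the scalar Campanato estimate by composing with the $1$-Lipschitz functions $x\mapsto\mathcal{G}(f(x),T_i)$ for a countable dense family $\{T_i\}\subset\mathcal{A}_Q$, use $|\nabla\mathcal{G}(f,T_i)|\le|Df|$ together with Poincar\'e to verify the Campanato hypothesis uniformly in $i$, and then recover the $\mathcal{G}$-oscillation of $f$ from the scalar oscillations via the choice $T=f(y)$. Your handling of the two mild subtleties (extending from the dense family to arbitrary $T$ by $1$-Lipschitz continuity, and passing to a continuous representative on a common null set) is also the standard one.
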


\begin{remark}
Many results of the classical theory of Sobolev spaces can be generalized to the Sobolev class $W^{1, p}(\Omega,\mathcal{A}_Q)$, such as chain rules, existence and uniqueness of the trace for Sobolev $Q$-functions, weak convergence, Sobolev embeddings, Poincaré inequality and so on; we refer to [\ref{DLSQ}] for more details.
\end{remark}

\subsubsection{Dir-minimizing $Q$-valued functions}

We aim at finding solutions of minimization problems framed in the context of $Q$-valued functions. In principle we should look at the minimization of the area functional, which is a delicate problem because of its nonlinear nature. As a starting point, we begin the investigation with the linear problem given by the minimization of the Dirichlet energy, which now can be defined thanks to the first-order calculus developed so far.

\begin{definition}
The generalized \textit{Dirichlet energy} of $f \in W^{1,2}(\Omega, \mathcal{A}_Q)$ is given by
$$
\operatorname{Dir}(f, \Omega):=\int_{\Omega}|D f|^2=\sum_i \int_{\Omega}\left|D f_i\right|^2 .
$$
We say that a function $f \in W^{1,2}(\Omega, \mathcal{A}_Q)$ is \textit{Dir-minimizing} if $$\operatorname{Dir}(f, \Omega) \leq \operatorname{Dir}(g, \Omega),$$ for all $g \in W^{1,2}(\Omega, \mathcal{A}_Q)$ with $f_{|_{\partial \Omega}}=g_{|_{\partial \Omega}}$ (in the sense of Definition \ref{d:qtrace}).
\end{definition}

Now we describe three fundamental theorems in the theory of Dir-minimizing $Q$-valued functions. The first theorem provides \textit{existence} of Dir-minimizing functions, while the second and the third theorems deal with \textit{regularity} results: (interior) H\"older regularity of Dir-minimizers and an estimate of the singular set. Indeed, as already mentioned, the first step of Almgren's theory of partial regularity for area-minimizing currents in higher codimension is to develop a theory concerning existence and regularity for the first nonconstant term in the area functional.

\begin{theorem}[Existence of Dir-minimizing functions]
Let $g \in W^{1,2}(\Omega, \mathcal{A}_{Q}).$ Then, there exists a Dir-minimizing function $f \in W^{1,2}(\Omega, \mathcal{A}_{Q})$ such that $f_{|_{\partial \Omega}}=g_{|_{\partial \Omega}}.$
\end{theorem}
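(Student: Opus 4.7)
The plan is to apply the direct method of the calculus of variations. The admissible class $\mathcal{C} := \{f \in W^{1,2}(\Omega, \mathcal{A}_{Q}) : f|_{\partial\Omega} = g|_{\partial\Omega}\}$ is nonempty (it contains $g$), so the infimum $m := \inf_{\mathcal{C}} \operatorname{Dir}(\cdot, \Omega) \leq \operatorname{Dir}(g, \Omega) < \infty$ is finite. Pick a minimizing sequence $(f_k) \subset \mathcal{C}$.

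Next I would extract a limit using Almgren's bi-Lipschitz embedding $\xi : \mathcal{A}_{Q} \hookrightarrow \mathbb{R}^{N(Q,n)}$ onto a closed subset, which was mentioned in the intrinsic/extrinsic discussion earlier. Set $F_k := \xi \circ f_k$. The fixed trace together with the uniform bound on $\operatorname{Dir}(f_k)$ yields, via a $Q$-valued Poincaré inequality (obtained by testing against $T=g(x_0)$ pointwise), $\sup_k \|F_k\|_{W^{1,2}(\Omega, \mathbb{R}^N)} < \infty$. Rellich-Kondrachov produces a subsequence with $F_k \to F$ strongly in $L^{2}$ (and a.e.) and weakly in $W^{1,2}$; since $\xi(\mathcal{A}_{Q})$ is closed in $\mathbb{R}^N$, the limit $F$ takes values in $\xi(\mathcal{A}_{Q})$ a.e., so $f := \xi^{-1} \circ F \in W^{1,2}(\Omega, \mathcal{A}_{Q})$ is well defined. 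The trace of $F$ equals $\xi \circ g|_{\partial\Omega}$ by continuity of the trace operator under weak $W^{1,2}$-convergence, and Definition \ref{d:qtrace} combined with the injectivity of $\xi$ then gives $f|_{\partial\Omega} = g|_{\partial\Omega}$, so $f \in \mathcal{C}$.

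It remains to show $\operatorname{Dir}(f, \Omega) \leq \liminf_k \operatorname{Dir}(f_k, \Omega) = m$. Fix a countable dense set $\{T_i\} \subset \mathcal{A}_{Q}$. For each $T_i$, the strong $L^{2}$-convergence $f_k \to f$ combined with the $1$-Lipschitz dependence of $\mathcal{G}(\cdot, T_i)$ gives $\mathcal{G}(f_k, T_i) \to \mathcal{G}(f, T_i)$ in $L^{2}$, while the bound $|\partial_j \mathcal{G}(f_k, T_i)| \leq |\partial_j f_k|$ forces weak $W^{1,2}$-convergence of the same sequence. For any finite $N$ and any direction $j$, the functional $(a_1,\ldots,a_N) \mapsto \int_\Omega \max_{i \leq N}|a_i|^2$ is convex on $(L^{2}(\Omega))^N$, hence weakly lower semicontinuous, yielding
\begin{equation*}
\int_\Omega \max_{i \leq N} |\partial_j \mathcal{G}(f, T_i)|^2 \,\leq\, \liminf_k \int_\Omega \max_{i \leq N} |\partial_j \mathcal{G}(f_k, T_i)|^2 \,\leq\, \liminf_k \int_\Omega |\partial_j f_k|^2.
\end{equation*}
Summing over $j$ and letting $N \to \infty$ by monotone convergence (applying Definition \ref{d:metricmodulus} of $|Df|^2$ as a countable supremum) gives $\operatorname{Dir}(f, \Omega) \leq \liminf_k \operatorname{Dir}(f_k, \Omega) = m$, so $f$ realizes the infimum.

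The main obstacle is exactly this last step: the intrinsic modulus $|Df|^2$ is a pointwise supremum over a countable dense family, which is not weakly lower semicontinuous ``for free.'' The truncation and monotone passage sketched above sidesteps this, but a cleaner alternative would be to invoke Proposition \ref{p:samederivative} to write $|Df|^2 = \sum_i |Df_i|^2$ along a measurable selection and reduce the problem to classical weak $L^{2}$ lower semicontinuity of $\int |DF|^2$ for the vector-valued function $F = \xi \circ f$, exploiting that $\xi$ can be chosen so that its differential essentially preserves the relevant quadratic form.
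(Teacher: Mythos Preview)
Your proof is correct and follows the same overall architecture as the paper: the direct method, resting on (i) weak sequential compactness in $W^{1,2}(\Omega,\mathcal{A}_Q)$, (ii) continuity of the trace under weak convergence, and (iii) weak lower semicontinuity of the Dirichlet energy. The paper simply invokes these three facts as black boxes from [\ref{DLSQ}] and then runs the three-line argument; you instead sketch proofs of the ingredients themselves.

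The main methodological difference is that you lean on Almgren's \emph{extrinsic} bi-Lipschitz embedding $\xi$ for compactness and trace stability, whereas the paper (following De~Lellis--Spadaro) emphasizes the \emph{intrinsic} metric-space viewpoint and would obtain all three pillars without $\xi$. Your lower-semicontinuity step, by contrast, is genuinely intrinsic: truncating the countable supremum in Definition~\ref{d:metricmodulus} to a finite $\max$, using convexity of $(a_1,\dots,a_N)\mapsto\int_\Omega\max_i|a_i|^2$ to pass weak limits, and then sending $N\to\infty$ by monotone convergence is a clean direct argument that avoids any issue with the embedding distorting the Dirichlet integrand. The hybrid works fine; the only cosmetic point is that having invoked $\xi$ anyway, the alternative you mention at the end (reduce lsc to the classical weak-$L^2$ lower semicontinuity of $\int|DF|^2$ for $F=\xi\circ f$) would be shorter, though it requires the additional fact---not stated in the paper---that $\xi$ can be chosen to be an isometry on the relevant quadratic form, not merely bi-Lipschitz.
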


The proof of the existence theorem for Dir-minimizing functions follows by a straightforward application of the direct methods in the calculus of variations. Indeed, $Q$-valued functions and the generalized Dirichlet energy satisfy the same results as in the classical setting, namely weak sequential compactness (see [\ref{DLSQ}, Proposition 2.11]), continuity of the trace under weak convergence (see [\ref{DLSQ}, Proposition 2.10]) and weak sequential lower semicontinuity of the Dirichlet energy (see [\ref{DLSQ}, Section 2.3.2]). Once these three properties have been proved, then we can easily conclude as follows.

\begin{proof}
Let $(f_k)_k$ in $W^{1,2}(\Omega, \mathcal{A}_Q)$ be a minimizing sequence of functions, that is for every $k$ we have ${f_{k}}_{|_{\partial \Omega}}=g_{|_{\partial \Omega}}$ and
$$\lim _{k \rightarrow \infty} \operatorname{Dir}(f_k, \Omega)=\inf \{\operatorname{Dir}(h, \Omega): h \in W^{1,2}(\Omega, \mathcal{A}_Q)  \text{ with } h_{|_{\partial \Omega}}=g_{|_{\partial \Omega}}\}.$$
Then, by weak sequential compactness, there exists a subsequence $(f_{k_j})_j$ which is $L^2$-converging to some function $f$, that is:
$$
\lim _{j\rightarrow \infty}\left\|\mathcal{G}(f_{k_j}, f)\right\|_{L^2(\Omega)}=0.
$$ By continuity of the trace under weak convergence we get $f_{|_{\partial \Omega}}=g_{|_{\partial \Omega}}$ and by the lower semicontinuity of the Dirichlet energy we conclude that
$$
\begin{aligned}
\operatorname{Dir}(f, \Omega) & \leq \lim _{j \rightarrow \infty} \operatorname{Dir}(f_{k_j}, \Omega) \\
&=\inf \left\{\operatorname{Dir}(h, \Omega): h \in W^{1,2}\left(\Omega, \mathcal{A}_Q\right) \text { with } h_{|_{\partial \Omega}}=g_{|_{\partial \Omega}}\right\},
\end{aligned}
$$
hence proving that $f$ is a minimizer.
\end{proof}

A fundamental result in the theory of higher codimension area-minimizing currents is the H\"older continuity of Dir-minimizing $Q$-valued functions in the interior of $\Omega$.

\begin{theorem}[H\"older-regularity of Dir-minimizing functions]\label{t:qholderregularity}
There exists a positive constant $\alpha=\alpha(m, Q)>0$ such that if $f \in W^{1,2}(\Omega, \mathcal{A}_Q)$ is Dir-minimizing, then $f \in \mathcal{C}^{0, \alpha}(\Omega^{\prime})$ for every $\Omega^{\prime} \subset\joinrel\subset \Omega \subset \mathbb{R}^m$. For two-dimensional domains, we have the explicit constant $\alpha(2, Q)=1 / Q$.
\end{theorem}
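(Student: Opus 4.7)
The plan is to reduce the statement to a Morrey--Campanato type decay estimate for the localized Dirichlet energy and then to apply Theorem~\ref{t:qmorreycampanato} directly. Concretely, I aim to prove that there exist $\alpha=\alpha(m,Q)>0$ and $C=C(m,Q)$ such that, for every Dir-minimizer $f\in W^{1,2}(B_1,\mathcal{A}_Q)$ and every ball $B_r(y)\subset B_{1/2}$,
\[
D(y,r)\,:=\,\int_{B_r(y)}|Df|^2 \,\leq\, C\, r^{m-2+2\alpha}\operatorname{Dir}(f,B_1). \qquad (\star)
\]
Once $(\star)$ is established, Theorem~\ref{t:qmorreycampanato} immediately yields $f\in\mathcal{C}^{0,\alpha}(\Omega')$ for every $\Omega'\subset\joinrel\subset\Omega$, via the standard rescaling from unit balls centered at points of $\Omega'$ to the original domain.

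To prove $(\star)$ I would use energy comparison based on Dir-minimality: for every competitor $h\in W^{1,2}(B_r(y),\mathcal{A}_Q)$ with the same trace as $f$ on $\partial B_r(y)$, one has $D(y,r)\leq \operatorname{Dir}(h, B_r(y))$. The idea is to construct $h$ so that its energy is controlled by the tangential energy of the trace $g:=f|_{\partial B_r(y)}$; combined with the coarea identity $D'(r)=\int_{\partial B_r(y)}|Df|^2\,d\mathcal{H}^{m-1}$ and the pointwise bound $|Df|^2\geq|\partial_\tau f|^2$, this produces a differential inequality of the form $D(r)\leq c(m,Q)\,r\,D'(r)$, which integrates to polynomial decay of $D(r)$ with exponent $2/c(m,Q)$.

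For $m=2$ this construction can be made explicit and yields the sharp exponent $\alpha(2,Q)=1/Q$. The key topological observation is that $\pi_1(S^1)=\mathbb{Z}$, which forces the monodromy of $g$ around $\partial B_r(y)$ to be represented by a permutation $\sigma\in\mathcal{S}_Q$ whose orbits have lengths $q_1,\dots,q_N$ dividing $Q$. Treating each orbit separately reduces matters to the case where $\sigma$ is a single $q$-cycle with $q\mid Q$; in this case $g$ admits a single-valued $\mathbb{R}^n$-valued $W^{1,2}$ lift $\tilde g$ through the $q$-fold cover of $S^1$, and I would define $h$ as the pullback, via the branched covering map $w\mapsto w^q$, of the classical harmonic extension $\tilde u$ of $\tilde g$ to the cover disk. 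The conformal invariance of the two-dimensional Dirichlet integral gives $\operatorname{Dir}(h,B_r(y))=\operatorname{Dir}(\tilde u,\tilde B)$, and a Fourier expansion of $\tilde g$ on the cover circle identifies the extremizing mode as the one corresponding to frequency $1/Q$ on the base, yielding the sharp inequality $D(r)\leq \tfrac{Q}{2}\,r\,D'(r)$ and hence $D(r)\leq (r/R)^{2/Q}D(R)$.

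In dimension $m\geq 3$ the covering construction is unavailable, and I would argue through Almgren's \emph{frequency function}
\[
I_{f,y}(r)\,:=\,\frac{r\,D(y,r)}{H_{f,y}(r)},\qquad H_{f,y}(r)\,:=\,\int_{\partial B_r(y)}\mathcal{G}(f,f(y))^{2}\,d\mathcal{H}^{m-1}.
\]
Computing the first variation of $\operatorname{Dir}(\cdot,B_r(y))$ along both outer perturbations $f+\varepsilon\,\varphi\circ f$ and inner perturbations $f\circ(\mathrm{id}+\varepsilon X)$, one establishes that $I_{f,y}$ is monotone nondecreasing in $r$; the limit $I_0:=\lim_{r\to 0^+}I_{f,y}(r)$ then controls the order of vanishing of $f$ at $y$ and, via Pohozaev-type identities linking $H'(r)$, $D(r)$ and the normal derivative, yields $D(y,r)\leq C r^{m-2+2I_0}$. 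A blow-up/compactness argument, combined with the quantization of $Q$-valued homogeneous Dir-minimizing cones on $\partial B_1$, shows that $I_0\geq\alpha(m,Q)$ for some universal $\alpha(m,Q)>0$. The main obstacles I foresee are (i) rigorously carrying out the lifting step for $m=2$ at the level of merely-measurable Sobolev traces, which requires Proposition~\ref{p:measurableselections} and some care regarding the non-uniqueness of the lift when $\sigma$ has several orbits; and (ii) proving monotonicity of $I_{f,y}$ in the $Q$-valued setting, where inner variations must be designed so that the perturbed map continues to satisfy the no-crossing condition $(ii)$ of Definition~\ref{d:pointwisediff}, a difficulty absent in the classical scalar theory.
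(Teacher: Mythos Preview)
Your plan for $m=2$ is essentially the paper's: the covering/lifting construction you describe is precisely how one proves Proposition~\ref{p:1perholder} in the planar case, and the paper then integrates the resulting differential inequality $D(r)\leq Q\,r\,D'(r)$ and applies Theorem~\ref{t:qmorreycampanato} exactly as you do.

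For $m\geq 3$, however, your route through the frequency function and a blow-up/compactness argument for the lower bound $I_0\geq\alpha(m,Q)$ has a genuine circularity. The blow-up limits you need are the tangent functions of Proposition~\ref{p:buhomogenbu}, and their existence (with Dir-minimality and the correct homogeneity) relies on Lemma~\ref{l:bucompactness}, which assumes \emph{uniform} convergence of the rescaled sequence---obtained in the paper via Arzel\`a--Ascoli \emph{from Theorem~\ref{t:qholderregularity} itself}. The paper makes this logical dependence explicit in the remark following Definition~\ref{d:almgrenfrequency}: the very well-definedness of $H_{x,f}(r)$ for every $r$ is justified \emph{a posteriori} by the H\"older continuity already established. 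Your variant $H_{f,y}(r)=\int_{\partial B_r(y)}\mathcal{G}(f,f(y))^2$ aggravates the problem, since $f(y)$ is only defined almost everywhere for a Sobolev map before continuity is known. Without an independent compactness mechanism, the ``quantization of homogeneous cones'' step cannot be reached.

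The paper circumvents this entirely by proving Proposition~\ref{p:1perholder} directly in all dimensions, with a constant $C(m)<(m-2)^{-1}$ for $m\geq 3$; the proof in that range uses a maximum principle for Dir-minimizers and a decomposition theorem (see [\ref{DLSQ}, Propositions~3.5--3.6]) rather than frequency or blow-ups. This gives $\gamma(m)=C(m)^{-1}-(m-2)>0$ in \eqref{e:gammaperq} and the same ODE argument as in dimension two. As a side remark, your obstacle~(ii) is misplaced: inner variations $f\circ(\mathrm{id}+\varepsilon X)$ are domain reparametrizations and automatically produce admissible $Q$-valued competitors, so no compatibility with Definition~\ref{d:pointwisediff}(ii) needs to be engineered.
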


Note that, although Dir-minimizing functions act as of classic harmonic functions in the setting of $Q$-valued maps, they are not in general analytic. One can show that interior H\"older continuity for Dir-minimizing functions is sharp. Indeed, not only are holomorphic varieties examples of singular (locally) area-minimizing integral currents (as shown in Theorem \ref{t:holomorphicsubvariety}), but also they provide examples of Dir-minimizing $Q$-valued functions. More formally one could prove the following theorem, see [\ref{Almgren2000}, Theorem 2.20] and [\ref{Spadarocomplex}].

\begin{theorem}\label{t:spadarocomplex}
Let $\mathcal{V} \subseteq B_2 \times \mathbb{R}^{2 m} \subseteq \mathbb{R}^{2 n+2 m} \simeq \mathbb{C}^{n+m}$ be an irreducible holomorphic variety with the property that $\pi_{*} \llbracket \mathcal{V} \rrbracket=Q \llbracket B_2 \rrbracket$, where $\pi$ is the orthogonal projection\footnote{This condition is sometimes referred to as $\mathcal{V}$ to be a $Q: 1$-cover of the ball $B_2 \subseteq \mathbb{C}^n$ under the orthogonal projection $\pi$ onto $B_2$.}. Then, there exists a Dir-minimizing $Q$-valued function $f \in W^{1,2}\left(B_1, \mathcal{A}_Q(\mathbb{R}^{2 m})\right)$ such that $\operatorname{graph}(f)=\mathcal{V} \cap\left(B_1 \times \mathbb{C}^m\right)$.
\end{theorem}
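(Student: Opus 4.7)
First I would construct the candidate $f$ directly from the variety. Since $\pi_*\llbracket\mathcal{V}\rrbracket=Q\llbracket B_2\rrbracket$, the projection $\pi|_{\mathcal{V}}:\mathcal{V}\to B_2$ is a proper analytic $Q$-sheeted map; off a branch locus $\Sigma\subset B_2$ (a complex analytic subset of complex codimension $\ge 1$, hence real codimension $\ge 2$) the map $\pi$ is a holomorphic covering and yields local holomorphic sheets $w_1,\dots,w_Q$. Define $f(z):=\sum_{i=1}^{Q}\llbracket w_i(z)\rrbracket$ on $B_2\setminus\Sigma$; this is globally well-defined as an unordered-tuple valued map, extends continuously across $\Sigma$ by properness of $\pi$, and has graph equal to $\mathcal{V}\cap(B_1\times\mathbb{C}^m)$.

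Next I would verify that $f\in W^{1,2}(B_1,\mathcal{A}_Q(\mathbb{R}^{2m}))$. Boundedness of $f$ follows from compactness of the relevant slice of $\mathcal{V}$; the branch locus is $W^{1,2}$-negligible by the codimension bound; and finiteness of $\operatorname{Dir}(f,B_1)$ comes from the area formula combined with $\mathbb{M}(\llbracket\mathcal{V}\rrbracket\res(B_1\times\mathbb{C}^m))<\infty$, using that on each holomorphic sheet
\[
\sqrt{\det(I+Dw_i^*Dw_i)}\ge 1+\tfrac{1}{2}|Dw_i|^2.
\]

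The heart of the proof is to show that $f$ is Dir-minimizing, which I would do by comparison with $\mathcal{V}$ as a calibrated current. Given any competitor $g\in W^{1,2}(B_1,\mathcal{A}_Q)$ with $g|_{\partial B_1}=f|_{\partial B_1}$, Lipschitz truncation together with Almgren's bi-Lipschitz embedding $\xi:\mathcal{A}_Q\hookrightarrow\mathbb{R}^{N(Q,n)}$ produce a rectifiable current $\mathbf{T}_g$ whose boundary inside $B_1\times\mathbb{C}^m$ matches that of $\mathbf{T}_f:=\llbracket\mathcal{V}\rrbracket\res(B_1\times\mathbb{C}^m)$. The Kähler form $\omega$ on $\mathbb{C}^{n+m}$ gives a closed calibration $\omega^n/n!$ for which Wirtinger's Proposition \ref{p:wirtinger} yields
\[
\int_{\mathbf{T}_h}\tfrac{\omega^n}{n!}\le \mathbb{M}(\mathbf{T}_h),
\]
with equality when the tangent planes to $\mathbf{T}_h$ are complex. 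Applied to $f$ the inequality is an equality, while applied to $g$ it remains an inequality; by Stokes' theorem the two left-hand sides coincide (same boundary trace), so $\mathbb{M}(\mathbf{T}_f)\le\mathbb{M}(\mathbf{T}_g)$. Expanding the area factor around the holomorphic configuration then isolates the Dirichlet terms on both sides, the higher-order pieces cancelling as null Lagrangians depending only on boundary data.

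The principal obstacle is precisely this last cancellation. For $n=1$ the argument is transparent because $\sqrt{\det(I+Dh^*Dh)}=\sqrt{1+|Dh|^2+(\det Dh)^2}\le 1+\tfrac{1}{2}|Dh|^2$ identically (AM-GM for the Jacobian), so $\mathbb{M}(\mathbf{T}_g)\le Q|B_1|+\tfrac{1}{2}\operatorname{Dir}(g)$ while $\mathbb{M}(\mathbf{T}_f)=Q|B_1|+\tfrac{1}{2}\operatorname{Dir}(f)$ (the equality being the pointwise Wirtinger rigidity on holomorphic sheets), and Dir-minimality follows at once from area-minimality. For $n\ge 2$ the area factor exceeds $1+\tfrac{1}{2}|Dh|^2$ by positive quartic and higher-order Jacobian terms, and the real work consists in showing that these extra terms assemble into null Lagrangians against the closed calibration $\omega^n/n!$, so that integration by parts against $d(\omega^n/n!)=0$ makes them cancel between $f$ and $g$; carrying this out rigorously at merely Sobolev regularity, via Lipschitz approximation of $g$ and the trace theory of Definition \ref{d:qtrace}, is where the technical heart of the proof lies.
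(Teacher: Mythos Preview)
The paper does not supply a proof of this theorem; it simply cites [\ref{Almgren2000}, Theorem 2.20] and [\ref{Spadarocomplex}] and moves on. So there is no in-paper argument to compare against, and your proposal has to be judged on its own terms.

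Your outline follows the correct strategy of the cited references: build $f$ from the sheets of the branched cover $\pi|_{\mathcal V}$, get $W^{1,2}$ regularity from finiteness of $\Mass(\llbracket\mathcal V\rrbracket\res(B_1\times\mathbb C^m))$, and then deduce Dir-minimality from the calibrated area-minimality of $\mathcal V$. For $n=1$ your argument is essentially complete; the quantity you call $(\det Dh)^2$ should be $\det(Dh^*Dh)=|\partial_1 h\wedge\partial_2 h|^2$ when $m>1$, but the bound $|\partial_1 h\wedge\partial_2 h|\le\tfrac12|Dh|^2$ still yields $\sqrt{\det(I+Dh^*Dh)}\le 1+\tfrac12|Dh|^2$, with equality on holomorphic sheets, and the comparison goes through.

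The genuine gap is for $n\ge 2$. Your phrase ``these extra terms assemble into null Lagrangians against the closed calibration $\omega^n/n!$'' conflates two mechanisms. Closedness of $\omega^n/n!$ only gives $\int_{\mathbf T_f}\omega^n/n!=\int_{\mathbf T_g}\omega^n/n!$, which with Wirtinger yields $\Mass(\mathbf T_f)\le\Mass(\mathbf T_g)$, i.e.\ area-minimality---something you already have from Theorem~\ref{t:holomorphicsubvariety}. The real obstruction is that for $n\ge 2$ the pointwise inequality $\sqrt{\det(I+Dh^*Dh)}\le 1+\tfrac12|Dh|^2$ is \emph{false} (take all $2n$ eigenvalues of $Dh^*Dh$ equal to some $\lambda>0$: the left side is $(1+\lambda)^n>1+n\lambda$), so you cannot bound $\Mass(\mathbf T_g)$ above by $Q|B_1|+\tfrac12\operatorname{Dir}(g)$ as in the $n=1$ step. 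What is actually needed, and what [\ref{Spadarocomplex}] proves, is a refined pointwise inequality comparing the pulled-back calibration $\Phi_h^*(\omega^n/n!)$ with $1+\tfrac12|Dh|^2$ plus an explicit polynomial $P(Dh)$ built from minors of $Dh$ (a genuine null Lagrangian in $h$), together with the matching identity when $Dh$ is complex-linear. That algebraic lemma is the technical core, and it does not follow from ``$d(\omega^n/n!)=0$'' alone. A secondary point you leave open is the construction of a rectifiable current $\mathbf T_g$ with $\partial\mathbf T_g=\partial\mathbf T_f$ from a merely Sobolev $Q$-valued competitor; the Lipschitz approximation you invoke is the right tool, but producing an \emph{exact} boundary match (rather than an approximate one) for multivalued graphs requires the careful construction in the cited works.
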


\begin{remark}In particular, a counterexample to higher-than-H\"older regularity comes considering the holomorphic curve $\Gamma$ as in \eqref{e:holobadguy}. We could consider a different parametrization (that would still be a Dir-minimizing $Q$-valued function since its graph is again $\Gamma$) than the one in \eqref{e:holobadguyparam}, just by inverting the role of $z$ and $w$.\end{remark}

The proof of Theorem \ref{t:qholderregularity} is divided into two main steps. The core of the proof relies on the first step which, in some sense, can be considered as a ``geometric differential inequality". Its proof strongly differs from the $m=2$ to the $m\ge3$ case: indeed, while in the planar case it is somehow not difficult to decompose a Sobolev $Q$-valed functions into Sobolev selections, this cannot be said for the general case. Hence, more sophisticated analytic results need to be employed, such as a maximum principle for Dir-minimizing functions (see [\ref{DLSQ}, Proposition 3.5]) and a decomposition theorem for Dir-minimizers (see  [\ref{DLSQ}, Proposition 3.6]). The first step can be stated as follows.

\begin{proposition}\label{p:1perholder}
Let $f \in W^{1,2}(B_r, \mathcal{A}_Q)$ be Dir-minimizing and suppose that
$$
g=f_{|_{\partial B_r}} \in W^{1,2}(\partial B_r, \mathcal{A}_Q).
$$
Then, we have that
\begin{equation}\label{e:diffineq}
\int_{B_r}|Df|^2 \leq C(m) r \int_{\partial B_r}|Dg|^2,
\end{equation}
where $C(2)=Q$ and $C(m)<(m-2)^{-1}$.
\end{proposition}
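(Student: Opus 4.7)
The plan is to use the minimality of $f$: it suffices to exhibit a competitor $h \in W^{1,2}(B_r, \mathcal{A}_Q)$ with trace $h_{|_{\partial B_r}} = g$ in the sense of Definition \ref{d:qtrace} and to bound $\int_{B_r} |Dh|^2 \leq C(m)\, r \int_{\partial B_r} |Dg|^2$. Then the minimality of $f$ immediately yields $\int_{B_r}|Df|^2 \leq \int_{B_r}|Dh|^2$, proving the claim.

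For $m \geq 3$, I would take the $0$-homogeneous extension $h(x) := g(r x/|x|)$. Since $h$ has no radial derivative, a direct scaling computation in polar coordinates $(t,\omega) \in (0,r) \times S^{m-1}$ gives $|Dh(t\omega)|^2 = (r/t)^2 |Dg(r\omega)|^2$, where $|Dg|$ denotes the tangential gradient on $\partial B_r$ (as in Remark \ref{r:coordinates}). Slicing in the radial direction yields
\begin{equation*}
\int_{B_r} |Dh|^2 \, dx = r^2 \int_0^r t^{m-3} dt \cdot \int_{S^{m-1}} |Dg(r\omega)|^2 \, d\mathcal{H}^{m-1}(\omega) = \frac{r}{m-2}\int_{\partial B_r} |Dg|^2 \, d\mathcal{H}^{m-1},
\end{equation*}
which gives $C(m) \leq (m-2)^{-1}$. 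The strict inequality in the statement can be obtained by a refinement of this construction, for instance truncating $h$ in a small ball $B_\rho$ around the origin and interpolating smoothly to a fixed $Q$-point value, then optimizing in $\rho$.

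For $m = 2$ the $0$-homogeneous extension fails to be in $W^{1,2}$ due to the logarithmic divergence at the origin, so a different construction is needed. I would invoke the decomposition theorem for Sobolev $Q$-valued maps on the circle, which writes $g$ as a disjoint superposition of irreducible pieces $g_i \in W^{1,2}(\partial B_r, \mathcal{A}_{Q_i})$ with $\sum_i Q_i = Q$. Each irreducible $g_i$ admits a single-valued lift $\gamma_i \colon [0, 2\pi Q_i] \to \mathbb{R}^n$ on the $Q_i$-fold covering of the circle, hence a Fourier expansion $\gamma_i(\tau) = \sum_{k \in \mathbb{Z}} a_k^{(i)} e^{ik\tau/Q_i}$. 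I would then set $h_i$ to be the $Q$-valued function on $B_r$ whose lift on the $Q_i$-sheeted cover is the Poisson harmonic extension $u_i(s,\theta) = \sum_k a_k^{(i)} (s/r)^{|k|/Q_i} e^{ik\theta/Q_i}$. Explicit Parseval-type computations give
\begin{equation*}
\int_{B_r}|Dh_i|^2 = 2\pi \sum_k |k|\,|a_k^{(i)}|^2, \qquad r \int_{\partial B_r}|Dg_i|^2 = \frac{2\pi}{Q_i}\sum_k k^2 |a_k^{(i)}|^2,
\end{equation*}
so the elementary inequality $|k| \leq k^2$ for $k \in \mathbb{Z}$ yields $\int_{B_r}|Dh_i|^2 \leq Q_i\, r\int_{\partial B_r}|Dg_i|^2$. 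Summing over $i$ and using $Q_i \leq Q$ produces a global competitor $h$ with $\int_{B_r}|Dh|^2 \leq Q\, r\int_{\partial B_r}|Dg|^2$.

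The main technical obstacle is in the $m = 2$ case: one must establish and carefully apply the decomposition of Sobolev $Q$-valued maps on $S^1$ into irreducible pieces (a planar-specific tool with no direct higher-dimensional analogue), and verify that the harmonic extensions on the $Q_i$-sheeted covers patch together to a well-defined $Q$-valued function in $W^{1,2}(B_r, \mathcal{A}_Q)$ with the prescribed trace. In higher dimensions, the radial extension is geometrically transparent and handles all $Q$ uniformly; the only nonroutine ingredient there is the truncation near the origin needed to replace the weak inequality $C(m) \leq (m-2)^{-1}$ by the strict one.
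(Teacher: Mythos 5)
Your argument reproduces the proof of [\ref{DLSQ}, Sections 3.3.2--3.3.3], which is exactly what the thesis defers to without restating: the $m\geq 3$ cone extension (giving $(m-2)^{-1}$, then improved by truncating near the origin and interpolating), and the $m=2$ irreducible-decomposition/Fourier computation on the $Q_i$-fold covers with the elementary bound $|k|\leq k^2$, both match that reference, and your Parseval identities and the final summation over pieces using $Q_i\leq Q$ are correct. The only place you stay at sketch level is the strict inequality $C(m)<(m-2)^{-1}$ for $m\geq 3$: making the truncation rigorous requires a gluing/interpolation lemma for Sobolev $Q$-valued functions across a sphere together with a Poincar\'e inequality on $\partial B_r$ (to control the interpolation energy by $\int_{\partial B_r}|Dg|^2$ alone, rather than by the $L^2$ norm of $g$), both of which are supplied in [\ref{DLSQ}].
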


\begin{remark}
In some sense, Proposition \ref{p:1perholder} ensures a geometric control of the trace by means of the energy: roughly speaking it tells that every time we have a Dir-minimizing function $f$ and we select a ``slice" $\partial B_r$ where $f_{|_{\partial B_r}}$ is still Sobolev, then the ``differential inequality"\footnote{We call \eqref{e:diffineq} ``differential inequality" since the quantity on the right-hand side of \eqref{e:diffineq} resembles very much the derivative of the left-hand side with respect to $r$.} estimate \eqref{e:diffineq} holds.
\end{remark}

The second step of the proof is, instead, a standard application of the Morrey-Campanato estimate of Theorem \ref{t:qmorreycampanato}. Assuming Proposition \ref{p:1perholder} (see [\ref{DLSQ}, Sections 3.3.2, 3.3.3]), we prove Theorem \ref{t:qholderregularity}.

\begin{proof}
Set \begin{equation}\label{e:gammaperq}
\gamma(m):= \begin{cases}2 Q^{-1} & \text { for } m=2 \\ C(m)^{-1}-m+2 & \text { for } m>2,\end{cases}
\end{equation} where $C(m)$ is the constant in \eqref{e:diffineq}\footnote{Note that $\gamma(m)>0$ since $C(m)^{-1} > m-2$.} and define $h(r):=\int_{B_r}|D f|^2$. Note that $h$ is absolutely continuous so that, by Remark \ref{r:coordinates}, we can write
\begin{equation}\label{e:tangentialder}
h^{\prime}(r)=\int_{\partial B_r}|D f|^2 \geq \int_{\partial B_r}|\partial_\tau f|^2 \quad \text { for a.e. } r,
\end{equation}
where $|\partial_\tau f|^2=|D f|^2-\sum_{i=1}^Q\left|\partial_\nu f_i\right|^2$. Here $\partial_\tau$ and $\partial_\nu$ denote the tangential and the normal derivatives respectively. Hence, by \eqref{e:tangentialder} and Proposition \ref{p:1perholder} we get $$
 h(r) \leq \frac{1}{m-2+\gamma} \,r h^{\prime}(r) \iff \frac{m-2+\gamma}{r} \le \frac{h^{\prime}(r)}{h(r)} .
$$
Integrating this differential inequality between $r$ and 1, we easily obtain
$$
h(r) \leq r^{m-2+\gamma} h(1),
$$ that is \begin{equation}\label{e:morreyfinal} \int_{B_r}|D f|^2\le r^{m-2+\gamma} \int_{B_1}|D f|^2. \end{equation}

By \eqref{e:morreyfinal} and the Morrey-Campanato estimate in Theorem \ref{t:qmorreycampanato}, we conclude the Hölder continuity of $f$ with exponent $\alpha=\gamma/2$.
\end{proof}

\begin{remark}
Note that if $m=2$, by \eqref{e:gammaperq} we have $\alpha=1/Q$. An intuition is that the higher the number of $Q$-points, the ``less continuity" we are guaranteed. This intuition is confirmed since the higher the value of $Q$, the smaller the exponent $\gamma$ in \eqref{e:morreyfinal}, resulting in a worse estimate.
\end{remark}

\begin{remark}
Theorem \ref{t:qholderregularity} is about interior regularity and tells nothing about what happens at the boundary of the original domain $\Omega$. We mention that Hirsch [\ref{Hirsh16}] extends the H\"older regularity for Dirichlet minimizing $Q$-valued functions up to the boundary assuming $\mathcal{C}^1$-regularity of the bounded domain $\Omega$ and $\mathcal{C}^{0, \alpha}$-regularity of the boundary datum $f_{|_{\partial \Omega}}$ with $\alpha>\frac{1}{2}$.
\end{remark}

We now turn to the second theorem about the regularity properties of Dir-minimizing $Q$-valued functions, concerning the analysis of the singularities. In particular, we introduce the following natural definition of regular and singular points.

\begin{definition}
A $Q$-valued function $f$ is \emph{regular at a point} $x \in \Omega$ if there exist $r>0$ and $Q$ analytic functions $f_{i}: B_{r}(x) \rightarrow \mathbb{R}^{n}$ such that
$$
f(y)=\sum_{i} \llbracket f_{i}(y) \rrbracket \quad \text { for every } y \in B_{r}(x)
$$
and either $f_{i}(y) \neq f_{j}(y)$ for every $y \in B_{r}(x)$ or $f_{i} \equiv f_{j}.$ The set of regular points of $f$ is denoted reg$(f)$. The \textit{singular set} $\Sigma_{f}$ of $f$ is the complement of the set of regular points.
\end{definition}

\begin{figure}[h]
    \centering
    \includegraphics[width=0.55\textwidth]{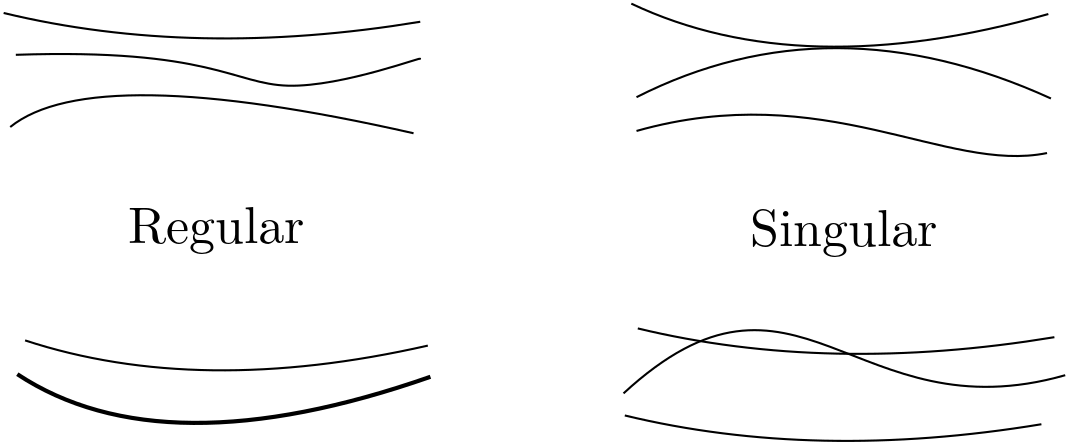}
    \caption{}
    \label{f:regsing}
\end{figure}

\begin{remark}
Note that reg$(f)$ is open (by definition) and the singular set $\Sigma_{f}$ is relatively closed in $\Omega$. Intersections of different selections or branch points are ruled out from the definition of regular point, see Figure \ref{f:regsing}. The rationale is to define a notion of regular point that conveys the same geometric meaning of an embedded submanifold.
\end{remark}

The following analogous result of partial regularity for area-minimizing currents holds in the setting of Dir-minimizing functions.

\begin{theorem}[Estimate of the singular set]\label{t:estimatesingset}
Let $f$ be a Dir-minimizing function. The singular set $\Sigma_{f}$ of $f$ has Hausdorff dimension at most $m-2$ and it is at most countable if $m=2$.
\end{theorem}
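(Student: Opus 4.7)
The plan is to follow the classical Almgren strategy, in the streamlined form adopted in [\ref{DLSQ}], structured around Almgren's frequency function and Federer-style dimension reduction, with the two-dimensional case treated separately by exploiting the sharp H\"older exponent $\alpha(2,Q)=1/Q$.

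First I would introduce Almgren's frequency function $I_{f,x}(r):=\frac{r\int_{B_r(x)}|Df|^2}{\int_{\partial B_r(x)}\mathcal{G}(f,Q\llbracket 0\rrbracket)^2\,d\mathcal{H}^{m-1}}$ at a point $x\in\Omega$ where $f(x)=Q\llbracket 0\rrbracket$ (one can always translate in the target to arrange this). Using first variations of $\mathrm{Dir}$ with respect to inner and outer variations (squeeze/expand test deformations, applied to the Dir-minimizing $f$), together with Proposition \ref{p:1perholder} and its derivation, one proves that $r\mapsto I_{f,x}(r)$ is monotone non-decreasing, and constant precisely when $f$ is positively $\alpha$-homogeneous at $x$ with $\alpha=I_{f,x}(0^+)$. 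Combining frequency monotonicity with the Dirichlet-energy compactness of Dir-minimizers (which in turn rests on the differential inequality of Proposition \ref{p:1perholder} and on the lower semicontinuity of $\mathrm{Dir}$), one shows that along every sequence $r_k\downarrow 0$ the rescaled maps $f_{x,r_k}(y):=f(x+r_k y)/r_k^{I_{f,x}(0^+)}$ admit a subsequential strong $W^{1,2}$-limit $g$, which is itself Dir-minimizing and $\alpha$-homogeneous. Such a $g$ is called a \emph{tangent map} to $f$ at $x$.

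Next I would stratify $\Sigma_f$ by the spine of the tangent maps: set $\mathcal{S}^k:=\{x\in\Sigma_f:\text{no tangent map to } f \text{ at } x \text{ is } (k+1)\text{-invariant}\}$, so that $\Sigma_f=\mathcal{S}^{m-1}\supset\mathcal{S}^{m-2}\supset\cdots\supset\mathcal{S}^0$. The key reductions are: a tangent map that is $m$-invariant must be a single classical affine function (no branching), so $\mathcal{S}^{m-1}=\Sigma_f$ and, crucially, no tangent map at a singular point can be $m$-invariant, hence $\mathcal{S}^{m-1}\setminus\mathcal{S}^{m-2}=\emptyset$; this uses that an $m$-invariant homogeneous Dir-minimizer on $\mathbb{R}^m$ is of the form $Q\llbracket L\rrbracket$ for $L$ affine and is therefore regular, contradicting $x\in\Sigma_f$. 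A standard Federer dimension-reduction argument, based on upper semicontinuity of the frequency $x\mapsto I_{f,x}(0^+)$, on the fact that tangent maps to tangent maps are again tangent maps, and on the homogeneity of the limit, then yields $\dim_{\mathcal H}\mathcal{S}^k\leq k$. The combination $\Sigma_f=\mathcal{S}^{m-2}$ and $\dim_{\mathcal H}\mathcal{S}^{m-2}\leq m-2$ gives the desired bound in dimensions $m\geq 3$. The most delicate step here is the non-existence of $m$-invariant singular tangent maps: it is handled inductively in $Q$, using the maximum principle and the decomposition theorem for Dir-minimizers alluded to after Proposition \ref{p:1perholder}, which allow one to split off components of the tangent map with distinct values and apply the inductive hypothesis.

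Finally, for $m=2$ (and separately $m=1$, which is essentially trivial since one-dimensional Dir-minimizers are piecewise affine) I would argue directly that $\Sigma_f$ is at most countable. By Theorem \ref{t:qholderregularity} with $\alpha(2,Q)=1/Q$, the function $f$ is continuous in $\Omega$, so the set $\{x:f(x)=Q\llbracket P(x)\rrbracket\}$ of points where all sheets collapse is relatively closed. Using frequency monotonicity in $m=2$ and the explicit classification of $1/Q$-homogeneous planar Dir-minimizers (whose sheets are of the form $z\mapsto\mathrm{Re}(c_i z^{1/Q})$ up to isometry, hence admit exactly one branch point at the origin), one shows that around each singular point there is a punctured disk on which $f$ admits a genuine decomposition into $W^{1,2}$ selections with no further branching. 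Hence singular points do not accumulate and $\Sigma_f$ is discrete in $\Omega$, therefore at most countable. The main obstacle throughout is establishing and exploiting the frequency-function monotonicity in the $Q$-valued setting (where one cannot freely subtract the value at the center) and, relatedly, controlling tangent maps well enough to run dimension reduction; both are obtained by carefully combining the metric first-order calculus of Definitions \ref{d:metricmodulus}--\ref{d:pointwisediff} with the variational inequality of Proposition \ref{p:1perholder}.
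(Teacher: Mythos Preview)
Your overall strategy---frequency monotonicity, homogeneous tangent maps, and Federer-type stratification---is sound and is indeed the route taken in [\ref{DLSQ}, Section 3.6.2] for general $m$. The paper organizes the proof differently: it makes an induction on $Q$ the main structure, reducing the full statement to the corresponding estimate for the \emph{maximal-multiplicity} set $\Sigma_{Q,f}=\{x:f(x)=Q\llbracket y\rrbracket\text{ for some }y\}$ (Lemma \ref{l:bufinallemma}); on $\Omega\setminus\Sigma_{Q,f}$ the sheets separate locally into two Dir-minimizing pieces of strictly smaller multiplicities and the inductive hypothesis applies.

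There is, however, a genuine gap in your proposal. The parenthetical claim that ``one can always translate in the target to arrange $f(x)=Q\llbracket 0\rrbracket$'' is false: a translation $f\mapsto\sum_i\llbracket f_i-v\rrbracket$ can center the barycenter $\eta(f(x))$ at $0$, but cannot collapse distinct sheets. At singular points where the values of $f$ are distinct (e.g.\ transversal crossings), $f(x)\neq Q\llbracket p\rrbracket$ for any $p$, and your frequency/blow-up setup does not apply as written. More seriously, without first subtracting the harmonic average $\eta\circ f$ (Lemma \ref{l:averagio}), the blow-up need not preserve the singularity: the paper's example \eqref{e:holoverybadguy}, $\Xi=\{(z-w^2)^2=w^{2021}\}$, has a genuine branch point at $0$ yet its inhomogeneous blow-up is the regular $2$-valued map $w\mapsto 2\llbracket w^2\rrbracket$. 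This is precisely why the paper first reduces to $\Sigma_{Q,f}$ via induction on $Q$ and then centers so that $\Sigma_{Q,f}=\{f=Q\llbracket 0\rrbracket\}$; only after this centering do the frequency and blow-up arguments produce tangents in which the singularity persists (and hence for which the stratum analysis is meaningful). Your stratification argument needs the same reduction, and it should be the organizing principle rather than a remark buried inside the ``non-existence of invariant tangent maps'' step.

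Your planar argument also needs correction. The frequency $I_{f,x}(0^+)$ at a singular point need not equal $1/Q$ (the H\"older exponent of Theorem \ref{t:qholderregularity} is a regularity threshold, not the value of the frequency), so invoking a ``classification of $1/Q$-homogeneous planar Dir-minimizers'' is not what is relevant. Moreover, even if the tangent map had $0$ as its only singularity, this alone does not yield that singularities of $f$ are isolated; passing that information back to $f$ requires an additional argument. The paper's route for $m=2$ is instead a contradiction: assuming an accumulation in $\Sigma_{Q,f}$, one blows up to a homogeneous $g$ with a second multiplicity-$Q$ point on $\partial B_1$, and then performs a \emph{second} (cylindrical) blow-up at that point (Lemma \ref{l:cylindricalbu}) to obtain a Dir-minimizer depending on one fewer variable. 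The resulting one-dimensional Dir-minimizer $\hat h$ is a superposition of linear maps; since $\hat h(0)=Q\llbracket 0\rrbracket$ forces all of them to coincide and $\eta\circ\hat h\equiv 0$ forces them to vanish, one contradicts $\mathrm{Dir}(h,B_1)=1$.
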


\begin{remark}
Note that the estimate in terms of Hausdorff dimension is the same as in the general case of Theorem \ref{t:regularityh}. The only difference is that Theorem \ref{t:estimatesingset} provides an estimate on the singular set \textit{in the domain} $\Omega$ of the Dir-minimizing function $f$. Nevertheless, in case the multiple-valued graph of $f$ is ``quite flat", it is reasonable to imagine the same estimate holds true for the singular set of the current.
\end{remark}

The analysis of the singularities in higher codimension area-minimizing currents depends deeply on a new monotonicity formula discovered by Almgren, that recently witnessed many applications in very different fields, see for instance [\ref{Signorini}, \ref{Signorini2}]. In almost all geometric problems, the starting point is usually a monotonicity estimate, which is an \textit{a priori} estimate on a monotone quantity related to the problem of interest, in the same spirit of \eqref{e:monotonicity}. A new monotonic quantity has been discovered\footnote{In the preface of [\ref{Big}], Jean Taylor recalls that Almgren started conceiving the idea of using this particular function in a particular jogging track in Oxford.} by Almgren for higher codimension area-minimizing currents and it is now named \textit{Almgren's frequency function}.

\begin{definition}\label{d:almgrenfrequency}
Let $f$ be a Dir-minimizing function, $x \in \Omega$ and $0<r<\operatorname{dist}(x, \partial \Omega)$. We define the functions \begin{equation}\label{e:dhifrequency}
D_{x, f}(r):=\int_{B_{r}(x)}|D f|^{2}, \quad H_{x, f}(r):=\int_{\partial B_{r}}|f|^{2} \quad \text { and } \quad I_{x, f}(r):=\frac{r D_{x, f}(r)}{H_{x, f}(r)}.
\end{equation}
$I_{x, f}$ is called \textit{Almgren's frequency function.}
\end{definition}

\begin{remark}
Note that, by Theorem \ref{t:qholderregularity}, $|f|^2$ is a continuous function. Hence, $H_{x, f}(r)$ is a well-defined quantity for every $r$. Moreover, if $H_{x, f}(r)=0$, then by minimality $f_{|_{B_r(x)}} \equiv 0$. So, apart for this case, $I_{x, f}(r)$ is always well-defined.
\end{remark}

\begin{remark}
When $x$ and $f$ are clear from the context, it is customary to use the shorthand notations $D(r), H(r)$ and $I(r)$. The reason why $I$ is called \textit{frequency} function can be explained by looking at its value on the planar harmonic functions $f_k(r, \theta)=r^k \cos (k \theta)$: one can easily compute that $I_{0, f_k}(r) \equiv k$, which is the corresponding frequency of the angular parameter. Notations $D_{x, f}$ and $H_{x, f}$ in \eqref{e:dhifrequency} refers to ``Dirichlet" energy and ``Height" of the function $f$. In a sense, Almgren's frequency function aims at capturing ``the degree of disorder" of a harmonic function, since it measures its energy in terms of its norm on the boundary.
\end{remark}

The most important estimate in the analysis of singular points is the following monotonicity theorem.

\begin{theorem}[Monotonicity]\label{t:monotonefreq}{\normalfont [\ref{DLSQ}, Section 3.4.1]}
Let $f$ be Dir-minimizing and $x \in \Omega.$ Either there exists $\rho$ such that $f_{|_{B_{\rho}(x)}} \equiv 0$ or $I_{x, f}(r)$ is an absolutely continuous nondecreasing positive function on $\left(0, \operatorname{dist}(x,\partial \Omega)\right)$. In particular, in the latter case the following limit exists
$$
I_{x, f}(0):=\lim _{r \rightarrow 0} I_{x, f}(r) >0.
$$
\end{theorem}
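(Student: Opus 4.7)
The plan is to combine two classical variational identities (the outer variation and the inner variation) for the Dir-minimizing $Q$-valued function $f$, which will give formulas for $H'(r)$ and $D'(r)$, and then apply Cauchy--Schwarz on $\partial B_r(x)$ to deduce $I'(r)\ge 0$. Without loss of generality assume $x=0$, and by the strong unique continuation property of Dir-minimizers (or, more elementarily, by the fact that if $H(r_0)=0$ then $f\equiv 0$ on $\partial B_{r_0}$ and hence on $B_{r_0}$ by minimality and Hölder continuity) we may suppose $H(r)>0$ for every $r$ in the interval of interest; otherwise one shows by monotonicity-type arguments that the vanishing propagates outward and we are in the first alternative.

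The first step is to differentiate $H(r)$ and to obtain the outer variation identity. By the coarea formula and the Lusin-type approximation of Sobolev $Q$-valued functions by Lipschitz selections, for a.e. $r$ the restriction $f|_{\partial B_r}\in W^{1,2}(\partial B_r,\mathcal{A}_Q)$ and the normal derivative $\partial_\nu f_i$ is defined at $\mathcal{H}^{m-1}$-a.e. point on $\partial B_r$. Writing $H(r)=\int_{\partial B_r}\sum_i|f_i|^2$ and using spherical coordinates gives
\begin{equation*}
H'(r)=\tfrac{m-1}{r}H(r)+2\int_{\partial B_r}\sum_i\langle f_i,\partial_\nu f_i\rangle\, d\mathcal{H}^{m-1}.
\end{equation*}
Testing minimality against the outer variation $f_\varepsilon(y):=\sum_i\llbracket(1+\varepsilon\phi(y))f_i(y)\rrbracket$, with $\phi$ a radial cutoff, and using Proposition \ref{p:samederivative} to split $|Df|^2=\sum_i|Df_i|^2$, yields the identity $\int \phi|Df|^2+\sum_i\langle f_i,Df_i\rangle\cdot D\phi=0$. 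Letting $\phi$ approximate $\mathbf{1}_{B_r}$ gives the outer variation formula
\begin{equation*}
D(r)=\int_{\partial B_r}\sum_i\langle f_i,\partial_\nu f_i\rangle\, d\mathcal{H}^{m-1},
\end{equation*}
so that $H'(r)=\tfrac{m-1}{r}H(r)+2D(r)$.

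The second step is the inner variation identity, obtained by testing minimality against reparametrizations $f\circ\Phi_\varepsilon$ where $\Phi_\varepsilon(y)=y+\varepsilon X(y)$ and $X$ is a compactly supported vector field. This is purely Euclidean on the domain and therefore insensitive to the multi-valued structure; using Proposition \ref{p:samederivative} one obtains
\begin{equation*}
\int\bigl(|Df|^2\,\mathrm{div}\,X-2\sum_{i,j,k}\partial_jf_i^k\,\partial_kf_i^\ell\,\partial_jX_\ell\bigr)=0.
\end{equation*}
Choosing $X(y)=\varphi(|y|)y$ with $\varphi$ approximating $\mathbf{1}_{[0,r]}$ gives, for a.e.\ $r$,
\begin{equation*}
(m-2)D(r)+2r\int_{\partial B_r}\sum_i|\partial_\nu f_i|^2=rD'(r)+\text{(radial term that cancels)},
\end{equation*}
which after simplification reads $D'(r)=\tfrac{m-2}{r}D(r)+2\int_{\partial B_r}\sum_i|\partial_\nu f_i|^2$. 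In particular $D$ is absolutely continuous, and combined with $H\in W^{1,1}_{\mathrm{loc}}$ and $H>0$, $I=rD/H$ is absolutely continuous on the relevant interval.

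For the final step, compute
\begin{equation*}
\frac{I'(r)}{I(r)}=\frac{1}{r}+\frac{D'(r)}{D(r)}-\frac{H'(r)}{H(r)}=\frac{2\int_{\partial B_r}\sum_i|\partial_\nu f_i|^2}{D(r)}-\frac{2D(r)}{H(r)},
\end{equation*}
where the dimensional constants $\tfrac{m-1}{r}$ cancel exactly. Using the outer variation identity $D(r)=\int_{\partial B_r}\sum_i\langle f_i,\partial_\nu f_i\rangle$ and Cauchy--Schwarz on $\partial B_r$,
\begin{equation*}
D(r)^2\le\Bigl(\int_{\partial B_r}\sum_i|\partial_\nu f_i|^2\Bigr)\Bigl(\int_{\partial B_r}\sum_i|f_i|^2\Bigr)=H(r)\int_{\partial B_r}\sum_i|\partial_\nu f_i|^2,
\end{equation*}
so $I'(r)\ge 0$. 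Positivity of $I(r)$ follows from $D(r)>0$ (otherwise $f$ is locally constant and, by connectedness of the level sets of minimizers, trivial). The main technical obstacle is the rigorous justification of the outer and inner variation identities in the $Q$-valued setting: one must use the Lipschitz approximation and the fact that at points of approximate differentiability of $f$ the classical chain rule holds for $Df_i$, together with the no-crossing condition from Definition \ref{d:pointwisediff}; the detailed verification is carried out in [\ref{DLSQ}, Section 3.4].
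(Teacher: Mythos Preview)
Your outline is exactly the approach the paper indicates (and that [\ref{DLSQ}, Section~3.4.1] carries out): derive the outer-variation identity $D(r)=\int_{\partial B_r}\sum_i\langle f_i,\partial_\nu f_i\rangle$, the inner-variation identity $D'(r)=\tfrac{m-2}{r}D(r)+2\int_{\partial B_r}\sum_i|\partial_\nu f_i|^2$, combine them in the logarithmic derivative of $I$, and close with Cauchy--Schwarz. The paper gives no proof beyond this sketch, so on the monotonicity part you are aligned with it. One cosmetic remark: what makes the dimensional terms drop is $\tfrac{1}{r}+\tfrac{m-2}{r}-\tfrac{m-1}{r}=0$, not a direct cancellation of two copies of $\tfrac{m-1}{r}$.

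There is, however, a genuine gap in your treatment of the last clause, $I_{x,f}(0)>0$. Your argument ``$D(r)>0$, otherwise $f$ is locally constant and trivial'' establishes at best $I(r)>0$ for each $r>0$; it says nothing about the limit. Indeed, if $f$ were equal to a nonzero constant $Q$-point on a small ball, one would have $H(r)>0$, $D(r)=0$ and hence $I(r)=0$ there, which is not covered by the first alternative ``$f\equiv 0$'' as stated. In [\ref{DLSQ}] the strict positivity $I(0)\geq\alpha>0$ is proved as a separate corollary (their Corollary~3.18): one integrates the identity $H'(r)/H(r)=\tfrac{m-1}{r}+\tfrac{2I(r)}{r}$ and compares the resulting growth of $H$ with the decay provided by the H\"older regularity estimate \eqref{e:morreyfinal}. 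You should either invoke Theorem~\ref{t:qholderregularity} at this point, or restrict the positivity claim to the situation $f(x)=Q\llbracket 0\rrbracket$ (which is the only case used later, in Proposition~\ref{p:buhomogenbu}).
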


\begin{remark}
Broadly speaking, Theorem \ref{t:monotonefreq} tells that the average (up to constants) of the energy of a Dir-minimizing function decreases on small scales (when $r \rightarrow 0$) with respect to its zero-degree norm.
The proof is not as enlightening as its consequences, and it is rather elementary. The main idea is based on some simple derivation arguments coupled with the tools of \textit{inner} and \textit{outer variation} of a Dir-minimizer\footnote{\textit{Inner} and \textit{outer variations} are two natural notions of variation that can be used to perturb Dir-minimizing $Q$-valued functions, see [\ref{DLSQ}, Section 3.1].}.
\end{remark}

It is worth mentioning the following very important corollary, see [\ref{DLSQ}, Section 3.4.2] for a straightforward proof.

\begin{corollary}
Let $f$ be Dir-minimizing in $B_{\varrho}$. Then, $I_{0, f}(r) \equiv \alpha$ if and only if $f$ is $\alpha$-homogeneous, that is
\begin{equation}\label{e:homogeneityfreq}
f(y)=|y|^\alpha f\left(\frac{y \varrho}{|y|}\right) .
\end{equation}
\end{corollary}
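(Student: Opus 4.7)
The plan is to extract an explicit formula for $I'(r)/I(r)$ from the very variational identities underpinning the monotonicity Theorem~\ref{t:monotonefreq}, and then to recognize both directions as two sides of the Cauchy--Schwarz equality case. The three ingredients I would use are (i) the outer variation identity
\[
D(r) \;=\; \int_{\partial B_r} \sum_{i=1}^Q f_i \cdot \partial_\nu f_i,
\]
(ii) the inner variation identity
\[
r D'(r) \;=\; (m-2)\, D(r) + 2r \int_{\partial B_r} \sum_{i=1}^Q |\partial_\nu f_i|^2,
\]
and (iii) the elementary polar computation $H'(r) = \tfrac{m-1}{r} H(r) + 2 D(r)$. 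Combining these and logarithmically differentiating $I(r)=rD(r)/H(r)$ I get
\[
\frac{I'(r)}{I(r)} \;=\; \frac{2}{D(r)} \int_{\partial B_r} \sum_i |\partial_\nu f_i|^2 \;-\; \frac{2 D(r)}{H(r)},
\]
which is nonnegative by Cauchy--Schwarz applied to (i), with equality iff the Cauchy--Schwarz inequality is saturated on $\partial B_r$.

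For the ``only if'' direction, $I(r) \equiv \alpha$ forces $I'(r) \equiv 0$, hence equality in Cauchy--Schwarz for a.e.\ $r$. Choosing a measurable selection $f = \sum_i \llbracket f_i \rrbracket$ via Proposition~\ref{p:measurableselections}, this saturation means $\partial_\nu f_i(x) = \lambda(r)\, f_i(x)$ for $\mathcal{H}^{m-1}$-a.e.\ $x \in \partial B_r$, with a \emph{common} scalar factor $\lambda(r)$ on every branch (consistent with the no-crossing condition in Definition~\ref{d:pointwisediff}). Substituting into (i) gives $D(r) = \lambda(r) H(r)$, and comparing with $rD(r)/H(r) = \alpha$ pins down $\lambda(r) = \alpha/r$. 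I would then read this as the ODE $\partial_r (\log f_i)(t\omega) = \alpha/t$ along $\mathcal{H}^{m-1}$-a.e.\ ray; integrating and using the interior H\"older continuity from Theorem~\ref{t:qholderregularity} to pass from the a.e.\ identity to a pointwise one recovers \eqref{e:homogeneityfreq}.

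For the ``if'' direction, direct substitution of the homogeneity ansatz into a measurable selection produces $\partial_\nu f_i = (\alpha/|y|)\, f_i$ and a clean polar decomposition of $|Df|^2$ into a radial piece $\alpha^2 |y|^{2\alpha-2}|g|^2$ and a tangential piece $|y|^{2\alpha-2}|\nabla_\tau g|^2$, where $g$ is the trace on $\partial B_1$. A direct polar integration then gives
\[
H(r) = r^{m-1+2\alpha} \int_{\partial B_1}\!|g|^2, \qquad D(r) = \frac{r^{m-2+2\alpha}}{m-2+2\alpha}\!\left(\alpha^2\!\int_{\partial B_1}\!|g|^2 + \int_{\partial B_1}\!|\nabla g|^2\right)\!.
\]
The Euler--Lagrange system forced by Dir-minimality on each branch reduces, via homogeneity, to $g_i$ being a spherical-Laplace eigenfunction with eigenvalue $-\alpha(\alpha+m-2)$, which by integration by parts on $\partial B_1$ yields $\int_{\partial B_1}|\nabla g|^2 = \alpha(\alpha+m-2) \int_{\partial B_1}|g|^2$. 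Plugging into $rD(r)/H(r)$ returns exactly $\alpha$.

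The hard part, and the reason the proof is not purely formal, is the $Q$-valued character: both the pointwise proportionality produced by the Cauchy--Schwarz equality case and the polar decomposition in the converse direction have to be carried out on measurable selections of $f$, without being able to globally trivialize the branching. I would handle this by coupling the selection result of Proposition~\ref{p:measurableselections} with the continuity from Theorem~\ref{t:qholderregularity}, so that the ODE $\lambda(r)=\alpha/r$ integrated branch-by-branch along rays actually glues back into the metric-valued map $f$, with compatibility with permutations of the branches guaranteed by the no-crossing condition.
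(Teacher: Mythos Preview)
The paper does not supply its own proof of this corollary; it only refers the reader to [\ref{DLSQ}, Section 3.4.2]. Your ``only if'' direction is essentially the standard argument found there: logarithmic differentiation of $I$ together with the two variational identities reduces $I'\equiv 0$ to the Cauchy--Schwarz equality case, yielding $\partial_\nu f_i = (\alpha/r)f_i$ a.e., which integrates to $\alpha$-homogeneity. The subtleties you flag about consistency of the selections along rays are real but are handled, as you indicate, by the continuity from Theorem~\ref{t:qholderregularity}.

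Your ``if'' direction, however, has a genuine gap. The claim that Dir-minimality forces each $g_i$ to be a spherical-Laplace eigenfunction is not available in the $Q$-valued setting: measurable selections of a Dir-minimizing $Q$-valued map need not individually satisfy any PDE (think of the branching example $w\mapsto\llbracket w^{3/2}\rrbracket+\llbracket -w^{3/2}\rrbracket$, whose angular trace is not even single-valued on $S^1$), so there is no justification for the identity $\int_{\partial B_1}|\nabla g|^2 = \alpha(\alpha+m-2)\int_{\partial B_1}|g|^2$ via eigenfunction theory and integration by parts on the sphere. The fix is immediate and uses only what you already wrote down: homogeneity gives $\partial_\nu f_i=(\alpha/|y|)f_i$, and plugging this directly into your identity (i) yields $D(r)=(\alpha/r)H(r)$, hence $I(r)\equiv\alpha$. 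The polar-integration and eigenfunction detour is both unjustified and unnecessary.
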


\begin{remark}
In \eqref{e:homogeneityfreq} the following convention has been adopted: if $\varphi$ is a scalar function and $f=\sum_i \llbracket f_i \rrbracket$ a $Q$-valued function, by $\varphi f$ is meant the function $\sum_i \llbracket \varphi f_i \rrbracket$.
\end{remark}

\subsubsection{Blow-up analysis}

The main strategy to analyze singular points is to perfororm a \textit{blow-up analysis}, by expanding homothetically a ball around the singular point, see Figure \ref{f:blowup}. The aim is to exploit possible symmetries of the limiting object, reducing the complexity of the problem.

\begin{figure}[h]
    \centering
    \includegraphics[width=0.45\textwidth]{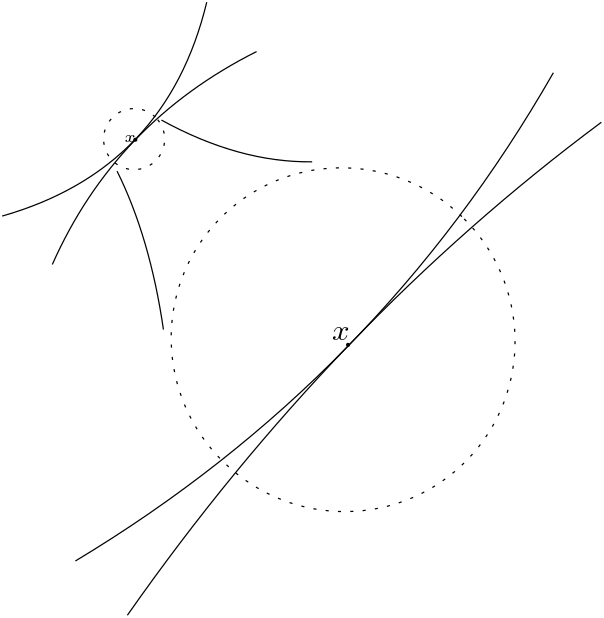}
    \caption{}
    \label{f:blowup}
\end{figure}

In order to perform an effective blow-up analysis, one needs to guarantee that the singular point is preserved in the blow-up limit, which is not granted in general. One of the possible obstructions to persistence in the limit of a singularity is that the first singular expansion of the object (either of the current\footnote{In the more general case of area-minimizing currents an ``almost monotonicity" formula holds for the frequency function, still assuring its boundedness and hence nontrivial blow-ups.}  or the Dir-minimizer) around its regular part may occur with an infinite order of contact, resulting in a trivial blow-up. The main strategy is in analogy with unique continuation theory, where a harmonic function is proved to be analytic by detecting the lowest frequency in its Taylor expansion. Almgren's frequency function is indeed an integral a priori estimate to detect the lowest frequency of an object and its main use is to ensure the persistence of the singularity in the blow-up limit.

The presence of branch points requires a different blow-up procedure than the one that is used in the codimension one case, since an ``homogeneous" blow-up around a branch point would end up to be a flat plane, losing all the information about the singularity. One of Almgren's main ideas was to rescale differently the ``horizontal directions" (namely those of flat tangent cone at the point) and the ``vertical directions" (which are the orthogonal complements of the horizontal ones). This different type of blow-up is sometimes called ``inhomogeneous" blow-up (or ``anisotropic" blow-up). 

Since we will look at point of maximal multiplicity, we assume that $f$ is a nontrivial\footnote{That is, $\int_{B_\rho(y)}|Df|^2>0$ for every $\varrho$.} Dir-minimizing $Q$-valued function such that $f(y)=Q \llbracket 0 \rrbracket$. The main idea of inhomogeneous blow-ups is to rescale ``according to the energy" in the following way:
\begin{equation}\label{e:iblowup}
f_{y, \varrho}(x):=\frac{\varrho^{\frac{m-2}{2}} f(\varrho x+y)}{\sqrt{\int_{B_{\varrho}(y)}|Df|^2}} .
\end{equation}

\begin{remark}
From \eqref{e:iblowup} it is easy to see that $$\int_{B_1}|Df_{y, \varrho}|^2=1,$$ showing that the energy is preserved in the limit. To simplify the notation, we will not display the subscript $y$ in $f_{y, \rho}$ when $y$ is the origin.
\end{remark} To better understand this point, let us consider again the current $\llbracket \Gamma \rrbracket$ of example \eqref{e:holobadguy}. Recall that its support can be seen as the graph of a function which associates to any $w \in \mathbb{C}$ two points in the $z$-plane:
$$
w \mapsto\left\{z_1(w), z_2(w)\right\} \quad \text { with } z_i(w)^2=w^3 \text { for } i=1,2 .
$$
The right rescaling according to Almgren's inhomogeneous blow-up is the one producing in the limit a multiple-valued harmonic function preserving the Dirichlet energy. In the case of $\Gamma$, the functions $z_1$ and $z_2$ are already harmonic functions (at least away from the origin), since they are two determinations of the square root of $w^3$. Hence, the correct blow-up is the one which ``keeps $\Gamma$ fixed". Hence, for every $\lambda>0$, we consider $f_\lambda: \mathbb{C}^2 \rightarrow \mathbb{C}^2$ given by
$$
f_\lambda(w, z)=(\lambda^2 w, \lambda^3 z),
$$
and note that $(f_\lambda)_{*} \llbracket \Gamma \rrbracket=\llbracket \Gamma \rrbracket$ for every $\lambda>0$. 

An important result is the convergence of blow-ups $f_{\rho}$ of Dir-minimizing functions to some limiting Dir-minimizing functions with some extra symmetries, which are called \textit{tangent functions}.  

\begin{proposition}\label{p:buhomogenbu}
Let $f \in W^{1,2}(B_1, \mathcal{A}_Q)$ be Dir-minimizing, with $f(0)=Q \llbracket 0 \rrbracket$ and $\int_{B_{\varrho}(y)}|Df|^2>0$ for every $\varrho \leq 1$. Then every sequence $f_{\varrho_k}$ with $\rho_k \rightarrow 0$ has a subsequence converging locally uniformly to a function $g: \mathbb{R}^m \rightarrow \mathcal{A}_Q(\mathbb{R}^n)$ satisfying:
\begin{itemize}
\item[$i)$] $\int_{B_1}|Dg|^2=1$ and $g_{|_{\Omega}}$ is Dir-minimizing for any bounded $\Omega$,
\item[$ii)$] $g(x)=|x|^\alpha g\left(\frac{x}{|x|}\right)$, where $\alpha=I_{0, f}(0)>0$ is the frequency of $f$ at 0 (that is, $g$ is $\alpha$-homogeneous).
\end{itemize}
\end{proposition}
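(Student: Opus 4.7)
The plan is to split the argument into four steps: compactness of the blow-up sequence, identification of the energy in the limit, the Dir-minimizing property, and finally the homogeneity.

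\textbf{Step 1 (Equi-Hölder bounds and compactness).} First, I would use the invariance of the Dirichlet energy under the rescaling \eqref{e:iblowup} to get, for every $R>0$ and every $k$ with $\varrho_k R<1$,
\[
\int_{B_R}|Df_{\varrho_k}|^2 \;=\; \frac{\int_{B_{\varrho_k R}}|Df|^2}{\int_{B_{\varrho_k}}|Df|^2} \;=\; \frac{D_{0,f}(\varrho_k R)}{D_{0,f}(\varrho_k)}.
\]
By Almgren's monotonicity (Theorem~\ref{t:monotonefreq}) the frequency $I_{0,f}$ is nondecreasing with limit $\alpha>0$ at $0$, and a standard computation (logarithmic differentiation of $H_{0,f}$ combined with monotonicity of $I_{0,f}$) shows that $r \mapsto D_{0,f}(r)\,r^{2-m-2\alpha}$ is comparable to a bounded positive function near $0$. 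Hence the ratio above is bounded by $C(R)$ uniformly in $k$, so $\{f_{\varrho_k}\}$ is uniformly bounded in $W^{1,2}(B_R,\mathcal{A}_Q)$ for every $R$. Combining this local energy bound with the Morrey-Campanato estimate of Theorem~\ref{t:qmorreycampanato} applied on balls where the energy decays like $r^{m-2+2\alpha}$ (again by frequency monotonicity applied to $f_{\varrho_k}$), one gets a uniform $\mathcal{C}^{0,\alpha}$ bound of the sequence on every compact subset of $\mathbb{R}^m$. A metric-valued Arzelà-Ascoli argument then produces a subsequence, not relabeled, converging locally uniformly to some $g:\mathbb{R}^m\to\mathcal{A}_Q(\mathbb{R}^n)$, and this limit belongs to $W^{1,2}_{\text{loc}}(\mathbb{R}^m,\mathcal{A}_Q)$ with locally bounded energy.

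\textbf{Step 2 (Dir-minimizing property and normalization).} This is the step I expect to be technically the most delicate. To show that $g$ is Dir-minimizing in every bounded open $\Omega$, I would argue by contradiction: assume there exists $h\in W^{1,2}(\Omega,\mathcal{A}_Q)$ with $h_{|_{\partial\Omega}}=g_{|_{\partial\Omega}}$ and $\mathrm{Dir}(h,\Omega)\le\mathrm{Dir}(g,\Omega)-\eta$ for some $\eta>0$. Using Fubini together with the locally uniform convergence $f_{\varrho_k}\to g$, one can find an annulus $\{r_1<|x|<r_2\}\subset\Omega$ where $f_{\varrho_k}\to g$ strongly in $L^2$ on the two boundary spheres, and then a $Q$-valued interpolation lemma (this is the real technical core: one interpolates between $h$ and $f_{\varrho_k}$ on a thin annulus inside $\Omega$, in the metric space $\mathcal{A}_Q$, at a controlled energy cost; such an interpolation is standard in the intrinsic theory [\ref{DLSQ}]) produces a competitor $\tilde h_k$ for $f_{\varrho_k}$ with $\tilde h_k=f_{\varrho_k}$ near $\partial\Omega$ and energy bounded by $\mathrm{Dir}(h,\Omega)+o(1)$ as $k\to\infty$. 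Rescaling $\tilde h_k$ back via the inverse of \eqref{e:iblowup} yields a competitor for $f$ on $B_{\varrho_k\Omega}$ which, for $k$ large, has strictly smaller energy than $f$ itself, contradicting the minimality of $f$. Once this is established, a byproduct of the strong $W^{1,2}_{\text{loc}}$ convergence that follows from comparing energies with the limit (using the Dir-minimizing property on $B_1$ and lower semicontinuity) gives $\int_{B_1}|Dg|^2=\lim_k\int_{B_1}|Df_{\varrho_k}|^2=1$.

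\textbf{Step 3 (Homogeneity via frequency).} The key observation is that the frequency is scale invariant, so
\[
I_{0,f_{\varrho_k}}(r)\;=\;I_{0,f}(\varrho_k r)\quad\text{for every }r>0.
\]
By Theorem~\ref{t:monotonefreq}, the right-hand side converges to $\alpha=I_{0,f}(0)$ as $k\to\infty$ for every fixed $r>0$. On the other hand, the strong convergence in $W^{1,2}_{\text{loc}}$ obtained in Step~2, together with the continuity of the trace on each sphere $\partial B_r$, yields $D_{0,f_{\varrho_k}}(r)\to D_{0,g}(r)$ and $H_{0,f_{\varrho_k}}(r)\to H_{0,g}(r)$, so $I_{0,f_{\varrho_k}}(r)\to I_{0,g}(r)$. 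Combining the two limits gives $I_{0,g}(r)\equiv\alpha$ for all $r>0$. By the corollary after Theorem~\ref{t:monotonefreq} (constant frequency characterizes $\alpha$-homogeneous Dir-minimizers), this forces the radial expression $g(x)=|x|^{\alpha}g(x/|x|)$, completing the proof. The positivity $\alpha>0$ follows from the nontriviality hypothesis $D_{0,f}(\varrho)>0$ together with the fact that $\alpha=0$ would force $g$ to be constant, contradicting the normalization $\int_{B_1}|Dg|^2=1$.
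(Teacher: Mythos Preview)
Your proposal is correct and follows essentially the same route as the paper (which defers to [\ref{DLSQ}, Propositions 3.19--3.20]): energy bounds via frequency monotonicity, equi-H\"older estimates and Arzel\`a--Ascoli for compactness, an interpolation/competitor argument for the Dir-minimizing property with strong energy convergence, and finally the scale-invariance $I_{0,f_{\varrho_k}}(r)=I_{0,f}(\varrho_k r)\to\alpha$ combined with the constant-frequency characterization for homogeneity. The only organizational difference is that the paper isolates your Step~2 as a separate compactness lemma (Lemma~\ref{l:bucompactness}), whereas you sketch the interpolation directly.
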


The proof of Proposition \ref{p:buhomogenbu} relies on the following preliminary compactness lemma (we refer to [\ref{DLSQ}, Propositions 3.19 and 3.20] for the proofs of Proposition \ref{p:buhomogenbu} and Lemma \ref{l:bucompactness} respectively).

\begin{lemma}\label{l:bucompactness}
Let $f_k \in W^{1,2}\left(B_1, \mathcal{A}_Q\right)$ be Dir-minimizing $Q$-valued functions such that $\sup _k \int_{B_1}|Df_k|^2<\infty$ and $f_k \rightarrow f$ uniformly. Then, for every $r<1,$ $f_{|_{B_r}}$ is Dir-minimizing and $$\lim_{k\rightarrow \infty}\int_{B_r}|Df_k|^2 = \int_{B_r}|Df|^2.$$
\end{lemma}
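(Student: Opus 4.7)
The plan is to combine weak lower semicontinuity of the Dirichlet energy with an annular gluing construction that turns uniform convergence into comparability of energies. First, the uniform energy bound $\sup_k\int_{B_1}|Df_k|^2<\infty$ together with weak sequential compactness of $W^{1,2}(B_1,\mathcal{A}_Q)$ yields a subsequence converging weakly to some $g\in W^{1,2}(B_1,\mathcal{A}_Q)$. Since the uniform limit $f$ is clearly the $L^2$-limit as well, we have $g=f$, hence $f\in W^{1,2}(B_1,\mathcal{A}_Q)$ and for every $r<1$
\[
\int_{B_r}|Df|^2\leq \liminf_{k\to\infty}\int_{B_r}|Df_k|^2
\]
by $L^2$-weak lower semicontinuity of the Dirichlet energy (invoking the metric definition of $|Df|^2$ through suprema of $|\partial_j\mathcal{G}(f,T_i)|$).

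Fix $r<1$ and a competitor $h\in W^{1,2}(B_r,\mathcal{A}_Q)$ with $h_{|_{\partial B_r}}=f_{|_{\partial B_r}}$. The core of the argument is to produce, for each small $\delta>0$ and each large $k$, a competitor $w_k$ for $f_k$ on a slightly larger ball $B_{r+\delta}$ which coincides with $f_k$ on $\partial B_{r+\delta}$, equals $h$ on $B_r$, and interpolates between the boundary data $h_{|_{\partial B_r}}=f_{|_{\partial B_r}}$ and $f_{k|_{\partial B_{r+\delta}}}$ on the annulus $A_\delta:=B_{r+\delta}\setminus B_r$. Using Fubini on $\int_{B_1\setminus B_r}|Df_k|^2$, $\int_{B_1\setminus B_r}|Df|^2$, and $\int_{B_1\setminus B_r}\mathcal{G}(f_k,f)^2$, I select a radius $\rho\in(r,r+\delta)$ such that $f_{k|_{\partial B_\rho}}$ and $f_{|_{\partial B_\rho}}$ lie in $W^{1,2}(\partial B_\rho,\mathcal{A}_Q)$ with energies controlled by the integral mean, and such that the $L^2$-distance $\int_{\partial B_\rho}\mathcal{G}(f_k,f)^2$ stays comparable to its average.

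The interpolation on the thin annulus is the main technical step, since $\mathcal{A}_Q$ is not a vector space. The plan here is to invoke the standard annular interpolation estimate for $Q$-valued Sobolev functions from [\ref{DLSQ}, Lemma 2.15] (or equivalently to realize it through Almgren's bi-Lipschitz embedding $\xi$ together with the Lipschitz retract $\rho$, and a linear convex combination in the ambient $\mathbb{R}^{N(Q,n)}$): on an annulus of width $\sim\delta$ one can build $w_k$ with boundary traces equal to $h_{|_{\partial B_r}}$ and $f_{k|_{\partial B_\rho}}$ respectively, satisfying
\[
\int_{A_\delta}|Dw_k|^2 \;\leq\; C\,\delta\!\int_{\partial B_\rho}\!\bigl(|Df_k|^2+|Df|^2\bigr)\;+\;\frac{C}{\delta}\!\int_{\partial B_\rho}\!\mathcal{G}(f_k,f)^2,
\]
for a dimensional constant $C$. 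This is the key estimate driving the whole proof and is where the non-linear structure of $\mathcal{A}_Q$ creates the main difficulty; I expect this interpolation lemma to be the hard technical input and the only place where the specific geometry of $Q$-points enters.

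To close, the minimality of $f_k$ gives $\mathrm{Dir}(f_k,B_{r+\delta})\leq\mathrm{Dir}(w_k,B_{r+\delta})$, which rewrites as
\[
\int_{B_r}|Df_k|^2\leq\int_{B_r}|Dh|^2+\int_{A_\delta}|Dw_k|^2-\int_{A_\delta}|Df_k|^2\leq\int_{B_r}|Dh|^2+\int_{A_\delta}|Dw_k|^2.
\]
The uniform convergence $f_k\to f$ forces $\int_{\partial B_\rho}\mathcal{G}(f_k,f)^2\to 0$, so for fixed $\delta>0$ the second summand tends (as $k\to\infty$) to something bounded by $C\delta\cdot M$, where $M$ depends only on the (uniformly bounded) total energies. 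Letting first $k\to\infty$ and then $\delta\to 0$ yields $\limsup_k\int_{B_r}|Df_k|^2\leq\int_{B_r}|Dh|^2$. Taking $h=f$ combined with the lower semicontinuity bound of the first paragraph gives $\lim_k\int_{B_r}|Df_k|^2=\int_{B_r}|Df|^2$, while choosing $h$ arbitrary gives $\int_{B_r}|Df|^2\leq\int_{B_r}|Dh|^2$, proving Dir-minimality of $f_{|_{B_r}}$.
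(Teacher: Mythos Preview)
The paper does not give its own proof of this lemma; it simply refers to [\ref{DLSQ}, Proposition~3.20]. Your strategy is precisely the one used there: lower semicontinuity of the Dirichlet energy for one inequality, and an annular interpolation (the $Q$-valued interpolation lemma [\ref{DLSQ}, Lemma~2.15]) to convert a competitor $h$ for $f$ into competitors for $f_k$, with uniform convergence killing the $L^2$-distance term.

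There is, however, a small but genuine gap in your bookkeeping. You select $\rho\in(r,r+\delta)$ by Fubini, so the slice energies are only controlled by the integral mean, i.e.\ $\int_{\partial B_\rho}|Df_k|^2\leq \tfrac{C}{\delta}\int_{A_\delta}|Df_k|^2$. Plugging this into the interpolation estimate, the term $C\delta\int_{\partial B_\rho}(|Df_k|^2+|Df|^2)$ becomes $C\int_{A_\delta}(|Df_k|^2+|Df|^2)$, which is $O(1)$ uniformly in $k$, not $O(\delta)$. Your claim that ``the second summand tends to something bounded by $C\delta\cdot M$'' is therefore not justified as written, and the limit $\delta\to 0$ does not close.

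The fix, and this is exactly what [\ref{DLSQ}] does, is to decouple the two scales. First fix a radius $\rho\in(r,1)$ via Fatou's lemma so that, along a subsequence, $\sup_k\int_{\partial B_\rho}|Df_k|^2=:M'<\infty$, $\int_{\partial B_\rho}|Df|^2<\infty$, and $\int_{\partial B_\rho}\mathcal{G}(f_k,f)^2\to 0$. Then, for that \emph{fixed} $\rho$, interpolate on an annulus of width $\varepsilon$ inside $B_\rho$ (between $h$ extended by $f$ outside $B_r$ and $f_k|_{\partial B_\rho}$) and send first $k\to\infty$, then $\varepsilon\to 0$. This gives $\limsup_k\mathrm{Dir}(f_k,B_\rho)\leq \mathrm{Dir}(h,B_r)+\mathrm{Dir}(f,B_\rho\setminus B_r)$, from which Dir-minimality of $f$ on $B_r$ is immediate, and energy convergence on $B_r$ follows by the standard squeeze using lower semicontinuity on $B_r$ and on $B_\rho\setminus B_r$ separately.
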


Now we can pass to the core of the estimate on the Hausdorff dimension of the singular set of a Dir-minimizing $Q$-valed function. The key lemma is a control on the set of singular points of highest multiplicity $Q$. More formally, Theorem \ref{t:estimatesingset} is an easy consequence of the following Lemma.

\begin{lemma}\label{l:bufinallemma}
Let $\Omega$ be connected and $f \in W^{1,2}(\Omega, \mathcal{A}_{Q}(\mathbb{R}^{n}))$ be Dir-minimizing. Then, either $f=Q \llbracket \zeta \rrbracket$ with $\zeta: \Omega \rightarrow \mathbb{R}^{n}$ harmonic in $\Omega$, or the set
$$
\Sigma_{Q, f}=\left\{x \in \Omega: f(x)=Q \llbracket y \rrbracket,\, y \in \mathbb{R}^{n}\right\}
$$
(relatively closed in $\Omega$) has Hausdorff dimension at most $m-2$ and it is locally finite for $m=2$.
\end{lemma}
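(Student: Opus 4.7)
The plan is to reduce to the vanishing-barycenter case and then combine the blow-up analysis in Proposition \ref{p:buhomogenbu} with a Federer-style dimension reduction whose base case $m=2$ is handled directly. First, given any measurable selection $f=\sum_i\llbracket f_i\rrbracket$, the barycenter $\bar f:=\frac{1}{Q}\sum_i f_i$ is selection-independent, and a standard outer-variation argument (perturbing by $\sum_i\llbracket f_i+\varphi\rrbracket$ with $\varphi\in \mathcal{C}^\infty_c(\Omega,\mathbb{R}^n)$) shows $\bar f$ to be classically harmonic. Setting $g:=\sum_i\llbracket f_i-\bar f\rrbracket$ one checks that $g$ is still Dir-minimizing, has zero barycenter, and satisfies $\Sigma_{Q,f}=\Sigma_{Q,g}$; moreover, on a connected open set, $f=Q\llbracket\bar f\rrbracket$ holds if and only if $g\equiv Q\llbracket 0\rrbracket$. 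Thus we may assume henceforth that $\bar f\equiv 0$. If $g\equiv Q\llbracket 0\rrbracket$ identically on $\Omega$ we recover the first alternative of the lemma; otherwise, by connectedness of $\Omega$ and the unique-continuation type statement provided by the frequency monotonicity of Theorem \ref{t:monotonefreq} (a Dir-minimizer cannot vanish on an open set without vanishing identically), the local Dirichlet energy $\int_{B_\varrho(x)}|Dg|^2$ is strictly positive at every $x\in\Omega$ and every admissible $\varrho$.

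The base case $m=2$ is the technical heart of the proof. At any $x_0\in\Sigma_{Q,g}$, the positivity of the local energy enables the inhomogeneous blow-up \eqref{e:iblowup}, and Proposition \ref{p:buhomogenbu} together with Lemma \ref{l:bucompactness} extract an $\alpha$-homogeneous Dir-minimizing tangent map $g_0:\mathbb{R}^2\to\mathcal{A}_Q(\mathbb{R}^n)$ with unit Dirichlet energy on $B_1$, vanishing barycenter, and $g_0(0)=Q\llbracket 0\rrbracket$; by Theorem \ref{t:monotonefreq}, the frequency $\alpha=I_{x_0,g}(0)>0$. In two dimensions one can classify all such tangent maps by decomposing them into Sobolev selections over angular sectors, each of which is a scalar $\alpha$-homogeneous harmonic function, so the branches assemble (up to relabelling) into $Q$-valued $\alpha$-homogeneous polynomials of the complex variable $z$ or of some root $z^{1/k}$; the zero-barycenter normalization rules out a uniform translation, and so $0$ is an isolated value with maximal multiplicity. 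A compactness argument based on Lemma \ref{l:bucompactness} and the continuity of the frequency along scales then transports this isolated-zero property back to $g$, showing that $x_0$ is isolated in $\Sigma_{Q,g}$. Hence $\Sigma_{Q,g}$ has no accumulation point in $\Omega$ and is locally finite.

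For the inductive step $m\ge 3$ we proceed by Federer's dimension reduction. Assume the statement holds in every dimension less than $m$ and suppose by contradiction that $\mathcal{H}^{m-2+\varepsilon}(\Sigma_{Q,g})>0$ for some $\varepsilon>0$. A standard Federer-type lemma applied to the relatively closed set $\Sigma_{Q,g}$, together with Proposition \ref{p:buhomogenbu}, produces a point $x_0$ and scales $\varrho_k\downarrow 0$ for which the tangent map $g_0$ is not only $\alpha$-homogeneous with vanishing barycenter and $g_0(0)=Q\llbracket 0\rrbracket$, but also translation-invariant along some nonzero direction $V\subset\mathbb{R}^m$ and its singular set $\Sigma_{Q,g_0}$ has Hausdorff dimension at least $m-2+\varepsilon$. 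By the translation invariance, $g_0$ descends to a Dir-minimizing $Q$-valued function on the quotient $\mathbb{R}^{m-1}\simeq\mathbb{R}^m/V$, still with vanishing barycenter, and its singular set there has dimension at least $(m-1)-2+\varepsilon$, contradicting the inductive hypothesis. This proves $\dim_\mathcal{H}\Sigma_{Q,f}\le m-2$, and iterating the invariance argument whenever $m-2>0$ reduces the count eventually to the planar case, already treated. The main obstacle is precisely this planar base case: keeping track of the zero-barycenter and homogeneity conditions through the blow-up, classifying the resulting $2$-dimensional tangent maps, and transferring the isolated-zero structure of $g_0$ back to $g$ are the steps that require the most care and rely in an essential way on Theorem \ref{t:monotonefreq} and the explicit two-dimensional structure of Dir-minimizing $Q$-valued functions.
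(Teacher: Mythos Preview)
Your outline is correct and shares the paper's scaffolding: reduce to zero barycenter via Lemma \ref{l:averagio}, blow up via Proposition \ref{p:buhomogenbu}, and run a dimension reduction. The genuine difference is in how the planar case is closed. You propose to \emph{classify} two-dimensional $\alpha$-homogeneous Dir-minimizers (as superpositions of branches of $z\mapsto c\,z^{\alpha}$) and read off that $0$ is isolated in $\Sigma_{Q,g_0}$, then transfer this back. The paper avoids this classification entirely: assuming $x_k\to 0$ in $\Sigma_{Q,g}$ and blowing up at scales $|x_k|$ yields a homogeneous tangent $g_0$ with a point $w\in S^1\cap\Sigma_{Q,g_0}$; then a \emph{second} blow-up at $e_1/2$ via Lemma \ref{l:cylindricalbu} produces a tangent $h$ that is \emph{translation-invariant in $e_1$}, hence determined by a one-dimensional Dir-minimizer $\hat h$ with $\hat h(0)=Q\llbracket 0\rrbracket$ and $\eta\circ\hat h\equiv 0$. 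One-dimensional Dir-minimizers are affine selections, so $\hat h\equiv Q\llbracket 0\rrbracket$, contradicting the unit energy. Thus the paper's base case is really $m=1$, not $m=2$, and the single tool Lemma \ref{l:cylindricalbu} serves both as the planar argument and as the translation-invariance step in the Federer reduction for $m\geq 3$ (which you invoke but do not name). Your route buys a more explicit picture of planar tangents, at the cost of importing the nontrivial two-dimensional classification; the paper's route is more uniform and needs only the trivial one-dimensional classification plus the cylindrical blow-up lemma.
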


\begin{remark}
Note that Lemma \ref{l:bufinallemma} is the analogue of Theorem \ref{t:estimatesingset} in the case one considers just points of multiplicity $Q$.
\end{remark} Assuming Lemma \ref{l:bufinallemma}, it is not difficult to conclude Theorem \ref{t:estimatesingset} by an induction argument on $Q$. 

\begin{proof}
For $Q=1$ there is nothing to prove, since Dir-minimizing 1-valued functions are classic harmonic functions. Suppose now the theorem to be true for every $Q^*$-valued functions, with $Q^*<Q$. 

If $f=Q \llbracket \zeta \rrbracket$ with $\zeta$ harmonic, then $\Sigma_f=\emptyset$ and the theorem is proved. If not, we first consider $\Sigma_{Q, f} \subset \Sigma_f$ that, by Lemma \ref{l:bufinallemma}, is a closed subset of $\Omega$ with Hausdorff dimension at most $m-2$, at most countable if $m=2$. 

Then, we also consider the open set $$\Omega^{\prime}:=\Omega \backslash \Sigma_{Q, f}.$$ Since $f$ is continuous, we can find countable open balls $B_k$ such that $\Omega^{\prime}=\cup_k B_k$ and $f_{|_{B_k}}$ can be decomposed as the sum of two multiple-valued Dir-minimizing functions $$f_{|_{B_k}}=\llbracket f_{k, Q_1} \rrbracket+\llbracket f_{k, Q_2} \rrbracket$$ with $Q_1<Q$, $Q_2<Q$ and
\begin{equation}\label{e:bulastcond}\operatorname{supp}\left(f_{k, Q_1}(x)\right) \cap \operatorname{supp}\left(f_{k, Q_2}(x)\right)=\emptyset \quad \text{ for every } x \in B_k.\end{equation} From \eqref{e:bulastcond} we have that $\Sigma_f \cap B_k=\Sigma_{f_k, Q_1} \cup \Sigma_{f_{k, Q_2}}$. Note further that $f_{k, Q_1}$ and $f_{k, Q_2}$ are Dir-minimizing and, by inductive hypothesis, $\Sigma_{f_{k, Q_1}}$ and $\Sigma_{f_{k, Q_2}}$ are closed subsets of $B_k$ with Hausdorff dimension at most $m-2$. We conclude that
$$
\Sigma_f=\Sigma_{Q, f} \cup \bigcup_{k \in \mathbb{N}}\left(\Sigma_{f_{k, Q_1}} \cup \Sigma_{f_{k, Q_2}}\right)
$$
has Hausdorff dimension at most $m-2$ and it is at most countable if $m=2$.
\end{proof}

\begin{remark}
As a result, by the simple induction argument shown above, we managed to reduce the whole proof of Theorem \ref{t:estimatesingset} to its analogous verison for highest multiplicity points only.
\end{remark}

\subsubsection{The need of centering}

In this final part we are going to prove Lemma \ref{l:bufinallemma} in the simplified planar case (for the general case $m\ge3$ we refer to [\ref{DLSQ}, Section 3.6.2], even if the main ideas of blow-up and dimension reduction arguments are the same as in the planar case). As we have already mentioned, one of the main issues in the blow-up analysis of the singularities is the persistence of such singularities in the limiting function. Even if Almgren's frequency function guarantees nontrivial blow-ups, this still does not ensure the singularity does not vanish in the limiting object. Consider the following complex variety:
\begin{equation}\label{e:holoverybadguy}
\Xi:=\left\{(z, w) \in \mathbb{C}^2:\left(z-w^2\right)^2=w^{2021}\right\} \subset \mathbb{C}^2.
\end{equation}
It is simple to see that $\Xi$ is the graph of the 2-valued function $f$ given by
$$
w \stackrel{f}{\mapsto} \sum_{\eta \,:\, \eta^2=w} \llbracket w^2+\eta^{2021} \rrbracket \in \mathcal{A}_2(\mathbb{C}) \simeq \mathcal{A}_2\left(\mathbb{R}^2\right) .
$$
By Theorem \ref{t:spadarocomplex} the function $f$ is Dir-minimizing in any compact set of $\mathbb{R}^2$ and by direct computation it is easy to verify that the rescaled functions $f_{\rho}$ in \eqref{e:iblowup} converge uniformly to a regular 2-valued function. This shows regularity of the limiting blow-up even if the origin was a singular point for $f$, hence excluding the possibility to estimate the size of the singular set of $f$ by means of its tangent functions. In other words, $\Xi$ is just an ``almost indistinguishable perturbation" of the (smooth) current $2\llbracket \{z=w^2\} \rrbracket$, see Figure \ref{f:centering}.

\begin{figure}[h]
    \centering
    \includegraphics[width=0.65\textwidth]{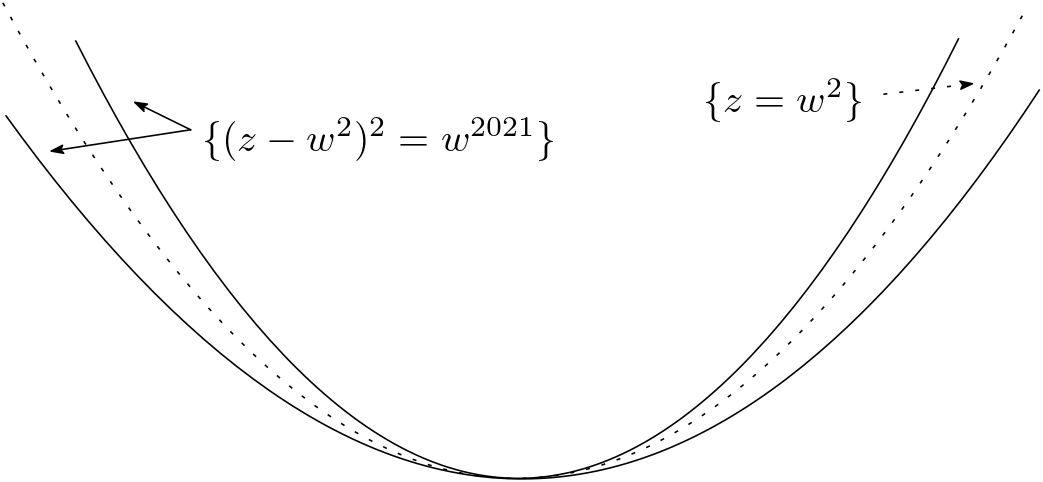}
    \caption{}
    \label{f:centering}
\end{figure}

The solution to such a problem is to perform a sort of ``change of coordinates", averaging out the regular first expansion of the blow-up, on top of which the singular branching behavior happens. In particular, in the previous example the regular part was exactly the smooth complex curve $\left\{z=w^2\right\}$, while the singular branching is due to the determinations of the square root of $z^{2021}$. 

Hence, it becomes clear why one should look for parametrizations of $\Xi$ as a multiple-valued function defined on $\left\{z=w^2\right\}$, so that the singular map to be considered reduces to
$$w \mapsto \sum_{\eta \,:\, \eta^2=w} \llbracket \eta^{2021}\rrbracket \in \mathcal{A}_2\left(\mathbb{R}^2\right) . $$ The blow-up of such map is the map itself, and the singular point 0 persists in the limit.

The regular surface $\left\{z=w^2\right\}$ is called \textit{center manifold} by Almgren, because it behaves like (and in this case it is exactly) the average of the sheets of the current in a suitable system of coordinates. In general, the determination of the center manifold is not as immediate as in this linearized problem of Dir-minimizing functions. For the nonlinear case of area-minimizing currents the construction of the center manifold and the parametrization of the current on its normal bundle actually represent one of the main difficulties in the analysis of singularities\footnote{In Almgren's theory for the nonlinear problem the construction of the center manifold takes almost three quarters of his Big Regularity Paper [\ref{Almgren2000}]. De Lellis and Spadaro [\ref{DLSQ}, \ref{DLSsns}, \ref{DLS1}, \ref{DLS2}, \ref{DLS3}] managed to reduce the whole argument, but still takes a fair half of the whole work.}.

We now give just a glimpse of how to deal with the case of Dir-minimizing functions, stating two lemmas (see [\ref{DLSQ}, Section 3.6.1] for a proof) that will allow to conclude the proof of (the planar case of) Lemma \ref{l:bufinallemma} (and so, Theorem \ref{t:estimatesingset}). 

In order to state the first lemma, we introduce the function $\eta: \mathcal{A}_Q\left(\mathbb{R}^n\right) \rightarrow \mathbb{R}^n$ mapping each measure $T=\sum_i \llbracket P_i \rrbracket$ to its center of mass,
$$
\eta(T):=\frac{\sum_i P_i}{Q} .
$$

\begin{lemma}\label{l:averagio}
Let $f: \Omega \rightarrow \mathcal{A}_Q\left(\mathbb{R}^n\right)$ be Dir-minimizing. Then, \begin{itemize}
\item[$i)$] the function $\eta \circ f: \Omega \rightarrow \mathbb{R}^n$ is harmonic,
\item[$ii)$] for every $\zeta: \Omega \rightarrow \mathbb{R}^n$ harmonic, $g:=\sum_i \llbracket f_i+\zeta \rrbracket$ is Dir-minimizing.
\end{itemize}
\end{lemma}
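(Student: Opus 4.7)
The plan is to prove (i) by an outer-variation argument and then deduce (ii) via a linear change of variables in the competitor class, using the harmonicity of $\zeta$ together with (i) to make the cross terms vanish.

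For (i), I would test the minimality of $f$ against the one-parameter family of competitors obtained by translating each sheet simultaneously by a smooth compactly supported perturbation. Precisely, fix $\zeta \in \mathcal{C}_c^\infty(\Omega,\mathbb{R}^n)$ and consider $f_t := \sum_i \llbracket f_i + t\zeta\rrbracket$, which is a well-defined $W^{1,2}$ $Q$-valued map with the same trace as $f$ on $\partial\Omega$ (since $\zeta$ vanishes there). By Proposition \ref{p:samederivative},
\begin{equation*}
\mathrm{Dir}(f_t,\Omega) = \sum_i \int_\Omega |Df_i + tD\zeta|^2 = \mathrm{Dir}(f,\Omega) + 2tQ\int_\Omega \langle D(\eta\circ f), D\zeta\rangle + t^2 Q\int_\Omega |D\zeta|^2.
\end{equation*}
Since $f$ is Dir-minimizing, the first variation at $t=0$ must vanish, which gives $\int_\Omega \langle D(\eta\circ f), D\zeta\rangle = 0$ for every $\zeta \in \mathcal{C}_c^\infty(\Omega,\mathbb{R}^n)$. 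This is exactly the weak formulation of $\Delta(\eta\circ f) = 0$, so $\eta\circ f$ is harmonic (and hence smooth) in $\Omega$.

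For (ii), I would compare energies between $g$ and an arbitrary competitor. Let $h \in W^{1,2}(\Omega,\mathcal{A}_Q)$ with $h_{|_{\partial\Omega}} = g_{|_{\partial\Omega}}$, and define the competitor $\tilde h := \sum_i \llbracket h_i - \zeta\rrbracket$ for $f$; its trace agrees with that of $f$ on $\partial\Omega$ since $g_{|_{\partial\Omega}} = f_{|_{\partial\Omega}} + \zeta_{|_{\partial\Omega}}$. Expanding exactly as above,
\begin{equation*}
\mathrm{Dir}(h,\Omega) - \mathrm{Dir}(\tilde h,\Omega) = 2Q\int_\Omega \langle D(\eta\circ \tilde h), D\zeta\rangle + Q\int_\Omega |D\zeta|^2,
\end{equation*}
and similarly
\begin{equation*}
\mathrm{Dir}(g,\Omega) - \mathrm{Dir}(f,\Omega) = 2Q\int_\Omega \langle D(\eta\circ f), D\zeta\rangle + Q\int_\Omega |D\zeta|^2.
\end{equation*}
Since $\eta\circ\tilde h$ and $\eta\circ f$ are $\mathbb{R}^n$-valued $W^{1,2}$ maps with the same boundary trace and $\zeta$ is harmonic, integration by parts yields $\int_\Omega \langle D(\eta\circ\tilde h - \eta\circ f), D\zeta\rangle = 0$. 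Subtracting the two expansions and using the minimality of $f$ against $\tilde h$ gives
\begin{equation*}
\mathrm{Dir}(h,\Omega) - \mathrm{Dir}(g,\Omega) = \mathrm{Dir}(\tilde h,\Omega) - \mathrm{Dir}(f,\Omega) \geq 0,
\end{equation*}
which is the desired Dir-minimality of $g$.

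The only delicate point is the algebraic identity $\sum_i |Df_i + tD\zeta|^2 = |Df|^2 + 2tQ\langle D(\eta\circ f), D\zeta\rangle + t^2 Q|D\zeta|^2$ at the level of measurable selections, which requires verifying that $\sum_i Df_i$ equals $Q\, D(\eta\circ f)$ almost everywhere; this follows from Proposition \ref{p:measurableselections} together with Rademacher-type differentiability for Sobolev $Q$-valued maps and the fact that the single-valued map $\eta\circ f = Q^{-1}\sum_i f_i$ is a genuine $\mathbb{R}^n$-valued Sobolev function whose distributional gradient coincides with the average of the selection gradients.
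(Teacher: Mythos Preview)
Your proof is correct and matches the approach the paper alludes to (the remark following the lemma says ``the proof is basically an integration by parts computation'', referring to [\ref{DLSQ}, Section 3.6.1]). The outer variation for part (i) and the energy-comparison-plus-integration-by-parts argument for part (ii) are exactly the standard route, and your handling of the identity $\sum_i Df_i = Q\,D(\eta\circ f)$ via measurable selections is the right justification.
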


\begin{remark}
Note that, in particular, we have that $$g(x)=\sum_i \llbracket f_i(x) - \eta \circ f (x)\rrbracket$$ is still Dir-minimizing. The proof is basically an integration by parts computation.
\end{remark}

\begin{remark}
Note that $$\Sigma_g=\Sigma_f$$ but now $\Sigma_{Q,g}=\{x : g(x)=Q\llbracket 0\rrbracket \}$, telling us that we managed to reduce Lemma \ref{l:bufinallemma} to the case where all points of multiplicity $Q$ are of the form $Q \llbracket 0 \rrbracket$. In this situation, the collapse to one single sheet in the blow-up analysis means that this collapse should happen at 0, which is forbidden by the estimates on the frequency function. Hence, this ``subtracting the average"-procedure tackles the problem of very high-order perturbations.
\end{remark}
The second lemma we need is a technical lemma on cylindrical blow-ups of homogeneous functions.

\begin{lemma}\label{l:cylindricalbu}
Let $g: B_1 \rightarrow \mathcal{A}_Q\left(\mathbb{R}^n\right)$ be an $\alpha$-homogeneous Dir-minimizing function with $\int_{B_1}|Dg|^2>0$ and set $\beta:=I_{z, g}(0)$. Suppose also that $g(z)=Q \llbracket 0 \rrbracket$ for $z=e_1 / 2$. Then, the tangent functions $h$ to $g$ at $z$ are $\beta$-homogeneous with $\operatorname{Dir}\left(h, B_1\right)=1$ and satisfy:\begin{itemize}
\item[$i)$] $h\left(s e_1\right)=Q \llbracket 0 \rrbracket$ for every $s \in \mathbb{R}$,
\item[$ii)$] $h\left(x_1, x_2, \ldots, x_m\right)=\hat{h}\left(x_2, \ldots, x_m\right)$, where $\hat{h}: \mathbb{R}^{m-1} \rightarrow \mathcal{A}_Q\left(\mathbb{R}^n\right)$ is Dir-minimizing on any bounded open subset of $\mathbb{R}^{m-1}$.
\end{itemize}
\end{lemma}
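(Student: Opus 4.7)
The plan is to derive all three assertions from three ingredients: the $\alpha$-homogeneity of $g$ around $0$, the vanishing $g(z)=Q\llbracket 0 \rrbracket$ at $z=e_1/2$, and the rescaling symmetry that this combination imposes on the blow-ups of $g$ at $z$. First I would apply Proposition \ref{p:buhomogenbu} at the point $z$. The nondegeneracy $\int_{B_\rho(z)}|Dg|^2>0$ for every $\rho\in(0,1/2)$ needed to invoke it holds because, if it failed, $\alpha$-homogeneity would force $g$ to vanish on the entire open cone generated from the origin by a ball around $z$, contradicting $\int_{B_1}|Dg|^2>0$. The proposition then produces a subsequence $\rho_k\downarrow 0$ and a $\beta$-homogeneous Dir-minimizing limit $h$ with $\int_{B_1}|Dh|^2=1$, which gives the general assertion preceding (i) and (ii).

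For (i), the $\alpha$-homogeneity of $g$ gives $g(\lambda z)=\lambda^\alpha g(z)=Q\llbracket 0 \rrbracket$ for every $\lambda>0$, so $g$ is identically $Q\llbracket 0 \rrbracket$ on the positive $e_1$-axis. For any fixed $s\in\mathbb{R}$ and $\rho_k$ small enough that $1+2\rho_k s>0$, the point $\rho_k s e_1+z=\tfrac{1+2\rho_k s}{2}e_1$ lies on this axis, hence $g_{z,\rho_k}(se_1)=Q\llbracket 0 \rrbracket$, and letting $k\to\infty$ gives $h(se_1)=Q\llbracket 0 \rrbracket$.

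The heart of the argument is (ii). For $y=se_1+y'$ with $y'\cdot e_1=0$ and $\lambda_\rho:=1+2\rho s>0$, one has $\rho y+z=\lambda_\rho\bigl(z+(\rho/\lambda_\rho)y'\bigr)$, so the $\alpha$-homogeneity of $g$ yields $g(\rho y+z)=\lambda_\rho^{\alpha}\,g(z+(\rho/\lambda_\rho)y')$. Normalizing both sides as in the definition of the blow-up gives the identity
\[
g_{z,\rho}(y)=\lambda_\rho^{\alpha+(m-2)/2}\sqrt{\tfrac{D_{z,g}(\rho/\lambda_\rho)}{D_{z,g}(\rho)}}\;g_{z,\rho/\lambda_\rho}(y').
\]
As $\rho_k\downarrow 0$ we have $\lambda_{\rho_k}\to 1$, so the prefactor tends to $1$; moreover $\rho_k/\lambda_{\rho_k}=\rho_k+O(\rho_k^2)$, so the Dirichlet ratio tends to $1$ by continuity of $D_{z,g}$ in the radius and nondegeneracy at scale $\rho_k$. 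Using the auxiliary identity $g_{z,\rho/\lambda}(x)=\lambda^{-(m-2)/2}\sqrt{D_{z,g}(\rho)/D_{z,g}(\rho/\lambda)}\,g_{z,\rho}(x/\lambda)$ together with the locally uniform convergence $g_{z,\rho_k}\to h$ from Proposition \ref{p:buhomogenbu}, one concludes $g_{z,\rho_k/\lambda_{\rho_k}}(y')\to h(y')$. Passing to the limit in the displayed identity gives $h(se_1+y')=h(y')$, i.e.\ $h(x)=\hat h(x_2,\dots,x_m)$ for $\hat h(x_2,\dots,x_m):=h(0,x_2,\dots,x_m)$.

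To conclude that $\hat h$ is Dir-minimizing on every bounded open $\omega\subset\mathbb{R}^{m-1}$, I would argue by contradiction through a standard slicing/lifting construction: a competitor $\hat k$ of $\hat h$ on $\omega$ with strictly smaller Dirichlet energy and identical trace would, when lifted trivially to a long cylinder $(-L,L)\times\omega$ and interpolated back to $h$ in thin transition layers near $x_1=\pm L$, produce a competitor to $h$ whose bulk energy saving $2L\cdot(\operatorname{Dir}(\hat h,\omega)-\operatorname{Dir}(\hat k,\omega))$ dominates the transition cost once $L$ is large, contradicting the Dir-minimality of $h$ already established. The main technical obstacle throughout is the limit passage in step (ii): one must justify that the $y$-dependent rescaling $\rho\mapsto\rho/\lambda_\rho$ does not destroy convergence to the tangent function $h$. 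This is essentially a continuity statement for the one-parameter family $\{g_{z,\rho}\}$ and is handled by combining the locally uniform convergence of the chosen subsequence with the higher-order smallness of $\rho/\lambda_\rho-\rho$ and the nondegeneracy of $D_{z,g}$.
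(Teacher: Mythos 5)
The paper itself does not give a proof of this lemma (it defers to [\ref{DLSQ}, Section 3.6.1]), but your plan reproduces the standard De Lellis--Spadaro argument: invoke Proposition \ref{p:buhomogenbu} at $z$, use the $\alpha$-homogeneity of $g$ about the origin to derive the exact rescaling identity between $g_{z,\rho}(se_1+y')$ and $g_{z,\rho/\lambda_\rho}(y')$, pass to the limit along $\rho_k$, and conclude minimality of $\hat h$ via a long-cylinder comparison. Point (i) and the cylinder argument for $\hat h$ are correctly sketched, and your rescaling identities in (ii) check out algebraically.

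However, the step you yourself single out as the "main technical obstacle" is not actually closed. The claim that
\[
\sqrt{\frac{D_{z,g}(\rho_k/\lambda_{\rho_k})}{D_{z,g}(\rho_k)}}\longrightarrow 1
\]
"by continuity of $D_{z,g}$ in the radius and nondegeneracy at scale $\rho_k$" is not a proof: both radii tend to $0$ and $D_{z,g}(0)=0$, so continuity of $r\mapsto D_{z,g}(r)$ gives no control on the ratio (for a positive continuous $D$ with $D(0)=0$ one can have $D(\rho(1+\rho))/D(\rho)\not\to 1$, e.g.\ $D(r)=e^{-1/r}$). What you actually need here is Almgren's monotonicity (Theorem \ref{t:monotonefreq}): from the identity $H_{z,g}'(r)/H_{z,g}(r)=(m-1)/r+2I_{z,g}(r)/r$ and the two-sided bound $\beta\le I_{z,g}(t)\le I_{z,g}(1/4)$ for small $t$, one gets, for $\rho\le\rho'$ small,
\[
\left(\tfrac{\rho'}{\rho}\right)^{m-1+2\beta}\le \frac{H_{z,g}(\rho')}{H_{z,g}(\rho)}\le \left(\tfrac{\rho'}{\rho}\right)^{m-1+2I_{z,g}(1/4)},
\]
so that $H_{z,g}(\rho')/H_{z,g}(\rho)\to 1$ whenever $\rho'/\rho\to 1$; writing $D_{z,g}=I_{z,g}H_{z,g}/r$ and using $I_{z,g}(\rho),I_{z,g}(\rho')\to\beta>0$ then gives the desired limit for the Dirichlet ratio. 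This is a doubling estimate coming from the frequency function, not a continuity statement, and it should be invoked explicitly; the same estimate also controls the prefactor $\sqrt{D_{z,g}(\rho_k)/D_{z,g}(\rho_k/\lambda_{\rho_k})}$ in your auxiliary identity, so the two occurrences should be handled together. With this repair the argument for (ii) is complete.

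A minor remark: your opening nondegeneracy argument ("$\alpha$-homogeneity would force $g$ to vanish on the open cone over a ball around $z$, contradicting $\int_{B_1}|Dg|^2>0$") does not close by itself, since that cone is a proper subset of $B_1$ and Dir-minimizers are not known to satisfy unique continuation from such a set. In practice the hypothesis that $\beta:=I_{z,g}(0)$ is well-defined (via the dichotomy of Theorem \ref{t:monotonefreq}) already excludes the degenerate case, and in the application inside the proof of Lemma \ref{l:bufinallemma} it is ruled out by the setup, so it is cleaner to rely on that rather than on a homogeneity-propagation argument.
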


We can finally prove Lemma \ref{l:bufinallemma}, concluding the proof of Theorem \ref{t:estimatesingset} in the case $m=2$.

\begin{proof}
Note that, as we remarked above, by Lemma \ref{l:averagio} it is sufficient to consider a Dir-minimizing function $f$ such that $\eta \circ f \equiv 0$ so that,
$$
\Sigma_{Q, f}=\{x: f(x)=Q \llbracket 0 \rrbracket\} .
$$
We prove that $\Sigma_{Q, f}$ consists of isolated points, except for the case where all sheets collapse. Without loss of generality, let $0 \in \Sigma_{Q, f}$ and assume that $f \neq Q \llbracket 0 \rrbracket$ in a neighborhood of 0. 

Suppose by contradiction that there exist a sequence $x_k \rightarrow 0$ such that $f(x_k)=Q \llbracket 0 \rrbracket$. By Proposition \ref{p:buhomogenbu}, the blow-ups $f_{\left|x_k\right|}$ converge uniformly, up to a subsequence, to some homogeneous Dir-minimizing function $g$, with $$\int_{B_1}|Dg|^2 =1 \text{ and } \eta \circ g \equiv 0.$$ Since $f(x_k)$ are $Q$-multiplicity points, we also deduce that there exists $w$ in the unit circle $S^1$ such that $g(w)=Q \llbracket 0 \rrbracket$. Up to a rotation, we can assume that $w=e_1$. Considering the blow-up of $g$ in the point $e_1 / 2$, by Lemma \ref{l:cylindricalbu}, we get a new tangent function $h$ with the property that $h(0, x_2)=\hat{h}(x_2)$ for some function $\hat{h}: \mathbb{R} \rightarrow \mathcal{A}_Q$ which is Dir-minimizing on every interval, $\eta \circ \hat{h} \equiv 0$ and $\hat{h}(0)=Q \llbracket 0 \rrbracket$. Moreover, since $$\int_{B_1}|Dh|^2=1,$$ then \begin{equation}\label{e:htilde}\int_{I}|D\hat{h}|^2>0 \text{ for } I=[-1,1],\end{equation} which is a contradiction. Indeed, by means of a comparison argument, one can prove that every Dir-minimizing 1-dimensional function $\hat{h}$ is an affine function of the form $$\hat{h}(x)=\sum_i \llbracket L_i(x) \rrbracket$$ with the property that either $L_i(x) \neq L_j(x)$ for every $x$ or $L_i(x)=L_j(x)$ for every $x$. Since $\hat{h}(0)=Q \llbracket 0 \rrbracket$, we would conclude that $\hat{h}=Q \llbracket L \rrbracket$ for some linear $L$. On the other hand, by $\eta \circ \hat{h} \equiv 0$ we would conclude $L=0$, contradicting \eqref{e:htilde}.

We conclude that if $x \in \Sigma_{Q, f}$ then either $x$ is isolated, or $U \subset \Sigma_{Q, f}$ for some neighborhood $U$ of $x$. Since by assumption $\Omega$ is connected we obtain that either $\Sigma_{Q, f}$ consists of isolated points, or $\Sigma_{Q, f}=\Omega$, concluding the proof.
\end{proof}

\begin{remark}
The proof of Lemma \ref{l:bufinallemma} is a very standard \textit{motiv} in geometric measure theory: one fixes a singular point, by contradiction the singular set is assumed to be ``too large" (like in this case where we assumed accumulation of other singular points) and then by blow-up procedure one derives a contradiction. 
\end{remark}

In [\ref{DLSQ}, Chapter 5], the authors improved Theorem \ref{t:estimatesingset} proving the following refinement in the planar case. One of the most important tools is the uniqueness of the tangent function to a Dir-minimizer. Thanks to this, the authors succeeded in developing a better description of the behavior of a Dir-minimizing function around singular points.

\begin{theorem} Let $f$ be Dir-minimizing and $m=2$. Then, the singular set $\Sigma_f$ of $f$ consists of isolated points. \end{theorem}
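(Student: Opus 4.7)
The plan is to proceed by induction on $Q$, mirroring the reduction strategy used in the proof of Theorem 2.1. The base case $Q=1$ is trivial since $1$-valued Dir-minimizing maps are classical harmonic functions, hence analytic. For the inductive step, fix $Q \geq 2$, assume the statement for all $Q^* < Q$, and by Lemma 2.21 reduce to the case $\eta \circ f \equiv 0$. Then $\Sigma_f = \Sigma_{Q,f} \cup \Sigma_f'$, where on the open set $\Omega \setminus \Sigma_{Q,f}$ the function $f$ admits local splittings $f = \llbracket f_{Q_1}\rrbracket + \llbracket f_{Q_2}\rrbracket$ with $Q_1, Q_2 < Q$. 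The argument carried out for Lemma 2.22 actually produces the stronger conclusion that $\Sigma_{Q,f}$ is discrete in $\Omega$ (the proof given in the excerpt shows isolation, not merely countability), and by the inductive hypothesis applied to each local splitting $\Sigma_f'$ is discrete inside $\Omega \setminus \Sigma_{Q,f}$.

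The remaining issue, which is the main point of the theorem, is to rule out accumulation of points of $\Sigma_f'$ on a point $x_0 \in \Sigma_{Q,f}$. Suppose for contradiction that $x_k \to x_0 \in \Sigma_{Q,f}$ with $x_k \in \Sigma_f \setminus \{x_0\}$; translate so that $x_0 = 0$. Set $\rho_k := |x_k|$ and consider the inhomogeneous rescalings $f_{\rho_k}$ defined by \eqref{e:iblowup}. By Proposition 2.26, up to a subsequence, $f_{\rho_k}$ converges locally uniformly to an $\alpha$-homogeneous Dir-minimizing $g\colon \mathbb{R}^2 \to \mathcal{A}_Q(\mathbb{R}^n)$ satisfying $\eta\circ g \equiv 0$, $g(0) = Q\llbracket 0 \rrbracket$ and $\int_{B_1}|Dg|^2 = 1$. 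Passing to a further subsequence, $w_k := x_k/\rho_k \in S^1$ converges to some $w \in S^1$.

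The central step is the persistence of singularity, namely $w \in \Sigma_g$. This is where the uniqueness of the tangent function (mentioned in the paragraph preceding the statement) intervenes: if $g$ were regular at $w$, then in a neighborhood of $w$ the function $g$ would be either a superposition of $Q$ non-intersecting analytic single-valued branches or an unsplittable multiplicity-$Q$ single sheet, and in both cases the combination of uniform convergence of $f_{\rho_k} \to g$, convergence of Dirichlet energies provided by Lemma 2.27, and the uniqueness of the blow-up at $w$ would allow one to transfer the local analytic structure of $g$ back to $f_{\rho_k}$ near $w_k$ for all sufficiently large $k$, contradicting $w_k \in \Sigma_{f_{\rho_k}}$. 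Thus $w \in \Sigma_g$.

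Now $\alpha$-homogeneity immediately yields $tw \in \Sigma_g$ for every $t > 0$, so $\Sigma_g$ contains a half-line through the origin and has positive $\mathcal{H}^1$-measure. This contradicts $\dim_{\mathcal{H}}\Sigma_g \leq m-2 = 0$ from Theorem 2.1, completing the induction. The delicate step is clearly the persistence-of-singularity argument: locally uniform convergence of $Q$-valued maps does not by itself transfer regularity, because singular sheets could in principle merge in the limit; the uniqueness of the planar tangent function at $w$, combined with a careful analysis of the local decomposition of $g$ (possibly via a further cylindrical blow-up as in Lemma 2.24 to cut down to a 1-dimensional Dir-minimizer), is what rigidifies the limit enough to propagate regularity back to the approximants and yield the contradiction.
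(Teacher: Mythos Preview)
The paper does not give its own proof of this theorem: it merely states the result and attributes it to [\ref{DLSQ}, Chapter 5], noting only that ``one of the most important tools is the uniqueness of the tangent function to a Dir-minimizer.'' There is therefore no paper-proof to compare against in any detail.

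Your overall architecture --- induction on $Q$, reduction to $\eta\circ f\equiv 0$, discreteness of $\Sigma_{Q,f}$ from the argument of Lemma~\ref{l:bufinallemma}, inductive discreteness of $\Sigma_f\setminus\Sigma_{Q,f}$, and then a blow-up contradiction to rule out accumulation onto $\Sigma_{Q,f}$ --- is the right skeleton and is consistent with the hint the paper gives. However, your step asserting persistence of the singularity (that $w\in\Sigma_g$) is not a proof but a wish list. You write that uniform convergence, energy convergence, and uniqueness of the tangent function ``would allow one to transfer the local analytic structure of $g$ back to $f_{\rho_k}$,'' but you do not indicate how. Uniform convergence of $Q$-valued maps plus strong $W^{1,2}$ convergence does \emph{not} by itself imply that regularity of the limit at $w$ forces regularity of the approximants at nearby points: sheets of $f_{\rho_k}$ with a genuine branch point near $w_k$ could perfectly well converge uniformly to separated analytic sheets of $g$. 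The actual mechanism in [\ref{DLSQ}] goes through the uniqueness of the tangent map together with a \emph{rate} of convergence (a power decay of $\mathcal{G}(f_\rho,g)$), which in turn yields a quantitative H\"older description of $f$ near the origin sufficient to control the topology of the sheets in a full neighborhood and conclude isolation directly. Your last paragraph gestures at ``a further cylindrical blow-up as in Lemma~\ref{l:cylindricalbu},'' but that lemma reduces dimension by one and is already the engine behind the $m=2$ case of Lemma~\ref{l:bufinallemma}; it does not supply the missing persistence argument for points of lower multiplicity accumulating onto a $Q$-point. As written, the core step is a genuine gap.
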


About more recent studies of fine properties of the singular set of Dir-minimizing function, it is worth mentioning the result in [\ref{DLMarchese}]. The main result proved by the authors is that if $f$ is a Dir-minimizing function, then $\Sigma_f$ is countably $(m-2)$-rectifiable (and hence $\mathcal{H}^{m-2}$ $\sigma$-finite).

\subsubsection{Hints to the nonlinear case} 

So far, we showed the analogous of Almgren's partial regularity theorem, Theorem \ref{t:regularityh}, in the very special case of parametrizations minimizing the (generalized) Dirichlet energy, which correspond to the linearized version of the whole problem. Unfortunately (or, depending on the point of view, luckily), the full proof of Almgren's partial regularity theorem for area-minimizing currents in higher codimension is way longer. 

In fact, the analytic and geometric issues of the linear case can be simplistically summarized as follows:\begin{itemize}
\item[$a)$] The problem of dealing with multiple-valued functions, 
\item[$b)$] The problem of getting trivial blow-ups due to a possible infinite order of contact,
\item[$c)$] The need of a centering to ensure persistence of the singularity in the limit. 
\end{itemize}

In the linear case, everything is very clean and the theory is fairly easy to understand. Nevertheless, many technical and convoluted difficulties need to be tackled in exporting all the aforementioned techniques to the general nonlinear problem, see [\ref{DLS1}, \ref{DLS2}, \ref{DLS3}]. 

The first major difficulty is that it is not known any a priori estimate telling whether the area-minimizing current is a graph of a suitable function in higher codimension. Hence, a whole theory of approximation needs to be developed to pass from the graphical case to the general one. 

\begin{remark} At this point, differently from the linear case, the average of the different sheets of a current does not solve in general any given partial differential equation, thus not allowing any simple translation or reparametrization argument. \end{remark}

As in the linear case, the proof of Theorem \ref{t:regularityh} is done by contradiction, where the contradiction assumption is the following: there exist numbers $m \geq 2,n \geq 1, \alpha>0$ and\footnote{Note that the hypothesis $m \geq 2$ is justified because, for $m=1$, an area-minimizing current is locally the union of finitely many nonintersecting open segments.} an area-minimizing $m$-dimensional integer rectifiable current $T$ in $\mathbb{R}^{m+n}$ such that
$$
\mathcal{H}^{m-2+\alpha}(\operatorname{Sing}(T))>0 .
$$
The aim of the proof is now to show that there exist suitable points of Sing$(T)$ where we can perform the blow-up analysis in the same spirit of the linear case. 

This process consists of several different steps: we list here the most important ones in the proof of Theorem \ref{t:regularityh}, following the neat description given by Spadaro in [\ref{Spadaronote}].

\begin{enumerate}
\item Find a point $x_0 \in \operatorname{Sing}(T)$ and a sequence of radii $(r_k)_k$ with $r_k \rightarrow 0$ such that: \begin{itemize}
\item The currents $T_{x_0, r_k}=(\iota_{x_0, r_k})_{*} T$ converge to a flat tangent cone,
\item $\mathcal{H}^{m-2+\alpha}\left(\operatorname{Sing}(T_{x_0, r_k}) \cap B_1\right)>\eta>0$ for some $\eta>0$ and for every $k \in \mathbb{N}$.
\end{itemize}
Note that both conclusions hold for suitable subsequences, which in principle may not coincide. What we need to prove is that we can select a point and a subsequence satisfying both.
\item Construction of the center manifold $\mathcal{M}$ and of a normal Lipschitz approximation $F:\mathcal{M} \rightarrow \mathcal{A}_Q(U)$, see [\ref{DLS2}, Definition 2.3], where $U$ is a (kind of) tubular neighborhood of $\mathcal{M}.$

\item The center manifold that one constructs in step $2$ can only be used in general for a finite number of radii $r_k$ of step $1$. The reason is that, in general, its degree of approximation of the average of the area-minimizing currents $T$ is under control only up to a certain distance from the singular point under consideration. This leads to the definition of the sets where the approximation works, called \textit{intervals of flattening}, and to the construction of an entire \textit{sequence of center manifolds} which will be used in the blow-up analysis.

\item Next, one has to deal with the problem of the infinite order of contact and this is done in two substeps. In the first substep, an \textit{almost monotonicity formula} is derived for a slight variation of Almgren's frequency function, deducing that the order of contact remains finite within each center manifold of the sequence described in step $3$.
In the second substep, one needs to compare different center manifolds and to show that the order of contact still remains finite. This is done by exploiting a deep consequence of the construction in step $3$, which is called \textit{splitting before tilting phenomenon}\footnote{Which is, roughly speaking, a multivalued version of what is known as \textit{tilt-lemma}, that is an estimate of the $L^2$-deviation from a tangent plane by means of the excess: the analogous of a reverse Poincaré inequality for elliptic partial differential equations.}, where the terminology was borrowed by [\ref{Rivieresplitting}].
\item With all this hard analysis at disposal, one can finally pass to the limit and conclude the convergence of the rescaling of the \textit{normal part} of $F$ to the graph of a Dir-minimizing $Q$-valued function $u$.
\item In conclusion, one can use a delicate capacitary argument leading to the persistence of the singularities to show that the function $u$ in step $5$ must have a singular set with positive $\mathcal{H}^{m-2+\alpha}$-measure, thus contradicting the partial regularity estimate for $Q$-valued Dir-minimizing functions in Theorem \ref{t:estimatesingset}.
\end{enumerate}

\subsubsection{Future research directions}

\begin{remark}
One could hope to be able to further investigate if Almgren's partial regularity theorem (Theorem \ref{t:regularityh}) can be improved (for $m \ge 3$) to derive fine properties of the interior singular set of any area-minimizing integral $m$-dimensional current in $\mathbb{R}^{m+n}$, above all if it is $m-2$-countably rectifiable and with locally finite $\mathcal{H}^{m-2}$-measure. Moreover, in the process of digging deeper in the above analysis of the singular set, this study would possibly shed light on at least some partial cases of Open problem \ref{op:utgcone}. \end{remark}

At the moment, the following questions are still open:

\begin{open problem}\label{op:rectifiability}
Consider an area-minimizing integral current $T$ of dimension $m$ in $\mathbb{R}^{m+n}$. Is \emph{Sing($T$)} $m-2$-rectifiable?
\end{open problem}

The fine structure of the singular set (hence, a fortiori, Open problem \ref{op:rectifiability}) shares many deep connections with the uniqueness of the tangent cone. In particular, Open problem \ref{op:rectifiability} seems to be relying on the following simpler tangent cone uniqueness question:
\begin{open problem}\label{op:rectifiability2}
Consider an area-minimizing integral current $T$ of dimension $m$ in $\mathbb{R}^{m+n}$ and let $p \in \emph{Sing($T$)}$ be a point where one tangent cone is flat. Is the latter the unique tangent cone to $T$ at $p$?
\end{open problem}

By the results obtained in the forthcoming work [\ref{DSkoro}]\footnote{Where the authors study, among other things, how to subdivide singularities based on the value of the frequency function.}, the authors suggest that a positive answer to the previous question, together with the additional information of a polynomial convergence rate, would imply $m-2$-rectifiability of $\operatorname{Sing}(T)$.

About the possibilities to go beyond Almgren's theory it is worth mentioning the case of currents equipped with special calibrated structures. Specific instances are complex integral currents of arbitrary dimension and codimension in $\mathbb{C}^d$, where the regularity analysis of [\ref{harvey}] fully characterized them as integrations over a pure $k$-dimensional algebraic variety (known as ``holomorphic $k$-chains"). Another more recent example is the one of special Lagrangian $3$-d cones in $\mathbb{R}^6$, proved in [\ref{bellettini}] to be smooth except for a finite number of half-lines, emanating from the vertex of the cone; a priori, Almgren's result ensures that area-minimizing integer rectifiable $3$-currents are smooth outside of a set of Hausdorff dimension $1$ that, in the case of a cone, roughly translates into having radial lines of singularities, possibly accumulating onto each other. Strongly relying on the special Lagrangian calibrated geometry, Bellettini and Rivière established in [\ref{bellettini}] that there can only be a finite number of such lines. 

\begin{open problem}\label{op:calibrated}
Is it possible to derive refined properties of \emph{Sing($T$)} in Almgren's theorem (Theorem \ref{t:regularityh}) when special structures are assumed on calibrated currents?
\end{open problem}

\section{Regularity theory for optimal transport paths}

Once an existence theory has been developed for the optimal branched transport problem, the natural following question is to ask whether such optimal transport paths with finite costs enjoy finer regularity properties. In general, one cannot hope for smooth minimizers, as it happens (for some dimensions) in the regularity theory for area-minimizing currents, because of the intrinsic nature of the networks. Nevertheless, an interior regularity theory has been developed starting from the work by Xia [\ref{Xiaregularity}], where an optimal transport path of finite cost is proved to be made by a finite union of line segments near each interior point of the path.

The main strategy to prove such a statement deeply relies on ideas borrowed from the theory of generalized area-minimizing surfaces. The key step is a blow-up procedure, studying tangent cones of minimizers at an arbitrary point; the main tools to perform such a blow-up analysis are the monotonicity formulae, as already widely discussed in Section 2.1. In analogy with area-minimizing currents, in optimal branched transport one should expect a monotonicity formula given by a suitable ratio involving the specific ``cost" of this Plateau-type problem: the $\alpha$-mass. Indeed, this is the case as shown in the following proposition.

\begin{proposition}\label{p:alphamonotonicity}{\normalfont [\ref{Xiaregularity}, Corollary 3.1]}
Let $T=\llbracket E,\tau,\theta \rrbracket$ be a transport path such that $T \in \OTP(\partial T)$. Then for any $x \in \operatorname{supp}(T) \setminus \operatorname{supp}(\partial T)$ and any $0<\rho < \operatorname{dist}(x,\operatorname{supp}(\partial T))$, the quantity \begin{equation}\label{e:alphamonotonicity} \frac{\int_{B_{\rho}(x)}|\theta|^{\alpha}d\mathcal{H}^1 \res E}{\rho} \end{equation}
is a nondecreasing function of $\rho$.
\end{proposition}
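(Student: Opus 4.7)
The plan is to follow the classical cone-comparison strategy underlying the area monotonicity formula \eqref{e:monotonicity}, adapted to the $\alpha$-mass. Define $m(\rho) := \int_{E \cap B_\rho(x)} |\theta|^\alpha \, d\mathcal{H}^1$; the conclusion $(m(\rho)/\rho)' \ge 0$ is equivalent to $\rho m'(\rho) \ge m(\rho)$ at a.e. $\rho \in (0, \operatorname{dist}(x, \operatorname{supp}(\partial T)))$. The argument combines an upper bound for $m(\rho)$ coming from optimality with a lower bound for $m'(\rho)$ coming from the coarea formula.

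For the upper bound I would construct a cone competitor. For a.e. $\rho$ in the admissible range, Proposition \ref{p:simonslice} applied to $f(y) := |y-x|$ together with the hypothesis $(\partial T) \res B_\rho(x) = 0$ gives $T_\rho := \partial(T \res B_\rho(x))$ as the slice $\langle T, f, \rho \rangle$, which by Proposition \ref{p:slice} is a finite atomic $0$-current $\sum_i \theta_i \delta_{y_i}$ with $y_i \in E \cap \partial B_\rho(x)$; testing on a smooth cutoff equal to $1$ on $\overline{B_\rho(x)}$ forces $\sum_i \theta_i = 0$. The cone $C_\rho := \sum_i \theta_i \llbracket [x, y_i] \rrbracket$ therefore satisfies $\partial C_\rho = \sum_i \theta_i(\delta_{y_i} - \delta_x) = T_\rho$, so $S := T \res (\R^d \setminus B_\rho(x)) + C_\rho$ is a valid competitor in $\TP(\partial T)$. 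For $\rho$ chosen so that $\mathcal{H}^1(E \cap \partial B_\rho(x)) = 0$, Proposition \ref{p:stuvard} ensures that $\mathbb{M}^\alpha$ splits additively over $B_\rho(x)$ and its complement, so optimality of $T$ combined with subadditivity \eqref{e:subadditivity} yields
\[ m(\rho) = \mathbb{M}^\alpha(T \res B_\rho(x)) \le \mathbb{M}^\alpha(C_\rho) \le \sum_i |\theta_i|^\alpha |y_i - x| \le \rho \, N(\rho), \]
where $N(\rho) := \sum_{y \in E \cap \partial B_\rho(x)} |\theta(y)|^\alpha$.

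For the lower bound on $m'(\rho)$ I would invoke the coarea formula (Theorem \ref{t:coarea}) with $f(y) = |y-x|$ on $E$. Since $f$ is $1$-Lipschitz and $\tau$ is a unit tangent vector, $J_\tau f(y) \le 1$ at every point of tangential differentiability. Inverting the tangential Jacobian in coarea gives $m(\rho) = \int_0^\rho \sum_{y \in E \cap \partial B_s(x)} |\theta(y)|^\alpha / J_\tau f(y) \, ds$, up to a nondecreasing contribution from the tangential set $\{J_\tau f = 0\}$, which meets $\partial B_s(x)$ in an $\mathcal{H}^0$-null set for $\mathcal{L}^1$-a.e. level $s$ by a standard application of the coarea formula. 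Consequently $m$ is absolutely continuous on compact subintervals of $(0, R)$ and $m'(\rho) \ge N(\rho)$ for a.e. $\rho$. Combining this with the cone-competitor bound yields $\rho m'(\rho) \ge \rho N(\rho) \ge m(\rho)$ a.e.; integrating the equivalent inequality $(\log m(\rho))' \ge (\log \rho)'$ on any subinterval of $(0,R)$ — noting $m > 0$ since $x \in \operatorname{supp}(T)$ — produces monotonicity of $m(\rho)/\rho$.

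The main obstacle is the simultaneous measure-theoretic bookkeeping needed to make the slicing-cone construction rigorous for a full-measure set of radii: one must select $\rho$ lying in the intersection of the full-measure sets of good radii appearing in Propositions \ref{p:slice} and \ref{p:simonslice}, in the set of points of differentiability of $m$, and in the set where $|\theta|^\alpha \mathcal{H}^1 \res E$ places no mass on $\partial B_\rho(x)$, so that both $\partial S = \partial T$ and additive splitting of $\mathbb{M}^\alpha$ hold exactly. Once these technicalities are settled, the conclusion reduces to the elementary one-variable differential inequality $\rho m'(\rho) \ge m(\rho)$.
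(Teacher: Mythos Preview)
The paper does not actually prove this proposition: it cites it from Xia's work and adds only the remark that ``once the monotone ratio has been `discovered', then the proof relies on some straightforward (and usually not so enlightening) computations.'' Your cone-comparison argument is exactly the standard route and mirrors the informal derivation of the area monotonicity formula \eqref{e:monotonicity} that the paper sketches in Section~2.1 (replace the current inside $B_\rho(x)$ by the cone over its slice, use optimality for the upper bound and coarea for the lower bound), so your proposal is both correct in outline and fully consistent with what the paper indicates. The measure-theoretic bookkeeping you flag at the end---in particular, passing from $\rho\,m'(\rho)\ge m(\rho)$ a.e.\ to monotonicity of $m(\rho)/\rho$ when $m$ is only known to be nondecreasing---is handled by observing that the singular part of $dm$ is nonnegative, so the distributional derivative of $m(\rho)/\rho$ is a nonnegative measure; this is the only point where your sketch is slightly informal.
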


\begin{remark}
As it always happens for monotonicity formulae, once the monotone ratio has been ``discovered", then the proof relies on some straightforward (and usually not so enlightening) computations.
\end{remark}

As soon as a monotonicity formula for the $\alpha$-mass is available, one can start the usual blow-up procedure studying the existence of tangent cones. Given the blow-up sequence $T_{0,r}$ as in Definition \ref{d:miserveblowup}, we need to prove that for $r_k \rightarrow 0$ as $k\rightarrow \infty$ we have the following uniform bound: \begin{equation}\label{e:uniformboundbranched}
\sup _k \left(\mathbb{M}(T_{0,r_k})+ \mathbb{M}(\partial T_{0,r_k})\right)<\infty .
\end{equation} Then, by Proposition \ref{p:compactnessnormal}, $T_{0,r}$ converges (up to subsequences) in the sense of currents to the tangent cone $T$.
\begin{remark}
Note the analogy with (for instance) Lemma \ref{l:bucompactness}, where one would like to prove uniform bounds to gain a suitable notion of (sequential) compactness and deduce convergence of the blow-ups.
\end{remark} Notice that by Proposition \ref{p:alphamonotonicity} we have $$\sup_k \mathbb{M}^{\alpha}(T_{0,r_k}) <\infty.$$ Moreover, since for every $k$ we have $T_{0,r_k} \in \OTP(\partial T_{0,r_k})$, we conclude that $$\sup_k \mathbb{M}(T_{0,r_k}) \leq \sup_k \mathbb{M}^{\alpha}(T_{0,r_k}) <\infty.$$

One can also prove $\sup_k \mathbb{M}^{\alpha}(\partial T_{0,r_k}) <\infty.$ The main idea of this proof is done by slicing, deriving a formula of the following type for the $\alpha$-mass of a current $S$ (see, [\ref{CDRMPP}, Proposition 2.9]): $$
\int \mathbb{M}^{\alpha}(S_y) \leq C \, \mathbb{M}^{\alpha}(S).
$$

\begin{remark}
In the same spirit of point $i)$ of Proposition \ref{p:buhomogenbu}, the fact that tangent cones are proved to be minimizers is of fundamental importance. Hence, also in the optimal branched transport problem one would like to conclude that the limiting object is a minimizer for the $\alpha$-mass, which is a priori not granted at all. 
Moreover, in analogy with point $ii)$ of Proposition \ref{p:buhomogenbu}, one wants to understand if the limiting object has some extra symmetries or finer properties: we would like to say that such a cone cannot have infinitely many segments spreading out of the object.
\end{remark}
More formally, we would like to prove the following lemma.

\begin{lemma}\label{p:incriminata}
Let $T=\llbracket E,\tau,\theta \rrbracket$ be a transport path such that $T \in \OTP(\partial T)$. Then for any $x \in \operatorname{supp}(T) \setminus \operatorname{supp}(\partial T)$ there exists a tangent cone $C_p$ of $T$ at $p$. Moreover, $C_p$ is again a minimizer for the $\alpha$-mass.
\end{lemma}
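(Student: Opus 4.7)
The plan is to combine the $\alpha$-mass monotonicity formula (Proposition \ref{p:alphamonotonicity}) with the stability result (Theorem \ref{t:stability_new}). Without loss of generality assume $p=0$. The paragraph preceding the statement already establishes $\sup_k\big(\mathbb{M}(T_{0,r_k})+\mathbb{M}(\partial T_{0,r_k})\big)<\infty$ on every fixed bounded set, so by Proposition \ref{p:compactnessnormal} we may extract a subsequence (not relabeled) along which $T_{0,r_k}$ converges in the sense of currents to some normal $1$-current $C_p$. This yields the existence of a subsequential blow-up limit; moreover the usual argument based on the constancy of the monotone ratio in \eqref{e:alphamonotonicity} in the limit shows that $C_p$ is invariant under dilations, i.e.\ a cone.

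The heart of the proof is showing that $C_p$ is optimal for its own boundary. First I would verify that each rescaling $T_{0,r_k}$ lies in $\OTP(\partial T_{0,r_k})$. Indeed, the $\alpha$-mass is additive on rectifiable currents supported on disjoint sets (a consequence of Proposition \ref{p:stuvard} combined with the subadditivity inequality \eqref{e:subadditivity}), so the optimality of $T$ forces $T\res B_{r}(0)$ to minimize the $\alpha$-mass among all $1$-currents with the same boundary: otherwise pasting a better competitor into $T$ outside $B_r(0)$ would contradict the optimality of $T$. Since the homothety $\iota_{0,r}$ rescales the $\alpha$-mass of every competitor by the common factor $r^{-1}$, the optimality descends to $T_{0,r_k}$.

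Next I would verify the hypotheses of Theorem \ref{t:stability_new} for the boundaries $b_k:=\partial T_{0,r_k}$, namely that $b_k\in A_C$ uniformly in $k$. The first condition $\mathbb{M}(b_k)\leq C$ is already part of the setup. The second condition, that $\mathbb{M}^\alpha(S)\leq C$ for every $S\in\OTP(b_k)$, reduces to a uniform bound on $\mathbb{M}^\alpha(T_{0,r_k})$, because by the previous step $T_{0,r_k}\in\OTP(b_k)$, so every optimal competitor shares the same $\alpha$-mass. The required bound on $\mathbb{M}^\alpha(T_{0,r_k})$ is exactly the content of Proposition \ref{p:alphamonotonicity}. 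Theorem \ref{t:stability_new} then yields $C_p\in\OTP(\partial C_p)$, concluding the proof.

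The hard part is precisely this last step: historically this is exactly the ``technical passage, only partially justified in the current literature'' singled out at the start of Section 2.2, and in Xia's original treatment [\ref{Xiaregularity}] it is handled by an argument whose details are incomplete. The delicate point that makes Theorem \ref{t:stability_new} (rather than the more classical Theorem \ref{t:stabilityCPAM}) necessary is that, at the blow-up scale, the positive and negative parts of $\partial C_p$ need not be mutually singular; this is the small but crucial upgrade encoded in the definition of $A_C$ that makes the regularity program go through. A minor concluding remark is that $C_p\neq 0$ whenever $p\in\operatorname{supp}(T)\setminus\operatorname{supp}(\partial T)$, which follows from strict positivity of the monotone ratio \eqref{e:alphamonotonicity} together with the lower semicontinuity of mass in Remark \ref{r:lsc}.
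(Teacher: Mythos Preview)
Your proposal is correct and follows the same route the paper indicates: the paper does not give a detailed proof of this lemma but states in the subsequent Remark that it is ``a straightforward application of stability of minimizers'' (Theorem~\ref{t:stabilityCPAM}, or more generally Theorem~\ref{t:stability_new}), which is precisely what you carry out, filling in the verification that the rescalings are themselves optimal and satisfy the uniform bounds needed for $A_C$. One small overstatement: you claim Theorem~\ref{t:stability_new} is \emph{necessary} because the positive and negative parts of $\partial C_p$ need not be mutually singular, but the paper itself points to Theorem~\ref{t:stabilityCPAM} as sufficient here (see the proof of Theorem~\ref{t:bttgcones}); using the more general version is harmless, just not strictly required.
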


\begin{remark}
To prove Lemma \ref{p:incriminata}, which is a key step in the regularity theory for optimal branched transport, the author in [\ref{Xiaregularity}, Proposition 3.3] introduces the so-called ``Whitney flat norm". Roughly speaking, the Whitney flat norm mimicks the definition of the flat norm $\mathbb{F}$ as in Definition \ref{d:flatnorm1}, substituting the mass with the $\alpha$-mass. In the proof [\ref{Xiaregularity}, page 290, line 28] the author hints at the possibility to generalize the compactness theorem for area-minimizing currents [\ref{Simonbook}, Theorem 34.5] to the case of equibounded $\alpha$-masses and $\alpha$-masses of the boundaries. This argument seems rather convoluted and would deserve some care in the delicate passages. We remark here that, in fact, Lemma \ref{p:incriminata} is a straightforward application of stability of minimizers, which is valid for any $\alpha \in (0,1)$, see Theorem \ref{t:stabilityCPAM} and [\ref{CDRMcpam}]. \end{remark}

More in general, we can conclude the following interior regularity theorem for tangent cones as a consequence of the stability property of minimizers. This is the fundamental step in order to prove Theorem \ref{t:ultimissimoregbranch}.

\begin{theorem}\label{t:bttgcones}
Let $T=\llbracket E,\tau,\theta \rrbracket$ be a transport path such that $T \in \OTP(\partial T)$. Then for any $p \in \operatorname{supp}(T) \setminus \operatorname{supp}(\partial T)$ there exists a tangent cone $C_p$ of $T$ at $p$ which is a minimizer for the $\alpha$-mass. Moreover we have that \begin{equation}\label{e:formadelcono}C_p \res B_1 = \sum_{i=1}^{k} m_i \llbracket e_i \rrbracket,\end{equation} where \begin{itemize}
\item[$a)$] $k \le C=C(\alpha,d)$,
\item[$b)$]  $e_i$ are segments of the form $\overline{p_i,0}$ for some $p_i$ in the unit $d-1$-sphere $S^{d-1}$.
\item[$c)$]  $m_i$ are real-valued multiplicities satisfying \begin{equation}\label{e:balancingconditions}\sum_{i=1}^{k}m_i=0 \, \text{ and } \sum_{i=1}^{k}\frac{m_i}{m_i^{1-\alpha}}p_i=0.\end{equation}
\end{itemize}
\end{theorem}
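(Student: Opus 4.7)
The plan is to perform a blow-up analysis in the spirit of the regularity theory for area-minimizing currents, based on the $\alpha$-monotonicity formula (Proposition \ref{p:alphamonotonicity}) and the flat-norm stability property for optimal transport paths (Theorem \ref{t:stability_new}).

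First I would set $\delta := \dist(p, \operatorname{supp}(\partial T)) > 0$, pick $r_k \downarrow 0$ with $r_k < \delta$, and consider the blow-ups $T_k := (\iota_{p, r_k})_* T$. For every $R > 0$ and $k$ large enough that $R r_k < \delta$, the scaling identity $\mathbb{M}^\alpha(T_k \res B_R) = \mathbb{M}^\alpha(T \res B_{R r_k}(p))/r_k$ and Proposition \ref{p:alphamonotonicity} give the uniform bound $\mathbb{M}^\alpha(T_k \res B_R) \leq R\, F(\delta)$, where $F(\rho) := \mathbb{M}^\alpha(T\res B_\rho(p))/\rho$. Since the multiplicity of $T$ is $\mathcal{H}^1$-a.e.\ bounded by $\mathbb{M}(\partial T)$ (Theorem \ref{t:smirnov} combined with Proposition \ref{pp:gooddec}), this yields a uniform $\mathbb{M}$-bound as well, and $\partial T_k \res B_R = 0$ for $k$ large because $\operatorname{supp}(\partial T_k)$ lies outside $B_R$. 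Proposition \ref{p:compactnessnormal} and a diagonal extraction then produce a subsequence converging in the sense of currents to a boundaryless normal current $C_p$.

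Next I would identify $C_p$ as a local minimizer. Each restriction $T_k \res B_R$ is an optimal transport path for its own sliced boundary (combining optimality of $T_k$ with the slicing inequality for $\mathbb{M}^\alpha$); the uniform bounds on $\mathbb{M}$, $\mathbb{M}(\partial\,\cdot)$ and the flat-norm metrization of the weak-$*$ convergence of normal currents with equibounded masses (see the remark following Definition \ref{d:flatnorm1}) place the sequence in the framework of Theorem \ref{t:stability_new}, yielding $C_p \res B_R \in \OTP(\partial(C_p \res B_R))$ for a.e.\ $R > 0$. To upgrade optimality to the cone property, let $m_0 := \lim_{\rho \downarrow 0} F(\rho)$: lower semicontinuity of $\mathbb{M}^\alpha$ under flat convergence (Definition \ref{d:lscrelaxmass}) combined with the scaling gives $\mathbb{M}^\alpha(C_p \res B_\rho)/\rho \leq m_0$, while Proposition \ref{p:alphamonotonicity} applied to $C_p$ yields the reverse inequality. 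Thus $\rho \mapsto \mathbb{M}^\alpha(C_p \res B_\rho)/\rho$ is the constant $m_0$; inspecting the proof of Proposition \ref{p:alphamonotonicity} then forces the $0$-homogeneity $(\iota_{0, \lambda})_* C_p = C_p$ for every $\lambda > 0$, hence $C_p$ is a cone with vertex at the origin.

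Finally, for the fine structure inside $B_1$: acyclicity of each restriction (Proposition \ref{p:paolstep}) allows Theorem \ref{t:smirnov} to provide a good decomposition of $C_p \res B_1$; the cone property together with the single-path property (Proposition \ref{p:singlepath}) forces every curve in the decomposition to be a radial segment, yielding $C_p \res B_1 = \sum_{i=1}^k m_i \llbracket e_i \rrbracket$ with $e_i = \overline{p_i, 0}$ and $p_i \in S^{d-1}$. The identity $\sum_i m_i = 0$ is Kirchhoff's law at the origin, forced by $\partial C_p = 0$ in a neighborhood of $0$. The second balancing condition is the first-order optimality under a translation of the central vertex to $v$: keeping the outer endpoints fixed, the $\alpha$-mass of the central star becomes $\sum_i |m_i|^\alpha |p_i - v|$, whose differential at $v = 0$ reads $-\sum_i \frac{m_i}{m_i^{1-\alpha}} p_i$ (under the convention that the fractional power of a real number is taken with sign), which must vanish by optimality of $C_p$ combined with Proposition \ref{p:12punto1}. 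The main obstacle I expect is the bound $k \leq C(\alpha, d)$: here I would argue that Proposition \ref{p:BCMangoli}, applied after suitable rescaling to any two adjacent rays, forces a positive lower bound on the angle between any two distinct $p_i, p_j$ depending only on $\alpha$, and a standard packing estimate on $S^{d-1}$ then yields the desired bound on $k$.
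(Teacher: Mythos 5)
Your overall strategy — blow-up along $r_k \downarrow 0$, uniform $\alpha$-mass bounds from the monotonicity formula (Proposition \ref{p:alphamonotonicity}), identification of the limit as a minimizing cone via the stability Theorem \ref{t:stability_new}, then the angle estimate of Proposition \ref{p:BCMangoli} together with a first variation of the vertex — is exactly the route the paper follows. The paper's own proof is extremely terse: it defers both the existence of the tangent cone and its minimality to Lemma \ref{p:incriminata} (which is in turn rereads [\ref{Xiaregularity}] through [\ref{CDRMcpam}]) and merely sketches the angle bound and balancing conditions. You have reconstructed the details the paper omits.

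However, the step where you ``upgrade optimality to the cone property'' has a genuine gap. You assert that Proposition \ref{p:alphamonotonicity} applied to $C_p$ yields the reverse inequality $\mathbb{M}^\alpha(C_p \res B_\rho)/\rho \geq m_0$. But that proposition only gives that $\rho \mapsto \mathbb{M}^\alpha(C_p \res B_\rho)/\rho$ is nondecreasing, hence bounded below by its limit as $\rho \to 0^+$; nothing in the monotonicity statement identifies that limit with $m_0 = \lim_{\sigma\to 0} F(\sigma)$. What you actually need is the \emph{convergence} $\mathbb{M}^\alpha(T_k \res B_\rho) \to \mathbb{M}^\alpha(C_p \res B_\rho)$, not merely the lower semicontinuity you invoke (which gives only the one-sided bound $\leq m_0$). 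This convergence holds because $T_k \res B_\rho$ and $C_p \res B_\rho$ are both $\alpha$-mass minimizers with flat-converging boundaries, which is precisely the nontrivial content outsourced by the paper to Lemma \ref{p:incriminata} and [\ref{CDRMcpam}]; once one has it, the density ratio of $C_p$ is the constant $m_0$, and rigidity in the monotonicity formula gives $0$-homogeneity. Two minor imprecisions elsewhere: the simple curves in the good decomposition of $C_p \res B_1$ may pass through the origin and be ``V''-shaped rather than radial segments (the \emph{current} is still a sum of radial segments, which is what \eqref{e:formadelcono} requires); and Proposition \ref{p:BCMangoli} treats three edges, so to get a uniform angle gap between any two of the $k$ rays one has to split off two adjacent rays and argue that a below-threshold angle produces a cheaper competitor — the paper also only sketches this and explicitly flags a remaining technical caveat in the remark following the proof.
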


\begin{remark}
Theorem \ref{t:bttgcones} ensures that the tangent cone is a finite (real) polyhedral current where the number $k$ of (nonoverlapping) segments is uniformly bounded above by a geometric constant depending only on $\alpha$ and the dimension $d$. Moreover, note that since the point $p$ has been chosen in $\text{supp}(T) \setminus \text{supp}(\partial T)$, then $\sum_{i=1}^{k}m_i=0$ follows immediately. Property $\sum_{i=1}^{k}\frac{m_i}{m_i^{1-\alpha}}p_i=0$ is a ``balancing equation" coming, instead, from a first order condition of the minimal cone $C_p$, relating the $m_i$'s and the vectors $p_i$'s.
\end{remark}

\begin{proof}
The existence of a tangent cone $C_p$ comes from Lemma \ref{p:incriminata}. $C_p$ is optimal by Theorem \ref{t:stabilityCPAM} (or, more generally, Theorem \ref{t:stability_new}) and so, by a standard computation in analogy with Proposition \ref{p:BCMangoli} one can show that the minimal angle between two vectors $e_i$ and $e_j$ is a strictly positive quantity depending only on $\alpha$ (and the dimension $d$). This uniform lower bound forces the number of segments $e_i$'s to be finite. Hence, $C_p$ must be of the form \eqref{e:formadelcono}. By optimality of $C_p$, the balancing conditions \eqref{e:balancingconditions} follow immediately, concluding the proof.
\end{proof}

\begin{remark}
More precisely, to be able to conclude, one would need to rule out the possibility that the vertices are not accumulating in the interior, which would require some other results. For the sake of the exposition, we assume this technical passage to be true. 
\end{remark}

Finally we can state the following interior regularity theorem for optimal transport paths as a corollary of Theorem \ref{t:bttgcones}.

\begin{theorem}[Interior regularity]\label{t:ultimissimoregbranch}
Suppose $T \in \OTP(\partial T)$ is an optimal transport path such that $\mathbb{M}^\alpha(T)<\infty$. For any point $p \in \operatorname{supp}(T) \setminus \operatorname{supp}(\partial T)$ there is an open neighborhood $U$ of $p$ such that $T \res U$ is a polyhedral current.
\end{theorem}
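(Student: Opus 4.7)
The plan is to deduce the theorem from Theorem \ref{t:bttgcones} by a blow-up plus rigidity argument. Fix an interior point $p \in \operatorname{supp}(T) \setminus \operatorname{supp}(\partial T)$. By Theorem \ref{t:bttgcones} there exists a tangent cone $C_p$ of $T$ at $p$, optimal for its own boundary, whose restriction to $B_1$ has the polyhedral form $\sum_{i=1}^{k} m_i \llbracket e_i \rrbracket$ with $k \leq C(\alpha,d)$ edges emanating from $0$. By the angle computation of Proposition \ref{p:BCMangoli} (applied to every triple of merging edges), the angles between any two rays in the support of $C_p$ are bounded below by some $\theta_0 = \theta_0(\alpha, d)>0$. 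The strategy is to use this quantitative rigidity of the cone, together with the stability/compactness of minimizers (Theorems \ref{t:stabilityCPAM}, \ref{t:stability_new}) and the monotonicity of the density ratio (Proposition \ref{p:alphamonotonicity}), to transfer the finite polyhedral structure of $C_p$ to a full neighborhood of $p$ in $T$.

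First I would fix a radius $r_0>0$ such that the rescaled currents $T_{p,r}$ for $r \leq r_0$ stay in a flat neighborhood of $C_p$ in $B_1$: such $r_0$ exists because the blow-up is well-defined along some sequence $r_k \to 0$, and by the monotonicity formula the $\alpha$-mass ratio at $p$ converges, which together with Theorem \ref{t:stability_new} upgrades the subsequential convergence to genuine closeness of $T_{p,r}$ to $C_p$ on any prescribed annulus. In an annulus $B_1 \setminus B_{1/2}$ the support of $C_p$ consists of $k$ disjoint segments, separated pairwise by a definite distance $\delta_0(\theta_0)>0$; hence for $r$ small enough the support of $T_{p,r}$ in this annulus lies in the union of $k$ disjoint narrow tubes, one around each $e_i$.

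Next I would argue that the piece of $T_{p,r}$ inside each such tube is itself an optimal transport between its endpoints, hence a single segment. The reasoning is the following: by slicing $T_{p,r}$ with spheres $\partial B_s$ and by the single path property of Proposition \ref{p:singlepath}, the $\pi$-a.e. curve in a good decomposition of $T_{p,r}$ traverses exactly one tube in each annulus; so the restriction of $T_{p,r}$ to a tube is a transport path between two atomic measures. By Proposition \ref{p:12punto1} and the uniform lower bound on angles, this restriction cannot split further at any intermediate vertex without violating the monotonicity of the ratio in \eqref{e:alphamonotonicity} around such a putative vertex (each branching would consume a definite amount of $\alpha$-mass, forcing the density ratio at $p$ to strictly exceed its value at scale $r$). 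Iterating this scheme at smaller scales shows that $T_{p,r_0'}$, restricted to some $B_{r_0'}(p)$, is a finite sum of straight segments emanating from $p$, possibly meeting finitely many further branching points; accumulation of such branching points near $p$ is excluded by the same $\alpha$-mass consumption argument (each branching carries a definite density jump, while the total $\alpha$-mass is finite).

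The main obstacle is the third step: passing from the \emph{qualitative} closeness $T_{p,r} \rightharpoonup C_p$ to the \emph{exact} statement that $T$ is polyhedral on a neighborhood. This is precisely the point where one must combine the monotonicity formula, the single-path property, and the sharp balancing conditions \eqref{e:balancingconditions} to exclude any oscillation or curvature of the edges at finite scale. Equivalently, one needs a rigidity lemma saying that an optimal $T$ whose tangent cone at $p$ is a Steiner-like configuration with uniformly spaced arms must itself be such a configuration on a ball; this is the nonstraightforward core of Xia's argument, and it is exactly here that the stability theorem (Theorem \ref{t:stability_new}) enters as a clean substitute for the more delicate compactness argument originally used in \emph{[\ref{Xiaregularity}]}.
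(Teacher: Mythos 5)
The paper in fact provides no proof of Theorem~\ref{t:ultimissimoregbranch}: it is stated as a bare corollary of Theorem~\ref{t:bttgcones}, and the remark just preceding it explicitly concedes that ``one would need to rule out the possibility that the vertices are not accumulating in the interior, which would require some other results. For the sake of the exposition, we assume this technical passage to be true.'' Your outline is therefore an attempt to reconstruct an argument the authors themselves leave open, and to your credit you identify precisely the ingredients the paper invokes (the polyhedral tangent cone of Theorem~\ref{t:bttgcones}, the angle rigidity from Proposition~\ref{p:BCMangoli}, the monotonicity of Proposition~\ref{p:alphamonotonicity}, the single-path property, and the stability Theorem~\ref{t:stability_new}), and you honestly flag the final rigidity step as the outstanding obstacle.

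That said, two steps in your sketch contain genuine gaps beyond the one you concede. The assertion that monotonicity plus Theorem~\ref{t:stability_new} ``upgrades the subsequential convergence to genuine closeness of $T_{p,r}$ to $C_p$ on any prescribed annulus'' is a claim of \emph{uniqueness of the tangent cone}, which nothing you quote delivers: the monotonicity formula \eqref{e:alphamonotonicity} controls a scalar density ratio only, and Theorem~\ref{t:stability_new} says every subsequential limit is a minimizer, but together they remain compatible with distinct (e.g.\ rotated) minimizing cones appearing along different scales $r_k\to 0$. You need a separate argument before you can speak of a single approximating cone on a full interval of radii. Similarly, the step concluding that the portion of $T_{p,r}$ in each tube ``is a transport path between two atomic measures,'' hence a segment, does not follow from the good decomposition and Proposition~\ref{p:singlepath} as stated: the boundary of the restriction of a normal $1$-current to a tube is not a priori atomic, and a curve in the decomposition may enter and exit a tube through its lateral wall rather than its spherical caps. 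Finally, a small but worth noting misattribution: in the paper's scheme Theorem~\ref{t:stability_new} is used (see the remark after Lemma~\ref{p:incriminata}) to replace Xia's compactness argument in showing that the \emph{tangent cone is optimal}; it is not the tool that supplies the final passage from cone structure to polyhedral structure in a neighborhood, which remains exactly the step the paper concedes to be assumed.
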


\chapter{Uniqueness results} 

The main goal of Chapter 3 is to present the uniqueness theory for the Plateau's problem and for the optimal branched transport problem. After a brief discussion about the main uniqueness and nonuniqueness theorems for solutions of these two geometric variational problems, we will pass to the most original contributions of this thesis: in Section 3.1 we exploit the regularity theory in higher codimension of Theorem \ref{t:regularityh} to prove that, \textit{generically} (in the sense of Baire categories), every integral $(m-1)$-current without boundary spans a unique minimizer in $\mathbb{R}^{m+n}$. In Section 3.2 we prove the generic uniqueness of minimizers of the optimal branched transport problem.

\section{Generic uniqueness of minimal surfaces}

Arguably, the very innocent question of ``how many minimal surfaces can be spanned by a given closed Jordan curve" turns out to be one of the most challenging questions that can be raised in connection with the Plateau's problem. Indeed, the answer to this question is still not known in full generality and goes back at least to the first decades of the twentieth century, to the works by Rad\'{o}, Courant, Tromba, Nitsche, Tomi and many others, see [\ref{Minimalsurfaces}] for a beautiful survey. Indeed, the problem of uniqueness still deserves some attention even considering the more modest question of asking how many minimal surfaces \textit{of the type of the disk}\footnote{For a precise definition we refer the reader to [\ref{Minimalsurfaces}, Section 4.2, Definition 1].} can be spanned in a given closed Jordan curve $\Gamma$. 
Many examples of minimal surfaces have been developed in the literature that warn us not to expect uniqueness even for disk-type solutions of the Plateau's problem. Hence, we may ask whether additional geometric conditions for $\Gamma$ are known to ensure this uniqueness. We mention here two pioneering uniqueness results. 

\begin{theorem}[Rad\'{o}'s Theorem]\label{t3:rado}{\normalfont [\ref{Rado16}]}
If $\mathit{\Gamma}$ has a one-to-one parallel projection onto a planar convex curve $\gamma$, then $\mathit{\Gamma}$ bounds at most one disk-type minimal surface.
\end{theorem}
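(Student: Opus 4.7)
The plan is to reduce the theorem to the classical uniqueness statement for the Dirichlet problem of the nonparametric minimal surface equation over a convex planar domain. First I would choose coordinates so that the given parallel projection is the orthogonal projection $\pi(x,y,z)=(x,y)$ onto $\R^2\times\{0\}$, and denote by $\Omega\subset\R^2$ the convex open region bounded by $\gamma$. Since $\pi|_\Gamma:\Gamma\to\gamma$ is a homeomorphism, $\Gamma$ is the graph of a continuous function $\varphi:\gamma\to\R$.

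The key step is to show that every disk-type minimal surface $\Sigma$ spanned by $\Gamma$ is the graph of some function $u:\overline\Omega\to\R$ with $u|_\gamma=\varphi$. To this end, parametrize $\Sigma$ conformally as $X=(x,y,z):\overline{D}\to\R^3$, where $D$ is the open unit disk; by conformality together with the minimal surface equation, each coordinate $x,y,z$ is harmonic in $D$. Thus $F:=\pi\circ X=(x,y):\overline D\to\R^2$ is a harmonic map whose boundary restriction is a homeomorphism onto the convex Jordan curve $\gamma$. At this point I would invoke the classical Rad\'o lemma on harmonic maps: a harmonic map of the disk whose boundary values trace out a convex Jordan curve homeomorphically is itself a diffeomorphism of $D$ onto the enclosed convex region. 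Applying it yields $F:D\to\Omega$ bijective, and therefore $u:=z\circ F^{-1}\in C^0(\overline\Omega)\cap C^\infty(\Omega)$ with $\Sigma=\mathrm{graph}(u)$ and $u|_\gamma=\varphi$.

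Having reduced to nonparametric minimal graphs, $u$ solves the Dirichlet problem
\[
\operatorname{div}\!\left(\frac{\nabla u}{\sqrt{1+|\nabla u|^2}}\right)=0\quad\text{in }\Omega,\qquad u=\varphi\ \text{on}\ \gamma.
\]
Given two disk-type minimizers $\Sigma_1,\Sigma_2$, the two steps above produce solutions $u_1,u_2$ with identical Dirichlet data. Subtracting the weak formulations, testing with $u_1-u_2$, and using the strict monotonicity of the vector field $p\mapsto p/\sqrt{1+|p|^2}$ (equivalently, the strict convexity of the area integrand $p\mapsto\sqrt{1+|p|^2}$) yields $\nabla u_1=\nabla u_2$ a.e., hence $u_1\equiv u_2$ by the boundary condition. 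Thus $\Sigma_1=\Sigma_2$.

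The main obstacle is the geometric step of showing that every disk-type minimal surface is necessarily a graph. Everything else is standard: the conformal/harmonic setup is built into the definition of disk-type solutions, and uniqueness for the minimal surface equation with equal boundary data is a textbook consequence of strict convexity of area. The crux lies in Rad\'o's harmonic-map lemma, which is proved by a winding/argument-principle analysis of the preimage of points under $F$: if $F$ failed to be injective, one could construct a nonconstant harmonic function on $D$ (an appropriate linear combination of $x$ and $y$) attaining an interior extremum, contradicting the maximum principle; convexity of $\gamma$ is used precisely to produce the separating linear functional that furnishes this contradiction.
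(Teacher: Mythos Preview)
The paper does not contain a proof of this theorem; it merely states the result and cites Rad\'o's original 1930 paper. So there is no proof in the paper to compare against.

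Your proposal is the classical argument and is essentially correct: reduce to the nonparametric setting via the Rad\'o--Kneser--Choquet theorem (your ``Rad\'o harmonic-map lemma''), then invoke uniqueness for the Dirichlet problem of the minimal surface equation. Two small remarks. First, the step ``subtract weak formulations and test with $u_1-u_2$'' needs care, since you only have $u_i\in C^0(\overline\Omega)\cap C^\infty(\Omega)$ and no a priori control of $\nabla u_i$ near $\partial\Omega$; the cleaner route is the comparison principle for the quasilinear elliptic minimal surface operator, which gives $u_1\equiv u_2$ directly from $u_1=u_2$ on $\gamma$. Second, your ``WLOG orthogonal projection'' is justified because a Euclidean rotation aligning the projection direction with the $z$-axis preserves minimality, and the induced linear map on the image plane preserves convexity and injectivity.
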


\begin{theorem}[Nitsche's Theorem]\label{t3:nitsche}{\normalfont [\ref{Nitsche26}]}
If $\mathit{\Gamma}$ is a real analytic, regular Jordan curve with a total curvature less than or equal to $4 \pi$, then $\mathit{\Gamma}$ bounds only one disk-type minimal surface. Moreover, the solution can be continued analytically across $\mathit{\Gamma}$ as a minimal surface\hspace{0.04cm}\footnote{The unique solution in Nitsche's theorem is not just immersed: it is also embedded.}.
\end{theorem}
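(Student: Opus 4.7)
The plan is to reduce the uniqueness statement to the classical maximum principle for the minimal surface equation, via a Gauss--Bonnet bound on any candidate minimizer. Let $\Sigma$ be any disk-type minimal surface spanned by $\mathit{\Gamma}$. First I would invoke the classical boundary regularity theorem of Hildebrandt and Lewy: since $\mathit{\Gamma}$ is real analytic, $\Sigma$ is real-analytic up to $\partial \Sigma$ and extends by Schwarzian reflection as a minimal surface across $\mathit{\Gamma}$, which already yields the ``analytic continuation'' part of the statement and allows me to work with smooth surfaces throughout.

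Next I would apply the Gauss--Bonnet formula to $\Sigma$: since $\chi(\Sigma)=1$ and a minimal surface satisfies $K_\Sigma \le 0$,
$$\int_\Sigma |K|\,dA \;=\; \int_{\partial \Sigma} \kappa_g\,ds \;-\; 2\pi \;\le\; \int_{\mathit{\Gamma}} |\kappa|\,ds \;-\; 2\pi \;\le\; 2\pi,$$
where I used $|\kappa_g|\le |\kappa|$ along $\mathit{\Gamma} \subset \Sigma$ together with the total curvature hypothesis. Since the total absolute curvature of a minimal surface equals the spherical area (with multiplicity) of the image of the Gauss map $N\colon \Sigma \to \mathbb{S}^2$, we conclude that $N(\Sigma)$ covers at most half of $\mathbb{S}^2$.

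The next step is to upgrade this area bound to containment in a closed hemisphere and then to deduce that $\Sigma$ is a graph. Using that $N$ is anti-holomorphic in isothermal coordinates, together with the boundary behaviour of the Gauss image inherited from the real-analyticity of $\mathit{\Gamma}$, one shows that $N(\Sigma)$ is contained in a hemisphere determined by some unit vector $\nu$, giving $\langle N,\nu\rangle \ge 0$ pointwise. Consequently $\Sigma$ is locally, and then --- using the analytic continuation of the first step together with the fact that the $4\pi$ curvature threshold forces $\mathit{\Gamma}$ to project injectively onto a convex planar Jordan curve in $\nu^\perp$ --- \emph{globally} a graph of a function $u$ over the planar domain $\Omega \subset \nu^\perp$ bounded by the projection of $\mathit{\Gamma}$.

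Uniqueness then follows by a maximum-principle argument: if $\Sigma_1,\Sigma_2$ are two disk-type solutions, the analysis above produces graphical representations $u_1,u_2$; once one verifies these can be arranged over a common $\Omega$ with identical boundary data, the difference $u_1-u_2$ satisfies a linear elliptic equation with zero boundary values and therefore vanishes by the strong maximum principle, proving $\Sigma_1=\Sigma_2$. The main obstacle will be the hemisphere step together with the compatibility of the two graph representations --- this is where the sharp $4\pi$ threshold is genuinely used, since both the deduction that $N(\Sigma)$ lies in a hemisphere and the injectivity of the planar projection of $\mathit{\Gamma}$ rely on a careful topological analysis of the tangent indicatrix of $\mathit{\Gamma}$ and of any minimal disk spanning it, rather than on the soft integral estimate alone.
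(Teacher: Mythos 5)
The paper does not prove Nitsche's theorem---it is quoted from [\ref{Nitsche26}] as a background result---so there is no proof of the paper's to compare against; judged on its own terms, your sketch has genuine gaps. The Gauss--Bonnet inequality $\int_\Sigma |K|\,dA\le 2\pi$ is derived under the tacit assumption that $\Sigma$ is a smooth immersion, but an a priori disk-type Plateau solution is only a branched conformal immersion; each interior branch point of order $m$ contributes an additional term $+2\pi m$ to the Euler characteristic side, so the inequality must be re-derived, and in fact ruling out branch points under the $4\pi$-hypothesis is part of the \emph{conclusion} of Nitsche's theorem, not a permissible input. More seriously, the step from $\int_\Sigma|K|\,dA\le 2\pi$ to ``$N(\Sigma)$ lies in a closed hemisphere'' does not follow: a bound on the spherical area of the Gauss image (counted with multiplicity) is entirely compatible with failure of hemisphere containment---an open $\varepsilon$-neighborhood of a great circle in $\mathbb{S}^2$ has arbitrarily small area yet meets the complement of every open hemisphere, and such sets do arise as images of the disk under nonconstant anti-holomorphic maps. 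The appeal to ``the boundary behaviour of the Gauss image inherited from the real-analyticity of $\Gamma$'' names no mechanism. You flag this as the main obstacle; it is, and the sketch does not overcome it.

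The third ingredient is also unavailable: you assert that the $4\pi$ total-curvature bound ``forces $\Gamma$ to project injectively onto a convex planar Jordan curve.'' If that were true, Theorem \ref{t3:nitsche} would reduce at once to Theorem \ref{t3:rado}, whereas the two hypotheses are known to be independent---neither implies the other---and this independence is exactly why Nitsche's result was new. So your overall strategy (reduce to Rad\'o via graphicality and the maximum principle for the minimal surface equation) cannot succeed in general. The argument that actually closes the problem does not pass through graphicality: the bound $\int_\Sigma|K|\,dA\le 2\pi$ is used to prove strict stability of the Jacobi operator $\Delta_\Sigma-2K$ with Dirichlet boundary condition, hence nondegeneracy of every Plateau solution spanned by $\Gamma$, and uniqueness then follows from a degree-theoretic/deformation argument. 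The Lewy reflection step you cite for the analytic continuation across $\Gamma$ is the one part of the sketch that is correct as stated.
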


To understand why the uniqueness question in the Plateau's problem is so complicated one should consider the following observation. Remarkably enough, Courant outlines in his famous book ``\textit{Dirichlet principle, conformal mappings and minimal surfaces}", see [\ref{Courant}], an argument showing that there might exist a rectifiable Jordan curve $\Gamma$ which bounds \textit{uncountably many} minimal surfaces (of the type of the disk). In fact, Courant did not prove a fundamental step of his construction called \textit{strong bridge theorem}, which roughly tells about the possibility to connect two Jordan curves by a \textit{bridge} consisting of two arcs, giving rise to many disk-type minimal surfaces, see Figures \ref{f:monster1} and \ref{f:monster2}. The validity of his example strictly depends on the validity of this result, which was rigorously proved by White in fairly general and strong versions, see [\ref{Whitebridge1}, \ref{Whitebridge2}]. More recently, Morgan [\ref{Morganinfinite}] gave an example of a smooth curve in $\R^4$ that bounds a whole continuum of (unoriented) area-minimizing analytic manifolds. The author proved the theorem relying on a generalization of the theory of currents called theory of currents modulo $p$, see [\ref{Federerbook}] for a more detailed discussion. Hence, from these examples it is clear that uniqueness is highly unexpected in Plateau-type problems.

\begin{figure}[h]
    \centering
    \includegraphics[width=0.9\textwidth]{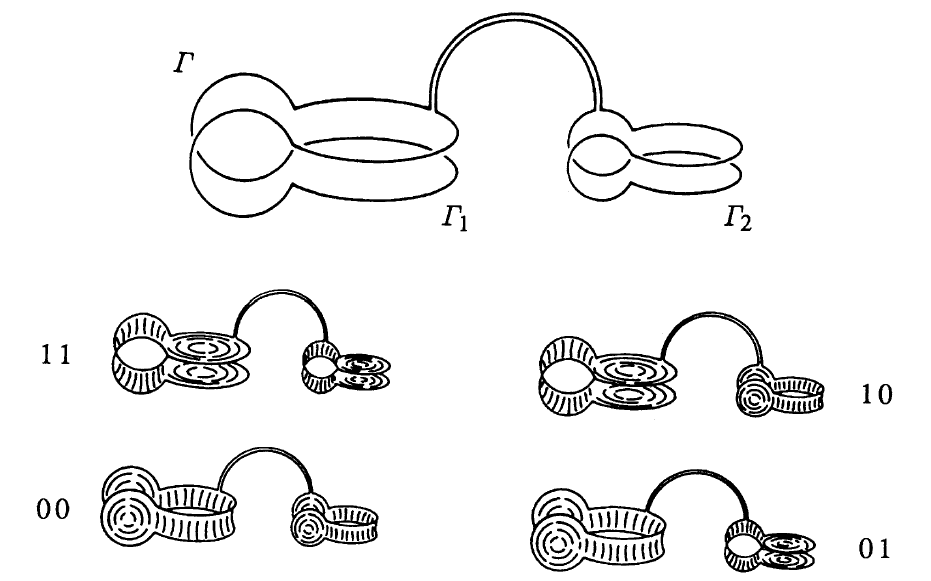}
    \caption{Application of Courant's bridge theorem. (Adapted from [\ref{Minimalsurfaces}]).}
    \label{f:monster1}
\end{figure}

\begin{figure}[h]
    \centering
    \includegraphics[width=0.9\textwidth]{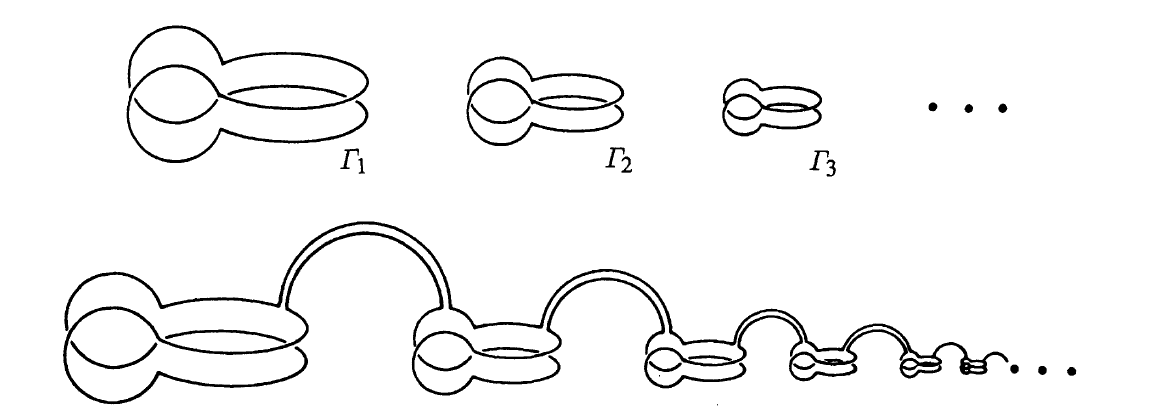}
    \caption{Construction of a curve $\Gamma$ bounding uncountably minimal surfaces. (Adapted from [\ref{Minimalsurfaces}]).}
    \label{f:monster2}
\end{figure}

Consequently, other approaches have been developed to study uniqueness questions. Arguably, the most fruitful was by means of Baire categories. Indeed we state here one of the first satisfactory partial answers to the finitness question by B\"ohme and Tromba, see [\ref{BTromba}].

\begin{theorem}{\normalfont [\ref{BTromba}]}
Generically\hspace{0.045cm}\footnote{We remark that by \textit{generically} the authors mean that there exists an open dense subset of boundaries for which there exists only a finite number of disk-type solutions of the Plateau's problem.}, the number of disk-type solutions of the Plateau's problem is finite.
\end{theorem}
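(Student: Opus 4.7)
The plan is to cast the problem in the functional-analytic framework of Douglas–Courant and then apply the Sard--Smale theorem, which is the infinite-dimensional analogue of Sard's theorem for nonlinear Fredholm maps between Banach manifolds. Let me first fix a regularity class: let $\mathcal{A}$ denote the Banach manifold of $C^{k,\beta}$ regular Jordan curves in $\mathbb{R}^d$ (with $k$ sufficiently large, $\beta\in(0,1)$), topologized as an open subset of the Banach space of $C^{k,\beta}$ embeddings of $S^1$ modulo reparametrization. Disk-type solutions of the Plateau's problem with boundary $\Gamma\in\mathcal{A}$ are in bijection with pairs $(u,\Gamma)$ where $u:\bar{D}\to\mathbb{R}^d$ is harmonic, almost conformal, and maps $\partial D$ homeomorphically onto $\Gamma$, modulo the three-dimensional conformal automorphism group of $D$. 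After fixing a three-point normalization, one obtains a Banach manifold $\mathcal{M}$ of (normalized) minimal surfaces, and the natural forgetful projection $\pi:\mathcal{M}\to\mathcal{A}$, $\pi(u)=u|_{\partial D}$, is the map whose fibers I want to control.

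Next, I would show that $\pi$ is a $C^1$ nonlinear Fredholm map of index $0$. The key computation is the linearization $D\pi(u)$ at a minimal surface $u$: it is governed by the second variation of the Dirichlet energy at $u$, which is a Jacobi-type elliptic operator on $\bar{D}$ with mixed (Plateau-type) boundary conditions. Elliptic regularity and the Fredholm alternative for such boundary value problems yield that $D\pi(u)$ has finite-dimensional kernel and cokernel of equal dimension, hence Fredholm of index $0$. At this point the Sard--Smale theorem applies and furnishes a residual set $\mathcal{R}\subset \mathcal{A}$ of regular values of $\pi$. Over any $\Gamma\in\mathcal{R}$ the implicit function theorem guarantees that $\pi^{-1}(\Gamma)$ is a smooth $0$-dimensional submanifold of $\mathcal{M}$, i.e.\ a discrete set.

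To upgrade discreteness to finiteness, I would establish that $\pi$ is proper. This is where the Douglas–Courant compactness theorem enters: any sequence of normalized disk-type minimal surfaces with boundaries converging in $\mathcal{A}$ has uniformly bounded Dirichlet energy (by the isoperimetric inequality applied to the fixed-boundary competitor), and combined with the three-point normalization and standard elliptic estimates this forces subsequential convergence of $u_j$ in the appropriate norm to a limit minimal surface. Properness plus discreteness of the fiber give finiteness of $\pi^{-1}(\Gamma)$ for every $\Gamma\in\mathcal{R}$. Finally, to pass from residual to open-dense as in the footnote, I would use upper semicontinuity of the cardinality $\#\pi^{-1}(\cdot)$ near regular values (again via the implicit function theorem: solutions persist under small perturbations of the boundary and no new ones spontaneously appear), which combined with a local version of the Sard–Smale argument shows that regularity is an open condition on $\mathcal{A}$ restricted to a suitable dense set.

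The hard part is the functional-analytic setup itself, and in particular the Fredholm analysis of the second variation. There are two subtle obstructions. First, \emph{branch points}: a minimal surface with interior branch points is not an immersion, so the Dirichlet energy is only $C^1$ on the full parametrization space, and the second-variation operator degenerates; handling this requires either restricting to an open subclass of boundaries avoiding branched solutions or working with the extended analytic framework of Tromba using a weighted Sobolev setting. Second, the \emph{conformal gauge}: one must carefully quotient by the three-parameter group of M\"obius automorphisms of $D$ before $\pi$ can even be defined, and verifying that the resulting $\mathcal{M}$ is a true Banach manifold with $\pi$ of class $C^1$ is the most delicate technical point; this is precisely where B\"ohme and Tromba's original contribution lies, and any honest implementation of the above sketch must import the analytic machinery from [\ref{BTromba}].
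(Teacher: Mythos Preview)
The paper does not supply a proof of this statement; it is simply quoted as a known result from B\"ohme and Tromba~[\ref{BTromba}], serving as historical context before the paper's own generic-uniqueness results for area-minimizing currents and optimal transport paths. There is therefore no ``paper's own proof'' to compare against.

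That said, your outline is a faithful sketch of the original B\"ohme--Tromba strategy: build a Banach manifold of normalized disk-type minimal surfaces, verify that the boundary projection $\pi$ is a $C^1$ Fredholm map of index~$0$, invoke Sard--Smale to obtain a residual set of regular values, and use properness of $\pi$ (via Douglas--Courant compactness) to pass from discreteness to finiteness of fibers. You have also correctly identified the two genuine technical obstacles---the loss of smoothness at branched minimal surfaces and the conformal gauge---and acknowledged that resolving them is exactly the content of the cited paper. As a high-level summary of the method, this is accurate; as a self-contained proof it is not, but you are explicit about where the hard analysis is being imported from.
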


 \begin{remark}
Despite the generic finiteness result by B\"ohme and Tromba, the question whether a (reasonably smooth) curve $\Gamma$ bounds only finitely many disk-type solutions of the Plateau's problem is widely open.
\end{remark}

More recently, Morgan [\ref{Morganinventiones}] proved in the context of geometric measure theory that almost every curve (with respect to a suitable \textit{geometrically meaningful} measure) in $\R^3$ bounds a unique area-minimizing surface. More formally, Morgan considered the space of parametrizations $$\mathscr{C}:= \{f : f \in \mathcal{C}^{2,\alpha}(S^1,\R^3)\},$$ where $S^{1}$ is the unit circle and $\alpha < 1$ equipped with the $\mathcal{C}^2$-norm. A measure on the space of curves $\mathscr{C}$ is constructed by defining a countably infinite product of standardized Gaussian measures in $\R^3$ measuring the Fourier coefficients of the embeddings in $\mathscr{C}$. Almost every such formal series converges uniformly to an element of $\mathscr{C}$ and the resulting measure $\mu$ has the fundamental geometric property that open balls in $\mathscr{C}$ with respect to the $\mathcal{C}^2$-norm are measurable and with positive measure\footnote{Due to this property Morgan's result holds in the sense of Baire's categories as well.}. Morgan's result can be stated as follows:

\begin{theorem}[Morgan's generic uniqueness]{\normalfont [\ref{Morganinventiones}]}\label{t3:morgan}
$\mu$-almost every $b \in \mathscr{C}$ bounds a unique area-minimizing surface.
\end{theorem}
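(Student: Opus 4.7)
The plan is to work with the \emph{least area functional}
\[ M: \mathscr{C} \to \mathbb{R}, \qquad M(f) := \inf\{\mathbb{M}(T) : T \in \mathcal{I}_2(\mathbb{R}^3),\ \partial T = f_* \llbracket S^1 \rrbracket\}, \]
which is finite and attained by Theorem \ref{t:plateausolution}. My strategy splits into two steps: (i) show that $M$ is Gâteaux differentiable at $\mu$-a.e.\ $f\in\mathscr{C}$, and (ii) show that at every point of Gâteaux differentiability of $M$ there is exactly one area-minimizing integral $2$-current with boundary $f_*\llbracket S^1\rrbracket$. The theorem then follows by combining (i) and (ii).

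For step (i), I would first verify that $M$ is locally Lipschitz with respect to the $\mathcal{C}^0$-norm (and \emph{a fortiori} with respect to the $\mathcal{C}^2$-norm of $\mathscr{C}$). Given $f, g \in \mathscr{C}$, the linear homotopy $H(\theta, t) := (1-t)f(\theta) + tg(\theta)$ defines a $2$-dimensional integer-rectifiable current $R_{f,g} := H_*\llbracket S^1 \times [0,1]\rrbracket$ whose mass is controlled by the pointwise Jacobian bound $|\partial_\theta H \wedge \partial_t H| \leq (\|f\|_{\mathcal{C}^1}+\|g\|_{\mathcal{C}^1})\|f-g\|_{\mathcal{C}^0}$, so that $\mathbb{M}(R_{f,g}) \leq C(\|f\|_{\mathcal{C}^1},\|g\|_{\mathcal{C}^1}) \, \|f-g\|_{\mathcal{C}^0}$, and whose boundary bridges $\llbracket f(S^1)\rrbracket$ and $\llbracket g(S^1)\rrbracket$. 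Adding $R_{f,g}$ to a minimizer for $f$ produces a competitor for $g$, giving $|M(f) - M(g)| \leq C \|f-g\|_{\mathcal{C}^0}$. Since Morgan's $\mu$ is a non-degenerate Gaussian measure on the separable Banach space $\mathscr{C}$, the Aronszajn--Mankiewicz--Preiss theorem on differentiability of Lipschitz maps with respect to Gaussian measures yields that $M$ is Gâteaux differentiable outside a $\mu$-null set.

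For step (ii), suppose towards contradiction that $T_1 \neq T_2$ are both area-minimizing with boundary $b := f_*\llbracket S^1 \rrbracket$, at a point $f$ where $M$ is Gâteaux differentiable. Since we are in codimension one with $m = 2$, Theorem \ref{t:regularity1} gives that both $T_i$ are smooth embedded minimal surfaces in $\mathbb{R}^3 \setminus \Gamma$. For any smooth compactly supported vector field $V:\mathbb{R}^3\to\mathbb{R}^3$, let $\phi_s := \mathrm{Id} + sV$ and use $(\phi_s)_*T_i$ as competitors for the perturbed boundary $(\phi_s)_* b$:
\[ M((\phi_s)_* b) \leq \mathbb{M}((\phi_s)_* T_i). \]
Using the vanishing mean curvature of each $T_i$ (minimality) and integrating by parts, the first variation of mass reads
\[ \frac{d}{ds}\Big|_{s=0} \mathbb{M}((\phi_s)_* T_i) = \int_{\Gamma} \langle V, \nu_i \rangle \,d\mathcal{H}^1 =: L_i(V), \]
where $\nu_i$ denotes the outer conormal of $T_i$ along $\Gamma$. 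Running this with $V$ and with $-V$, and using that Gâteaux differentiability of $M$ forces the one-sided derivatives to agree, one obtains $M'(b; V) = L_1(V) = L_2(V)$ for every $V$. Hence $\nu_1 = \nu_2$ $\mathcal{H}^1$-a.e.\ along $\Gamma$, so $T_1$ and $T_2$ share the same tangent plane at every regular point of $\Gamma$. The Cauchy problem for the real-analytic elliptic minimal surface equation then yields $T_1 \equiv T_2$ in a neighborhood of $\Gamma$, and unique continuation propagates this globally, contradicting $T_1 \neq T_2$. The main obstacle I expect is technical rather than conceptual: verifying that Morgan's measure, constructed as an infinite product of Gaussians on Fourier coefficients, is genuinely non-degenerate in the sense required by the abstract Gaussian differentiability theorem, and checking that the perturbation directions $V$ used in step (ii) can be realized by $\mu$-admissible variations of $f \in \mathscr{C}$.
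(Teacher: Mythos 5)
Your overall strategy — use the Lipschitz property of the least-area functional plus a Gaussian-measure differentiability theorem to reduce to points of differentiability, then show differentiability of $M$ forces the minimizer to be unique via the first variation and unique continuation — is in fact precisely the strategy of Morgan's proof, and step~(i) and step~(ii) correctly identify the two halves. However, step~(ii) has a genuine gap that you do not flag: the first variation formula
\[
\frac{d}{ds}\Big|_{s=0}\mathbb{M}\big((\phi_s)_* T_i\big)=\int_{\Gamma}\langle V,\nu_i\rangle\,d\mathcal{H}^1
\]
presupposes that $T_i$ is a $\mathcal{C}^1$ manifold-with-boundary up to $\Gamma$, so that the outer conormal $\nu_i$ exists $\mathcal{H}^1$-a.e.\ on $\Gamma$ and the divergence-theorem integration by parts produces exactly this boundary term and nothing else. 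You justify the smoothness of $T_i$ by invoking Theorem~\ref{t:regularity1}, but that theorem gives only \emph{interior} regularity on $\mathbb{R}^3\setminus\Gamma$; it says nothing about the behaviour at $\Gamma$, and without boundary regularity the first variation under a vector field that does not vanish on $\Gamma$ need not have the claimed form (there can be concentration of density, folding of sheets, or worse). This is precisely the ``second fundamental ingredient'' the thesis highlights in the discussion after the theorem: Morgan obtains boundary regularity from Allard's theorem [\ref{Allardboundary}], which requires either that $\Gamma$ lie on the boundary of a uniformly convex set or that the minimizer have density $\tfrac{1}{2}$ at the boundary points under consideration, and verifying this hypothesis (for $\mu$-a.e.\ $b$, or at least along a positive-length arc of $\Gamma$) is a nontrivial part of the argument. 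Without this, the conormals $\nu_i$, the linear functionals $L_i$, and the Cauchy-data comparison you perform are all undefined.

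Two further remarks. First, the Lipschitz bound you derive is with respect to the $\mathcal{C}^0$-norm, while $\mu$ lives on $\mathcal{C}^{2,\alpha}$; this is fine for concluding Lipschitzness in the stronger norm, but the Aronszajn--Mankiewicz--Preiss framework you invoke must be checked against the specific product-Gaussian construction of $\mu$ (you flag this yourself, and it is indeed a point Morgan has to address). Second, once you do have boundary regularity along a positive-length arc of $\Gamma$ with $\nu_1=\nu_2$ there, the unique continuation step is exactly the first ingredient the thesis describes — writing both surfaces locally as graphs satisfying the minimal surface equation and using the Cauchy--Kowalevski/Holmgren-type uniqueness for the real-analytic elliptic system, followed by interior analyticity to propagate — and your sketch of that step is correct.
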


We highlight two key passages of the proof of Theorem \ref{t3:morgan}. The first result is a \textit{unique continuation theorem} assuring that two area-minimizing surfaces with the same boundary which have the same tangent space along a stretch of positive length of the boundary are indeed the same surface. By the interior regularity (in fact, analyticity) of the surfaces, it is enough to prove this step only locally at a boundary point where the two surfaces can be parametrized as graphs of functions $f,g:\R^2 \rightarrow \R$ satisfying the minimal surface equation; in this setting the problem becomes a straightforward application of the theory of elliptic partial differential equations. 

The second fundamental ingredient is a \textit{uniform boundary regularity theorem} for integral currents that allows the author to conclude that if the tangents to two area-minimizing surfaces with the same boundary are ``close together", then the two surfaces are ``close together". The author deeply relies on a result by Allard telling that if an oriented submanifold $\Gamma$ is contained in the boundary of a uniformly convex set, then every boundary point $p \in \Gamma$ is \textit{regular} and has density $1/2$. By \textit{regular} we mean that the support of the current is a regular submanifold with boundary in a neighborhood of the boundary point\footnote{In fact, [\ref{Morganinventiones}, \ref{Morganindiana}] and their later generalization to any dimension and codimension [\ref{MorganARMA}] all rely on a more general theorem developed by Allard, see [\ref{Allardboundary}], telling that (among other things) if $p \in \Gamma$ is a point where the density $\Theta(T, p)$ is $\frac{1}{2}$ (in any dimension and codimension), then $p$ is a regular boundary point.}. 

The main reason why Morgan's result (and his later generalizations, see [\ref{Morganindiana}, \ref{MorganARMA}]) can be improved is that he strongly relies on Allard's assumptions in [\ref{Allardboundary}]. Indeed, what was missing was a boundary regularity theory for higher codimension integral currents without any assumption on the multiplicity of the boundary. Indeed, before the fundamental work by De Lellis, De Philippis, Hirsch and Massaccesi [\ref{Borda}] one could not even exclude in general that the set of regular boundary points was empty, see [\ref{Almgren2000}, Section 5.23] and [\ref{Borda}, Corollary 1.10]. In [\ref{Borda}] the authors prove the first general boundary regularity theorem without any restrictions on the codimension and on the geometry of $\Gamma$, showing that the set of regular boundary points is dense. After [\ref{Borda}], it is reasonable to consider further studies than the ones initiated by Morgan in [\ref{Morganinventiones}, \ref{Morganindiana}, \ref{MorganARMA}].

\subsection{Generic uniqueness of higher codimension area-minimizing currents}

In the remaining part we will prove an original result which is based on a joint work with A. Marchese, see [\ref{caldiniinpreparation}]. We prove that \textit{generically} (in the sense of Baire categories) and with respect to the flat norm, every integral $(m-1)$-current without boundary spans a unique minimizer in $\mathbb{R}^{m+n}.$ The techiniques we adopt have strong analogies with the (more complicated) proof of the generic uniqueness for the optimal branched transport problem we will present in the next section.

Given $K\subset \R^{m+n}$ a compact set we denote 
$$\mathcal{E}(B):= \operatorname{inf}\{\mathbb{M}(T) \,|\,  T \in \mathcal{I}_m(K) : \partial T= B \}.$$
Moreover, we denote the set of solutions to the generalized Plateau's problem with boundary $B$ as
$$\mathbf{AMC}(B):=\{T \in \mathcal{I}_{m}(K): \partial T= B \text{ and } \mathbb{M}(T)=\mathcal{E}(B)\}.$$
We denote the set of $(m-1)$-boundaries by $$\mathcal{B}_{m-1}(K):=\{B \in \mathcal{D}_{m-1}(K) : \partial B=0 \}.$$
Fix an arbitrary $C>0$ and define \begin{equation} \mathcal{A}_C:=\{B \in \mathcal{B}_{m-1}(K) : \mathbb{M}(B)\leq C \text{ and } \mathbb{M}(T) \leq C \text{ for every } T \in \mathbf{AMC}(B)\}.\end{equation}

\begin{lemma}\label{l3:closed}
The space $(\mathcal{A}_C, d_{\,\flat})$ is a nontrivial complete metric space, where $d_{\,\flat}$ is the distance induced by the flat norm $\mathbb{F}_{K}$.
\end{lemma}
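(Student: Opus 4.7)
The plan is to check separately that $\mathcal{A}_C$ is nonempty (in fact substantial), that the metric structure is well defined, and that every Cauchy sequence admits a limit in $\mathcal{A}_C$; the metric structure itself is inherited directly from the ambient flat-norm distance on currents. For nontriviality, the zero current $0$ trivially belongs to $\mathcal{A}_C$ since $\partial 0 = 0$, $\mathbb{M}(0) = 0 \le C$, and $\mathbf{AMC}(0) = \{0\}$ satisfies the mass bound. More substantively, for any integral boundary of the form $B = \partial \llbracket D \rrbracket$ with $D$ a sufficiently small $m$-disk in $K$, both $\mathbb{M}(B)$ and every $\mathbb{M}(T)$ with $T \in \mathbf{AMC}(B)$ (bounded by $\mathbb{M}(\llbracket D \rrbracket)$ via comparison with the trivial competitor $\llbracket D \rrbracket$) fall below $C$, so $\mathcal{A}_C$ contains uncountably many elements.

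For completeness, I would start from a Cauchy sequence $(B_n)_n \subset \mathcal{A}_C$ with respect to $\mathbb{F}_K$. Since $\mathbb{M}(B_n) \le C$ and $\partial B_n = 0$, each $B_n$ is a normal current with support in $K$, hence lies in $\mathbf{F}_{m-1}(K)$, whose completeness with respect to $\mathbb{F}_K$ is recalled in the excerpt. This supplies a flat-chain limit $B \in \mathbf{F}_{m-1}(K)$. The uniform bound $\sup_n \bigl(\mathbb{M}(B_n) + \mathbb{M}(\partial B_n)\bigr) \le C$ upgrades the flat convergence to weak$^*$-convergence $B_n \stackrel{*}{\rightharpoonup} B$, as recalled after Definition~\ref{d:flatnorm1}; then Remark~\ref{r:lsc} gives $\partial B = \lim_n \partial B_n = 0$ and $\mathbb{M}(B) \le \liminf_n \mathbb{M}(B_n) \le C$. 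Thus $B \in \mathcal{B}_{m-1}(K)$ and the first defining condition of $\mathcal{A}_C$ is satisfied.

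The central step is to bound the mass of every minimizer with boundary $B$. For each $n$ (assuming, as is natural in this framework, that each $B_n$ is an integral boundary, so that $\mathbf{AMC}(B_n) \neq \emptyset$ by Theorem~\ref{t:plateausolution}) I would select $T_n \in \mathbf{AMC}(B_n)$. By definition of $\mathcal{A}_C$ one has $\mathbb{M}(T_n) \le C$, and together with $\mathbb{M}(\partial T_n) = \mathbb{M}(B_n) \le C$ the Closure Theorem (Theorem~\ref{t:closure}) produces an integral subsequential limit $T' \in \mathcal{I}_m(K)$ with $\partial T' = B$ and $\mathbb{M}(T') \le \liminf_n \mathbb{M}(T_n) \le C$. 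The key point is now to invoke Theorem~\ref{t:compactnessforam}, the compactness theorem for area-minimizing currents, which upgrades $T'$ from merely integral to area-minimizing; hence $T' \in \mathbf{AMC}(B)$ and $\mathcal{E}(B) = \mathbb{M}(T') \le C$. Since every $T \in \mathbf{AMC}(B)$ has mass equal to $\mathcal{E}(B)$, the inequality $\mathbb{M}(T) \le C$ holds uniformly, and $B \in \mathcal{A}_C$, concluding completeness.

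No single step is genuinely deep: the argument is the conjunction of the flat-norm completeness of $\mathbf{F}_{m-1}(K)$, the Closure Theorem, and Theorem~\ref{t:compactnessforam}. The only delicate bookkeeping concerns boundaries $B_n$ for which $\mathbf{AMC}(B_n)$ might a priori be empty; under the natural convention (implicit in the variational problem one is about to consider) that $\mathcal{A}_C$ consists of integral boundaries, Theorem~\ref{t:plateausolution} guarantees nonemptiness and the argument proceeds verbatim.
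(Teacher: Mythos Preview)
Your argument is correct and follows essentially the same route as the paper: pick $T_n\in\mathbf{AMC}(B_n)$, apply the Closure Theorem to extract a limit $T'$ with $\partial T'=B$ and $\Mass(T')\le C$, and conclude. One remark: the appeal to Theorem~\ref{t:compactnessforam} is superfluous, since from $\partial T'=B$ and $\Mass(T')\le C$ you already get $\mathcal{E}(B)\le\Mass(T')\le C$, and hence every $T\in\mathbf{AMC}(B)$ automatically satisfies $\Mass(T)=\mathcal{E}(B)\le C$; you do not need $T'$ itself to be a minimizer.
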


We prove that $\mathcal{A}_C$ is $\Flat_K$-closed, then completeness follows from $\mathbb{F}_K$-compactness of integral $(m-1)$-currents (without boundary) with support in $K$ and mass bounded by $C$, see Theorem \ref{t:closure}.

\begin{proof}
Let $(B_j)_{j\in\N}$ be a sequence of elements of $\mathcal{A}_C$ and let $B$ be such that for $j\to\infty$ we have $\mathbb{F}_K(B_j-B)\to 0$. By the lower semicontinuity of the mass (with respect to the flat convergence), we have $\Mass(B)\leq C$. For any $j\in\N$, let $T_j \in \AMC(B_j)$. Note we also have $\mathbb{M}(T_j)\leq C$. By Theorem \ref{t:closure}, there exists $T\in{\mathcal{I}}_{m}(K)$ such that, up to (nonrelabeled) subsequences, $\mathbb{F}_K(T_j-T)\to 0$. By the continuity of the boundary operator we have $\partial T=B$ and by the lower semicontinuity of the mass, we have $\mathbb{M}(T)\leq C$ and hence $B\in \mathcal{A}_C$. 
\end{proof}

The main result we prove can be stated as follows.

\begin{theorem}[Generic uniqueness]\label{t:genericga}
The set of boundaries $B \in \mathcal{A}_C$, for which $\mathbf{AMC}(B)$ is a singleton, is residual\,\footnote{Recall that a set is \textit{residual} if it contains a countable intersection of open dense subsets.}.
\end{theorem}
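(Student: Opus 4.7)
The plan is to run a Baire category argument on the complete metric space $(\mathcal{A}_C, d_\flat)$ provided by Lemma~\ref{l3:closed}. First I would establish two preliminary semicontinuity properties. The minimum-energy map $\mathcal{E}\colon \mathcal{A}_C \to [0,\infty)$, $\mathcal{E}(B)=\min\{\Mass(T):T\in\AMC(B)\}$, is continuous in the flat norm: lower semicontinuity comes directly from Theorem~\ref{t:closure} and the lower semicontinuity of $\Mass$ along flat convergence, while upper semicontinuity uses the isoperimetric inequality for integer cycles. Given $T \in \AMC(B)$ and a flat decomposition $B_n - B = R_n + \partial S_n$ with $\Mass(R_n) + \Mass(S_n) \to 0$ (where $R_n$ is automatically a cycle), one produces integral fillings $T'_n$ of $R_n$ with $\Mass(T'_n) \lesssim \Mass(R_n)^{m/(m-1)}$, so that $T + S_n + T'_n$ is an integral competitor for $B_n$ with mass converging to $\Mass(T)$. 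The second fact is that the graph $\{(B,T) : T \in \AMC(B)\}$ is closed in $\mathcal{A}_C \times \mathcal{I}_m(K)$, which combined with the uniform mass bound $\Mass(T) \le C$ and Theorem~\ref{t:closure} yields that the \emph{gap functional}
\[ \delta(B) := \sup\{\Flat_K(T_1 - T_2) : T_1, T_2 \in \AMC(B)\} \]
is upper semicontinuous on $\mathcal{A}_C$.

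Consequently the sets $F_k := \{B \in \mathcal{A}_C : \delta(B) \geq 1/k\}$ are closed in $(\mathcal{A}_C, d_\flat)$, and the set of boundaries with a unique minimizer is exactly $\mathcal{A}_C \setminus \bigcup_{k \in \N} F_k$. Residuality therefore reduces to showing that each $F_k$ has empty interior, and by Baire's theorem this suffices.

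The main obstacle lies precisely here: given $B \in F_k$ and $\eps > 0$, one must produce $B' \in \mathcal{A}_C$ with $d_\flat(B, B') < \eps$ and $\delta(B') < 1/k$. The strategy is to exploit the higher-codimension interior regularity (Theorem~\ref{t:regularityh}). For any two minimizers $T_1 \neq T_2$ in $\AMC(B)$ with $\Flat_K(T_1 - T_2) \geq 1/k$, the interior singular sets have Hausdorff dimension at most $m-2$ and are closed, so one can select a regular point $x$ of $T_1$ not lying in $\supp(T_2)$. Near $x$ I would construct a small integral $m$-current $R$ supported in a ball $\mathbf{B}_r(x)$ so that $B' := B + \partial R$ satisfies $d_\flat(B, B') < \eps$ and so that the perturbation asymmetrically decreases mass along $T_1$ relative to $T_2$: in quantitative terms, for any integer rectifiable competitor $T'$ of $B'$ lying in a small flat neighborhood of $T_2$, one would have $\Mass(T') > \Mass(T_1 + R)$, forcing every minimizer of $B'$ to cluster near $T_1$ and hence $\delta(B') < 1/k$. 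The truly delicate point is that, a priori, $\AMC(B)$ may contain many pairs of distant minimizers, so the construction must be refined via a finite cover of $\AMC(B)$ (compact in the flat topology thanks to Theorem~\ref{t:closure} and the uniform mass bound) and a choice of perturbation that breaks ties simultaneously for every pair, while keeping the new boundary inside $\mathcal{A}_C$ (absorbed, if necessary, by working initially in $\mathcal{A}_{C-\eps}$). Carrying out this quantitative perturbation, in which Theorem~\ref{t:regularityh} is combined with the monotonicity formula~\eqref{e:monotonicity} and the isoperimetric estimate, is the heart of the argument.
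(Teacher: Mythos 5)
Your reduction-to-density argument is essentially the paper's Lemma~\ref{l3:closed} and the accompanying lemma showing that the sets $\mathcal{N}\mathcal{U}_C^m$ are closed; framing things via the gap functional $\delta$ and its upper semicontinuity is an equivalent reformulation of the same Baire category argument, and the observation that one only needs $\delta(B')<1/k$ rather than strict uniqueness is sound (though the paper actually proves strict uniqueness for the perturbed boundary, which is of course stronger). The continuity of $\mathcal{E}$ is a pleasant remark but is not actually used.

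The genuine gap is in the density step, and it occurs before any of the ``tie-breaking'' difficulties you flag. You write that one can ``select a regular point $x$ of $T_1$ not lying in $\supp(T_2)$'' by invoking Theorem~\ref{t:regularityh} and the smallness of the singular set. But for a generic $B\in\mathcal{A}_C$, the set $\operatorname{Reg}(T_1)$ may be \emph{empty}: the Hausdorff-dimension estimate only bounds $\operatorname{Sing}(T_1)$ inside $\supp(T_1)\setminus\supp(\partial T_1)$, and this relative complement can itself be empty. Precisely this obstruction is illustrated by Example~\ref{e:bordobrutto}, in which $\supp(\partial T)$ is dense in $\supp(T)$ and hence $\operatorname{Reg}(T_1)=\emptyset$, trivializing the interior regularity theorem. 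This is exactly why the paper inserts a preliminary reduction (Lemma~\ref{l:moltisperem}) approximating $B$ by a \emph{polyhedral} boundary $P\in\mathcal{A}_{C-\delta}$ via Theorem~\ref{t:polyapprox}: for a polyhedral $P$, $\supp(P)$ is a finite union of $(m-1)$-simplexes and cannot be dense in $\supp(S)$, so $\operatorname{Reg}(S)\neq\emptyset$. Without that step your perturbation argument has no starting point.

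Beyond this, your proposed perturbation $B' := B+\partial R$ designed to ``asymmetrically decrease mass along $T_1$ relative to $T_2$'' is both more complicated and less conclusive than what the paper does. The paper instead removes a small ball from the minimizer itself, setting $S':= S - S\res\mathbf{B}_r(x_0)$ and $B':=\partial S'$; then $S'\in\AMC(B')$ for free, and uniqueness of the minimizer for $B'$ is established directly: given another minimizer $S''$, one re-glues $\hat S := S'' + S\res\mathbf{B}_r(x_0)$, locates a common regular point $x_1\in\partial\mathbf{B}_r(x_0)$ of $S$ and $\hat S$ (again Theorem~\ref{t:regularityh}), and applies unique continuation for the minimal surface system ([\ref{MorganARMA}, Lemma~7.2]) to conclude $\hat S = S$. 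This yields genuine uniqueness, so there is no need for the ``finite cover of $\AMC(B)$ and simultaneous tie-breaking'' that you flag as delicate and leave unresolved. Your instinct that regularity in higher codimension is the key ingredient is correct; what is missing is the polyhedral reduction that makes the regularity theorem nonvacuous, and the unique-continuation mechanism that replaces the quantitative mass comparison you sketch.
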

Consider the following subset of $\mathcal{A}_C$, which denotes the set of boundaries admitting nonunique minimizers:
$$\mathcal{N}\mathcal{U}_C:=\{B\in \mathcal{A}_C: \text{ there exist } T^1, T^2 \in\AMC(B) \text{ such that } T^1\neq T^2\}.$$

\begin{lemma}
Assume that the set $\mathcal{A}_C\setminus \mathcal{N}\mathcal{U}_C$ is $\mathbb{F}_K$-dense in $\mathcal{A}_C$. Then it is residual.
\end{lemma}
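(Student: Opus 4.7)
The plan is a classical Baire-category argument: write $\mathcal{N}\mathcal{U}_C$ as a countable union of closed sets with empty interior, so that its complement is a countable intersection of open dense sets.

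For each $k \in \mathbb{N}$, define
\[
\mathcal{N}\mathcal{U}_C^k := \{B \in \mathcal{A}_C : \exists\, T^1, T^2 \in \mathbf{AMC}(B) \text{ with } \mathbb{F}_K(T^1 - T^2) \geq 1/k\},
\]
so that $\mathcal{N}\mathcal{U}_C = \bigcup_{k \in \mathbb{N}} \mathcal{N}\mathcal{U}_C^k$. By Lemma \ref{l3:closed}, $(\mathcal{A}_C, d_{\flat})$ is a complete metric space, so it suffices to show that each $\mathcal{N}\mathcal{U}_C^k$ is closed and has empty interior; then $\mathcal{A}_C \setminus \mathcal{N}\mathcal{U}_C = \bigcap_k (\mathcal{A}_C \setminus \mathcal{N}\mathcal{U}_C^k)$ is a countable intersection of open dense sets, hence residual by Baire's theorem.

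The first step is the closedness of $\mathcal{N}\mathcal{U}_C^k$, which is the only place where geometric input beyond soft functional analysis is needed. Let $(B_n)_n \subset \mathcal{N}\mathcal{U}_C^k$ with $\mathbb{F}_K(B_n - B) \to 0$, and pick $T^1_n, T^2_n \in \mathbf{AMC}(B_n)$ with $\mathbb{F}_K(T^1_n - T^2_n) \geq 1/k$. Since $B_n \in \mathcal{A}_C$, both $\mathbb{M}(T^i_n) \leq C$ and $\mathbb{M}(\partial T^i_n) = \mathbb{M}(B_n) \leq C$. By Federer--Fleming's closure theorem (Theorem \ref{t:closure}), up to passing to (diagonal) subsequences, $T^i_n \to T^i$ in the sense of currents with $T^i \in \mathcal{I}_m(K)$, and continuity of $\partial$ gives $\partial T^i = B$. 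Theorem \ref{t:compactnessforam} ensures that $T^i$ is area-minimizing, hence $T^i \in \mathbf{AMC}(B)$; lower semicontinuity of the mass also keeps $\mathbb{M}(T^i) \leq C$, so that $B \in \mathcal{A}_C$. Finally, the uniform mass bound together with the equivalence between weak-$*$ and flat convergence for equibounded normal currents (Remark after Definition \ref{d:flatnorm1}) upgrades the convergence to $\mathbb{F}_K(T^i_n - T^i) \to 0$, so by continuity of the flat norm $\mathbb{F}_K(T^1 - T^2) = \lim_n \mathbb{F}_K(T^1_n - T^2_n) \geq 1/k$. Hence $B \in \mathcal{N}\mathcal{U}_C^k$, proving closedness.

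The second step is trivial under the standing assumption: since $\mathcal{A}_C \setminus \mathcal{N}\mathcal{U}_C \subseteq \mathcal{A}_C \setminus \mathcal{N}\mathcal{U}_C^k$ and $\mathcal{A}_C \setminus \mathcal{N}\mathcal{U}_C$ is dense in $\mathcal{A}_C$ by hypothesis, also $\mathcal{A}_C \setminus \mathcal{N}\mathcal{U}_C^k$ is dense, i.e.\ $\mathcal{N}\mathcal{U}_C^k$ has empty interior. The main obstacle of the whole program is therefore not the lemma we are proving now, but the density assumption itself, which is what the rest of Section 3.1 is devoted to establish via an explicit perturbation of a boundary $B$; here we only exploit density to conclude, via Baire's theorem applied to the complete metric space $(\mathcal{A}_C, d_{\flat})$, that $\mathcal{A}_C \setminus \mathcal{N}\mathcal{U}_C$ is residual.
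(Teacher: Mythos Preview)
Your proof is correct and follows essentially the same approach as the paper: decompose $\mathcal{N}\mathcal{U}_C$ into the countable union of the sets $\mathcal{N}\mathcal{U}_C^k$, show each is closed via the closure theorem and Theorem \ref{t:compactnessforam}, and deduce empty interior from the density hypothesis. The only cosmetic difference is that the paper states the empty-interior step first and the closedness afterwards, and it passes directly to $\mathbb{F}_K(T^1-T^2)\geq 1/k$ without spelling out the norm-continuity step.
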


\begin{proof}
For $m \in \N \setminus \{0\}$, consider the sets 
$$\mathcal{N}\mathcal{U}_C^m:=\{B\in \mathcal{A}_C: \text{ there exist } \{T^1,T^2\}\subset\AMC(B) \text{ with } \mathbb{F}_K(T^2-T^1)\geq m^{-1}\}.$$
Since $\mathcal{N}\mathcal{U}_C^m\subset \mathcal{N}\mathcal{U}_C$, then $(\mathcal{A}_C\setminus \mathcal{N}\mathcal{U}_C^m)\supset(\mathcal{A}_C\setminus \mathcal{N}\mathcal{U}_C)$ and hence, by assumption, $\mathcal{A}_C\setminus \mathcal{N}\mathcal{U}_C^m$ is $\mathbb{F}_K$-dense in $\mathcal{A}_C$ for every $m$. Therefore $\mathcal{N}\mathcal{U}_C^m$ has empty interior in $\mathcal{A}_C$ for every $m$. By proving that $\mathcal{N}\mathcal{U}_C^m$ is closed for every $m$ we conclude. 

Consider a sequence $(B_j)_{j\in\N}$ of elements of $\mathcal{N}\mathcal{U}_C^m$ and let $B\in \mathcal{A}_C$ be such that $\mathbb{F}_K(B_j-B)\to 0$. For every $j\in\N$, take 
$$\{T^1_j,T^2_j\}\subset\AMC(B_j)\quad {\mbox{ with }}\quad \mathbb{F}_K(T^2_j-T^1_j)\geq 1/m.$$ 
As in the proof of Lemma \ref{l3:closed}, we deduce that there exist $T^1,T^2\in{\mathcal{I}}_{m}(K),$ such that $\partial T^1=\partial T^2=B$ and, up to (nonrelabeled) subsequences, $\mathbb{F}_K(T^1_j-T^1)\to 0$, $\mathbb{F}_K(T^2_j-T^2)\to 0$ as $j\to\infty$. Clearly $\mathbb{F}_K(T^2-T^1)\geq 1/m$. By Theorem \ref{t:closure} and Theorem \ref{t:compactnessforam}, we have $\{T_1, T_2\}\subset\AMC(B)$, hence $B\in \mathcal{N}\mathcal{U}_C^m$.
\end{proof}

To prove Theorem \ref{t:genericga} we are left to prove that the set of boundaries $B \in \mathcal{A}_C$ for which $\AMC(B)$ is a singleton is dense in the metric space $(\mathcal{A}_C,d_{\,\flat}).$

The main idea is to consider the following argument: we first reduce to a polyhedral boundary $P$ by suitably applying Theorem \ref{t:polyapprox}, see Lemma \ref{l:moltisperem}. Then we fix an area-minimizing current $S$ with $\partial S=P$ and, by Theorem \ref{t:regularityh}, we can find $x_0 \in \text{Reg}(S)$. Starting from such interior regular point, a simple perturbation argument proves the density of boundaries with unique minimizer.

\begin{lemma}\label{l:moltisperem}
For any $B \in \mathcal{A}_C$ and $\varepsilon >0$ there exist a $\delta >0$ and a polyhedral boundary $P \in \mathcal{A}_{C-\delta}\cap \mathbb{P}_{m-1}(K)$ such that $$\mathbb{F}_K(B-P)\leq \varepsilon.$$
\end{lemma}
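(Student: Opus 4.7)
The plan is to approximate an area-minimizing current $S$ with boundary $B$ by an integer polyhedral chain $Q$ (using Theorem \ref{t:polyapprox}) and then take $P := \partial Q$. This gives a polyhedral cycle close to $B$ in flat norm, and at the same time $Q$ serves as a competitor for the Plateau problem with boundary $P$, yielding control on $\mathcal{E}(P)$.

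More precisely, I would proceed as follows. First, invoke Theorem \ref{t:plateausolution} to pick $S \in \mathbf{AMC}(B)$, so $\mathbb{M}(S) = \mathcal{E}(B) \leq C$ and $\partial S = B$. For $\eta>0$ to be chosen small, apply Theorem \ref{t:polyapprox} to obtain an integer polyhedral chain $Q \in \mathbb{P}_m(K)$ with
\[
\mathbb{F}_K(S - Q) < \eta, \qquad \mathbb{M}(Q) \leq \mathbb{M}(S) + \eta, \qquad \mathbb{M}(\partial Q) \leq \mathbb{M}(B) + \eta.
\]
Set $P := \partial Q$. Since $\partial \circ \partial = 0$, one has $\partial P = 0$, so $P$ is a genuine polyhedral boundary in $\mathbb{P}_{m-1}(K)$.

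For the flat norm estimate, unwind Definition \ref{d:flatnorm1}: there exist $R \in \mathcal{D}_m(K)$ and $W \in \mathcal{D}_{m+1}(K)$ with $S - Q = R + \partial W$ and $\mathbb{M}(R) + \mathbb{M}(W) < \eta$. Then
\[
B - P = \partial S - \partial Q = \partial(S - Q) = \partial R,
\]
so $\mathbb{F}_K(B - P) \leq \mathbb{M}(R) < \eta$. Choosing $\eta \leq \varepsilon$ yields the desired approximation. For the required membership in $\mathcal{A}_{C-\delta}$, use that $\mathbb{M}(P) = \mathbb{M}(\partial Q) \leq \mathbb{M}(B) + \eta$ and, crucially, that $Q$ itself is a competitor with $\partial Q = P$, so every $T \in \mathbf{AMC}(P)$ satisfies
\[
\mathbb{M}(T) = \mathcal{E}(P) \leq \mathbb{M}(Q) \leq \mathbb{M}(S) + \eta = \mathcal{E}(B) + \eta.
\]

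The main obstacle is that the natural polyhedral approximation only lands $P$ in a slightly \emph{larger} class $\mathcal{A}_{C+\eta}$, whereas the statement demands the strictly smaller class $\mathcal{A}_{C-\delta}$. To close this gap I expect two modifications to be necessary. First, one should reduce to the case where $B$ has strict slack, i.e.\ $\mathbb{M}(B) < C$ and $\mathcal{E}(B) < C$; this is typically achieved by a preliminary perturbation (for example, translating or slightly damping $B$), and it exploits the fact that the subset of $\mathcal{A}_C$ with strict inequalities is $\mathbb{F}_K$-dense. Once one has fixed margins $C - \mathbb{M}(B) =: 2\delta_1 > 0$ and $C - \mathcal{E}(B) =: 2\delta_2 > 0$, setting $\delta := \min\{\delta_1, \delta_2\}$ and choosing $\eta < \delta$ forces $\mathbb{M}(P) \leq C - \delta$ and $\mathcal{E}(P) \leq C - \delta$ simultaneously. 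This preliminary reduction is the delicate point of the argument; the rest is a direct application of Federer-Fleming polyhedral approximation combined with the minimality of $S$.
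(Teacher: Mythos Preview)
Your approach is correct and is essentially the paper's proof: the paper handles the slack issue you flag by first replacing $T\in\AMC(B)$ with $(1-\varepsilon/2)T$ (exactly your ``damping'') and only then applying Theorem~\ref{t:polyapprox}, after which the competitor argument $\mathcal{E}(P)\leq\mathbb{M}(T_P)$ proceeds just as you describe.
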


\begin{proof}
Without loss of generality and up to rescaling, we can assume $C=1$ and write $\mathcal{A}$ instead of $\mathcal{A}_C$. Consider $B \in \mathcal{A}$ and $T \in \AMC(B)$. Now define the following rescalings $$T_{\varepsilon}:=(1-\varepsilon/2)T$$ with $B_{\varepsilon}:=\partial T_{\varepsilon}=(1-\varepsilon/2)B$ and note $T_{\varepsilon} \in \AMC(B_{\varepsilon})$ with \begin{equation}\label{e:sperem}\mathbb{M}(T_{\varepsilon}) \leq 1-\varepsilon/2  \, \text{ and } \, \mathbb{M}(B_{\varepsilon}) \leq 1-\varepsilon/2.\end{equation} Since we have $\mathbb{M}(B-B_{\varepsilon})\leq  \varepsilon/2$ then we also have \begin{equation}\label{e:sperem2}\mathbb{F}_{K}(B-B_{\varepsilon})\leq  \varepsilon/2.\end{equation}
By Theorem \ref{t:polyapprox} there exists a polyhedral $m$-current $T_{P} \in \mathbb{P}_m(K)$ such that, denoting $P:= \partial T_{P}$, we have \begin{equation}\label{e:sperem3}\mathbb{F}_K(T_{P}-T_{\varepsilon})\leq \varepsilon/4, \quad \mathbb{M}(T_{P})\leq  \mathbb{M}(T_{\varepsilon}) +\varepsilon/4  \, \text{ and }\, \mathbb{M}(P) \leq \mathbb{M}(B_{\varepsilon})+\varepsilon/4.\end{equation} In particular we get \begin{equation}\label{e:sperem4}\mathbb{F}_{K}(P-B_{\varepsilon})\leq \varepsilon/4.\end{equation}
By \eqref{e:sperem3} and \eqref{e:sperem} we get \begin{equation}\label{e:sperem5}\mathbb{M}(T_P) < 1.\end{equation}
Combining \eqref{e:sperem2} and \eqref{e:sperem4} we get $\mathbb{F}_{K}(P-B)\leq\varepsilon$ and by \eqref{e:sperem3} and \eqref{e:sperem} we conclude that \begin{equation}\label{e:sperem6}\mathbb{M}(P)\leq 1- \varepsilon/4 <1.\end{equation} Hence by \eqref{e:sperem5} and \eqref{e:sperem6} we have $P \in \mathcal{A}_{1-\delta}\cap \mathbb{P}_{m-1}(K)$ for some $\delta >0$.
\end{proof}

Now consider an area-minimizing $m$-current $S$ such that $\partial S=P$, which exists by Theorem \ref{t:plateausolution}, and fix $x_0 \in \text{Reg}(S)$. 

\begin{remark}Note that by Theorem \ref{t:regularityh} and by the fact $\partial S \in \mathbb{P}_{m-1}(K)$ we have $\text{Reg}(S) \neq \emptyset$. Indeed $P$ is of the form \eqref{ee:poly}, hence $\text{supp}(P)$ cannot be dense in $\text{supp}(S)$ since it is made by a finite number of $(m-1)$-dimensional simplexes. \end{remark}

Recall the definition of interior regular point: we say that $x_0 \in \operatorname{supp}(S) \setminus \operatorname{supp}(\partial S)$ is an interior regular point if there is a positive radius $\overline{r}>0$ and a ball $\mathbf{B}_{\overline{r}}(x_0) \subset \R^{n+m}$, a smooth embedded submanifold $\Sigma \subset \R^{n+m}$ and a positive integer $Q$ such that $T \res \mathbf{B}_{\overline{r}}(x_0)=Q \llbracket \Sigma \rrbracket$. 

\begin{proof}[Proof (Theorem \ref{t:genericga})]

For some radius $r$ such that $r < \overline{r}$, define $$S':= S - S \res \mathbf{B}_{r}(x_0)  \, \text{ and }\, B':= \partial S'.$$ Note further that by Lemma \ref{l:moltisperem}, for $r$ small enough, $B' \in \mathcal{A}_C$. Note that $S' \in \AMC(B')$, since otherwise we would find $\tilde S$ with $\partial\tilde S=B'$ and $\mathbb{M}(\tilde S)<\mathbb{M}(S')$. This would lead to the contradiction $\partial (\tilde S + S \res \mathbf{B}_{r}(x_0))=P$ but $\mathbb{M}(\tilde S + S \res \mathbf{B}_{r}(x_0))< \mathbb{M}(S).$ Hence, if we are able to prove that $\AMC(B') = \{S'\}$, then we prove $\mathcal{A}_C\setminus \mathcal{N}\mathcal{U}_C$ is $\mathbb{F}_K$-dense in $\mathcal{A}_C$. 

Suppose there exists $S'' \in \AMC(B')$ such that $S'\neq S''$. Denote $\hat S:=S''+ S \res \mathbf{B}_{r}(x_0)$. By the minimality of $S$ one immediately sees that $\text{supp}(\hat S)\supset \text{supp} (S)\cap\mathbf{B}_{r}(x_0)$. By Theorem \ref{t:regularityh} there exists $x_1 \in \partial\mathbf{B}_{r}(x_0) \cap \text{Reg}(S) \cap \text{Reg}(\hat S)$. For a sufficiently small radius $\rho$ such that $\rho<\text{dist}(x_1,\partial \mathbf{B}_{\overline{r}})$, we can write $$S\res \mathbf{B}_{\rho}(x_1)= Q_1\llbracket \Sigma_1 \rrbracket \res \mathbf{B}_{\rho}(x_1) \, \text{ and } \, \hat S\res \mathbf{B}_{\rho}(x_1)= Q_2\llbracket \Sigma_2 \rrbracket \res \mathbf{B}_{\rho}(x_1).$$ By the same argument, the tangents to $\Sigma_1$ and $\Sigma_2$ coincide on $\text{supp} (S)\cap\mathbf{B}_{r}(x_0)$ locally around $x_1$. By a unique continuation argument for the minimal surface system, see [\ref{MorganARMA}, Lemma 7.2], the two submanifolds $\Sigma_1, \Sigma_2$ must coincide locally around $x_1$. By Theorem \ref{t:regularityh} this equivalence can be extended to the full regular parts of $S$ and $\hat S$. This easily implies the contradiction $\hat S= S$.
\end{proof}

The initial approximation argument by means of polyhedral currents is motivated by the following remark. 
\begin{remark}
Given an integral $(m-1)$-current without boundary $T \in \mathcal{I}_{m-1}(\R^{m+n})$ and an area-minimizing $m$-current $S$ such that $T= \partial S$, it is not possible to conclude Reg$(T) \neq \emptyset$. In other words, it is possible that the interior regularity theorems may be empty theorems, as the following example shows.
\end{remark}

\begin{example}\label{e:bordobrutto}
In $\R^2$, consider a sequence of positive real numbers $(r_j)_j$ such that $\sum_jr_j < \infty$ and a sequence $(q_j)_j$ that is dense in $\R^2$. Consider the balls $B_{r_j}(q_j)$ and the $2$-current defined as $$T:= \sum_{j \in \N}\llbracket B_{r_j}(q_j)\rrbracket.$$ Note that $$\partial T= \partial \left(\sum_{j \in \N}\llbracket B_{r_j}(q_j)\rrbracket\right) =\sum_{j \in \N}\partial\llbracket B_{r_j}(q_j)\rrbracket$$ since the intersection between two circumferences with different centers has $\mathcal{H}^1$-measure equal to zero. Moreover it is easy to check that $$\mathbb{M}(T)= \pi \sum_j r_j^2 <\infty \,\text{ and }\, \mathbb{M}(\partial T)= 2\pi \sum_j r_j<\infty.$$ Fix $x \in \R^2$, $\varepsilon >0$ arbitrary and consider the ball $B_{\varepsilon}(x)$. Then there exists $q_{j_0} \in B_{\varepsilon}(x)$ such that $r_{j_0} < \text{dist}(q_{j_0}, \partial B_{\varepsilon}(x)).$ Hence, $\text{supp}(\partial T) \cap B_{\varepsilon}(x) \neq \emptyset$, showing that $\text{supp}(\partial T)=\R^2$. Since $\partial T$ is integral, by Theorem \ref{t:plateausolution} there exists an area-minimizing current $T_1$ such that $\partial T_1 = \partial T$ but now $$\text{Reg}(T_1)\subset \text{supp}(T_1) \setminus \text{supp}(\partial T)= \emptyset.$$ Note further that the interior regularity theorem for area-minimizing currents is still valid, but it trivializes to an empty theorem since $\text{Reg}(T_1)=\emptyset$. The same argument can be easily generalized in any dimension. \end{example}

\section{Generic uniqueness of optimal transport paths} 

This section is based on a joint work with A. Marchese and S. Steinbr\"uchel, see [\ref{caldini}]. 

It is well-known that there are boundaries $b$ such that $\OTP(b)$ contains more than one element of finite $\alpha$-mass; for instance one can exhibit a nonsymmetric minimizer $T$ for which $\partial T$ is symmetric, so that the network $T'$ symmetric to $T$ is a different minimizer (see Figure \ref{f:1}).

\begin{figure}[ht]
	\centering 
    \includegraphics[width=0.5\textwidth]{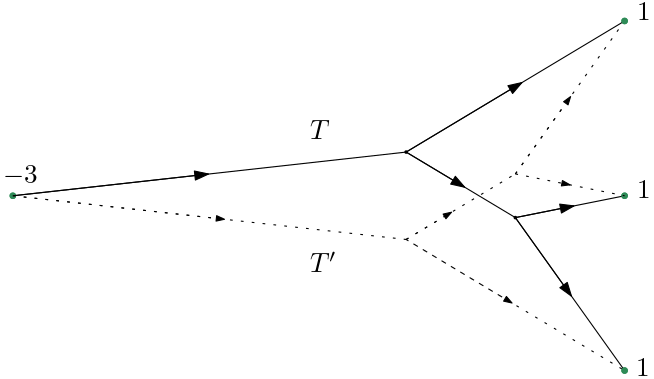}
    \caption{The boundary $\partial T$ is symmetric (with respect to the horizontal axis) and $T\in\OTP(\partial T)$ is not symmetric, hence the symmetric copy $T'$ is a different minimizer.}\label{f:1}
\end{figure}

We prove that for the generic boundary, in the sense of Baire categories, there exists a unique minimizer of the associated optimal branched transport problem. Slightly improving upon the main result of [\ref{CDRMcpam}], see Theorem \ref{t:stability_new}, we advance on the study of the well-posedness properties of the optimal branched transport problem, as we establish the first result on the generic uniqueness of minimizers and in full generality, namely in every dimension $d$ and for every exponent $\alpha\in(0,1)$. Prior to our work, we are aware of only one elementary result on the uniqueness of minimizing networks. It appeared in the original paper by Gilbert [\ref{Gilbert}] and says that there exists at most one discrete \emph{minimum cost communication network} with a given \textit{Steiner topology}.

Recall we denoted the set of boundaries by
\[ {\mathscr{B}}_{0}(K) := \{b\in{\mathcal{D}}_{0}(K): \text{there is an } S \in {\mathcal{D}}_{1}(K) \text{ with } \partial S= b\}\,, \]
we fixed an arbitrary constant $C>0$ and we defined 
\begin{equation}\label{def_AC}
    A_C:=\{b\in{\mathscr{B}}_{0}(K): \mathbb{M}(b)\leq C\;{\rm{and}}\;\MM(T)\leq C\mbox{ for every }T\in\OTP(b)\}.
\end{equation}
We metrize $A_C$ with the \emph{flat norm} $\Flat_K$ and we observe that the set $A_C$ endowed with the induced distance is a nontrivial complete metric space, see Lemma \ref{l:closed}.
Our main result is the following.

\begin{theorem}[Generic uniqueness of optimal transport paths]\label{t:main}
    The set of boundaries $b\in A_C$, for which $\OTP(b)$ is a singleton, is residual.
\end{theorem}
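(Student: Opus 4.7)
The plan is to adapt the structure used for generic uniqueness of area-minimizing currents in Theorem~\ref{t:genericga}.

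I would first set up the Baire framework. Define
\[ \mathcal{NU}_C^m := \{b \in A_C : \exists\, T_1, T_2 \in \OTP(b) \text{ with } \Flat_K(T_1 - T_2) \geq 1/m\}, \]
and write $\mathcal{NU}_C := \bigcup_{m \geq 1} \mathcal{NU}_C^m$. It suffices to show that each $\mathcal{NU}_C^m$ is $\Flat_K$-closed with empty interior, so that $A_C \setminus \mathcal{NU}_C$ is residual. Closedness will follow from the stability theorem in the spirit of Lemma~\ref{l3:closed}: given $b_j \in \mathcal{NU}_C^m$ with $\Flat_K(b_j-b)\to 0$ and minimizers $T^i_j \in \OTP(b_j)$, $i = 1,2$, with $\Flat_K(T^1_j - T^2_j) \geq 1/m$, the uniform mass bound in the definition of $A_C$ and Proposition~\ref{p:compactnessnormal} yield subsequential flat limits $T^i$. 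Theorem~\ref{t:stability_new} upgrades these to $T^i \in \OTP(b)$, and lower semicontinuity of the flat norm preserves the separation, so $b \in \mathcal{NU}_C^m$.

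For density of $A_C \setminus \mathcal{NU}_C$, given $b \in A_C$ and $\varepsilon > 0$, I would construct $b' \in A_C$ with $\Flat_K(b - b') < \varepsilon$ and $|\OTP(b')| = 1$. A preliminary rescaling $b \mapsto (1-\varepsilon/2) b$ combined with a finite atomic approximation (using that the family of finite atomic boundaries is dense and that $\mathbf{E}^\alpha$ behaves continuously under stability) reduces to the case where $b$ is finite atomic with $\mathbb{M}(b) \leq C - \delta$, in analogy with Lemma~\ref{l:moltisperem}. Pick any $T \in \OTP(b)$; by the interior regularity theorem (Theorem~\ref{t:ultimissimoregbranch}), $T$ is locally polyhedral in $\supp(T) \setminus \supp(\partial T)$. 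Choose $x_0$ in the relative interior of an edge of $T$ and $r > 0$ so small that $T \res B_r(x_0)$ is a single oriented segment from $q_1$ to $q_2$ with constant multiplicity $\theta \neq 0$, $2 r |\theta| < \varepsilon/2$, and so that the resulting boundary stays in $A_C$. Set
\[ T' := T - T \res B_r(x_0), \qquad b' := \partial T' = b + \theta(\delta_{q_1} - \delta_{q_2}). \]
Then $\Flat_K(b - b') \leq |\theta| \cdot |q_2 - q_1| < \varepsilon$. Optimality $T' \in \OTP(b')$ follows by the subadditivity identity $\mathbb{M}^\alpha(T) = \mathbb{M}^\alpha(T') + \mathbb{M}^\alpha(T \res B_r(x_0))$ (valid because the supports intersect only at two points): any $\hat T$ with $\partial \hat T = b'$ and $\mathbb{M}^\alpha(\hat T) < \mathbb{M}^\alpha(T')$ would, via $\hat T + T \res B_r(x_0)$, contradict the optimality of $T$ for $b$.

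The main obstacle is proving $\OTP(b') = \{T'\}$. Suppose $\hat T \in \OTP(b')$. From $\mathbb{M}^\alpha(\hat T) = \mathbb{M}^\alpha(T')$ and the strict concavity of $t \mapsto t^\alpha$ appearing in the subadditivity estimate~\eqref{e:subadditivity}, one deduces that $\supp(\hat T) \cap \supp(T \res B_r(x_0))$ is $\mathcal{H}^1$-negligible, so $\tilde T := \hat T + T \res B_r(x_0)$ is again in $\OTP(b)$ and contains the cut segment disjointly. The plan is now to combine the single path property (Proposition~\ref{p:singlepath}) applied to $\tilde T$ with the uniqueness of the straight-line transport between $\delta_{q_1}$ and $\delta_{q_2}$: any path of positive $\pi$-measure in the good decomposition of $\tilde T$ which crosses from a neighborhood of $q_1$ to a neighborhood of $q_2$ must coincide with the segment $T \res B_r(x_0)$, so the remaining portion of $\tilde T$ is pinned to $T$ on a set of positive mass. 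Iterating this cutting procedure at finitely many interior regular points chosen so as to pin all the branches of $T$ (whose branching structure is finite in any compact portion by Theorem~\ref{t:ultimissimoregbranch}) then forces $\hat T = T'$. The hard part is this last rigidity step: the additive $\alpha$-mass identity does not by itself rule out alternative routings of the two Dirac masses $\theta \delta_{q_1}, \theta \delta_{q_2}$ through the remaining network, and closing the argument requires a careful combination of interior regularity, the single path property, and induction on the (finite) polyhedral complexity of $T$ restricted to a neighborhood of $\supp(T) \setminus \supp(\partial T)$.
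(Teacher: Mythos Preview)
Your Baire setup and the closedness of $\mathcal{NU}_C^m$ via stability are correct and match the paper. The genuine gap is in the perturbation step, and it is precisely the obstacle the paper is built around.

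Your construction $b' = b + \theta(\delta_{q_1}-\delta_{q_2})$ adds $2|\theta|$ to the mass of the boundary, where $\theta$ is the multiplicity of the chosen edge of $T$. This is not small: by the good decomposition, $|\theta|$ can be as large as $\tfrac12\Mass(b)$, and it is fixed by the combinatorics of $T$, independent of the cut radius $r$. The preliminary rescaling only buys you slack $\delta$ comparable to $\varepsilon$, so in general $\Mass(b')>C$ and $b'\notin A_C$. Iterating over several cut points, as you propose, makes this worse. The paper addresses exactly this by subtracting only a fraction $\tfrac1k$ of the segment near each chosen point, i.e.\ $T_n = T - \tfrac1k\sum_i T\res B_{n^{-1}}(p_i)$, and choosing $k$ large so that the added boundary mass $hk^{-1}\Mass(b)$ fits inside the slack. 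The price is that uniqueness of the minimizer for $b_n$ is no longer ``easy'': the paper carries out an exhaustive case analysis of all possible local topologies of $S_n$ in a box around each $p_i$ (the four-point boundary with multiplicities $\theta,\theta/k$), ruling out every configuration except the two in \eqref{e:THE_ONE}, and then combines this with a global energy comparison to force $S_n=T_n$.

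A second, related gap: your rigidity sketch assumes that recovering a minimizer $\tilde T\in\OTP(b)$ from $\hat T$ pins it to $T$. But $\OTP(b)$ is in general not a singleton, so $\tilde T$ could be a different minimizer with the reinserted segment lying in its support by coincidence. The paper handles this by first proving that $\OTP(b)$ is \emph{finite} for the (rescaled integral) boundaries produced by the approximation step, and then choosing the points $p_1,\dots,p_h$ so that $T$ is the unique element of $\OTP(b)$ whose support contains all of them (Lemma~\ref{l:magic_points}). Your single-point cut, even if it stayed in $A_C$, would not separate $T$ from the other minimizers, and the single path property alone does not supply the missing unique continuation that the analytic minimal-surface argument used in Theorem~\ref{t:genericga} provides.
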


Several variants and generalizations of the optimal branched transport problem were proposed and studied by many authors in recent years, see for instance [\ref{BF}, \ref{BrBuSa}, \ref{BW}, \ref{BW1}, \ref{brabutsan}, \ref{BrGaReSun}, \ref{BrPaSun}, \ref{BrSun}, \ref{CDRMjmpa}, \ref{MMST}, \ref{MMT}, \ref{PaoliniStepanov}]. For the sake of simplicity, we prove the generic uniqueness of minimizers only for the Eulerian formulation introduced in [\ref{Xia2003}].

\subsubsection{Strategy of the proof}
Using Theorem \ref{t:stability_new}, we show that in order to prove Theorem \ref{t:main}, it suffices to prove the \emph{density} of the set of boundaries $b\in A_C$ for which $\OTP(b)$ is a singleton, see Lemma \ref{l:residual}. A similar reduction principle is used in [\ref{Morganinventiones}, \ref{Morganindiana}, \ref{MorganARMA}] to prove that the generic (higher dimensional) boundary spans a unique solution to the Plateau's problem. 

The proof of the density result is based on the following perturbation argument. Firstly, we prove that we can reduce to a finite atomic boundary $b$ whose multiplicities are integer multiples of a fixed positive number, exploiting the fact that such boundaries are dense in $A_C$, see Lemma \ref{l:integral_bdry}. For these boundaries, we prove that the solutions to the optimal branched transport problem are multiples of polyhedral integral currents, see Lemma \ref{l:integral_minimizers}. Then we improve the uniqueness result of [\ref{Gilbert}] to suit the discrete optimal branched transport problem, obtaining as a byproduct that for every finite atomic boundary $b$ as above the set $\OTP(b)$ is finite, see Lemma \ref{l:finiteminimizers}. We deduce the existence of a set of points $\{p_1,\dots,p_h\}$ in the regular part of the support of a fixed transport path $T\in\OTP(b)$ with the property that $T$ is the only element in $\OTP(b)$ whose support contains $\{p_1,\dots,p_h\}$, see Lemma \ref{l:magic_points}. 

Next, we aim to ``perturb" the boundary $b$ close to the points $p_1,\dots,p_h$ in order to obtain boundaries with unique minimizers, keeping in mind the fact that the perturbed boundaries should not escape from the set $A_C$. More in detail, we define a sequence $(b_n)_{n\geq 1} \in A_C$ of boundaries for the optimal branched transport problem with the property that $\Flat_K(b_n-b)\to 0$ as $n\to\infty$. Moreover, each $b_n$ has points of its support (with small multiplicity) in proximity of $p_1,\dots,p_h$, so that every minimizing transport path $S_n$ with boundary $\partial S_n=b_n$ is forced to have such close-by points in its support. Exploiting again the stability property of Theorem \ref{t:stability_new}, we deduce that for every choice of $S_n\in\OTP(b_n)$ there exists $S\in\OTP(b)$ such that, up to subsequences, it holds $\Flat_K(S_n-S)\to 0$ and we can infer the Hausdorff convergence of the supports of the $S_n$'s to the union of the support of $S$ and the points $p_1,\dots,p_h$, see Lemma \ref{l:conv_hauss}. Notice that at this stage we cannot deduce from Lemma \ref{l:magic_points} that $S=T$, since the portion of $S_n$ which is in proximity of some of the $p_i$'s might vanish in the limit. In order to exclude this possibility, we perform a fine analysis of the structure of the network $S_n$ around the points $p_1,\dots, p_h$, see Section \ref{Step2}: this allows us to exclude all possible local topologies except for two, see \eqref{e:THE_ONE}, proving that $p_1,\dots,p_h$ are contained in the support of $S$ (so that in particular $S=T$ by Lemma \ref{l:magic_points}) and that $\OTP(b_n)=\{S_n\}$, for $n$ sufficiently large, see Lemma \ref{l:main}, which concludes the proof of Theorem \ref{t:main}.

\begin{remark}
It is much easier to prove just density in $\bigcup_{C>0}A_C$ of the boundaries $b$ for which $\OTP(b)$ is a singleton. Indeed, it is significantly simpler to perform the strategy outlined above if one is allowed to choose $b_n$ simply satisfying $\Flat_K(b_n-b)\to 0$ and $\MM(S_n)\leq C$, but possibly with $\Mass(b_n)>C$: for instance it suffices to choose the perturbation $b_n$ as in \eqref{e:bienne} with $k=1$, in which case it is easy to prove that $\OTP(b_n)$ is a singleton. Obviously such type of perturbation is not admissible in order to prove the residuality result of Theorem \ref{t:main}, since such boundaries $b_n$ do not belong to $A_C$. One of the challenges in our proof is therefore to find suitable perturbations $b_n$ of $b$ which are internal to the set $A_C$ and such that for the boundary $b_n$ there exists a unique minimizer of the optimal branched transport problem, for $n$ sufficiently large.
\end{remark}

\subsubsection{Preliminaries}\label{s:preliminaries}
Through the section $K\subset$ $\R^d$ denotes a convex compact set. Let $A_C$ be the set of boundaries defined in \eqref{def_AC}. Due to the Baire category theorem, the next lemma ensures that a residual subset of $A_C$ is dense. 
\begin{lemma}\label{l:closed}
The set $A_C$ is $\mathbb{F}_K$-closed. In particular $(A_C,d_{\flat})$ is a complete metric space.
\end{lemma}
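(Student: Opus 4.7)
The plan is to prove closedness of $A_C$ with respect to $\mathbb{F}_K$, after which completeness will follow immediately from the fact that $(\mathscr{B}_0(K) \cap \{b : \mathbb{M}(b) \le C\}, d_\flat)$ is a complete metric space: this is a standard consequence of the compactness property in Proposition \ref{p:compactnessnormal} combined with the lower semicontinuity of the mass under flat convergence (applied to both the currents and their boundaries). So all the work lies in showing $A_C$ is $\mathbb{F}_K$-closed.

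To this end, I would fix a sequence $(b_n)_n \subset A_C$ with $\mathbb{F}_K(b_n - b) \to 0$ for some $b \in \mathscr{B}_0(K)$, and verify the two defining conditions of $A_C$ for the limit $b$. The bound $\mathbb{M}(b) \le C$ is immediate from the lower semicontinuity of the mass under flat convergence, since $\mathbb{M}(b_n) \le C$ for every $n$. For the second condition, recall that every $T \in \OTP(b)$ satisfies $\mathbb{M}^\alpha(T) = \mathbf{E}^\alpha(b)$, so it suffices to prove $\mathbf{E}^\alpha(b) \le C$.

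To prove this, for each $n$ I would pick $T_n \in \OTP(b_n)$ (this set is nonempty by the Optimal Branched Transport existence result and by the assumption that $\mathbf{E}^\alpha(b_n) \le C < \infty$). By Proposition \ref{p:stuvard}, each $T_n$ is rectifiable with multiplicity $\theta_n$ bounded by $\mathbb{M}(b_n) \le C$ (as follows from the Kirchhoff-type constraint encoded in $\partial T_n = b_n$ via good decompositions, cf.\ Proposition \ref{pp:gooddec}). Hence $|\theta_n| \le C$ gives $\mathbb{M}(T_n) \le C^{1-\alpha} \mathbb{M}^\alpha(T_n) \le C^{2-\alpha}$, yielding a uniform mass bound. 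Together with $\mathbb{M}(\partial T_n) = \mathbb{M}(b_n) \le C$, Proposition \ref{p:compactnessnormal} provides a subsequence (not relabeled) and a normal $1$-current $T^*$ with $T_n \to T^*$ in the sense of currents. By continuity of the boundary operator and the assumed flat convergence $b_n \to b$, we deduce $\partial T^* = b$.

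The final step is the one where a nontrivial tool enters: by Theorem \ref{t:stability_new} (the stability theorem in the form just proved above), any subsequential limit $T^*$ of $T_n \in \OTP(b_n)$ with $b_n \in A_C$ belongs to $\OTP(\partial T^*) = \OTP(b)$. Thus $\mathbf{E}^\alpha(b) = \mathbb{M}^\alpha(T^*) \le \liminf_n \mathbb{M}^\alpha(T_n) \le C$, where in the inequality I use the lower semicontinuity of $\mathbb{M}^\alpha$ with respect to $\mathbb{F}_K$-convergence (Definition \ref{d:lscrelaxmass}). Hence every $T \in \OTP(b)$ has $\mathbb{M}^\alpha(T) \le C$, which shows $b \in A_C$ and completes the proof. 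The key conceptual point — and the only nontrivial ingredient — is the invocation of stability to promote the weak limit of minimizers to a minimizer; without Theorem \ref{t:stability_new}, lower semicontinuity of $\mathbb{M}^\alpha$ alone would not suffice, as one could not exclude a priori that $\mathbf{E}^\alpha(b) < \mathbb{M}^\alpha(T^*)$.
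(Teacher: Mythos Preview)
Your proof is correct, but it takes an unnecessary detour. The invocation of Theorem \ref{t:stability_new} is not needed, and your closing remark---that without stability ``lower semicontinuity of $\MM$ alone would not suffice, as one could not exclude a priori that $\mathbf{E}^\alpha(b) < \MM(T^*)$''---is actually mistaken. You correctly reduced the problem to showing $\mathbf{E}^\alpha(b)\le C$. But once you have a limit $T^*$ with $\partial T^*=b$ and $\MM(T^*)\le C$ (from lower semicontinuity), you are done: $T^*$ is \emph{some} competitor for the boundary $b$, so by definition of the infimum $\mathbf{E}^\alpha(b)\le\MM(T^*)\le C$. Whether or not $T^*$ is itself a minimizer is irrelevant; the possibility $\mathbf{E}^\alpha(b)<\MM(T^*)$ causes no harm whatsoever.

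This is precisely the paper's argument: it extracts the limit $T$, observes $\partial T=b$, applies lower semicontinuity of $\MM$ to get $\MM(T)\le C$, and concludes $b\in A_C$ directly---no stability needed. So your approach and the paper's are the same up to the point where you bring in Theorem \ref{t:stability_new}; from there the paper finishes in one line while you add an extra (valid but superfluous) step. Stability does become essential later, in Lemma \ref{l:residual}, where one must show that limits of minimizers are again minimizers; but for Lemma \ref{l:closed} the mere existence of a competitor with small $\alpha$-mass suffices.
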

\begin{proof}
The second part of the statement follows from the first part and from the $\mathbb{F}_K$-compactness of 0-currents with support in $K$ and mass bounded by $C$, see [\ref{Federerbook}, 4.2.17].

In order to prove that $A_C$ is $\Flat_K$ closed, let $(b_j)_{j\in\N}$ be a sequence of elements of $A_C$ and let $b$ be such that $\mathbb{F}_K(b_j-b)\to 0$ as $j\to\infty$. We want to prove that $b\in A_C$. By the lower semicontinuity of the mass (with respect to the flat convergence), we have $\Mass(b)\leq C$. For any $j\in\N$, let $T_j \in \OTP(b_j)$. By [\ref{CDRMcalcvar}, Proposition 3.6], we have $\mathbb{M}(T_j)\leq C^{1-\alpha}\mathbb{M}^\alpha(T_j)\leq C^{2-\alpha}$.
By the compactness theorem for normal currents, there exists $T\in{\mathbb{N}}_{1}(K)$ such that, up to (nonrelabeled) subsequences $\mathbb{F}_K(T_j-T)\to 0$. By the continuity of the boundary operator we have $\partial T=b$ and by the lower semicontinuity of the $\alpha$-mass, see [\ref{CDRMS}], we have $\mathbb{M}^\alpha(T)\leq C$ and hence $b\in A_C$. 
\end{proof}

Consider the following subset of $A_C$, which represents the set of boundaries admitting nonunique minimizers:
$$NU_C:=\{b\in A_C: \exists\,\, T^1, T^2 \in\OTP(b) \text{ such that } T^1\neq T^2\}.$$
Notice that since $b\in A_C$ then $\MM(T^1)=\MM(T^2)\leq C$. We have the following.

\begin{lemma}\label{l:residual}
Assume that the set $A_C\setminus NU_C$ is $\mathbb{F}_K$-dense in $A_C$. Then it is residual.
\end{lemma}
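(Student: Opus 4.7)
The plan is to write $NU_C$ as a countable union of $\mathbb{F}_K$-closed sets and then invoke the Baire category theorem. Concretely, for every integer $m\geq 1$ set
$$NU_C^m := \{b\in A_C : \text{there exist } T^1, T^2 \in \OTP(b) \text{ with } \mathbb{F}_K(T^1-T^2)\geq 1/m\},$$
so that $NU_C = \bigcup_{m\geq 1} NU_C^m$ and $A_C\setminus NU_C = \bigcap_{m\geq 1}(A_C\setminus NU_C^m)$. Since $A_C\setminus NU_C \subset A_C\setminus NU_C^m$ for each $m$, the density hypothesis propagates from $A_C\setminus NU_C$ to each $A_C\setminus NU_C^m$. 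Because $(A_C,d_{\flat})$ is a complete metric space by Lemma~\ref{l:closed}, it suffices to show that every $NU_C^m$ is $\mathbb{F}_K$-closed, so that each $A_C\setminus NU_C^m$ is open and dense, and the countable intersection is residual by Baire. This mirrors the argument used in Section~3.1 for the generic uniqueness of area-minimizing currents.

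The entire content of the proof is therefore the closedness of $NU_C^m$. Pick a sequence $(b_j)_j \subset NU_C^m$ with $\mathbb{F}_K(b_j - b)\to 0$; Lemma~\ref{l:closed} already ensures $b\in A_C$. For each $j$ choose competitors $T^1_j, T^2_j \in \OTP(b_j)$ with $\mathbb{F}_K(T^1_j - T^2_j)\geq 1/m$. The definition of $A_C$ and the inequality $\MM(T_j^i)\leq C^{1-\alpha}\MM^\alpha(T_j^i)\leq C^{2-\alpha}$ (as in the proof of Lemma~\ref{l:closed}) yield a uniform bound
$$\sup_j\bigl(\MM(T_j^i)+\MM(\partial T_j^i)\bigr)<\infty\qquad (i=1,2),$$
so Proposition~\ref{p:compactnessnormal} gives, along a common nonrelabeled subsequence, normal currents $T^1,T^2 \in \mathbb{N}_1(K)$ with $\mathbb{F}_K(T^i_j - T^i)\to 0$. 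The continuity of the boundary operator then forces $\partial T^1 = \partial T^2 = b$.

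The main (and really the only nonroutine) step is to check that $T^1, T^2$ actually belong to $\OTP(b)$: a priori the stability result of [\ref{CDRMcpam}] would require $\mu_+$ and $\mu_-$ to be mutually singular, which is not guaranteed for a generic limit boundary. This is precisely the reason the improved stability statement of Theorem~\ref{t:stability_new} was proved earlier: it asserts that any subsequential flat-limit of optimal transport paths with boundaries in $A_C$ is itself optimal with respect to its own boundary, without any mutual-singularity assumption. Applying it to each of the sequences $(T_j^1)_j$ and $(T_j^2)_j$ yields $T^1, T^2 \in \OTP(b)$.

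Finally, since $\mathbb{F}_K$ is a norm, the flat distance is continuous under flat convergence, so
$$\mathbb{F}_K(T^1 - T^2) = \lim_{j\to\infty}\mathbb{F}_K(T^1_j - T^2_j) \geq \frac{1}{m},$$
whence $b\in NU_C^m$ and $NU_C^m$ is closed. Combining this with the reduction in the first paragraph completes the proof. I expect no substantial technical obstacle beyond correctly isolating that the generality required for the closedness step is exactly what Theorem~\ref{t:stability_new} provides over Theorem~\ref{t:stabilityCPAM}.
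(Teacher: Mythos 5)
Your proof is correct and follows essentially the same route as the paper's: decompose $NU_C$ into the sets $NU_C^m$, use the density hypothesis to show each has empty interior, prove $NU_C^m$ is $\Flat_K$-closed via the normal-current compactness plus Theorem~\ref{t:stability_new}, and conclude by Baire. Your explicit remark that $\mathbb{F}_K$ is continuous along flat convergence (so the separation $\geq 1/m$ persists) and your explanation of why Theorem~\ref{t:stability_new} rather than Theorem~\ref{t:stabilityCPAM} is needed are both accurate and match the paper's intent, which simply writes ``clearly'' and ``as in the proof of Lemma~\ref{l:closed}'' for those steps.
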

\begin{proof}
For $m \in \N \setminus \{0\}$, consider the sets 
$$NU_C^m:=\{b\in A_C: \exists\, \{T^1,T^2\}\subset\OTP(b)\;{\rm with}\;\mathbb{F}_K(T^2-T^1)\geq m^{-1}\}.$$
Since $NU_C^m\subset NU_C$, then $(A_C\setminus NU_C^m)\supset(A_C\setminus NU_C)$ and hence, by assumption, $A_C\setminus NU_C^m$ is $\mathbb{F}_K$-dense in $A_C$ for every $m$. Therefore $NU_C^m$ has empty interior in $A_C$ for every $m$.

To conclude, it is sufficient to prove that $NU_C^m$ is closed for every $m$. Consider a sequence $(b_j)_{j\in\N}$ of elements of $NU_C^m$ and let $b\in A_C$ be such that $\mathbb{F}_K(b_j-b)\to 0$. We need to prove that $b\in NU_C^m$. For every $j\in\N$, take 
$$\{T^1_j,T^2_j\}\subset\OTP(b_j)\quad {\mbox{ with }}\quad \mathbb{F}_K(T^2_j-T^1_j)\geq m^{-1}.$$ 
As in the proof of Lemma \ref{l:closed}, we deduce that there exist $T^1,T^2\in \mathbb{N}_{1}(K),$ such that $\partial T^1=\partial T^2=b$ and, up to (nonrelabeled) subsequences, $\mathbb{F}_K(T^1_j-T^1)\to 0$ and $\mathbb{F}_K(T^2_j-T^2)\to 0$ as $j\to\infty$. Clearly $\mathbb{F}_K(T^2-T^1)\geq m^{-1}$. By Theorem \ref{t:stability_new}, we have $\{T_1, T_2\}\subset\OTP(b)$, hence $b\in NU_C^m$.
\end{proof}

\subsubsection{Preliminary reductions}

\begin{lemma}\label{l:integral_bdry}
    For any $b \in A_C$ and $\eps >0$, there exist $\delta>0$ and a boundary $b'' \in A_{C-\delta}$ with
    \[ \Flat_K(b-b'') < \eps  \qquad \text{ and } \qquad
    b'' = \eta b_I\]
    for some $\eta >0$ and $b_I\in\I_0(K)$.
\end{lemma}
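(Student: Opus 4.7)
The plan is to produce $b''$ by three successive approximations of $b$, each respecting the constraints built into the definition of $A_C$.

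The first step would be a scaling: set $b_1:=(1-t)b$ for a small $t\in(0,1)$. Because $\OTP((1-t)b)=(1-t)\,\OTP(b)$ and the $\alpha$-mass scales by $(1-t)^\alpha$, this gives $\Mass(b_1)\leq (1-t)C$ and $\MM(T_1)\leq(1-t)^\alpha C$ for every $T_1\in\OTP(b_1)$, both strictly below $C$, while $\Flat_K(b-b_1)\leq tC$. This opens up the slack needed to absorb the errors introduced in the later steps.

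Second, I would reduce to a finite atomic boundary. Fix $T_1\in\OTP(b_1)$ and apply the polyhedral approximation theorem used in the existence proof in Section 1.3.3 to obtain a polyhedral $1$-chain $G$ with $\MM(G)\leq\MM(T_1)$, $\Mass(\partial G)\leq\Mass(b_1)$ and $\Flat_K(\partial G-b_1)<\eta_1$. The boundary $b':=\partial G=\sum_{i=1}^N c_i\delta_{z_i}$ is then a finite atomic $0$-current with real coefficients satisfying $\sum_i c_i=0$, and since $G$ is a competitor for the $\alpha$-mass on $\OTP(b')$, one has $\MM(\tilde T)\leq\MM(G)\leq(1-t)^\alpha C$ for every $\tilde T\in\OTP(b')$.

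Third, I would round coefficients. For small $\eta>0$, let $k_i:=\operatorname{round}(c_i/\eta)$ and then perturb at most $\lceil N/2\rceil$ of these integers by $\pm 1$ to enforce $\sum_i k_i=0$ (possible since $|\sum_i\operatorname{round}(c_i/\eta)|\leq N/2$ thanks to $\sum_i c_i=0$). Setting $b_I:=\sum_i k_i\delta_{z_i}\in\I_0(K)$ and $b'':=\eta b_I$, one has $|c_i-\eta k_i|\leq 3\eta/2$ for each $i$, so $\Mass(b''-b')\leq 2N\eta$ and $\Mass(b'')\leq(1-t)C+2N\eta$. To control $\MM(T'')$ for $T''\in\OTP(b'')$, I would construct a polyhedral $1$-current $R$ with $\partial R=b''-b'$ by pairing atoms of the positive and negative parts of $b''-b'$ via straight segments in $K$; since each atomic mass is at most $2\eta$ and there are at most $N$ such segments of length $\leq\operatorname{diam}(K)$, one obtains $\MM(R)\leq N(2\eta)^\alpha\operatorname{diam}(K)$. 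Using $\tilde T+R$ with $\tilde T\in\OTP(b')$ as a competitor and invoking subadditivity \eqref{e:subadditivity} then yields $\MM(T'')\leq(1-t)^\alpha C+N(2\eta)^\alpha\operatorname{diam}(K)$.

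To conclude, I would choose the parameters in the right order: first $t>0$ so that $(1-t)^\alpha C<C$ (possible since $\alpha<1$), then $\eta_1>0$ small (which fixes $N$), and finally $\eta>0$ small enough that $\Flat_K(b-b'')\leq tC+\eta_1+2N\eta<\eps$ while both $\Mass(b'')$ and $\MM(T'')$ remain strictly below $C$; this exhibits $b''\in A_{C-\delta}$ for some $\delta>0$. The only delicate step is the second one, which relies on the polyhedral approximation theorem with simultaneous control on flat distance of the boundary, boundary mass, and $\alpha$-mass of the current; this is the same tool already used in the existence theory of Section 1.3.3, so its invocation here is immediate.
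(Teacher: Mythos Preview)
Your three-step plan is sound and yields a valid proof, but the paper executes the third step differently and more economically. After the polyhedral approximation you have a polyhedral $1$-current $G=\sum_{i=1}^N\theta'_i\llbracket\sigma_i\rrbracket$; the paper rounds the \emph{edge} multiplicities $\theta'_i$ down to multiples of $\eta$, setting $\theta''_i:=\eta\lfloor\theta'_i/\eta\rfloor$ and $T'':=\sum_i\theta''_i\llbracket\sigma_i\rrbracket$, $b'':=\partial T''$. This has two advantages over your rounding of the \emph{vertex} coefficients $c_i$: first, $b''$ is automatically a boundary with zero total mass (no need to adjust integers to enforce $\sum_i k_i=0$); second, $T''$ is itself a competitor for $\OTP(b'')$ with $\MM(T'')\leq\MM(G)$ immediately from $0\leq\theta''_i\leq\theta'_i$, so the auxiliary current $R$ and the pairing-of-atoms construction are unnecessary. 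Your route works (the bounds on $\Mass(b''-b')$ and $\MM(R)$ do tend to zero as $\eta\to 0$, though the constant in front of $(2\eta)^\alpha$ should be closer to $2N$ than $N$ depending on the pairing), but the paper's choice of where to discretize makes the argument shorter and avoids the bookkeeping of your third step entirely.
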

\begin{proof}
Without loss of generality and up to rescaling, we can assume $C=1$ and write $A$ instead of $A_C$. Let $b \in A$ and $T\in\OTP(b)$ and define $T_{\varepsilon}:= (1-\varepsilon/4) T$. Then $b_{\varepsilon}:=\partial T_{\varepsilon}=(1-\varepsilon/4)b$ and $T_{\varepsilon}\in\OTP(b_{\varepsilon})$ satisfy 
\begin{equation}\label{e:estTepsbeps}
\mathbb{M}^{\alpha} (T_{\varepsilon})\le (1 - \varepsilon/4)^{\alpha}\quad\mbox{ and }\quad \Mass(b_\varepsilon)\leq 1 - \varepsilon/4.  
\end{equation}
Since we also have $\mathbb{M}(b-b_{\varepsilon}) \le \varepsilon/4$, we deduce that 
\begin{equation}\label{e:estflatbeps}
\mathbb{F}_K(b-b_{\varepsilon}) \le \varepsilon/4.    
\end{equation}
Now apply a polyhedral approximation theorem by combinig [\ref{MW}, Lemma 9] and [\ref{Chambolle}, Theorem 1.2] to obtain, possibly after rescaling, a current $T'_\eps \in\Po_1(K)$, such that, denoting $b'_\eps:=\partial T'_\eps$, we have 
\begin{equation}\label{e:estTprimo}
\mathbb{M}^{\alpha}(T'_\eps)\leq \mathbb{M}^{\alpha}(T_{\varepsilon}),\quad\mathbb{M}(b'_\eps) \leq \mathbb{M}(b_{\varepsilon})\quad\mbox{and}\quad \Flat_K(T'_\eps-T_\eps)\leq \varepsilon/4,    
\end{equation}
and in particular $\Flat_K(b'_\eps-b_\varepsilon)\leq \varepsilon/4$. We can write 
$$T'_\eps = \sum_{i=1}^{N} \theta_i' \llbracket\sigma_i\rrbracket$$ as in \eqref{ee:poly}. Up to changing the orientation of $\llbracket\sigma_i\rrbracket$, we may assume $\theta'_i>0$ for every $i$. Fix $\eta := \varepsilon/(16N)$ and denote 
$\theta_i'' := \eta  \left\lfloor \frac{\theta_i'}{\eta} \right\rfloor$ (where $\lfloor x\rfloor$ is the largest integer smaller than or equal to $x$) so that
\begin{equation}\label{e:appx}
     0\leq\theta_i'-\theta_i''< \frac{\varepsilon}{16N} \qquad\mbox{for every $i \in \{1,\dots,N\}$}.
      \end{equation} Define $$T'':= \sum_{i=1}^{N}\theta_i'' \llbracket\sigma_i\rrbracket$$ and denote $b''=\partial T''$. Observe that by \eqref{e:appx} and \eqref{e:estTprimo} we have 
\begin{equation}\label{e:alphamasstsecondo}
    \MM(T'')\leq \MM(T'_\eps)\leq \MM(T_\varepsilon)< 1.
\end{equation} 
      
For every $i\in \{1,\dots,N\}$, we denote by $x_i$ and $y_i$ respectively the first and second endpoint of the oriented segment $\sigma_i$, so that we can write 
$$b'_\eps= \sum_{i=1}^{N} \theta_i' (\delta_{y_i} - \delta_{x_i})$$ 
which we can rewrite as 
$$b'_\eps= \sum_{j=1}^{M}\beta_j' \delta_{z_j},$$
where all points $z_j$ are distinct and
$$\beta_j':= \left( \sum_{\{i: y_i = z_j \}} \theta_i' - \sum_{\{i: x_i = z_j\}} \theta_i'\right).$$
Analogously, we define $$\beta_j'':= \left( \sum_{\{i: y_i = z_j \}} \theta_i'' - \sum_{\{i: x_i = z_j\}} \theta_i''\right),$$
so that we can write
$$b''= \sum_{j=1}^{M}\beta_j'' \delta_{z_j}.$$
Thus we obtain
\begin{equation}\label{e:massb''b'}
\begin{split}
\Mass(b''-b'_\eps ) = \sum_{j=1}^{M}|\beta_j'' - \beta_j'|
\leq \Bigg|\sum_{\{i: x_i = z_j \}} (\theta_i''- \theta_i')\Bigg| + \Bigg|\sum_{\{i: y_i = z_j\}} (\theta_i''- \theta_i') \Bigg| \stackrel{(\ref{e:appx})}< \frac{\eps}{16} + \frac{\eps}{16} = \frac{\eps}{8}
\end{split}
\end{equation} 
and by \eqref{e:estTepsbeps} and \eqref{e:estTprimo} we deduce
\begin{equation}\label{e:massbsecondo}
    \Mass(b'') \leq \Mass(b'_\eps) +\Mass(b''-b'_\eps) <1. 
\end{equation}
Combining \eqref{e:massb''b'} with \eqref{e:estflatbeps} and \eqref{e:estTprimo}, we get
$\mathbb{F}_K(b-b'') < \varepsilon$. The conclusion follows denoting $b_I:=\eta^{-1}b''$ and observing that $b_I\in\I_0(K)$ (as the $\theta''_i$ are multiples of $\eta$) and that by \eqref{e:alphamasstsecondo} and \eqref{e:massbsecondo} we have $b''\in A_{1-\delta}$ for some $\delta>0$.
\end{proof}

\begin{lemma}
\label{l:integral_minimizers}
If $b\in\I_0(K)$ and $T\in\mathbb{N}_1(K)$ is in $\OTP(b)$ then $T\in \Po_1(K)\cap \I_1(K)$.
\end{lemma}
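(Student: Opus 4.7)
Since $b\in\I_0(K)$ is finitely atomic with integer masses, the infimum $\Ex^\alpha(b)$ is finite (one can always connect the atoms by a finite polyhedral chain), hence any $T\in\OTP(b)$ satisfies $\MM(T)=\Ex^\alpha(b)<\infty$. Proposition \ref{p:stuvard} then forces $T=\llbracket E,\tau,\theta\rrbracket$ to be a rectifiable $1$-current. By Proposition \ref{p:paolstep}, $T$ is acyclic, so Smirnov's theorem (Theorem \ref{t:smirnov}) produces a good decomposition $\pi$ of $T$ supported on simple Lipschitz curves; writing $b=\mu_+-\mu_-$ with $\mu_\pm$ finite atomic and integer-valued on the finite set $\{x_1,\dots,x_N\}$, Proposition \ref{pp:gooddec}(1) forces $\pi$-almost every curve to begin and end at points of this finite set.

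The central step is to show that $T$ is a polyhedral current. The plan is to combine three ingredients: (i) the interior regularity Theorem \ref{t:ultimissimoregbranch}, which provides for every $p\in\supp(T)\setminus\supp(\partial T)$ a neighborhood on which $T$ is polyhedral; (ii) the tangent cone Theorem \ref{t:bttgcones}, which bounds the number of branches incident to any interior point by a uniform constant $C(\alpha,d)$ and produces a uniform lower bound on the angles between distinct branches, depending only on $\alpha$ and $d$; (iii) the single path property of Proposition \ref{p:singlepath}, which forces $\pi$-almost every pair of curves sharing two common points to coincide on the intermediate arc. Together with the finiteness of $\supp(\partial T)\subset\{x_1,\dots,x_N\}$, these items prevent branching points of $\supp(T)$ from accumulating anywhere in $K$, so the combinatorial graph underlying $\supp(T)$ has finitely many vertices and edges, and hence $T=\sum_{k=1}^{M}\theta_k\llbracket\sigma_k\rrbracket\in\Po_1(K)$.

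To upgrade this polyhedral representation to an integral one, I will exploit optimality more strongly: adapting the argument of Proposition \ref{p:tree} to the continuous setting shows that the weighted graph underlying the polyhedral current $T$ is a forest. In each tree component, every leaf (degree-one vertex) must lie in $\supp(b)$, since otherwise Kirchhoff's law at a degree-one interior vertex would force the sole incident multiplicity to vanish. At a leaf vertex $x_i$, the single incident edge therefore carries multiplicity $|m_i|\in\mathbb{Z}$, where $m_i$ is the signed mass of $b$ at $x_i$. Peeling off this leaf edge produces a smaller forest whose induced boundary remains integer atomic, and an induction on the number of edges yields that every $\theta_k$ is an integer, whence $T\in\Po_1(K)\cap\I_1(K)$.

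The most delicate step will be the global finiteness in the second paragraph, namely excluding the possibility that infinitely many branching points of $\supp(T)$ accumulate at an interior point or near one of the atoms $x_i$. This is precisely where the \emph{uniform} nature of the bounds in Theorem \ref{t:bttgcones}---the same constant $C(\alpha,d)$ controls the number of branches at every interior point, and the same lower bound controls branch angles---converts local polyhedrality into a globally finite polyhedral decomposition.
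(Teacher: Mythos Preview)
Your proposal is workable, but it takes a considerably more roundabout path to polyhedrality than the paper does. The paper observes directly that since $b\in\I_0(K)$ is supported on finitely many points, there are only finitely many possible (source, sink) pairs for curves in a good decomposition $\pi$ of $T$; the single path property (Proposition \ref{p:singlepath}) then forces all curves with a given pair of endpoints to share the same trajectory, so $T$ is a \emph{finite} superposition $\sum_{i=1}^N a_i\llbracket\gamma_i\rrbracket$ of simple curves, and (again by the single path property) the pairwise intersections ${\rm Im}(\gamma_i)\cap{\rm Im}(\gamma_j)$ are connected. This already yields a finite-graph structure on $\supp(T)$ without any appeal to interior regularity or to the uniform angle bounds of Theorem \ref{t:bttgcones}. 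In particular, the paper sidesteps entirely the delicate issue you flag---accumulation of branch points near atoms of $b$---because finiteness comes from the combinatorics of endpoint pairs, not from local regularity estimates. Your approach does face a genuine soft spot there: Theorems \ref{t:bttgcones} and \ref{t:ultimissimoregbranch} apply only at points of $\supp(T)\setminus\supp(\partial T)$, so the uniform angle bound does not by itself preclude an infinite cascade of branch points converging to some $x_i\in\supp(b)$; you would still need to invoke the finitely-many-curves observation to close this, at which point the heavier regularity machinery becomes redundant.

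For integrality, your leaf-peeling induction is correct. The paper instead argues by contradiction: collecting the edges with non-integer multiplicity into a subcurrent $\hat T$, one has $\partial\hat T=\partial T-\partial(T-\hat T)\in\I_0(K)$, which forces every vertex of the graph underlying $\hat T$ to have degree at least two (a single incident edge would contribute a non-integer atom to $\partial\hat T$); hence $\supp(\hat T)\subset\supp(T)$ contains a loop, contradicting [\ref{BCM}, Proposition 7.8]. Both arguments work; the paper's is a bit shorter.
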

\begin{proof}
Combining the \emph{good decomposition} properties of optimal transport paths, see Proposition \ref{pp:gooddec} and their \emph{single path property}, see Proposition \ref{p:singlepath} with the assumption $\partial T\in\Po_0(K)$, we deduce that there are finitely many Lipschitz simple paths $\gamma_1,\dots,\gamma_N$ of finite length such that $T$ can be written as a $T=\sum_{i=1}^N a_i\llbracket\gamma_i\rrbracket$, where $a_i>0$ for every $i$ and $\llbracket\gamma_i\rrbracket\in\I_1(K)$ is the current $({\rm{Im}}(\gamma_i), \gamma_i'/|\gamma_i'|, 1)$. Moreover, again by Proposition \ref{p:singlepath}, one can assume that ${\rm{Im}}(\gamma_i)\cap {\rm{Im}}(\gamma_j)$ is connected for every $i,j$, which in turn implies that $T\in\Po_1(K)$. Hence we can write $$T:=\sum_{\ell=1}^N\theta_\ell\llbracket\sigma_\ell\rrbracket,$$
where $\sigma_\ell$ are non-overlapping oriented segments and $\theta_\ell\in\R$. We want to prove that $\theta_\ell\in\Z$, for all $\ell$.

Denote $$\mathscr{I}:=\{\ell\in\{1,\dots,N\}:\theta_\ell\in\R\setminus\Z\}$$
and let $\hat T:=\sum_{\ell\in\mathscr{I}}\theta_\ell\llbracket\sigma_\ell\rrbracket$. Assume by contradiction that $\hat T\neq 0$.
Note that $T-\hat T\in\I_1(K)$ and therefore, since $b\in\I_0(K)$, we have $\partial\hat T=b-\partial(T-\hat T)\in\I_0(K)$. Hence, for every point $x$ in the support of $\partial\hat T$ there are at least two distinct segments $\sigma_{\ell_1}$ and $\sigma_{\ell_2}$ having $x$ as an endpoint. This implies that the support of $\hat T$, and in particular also the support of $T$, contains a loop, which contradicts [\ref{BCM}, Proposition 7.8]\footnote{This proposition can be considered as the continuous analogue of Proposition \ref{p:tree}.}.
\end{proof}
\subsection{Finiteness of the set of minimizers for integral boundaries}
\begin{definition}\label{d:topo}
Let $b\in\I_0(K)$ and let $T, T'\in\Po_1(K)$ with $\partial T=\partial T'=b$. We say that $T$ and $T'$ have the same \emph{topology} if there exist two ordered sets, each made of distinct points, $\{x_1,\dots,x_M\}$ and $\{x'_1,\dots,x'_M\}$ with the following properties:
\begin{itemizeb}
\item[(i)] for every $p\in\supp(b)$ there exists $i$ such that $x_i=p=x'_i$;
\item[(ii)] denoting $\sigma_{ij}$ the segment with first endpoint $x_i$ and second endpoint $x_j$ and $\sigma'_{ij}$ the segment with first endpoint $x'_i$ and second endpoint $x'_j$, $T$ and $T'$ can be written respectively as 
\begin{equation}\label{e:def_topo}
T=\sum_{i<j}a_{ij}\llbracket\sigma_{ij}\rrbracket,\quad T'=\sum_{i<j}a'_{ij}\llbracket\sigma'_{ij}\rrbracket, \quad \mbox{ for some $a_{ij}, a'_{ij}\in\R$.}    
\end{equation}
\item[(iii)] the representations in \eqref{e:def_topo}, restricted to the nonzero addenda, are of the same type as \eqref{ee:poly}. In particular, if $a_{ij}$ and $a_{kl}$ (respectively $a'_{ij}$ and $a'_{kl}$) are nonzero, then $\sigma_{ij}$ and $\sigma_{kl}$ (respectively $\sigma'_{ij}$ and $\sigma'_{kl}$) have disjoint interiors. Moreover, the number of nonzero addenda in the representation of $T$ (respectively $T'$) given in \eqref{e:def_topo} coincides with the smallest number $N$ for which $T$ (respectively $T'$) can be written as in \eqref{ee:poly}.
\item [(iv)]$a_{ij}=0$ if and only if $a'_{ij}=0$. In particular, the number $N$
of the previous point is the same for $T$ and $T'$.
\end{itemizeb}
One can check that the above conditions define an equivalence relation on the set of polyhedral currents. We call the \emph{topology} of a polyhedral current $T$ the corresponding equivalence class. Notice that the number $M$ depends only on the equivalence class and for every $T$ the (unordered) set $\{x_1,\dots,x_M\}$ is uniquely determined, by property (iii). The set $\{x_1,\dots,x_M\}\setminus \supp(b)$ is called the set of \emph{branch points} of $T$ and denoted by $\BR(T)$. By Lemma \ref{l:integral_minimizers}, for every $T\in\OTP(b)$ the topology of $T$ and the set $\BR(T)$ are well-defined. 
\end{definition}

\begin{lemma}\label{l:numberBranch}
	Let $0\neq b\in\I_0(K)$ and $T \in \OTP(b)$. Then $ \Haus^0(\BR(T))\leq \Haus^0(\supp(b))-2$.
\end{lemma}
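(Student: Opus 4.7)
The plan is to reinterpret the statement as a purely graph-theoretic inequality. By Lemma \ref{l:integral_minimizers}, the optimizer $T$ is an integer polyhedral current, and by the minimality condition (iii) in Definition \ref{d:topo} we may write
\[ T=\sum_{\ell=1}^N \theta_\ell \llbracket\sigma_\ell\rrbracket \]
with $\theta_\ell\in\Z\setminus\{0\}$ and the $\sigma_\ell$ non-overlapping oriented segments. I would view $T$ as a finite graph $G=(V,E)$ whose vertex set is $V=\supp(b)\cup\BR(T)$ and whose edge set $E=\{\sigma_1,\dots,\sigma_N\}$, and then bound $|\BR(T)|$ by comparing the two counts of $|E|$ given by the handshake lemma and by Euler's formula for a forest.

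The key steps, in order, would be: (1) invoke [BCM, Proposition 7.8] to conclude that $\supp(T)$ contains no loops, so $G$ is a forest with, say, $c$ connected components, and hence $|E|=|V|-c=|\supp(b)|+|\BR(T)|-c$; (2) establish the degree bound $\deg(v)\ge 3$ for every $v\in\BR(T)$ (while $\deg(v)\ge 1$ for $v\in\supp(b)$ since these points are endpoints of at least one segment whenever $b\neq 0$); (3) apply the handshake lemma $2|E|=\sum_{v\in V}\deg(v)\ge 3|\BR(T)|+|\supp(b)|$; (4) combine (1) and (3) to obtain
\[ 2|\supp(b)|+2|\BR(T)|-2c\;\ge\;3|\BR(T)|+|\supp(b)|, \]
i.e.\ $|\BR(T)|\le |\supp(b)|-2c$; (5) observe that every connected component of $G$ must contain at least one point of $\supp(b)$ (otherwise it would be a nontrivial finite tree all of whose vertices have degree $\ge 3$, which contradicts the existence of a leaf), hence $c\ge 1$ (in fact we just need $c\ge 1$, which already follows from $b\neq 0\Rightarrow T\neq 0$).

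The main obstacle is step (2), the lower bound $\deg(v)\ge 3$ at branch points. This needs the optimality of $T$ combined with the minimality of the topological representation. Indeed, if some $v\in\BR(T)$ had $\deg(v)=2$, then Kirchhoff's law at the interior vertex $v$ would force the two incident segments $[x_1,v]$ and $[v,x_2]$ to carry the same integer multiplicity $\theta$; replacing them by the single segment $\theta\llbracket[x_1,x_2]\rrbracket$ preserves $\partial T$ and, by the strict subadditivity of $|\cdot|^\alpha$ together with the triangle inequality $|x_1-v|+|v-x_2|\ge|x_1-x_2|$, does not increase the $\alpha$-mass. If $x_1,v,x_2$ are not collinear the inequality is strict, contradicting optimality; if they are collinear the two segments merge into one in the representation \eqref{ee:poly}, contradicting the minimality of $N$ required by condition (iii) of Definition \ref{d:topo}. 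Ruling out degree $\le 2$ at each branch point rigorously (including the case of several collinear segments meeting at $v$) is the delicate point, but once it is established the rest of the argument is a one-line combinatorial computation.
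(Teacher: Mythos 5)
Your strategy, combining the Euler relation for a forest with the handshake lemma, is a genuinely different route from the paper's argument (which cuts $T$ along a small sphere around a fixed $p\in\BR(T)$ and runs a minimal-counterexample induction on $\Haus^0(\supp(b))$), and it would give a clean proof; but there is a real gap in step (2), exactly where you flag things as delicate. In the minimal-$N$ representation of condition (iii) of Definition \ref{d:topo}, a vertex $v\in\BR(T)$ is allowed to lie in the relative \emph{interior} of another segment $\sigma_{ij}$: the disjoint-interiors requirement only forbids two segment interiors from overlapping, and an endpoint of $\sigma_{kl}$ sitting in $\mathrm{int}\,\sigma_{ij}$ violates nothing. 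When that happens, $\deg_G(v)$ can equal $2$ with the two incident segments \emph{collinear}, and your dichotomy fails in its second branch: merging them is forbidden precisely because the interior of the merged segment would then contain $v\in\mathrm{int}\,\sigma_{ij}$, so the merged representation no longer has disjoint interiors and condition (iii) is not contradicted. The prototype is a ``plus''-shaped current $T=\llbracket[(-1,0),(1,0)]\rrbracket+\llbracket[(0,-1),(0,1)]\rrbracket$: its minimal disjoint-interior representation has $N=3$, $M=5$, $\BR(T)=\{(0,0)\}$, the origin has $\deg_G=2$, and the abstract graph has $c=2$ components, so your intermediate estimate $|\BR(T)|\le|\supp(b)|-2c$ would read $1\le 0$.

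The repair is to run your Euler/handshake computation on the refined graph $G'$ obtained by subdividing each $\sigma_{ij}$ at every vertex lying in its relative interior, so that $\deg_{G'}(v)$ equals the geometric number of branches of $\supp(T)$ emanating from $v$; this is exactly the quantity the paper's proof controls, namely the number $m\ge 3$ of connected components of $\supp(T)\setminus B_\varepsilon(p)$. The graph $G'$ is still a forest (a cycle in $G'$ gives a loop in $\supp(T)$), $c'$ is the number of connected components of $\supp(T)$, every $p\in\supp(b)$ still has $\deg_{G'}(p)\geq 1$, and now the degree-$\geq 3$ argument at branch points closes: if $\deg_{G'}(v)=2$ with collinear incident edges and nothing else through $v$, then $v$ is an interior point of a single segment of the minimal representation and therefore cannot lie in $\{x_1,\ldots,x_M\}$ at all. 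Your one-line computation then gives $|\BR(T)|\le|\supp(b)|-2c'\le|\supp(b)|-2$, as claimed. The paper's geometric decomposition sidesteps this combinatorics entirely, at the cost of an induction on $\Haus^0(\supp(b))$ and a (tacit) verification that each piece $T_i$ is itself an optimal transport path.
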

\begin{proof}
	Suppose without loss of generality that $\Haus^0(\BR(T))>0$. Assume by contradiction that the lemma is false and let $n$ be the minimal number such that there exist $b\in\I_0(K)$ and $T \in \OTP(b)$ such that $$\Haus^0(\BR(T))+2> n=\Haus^0(\supp(b)).$$ 
	Notice that $n>2$. 
	Fix $p \in \BR(T)$ and let $\varepsilon>0$ be such that $$(\overline B_{\varepsilon}(p)\setminus\{p\})\cap(\supp(b)\cup\BR(T))=\emptyset.$$ Denote by $T_1, \dots, T_m$ the restriction of $T$ to the connected components of $\supp(T)\setminus B_\varepsilon(p)$. We notice that $m \geq 3$. Indeed, if $m=1$ we would have the contradiction $p\in\supp(b)$ and if $m=2$, writing $T$ as in \eqref{e:def_topo}, the only two segments with nonzero coefficient having $p$ as an endpoint cannot be collinear by property (iii): this contradicts the the fact that $T\in\OTP(b)$. Observe that for every $i$ we have that $\supp(\partial T_i)\setminus \supp(b)$ consists of exactly one point $p_i$, so that
	\begin{equation}\label{b}
 n = \sum_{i=1}^m \left( \Haus^0(\supp(\partial T_i)) - 1 \right).
 \end{equation}
By minimality of $n$ and the fact that $m \geq 3$, we have \begin{equation}\label{i_s} \Haus^0(\BR(T_i)) \leq \Haus^0(\supp(\partial T_i)) -2 \quad \text{ for all $i \in \{1,\dots ,m\}$}. \end{equation} Since $m\geq 3$, the combination of \eqref{b} and \eqref{i_s} leads to a contradiction.
\end{proof}

\begin{lemma}\label{l:topo_and_support}
Let $b\in\I_0(K)$ and let $T,T'\in\Po_1(K)$ with $\partial T = b =\partial T'$ have the same topology. Assume moreover that $\supp(T)$ and $\supp(T')$ do not contain loops. Write $T$ and $T'$ as in \eqref{e:def_topo} with properties (i)-(iv) and with the same orientation on each segment. Then $a_{ij}=a'_{ij}$ for every $i,j$.
\end{lemma}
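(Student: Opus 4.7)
The plan is to exploit the fact that the common combinatorial structure of $T$ and $T'$ is a forest, together with the Kirchhoff-type balance that follows from $\partial T = b = \partial T'$. The key observation is that on a tree the edge multiplicities are uniquely determined by the boundary masses at the vertices, and this determination depends only on the index combinatorics, not on the geometric positions of the branch points.

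First, I would fix a pair of indices $i<j$ with $a_{ij}\neq 0$, which by property (iv) is equivalent to $a'_{ij}\neq 0$. The nonzero edges define a common combinatorial graph $G$ on the index set $\{1,\dots,M\}$, and by the acyclicity assumption $G$ is a forest. Removing the edge indexed by $(i,j)$ disconnects the component containing $\{i,j\}$ into two pieces; let $V\subset\{1,\dots,M\}$ be the set of indices of the piece containing $i$.

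Second, I would compute $\partial T$ on the set $\{x_k : k\in V\}$. Starting from $\partial T = \sum_{k<\ell}a_{k\ell}(\delta_{x_\ell}-\delta_{x_k})$, edges with both endpoints in $V$ contribute zero to the total mass over $V$, while the unique edge of $G$ with exactly one endpoint in $V$ is $(i,j)$; hence
\[ \varepsilon\, a_{ij} \;=\; \sum_{k\in V} b(\{x_k\}), \]
where $\varepsilon\in\{+1,-1\}$ records whether the oriented segment $\sigma_{ij}$ points out of or into $V$ (which depends only on the convention $i<j$ and on which of $i,j$ lies in $V$). The same computation applied to $T'$ yields
\[ \varepsilon\, a'_{ij} \;=\; \sum_{k\in V} b(\{x'_k\}), \]
with the \emph{same} $\varepsilon$, thanks to the hypothesis that each segment carries the same orientation in the two representations and to the common combinatorial role of the index set $V$, which is guaranteed by property (iv).

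Third, I would observe that the two right-hand sides are equal. Indeed, by property (i), $x_k=x'_k$ whenever $x_k\in\supp(b)$; and for indices $k$ such that $x_k$ (respectively $x'_k$) is a branch point, i.e.\ lies outside $\supp(b)$, one has $b(\{x_k\})=0=b(\{x'_k\})$ since at branch points the Kirchhoff balance forced by $\partial T=b$ (resp.\ $\partial T'=b$) vanishes. Therefore $\sum_{k\in V}b(\{x_k\})=\sum_{k\in V}b(\{x'_k\})$, and we conclude $a_{ij}=a'_{ij}$. The case $a_{ij}=0$ is automatic from property (iv).

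The main (mild) difficulty I anticipate is the bookkeeping of the signs and orientations, to ensure that the Kirchhoff computation produces the same signed sum on both sides; this is precisely where the standing assumption that $T$ and $T'$ are written with the \emph{same} orientation on each segment and with the \emph{same} labeling of indices (so that ``side $V$'' has an intrinsic meaning common to both currents) is essential. Once these conventions are aligned, the rest of the argument is purely combinatorial and relies on no further geometric input.
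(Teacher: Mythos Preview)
Your argument is correct. Both your proof and the paper's exploit the fact that on a forest the edge weights are uniquely determined by the vertex divergences, but the implementations differ. The paper argues by contradiction and minimality: it takes a putative counterexample with the smallest number $M$ of vertices, finds a leaf $x_j\in\supp(b)$ (which exists by acyclicity), observes that the multiplicity of the unique incident edge is forced by $b(\{x_j\})$, and then strips that edge to contradict minimality of $M$. Your approach instead gives a closed formula: for each edge $(i,j)$ you remove it, take the vertex set $V$ on one side of the resulting cut, and read off $a_{ij}$ (up to a sign) as $\sum_{k\in V}b(\{x_k\})$, which is manifestly the same for $T$ and $T'$ by property (i) and the vanishing of $b$ at branch points. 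Your route is slightly more direct and yields an explicit expression for the multiplicities; the paper's leaf-stripping is perhaps closer in spirit to how one would actually compute the $a_{ij}$ recursively. The only delicate point, which you correctly flagged, is that the sign $\varepsilon$ and the set $V$ depend only on the common index combinatorics, so they are identical for $T$ and $T'$.
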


\begin{proof}
By contradiction, let $T,T'$ be nonzero currents with the same topology, $\partial T = b =\partial T'$, and minimizing the quantity $M$ in Definition \ref{d:topo} among all pairs for which the lemma is false. We claim that there exists a point $p\in\supp(b)$ and (up to reordering) indexes $i,j\in\{1,\dots,M\}$ such that
\begin{itemizeb}
\item [(a)] $a_{lj}=0=a'_{lj}$ for every $l\neq i$;
\item [(b)] $x_j=p=x'_j$ and $a_{ij}\neq a'_{ij}$, with $a_{ij},a'_{ij}\in\R\setminus\{0\}$. 
\end{itemizeb}
The validity of (a) follows from the absence of loops. On the other hand, if a point $p$ as in (a) violated (b), one could restrict the currents $T$ and $T'$ respectively to the complementary of $\sigma_{ij}$ and $\sigma'_{ij}$, thus contradicting the minimality of $M$.
The validity of (a) and (b) is a contradiction because the multiplicities $a_{ij}$ and $a'_{ij}$ correspond to the multiplicity of $p$ as point in the support of $b$. 
\end{proof}

\begin{lemma}\label{l:supports_determine_current}
Let $b\in\I_0(K)$ and $S,T\in\OTP(b)$ with $\supp(S)=\supp(T)$. Then $S=T$. 
\end{lemma}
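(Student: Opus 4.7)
The plan is to reduce the statement to an elementary linear-algebraic fact about integer 1-chains supported on a forest. By Lemma \ref{l:integral_minimizers}, both $S$ and $T$ belong to $\Po_1(K)\cap\I_1(K)$, so by passing to a common polyhedral refinement (obtained by splitting each segment of either natural representation at every interior point where it meets an endpoint of the other, and fixing an arbitrary orientation on each resulting segment) we may simultaneously write
\[ S=\sum_{\ell=1}^N a_\ell\llbracket\sigma_\ell\rrbracket,\qquad T=\sum_{\ell=1}^N b_\ell\llbracket\sigma_\ell\rrbracket, \]
where $\{\sigma_\ell\}_{\ell=1}^N$ is a fixed finite family of pairwise non-overlapping oriented segments and $a_\ell,b_\ell\in\Z$. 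The hypothesis $\supp(S)=\supp(T)$ guarantees that every $\sigma_\ell$ is contained in the common support $G:=\supp(S)=\supp(T)$, and the difference $R:=S-T=\sum_{\ell=1}^N(a_\ell-b_\ell)\llbracket\sigma_\ell\rrbracket$ is a polyhedral integer 1-chain satisfying $\partial R=b-b=0$ and $\supp(R)\subset G$.

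The key geometric input of the argument is that the graph $G$ contains no closed loop. Indeed, since $S\in\OTP(b)$, Proposition \ref{p:paolstep} ensures that $S$ is acyclic, and hence, exactly as invoked in the proof of Lemma \ref{l:integral_minimizers}, [\ref{BCM}, Proposition 7.8] rules out the presence of any loop in $\supp(S)=G$. I expect this acyclicity step to be the only non-combinatorial ingredient of the proof: everything else reduces to bookkeeping on a forest.

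Once $G$ is known to be a forest, the conclusion $R=0$ follows from a standard pruning argument. Any leaf vertex $v$ of a connected component of $G$ is incident to a unique edge $\sigma_\ell$, and the vanishing of $\partial R$ at $v$ forces the coefficient $a_\ell-b_\ell$ of that edge in $R$ to be zero. Deleting $\sigma_\ell$ from $G$ produces a strictly smaller forest on which $R$ remains a 1-cycle; iterating this pruning finitely many times exhausts all edges and yields $R\equiv 0$, that is $S=T$.
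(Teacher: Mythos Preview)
Your proof is correct and follows essentially the same approach as the paper. Both arguments form the difference $R=S-T$, note that it is a polyhedral integral 1-cycle supported in $\supp(S)$, and then invoke the absence of loops in $\supp(S)$ (via [\ref{BCM}, Proposition 7.8]) to force $R=0$; the paper phrases this last step as a contradiction (a nontrivial $R$ would have a loop in its support), while you give the equivalent direct pruning argument on the forest.
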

\begin{proof}
Assume by contradiction $S\neq T$. By Lemma \ref{l:integral_minimizers}, $S-T\in\Po_1(K)\cap\I_1(K)$ is a nontrivial current with $\partial(S-T)=0$ and by assumption $\supp(S-T)\subset \supp(S)$. As in the proof of Lemma \ref{l:integral_minimizers} we deduce that $\supp(S-T)$ contains a loop. In particular, so does $\supp(S)$, which contradicts [\ref{BCM}, Proposition 7.8].
\end{proof}

\begin{lemma}\label{l:finiteminimizers}
    Let $b\in\I_0(K)$ be a boundary. Then $\OTP(b)$ is finite.
\end{lemma}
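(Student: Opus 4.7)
The plan is to reduce $\OTP(b)$ to a finite union of classes indexed by an abstract combinatorial datum (a ``labeled tree topology'') and then to show that each class is a singleton. The two preceding lemmas \ref{l:numberBranch}, \ref{l:topo_and_support} and the structural Lemma \ref{l:integral_minimizers} already provide the key ingredients; the remaining analytic content is the uniqueness within a fixed topology.

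First, I combine Lemma \ref{l:integral_minimizers} with Lemma \ref{l:numberBranch} to bound the combinatorial complexity of every $T\in\OTP(b)$. By Lemma \ref{l:integral_minimizers}, every such $T$ belongs to $\Po_1(K)\cap\I_1(K)$, so it is a finite sum of oriented segments with integer multiplicities; by [\ref{BCM}, Proposition 7.8] (already invoked in the proof of Lemma \ref{l:integral_minimizers}), $\supp(T)$ contains no loop, so it is a tree on vertex set $\BR(T)\cup\supp(b)$. Lemma \ref{l:numberBranch} bounds $\Haus^{0}(\BR(T))\le \Haus^{0}(\supp(b))-2$, so the total number of vertices of $\supp(T)$ is uniformly bounded; since the support is a tree, Kirchhoff's rule bounds the absolute value of every edge multiplicity by $\Mass(b)/2$, and these multiplicities are integers. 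Hence only finitely many abstract labeled tree structures (in the sense of Definition \ref{d:topo}, keeping the identification of endpoints with the points of $\supp(b)$) are realized by elements of $\OTP(b)$. Moreover, by Lemma \ref{l:topo_and_support} two currents realizing the same abstract labeled tree carry the same edge multiplicities, so $\OTP(b)$ is partitioned into finitely many classes parametrized by labeled topologies.

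Next, I fix one such abstract labeled topology $\mathcal{T}$ with $k$ free branch points and I argue that at most one element of $\OTP(b)$ realizes it. Parametrizing the branch points by $(y_{1},\dots,y_{k})\in K^{k}$ (the boundary vertices being fixed points of $\supp(b)$), I write
\[ F_{\mathcal{T}}(y_{1},\dots,y_{k}) \;=\; \sum_{e}|w_{e}|^{\alpha}\,|y_{e_{-}}-y_{e_{+}}|, \]
where the integer weights $w_{e}$ are the multiplicities prescribed by $\mathcal{T}$. Each summand is convex in the pair of endpoints of the corresponding edge, hence $F_{\mathcal{T}}$ is convex on $K^{k}$. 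At any minimizer in $\OTP(b)$, all branch points are distinct and trivalent (otherwise, by strict concavity of $t\mapsto t^{\alpha}$ and a standard local cone rearrangement, one would find a competitor with strictly smaller $\alpha$-mass), and the first-order optimality condition at each branch point reduces to the angle relations of Proposition \ref{p:BCMangoli}, which uniquely determine the angles between the three incident edges in terms of the multiplicities. Combined with the fixed positions of the boundary vertices, this rigidity forces the branch-point configuration to be unique: equivalently, one invokes Gilbert's uniqueness result [\ref{Gilbert}] for the minimum-cost communication network with prescribed Steiner topology, which applies verbatim once the tree structure and the integer multiplicities have been fixed.

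Putting the two steps together, $\OTP(b)$ is a finite union of singletons, hence finite. The main obstacle in this plan is the uniqueness within each fixed topology: the convexity of $F_{\mathcal{T}}$ alone only yields a convex \emph{set} of minimizers, and excluding higher-dimensional flat directions requires either adapting Gilbert's discrete argument to our Eulerian setting, or showing that in the interior of the ``topological cell'' of $\mathcal{T}$ (where all branch points are distinct and no edge degenerates) $F_{\mathcal{T}}$ is strictly convex apart from degenerate collinear configurations that are ruled out by the angle conditions of Proposition \ref{p:BCMangoli}.
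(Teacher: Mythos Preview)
Your outline matches the paper's: reduce to finitely many topologies via Lemmas \ref{l:integral_minimizers}, \ref{l:numberBranch}, and \ref{l:topo_and_support}, then show uniqueness within each topology. The gap is exactly the one you name in your final paragraph, and your attempts to close it do not work as stated.

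First, the claim that every branch point of a minimizer is trivalent is unjustified. The paper only uses (and only proves, inside Lemma \ref{l:numberBranch}) that each branch point has \emph{at least} three incident segments, not exactly three; in $\R^d$ with $d\ge 3$ higher-valence branch points are not excluded a priori. This matters because Gilbert's uniqueness result in [\ref{Gilbert}] is stated for Steiner topologies, and you have not reduced to that case. Second, Proposition \ref{p:BCMangoli} gives angle identities for a \emph{single} triple junction with fixed multiplicities; it does not by itself pin down the global positions of the branch points once the tree has more than one internal vertex, so invoking it ``combined with the fixed positions of the boundary vertices'' is not a proof.

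The paper closes the gap by a direct strict-convexity argument that avoids both of the above detours. With the multiplicities fixed by Lemma \ref{l:topo_and_support}, the function
\[
F(x_1,\dots,x_{M-n})=\sum_{i<j}|a_{ij}|^\alpha\,|x_j-x_i|
\]
is convex. The key observation is that each term $|a_{ij}|^\alpha|x_j-x_i|$, viewed as a function of $x_j$ alone, is strictly convex along any segment $[s,t]$ unless $x_i,s,t$ are collinear. If two minimizers $T\neq T'$ have the same topology, pick $j$ with $x_j\neq x'_j$; since at least three segments meet at $x_j$ and they are not all collinear (else two could be merged, violating minimality of the representation), at least one incident neighbor $x_i$ is off the line through $x_j$ and $x'_j$, so $F$ is strictly convex in the $j$-th variable along $[x_j,x'_j]$. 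Hence the midpoint (or any convex combination) yields a strictly smaller value of $F$, producing a competitor with the same boundary and strictly smaller $\alpha$-mass, contradicting $T\in\OTP(b)$. This is the missing analytic step you should supply in place of the appeals to trivalence and to Gilbert.
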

\begin{proof}
By Lemma \ref{l:numberBranch} the range of the integer $M$ of Definition \ref{d:topo} among all $T\in\OTP(b)$ is finite. In turn this implies that the set of possible topologies of currents $T\in\OTP(b)$ is finite. Indeed the topology of a polyhedral current $T$ as in Definition \ref{d:topo}, up to choosing the order of the points $\{x_1,\dots, x_M\}$, is uniquely determined by the $M\times M$ matrix $A:=(|{\rm{sign}}(a_{ij})|)_{ij}$. Hence it is sufficient to prove that if $T$ and $T'$ are in $\OTP(b)$ and have the same topology, then $T=T'$, and by Lemma \ref{l:supports_determine_current} it suffices to prove that $\supp(T)=\supp(T')$.

By [\ref{BCM}, Proposition 7.8] the support of $T$ and $T'$ does not contain loops, hence we can apply Lemma \ref{l:topo_and_support} and we can assume that $T$ and $T'$ can be written as in \eqref{e:def_topo} with $a_{ij}=a'_{ij}$ for every $i,j=1,\dots, M$. This means that the set of competitors for the optimal branched transport problem with boundary $b$ and a given topology can be reduced to a family of polyhedral currents $T\in\Po_1(K)$ whose only unknown is the position of the points $\{x_1,\dots,x_M\}\setminus \supp(b)$. Accordingly, we denote $n:=\Haus^0(\supp(b))$ and we order the points $\{x_1,\dots, x_M\}$ in such a way that $\BR(T)=\{x_1,\dots,x_{M-n}\}$.
The $\alpha$-mass of such $T$ is computed as 
$$\MM(T)=\sum_{i<j}|a_{ij}|^\alpha\Haus^1(\sigma_{ij})$$
and by the previous discussion, since the vector $(x_{M-n+1},\dots, x_M)$ is fixed, this is a functional of the vector $(x_1,\dots,x_{M-n})$ only, which can be written as
\begin{equation}\label{e:convex}
    \MM(T)=F(x_1,\dots,x_{M-n}):=\sum_{i<j}|a_{ij}|^\alpha|x_j-x_i|=C+\sum_{i=1}^{M-n}\sum_{j=i+1}^M |a_{ij}|^\alpha|x_j-x_i|,
\end{equation}
where $C= \sum_{i=M-n+1}^{M}\sum_{i<j}|a_{ij}|^\alpha|x_j-x_i|$.
One can immediately see that $F$ is convex, being a sum of convex functions. Moreover each term $|a_{ij}|^\alpha|x_j-x_i|$ in \eqref{e:convex}, as a function of the variable $x_j$, is strictly convex on a segment $[s,t]$ whenever $x_i$, $s$ and $t$ are not collinear. 

Assume by contradiction that $T\neq T'\in\OTP(b)$ have the same topology and consider the corresponding sets $$\BR(T)=\{x_1,\dots,x_{M-n}\},\quad \BR(T')=\{x'_1,\dots,x'_{M-n}\}.$$
By Lemma \ref{l:topo_and_support} there exists $j\in\{1,\dots,M-n\}$ such that $x_j\neq x'_j$. As in the proof of Lemma \ref{l:numberBranch} we infer that $x_j$ is an endpoint of at least three segments in the support of $T$ which are not collinear. We deduce by the discussion after \eqref{e:convex} that the function $F$ is strictly convex in the $j$-th variable. Since $F(x_1,\dots,x_{M-n})=F(x'_1,\dots,x'_{M-n})$ we deduce that there exists a point $(y_1,\dots,y_{M-n})$ with \begin{equation}\label{e:bettery}
    F(y_1,\dots,y_{M-n})<F(x_1,\dots,x_{M-n}).
\end{equation}
Denote 
$$
z_{i}:=
\begin{cases}
y_i \quad \mbox{if $i\leq M-n$}\\
x_i \quad \mbox{otherwise}
\end{cases}
$$
and let $S$ be the current
\begin{equation}\label{e:S}
    S:=\sum_{i<j}a_{ij}\llbracket\tilde\sigma_{ij}\rrbracket,
\end{equation}
where $\tilde\sigma_{ij}$ is the segment with first endpoint $z_i$ and second endpoint $z_j$. Notice that in principle it might happen that $S$ does not have the same topology as $T$ and $T'$, since \eqref{e:S} might fail to have property (iii) of Definition \ref{d:topo}. However we have $\partial S=b$ and by \eqref{e:bettery} 
$$\MM(S)\leq F(y_1,\dots,y_{M-n})<F(x_1,\dots,x_{M-n})=\MM(T),$$
which contradicts the assumption $T\in\OTP(b)$. 
\end{proof}

\begin{lemma}\label{l:magic_points}
    For every boundary $b\in\I_{0}(K)$ and $T \in \OTP(b)$, there is a set of distinct points $\{p_1, \dots, p_h\} \subset \supp(T)\setminus(\BR(T)\cup\supp (b))$ such that 
    \[ \big\{ S \in \OTP(b): \{ p_1, \dots, p_h \} \subset \supp(S)\big\} = \{T\}\,. \]
    Moreover, the $p_i$'s can be chosen so that if $p_i\in\supp(T)\,\cap\,\supp(S)$ for some $ S\in\OTP(b)$, then there exists $\rho>0$ such that $\supp(T)\cap B_\rho(p_i)=\supp(S)\cap B_\rho(p_i)$.
\end{lemma}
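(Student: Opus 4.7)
The plan is to exploit the finiteness of $\OTP(b)$, guaranteed by Lemma \ref{l:finiteminimizers}, and to produce one distinguishing point per alternative minimizer. Writing $\OTP(b)=\{T,T_1,\dots,T_N\}$ (the case $N=0$ being trivial), I will select, for each $j=1,\dots,N$, a point $p_j$ in the relative interior of some segment of $\supp(T)$ that lies outside $\supp(T_j)$, and then fine-tune the choice so that any minimizer passing through $p_j$ is forced to coincide locally with $T$ around $p_j$.

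First I would argue that $\supp(T)\setminus\supp(T_j)$ is large enough to contain an open piece of a segment of $\supp(T)$. By Lemma \ref{l:integral_minimizers} both $T$ and $T_j$ are polyhedral integer-multiplicity currents, and by [\ref{BCM}, Proposition 7.8] their supports are trees. If the inclusion $\supp(T)\subseteq \supp(T_j)$ held, then $T_j-T\in \Po_1(K)\cap \I_1(K)$ would be an integral $1$-cycle supported on the tree $\supp(T_j)$, hence it would vanish, contradicting $T\neq T_j$. Therefore $\supp(T)\not\subseteq \supp(T_j)$, and since $\supp(T)$ is a finite union of segments, the relative interior of at least one such segment $\sigma_j$ meets the complement of $\supp(T_j)$ in a relatively open nonempty subset of $\sigma_j$.

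Next I would refine the choice of $p_j$ inside that subset by removing a finite number of forbidden points: (a) the finite sets $\BR(T)$, $\supp(b)$ and $\BR(T_k)$ for every $k=1,\dots,N$; (b) for each $k$ the isolated points of the intersection $\sigma_j\cap \supp(T_k)$, which are finitely many since both $\sigma_j$ and $\supp(T_k)$ are polyhedral. A nonempty open subset of $\sigma_j$ survives, and I pick $p_j$ there. To verify the two assertions of the lemma, note first that any $S\in \OTP(b)$ containing all the $p_j$'s cannot equal any $T_j$, because $p_j\notin \supp(T_j)$, so $S=T$. For the local-agreement property, suppose $p_j\in \supp(T)\cap \supp(S)$ with $S=T_k$. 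Away from $\BR(T_k)\cup\supp(b)$ the angle conditions of Proposition \ref{p:BCMangoli} together with the polyhedral structure force $\supp(T_k)$ to be a single straight segment near $p_j$, and analogously for $\supp(T)$; since $p_j$ was chosen away from the isolated (i.e., transverse) intersections of $\sigma_j$ with edges of $\supp(T_k)$, the two local segments must be collinear, hence coincide in a neighbourhood of $p_j$. Choosing $\rho$ small enough to stay away from all other edges and all branch/boundary points of $T$ and $T_k$ yields the required equality $\supp(T)\cap B_\rho(p_j)=\supp(S)\cap B_\rho(p_j)$.

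The step I expect to be the main obstacle is ruling out the inclusion $\supp(T)\subseteq \supp(T_j)$ in the first step. This is what forces the use of both the polyhedral integrality from Lemma \ref{l:integral_minimizers} and the no-loops property from [\ref{BCM}, Proposition 7.8]: without these ingredients, two distinct minimizers whose supports are nested would not automatically coincide, and one would have to resort to a more delicate geometric cancellation argument. Once that point is in place, the remaining steps are essentially a finite combinatorial manipulation of the polyhedral currents making up $\OTP(b)$.
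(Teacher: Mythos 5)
Your proof is correct and follows essentially the same strategy as the paper's: both invoke Lemma~\ref{l:finiteminimizers} to reduce to finitely many polyhedral minimizers, both use the tree/no-loops structure of the supports (the paper packages this as Lemma~\ref{l:supports_determine_current}; you re-derive the needed variant $\supp(T)\not\subseteq\supp(T_j)$ by the same argument) to locate a relatively open piece of $\supp(T)$ avoiding $\supp(T_j)$, and both then select each $p_i$ after discarding the finitely many branch points, boundary points, and transversal-intersection points of the other minimizers. The one small imprecision in your write-up is the appeal to Proposition~\ref{p:BCMangoli} in the final paragraph: the angle conditions are not what forces $\supp(T_k)$ to be a single straight segment near $p_j$ --- this follows directly from polyhedrality (Lemma~\ref{l:integral_minimizers}) together with the fact that you have already excluded branch points and boundary points of $T_k$ --- but this does not affect the validity of the argument.
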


\begin{proof}
By Lemma \ref{l:finiteminimizers} we have that $\OTP(b)$ consists of finitely many polyhedral currents $ T^1,\dots, T^h$ and, by Lemma \ref{l:supports_determine_current}, the symmetric difference $\supp(T^i)\triangle \,\supp(T^j)$ is a relatively open set of positive length for every $i\neq j$. Up to reordering, we assume $T^1=T$ and for every $i \in \{2, \dots, h\}$ we consider the set $U_i:= \supp(T) \setminus (\supp(T^i) \cup \BR(T))$. We observe, recalling that $\BR(T)$ is finite by Lemma \ref{l:numberBranch}, that each $U_i$ is relatively open with positive length. 
Define the subset
\begin{align*}
    V_i := U_i \cap  \Big( \bigcup_{j \neq i} \BR(T^j) \cup \left\{p \in U_i: \text{$\supp(T^j)$ intersects $U_i$ transversally at $p$} \right\} \Big)
\end{align*}
and observe that $V_i$ is finite since every $T^j$ is polyhedral. Then choose $p_i \in U_i \setminus V_i$. Clearly $p_i\in \supp(T)\setminus(\BR(T)\cup \supp(b))$ and $p_i\not\in\supp(T^i)$; moreover if $p_i \in \supp(T^j)$ then locally $\supp(T^j)$ agrees with $\supp(T)$.
\end{proof}

\subsection{Perturbation argument}
\subsubsection{Construction of the perturbed boundaries}
Let us fix a boundary $b\in\I_0(K)$, an integer polyhedral current $T\in\OTP(b)$, see Lemma \ref{l:integral_minimizers}, and points $\{p_1,\dots,p_h\}$ as in Lemma \ref{l:magic_points}. For a fixed $k\in\N\setminus\{0\}$ and for $n=1,2,\dots$ we denote 
\begin{equation}\label{e:bienne}
\begin{split}
    T_n &:=T- \frac1k \sum_{i=1}^h T\res B_{n^{-1}}(p_i)\, ,\\
    b_n &:=\partial T_n\,.
\end{split}
\end{equation}
Observe that by Proposition \ref{pp:gooddec}, the multiplicity of $T$ is bounded from above by $2^{-1}\Mass(b)$ and moreover, for $n$ sufficiently large, the closed balls $\overline B_{n^{-1}}(p_i)$ are disjoint and do not intersect $\supp(b)\cup\BR(T)$, so that we have
\begin{equation}\label{e:massa_bienne}
    \Mass(b_n)=\Mass(b)+k^{-1}\sum_{i=1}^h\Mass(\partial(T\res B_{n^{-1}}(p_i)))\leq\Mass(b)+hk^{-1}\Mass(b)
\end{equation}
and \begin{equation}\label{e:convergence_bienne}
    \Flat_K(b_n-b)\leq k^{-1}\sum_{i=1}^h\Mass(T\res B_{n^{-1}}(p_i))\leq h(nk)^{-1}\Mass(b).
\end{equation}
For every $n$, we choose $S_n\in\OTP(b_n)$ and we apply Lemma \ref{l:integral_minimizers} to the boundaries $kb_n$ to deduce that $kS_n\in\Po_1(K)\cap\I_1(K)$. By \eqref{e:bienne} we have 
\begin{equation}\label{e:alphamass_tienne}
\MM(S_n)\leq\MM(T_n)<\MM(T). \end{equation}
The aim is to prove the following proposition.
\begin{proposition}\label{p:unique_bienne}
There exists $k_0=k_0(\alpha)$ such that for $(b_n)_{n}$ as in \eqref{e:bienne} with $k\geq k_0$ and for $n$ sufficiently large, $\OTP(b_n)=\{T_n\}$. 
\end{proposition}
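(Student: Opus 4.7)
The plan is to combine compactness and the stability Theorem \ref{t:stability_new} with a local rigidity analysis of $S_n$ near each $p_i$. Let $S_n\in\OTP(b_n)$ be arbitrary. By \eqref{e:alphamass_tienne}, \eqref{e:massa_bienne} and \eqref{e:convergence_bienne} the sequences $(S_n)$ and $(b_n)=(\partial S_n)$ have uniformly bounded $\alpha$-mass and mass, so Proposition \ref{p:compactnessnormal} yields a subsequential flat limit $S$ and Theorem \ref{t:stability_new} gives $S\in\OTP(b)$. The goal is to prove that $S=T$ and, more strongly, that $S_n=T_n$ for every $n$ sufficiently large.

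First, I would establish the Hausdorff convergence $\supp(S_n)\to\supp(S)\cup\{p_1,\dots,p_h\}$ (the promised Lemma \ref{l:conv_hauss}). The inclusion $p_i\in\lim\supp(S_n)$ is automatic because $b_n$ carries atomic masses supported on $\partial B_{n^{-1}}(p_i)$ which any admissible transport path must reach. The converse inclusion is an energy argument: if a sequence $q_n\in\supp(S_n)$ escaped the union, then one could reroute the corresponding flow onto $T_n$ and strictly lower the $\alpha$-mass, contradicting $\mathbb{M}^\alpha(S_n)\leq\mathbb{M}^\alpha(T_n)\to\mathbb{M}^\alpha(T)$. In particular $\{p_1,\dots,p_h\}\subset\supp(S)$, and the first part of Lemma \ref{l:magic_points} forces $S=T$; since the limit is uniquely determined, the whole sequence $(S_n)$ flat-converges to $T$.

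The crux is the local topological analysis inside balls $B_\rho(p_i)$ with $\rho>0$ fixed (independent of $n$) such that the balls $\overline{B_\rho(p_i)}$ are pairwise disjoint, disjoint from $\supp(b)\cup\BR(T)$, and $\supp(T)\cap B_\rho(p_i)$ is a single segment through $p_i$ of multiplicity $\theta_i$. Then $\partial(S_n\res B_\rho(p_i))$ consists of four atomic masses: $\pm(\theta_i-k^{-1}\theta_i)$ at the two endpoints of $\supp(T)\cap\partial B_\rho(p_i)$, and $\pm k^{-1}\theta_i$ at the two points of $\supp(T)\cap\partial B_{n^{-1}}(p_i)$. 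Using the tree property (Proposition \ref{p:tree}) of $kS_n$, the branching angle conditions of Propositions \ref{p:12punto1} and \ref{p:BCMangoli}, and the Hausdorff convergence already established, I would enumerate all local topologies of $S_n\res B_\rho(p_i)$ compatible with these four boundary data. Strict concavity of $x\mapsto x^\alpha$ yields an energy saving proportional to $(k^{-1}\theta_i)^\alpha$ every time a ``long detour'' or an extra branch point is excluded, while the possible geometric loss is at most $O(n^{-1})$; hence for $k\geq k_0(\alpha)$ and $n$ large only two local topologies survive as candidates for optimality, namely $S_n\res B_\rho(p_i)=T_n\res B_\rho(p_i)$ or $S_n\res B_\rho(p_i)=T\res B_\rho(p_i)$.

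The final step is to rule out the second candidate by a global energy comparison: glueing it with the rest of $S_n$ and exploiting subadditivity \eqref{e:subadditivity} together with strict concavity of $x\mapsto x^\alpha$ at the multiplicities $\theta_i$ versus $\theta_i-k^{-1}\theta_i$ and $k^{-1}\theta_i$ produces, again for $k\geq k_0(\alpha)$, a strict improvement upon replacing it with the first candidate, contradicting $S_n\in\OTP(b_n)$. Thus $S_n\res B_\rho(p_i)=T_n\res B_\rho(p_i)$ for every $i$, and outside $\bigcup_i B_\rho(p_i)$ the Hausdorff convergence combined with Lemma \ref{l:supports_determine_current} forces $S_n=T_n$, yielding $\OTP(b_n)=\{T_n\}$. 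The main obstacle will be the third step: one must enumerate, uniformly in $n$, all admissible local topologies near each $p_i$ and show that the concavity of $x\mapsto x^\alpha$ produces a quantitative threshold $k_0(\alpha)$ beyond which only the two surviving configurations persist. The delicate point is that the perturbation mass $k^{-1}\theta_i$ is small only as $k\to\infty$ (and not as $n\to\infty$), so all competing-energy estimates must be carried out at the fixed macroscopic scale $\rho$ while the $n^{-1}$-scale geometry is kept under control.
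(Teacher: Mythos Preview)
Your overall architecture matches the paper's Lemma \ref{l:main} (local structure in a box, enumeration of topologies, global energy comparison), but two steps are not correctly justified.

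First, the inference ``In particular $\{p_1,\dots,p_h\}\subset\supp(S)$'' in your second paragraph is a non sequitur. Hausdorff convergence of $\supp(S_n)$ to $\supp(S)\cup\{p_1,\dots,p_h\}$ says nothing about whether $p_i$ lies in $\supp(S)$: if $p_i\notin\supp(S)$, then near $p_i$ the current $S_n$ could simply be the segment of length $O(n^{-1})$ and multiplicity $k^{-1}\theta_i$ joining the two atoms of $b_n$ on $\partial B_{n^{-1}}(p_i)$, and this piece vanishes in the flat limit. The paper explicitly flags this in its strategy description and only rules out $p_i\notin\supp(S)$ at the very end (Step~3), simultaneously with the exclusion of the topology $Z_j$, via the global comparison of $S_{n_j}+\frac1k\sum_i T\res B_{n_j^{-1}}(p_i)$ with $T$. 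So you cannot conclude $S=T$ before the topological analysis; you must carry the possibility $p_i\notin\supp(S)$ through to the end.

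Second, your description of $\partial(S_n\res B_\rho(p_i))$ is wrong on two counts. The outer atoms are not located on $\supp(T)\cap\partial B_\rho(p_i)$ but wherever $\supp(S_n)$ meets the box boundary; showing that this intersection consists of \emph{two} points (one on each side) is precisely the nontrivial slicing argument of the paper's Step~1 (equations \eqref{e:good_slices}--\eqref{e:GS}), which uses energy comparison on slices to force $S_{n_j}^{\pm 4}=\theta\delta_{(\pm 4,y_j^{\pm})}$. Moreover, the multiplicities at those outer points are $\pm\theta_i$, not $\pm(\theta_i-k^{-1}\theta_i)$, since $T_n=T$ outside $B_{n^{-1}}(p_i)$; see \eqref{e:four_points}. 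Consequently your identification of the two surviving topologies is off: the second one is not $T\res B_\rho(p_i)$ (which does not even have the right boundary) but the configuration $Z_j$ of \eqref{e:THE_ONE}, consisting of the full segment $\sigma_{A_jD_j}$ together with the short \emph{reversed} segment $k^{-1}\theta\llbracket\sigma_{C_jB_j}\rrbracket$. Both $W_j$ and $Z_j$ survive the local analysis; only the global comparison of Step~3 eliminates $Z_j$.
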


In the next lemma, for any set $A$ and $\rho>0$ we denote $B_\rho(A):=\bigcup_{a\in A}B_\rho(a)$.
\begin{lemma}\label{l:conv_hauss}
    For $n \in \N \setminus \{0\}$ let $b_n$ be as in \eqref{e:bienne} and $S_n\in\OTP(b_n)$. For every subsequence $(S_{n_j})_{j\in\N}$ and current $S$ such that $\Flat_K(S_{n_j}-S)\to 0$ as $j\to\infty$ we have $S\in\OTP(b)$ and moreover for every $\rho>0$ we have $\supp(S_{n_j})\subset B_\rho(\supp(S)\cup\{p_1,\dots,p_h\})$, for $j$ sufficiently large.
\end{lemma}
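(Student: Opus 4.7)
The approach is to handle the two assertions of the lemma in turn. The first claim---that $S \in \OTP(b)$---will follow directly from Theorem \ref{t:stability_new}. Indeed, \eqref{e:massa_bienne} and \eqref{e:alphamass_tienne} show that $(b_n)$ lies in $A_{C'}$ for a uniform constant $C'$ depending only on $h, k, b, T$, while \eqref{e:convergence_bienne} gives $\Flat_K(b_n - b) \to 0$. Continuity of the boundary operator under flat convergence then yields $\partial S = b$, and Theorem \ref{t:stability_new} applies to deliver $S \in \OTP(b)$.

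For the Hausdorff-type inclusion I plan to argue by contradiction. Assume there are $\rho > 0$, a subsequence (not relabeled) and points $x_{n_j} \in \supp(S_{n_j})$ with $\dist(x_{n_j}, \supp(S) \cup \{p_1,\dots,p_h\}) \geq \rho$; extract a further subsequence so that $x_{n_j} \to x_\infty$ and fix $\sigma \in (0, \rho/4)$ small enough that $\overline{B_{3\sigma}(x_\infty)}$ is disjoint from $\supp(S) \cup \supp(b) \cup \{p_1,\dots,p_h\}$. Since $\supp(b_{n_j})$ is contained in $\supp(b)$ together with at most $2h$ additional points lying within distance $n_j^{-1}$ of the $p_i$'s, for $j$ large $\overline{B_{2\sigma}(x_\infty)}$ is disjoint from $\supp(b_{n_j})$, so that $x_{n_j}$ is an interior point of $\supp(S_{n_j})$ in the sense required by Proposition \ref{p:alphamonotonicity}. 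Moreover, Lemma \ref{l:integral_minimizers} applied to the integer-coefficient boundary $k b_{n_j}$ shows that $k S_{n_j} \in \I_1(K)$, so that the multiplicity of $S_{n_j}$ is bounded below by $1/k$ on its support. Combining these facts with the monotonicity formula gives the uniform estimate
\begin{equation}\label{e:monot_pert}
\MM(S_{n_j}\res B_\sigma(x_{n_j})) \geq 2\sigma k^{-\alpha}.
\end{equation}

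To derive a contradiction from \eqref{e:monot_pert}, the plan is to pass to weak$^*$ subsequential limits of the nonnegative Radon measures $\mu_{n_j} := |\theta_{n_j}|^\alpha \mathcal{H}^1\res\supp(S_{n_j})$, which are equibounded: a direct computation using that $T$ is a single segment of multiplicity $\theta_i$ inside each $B_{n^{-1}}(p_i)$ yields $\MM(T_n) \to \MM(T)$, and combined with $\MM(S_n) \leq \MM(T_n)$, with $S \in \OTP(b)$, and with the lower semicontinuity of $\MM$, one obtains $\mu_{n_j}(K) = \MM(S_{n_j}) \to \MM(S) = \mu(K)$, where $\mu := |\theta_S|^\alpha \mathcal{H}^1 \res \supp(S)$. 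The lower semicontinuity of $\MM$ on open sets, together with the Portmanteau theorem, forces any weak$^*$ limit $\nu$ of $(\mu_{n_j})$ to satisfy $\nu(U) \geq \mu(U)$ for every open $U \subset K$; coupled with $\nu(K) = \mu(K)$, this forces $\nu = \mu$, and in particular $\supp(\nu) \subset \supp(S)$. On the other hand, applying Portmanteau to the closed set $\overline{B_{2\sigma}(x_\infty)}$ (which for $j$ large contains $\overline{B_\sigma(x_{n_j})}$) together with \eqref{e:monot_pert} yields $\nu(\overline{B_{2\sigma}(x_\infty)}) \geq 2\sigma k^{-\alpha} > 0$, contradicting $\overline{B_{2\sigma}(x_\infty)} \cap \supp(S) = \emptyset$. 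The main technical point I expect to need to verify carefully is the lower semicontinuity of $\MM$ on open sets, which does not follow verbatim from the global statement implicit in Definition \ref{d:lscrelaxmass} and Proposition \ref{p:stuvard}, but which can be obtained by a standard slicing-and-cutoff localization.
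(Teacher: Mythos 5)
Your proof is correct, and for the second claim it takes a genuinely different route from the paper's. The paper's argument stays entirely at the level of $\alpha$-masses: it invokes the localized lower semicontinuity of $\MM$ (via the polyhedral approximation result [\ref{CDRMS}, Proposition 2.6]) to get $\liminf_j \MM(S_{n_j}\res B_r(\supp(S)\cup\{p_1,\dots,p_h\}))\geq\MM(S)$, and then produces the extra amount $rk^{-\alpha}$ of $\alpha$-mass inside the disjoint ball $B_r(q_j)$ by a slicing/single-path argument (Propositions \ref{p:simonslice} and \ref{p:singlepath} together with Lemma \ref{l:integral_minimizers}); adding these and comparing with $\MM(S_{n_j})\leq\MM(T_{n_j})<\MM(T)=\MM(S)$ closes the contradiction. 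You instead obtain the local lower bound from the $\alpha$-mass monotonicity formula (Proposition \ref{p:alphamonotonicity}), which is applicable since $S_{n_j}\in\OTP(b_{n_j})$ and $x_{n_j}$ stays away from $\supp(b_{n_j})$, the density being $\geq 2k^{-\alpha}$ because $kS_{n_j}\in\Po_1(K)\cap\I_1(K)$; and you propagate it through weak$^*$ limits of the $\alpha$-mass measures $\mu_{n_j}$, which additionally requires the step $\MM(S_{n_j})\to\MM(S)$. Both arguments rest on the same localized lower semicontinuity of $\MM$, which you rightly flag as the point deserving care. The paper's version is slightly leaner: it never needs the full convergence of the total $\alpha$-mass (the strict inequality $\MM(S_{n_j})<\MM(T)$ from \eqref{e:alphamass_tienne} suffices) and it avoids the weak$^*$ limit machinery altogether.

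One imprecision in your write-up: the claim that the lsc of $\MM$ on open sets \emph{together with the Portmanteau theorem} yields $\nu(U)\geq\mu(U)$ for open $U$ is not a direct consequence of those two facts. Portmanteau on open sets gives $\nu(U)\leq\liminf_j\mu_{n_j}(U)$, the same-signed inequality as the localized lsc $\mu(U)\leq\liminf_j\mu_{n_j}(U)$, so the two do not compare $\mu$ and $\nu$. The inequality $\mu\leq\nu$ you actually need does follow, but from the localized lsc alone, via the layer-cake formula $\int f\,d\mu=\int_0^\infty\mu(\{f>t\})\,dt$ and Fatou's lemma applied to the open sets $\{f>t\}$, giving $\int f\,d\mu\leq\liminf_j\int f\,d\mu_{n_j}=\int f\,d\nu$ for all $f\in\mathcal{C}^0_c(K)$ with $f\geq 0$; combined with $\mu(K)=\nu(K)$ this yields $\mu=\nu$, after which your Portmanteau application to the compact set $\overline{B_{2\sigma}(x_\infty)}$ is correct. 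This does not affect the validity of the proof, but the justification should be stated correctly.
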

\begin{proof}
The first part of the proposition is a direct consequence of Theorem \ref{t:stability_new}. 
Towards a proof by contradiction of the second part, assume that there exists $r>0$ and, for every $j$, a point 
\begin{equation}\label{e:quenne}
    q_j\in\supp(S_{n_j})\setminus B_{2r}(\supp(S)\cup\{p_1,\dots,p_h\}).
\end{equation} By [\ref{CDRMS}, Proposition 2.6]\footnote{This proposition is a polyhedral approximation theorem (similar to Theorem \ref{t:polyapprox}) for more general notions of the $\alpha$-mass.} we have 
\begin{equation}\label{e:mass_in}
    \liminf_j\MM(S_{n_j}\res B_{r}(\supp(S)\cup\{p_1,\dots,p_h\}))\geq\MM(S).
\end{equation}
On the other hand, by \eqref{e:bienne} and \eqref{e:quenne} the current $S_{n_j}$ has no boundary in $B_r(q_j)$, for $j$ sufficiently large. Moreover, by Proposition \ref{p:singlepath} the restriction $R_j$ of $S_{n_j}\res \,B_r(q_j)$ to the connected component of its support containing $q_j$ has nontrivial boundary, and more precisely applying Proposition \ref{p:simonslice} with $f(x)=|x-q_j|$ we deduce that $\emptyset\neq\supp(\partial R_j)\subset \partial B_r(q_j)$. We conclude that $\supp (R_j)$ contains a path connecting $q_j$ to a point of $\partial B_r(q_j)$. By Lemma \ref{l:integral_minimizers} such path has multiplicity bounded from below by $k^{-1}$. This allows to conclude that 
\begin{equation}\label{e:mass_out}
\MM(S_{n_j}\res B_r(q_j))\geq rk^{-\alpha}.
\end{equation}
Combining \eqref{e:quenne},\eqref{e:mass_in}, and \eqref{e:mass_out}, we conclude
$$ \liminf_j\MM(S_{n_j})\geq\MM(S)+rk^{-\alpha}=\MM(T)+rk^{-\alpha},$$
which contradicts \eqref{e:alphamass_tienne}.
\end{proof}

We now prove the following main lemma.

\begin{lemma}\label{l:main}
    There exists $k_0=k_0(\alpha)$ with the following property. Let $T$ and $(T_n)_n$ be as in \eqref{e:bienne} with $k\geq k_0$ and let $S$ and $(S_{n_j})_j$ be as in Lemma \ref{l:conv_hauss}.  Then $S_{n_j}=T_{n_j}$, for $j$  sufficiently large and in particular  $S=T$.
\end{lemma}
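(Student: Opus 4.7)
The proof combines the Hausdorff-type localization of Lemma \ref{l:conv_hauss} with the polyhedral analysis of $S_n$ inside a small ball at each $p_i$ carried out in the preceding subsection. By Lemma \ref{l:magic_points}, every $p_i$ lies in the regular part of $T$, so I fix $r>0$ small enough that the balls $\overline{B_r(p_i)}$ are pairwise disjoint, disjoint from $\supp(b)$, and inside each of them $T$ reduces to a single oriented segment of integer multiplicity $q_i$ through $p_i$. After extracting a subsequence along which $\Flat_K(S_{n_j}-S)\to 0$, Theorem \ref{t:stability_new} yields $S\in\OTP(b)$; it will suffice to show that $\{p_1,\dots,p_h\}\subset\supp(S)$ (so that $S=T$ by Lemma \ref{l:magic_points}) and to upgrade this to $S_{n_j}=T_{n_j}$ for $j$ large.

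For the first claim I argue by contradiction: if $p_1\notin\supp(S)$, I shrink $r$ so that $\overline{B_r(p_1)}\cap\supp(S)=\emptyset$ and use Lemma \ref{l:conv_hauss} to ensure $\supp(S_{n_j})\cap\partial B_r(p_1)=\emptyset$ for $j$ large. Then $\MM(S_n)=\MM(S_n\res B_r(p_1))+\MM(S_n\res(K\setminus B_r(p_1)))$. The inner piece has boundary $\tfrac{q_1}{k}(\delta_{x_n^1}-\delta_{y_n^1})$ and by Proposition \ref{p:singlepath} the curves in any good decomposition of this piece connect $y_n^1$ to $x_n^1$, giving $\MM(S_n\res B_r(p_1))\geq (q_1/k)^\alpha\,(2/n)$. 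The outer piece is a competitor for the boundary $\partial T_n^{(-1)}:=b_n-\tfrac{q_1}{k}(\delta_{x_n^1}-\delta_{y_n^1})$, and by applying the same topology classification to the reduced set of perturbation points---equivalently, by induction on the number of perturbed $p_i$---$T_n^{(-1)}$ is the unique minimizer of its boundary for $n$ large, so $\MM(S_n\res(K\setminus B_r(p_1)))\geq\MM(T_n^{(-1)})$. Since $\MM(T_n)=\MM(T_n^{(-1)})-c_1(2/n)$ with $c_1:=q_1^\alpha[1-((k-1)/k)^\alpha]>0$, we obtain
\[
\MM(S_n)-\MM(T_n)\;\geq\;\frac{2}{n}\bigl[(q_1/k)^\alpha+c_1\bigr]\;>\;0,
\]
contradicting $S_n\in\OTP(b_n)$. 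Hence $p_i\in\supp(S)$ for every $i$ and $S=T$.

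Having established $S=T$, the second part of Lemma \ref{l:magic_points} shows that $\supp(S_n)\cap B_r(p_i)$ Hausdorff-converges to the oriented segment of $T$ through $p_i$; since $kS_n$ is integer polyhedral by Lemma \ref{l:integral_minimizers}, the tree $S_n\res B_r(p_i)$ must then cross $\partial B_r(p_i)$ at exactly two trunks whose multiplicities tend to $q_i$ and carry the two leaves $x_n^i,y_n^i$ of multiplicity $q_i/k$. The classification of admissible polyhedral topologies encoded in \eqref{e:THE_ONE} forces, for $k\geq k_0(\alpha)$ and $n$ large, the combinatorial topology of $S_n\res B_r(p_i)$ to coincide with that of $T_n\res B_r(p_i)$, and within that fixed topology the positions of the branching vertices are uniquely determined by the strict convexity argument used in the proof of Lemma \ref{l:finiteminimizers}. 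This yields $S_{n_j}=T_{n_j}$ for $j$ large. The principal obstacle is the cost classification of Step 2: one has to rule out quantitatively at order $1/n$ all bypass topologies (where the $q_i/k$-mass is rerouted around $p_i$) and all topologies with spurious extra branch points, and the very same comparison powers the inductive lower bound $\MM(S_n\res(K\setminus B_r(p_1)))\geq\MM(T_n^{(-1)})$ invoked in Step 1.
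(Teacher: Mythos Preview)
Your plan has two genuine gaps, both of which stem from bypassing the global energy comparison that the paper carries out in its concluding step.

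\textbf{The induction is circular.} To run your contradiction for $p_1\notin\supp(S)$ you need $\MM(S_n\res(K\setminus B_r(p_1)))\geq\MM(T_n^{(-1)})$, i.e.\ that $T_n^{(-1)}$ is optimal for its own boundary. You invoke ``induction on the number of perturbed $p_i$'', but the inductive hypothesis is not available: the reduced set $\{p_2,\dots,p_h\}$ need not satisfy the conclusion of Lemma~\ref{l:magic_points} (some other $S'\in\OTP(b)$ may contain all of $p_2,\dots,p_h$), so Lemma~\ref{l:main}/Proposition~\ref{p:unique_bienne} for $h-1$ points simply does not apply. One \emph{can} prove that $T_n^{(-1)}$ is a minimizer, but the only way to do it is to run the paper's Step~1--2 local classification around each remaining $p_i$ and then the Step~3 global comparison $\MM\big(R+\tfrac1k\sum_{i\geq 2}T\res B_{n^{-1}}(p_i)\big)\geq\MM(T)$ for a putative better competitor $R$. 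That is precisely the argument you are trying to replace by induction, so the induction buys nothing.

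\textbf{The local classification does not force the topology of $T_n$.} After you establish $S=T$, you assert that \eqref{e:THE_ONE} forces $S_{n_j}\res Q'$ to have the same topology as $T_{n_j}\res Q'$. It does not: the paper's Step~2 only reduces the local picture to \emph{two} possibilities, $W_j$ (the topology of $T_{n_j}$) and $Z_j$ (the ``bypass'' where a segment of multiplicity $\theta/k$ runs \emph{backwards} over $\sigma_{B_jC_j}$ while a segment of multiplicity $\theta$ goes straight through on $\sigma_{A_jD_j}$). Since the endpoints $A_j,D_j$ are determined by where $S_{n_j}$ crosses $\partial Q'$---not by $T$---they are in general not collinear with $B_j,C_j$, and $Z_j$ is a genuine alternative local minimizer. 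The strict convexity argument from Lemma~\ref{l:finiteminimizers} only gives uniqueness \emph{within} a fixed topology; it cannot distinguish $W_j$ from $Z_j$.

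The paper resolves both issues simultaneously with one global stroke: form $S_{n_j}+\tfrac1k\sum_i T\res B_{n_j^{-1}}(p_i)$, which has boundary $b$, and compute its $\alpha$-mass. Each $Z_j$-type local picture and each isolated $p_i\notin\supp(S)$ produces a \emph{strict decrease} below $\MM(T)$, contradicting $T\in\OTP(b)$; only the all-$W_j$, all-$p_i\in\supp(S)$ scenario survives, and in that case the modified current equals $T$, giving $S_{n_j}=T_{n_j}$. Your plan would need this comparison anyway, so the detour through induction should be dropped.
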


\begin{proof}
We divide the proof in three steps. First we prove that locally in a box around each point $p\in\{p_1,\dots,p_h\}\cap \supp(S)$ for $j$ sufficiently large the current $S_{n_j}$ is a minimizer of the $\alpha$-mass for a certain boundary whose support is a set of four \emph{almost collinear} points. Then, we analyze all the possible topologies for the minimizers with such boundary and we are able to exclude all of them except for two. Finally, we combine the local analysis with a global energy estimate to conclude.

\subsubsection{Local structure of $S_{n_j}$.}\label{Step1} Let $\rho$ be sufficiently small, to be chosen later (see (2a), (3a) and (3b) in Section \ref{Step2}). For every $i=1,\dots, h$ and for $p_i\in\supp(S)$, by Lemma \ref{l:magic_points} we can choose orthonormal coordinates $(x,y)\in\R\times\R^{d-1}$ such that, up to a dilation with homothety ratio $c$ with $$c>\frac{8}{\text{dist}(p_i,\supp(b)\cup\BR(S))},$$ denoting $Q:=[-8,8]\times B_{\rho}^{d-1}(0)$ and $B_j:=(-cn_j^{-1},0)$, $C_j:=(cn_j^{-1},0)$, for $j=1,2,\dots$, the following holds:  
\begin{itemize}
    \item [(i)] $p_i=(0,0)$;
    \item [(ii)]$S\res Q=\theta\llbracket\sigma\rrbracket$, where $\theta\in\Z$ and $\sigma:=[-8,8]\times\{0\}^{d-1}$ is positively oriented;
    \item [(iii)] $\{p_1,\dots,p_h\}\cap \sigma= \{p_i\}$;
    \item [(iv)] for $j$ sufficiently large we have $b_{n_j}\res Q =k^{-1}\theta(\delta_{B_j}-\delta_{C_j})$.
\end{itemize} 
By Lemma \ref{l:conv_hauss}, for this $\rho$, we may choose $j$ large enough such that
\begin{equation}\label{e:where_is_Sn}
    \supp(S_{n_j})\cap \bar{Q} \subset B_\rho(\sigma).
\end{equation}
For $x \in \R$ we recall the notation $S_{n_j}^x$ for the slice of $S_{n_j}\res Q$ at the point $x$
with respect to the projection $\pi:\R\times\R^{d-1}\to\R$. We infer from the flat convergence of $S_{n_j}$ to $S$ that for $\Haus^1$-a.e. $x\in[-8,8]$ we have
\begin{equation}\label{e:flat_slices}
    \Flat_K(S_{n_j}^x-\theta\delta_{(x,0)})\to 0\quad \mbox{as $j\to \infty$}
\end{equation}
and moreover by Lemma \ref{l:integral_minimizers} the multiplicities of $S_{n_j}^x$ are integer multiples of $k^{-1}$. 

We aim to prove that for $j$ sufficiently large there are points $y_j^{\pm}\in B^{d-1}_\rho(0)$ such that \begin{equation}\label{e:good_slices}
S_{n_j}^{\pm 4}=\theta\delta_{(\pm 4,y_j^{\pm})},    
\end{equation}
To this aim we seek points $x_1(j)\in[-6,-5]$, $x_2(j)\in[-2,-1]$,  $x_3(j)\in[1,2]$ and
$x_4(j)\in[5,6]$ such that for $i=1, 2, 3, 4$ we have
\begin{equation}\label{e:almost_good_slices}
    S_{n_j}^{x_i(j)}=\theta\delta_{(x_i(j),y_i(j))},
\end{equation}
for some points $y_i(j)\in B^{d-1}_\rho(0)$. If so, by Proposition \ref{p:simonslice}, 
\eqref{e:almost_good_slices} and \eqref{e:where_is_Sn} imply, denoting $$Q_1^j:= (x_1(j),x_2(j)) \times B_{\rho}^{d-1}(0)\quad\mbox{and}\quad Q_2^j:=(x_3(j),x_4(j)) \times B_{\rho}^{d-1}(0),$$
that $$\partial(S_{n_j}\res Q_1^j)=S_{n_j}^{x_2(j)}-S_{n_j}^{x_1(j)}\quad \mbox{and}\quad\partial(S_{n_j}\res Q_2^j)=S_{n_j}^{x_4(j)}-S_{n_j}^{x_3(j)}.$$
In turn, by Proposition \ref{p:singlepath}, the latter implies \eqref{e:good_slices}.

In order to prove \eqref{e:almost_good_slices}, we focus on the interval $I:=[1,2]$ as the argument for the remaining intervals is identical. Firstly, we observe that by \eqref{e:flat_slices} we have
\begin{equation}\label{e:mass_slices_larger_theta}
\liminf_j(\Mass(S_{n_j}^x))\geq \theta \quad\mbox{for $\Haus^1$-a.e. $x\in I$}.
\end{equation}
Next, denoting $\Omega:=I\times B_{\rho}^{d-1}(0)$, we claim that for $j$ sufficiently large and for every $C>0$ it holds that
\begin{equation}\label{e:slices_with_small_energy}
    \Haus^1(\{x\in I:\MM(S_{n_j}^x)\leq \theta^\alpha + C\})>0,
\end{equation}
where for a 0-current $Z:=\sum_{\ell\in\N}\theta_\ell\delta_{z_\ell}$ we denoted $\MM(Z):=\sum_{\ell\in\N}|\theta_\ell|^\alpha$.

Assume by contradiction that \eqref{e:slices_with_small_energy} is false for infinitely many indices $j$. By Corollary \ref{c:simon1}, for those indices we have
\begin{equation}\label{e:towards_contr}
    \MM(S_{n_j}\res \bar \Omega)\geq\theta^\alpha+C=\MM(S\res \bar \Omega)+C.
\end{equation}
The latter, combined with \eqref{e:alphamass_tienne}, implies that for the same indices we have 
$$\MM(S_{n_j}\res (\R^d\setminus \bar \Omega))<\MM(S\res (\R^d\setminus \bar \Omega))-C,$$
which contradicts [\ref{CDRMS}, Proposition 2.6]. 
From \eqref{e:mass_slices_larger_theta} and \eqref{e:slices_with_small_energy} we deduce that for $j$ sufficiently large there exists $x_1(j)\in I$ such that
\begin{equation}\label{e:final_good_slice}
\Mass(S_{n_j}^{x_1(j)})\geq\theta\quad\mbox{and}\quad \MM(S_{n_j}^{x_1(j)})\leq\theta^\alpha+C.
\end{equation}
Lastly we prove that if $C$ is sufficiently small, then \eqref{e:final_good_slice} implies 
\begin{equation}\label{e:GS}
    S_{n_j}^{x_1(j)}=\theta\delta_{(x_1(j),y_1(j))},
\end{equation}
for some point $y_1(j)\in B^{d-1}_\rho(0)$, thus completing the proof of \eqref{e:almost_good_slices}.

Towards a proof by contradiction of \eqref{e:GS}, observe that for every 0-current $Z=\sum_{\ell=1}^M\theta_\ell\delta_{z_\ell}$, with $M\geq 2$, $|\theta_\ell|\,\, \geq k^{-1}$ and $z_\ell$ distinct, satisfying $\Mass(Z)=\sum_\ell|\theta_\ell|\,\, \geq \theta$, the strict subadditivity of the function $t\mapsto t^\alpha$ (for $t>0$) yields the existence of a $\bar C=\bar C(\alpha, \theta, k)> 0$ such that
$$\MM(Z)=|\theta_1|^\alpha+\sum_{\ell=2}^{M}|\theta_\ell|^\alpha\geq \min\{(mk^{-1})^\alpha + (\theta-mk^{-1})^\alpha: m=1,\dots,k\theta-1\}>\theta^\alpha+\bar C.$$
This contradicts \eqref{e:final_good_slice}, by the arbitrariness of $C$.

It follows from \eqref{e:good_slices} and Proposition \ref{p:simonslice} that, denoting $$Q':=(-4,4) \times B_{\rho}^{d-1}(0), \quad A_j:= ( -4,y_j^-)\ \text{ and } \ D_j:=(4,y_j^+),$$
we have
\begin{equation}\label{e:four_points}
\partial (S_{n_j}\res Q') =\theta\left(\delta_{D_j}-\delta_{A_j}+k^{-1}\big(\delta_{B_j}-\delta_{C_j}\big)\right),   
\end{equation}
for $j$ sufficiently large (see Figure \ref{f:wurst}).

\begin{figure}[ht]
	\centering
	\captionsetup{justification=centering}
	\includegraphics[width=0.95\textwidth]{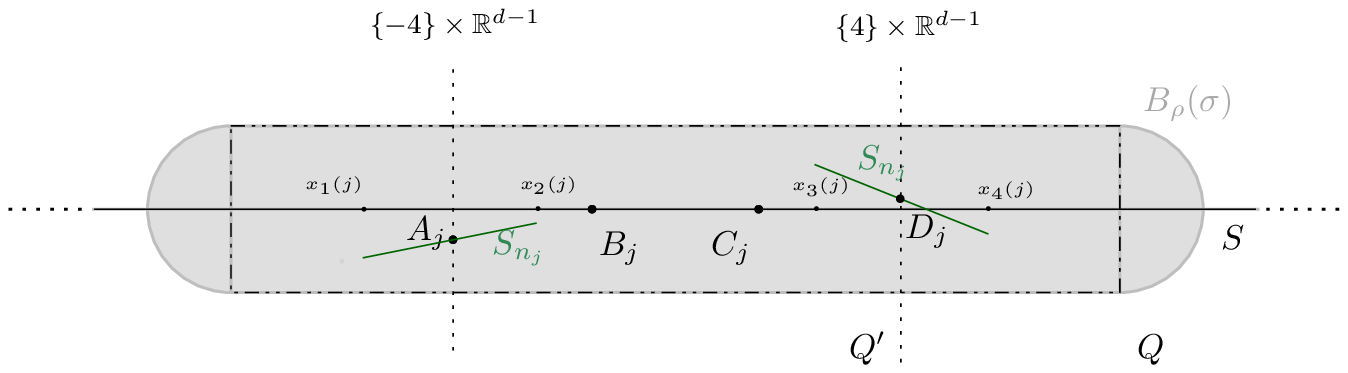}
	\caption{Representation of parts of $S_{n_j}\res Q$.}\label{f:wurst}
\end{figure}
\vspace{0.5cm}

\subsection{Analysis of the possible topologies}\label{Step2}
Now we study the possible topologies of $S_{n_j}\res Q'$. Since $S_{n_j}\in\OTP(b_{n_j})$ we must have $S_{n_j}\res Q'\in\OTP(\partial (S_{n_j}\res Q'))$. In general, we will denote by $\sigma_{PR}$ the oriented segment from the point $P$ to the point $R$. We aim to prove that for $k\geq k_0(\alpha)$ and for $\rho \leq \rho(k)$ sufficiently small it holds that $S_{n_j}\res Q' \in \{W_j, Z_j\}$, for $j$ large enough, where 
\begin{equation}\label{e:THE_ONE}
    W_j := \theta \left( \llbracket \sigma_{A_jB_j} \rrbracket + \llbracket \sigma_{C_jD_j} \rrbracket + \frac{k-1}{k} \llbracket \sigma_{B_jC_j} \rrbracket \right)\quad \mbox{and}\quad Z_j := \theta \left( \llbracket \sigma_{A_jD_j} \rrbracket + \frac{1}{k} \llbracket \sigma_{C_jB_j} \rrbracket \right) \,, 
\end{equation}
see Table \ref{FigONES}.
We will do this by excluding every other topology comparing angle conditions which are given by the multiplicities of the segments (which depend on $k$) and contradict the choice of $\rho$. Thus, when we say for $\rho$ small enough, we mean implicitly to choose $j$ large enough such that, by Lemma \ref{l:conv_hauss}, for the desired $\rho$ we have $$\supp(S_{n_j})\subset B_\rho(\supp(S)\cup\{p_1,\dots,p_h\}).$$

Write $S_{n_j}\res Q'=\sum_{i<j}a_{ij}\llbracket\sigma_{ij}\rrbracket$ as in \eqref{e:def_topo} and observe that by Lemma \ref{l:numberBranch}, since $\Haus^0(\partial (S_{n_j}\res Q'))=4$, then $\Haus^0(\BR (S_{n_j}\res Q'))\in \{0,1,2\}$. We thus analyze the three cases separately and we recall that, by Lemma \ref{l:supports_determine_current}, in order to prove \eqref{e:THE_ONE} it suffices to prove that $$\supp(S_{n_j}\res Q')=\sigma_{A_jB_j}\cup\sigma_{B_jC_j}\cup\sigma_{C_jD_j}\quad\mbox{or}\quad \supp(S_{n_j}\res Q')=\sigma_{A_jD_j}\cup\sigma_{B_jC_j}.$$

\begin{longtable}{  p{1cm} c } \label{FigONES} 

\\ $W_j$ &
    \begin{tikzpicture}
         \filldraw[black] (0,0) circle (2pt) node[anchor=south]{$A$};
        \filldraw[black] (3.6, 2) circle (2pt) node[anchor=north]{$B$};
        \filldraw[black] (5.4, 2) circle (2pt) node[anchor=north]{$C$};
        \filldraw[black] (8.8, 2.8) circle (2pt) node[anchor=north]{$D$};
        \path[draw = gray,% 
             decoration={%
            markings,%
            mark=at position 0.5   with {\arrow[scale=1.5]{>}},%
            },%
            postaction=decorate] (0,0) -- (3.6, 2)
            node[midway, above, color= gray, font=\small]{$\theta$};
        \path[draw = gray,% 
             decoration={%
            markings,%
            mark=at position 0.5   with {\arrow[scale=1.5]{>}},%
            },%
            postaction=decorate] (5.4, 2) -- (8.8, 2.8)
            node[midway, below, color= gray, font=\small]{$\theta$};
        \path[draw = gray,% 
             decoration={%
            markings,%
            mark=at position 0.5   with {\arrow[scale=1.5]{>}},%
            },%
            postaction=decorate] (3.6, 2) -- (5.4, 2)
            node[midway, above, color= gray, font=\small]{$\theta-\delta$};
    \end{tikzpicture}
    \\ \hline
\\ $Z_j$ & 
\begin{tikzpicture}
     \filldraw[black] (0,0) circle (2pt) node[anchor=south]{$A$};
    \filldraw[black] (3.6, 2) circle (2pt) node[anchor=south]{$B$};
    \filldraw[black] (5.4, 2) circle (2pt) node[anchor=south]{$C$};
    \filldraw[black] (8.8, 2.8) circle (2pt) node[anchor=north]{$D$};
    \path[draw = gray,% 
             decoration={%
            markings,%
            mark=at position 0.5   with {\arrow[scale=1.5]{>}},%
            },%
            postaction=decorate] (0,0) -- (8.8, 2.8)
            node[midway, below, color= gray, font=\small]{$\theta$};
    \path[draw = gray,% 
             decoration={%
            markings,%
            mark=at position 0.5   with {\arrow[scale=1.5]{<}},%
            },%
            postaction=decorate] (3.6, 2) -- (5.4, 2)
            node[midway, above, color= gray, font=\small]{$\delta$};
\end{tikzpicture}
\\ \hline
\caption{Representation of $W_j$ and $Z_j$. From now on $\delta := \theta/k$ and we remove the subscript $j$ from the points.}
\end{longtable}

\emph{Case 1}: $\BR(S_{n_j}\res Q') = \emptyset$. Recalling Proposition \ref{p:singlepath},  $\supp(S_{n_j}\res Q')$ must be one of the following sets, sorted alphabetically: 
\begin{itemize}
    \item [(1a)] $\sigma_{A_jB_j}\cup\sigma_{A_jC_j}\cup\sigma_{A_jD_j}$,
    \item [(1b)] $\sigma_{A_jB_j}\cup\sigma_{A_jC_j}\cup\sigma_{B_jD_j}$,
    \item [(1c)] $\sigma_{A_jB_j}\cup\sigma_{A_jC_j}\cup\sigma_{C_jD_j}$,
    \item [(1d)] $\sigma_{A_jB_j}\cup\sigma_{A_jD_j}\cup\sigma_{B_jC_j}$,
     \item [(1e)] $\sigma_{A_jB_j}\cup\sigma_{A_jD_j}\cup\sigma_{C_jD_j}$,
    \item [(1f)] $\sigma_{A_jB_j}\cup\sigma_{B_jC_j}\cup\sigma_{B_jD_j}$,
    \item [(1g)] $\sigma_{A_jB_j}\cup\sigma_{B_jC_j}\cup\sigma_{C_jD_j}$,
    \item [(1h)] $\sigma_{A_jB_j}\cup\sigma_{B_jD_j}\cup\sigma_{C_jD_j}$,
    \item [(1i)]
    $\sigma_{A_jB_j}\cup\sigma_{C_jD_j}$,
    \item [(1j)]
    $\sigma_{A_jC_j}\cup\sigma_{A_jD_j}\cup\sigma_{B_jC_j}$,
    \item [(1k)]
    $\sigma_{A_jC_j}\cup\sigma_{A_jD_j}\cup\sigma_{B_jD_j}$,    
    \item [(1l)] $\sigma_{A_jC_j}\cup\sigma_{B_jC_j}\cup\sigma_{B_jD_j}$,
    \item [(1m)] $\sigma_{A_jC_j}\cup\sigma_{B_jC_j}\cup\sigma_{C_jD_j}$,    
    \item [(1n)] $\sigma_{A_jC_j}\cup\sigma_{B_jD_j}$,
    \item [(1o)] $\sigma_{A_jC_j}\cup\sigma_{B_jD_j}\cup\sigma_{C_jD_j}$,    
    \item [(1p)] $\sigma_{A_jD_j}\cup\sigma_{B_jC_j}$,    
    \item [(1q)] $\sigma_{A_jD_j}\cup\sigma_{B_jC_j}\cup\sigma_{B_jD_j}$,
    \item [(1r)] $\sigma_{A_jD_j}\cup\sigma_{B_jC_j}\cup\sigma_{C_jD_j}$,
    \item [(1s)] $\sigma_{A_jD_j}\cup\sigma_{B_jD_j}\cup\sigma_{C_jD_j}$.
\end{itemize}
Observe that we omitted the cases
\begin{itemize}
    \item [(i)] $\sigma_{A_jB_j}\cup\sigma_{A_jC_j}\cup\sigma_{B_jC_j}$,
    \item [(ii)] $\sigma_{A_jB_j}\cup\sigma_{A_jD_j}\cup\sigma_{B_jD_j}$,
    \item [(iii)] $\sigma_{A_jC_j}\cup\sigma_{A_jD_j}\cup\sigma_{C_jD_j}$ 
    \item [(iv)] $\sigma_{B_jC_j}\cup\sigma_{B_jD_j}\cup\sigma_{C_jD_j}$
\end{itemize}
because, independently of the position of the points, the support either contains a loop or does not contain one of the four points in the support of the boundary. The only exceptions to this behavior are (ii) and (iii) only when the four points are collinear, which is not relevant, as we discuss in Sub-case 1-1 below.\\

\emph{Sub-case 1-1}. Firstly we observe that when the points $A_j,B_j,C_j$ and $D_j$ are collinear the only admissible competitor is $Z_j$.\\ 

\emph{Sub-case 1-2}. Next, we analyze the case in which no triples among the points $A_j,B_j,C_j$ and $D_j$ are contained in a line.

We immediately exclude those cases for which the corresponding set is not the support of any current with boundary $\partial(S_{n_j}\res Q')$. Hence we can exclude (1i) and (1n), because the endpoints of the two segments in the support have different multiplicities. Moreover we exclude (1d), (1j), (1q) and (1r) as well, because the segment $\sigma_{A_j,D_j}$ should have multiplicity $\theta$, being either for $A_j$ or $D_j$ the only segment in the support containing it. On the other hand, the remaining point (respectively $D_j$ or $A_j$) is an endpoint also for a different segment of the support, from which we deduce that the multiplicity of the latter segment should be 0 (see Table \ref{Fig1}).

\begin{longtable}{  p{1cm} c } \label{Fig1} 
\\ 1d &
\begin{tikzpicture}
     \filldraw[black] (0,0) circle (2pt) node[anchor=south]{$A$};
    \filldraw[black] (1.8, 1) circle (2pt) node[anchor=south]{$B$};
    \filldraw[black] (2.6, 1) circle (2pt) node[anchor=south]{$C$};
    \filldraw[black] (4.4, 1.4) circle (2pt) node[anchor=north]{$D$};
    \draw[gray, thick] (0,0) -- (1.8, 1);
    \draw[gray, thick] (0,0) -- (4.4, 1.4);
    \draw[gray, thick] (1.8, 1) -- (2.6, 1);
\end{tikzpicture}
  \qquad \qquad
\begin{tikzpicture}
     \filldraw[black] (0,0) circle (2pt) node[anchor=south]{$A$};
    \filldraw[black] (1.8, 1) circle (2pt) node[anchor=south]{$B$};
    \filldraw[black] (2.6, 1) circle (2pt) node[anchor=south]{$C$};
    \filldraw[black] (4.4, 1.4) circle (2pt) node[anchor=north]{$D$};
    \draw[gray, thick] (0,0) -- (1.8, 1);
    \draw[gray, thick] (2.6, 1) -- (4.4, 1.4);
\end{tikzpicture}
\quad  1i 
\\ \hline 
\\ 1j &
\begin{tikzpicture}
     \filldraw[black] (0,0) circle (2pt) node[anchor=south]{$A$};
    \filldraw[black] (1.8, 1) circle (2pt) node[anchor=south]{$B$};
    \filldraw[black] (2.6, 1) circle (2pt) node[anchor=south]{$C$};
    \filldraw[black] (4.4, 1.4) circle (2pt) node[anchor=north]{$D$};
    \draw[gray, thick] (0,0) -- (2.6, 1);
    \draw[gray, thick] (1.8, 1) -- (2.6, 1);
    \draw[gray, thick] (0,0) -- (4.4, 1.4);
\end{tikzpicture}
 \qquad \qquad
\begin{tikzpicture}
     \filldraw[black] (0,0) circle (2pt) node[anchor=south]{$A$};
    \filldraw[black] (1.8, 1) circle (2pt) node[anchor=south]{$B$};
    \filldraw[black] (2.6, 1) circle (2pt) node[anchor=north]{$C$};
    \filldraw[black] (4.4, 1.4) circle (2pt) node[anchor=north]{$D$};
    \draw[gray, thick] (0,0) -- (2.6, 1);
    \draw[gray, thick] (1.8, 1) -- (4.4, 1.4);
\end{tikzpicture}
\quad 1n
\\ \hline 
\\ 1q &
\begin{tikzpicture}
     \filldraw[black] (0,0) circle (2pt) node[anchor=south]{$A$};
    \filldraw[black] (1.8, 1) circle (2pt) node[anchor=south]{$B$};
    \filldraw[black] (2.6, 1) circle (2pt) node[anchor=south]{$C$};
    \filldraw[black] (4.4, 1.4) circle (2pt) node[anchor=north]{$D$};
    \draw[gray, thick] (0,0) -- (4.4, 1.4);
    \draw[gray, thick] (1.8, 1) -- (2.6, 1);
    \draw[gray, thick] (1.8, 1) -- (4.4, 1.4);
\end{tikzpicture}
 \qquad \qquad
\begin{tikzpicture}
     \filldraw[black] (0,0) circle (2pt) node[anchor=south]{$A$};
    \filldraw[black] (1.8, 1) circle (2pt) node[anchor=south]{$B$};
    \filldraw[black] (2.6, 1) circle (2pt) node[anchor=south]{$C$};
    \filldraw[black] (4.4, 1.4) circle (2pt) node[anchor=north]{$D$};
    \draw[gray, thick] (0,0) -- (4.4, 1.4);
    \draw[gray, thick] (1.8, 1) -- (2.6, 1);
    \draw[gray, thick] (2.6, 1) -- (4.4, 1.4);
\end{tikzpicture}
\quad 1r
\\ \hline
\caption{Representation of (1d), (1i), (1j), (1n), (1q), (1r).}
\end{longtable}

We exclude the following cases by direct comparison with the $\alpha$-mass of $Z_j$, for $j$ sufficiently large (see Table \ref{Fig2}):

\begin{itemize}
    \item (1a), whose corresponding $\alpha$-mass is
    $$\theta^\alpha(\Haus^1(\sigma_{A_jD_j})+k^{-\alpha}(\Haus^1(\sigma_{A_jB_j})+\Haus^1(\sigma_{A_jC_j}))>\MM(Z_j).$$
    \item (1b), whose corresponding $\alpha$-mass is
    $$\theta^\alpha((1+k^{-1})^{\alpha}\Haus^1(\sigma_{A_jB_j})+\Haus^1(\sigma_{B_jD_j})+k^{-\alpha}\Haus^1(\sigma_{A_jC_j}))>\MM(Z_j).$$
    \item (1f), whose corresponding $\alpha$-mass is
    $$\theta^\alpha(\Haus^1(\sigma_{A_jB_j})+\Haus^1(\sigma_{B_jD_j})+k^{-\alpha}\Haus^1(\sigma_{B_jC_j}))>\MM(Z_j).$$
     \item (1k), whose corresponding $\alpha$-mass is
    $$\theta^\alpha((1+k^{-1})^{\alpha}\Haus^1(\sigma_{A_jD_j})+k^{-\alpha}(\Haus^1(\sigma_{A_jC_j})+\Haus^1(\sigma_{B_jD_j})))>\MM(Z_j).$$
     \item (1l), whose corresponding $\alpha$-mass is
    $$\theta^\alpha((1+k^{-1})^{\alpha}\Haus^1(\sigma_{B_jC_j})+\Haus^1(\sigma_{A_jC_j})+\Haus^1(\sigma_{B_jD_j}))>\MM(Z_j).$$
     \item (1m), whose corresponding $\alpha$-mass is
    $$\theta^\alpha(\Haus^1(\sigma_{A_jC_j})+\Haus^1(\sigma_{C_jD_j})+k^{-\alpha}\Haus^1(\sigma_{B_jC_j}))>\MM(Z_j).$$
    \item (1o), whose corresponding $\alpha$-mass is
    $$\theta^\alpha((1+k^{-1})^{\alpha}\Haus^1(\sigma_{C_jD_j})+\Haus^1(\sigma_{A_jC_j})+k^{-\alpha}\Haus^1(\sigma_{B_jD_j}))>\MM(Z_j).$$
    \item (1s), whose corresponding $\alpha$-mass is
    $$\theta^\alpha(\Haus^1(\sigma_{A_jD_j})+k^{-\alpha}(\Haus^1(\sigma_{B_jD_j})+\Haus^1(\sigma_{C_jD_j}))>\MM(Z_j).$$
\end{itemize}

\begin{longtable}{  p{1cm} c } \label{Fig2} 
        1a &
        \begin{tikzpicture}
         \filldraw[black] (0,0) circle (2pt) node[anchor=south]{$A$};
        \filldraw[black] (3.6, 2) circle (2pt) node[anchor=south]{$B$};
        \filldraw[black] (5.4, 2) circle (2pt) node[anchor=south]{$C$};
        \filldraw[black] (8.8, 2.8) circle (2pt) node[anchor=north]{$D$};
        \path[draw = gray,% 
             decoration={%
            markings,%
            mark=at position 0.5   with {\arrow[scale=1.5]{>}},%
            },%
            postaction=decorate] (0,0) -- (3.6, 2)
            node[midway, above left, color= gray, font=\small]{$\delta$};
        \path[draw = gray,% 
             decoration={%
            markings,%
            mark=at position 0.5   with {\arrow[scale=1.5,]{<}},%
            },%
            postaction=decorate] (0,0) -- (5.4, 2)
            node[midway, above, color= gray, font=\small]{$\delta$};
        \path[draw = gray,% 
             decoration={%
            markings,%
            mark=at position 0.5   with {\arrow[scale=1.5]{>}},%
            },%
            postaction=decorate] (0,0) -- (8.8, 2.8)
            node[midway, below right, color= gray, font=\small]{$\theta$};
        \end{tikzpicture}
        
      \\ \hline \\1b &
          \begin{tikzpicture}
         \filldraw[black] (0,0) circle (2pt) node[anchor=south]{$A$};
        \filldraw[black] (3.6, 2) circle (2pt) node[anchor=south]{$B$};
        \filldraw[black] (5.4, 2) circle (2pt) node[anchor=north]{$C$};
        \filldraw[black] (8.8, 2.8) circle (2pt) node[anchor=north]{$D$};
        \path[draw = gray,% 
             decoration={%
            markings,%
            mark=at position 0.5   with {\arrow[scale=1.5]{>}},%
            },%
            postaction=decorate] (0,0) -- (3.6, 2)
            node[midway, above left, color= gray, font=\small]{$\theta+\delta$};
        \path[draw = gray,% 
             decoration={%
            markings,%
            mark=at position 0.5   with {\arrow[scale=1.5]{<}},%
            },%
            postaction=decorate] (0,0) -- (5.4, 2)
            node[midway, below, color= gray, font=\small]{$\delta$};
        \path[draw = gray,% 
             decoration={%
            markings,%
            mark=at position 0.5   with {\arrow[scale=1.5]{>}},%
            },%
            postaction=decorate] (3.6, 2) -- (8.8, 2.8)
            node[midway, above, color= gray, font=\small]{$\theta$};
        \end{tikzpicture}
\\ \hline \\ 1f &
    \begin{tikzpicture}
         \filldraw[black] (0,0) circle (2pt) node[anchor=south]{$A$};
        \filldraw[black] (3.6, 2) circle (2pt) node[anchor=south]{$B$};
        \filldraw[black] (5.4, 2) circle (2pt) node[anchor=north]{$C$};
        \filldraw[black] (8.8, 2.8) circle (2pt) node[anchor=north]{$D$};
        \path[draw = gray,% 
             decoration={%
            markings,%
            mark=at position 0.5   with {\arrow[scale=1.5]{>}},%
            },%
            postaction=decorate] (0,0) -- (3.6, 2)
            node[midway, above, color= gray, font=\small]{$\theta$};
        \path[draw = gray,% 
             decoration={%
            markings,%
            mark=at position 0.5   with {\arrow[scale=1.5]{>}},%
            },%
            postaction=decorate] (3.6, 2) -- (8.8, 2.8)
            node[midway, above, color= gray, font=\small]{$\theta$};
        \path[draw = gray,% 
             decoration={%
            markings,%
            mark=at position 0.5   with {\arrow[scale=1.5]{<}},%
            },%
            postaction=decorate] (3.6, 2) -- (5.4, 2)
            node[midway, below, color= gray, font=\small]{$\delta$};
    \end{tikzpicture}
\\ \hline
     \\ 1k &
\begin{tikzpicture}
     \filldraw[black] (0,0) circle (2pt) node[anchor=south]{$A$};
    \filldraw[black] (3.6, 2) circle (2pt) node[anchor=south]{$B$};
    \filldraw[black] (5.4, 2) circle (2pt) node[anchor=south]{$C$};
    \filldraw[black] (8.8, 2.8) circle (2pt) node[anchor=north]{$D$};
    \path[draw = gray,% 
             decoration={%
            markings,%
            mark=at position 0.5   with {\arrow[scale=1.5]{<}},%
            },%
            postaction=decorate] (0,0) -- (5.4, 2)
            node[midway, above, color= gray, font=\small]{$\delta$};
    \path[draw = gray,% 
             decoration={%
            markings,%
            mark=at position 0.5   with {\arrow[scale=1.5]{<}},%
            },%
            postaction=decorate] (3.6, 2) -- (8.8, 2.8)
            node[midway, above, color= gray, font=\small]{$\delta$};
    \path[draw = gray,% 
             decoration={%
            markings,%
            mark=at position 0.5   with {\arrow[scale=1.5]{>}},%
            },%
            postaction=decorate] (0,0) -- (8.8, 2.8)
            node[midway, below, color= gray, font=\small]{$\theta+\delta$};
\end{tikzpicture}
\\ \hline
\\ 1l &
\begin{tikzpicture}
     \filldraw[black] (0,0) circle (2pt) node[anchor=south]{$A$};
    \filldraw[black] (3.6, 2) circle (2pt) node[anchor=south]{$B$};
    \filldraw[black] (5.4, 2) circle (2pt) node[anchor=north]{$C$};
    \filldraw[black] (8.8, 2.8) circle (2pt) node[anchor=north]{$D$};
    \path[draw = gray,% 
             decoration={%
            markings,%
            mark=at position 0.5   with {\arrow[scale=1.5]{>}},%
            },%
            postaction=decorate] (0,0) -- (5.4, 2)
            node[midway, above, color= gray, font=\small]{$\theta$};
    \path[draw = gray,% 
             decoration={%
            markings,%
            mark=at position 0.5   with {\arrow[scale=1.5]{<}},%
            },%
            postaction=decorate] (3.6, 2) -- (5.4, 2)
            node[midway, below, color= gray, font=\small]{$\theta+\delta$};
    \path[draw = gray,% 
             decoration={%
            markings,%
            mark=at position 0.5   with {\arrow[scale=1.5]{>}},%
            },%
            postaction=decorate] (3.6, 2) -- (8.8, 2.8)
            node[midway, above, color= gray, font=\small]{$\theta$};
\end{tikzpicture}
\\ \hline \\ 1m &
\begin{tikzpicture}
     \filldraw[black] (0,0) circle (2pt) node[anchor=south]{$A$};
    \filldraw[black] (3.6, 2) circle (2pt) node[anchor=south]{$B$};
    \filldraw[black] (5.4, 2) circle (2pt) node[anchor=south]{$C$};
    \filldraw[black] (8.8, 2.8) circle (2pt) node[anchor=north]{$D$};
    \path[draw = gray,% 
             decoration={%
            markings,%
            mark=at position 0.5   with {\arrow[scale=1.5]{>}},%
            },%
            postaction=decorate] (0,0) -- (5.4, 2)
            node[midway, above, color= gray, font=\small]{$\theta$};
    \path[draw = gray,% 
             decoration={%
            markings,%
            mark=at position 0.5   with {\arrow[scale=1.5]{<}},%
            },%
            postaction=decorate] (3.6, 2) -- (5.4, 2)
            node[midway, above, color= gray, font=\small]{$\delta$};
    \path[draw = gray,% 
             decoration={%
            markings,%
            mark=at position 0.5   with {\arrow[scale=1.5]{>}},%
            },%
            postaction=decorate] (5.4, 2) -- (8.8, 2.8)
            node[midway, above, color= gray, font=\small]{$\theta$};
\end{tikzpicture}
\\ \hline
\\ 1o &
\begin{tikzpicture}
     \filldraw[black] (0,0) circle (2pt) node[anchor=south]{$A$};
    \filldraw[black] (3.6, 2) circle (2pt) node[anchor=south]{$B$};
    \filldraw[black] (5.4, 2) circle (2pt) node[anchor=north]{$C$};
    \filldraw[black] (8.8, 2.8) circle (2pt) node[anchor=north]{$D$};
       \path[draw = gray,% 
             decoration={%
            markings,%
            mark=at position 0.5   with {\arrow[scale=1.5]{>}},%
            },%
            postaction=decorate] (0,0) -- (5.4, 2)
            node[midway, above, color= gray, font=\small]{$\theta$};
       \path[draw = gray,% 
             decoration={%
            markings,%
            mark=at position 0.5   with {\arrow[scale=1.5]{>}},%
            },%
            postaction=decorate] (5.4, 2) -- (8.8, 2.8)
            node[midway, below, color= gray, font=\small]{$\theta+\delta$};
       \path[draw = gray,% 
             decoration={%
            markings,%
            mark=at position 0.5   with {\arrow[scale=1.5]{<}},%
            },%
            postaction=decorate] (3.6, 2) -- (8.8, 2.8)
            node[midway, above, color= gray, font=\small]{$\delta$};
\end{tikzpicture}\\ \hline  \\ 1s &
\begin{tikzpicture}
     \filldraw[black] (0,0) circle (2pt) node[anchor=south]{$A$};
    \filldraw[black] (3.6, 2) circle (2pt) node[anchor=south]{$B$};
    \filldraw[black] (5.4, 2) circle (2pt) node[anchor=south]{$C$};
    \filldraw[black] (8.8, 2.8) circle (2pt) node[anchor=north]{$D$};
    \path[draw = gray,% 
             decoration={%
            markings,%
            mark=at position 0.5   with {\arrow[scale=1.5]{>}},%
            },%
            postaction=decorate] (0,0) -- (8.8, 2.8)
            node[midway, below, color= gray, font=\small]{$\theta$};
    \path[draw = gray,% 
             decoration={%
            markings,%
            mark=at position 0.5   with {\arrow[scale=1.5]{<}},%
            },%
            postaction=decorate] (3.6, 2) -- (8.8, 2.8)
            node[midway, above, color= gray, font=\small]{$\delta$};
    \path[draw = gray,% 
             decoration={%
            markings,%
            mark=at position 0.5   with {\arrow[scale=1.5]{>}},%
            },%
            postaction=decorate] (5.4, 2) -- (8.8, 2.8)
            node[midway, below, color= gray, font=\small]{$\delta$};
\end{tikzpicture}\\ \hline
\caption{Representation of (1a), (1b), (1f), (1k), (1l), (1m), (1o), (1s).}
\end{longtable}

For $j$ sufficiently large and for $k\geq k_0(\alpha)$, the $\alpha$-mass corresponding to (1c) is (see Table \ref{Figch})
\begin{equation}\label{e:choicek1}
\begin{split}
&\theta^\alpha(\Haus^1(\sigma_{C_jD_j})+(1-k^{-1})^{\alpha}\Haus^1(\sigma_{A_jC_j})+k^{-\alpha}\Haus^1(\sigma_{A_jB_j}))\\
&> \theta^\alpha(\Haus^1(\sigma_{C_jD_j})+((1-k^{-1})^{\alpha}+k^{-\alpha})\Haus^1(\sigma_{A_jB_j}))\\
&> \theta^\alpha(\Haus^1(\sigma_{C_jD_j})+\Haus^1(\sigma_{A_jB_j})+\frac{k^{-\alpha}}{2}\Haus^1(\sigma_{A_jB_j}))\\
&> \theta^\alpha(\Haus^1(\sigma_{C_jD_j})+\Haus^1(\sigma_{A_jB_j})+k^{-\alpha}\Haus^1(\sigma_{B_jC_j}))=\MM(W_j).
\end{split}
\end{equation}
Also, for $j$ sufficiently large and for $k\geq k_0(\alpha)$, the $\alpha$-mass corresponding to (1h) is (see Table \ref{Figch})
\begin{equation}\label{e:choicek2}
\begin{split}
&\theta^\alpha(\Haus^1(\sigma_{A_jB_j})+(1-k^{-1})^{\alpha}\Haus^1(\sigma_{B_jD_j})+k^{-\alpha}\Haus^1(\sigma_{C_jD_j}))\\
&> \theta^\alpha(\Haus^1(\sigma_{A_jB_j})+((1-k^{-1})^{\alpha}+k^{-\alpha})\Haus^1(\sigma_{C_jD_j}))\\
&> \theta^\alpha(\Haus^1(\sigma_{A_jB_j})+\Haus^1(\sigma_{C_jD_j})+\frac{k^{-\alpha}}{2}\Haus^1(\sigma_{C_jD_j}))\\
&> \theta^\alpha(\Haus^1(\sigma_{A_jB_j})+\Haus^1(\sigma_{C_jD_j})+k^{-\alpha}\Haus^1(\sigma_{B_jC_j}))=\MM(W_j).
\end{split}
\end{equation}

\begin{longtable}{  p{1cm} c } \label{Figch} 
        \\1c &
        \begin{tikzpicture}
     \filldraw[black] (0,0) circle (2pt) node[anchor=south]{$A$};
    \filldraw[black] (3.6, 2) circle (2pt) node[anchor=south]{$B$};
    \filldraw[black] (5.4, 2) circle (2pt) node[anchor=south]{$C$};
    \filldraw[black] (8.8, 2.8) circle (2pt) node[anchor=north]{$D$};
    \path[draw = gray,% 
             decoration={%
            markings,%
            mark=at position 0.5   with {\arrow[scale=1.5]{>}},%
            },%
            postaction=decorate] (0,0) -- (3.6, 2)
            node[midway, above, color= gray, font=\small]{$\delta$};
    \path[draw = gray,% 
             decoration={%
            markings,%
            mark=at position 0.5   with {\arrow[scale=1.5]{>}},%
            },%
            postaction=decorate] (0,0) -- (5.4, 2)
            node[midway, below, color= gray, font=\small]{$\theta-\delta$};
    \path[draw = gray,% 
             decoration={%
            markings,%
            mark=at position 0.5   with {\arrow[scale=1.5]{>}},%
            },%
            postaction=decorate] (5.4, 2) -- (8.8, 2.8)
            node[midway, below, color= gray, font=\small]{$\theta$};
\end{tikzpicture}
\\ \hline
\\ 1h &
\begin{tikzpicture}
     \filldraw[black] (0,0) circle (2pt) node[anchor=south]{$A$};
    \filldraw[black] (3.6, 2) circle (2pt) node[anchor=north]{$B$};
    \filldraw[black] (5.4, 2) circle (2pt) node[anchor=north]{$C$};
    \filldraw[black] (8.8, 2.8) circle (2pt) node[anchor=north]{$D$};
    \path[draw = gray,% 
             decoration={%
            markings,%
            mark=at position 0.5   with {\arrow[scale=1.5]{>}},%
            },%
            postaction=decorate] (0,0) -- (3.6, 2)
            node[midway, above, color= gray, font=\small]{$\theta$};
    \path[draw = gray,% 
             decoration={%
            markings,%
            mark=at position 0.5   with {\arrow[scale=1.5]{>}},%
            },%
            postaction=decorate] (5.4, 2) -- (8.8, 2.8)
            node[midway, below, color= gray, font=\small]{$\delta$};
    \path[draw = gray,% 
             decoration={%
            markings,%
            mark=at position 0.5   with {\arrow[scale=1.5]{>}},%
            },%
            postaction=decorate] (3.6, 2) -- (8.8, 2.8)
            node[midway, above, color= gray, font=\small]{$\theta-\delta$};
\end{tikzpicture} \\ \hline 
  \caption{Representation of (1c), (1h).}
\end{longtable}

Lastly, we exclude case (1e) by direct comparison with the $\alpha$-mass of $Z_j$. For $j$ sufficiently large and for $k\geq k_0(\alpha)$, the $\alpha$-mass corresponding to (1e) is (see Table \ref{Fige})
\begin{equation}\label{e:choicek3}
\begin{split}
&\theta^\alpha((1-k^{-1})^{\alpha}\Haus^1(\sigma_{A_jD_j})+k^{-\alpha}(\Haus^1(\sigma_{A_jB_j})+\Haus^1(\sigma_{C_jD_j})))\\
&> \theta^\alpha((1-k^{-1})^{\alpha}\Haus^1(\sigma_{A_jD_j})+k^{-\alpha}\frac{1}{2}\Haus^1(\sigma_{A_jD_j}))\\
&> \theta^\alpha(\Haus^1(\sigma_{A_jD_j})+\frac{1}{4}k^{-\alpha}\Haus^1(\sigma_{A_jD_j}))\\
&> \theta^\alpha(\Haus^1(\sigma_{A_jD_j})+k^{-\alpha}\Haus^1(\sigma_{B_jC_j}))=\MM(Z_j).
\end{split}
\end{equation}

\begin{longtable}{  p{1cm} c } \label{Fige}
   \\ 1e &
\begin{tikzpicture}
     \filldraw[black] (0,0) circle (2pt) node[anchor=south]{$A$};
    \filldraw[black] (3.6, 2) circle (2pt) node[anchor=south]{$B$};
    \filldraw[black] (5.4, 2) circle (2pt) node[anchor=south]{$C$};
    \filldraw[black] (8.8, 2.8) circle (2pt) node[anchor=north]{$D$};
    \path[draw = gray,% 
             decoration={%
            markings,%
            mark=at position 0.5   with {\arrow[scale=1.5]{>}},%
            },%
            postaction=decorate] (0,0) -- (3.6, 2)
            node[midway, above, color= gray, font=\small]{$\delta$};
    \path[draw = gray,% 
             decoration={%
            markings,%
            mark=at position 0.5   with {\arrow[scale=1.5]{>}},%
            },%
            postaction=decorate] (0,0) -- (8.8, 2.8)
            node[midway, below, color= gray, font=\small]{$\theta-\delta$};
    \path[draw = gray,% 
             decoration={%
            markings,%
            mark=at position 0.5   with {\arrow[scale=1.5]{>}},%
            },%
            postaction=decorate] (5.4, 2) -- (8.8, 2.8)
            node[midway, above, color= gray, font=\small]{$\delta$};
    \end{tikzpicture}
\\ \hline
\caption{Representation of (1e).}
\end{longtable}

\emph{Sub-case 1-3}. The last situation we need to take into account is when exactly three points are collinear. We will discuss the case in which the collinear points are $A_j$, $B_j$ and $C_j$ or $A_j, C_j$ and $D_j$. The remaining cases in which the collinear points are $B_j$, $C_j$ and $D_j$ or $A_j, B_j$ and $D_j$ are symmetric and can be treated analogously, therefore we leave the analysis to the interested reader.

\emph{Sub-case 1-3-1: $A_j$, $B_j$ and $C_j$ are collinear}.

The cases (1d), (1i), (1j), (1q), (1r) can be excluded for the same reason as in the Sub-case 1-2 (see Table \ref{Fig_wrong}). We exclude cases (1b), (1f), (1l), (1n), which are coincident (see Table \ref{Fignall}), by direct comparison with the $\alpha$-mass of $Z_j$. For $j$ sufficiently large, the $\alpha$-mass corresponding to the above cases is
    $$\theta^\alpha(\Haus^1(\sigma_{A_jB_j})+\Haus^1(\sigma_{B_jD_j})+k^{-\alpha}\Haus^1(\sigma_{B_jC_j}))>\MM(Z_j).$$
\begin{longtable}{  p{1cm} c } \label{Fig_wrong} 
\\ 1d &
\begin{tikzpicture}
     \filldraw[black] (0,0) circle (2pt) node[anchor=south]{$A$};
    \filldraw[black] (1.8, 0) circle (2pt) node[anchor=south]{$B$};
    \filldraw[black] (2.6, 0) circle (2pt) node[anchor=south]{$C$};
    \filldraw[black] (4.4, 1.4) circle (2pt) node[anchor=north]{$D$};
    \draw[gray, thick] (0,0) -- (1.8, 0);
    \draw[gray, thick] (0,0) -- (4.4, 1.4);
    \draw[gray, thick] (1.8, 0) -- (2.6, 0);
\end{tikzpicture}
  \qquad \qquad
\begin{tikzpicture}
     \filldraw[black] (0,0) circle (2pt) node[anchor=south]{$A$};
    \filldraw[black] (1.8, 0) circle (2pt) node[anchor=south]{$B$};
    \filldraw[black] (2.6, 0) circle (2pt) node[anchor=south]{$C$};
    \filldraw[black] (4.4, 1.4) circle (2pt) node[anchor=north]{$D$};
    \draw[gray, thick] (0,0) -- (1.8, 0);
    \draw[gray, thick] (2.6, 0) -- (4.4, 1.4);
\end{tikzpicture}
\quad  1i 
\\ \hline 
\\ 1j &
\begin{tikzpicture}
     \filldraw[black] (0,0) circle (2pt) node[anchor=south]{$A$};
    \filldraw[black] (1.8, 0) circle (2pt) node[anchor=south]{$B$};
    \filldraw[black] (2.6, 0) circle (2pt) node[anchor=south]{$C$};
    \filldraw[black] (4.4, 1.4) circle (2pt) node[anchor=north]{$D$};
    \draw[gray, thick] (0,0) -- (2.6, 0);
    \draw[gray, thick] (1.8, 0) -- (2.6, 0);
    \draw[gray, thick] (0,0) -- (4.4, 1.4);
\end{tikzpicture}
 \qquad \qquad
\begin{tikzpicture}
\filldraw[black] (0,0) circle (2pt) node[anchor=south]{$A$};
    \filldraw[black] (1.8, 0) circle (2pt) node[anchor=south]{$B$};
    \filldraw[black] (2.6, 0) circle (2pt) node[anchor=south]{$C$};
    \filldraw[black] (4.4, 1.4) circle (2pt) node[anchor=north]{$D$};
    \draw[gray, thick] (0,0) -- (4.4, 1.4);
    \draw[gray, thick] (1.8, 0) -- (2.6, 0);
    \draw[gray, thick] (1.8, 0) -- (4.4, 1.4);
\end{tikzpicture}
\quad 1q
\\ \hline 
\\ 1r &
\begin{tikzpicture}
   \filldraw[black] (0,0) circle (2pt) node[anchor=south]{$A$};
    \filldraw[black] (1.8, 0) circle (2pt) node[anchor=south]{$B$};
    \filldraw[black] (2.6, 0) circle (2pt) node[anchor=south]{$C$};
    \filldraw[black] (4.4, 1.4) circle (2pt) node[anchor=north]{$D$};
    \draw[gray, thick] (0,0) -- (4.4, 1.4);
    \draw[gray, thick] (1.8, 0) -- (2.6, 0);
    \draw[gray, thick] (2.6, 0) -- (4.4, 1.4);  
\end{tikzpicture}
\\ \hline \caption{Representation of (1d), (1i), (1j), (1q), (1r) in the collinear case.} 
\end{longtable} 

\begin{longtable}{  p{1cm} c } \label{Fignall}
   \\ &
\begin{tikzpicture}
     \filldraw[black] (0,0) circle (2pt) node[anchor=south]{$A$};
    \filldraw[black] (3.6, 0) circle (2pt) node[anchor=south]{$B$};
    \filldraw[black] (5.4, 0) circle (2pt) node[anchor=south]{$C$};
    \filldraw[black] (8.8, 2.8) circle (2pt) node[anchor=north]{$D$};
    \path[draw = gray,% 
             decoration={%
            markings,%
            mark=at position 0.5   with {\arrow[scale=1.5]{>}},%
            },%
            postaction=decorate] (0,0) -- (3.6, 0)
            node[midway, above, color= gray, font=\small]{$\theta$};
    \path[draw = gray,% 
             decoration={%
            markings,%
            mark=at position 0.5   with {\arrow[scale=1.5]{>}},%
            },%
            postaction=decorate] (3.6,0) -- (8.8, 2.8)
            node[midway, above, color= gray, font=\small]{$\theta$};
    \path[draw = gray,% 
             decoration={%
            markings,%
            mark=at position 0.5   with {\arrow[scale=1.5]{>}},%
            },%
            postaction=decorate] (5.4, 0) -- (3.6, 0)
            node[midway, above, color= gray, font=\small]{$\delta$};
    \end{tikzpicture}
\\ \hline
\caption{Representation of (1b), (1f), (1l), (1n) in the collinear case.}
\end{longtable}

We exclude case (1a), since it coincides with case (1j), which we have already excluded and we exclude cases (1k) and (1o) because they contain a loop (see Table \ref{Figloop}).
\begin{longtable}{  p{1cm} c } \label{Figloop} 
\\ 1k &
\begin{tikzpicture}
     \filldraw[black] (0,0) circle (2pt) node[anchor=south]{$A$};
    \filldraw[black] (1.8, 0) circle (2pt) node[anchor=south]{$B$};
    \filldraw[black] (2.6, 0) circle (2pt) node[anchor=south]{$C$};
    \filldraw[black] (4.4, 1.4) circle (2pt) node[anchor=north]{$D$};
    \draw[gray, thick] (0,0) -- (2.6, 0);
    \draw[gray, thick] (0,0) -- (4.4, 1.4);
    \draw[gray, thick] (1.8, 0) -- (4.4, 1.4);
\end{tikzpicture}
  \qquad \qquad
\begin{tikzpicture}
     \filldraw[black] (0,0) circle (2pt) node[anchor=south]{$A$};
    \filldraw[black] (1.8, 0) circle (2pt) node[anchor=south]{$B$};
    \filldraw[black] (2.6, 0) circle (2pt) node[anchor=south]{$C$};
    \filldraw[black] (4.4, 1.4) circle (2pt) node[anchor=north]{$D$};
    \draw[gray, thick] (0,0) -- (2.6, 0);
    \draw[gray, thick] (2.6, 0) -- (4.4, 1.4);
    \draw[gray, thick] (1.8, 0) -- (4.4, 1.4);
\end{tikzpicture}
\quad  1o 
\\ \hline 
\caption{Representation of (1k) and (1o) in the collinear case.}
\end{longtable}

We do not need to exclude cases (1c) and (1m), since the current coincides with $Z_j$ (see Table \ref{Figcm}).

\begin{longtable}{  p{1cm} c } \label{Figcm}
   \\ &
\begin{tikzpicture}
     \filldraw[black] (0,0) circle (2pt) node[anchor=south]{$A$};
    \filldraw[black] (3.6, 0) circle (2pt) node[anchor=south]{$B$};
    \filldraw[black] (5.4, 0) circle (2pt) node[anchor=south]{$C$};
    \filldraw[black] (8.8, 2.8) circle (2pt) node[anchor=north]{$D$};
    \path[draw = gray,% 
             decoration={%
            markings,%
            mark=at position 0.5   with {\arrow[scale=1.5]{>}},%
            },%
            postaction=decorate] (0,0) -- (3.6, 0)
            node[midway, above, color= gray, font=\small]{$\theta$};
    \path[draw = gray,% 
             decoration={%
            markings,%
            mark=at position 0.5   with {\arrow[scale=1.5]{>}},%
            },%
            postaction=decorate] (5.4,0) -- (8.8, 2.8)
            node[midway, above, color= gray, font=\small]{$\theta$};
    \path[draw = gray,% 
             decoration={%
            markings,%
            mark=at position 0.5   with {\arrow[scale=1.5]{>}},%
            },%
            postaction=decorate] (3.6, 0) -- (5.4, 0)
            node[midway, above, color= gray, font=\small]{$\theta-\delta$};
    \end{tikzpicture}
\\ \hline
\caption{Representation of (1c), (1m) in the collinear case.} \endlastfoot
\end{longtable}

Lastly, cases (1e), (1h), (1s) can be excluded with the same argument used in Sub-case 1-2, since the segments in the corresponding support are in general position also when $A_j, B_j$, and $C_j$ are collinear (see Table \ref{Fehs}).

\begin{longtable}{  p{1cm} c } \label{Fehs} 
\\ 1e &
\begin{tikzpicture}
     \filldraw[black] (0,0) circle (2pt) node[anchor=south]{$A$};
    \filldraw[black] (1.8, 0) circle (2pt) node[anchor=south]{$B$};
    \filldraw[black] (2.6, 0) circle (2pt) node[anchor=south]{$C$};
    \filldraw[black] (4.4, 1.4) circle (2pt) node[anchor=north]{$D$};
    \draw[gray, thick] (0,0) -- (1.8, 0);
    \draw[gray, thick] (4.4, 1.4) -- (2.6, 0);
    \draw[gray, thick] (0,0) -- (4.4, 1.4);
\end{tikzpicture}
 \qquad \qquad
\begin{tikzpicture}
\filldraw[black] (0,0) circle (2pt) node[anchor=south]{$A$};
    \filldraw[black] (1.8, 0) circle (2pt) node[anchor=south]{$B$};
    \filldraw[black] (2.6, 0) circle (2pt) node[anchor=south]{$C$};
    \filldraw[black] (4.4, 1.4) circle (2pt) node[anchor=north]{$D$};
    \draw[gray, thick] (2.6,0) -- (4.4, 1.4);
    \draw[gray, thick] (1.8, 0) -- (0, 0);
    \draw[gray, thick] (1.8, 0) -- (4.4, 1.4);
\end{tikzpicture}
\quad 1h
\\ \hline 
\\ 1s &
\begin{tikzpicture}
   \filldraw[black] (0,0) circle (2pt) node[anchor=south]{$A$};
    \filldraw[black] (1.8, 0) circle (2pt) node[anchor=south]{$B$};
    \filldraw[black] (2.6, 0) circle (2pt) node[anchor=south]{$C$};
    \filldraw[black] (4.4, 1.4) circle (2pt) node[anchor=north]{$D$};
    \draw[gray, thick] (0,0) -- (4.4, 1.4);
    \draw[gray, thick] (4.4, 1.4) -- (1.8, 0);
    \draw[gray, thick] (2.6, 0) -- (4.4, 1.4);  
\end{tikzpicture}
\\ \hline
\caption{Representation of (1e), (1h), (1s) in the collinear case.}
\end{longtable}

\emph{Sub-case 1-3-2: $A_j$, $C_j$ and $D_j$ are collinear}.

The cases (1d), (1i), (1n), (1q) can be excluded for the same reason as in the Sub-case 1-2 (see Table \ref{Fig_wrong2}).

\begin{longtable}{  p{1cm} c } \label{Fig_wrong2} 
\\ 1d &
\begin{tikzpicture}
     \filldraw[black] (0,0) circle (2pt) node[anchor=south]{$A$};
    \filldraw[black] (1.6, 0.6) circle (2pt) node[anchor=south]{$B$};
    \filldraw[black] (2.4, 0.6) circle (2pt) node[anchor=south]{$C$};
    \filldraw[black] (4, 1) circle (2pt) node[anchor=north]{$D$};
    \draw[gray, thick] (0,0) -- (1.6, 0.6);
    \draw[gray, thick] (0,0) -- (4, 1);
    \draw[gray, thick] (1.6, 0.6) -- (2.4, 0.6);
\end{tikzpicture}
  \qquad \qquad
\begin{tikzpicture}
     \filldraw[black] (0,0) circle (2pt) node[anchor=south]{$A$};
    \filldraw[black] (1.6, 0.6) circle (2pt) node[anchor=south]{$B$};
    \filldraw[black] (2.4, 0.6) circle (2pt) node[anchor=south]{$C$};
    \filldraw[black] (4, 1) circle (2pt) node[anchor=north]{$D$};
    \draw[gray, thick] (0,0) -- (1.6, 0.6);
    \draw[gray, thick] (2.4, 0.6) -- (4, 1);
\end{tikzpicture}
\quad  1i 
\\ \hline 
\\ 1n &
\begin{tikzpicture}
     \filldraw[black] (0,0) circle (2pt) node[anchor=south]{$A$};
    \filldraw[black] (1.6, 0.6) circle (2pt) node[anchor=south]{$B$};
    \filldraw[black] (2.4, 0.6) circle (2pt) node[anchor=north]{$C$};
    \filldraw[black] (4, 1) circle (2pt) node[anchor=north]{$D$};
    \draw[gray, thick] (0,0) -- (2.4, 0.6);
    \draw[gray, thick] (1.6, 0.6) -- (4, 1);
\end{tikzpicture}
 \qquad \qquad
\begin{tikzpicture}
\filldraw[black] (0,0) circle (2pt) node[anchor=south]{$A$};
    \filldraw[black] (1.6, 0.6) circle (2pt) node[anchor=south]{$B$};
    \filldraw[black] (2.4, 0.6) circle (2pt) node[anchor=north]{$C$};
    \filldraw[black] (4, 1) circle (2pt) node[anchor=north]{$D$};
    \draw[gray, thick] (0,0) -- (4, 1);
    \draw[gray, thick] (1.6, 0.6) -- (2.4, 0.6);
    \draw[gray, thick] (1.6, 0.6) -- (4, 1);
\end{tikzpicture}
\quad 1q
\\ \hline 
\caption{Representation of (1d), (1i), (1n), (1q) in the collinear case.}
\end{longtable}

We exclude cases (1k), (1o), (1s), which are coincident (see Table \ref{Fignall2first}), by direct comparison with the $\alpha$-mass of $Z_j$. For $j$ sufficiently large, the $\alpha$-mass corresponding to the above cases is
    $$\theta^\alpha((1+k^{-1})^\alpha\Haus^1(\sigma_{C_jD_j})+\Haus^1(\sigma_{A_jC_j})+k^{-\alpha}\Haus^1(\sigma_{B_jD_j}))>\MM(Z_j).$$

\begin{longtable}{  p{1cm} c } \label{Fignall2first}
   \\ &
\begin{tikzpicture}
     \filldraw[black] (0,0) circle (2pt) node[anchor=south]{$A$};
    \filldraw[black] (3.2, 1.2) circle (2pt) node[anchor=south]{$B$};
    \filldraw[black] (4.8, 1.2) circle (2pt) node[anchor=north]{$C$};
    \filldraw[black] (8, 2) circle (2pt) node[anchor=north]{$D$};
    \path[draw = gray,% 
             decoration={%
            markings,%
            mark=at position 0.5   with {\arrow[scale=1.5]{>}},%
            },%
            postaction=decorate] (0,0) -- (4.8, 1.2)
            node[midway, above, color= gray, font=\small]{$\theta$};
    \path[draw = gray,% 
             decoration={%
            markings,%
            mark=at position 0.5   with {\arrow[scale=1.5]{>}},%
            },%
            postaction=decorate] (4.8, 1.2) -- (8, 2)
            node[midway, below, color= gray, font=\small]{$\theta+\delta$};
    \path[draw = gray,% 
             decoration={%
            markings,%
            mark=at position 0.5   with {\arrow[scale=1.5]{>}},%
            },%
            postaction=decorate] (3.2, 1.2) -- (8, 2)
            node[midway, above, color= gray, font=\small]{$\delta$};
    \end{tikzpicture}
\\ \hline
\caption{Representation of (1k), (1o), (1s) in the collinear case.}
\end{longtable}

We do not exclude cases (1j), (1m) and (1r), since the current coincides with $Z_j$ (see Table \ref{Figcm2}).
\begin{longtable}{  p{1cm} c } \label{Figcm2}
   \\ &
\begin{tikzpicture}
     \filldraw[black] (0,0) circle (2pt) node[anchor=south]{$A$};
    \filldraw[black] (3.2, 1.2) circle (2pt) node[anchor=south]{$B$};
    \filldraw[black] (4.8, 1.2) circle (2pt) node[anchor=south]{$C$};
    \filldraw[black] (8, 2) circle (2pt) node[anchor=north]{$D$};
    \path[draw = gray,% 
             decoration={%
            markings,%
            mark=at position 0.5   with {\arrow[scale=1.5]{>}},%
            },%
            postaction=decorate] (0,0) -- (4.8, 1.2)
            node[midway, above, color= gray, font=\small]{$\theta$};
    \path[draw = gray,% 
             decoration={%
            markings,%
            mark=at position 0.5   with {\arrow[scale=1.5]{>}},%
            },%
            postaction=decorate] (4.8,1.2) -- (8, 2)
            node[midway, above, color= gray, font=\small]{$\theta$};
    \path[draw = gray,% 
             decoration={%
            markings,%
            mark=at position 0.5   with {\arrow[scale=1.5]{>}},%
            },%
            postaction=decorate] (4.8, 1.2) -- (3.2, 1.2)
            node[midway, above, color= gray, font=\small]{$\delta$};
    \end{tikzpicture}
\\ \hline
\caption{Representation of (1j), (1m), (1r) in the collinear case.} \endlastfoot
\end{longtable}

We exclude cases (1a), (1c), (1e), which are coincident (see Table \ref{Fignall2}), by direct comparison with the $\alpha$-mass of $W_j$. For $j$ sufficiently large and for $k\geq k_0(\alpha)$, the $\alpha$-mass corresponding to the above cases is
\begin{equation}\label{e:choicek4}
\begin{split}
&\theta^\alpha(\Haus^1(\sigma_{C_jD_j})+(1-k^{-1})^{\alpha}\Haus^1(\sigma_{A_jC_j})+k^{-\alpha}\Haus^1(\sigma_{A_jB_j}))\\
&> \theta^\alpha(\Haus^1(\sigma_{C_jD_j})+((1-k^{-1})^{\alpha}+k^{-\alpha})\Haus^1(\sigma_{A_jB_j}))\\
&> \theta^\alpha(\Haus^1(\sigma_{C_jD_j})+(1+\frac{1}{2}k^{-\alpha})\Haus^1(\sigma_{A_jB_j}))\\
&> \theta^\alpha(\Haus^1(\sigma_{C_jD_j})+\Haus^1(\sigma_{A_jB_j}) +k^{-\alpha}\Haus^1(\sigma_{B_jC_j}))=\MM(W_j).
\end{split}
\end{equation}

\begin{longtable}{  p{1cm} c } \label{Fignall2}
      \\ &
\begin{tikzpicture}
     \filldraw[black] (0,0) circle (2pt) node[anchor=south]{$A$};
    \filldraw[black] (3.2, 1.2) circle (2pt) node[anchor=south]{$B$};
    \filldraw[black] (4.8, 1.2) circle (2pt) node[anchor=south]{$C$};
    \filldraw[black] (8, 2) circle (2pt) node[anchor=north]{$D$};
    \path[draw = gray,% 
             decoration={%
            markings,%
            mark=at position 0.5   with {\arrow[scale=1.5]{>}},%
            },%
            postaction=decorate] (0,0) -- (4.8, 1.2)
            node[midway, below, color= gray, font=\small]{$\theta-\delta$};
    \path[draw = gray,% 
             decoration={%
            markings,%
            mark=at position 0.5   with {\arrow[scale=1.5]{>}},%
            },%
            postaction=decorate] (4.8,1.2) -- (8, 2)
            node[midway, above, color= gray, font=\small]{$\theta$};
    \path[draw = gray,% 
             decoration={%
            markings,%
            mark=at position 0.5   with {\arrow[scale=1.5]{>}},%
            },%
            postaction=decorate] (0, 0) -- (3.2, 1.2)
            node[midway, above, color= gray, font=\small]{$\delta$};
    \end{tikzpicture}
\\ \hline
\caption{Representation of (1a), (1c), (1e) in the collinear case.} \endlastfoot
\end{longtable}

Lastly, cases (1b), (1f), (1h), (1l) can be excluded with the same argument used in Sub-case 1-2, since the segments in the corresponding support are in general position also when $A_j, C_j$, and $D_j$ are collinear (see Table \ref{Fehs2}).

\begin{longtable}{  p{1cm} c } \label{Fehs2} 
\\ 1b &
\begin{tikzpicture}
     \filldraw[black] (0,0) circle (2pt) node[anchor=south]{$A$};
    \filldraw[black] (1.6, 0.6) circle (2pt) node[anchor=south]{$B$};
    \filldraw[black] (2.4, 0.6) circle (2pt) node[anchor=north]{$C$};
    \filldraw[black] (4, 1) circle (2pt) node[anchor=north]{$D$};
    \draw[gray, thick] (0,0) -- (1.6, 0.6);
    \draw[gray, thick] (4, 1) -- (1.6, 0.6);
    \draw[gray, thick] (0,0) -- (2.4, 0.6);
\end{tikzpicture}
 \qquad \qquad
\begin{tikzpicture}
\filldraw[black] (0,0) circle (2pt) node[anchor=south]{$A$};
    \filldraw[black] (1.6, 0.6) circle (2pt) node[anchor=south]{$B$};
    \filldraw[black] (2.4, 0.6) circle (2pt) node[anchor=north]{$C$};
    \filldraw[black] (4, 1) circle (2pt) node[anchor=north]{$D$};
    \draw[gray, thick] (2.4,0.6) -- (1.6, 0.6);
    \draw[gray, thick] (1.6, 0.6) -- (0, 0);
    \draw[gray, thick] (1.6, 0.6) -- (4, 1);
\end{tikzpicture}
\quad 1f
\\ \hline 
\\ 1h &
\begin{tikzpicture}
     \filldraw[black] (0,0) circle (2pt) node[anchor=south]{$A$};
    \filldraw[black] (1.6, 0.6) circle (2pt) node[anchor=south]{$B$};
    \filldraw[black] (2.4, 0.6) circle (2pt) node[anchor=north]{$C$};
    \filldraw[black] (4, 1) circle (2pt) node[anchor=north]{$D$};
    \draw[gray, thick] (0,0) -- (1.6, 0.6);
    \draw[gray, thick] (4, 1) -- (1.6, 0.6);
    \draw[gray, thick] (2.4,0.6) -- (4, 1);
\end{tikzpicture}
 \qquad \qquad
\begin{tikzpicture}
\filldraw[black] (0,0) circle (2pt) node[anchor=south]{$A$};
    \filldraw[black] (1.6, 0.6) circle (2pt) node[anchor=south]{$B$};
    \filldraw[black] (2.4, 0.6) circle (2pt) node[anchor=north]{$C$};
    \filldraw[black] (4, 1) circle (2pt) node[anchor=north]{$D$};
    \draw[gray, thick] (2.4,0.6) -- (0, 0);
    \draw[gray, thick] (1.6, 0.6) -- (2.4, 0.6);
    \draw[gray, thick] (1.6, 0.6) -- (4, 1);
\end{tikzpicture}
\quad 1l
\\ \hline
\caption{Representation of (1b), (1f), (1h), (1l) in the collinear case.}
\end{longtable}

\emph{Case 2: $\BR(S_{n_j}\res Q') = \{E_j\}$.} Recalling that $E_j$ is the endpoint of at least three segments in the support of $S_{n_j}\res Q'$, see the proof of Lemma \ref{l:numberBranch}, the only possibilities are that
$\supp(S_{n_j}\res Q')$ is one of the following sets (see Table \ref{Fig16}): 
\begin{itemize}
    \item [(2a)] $\sigma_{A_jE_j}\cup\sigma_{B_jE_j}\cup\sigma_{C_jE_j}\cup\sigma$, with $\sigma\neq\sigma_{D_jE_j}$,
    \item [(2b)] $\sigma_{A_jE_j}\cup\sigma_{B_jE_j}\cup\sigma_{D_jE_j}\cup\sigma$, with $\sigma\neq\sigma_{C_jE_j}$,
    \item [(2c)] $\sigma_{A_jE_j}\cup\sigma_{C_jE_j}\cup\sigma_{D_jE_j}\cup\sigma$, with $\sigma\neq\sigma_{B_jE_j}$,
    \item [(2d)] $\sigma_{B_jE_j}\cup\sigma_{C_jE_j}\cup\sigma_{D_jE_j}\cup\sigma$, with $\sigma\neq\sigma_{A_jE_j}$,
    \item [(2e)] $\sigma_{A_jE_j}\cup\sigma_{B_jE_j}\cup\sigma_{C_jE_j}\cup\sigma_{D_jE_j}$.
\end{itemize}  

\afterpage{
 \begin{longtable}{p{2cm} c  }\label{Fig16}
     2a &
        \begin{tikzpicture}
         \filldraw[black] (0,0) circle (2pt) node[anchor=south]{$A$};
        \filldraw[black] (3.6,2) circle (2pt) node[anchor=south]{$B$};
        \filldraw[black] (5.4, 2) circle (2pt) node[anchor=south]{$C$};
        \filldraw[black] (8.8, 1.2) circle (2pt) node[anchor=north]{$D$};
        \filldraw[black] (2.4, 1.1) circle (2pt) node[anchor=north]{$E$};
        \draw[gray, thick] (0,0) -- (2.4, 1.1);
        \draw[gray, thick] (3.6, 2) -- (2.4, 1.1);
        \draw[gray, thick] (5.4, 2) -- (2.4, 1.1);
        \end{tikzpicture}\\ \hline
        2b &
        \begin{tikzpicture}
         \filldraw[black] (0,0) circle (2pt) node[anchor=south]{$A$};
        \filldraw[black] (3.6,2) circle (2pt) node[anchor=south]{$B$};
        \filldraw[black] (5.4, 2) circle (2pt) node[anchor=south]{$C$};
        \filldraw[black] (8.8, 1.2) circle (2pt) node[anchor=north]{$D$};
        \filldraw[black] (3.5, 0.9) circle (2pt) node[anchor=north]{$E$};
        \draw[gray, thick] (0,0) -- (3.5, 0.9);
        \draw[gray, thick] (3.6, 2) -- (3.5, 0.9);
        \draw[gray, thick] (8.8, 1.2) -- (3.5, 0.9);
        \end{tikzpicture}\\ \hline
        2c &
        \begin{tikzpicture}
         \filldraw[black] (0,0) circle (2pt) node[anchor=south]{$A$};
        \filldraw[black] (3.6,2) circle (2pt) node[anchor=south]{$B$};
        \filldraw[black] (5.4, 2) circle (2pt) node[anchor=south]{$C$};
        \filldraw[black] (8.8, 1.2) circle (2pt) node[anchor=north]{$D$};
        \filldraw[black] (5.6, 1.2) circle (2pt) node[anchor=north]{$E$};
        \draw[gray, thick] (0,0) -- (5.6, 1.2);
        \draw[gray, thick] (5.4, 2) -- (5.6, 1.2);
        \draw[gray, thick] (8.8, 1.2) -- (5.6, 1.2);
        \end{tikzpicture}\\ \hline
            2d &
        \begin{tikzpicture}
         \filldraw[black] (0,0) circle (2pt) node[anchor=south]{$A$};
        \filldraw[black] (3.6,2) circle (2pt) node[anchor=south]{$B$};
        \filldraw[black] (5.4, 2) circle (2pt) node[anchor=south]{$C$};
        \filldraw[black] (8.8, 1.2) circle (2pt) node[anchor=north]{$D$};
        \filldraw[black] (6.5, 1.6) circle (2pt) node[anchor=north]{$E$};
        \draw[gray, thick] (3.6,2) -- (6.5, 1.6);
        \draw[gray, thick] (5.4, 2) -- (6.5, 1.6);
        \draw[gray, thick] (8.8, 1.2) -- (6.5, 1.6);
        \end{tikzpicture}\\ \hline
            2e &
        \begin{tikzpicture}
         \filldraw[black] (0,0) circle (2pt) node[anchor=south]{$A$};
        \filldraw[black] (3.6,2) circle (2pt) node[anchor=south]{$B$};
        \filldraw[black] (5.4, 2) circle (2pt) node[anchor=south]{$C$};
        \filldraw[black] (8.8, 1.2) circle (2pt) node[anchor=north]{$D$};
        \filldraw[black] (3.7, 1.1) circle (2pt) node[anchor=north]{$E$};
        \draw[gray, thick] (0,0) -- (3.7, 1.1);
        \draw[gray, thick] (3.6,2) -- (3.7, 1.1);
        \draw[gray, thick] (5.4, 2) -- (3.7, 1.1);
        \draw[gray, thick] (8.8, 1.2) -- (3.7, 1.1);
        \end{tikzpicture}
\\
\hline
\caption{Representation of (2a), (2b), (2c), (2d), (2e). In (2a), (2b), (2c), (2d) we do not represent the segment $\sigma$.}
\end{longtable}}

We exclude case (2a), indeed by Propositions \ref{p:12punto1} and \ref{p:BCMangoli}, $E_j\in{\rm{conv}}(\{A_j, B_j, C_j\})$, hence we have for $\rho$ small and $j$ sufficiently large
\begin{equation}\label{e:angolo1}
    \pi-O(\rho)=\angle{A_jB_jC_j}\leq\angle{A_jE_jC_j}\leq\pi \,.
\end{equation} 
This contradicts Proposition \ref{p:BCMangoli} for $\rho \leq \rho(k)$, since the modulus of the multiplicity of $\sigma_{A_jE_j},\sigma_{E_jB_j}$ and $\sigma_{E_jC_j}$ belongs to $[k^{-1},\Mass(b_{n_j})]$, which by \eqref{e:massa_bienne} is contained in $[k^{-1},(1+hk^{-1})\Mass(b)]$.\\

Cases (2b), (2c) and (2d) are excluded with a similar argument as in case (2a), where the angle $\angle{A_jE_jC_j}$ in \eqref{e:angolo1} is replaced respectively by $\angle{A_jE_jD_j}$, $\angle{A_jE_jD_j}$ and $\angle{B_jE_jD_j}$.\\

We exclude case (2e) by direct comparison with the $\alpha$-mass of $Z_j$. The $\alpha$-mass corresponding to (2e) is
\begin{equation}\label{e:case2e}
\begin{split}
&\theta^\alpha(\Haus^1(\sigma_{A_jE_j})+\Haus^1(\sigma_{E_jD_j})+k^{-\alpha}(\Haus^1(\sigma_{B_jE_j})+\Haus^1(\sigma_{E_jC_j})))\\
&\geq \theta^\alpha(\Haus^1(\sigma_{A_jD_j})+k^{-\alpha}\Haus^1(\sigma_{B_jC_j}))=\MM(Z_j),
\end{split}
\end{equation}
where the inequality is strict unless $\{E_j\}=\sigma_{A_jD_j}\cap \sigma_{B_jC_j}$, namely unless the current is $Z_j$, which of course we do not need to exclude.\\


\emph{Case 3: $\BR(S_{n_j}\res Q') = \{E_j,F_j\}$.} Recalling Proposition \ref{p:singlepath} and the fact that both $E_j$ and $F_j$ are the endpoints of at least three segments in the support of $S_{n_j}\res Q'$, see the proof of Lemma \ref{l:numberBranch}, up to switching between $E_j$ and $F_j$, the only possibilities are that $\supp(S_{n_j}\res Q')$ is one of the following sets (see Table \ref{Fig17}):
\begin{itemize}
    \item [(3a)] $\sigma_{E_jF_j}\cup\sigma_{A_jE_j}\cup\sigma_{B_jE_j}\cup\sigma_{C_jF_j}\cup\sigma_{D_jF_j}$,
    \item [(3b)] $\sigma_{E_jF_j}\cup\sigma_{A_jE_j}\cup\sigma_{C_jE_j}\cup\sigma_{B_jF_j}\cup\sigma_{D_jF_j}$,
    \item [(3c)] $\sigma_{E_jF_j}\cup\sigma_{A_jE_j}\cup\sigma_{D_jE_j}\cup\sigma_{B_jF_j}\cup\sigma_{C_jF_j}$.
\end{itemize}
\vspace{1.5cm}
(3a) 
Denote by $\pi_0$ the affine 2-plane passing through $A_j$, $B_j$ and $F_j$ (and therefore containing $E_j$ as well). By Proposition \ref{p:BCMangoli} the line $\ell$ containing $\sigma_{E_jF_j}$ divides $\pi_0\setminus \ell$ into two open half-planes $\pi_0^-$ and $\pi_0^+$ containing respectively $A_j$ and $B_j$. Let $C'_j$ and $D'_j$ denote the orthogonal projections onto $\pi_0$ of $C_j$ and $D_j$ respectively and observe that $C'_j\in \pi_0^+$. This follows from the fact that by Proposition \ref{p:BCMangoli} there exists a positive constant $\kappa$ (depending on $k$) such that $\angle A_jE_jF_j\leq \pi-\kappa$ and assuming $C'_j\notin \pi_0^+$ would lead to 
$$\angle A_jB_jC'_j \leq\angle A_jE_jF_j\leq \pi-\kappa,$$
which is a contradiction, for $\rho$ sufficiently small with respect to $k$, since, due to the fact that $\angle A_jB_jC_j\geq \frac\pi 2$, 
$$\angle A_jB_jC'_j\geq\angle A_jB_jC_j\geq \pi-O(\rho).$$
On the other hand, the fact that $C'_j\in\pi_0^+$ implies that $D'_j\in\pi_0^-$, hence
$$\angle A_jE_jD'_j\leq\angle A_jE_jF_j\leq \pi-\kappa,$$
which is a contradiction for $\rho$ sufficiently small with respect to $k$, since, as above, 
$$\angle A_jE_jD'_j\geq\angle A_jE_jD_j\geq \pi-O(\rho).$$

\afterpage{
\begin{longtable}{p{2cm} c  }\label{Fig17}
     3a &
\begin{tikzpicture}
         \filldraw[black] (0,0) circle (2pt) node[anchor=south]{$A$};
        \filldraw[black] (4,1.8) circle (2pt) node[anchor=south]{$B$};
        \filldraw[black] (6, 1.8) circle (2pt) node[anchor=south]{$C$};
        \filldraw[black] (10, 0) circle (2pt) node[anchor=north]{$D$};
        \filldraw[black] (3.5, 1) circle (2pt) node[anchor=north]{$E$};
        \filldraw[black] (6.5, 1.1) circle (2pt) node[anchor=north]{$F$};
        \path[draw = gray,% 
             decoration={%
            markings,%
            mark=at position 0.5   with {\arrow[scale=1.5]{>}},%
            },%
            postaction=decorate] (0,0) -- (3.5, 1)
            node[midway, above, color= gray, font=\small]{$\theta$};
        \path[draw = gray,% 
             decoration={%
            markings,%
            mark=at position 0.5   with {\arrow[scale=1.5]{>}},%
            },%
            postaction=decorate] (6.5, 1.1)  -- (10, 0)
            node[midway, above, color= gray, font=\small]{$\theta$};
        \path[draw = gray,% 
             decoration={%
            markings,%
            mark=at position 0.5   with {\arrow[scale=1.5]{<}},%
            },%
            postaction=decorate] (6.5, 1.1)  -- (6, 1.8)
            node[midway, above right, color= gray, font=\small]{$\delta$};
        \path[draw = gray,% 
             decoration={%
            markings,%
            mark=at position 0.5   with {\arrow[scale=1.5]{>}},%
            },%
            postaction=decorate] (3.5, 1) -- (4, 1.8)
            node[midway, above left, color= gray, font=\small]{$\delta$};
        \path[draw = gray,% 
             decoration={%
            markings,%
            mark=at position 0.5   with {\arrow[scale=1.5]{>}},%
            },%
            postaction=decorate] (3.5, 1) -- (6.5, 1.1)
            node[midway, above, color= gray, font=\small]{$\theta-\delta$};
        \end{tikzpicture}\\ \hline
        3b &
\begin{tikzpicture}
         \filldraw[black] (0,0) circle (2pt) node[anchor=south]{$A$};
        \filldraw[black] (5.5, 2.6) circle (2pt) node[anchor= south east]{$B$};
        \filldraw[black] (4.5, 0.6) circle (2pt) node[anchor=north west]{$C$};
        \filldraw[black] (10, 3.0) circle (2pt) node[anchor=north]{$D$};
        \filldraw[black] (5.7, 2.0) circle (2pt) node[anchor=north west]{$F$};
        \filldraw[black] (4.3, 1.2) circle (2pt) node[anchor=south east]{$E$};
         \path[draw = gray,% 
             decoration={%
            markings,%
            mark=at position 0.5   with {\arrow[scale=1.5]{>}},%
            },%
            postaction=decorate] (0,0) -- (4.3, 1.2)
            node[midway, above , color= gray, font=\small]{$\theta$};
         \path[draw = gray,% 
             decoration={%
            markings,%
            mark=at position 0.5   with {\arrow[scale=1.5]{>}},%
            },%
            postaction=decorate] (5.7, 2.0)  -- (10, 3.0)
            node[midway, below, color= gray, font=\small]{$\theta$};
         \path[draw = gray,% 
             decoration={%
            markings,%
            mark=at position 0.6   with {\arrow[scale=1.5]{>}},%
            },%
            postaction=decorate] (5.7, 2.0)  -- (5.5, 2.6)
            node[midway, right , color= gray, font=\small]{$\delta$};
         \path[draw = gray,% 
             decoration={%
            markings,%
            mark=at position 0.5   with {\arrow[scale=1.5]{<}},%
            },%
            postaction=decorate] (5.7, 2.0) -- (4.3, 1.2)
            node[midway, below right, color= gray, font=\small]{$\theta+\delta$};
         \path[draw = gray,% 
             decoration={%
            markings,%
            mark=at position 0.5   with {\arrow[scale=1.5]{<}},%
            },%
            postaction=decorate] (4.3, 1.2) -- (4.5, 0.6)
            node[midway, left, color= gray, font=\small]{$\delta$};
        \end{tikzpicture} \\ \hline
        3c &
\begin{tikzpicture}
         \filldraw[black] (0,0) circle (2pt) node[anchor=south]{$A$};
        \filldraw[black] (4,1.8) circle (2pt) node[anchor=south]{$B$};
        \filldraw[black] (6, 1.8) circle (2pt) node[anchor=south]{$C$};
        \filldraw[black] (10, 0) circle (2pt) node[anchor=north]{$D$};
        \filldraw[black] (4.8, 0.6) circle (2pt) node[anchor=north]{$E$};
        \filldraw[black] (5.2, 1.3) circle (2pt) node[anchor=south]{$F$};
        \draw[gray, thick] (0,0) -- (4.8, 0.6);
        \draw[gray, thick] (4.8, 0.6)  -- (10, 0);
        \draw[gray, thick] (4.8, 0.6)  -- (5.2, 1.3);
        \draw[gray, thick] (5.2, 1.3) -- (4,1.8);
        \draw[gray, thick] (5.2, 1.3) -- (6, 1.8);
        \end{tikzpicture}\\
\hline
\caption{Representation of (3a), (3b), (3c).} 
\end{longtable}}

(3b) By Proposition \ref{p:BCMangoli} applied at the branch point $E_j$ we deduce that the angle between the oriented segments $\sigma_{A_j E_j}$ and $\sigma_{E_j F_j}$ tends to 0 as $k\to\infty$. By the same argument applied at the branch point $F_j$ we deduce the same property for the angle between the oriented segments $\sigma_{E_j F_j}$ and $\sigma_{F_j D_j}$. As a consequence, the angle between the oriented segments $\sigma_{A_j D_j}$ and $\sigma_{E_j F_j}$ tends to 0 as $k\to\infty$. Again, by Proposition \ref{p:BCMangoli}, the angles $\angle C_jE_jF_j$ and $\angle E_jF_jB_j$ are equal to $\frac{\pi}{2}+C(k)$ where $C(k)$ tends to 0 as $k\to\infty$. 

Next, using that the angle $\angle E_jF_jD_j$ differs from $\pi$ by a positive constant which depends only on $k$, we observe that the plane containing $A_j,C_j,F_j$ (and therefore also $E_j$) is obtained from the plane containing $D_j,B_j,E_j$ (and therefore also $F_j$) by a rotation $O$ around the line containing $\sigma_{E_jF_j}$ such that, for any fixed $k$,  $\|O-Id\|<f(\rho)$, where $f(\rho)$ tends to 0 as $\rho\to 0$. This implies that the angle between the oriented segments $\sigma_{B_jC_j}$ and $\sigma_{A_jE_j}$ is larger than $\frac{\pi}{2}-c(k)$, where $c(k)$ tends to 0 as $k\to\infty$. This is a contradiction for $\rho$ sufficiently small and $k$ sufficiently large, since for any $k$ the angle between the oriented segments $\sigma_{B_jC_j}$ and $\sigma_{A_jD_j}$ tends to 0 as $\rho\to 0$ and for any $\rho$ the angle between the oriented segment $\sigma_{A_jE_j}$ and the oriented segment $\sigma_{A_jD_j}$ tends to 0 (independently of $\rho$) as $k\to \infty$.

(3c) We exclude this case as the corresponding set is not the support of any
current with boundary $ \partial (S_{n_j} \res Q') $, because both the segments $\sigma_{A_j E_j} $ and $\sigma_{E_j D_j}$ should have multiplicity $\theta$, thus the multiplicity of $\sigma_{E_j F_j}$ would be zero.
\vspace{1.5cm}

\subsubsection{Conclusion.}\label{Step3} 
Let $p\in\{p_1,\dots,p_h\}\setminus\supp(S)$ and take $r>0$ such that 
\begin{equation}\label{e:p_isolated}
p\notin B_{3r} \big(\supp(S)\cup (\{p_1, \dots, p_h\}\setminus \{p\}) \big).    
\end{equation}
Applying Lemma \ref{l:conv_hauss}, for $j$ sufficiently large we have
\begin{equation}\label{e:sn_dentro}
    \begin{split}
        \supp(S_{n_j}) &\subset B_r(\supp(S)\cup\{p_1,\dots,p_h\})\\
        &=B_r(\supp(S)\cup\{p_1,\dots,p_h\}\setminus \{p\})\cup B_r(p)\,.
    \end{split}
\end{equation}
By \eqref{e:p_isolated} we have that $B_r(\supp(S)\cup\{p_1,\dots,p_h\}\setminus \{p\})$ and $B_{2r}(p)$ are disjoint. Define $\tilde S_{n_j} := S_{n_j} \res B_{2r}(\{p\})$. 
By Proposition \ref{p:simonslice} with $f(x)=\dist(x,\{p\})$ and \eqref{e:sn_dentro},
we have that for $j$ sufficiently large
\begin{equation}
\partial \tilde S_{n_j} = b_{n_j} \res B_{2r}(\{p\}) = -\frac{1}{k}\partial(T\res B_{{n_j}^{-1}}(p)),
\end{equation}
which is supported in exactly two points.
Since necessarily $\tilde S_{n_j}\in\OTP(\partial \tilde S_{n_j})$, we deduce that 
\begin{equation}\label{e:snoutsidesupport}
\tilde S_{n_j} = -\frac{1}{k} T\res B_{{n_j}^{-1}}(p),
\end{equation}
for $j$ sufficiently large.\\

Combining \eqref{e:snoutsidesupport}, \eqref{e:THE_ONE} and \eqref{e:alphamass_tienne}, we obtain that, for $j$ sufficiently large and for $k\geq k_0(\alpha)$
\begin{equation}
\begin{split}
        \MM &\left(S_{n_j}+\frac1k \sum_{i=1}^h T\res B_{{n_j}^{-1}}(p_i)\right)\\
        &=\MM(S_{n_j})+\sum_{i:S_{n_j}\res Q'=W_j}(1-(1-k^{-1})^\alpha)\MM(T\res B_{{n_j}^{-1}}(p_i))\\
        &\quad -\sum_{i:S_{n_j}\res Q'=Z_j}k^{-\alpha}\MM(T\res B_{{n_j}^{-1}}(p_i))-\sum_{i:p_i\notin\supp(S)}k^{-\alpha}\MM(T\res B_{{n_j}^{-1}}(p_i))\\
        &\leq \MM(T_{n_j})+\sum_{i:S_{n_j}\res Q'=W_j}(1-(1-k^{-1})^\alpha)\MM(T\res B_{{n_j}^{-1}}(p_i))\\
        &\leq \MM(T_{n_j})+\sum_{i=1}^h (1-(1-k^{-1})^\alpha)\MM(T\res B_{{n_j}^{-1}}(p_i))=\MM(T).
        \end{split}
\end{equation}
Observe that the first inequality is strict unless 
\begin{equation}\label{e:finalinclusion}
\{i:p_i\notin\supp(S)\}=\emptyset=\{i:S_{n_j}\res Q'=Z_j\}.    
\end{equation}
On the other hand, the inequality cannot be strict, since $\partial (S_{n_j}+\frac1k \sum_{i=1}^h T\res B_{{n_j}^{-1}}(p_i))=b$ by \eqref{e:bienne} and $T\in\OTP(b)$. We deduce that \eqref{e:finalinclusion} holds, which by Lemma \ref{l:magic_points} implies that $S=T$ and $S_{n_j}+\frac1k \sum_{i=1}^h T\res B_{{n_j}^{-1}}(p_i)=T$ and therefore, recalling \eqref{e:bienne}, $S_{n_j}=T_{n_j}$ for $j$ sufficiently large. 
\end{proof}

\begin{proof}[Proof of Proposition \ref{p:unique_bienne}]
Since the conclusion of Lemma \ref{l:main} holds for every converging subsequence $S_{n_j}$, we deduce that $S_n=T_n$ and therefore $\OTP(b_n)=\{T_n\}$, for $n$ sufficiently large.
\end{proof}

\subsection{Proof of Theorem \ref{t:main}}
Recall that by Lemma \ref{l:residual} it suffices to prove that the set $A_C\setminus NU_C$ is $\Flat_K$-dense in $A_C$. Fix $b\in A_C$ and $\varepsilon>0$. Let $\delta>0$ and $b''\in A_{C-\delta}$ be obtained by Lemma \ref{l:integral_bdry}. In particular let $b_I\in\I_0(K)$ be such that $b''=\eta b_I$ for some $\eta>0$.

Fix $T\in\OTP(b_I)$ and let $p_1,\dots,p_h$ be obtained applying Lemma \ref{l:magic_points} to the current $T$. Observe that $h$ depends on $T$. Let $k\in\N \setminus \{0\}$ be such that $k\geq k_0(\alpha)$ given by Proposition \ref{p:unique_bienne} and moreover $hk^{-1}C\leq\eta^{-1}\delta$.
For $n=1,2,\dots$, let $b_n$ be obtained as in \eqref{e:bienne}, where $b$ is replaced with $b_I$. By \eqref{e:massa_bienne}, for every $n$ we have 
$$\Mass(\eta b_n) = \eta\Mass(b_n)\leq\eta(\Mass(b_I)+hk^{-1}C)\leq\eta(\eta^{-1}(C-\delta)+\eta^{-1}\delta)=C.$$
Moreover, letting $S_n\in\OTP(b_n)$, by \eqref{e:alphamass_tienne} we have $\MM(\eta S_n)\leq\MM(\eta T)\leq C-\delta$, which allows to conclude that $\eta b_n\in A_C$ for every $n\in\N \setminus \{0\}$. By Proposition \ref{p:unique_bienne} we deduce that $\eta b_n\in A_C\setminus NU_C$ for $n$ sufficiently large, and by \eqref{e:convergence_bienne}, we have $$\Flat_K(\eta b_n-b)\leq\Flat_K(\eta b_n-\eta b_I)+\Flat_K(b''- b) <2\varepsilon,$$ 
for $n$ sufficiently large. By the arbitrariness of $\varepsilon$ we finish the proof of the density of $A_C\setminus NU_C$ and hence the proof of Theorem \ref{t:main}, concluding generic uniqueness of optimal transport paths. \null\nobreak\hfill\ensuremath{\square}


\begin{thebibliography}{10}
\addcontentsline{toc}{chapter}{Bibliography}
\nocite{}

\bibitem{AAvarif}\label{AAvarifolds}
W. Allard,
\newblock {\em On the first variation of a varifold},
\newblock in ``Annals of Mathematics", \textbf{95} (3), 417--491, 1972.


\bibitem{Allard}\label{Allardboundary}
W. Allard,
\newblock {\em On the first variation of a varifold: boundary behavior},
\newblock  in ``Annals of Mathematics", \textbf{101} (3), 418--446, 1975.


\bibitem{Almgrensome}\label{Almgrencod1}
F. Almgren,
\newblock {\em Some interior regularity theorems for minimal surfaces and an extension of Bernstein’s theorem},
\newblock  in ``Annals of Mathematics", \textbf{84} (2), 277--292, 1966.

\bibitem{big}\label{Big}
F. Almgren,
\newblock {\em Q-valued functions minimizing Dirichlet's integral and the regularity of area minimizing rectifiable currents up to codimension two},
\newblock in ``Bulletin of the American Mathematical Society (N.S.)", \textbf{8} (2), 327--328, 1983.

\bibitem{Almgrenclo}\label{Almgrenclosure}
F. Almgren,
\newblock {\em Deformations and multiple-valued functions},
\newblock in ``Proceedings of Symposia in Pure Mathematics", \textbf{44}, 29--130, 1986.

\bibitem{Almgren2000}\label{Almgren2000}
F. Almgren,
\newblock {\em Almgren’s Big Regularity Paper}, World Scientific Monograph Series
in Mathematics,
\newblock \textbf{1}, World Scientific Publishing Co. Inc., River Edge 2000.


\bibitem{ambrosimetri}\label{Ambrosio90}
L. Ambrosio,
\newblock {\em Metric space valued functions of bounded variation},
\newblock in ``Annali della Scuola Normale Superiore di Pisa - Classe di Scienze", \textbf{17} (3), 439--478, 1990.


\bibitem{ADLS}\label{ADLS}
L. Ambrosio, C. De Lellis and T. Schmidt,
\newblock {\em Partial regularity for area minimizing currents in Hilbert spaces},
\newblock in ``Journal f\"ur die reine und angewandte Mathematik", \textbf{734}, 99--144, 2018.


\bibitem{AKmetric}\label{AKmetric}
L. Ambrosio and B. Kirchheim,
\newblock {\em Currents in metric spaces},
\newblock in ``Acta Mathematica", \textbf{185}, 1--80, 2000.

\bibitem{ATilli}\label{ATilli}
L. Ambrosio and P. Tilli,
\newblock {\em Topics on Analysis in Metric Spaces}, Oxford Lecture Series in Mathematics and its Applications, \textbf{25},
\newblock Oxford University Press, Oxford 2004.


\bibitem{bellettini}\label{bellettini}
C. Bellettini and T. Rivière,
\newblock {\em The regularity of Special Legendrian integral cycles},
\newblock in ``Annali della Scuola Normale Superiore di Pisa - Classe di Scienze", \textbf{11} (5), 61--142, 2012.



\bibitem{BCM}\label{BCM}
M. Bernot, V. Caselles and J.-M. Morel,
\newblock {\em Optimal transportation networks}, Lecture Notes in Mathematics, \textbf{1955}, 
\newblock Springer, Berlin-Heidelberg 2009.

\bibitem{BF}\label{BF}
M. Bernot and A. Figalli,
\newblock {\em Synchronized traffic plans and stability of optima},
\newblock in ``ESAIM: Control, Optimisation and Calculus of Variations", \textbf{14} (4), 864--878, 2008.


\bibitem{Bohtromba}\label{BTromba}
R. B\"ohme and A. Tromba,
\newblock {\em The number of solutions to the classical Plateau problem is generically finite},
\newblock in ``Bulletin of the American Mathematical Society", \textbf{83} (5), 1043--1044, 1977.

\bibitem{BDGG}\label{BDGcodim1}
E. Bombieri, E. De Giorgi and E. Giusti,
\newblock {\em Minimal cones and the Bernstein problem},
\newblock in ``Inventiones Mathematicae", \textbf{7},  243--268, 1969.


\bibitem{BrBuSa}\label{BrBuSa}
A. Brancolini, G. Buttazzo and F. Santambrogio,
\newblock {\em Path functionals over Wasserstein spaces},
\newblock in ``Journal of the European Mathematical Society", \textbf{8} (3), 415--434, 2006.


\bibitem{BW}\label{BW}
A. Brancolini and B. Wirth,
\newblock {\em Equivalent formulations for the branched transport and urban planning problems},
\newblock in ``Journal de Mathématiques Pures et Appliquées", \textbf{106} (4), 695--724, 2016.

\bibitem{BW1}\label{BW1}
A. Brancolini and B. Wirth,
\newblock {\em General transport problems with branched minimizers as functionals of 1-currents with prescribed boundary},
\newblock in ``Calculus of Variations and Partial Differential Equations", \textbf{57}, 82, 2018.

\bibitem{brabutsan}\label{brabutsan}
L. Brasco, G. Buttazzo and F. Santambrogio,
\newblock {\em A Benamou-Brenier approach to branched transport},
\newblock in ``SIAM Journal on Mathematical Analysis", \textbf{43} (2), 1023--1040, 2011.


\bibitem{BrGaReSun}\label{BrGaReSun}
A. Bressan, S. Gatlung, A. Reigstad and J. Ridder,
\newblock {\em Competition models for plant stems},
\newblock in ``Journal of Differential Equations", \textbf{269} (2) 1571--1611, 2020.

\bibitem{BrPaSun}\label{BrPaSun}
A. Bressan, M. Palladino and Q. Sun,
\newblock {\em Variational problems for tree roots and branches},
\newblock in ``Calculus of Variations and Partial Differential Equations", \textbf{59}, 7, 2020.

\bibitem{BrSun}\label{BrSun}
A. Bressan and Q. Sun,
\newblock {\em On the optimal shape of tree roots and branches},
\newblock in ``Mathematical Models and Methods in Applied Sciences", \textbf{28} (14), 2763--2801, 2018.


\bibitem{caldo}\label{caldiniinpreparation}
G. Caldini and A. Marchese,
\newblock {\em Generic uniqueness of higher codimension area-minimizing currents},
\newblock in preparation.

\bibitem{caassx}\label{caldini}
G. Caldini, A. Marchese and S. Steinbr\"uchel,
\newblock {\em Generic uniqueness of optimal transportation networks},
\newblock preprint arXiv:2205.05023.


\bibitem{c}\label{Chambolle}
A. Chambolle, L. Ferrari and B. Merlet,
\newblock {\em Strong approximation in $h$-mass of rectifiable currents under homological constraint},
\newblock in ``Advances in Calculus of Variations", \textbf{14} (3), 343--363, 2019.

\bibitem{ciao}\label{Chang}
S. Chang,
\newblock {\em Two-dimensional area minimizing integral currents are classical minimal surfaces},
\newblock in ``Journal of the American Mathematical Society", \textbf{1} (4), 699--778, 1988.


\bibitem{CDRMcalcvar}\label{CDRMcalcvar}
M. Colombo, A. De Rosa and A. Marchese,
\newblock {\em Improved stability of optimal traffic paths},
\newblock in ``Calculus of Variations and Partial Differential Equations", \textbf{57}, 28, 2018.

\bibitem{CDRMjmpa}\label{CDRMjmpa}
M. Colombo, A. De Rosa and A. Marchese,
\newblock {\em Stability for the mailing problem},
\newblock in ``Journal de Mathématiques Pures et Appliquées", \textbf{128}, 152--182, 2019.

\bibitem{CDRMcpam}\label{CDRMcpam}
M. Colombo, A. De Rosa and A. Marchese,
\newblock {\em On the well-posedness of branched transportation},
\newblock in ``Communications on Pure and Applied Mathematics", \textbf{74} (4), 833--864, 2020.

\bibitem{CDRMPP}\label{CDRMPP}
M. Colombo, A. De Rosa, A. Marchese, P. Pegon and A. Prouff,
\newblock {\em Stability of optimal traffic plans in the irrigation problem}, 
\newblock in ``Discrete and Contintinuous Dynamical Systems", \textbf{42} (4), 1647--1667, 2022.

\bibitem{CDRMS}\label{CDRMS}
M. Colombo, A. De Rosa, A. Marchese and S. Stuvard,
\newblock {\em On the lower semicontinuous envelope of functionals defined on polyhedral chains},
\newblock in ``Nonlinear Analysis", \textbf{163}, 201--215, 2017.  


\bibitem{Courant}\label{Courant}
R. Courant,
\newblock {\em Dirichlet’s principle, conformal mapping and minimal surfaces}, Interscience, New York 1950.

\bibitem{DeGiorgi1}\label{Degiorgiperimetri}
E. De Giorgi,
\newblock {\em Su una teoria generale della misura $(r-1)$-dimensionale in uno spazio ad $r$ dimensioni},
\newblock in ``Annali di Matematica Pura ed Applicata", \textbf{36} (4), 191--213, 1954.

\bibitem{DeGiorgi2}\label{Degiorginuovi}
E. De Giorgi,
\newblock {\em Nuovi teoremi relativi alle misure $(r-1)$-dimensionali in uno spazio ad $r$ dimensioni},
\newblock in ``Ricerche di Matematica", \textbf{4}, 95--113, 1955.

\bibitem{DeGiorgi2ss}\label{Degiorgifrontiere}
E. De Giorgi,
\newblock {\em Frontiere orientate di misura minima},
\newblock in {\em Seminario di Matematica della Scuola Normale Superiore di Pisa A.A. 1960-1961}, Editrice Tecnico Scientifica, Pisa 1961, 1--56.


\bibitem{DeGiorgi2s}\label{Degiorgibernstein}
E. De Giorgi,
\newblock {\em Una estensione del teorema di Bernstein},
\newblock in ``Annali della Scuola Normale Superiore di Pisa - Classe di Scienze", \textbf{19} (3), 79--85, 1965.


\bibitem{DeGiorgipl}\label{Degiorgiplateau}
E. De Giorgi,
\newblock {\em Problema di Plateau generale e funzionali geodetici}, proceedings of Nonlinear
Analysis - Calculus of Variations (Perugia 9-12 May 1993)
\newblock in ``Atti del Seminario Matematico e Fisico dell'Universita Di Modena", \textbf{43}, 285--292, 1995.

\bibitem{DElellissing}\label{Delellisnote}
C. De Lellis,
\newblock {\em The size of the singular set of area-minimizing currents}, lectures given at Conference on Geometry and Topology (Cambridge, MA 13-14 September 2014)
\newblock in ``Surveys in Differential Geometry", \textbf{21}, 1--83, 2016.

\bibitem{dlsq2sborda}\label{Borda}
C. De Lellis, G. De Philippis, J. Hirsch and A. Massaccesi,
\newblock {\em On the boundary behavior of mass-minimizing integral currents},
\newblock to appear in ``Memoirs of the American Mathematical Society".

\bibitem{dlmsv}\label{DLMarchese}
C. De Lellis, A. Marchese, E. Spadaro and D. Valtorta,
\newblock {\em Rectifiability and upper Minkowski bounds for singularities of harmonic Q-valued maps}, \newblock in ``Commentarii Mathematici Helvetici", \textbf{93} (4), 737--779, 2018.

\bibitem{dskoro}\label{DSkoro}
C. De Lellis and A. Skorobogatova,  
\newblock in preparation.


\bibitem{dlsq2s}\label{DLSQ}
C. De Lellis and E. Spadaro,
\newblock {\em $Q$-valued functions revisited},
\newblock in ``Memoirs of the American Mathematical Society", \textbf{211} (991), 2011.

\bibitem{dls1}\label{DLS1}
C. De Lellis and E. Spadaro,
\newblock {\em Regularity of area minimizing currents I: gradient $L^{p}$ estimates},
\newblock in ``Geometric and Functional Analysis", \textbf{24}, 1831--1884, 2014.

\bibitem{dlssns}\label{DLSsns}
C. De Lellis and E. Spadaro,
\newblock {\em Multiple valued functions and integral currents},
\newblock in ``Annali della Scuola Normale Superiore di Pisa - Classe di Scienze", \textbf{14} (5), 1239--1269, 2015.
  
\bibitem{dls2}\label{DLS2}
C. De Lellis and E. Spadaro,
\newblock {\em Regularity of area minimizing currents II: center manifold},
\newblock in ``Annals of Mathematics", \textbf{183} (2), 499--575, 2016.
 
  \bibitem{dls3}\label{DLS3}
C. De Lellis and E. Spadaro,
\newblock {\em Regularity of area minimizing currents III: blow-up},
\newblock in ``Annals of Mathematics", \textbf{183} (2), 577--617, 2016.


 \bibitem{dlsss3}\label{DLSS2}
C. De Lellis, E. Spadaro and L. Spolaor,
\newblock {\em Regularity theory for $2$-dimensional almost minimal currents II: Branched center manifold},
\newblock in ``Annals of PDE", \textbf{3}, 18, 2017.

\bibitem{dlss3}\label{DLSS1}
C. De Lellis, E. Spadaro and L. Spolaor,
\newblock {\em Regularity theory for $2$-dimensional almost minimal currents I: Lipschitz approximation},
\newblock in ``Transactions of the American Mathematical Society", \textbf{370} (3), 1783--1801, 2018.

 \bibitem{dlssss3}\label{DLSS3}
C. De Lellis, E. Spadaro and L. Spolaor,
\newblock {\em Regularity theory for $2$-dimensional almost minimal currents III: Blowup},
\newblock in ``Journal of Differential Geometry", \textbf{116} (1), 125--185, 2020.
  

\bibitem{deRham}\label{deRham}
G. de Rham,
\newblock {\em Relations entre la topologie et la théorie des intégrales multiples},
\newblock in ``L'Enseignement mathématique", \textbf{35}, 213--228, 1936.

 \bibitem{deRham2}\label{deRham2}
G. de Rham,
\newblock {\em \"Uber mehlfache Integrale},
\newblock in ``Abhandlungen aus dem mathematischen Seminar der Hansischen Universit\"at", \textbf{12}, 313--339, 1937.



\bibitem{DevSol}\label{DevSol}
G. Devillanova and S. Solimini,
\newblock {\em On the dimension of an irrigable measure},
\newblock in ``Rendiconti del Seminario Matematico della Università di Padova", \textbf{117}, 1--49, 2007.


\bibitem{dierkes}\label{Minimalsurfaces}
U. Dierkes, S. Hildebrandt and F. Sauvigny,
\newblock {\em Minimal Surfaces}, Grundlehren der mathematischen Wissenschaften, \textbf{339},
\newblock Springer, Berlin-Heidelberg 2010.


\bibitem{Douglas}\label{Douglas31}
J. Douglas,
\newblock {\em Solution of the problem of Plateau},
\newblock in ``Transactions of the American Mathematical Society", \textbf{33} (1), 263--321, 1931.


\bibitem{Fed65}\label{Federer65}
H. Federer,
\newblock {\em Some theorems on integral currents},
\newblock in ``Transactions of the American Mathematical Society", \textbf{117}, 43--67, 1965.





\bibitem{Fed}\label{Federercod1}
H. Federer,
\newblock {\em The singular sets of area minimizing rectifiable currents with codimension
one and of area minimizing flat chains modulo two with arbitrary codimension},
\newblock in ``Bulletin of the American Mathematical Society", \textbf{76} (4), 767--771, 1970.
\newblock 

\bibitem{F}\label{Federerbook}
H. Federer,
\newblock {\em Geometric measure theory}, Classics in Mathematics,
\newblock Springer, Berlin-Heidelberg 1996.


\bibitem{FF}\label{FF}
H. Federer and W. Fleming,
\newblock {\em Normal and Integral Currents},
\newblock in ``Annals of Mathematics", \textbf{72} (3), 458--520, 1960.




\bibitem{Signor}\label{Signorini}
A. Figalli, X. Ros-Oton and J. Serra,
\newblock {\em Generic regularity of free boundaries for the obstacle problem},
\newblock in ``Publications mathématiques de l'IHÉS", \textbf{132}, 181--292, 2020.
\newblock 


\bibitem{Fleming}\label{Flemingcod1}
W. Fleming,
\newblock {\em On the oriented Plateau problem},
\newblock in ``Rendiconti del Circolo Matematico di Palermo", \textbf{11} (1), 69--90, 1962.



\bibitem{GMS}\label{GMS}
M. Giaquinta, G. Modica and J. Sou\v{c}ek,
\newblock {\em Cartesian Currents in the Calculus of Variations I}, Ergebnisse der Mathematik und ihrer Grenzgebiete, \textbf{37},
\newblock Springer, Berlin-Heidelberg 1998.



\bibitem{Gilbert}\label{Gilbert}
E. Gilbert,
\newblock {\em Minimum cost communication networks},
\newblock in ``Bell System Technical Journal", \textbf{46} (9), 2209--2227, 1967.


\bibitem{signo2}\label{Signorini2}
S. Granlund and N. Marola,
\newblock {\em On a frequency function approach to the unique continuation principle},
\newblock in ``Expositiones Mathematicae", \textbf{30} (2), 154--167, 2012.



\bibitem{HardtPitts}\label{HardtPitts}
R. Hardt and J. Pitts,
\newblock {\em Solving Plateau’s Problem for Hypersurfaces Without the Compactness Theorem for Integral Currents}, proceedings of Geometric Measure Theory and the Calculus of Variations (Arcata, CA July 16-August 3 1984)
\newblock in ``Proceedings of Symposia in Pure Mathematics", \textbf{44}, 255--259, American Mathematical Society, Providence 1986.


\bibitem{harvey}\label{harvey}
F. Harvey and B. Shiffman,
\newblock {\em A characterization of holomorphic chains},
\newblock in ``Annals of Mathematics", \textbf{99} (3), 553--587, 1974.


\bibitem{HKST}\label{HKST}
J. Heinonen, P. Koskela, N. Shanmugalingam and J. Tyson,
\newblock {\em Sobolev classes of Banach space-valued functions and quasiconformal mappings},
\newblock in ``Journal d’Analyse Mathématique", \textbf{85}, 87--139, 2001.

\bibitem{Hir}\label{Hirsh16}
J. Hirsch,
\newblock {\em Partial H\"older continuity for $Q$-valued energy minimizing maps},
\newblock in ``Communications in Partial Differential Equations", \textbf{41} (9), 1347--1378, 2016.

\bibitem{Joyce}\label{Joyce}
D. Joyce,
\newblock {\em Riemannian Holonomy Groups and Calibrated Geometry}, Oxford graduate texts in mathematics, \textbf{12},
\newblock Oxford University Press, Oxford 2007.

\bibitem{KrantzParks}\label{KP}
S. Krantz and H. Parks,
\newblock {\em Geometric integration theory}, Cornerstones, \textbf{8},
\newblock Birkh{\"a}user, Boston 2008.

\bibitem{LawsonOsserman}\label{LawsonOsserman}
H. Lawson and R. Osserman,
\newblock {\em Non-existence, non-uniqueness and irregularity of solutions to the minimal surface system},
\newblock in ``Acta Mathematica", \textbf{139}, 1--17, 1977.


\bibitem{MSM}\label{MSM}
F. Maddalena, J.-M. Morel and S. Solimini,
\newblock {\em A variational model of irrigation patterns},
\newblock in ``Interfaces and Free Boundaries", \textbf{5} (4), 391--416, 2003.

\bibitem{MMST}\label{MMST}
A. Marchese, A. Massaccesi, S. Stuvard and R. Tione,
\newblock {\em A multi-material transport problem with arbitrary marginals},
\newblock in ``Calculus of Variations and Partial Differential Equations", \textbf{60}, 88, 2021.

\bibitem{MMT}\label{MMT}
A. Marchese, A. Massaccesi and R. Tione,
\newblock {\em A multi-material transport problem and its convex relaxation via rectifiable \emph{G}-currents},
\newblock in ``SIAM Journal on Mathematical Analysis", \textbf{51} (3), 1965--1998, 2019.


\bibitem{MW}\label{MW}
A. Marchese and B. Wirth,
\newblock {\em Approximation of rectifiable 1-currents and weak-$*$ relaxation of the $h$-mass},
\newblock in ``Journal of Mathematical Analysis and Applications", \textbf{479} (2), 2268--2283, 2019. 


\bibitem{Monge}\label{Monge}
G. Monge,
\newblock {\em Mémoire sur la Théorie des Déblais et des Remblais},
\newblock in ``Historie de l’Académie Royale des Sciences", 666--704, 1781. 


\bibitem{morsant}\label{Morsant}
J.-M. Morel and F. Santambrogio,
\newblock {\em The regularity of optimal irrigation patterns},
\newblock in ``Archive for Rational Mechanics and Analysis", \textbf{195} (2), 499--531, 2010.

\bibitem{Morganinf}\label{Morganinfinite}
F. Morgan,
\newblock {\em A smooth curve in $\mathbb{R}^4$ bounding a continuum of area minimizing surfaces},
\newblock in ``Duke Mathematical Journal", \textbf{43} (4), 867--870, 1976.

\bibitem{Morganinv}\label{Morganinventiones}
F. Morgan,
\newblock {\em Almost every curve in $\mathbb{R}^3$ bounds a unique area minimizing surface},
\newblock in ``Inventiones Mathematicae", \textbf{45} (3), 253--297, 1978.
 

\bibitem{Morganind}\label{Morganindiana}
F. Morgan,
\newblock {\em Generic Uniqueness for Hypersurfaces Minimizing the Integral of an Elliptic Integrand with Constant Coefficients},
\newblock in ``Indiana University Mathematics Journal", \textbf{30} (1), 29--45, 1981.

\bibitem{MorganARMA}\label{MorganARMA}
F. Morgan,
\newblock {\em Measures on Spaces of Surfaces},
\newblock in ``Archive for Rational Mechanics and Analysis", \textbf{78} (4), 335--359, 1982.


\bibitem{Morreysystem}\label{Morreysystem}
C. Morrey,
\newblock {\em Second Order Elliptic Systems of Differential Equations},
\newblock in ``Proceedings of the National Academy of Sciences", \textbf{39} (3), 201--206, 1953.



\bibitem{Naber}\label{Nabervaltorta}
A. Naber and D. Valtorta,
\newblock {\em The singular structure and regularity of stationary varifolds},
\newblock in ``Journal of the European Mathematical Society", \textbf{22} (10), 3305--3382, 2020.


\bibitem{Nittdd}\label{Nitsche26}
J. Nitsche,
\newblock {\em A new uniqueness theorem for minimal surfaces},
\newblock in ``Archive for Rational Mechanics and Analysis", \textbf{52} (4), 319--329, 1973.


\bibitem{PaoliniStepanov}\label{PaoliniStepanov}
E. Paolini and E. Stepanov,
\newblock {\em Optimal transportation networks as flat chains},
\newblock in ``Interfaces and Free Boundaries", \textbf{8} (4), 393--436, 2006.

\bibitem{Pegon}\label{Pegon}
P. Pegon,
\newblock {\em On the Lagrangian branched transport model and the equivalence with its Eulerian formulation},
\newblock in ``Radon Series on Computational and Applied Mathematics", \textbf{17}, 281--303, 2017.


\bibitem{Rado}\label{Rado30}
T. Rad\'o,
\newblock {\em On Plateau’s Problem},
\newblock in ``Annals of Mathematics", \textbf{31} (3), 457--469, 1930.

\bibitem{Radooo}\label{Rado16}
T. Rad\'o,
\newblock {\em Some Remarks on the Problem of Plateau},
\newblock in ``Proceedings of the National Academy of Sciences", \textbf{16} (3), 242--248, 1930.



\bibitem{resh1}\label{Resh97}
Y. Reshetnyak,
\newblock {\em Sobolev-type classes of functions with values in a metric space},
\newblock in ``Siberian Mathematical Journal", \textbf{38} (3), 567--583, 1997.

\bibitem{resh2}\label{Resh04}
Y. Reshetnyak,
\newblock {\em Sobolev-type classes of functions with values in a metric space. II},
\newblock in ``Siberian Mathematical Journal", \textbf{45} (4), 709--721, 2004.

\bibitem{resh3}\label{Resh06}
Y. Reshetnyak,
\newblock {\em To the theory of Sobolev-type classes of functions with values in a metric space},
\newblock in ``Siberian Mathematical Journal", \textbf{47} (1), 117--134, 2006.


\bibitem{riviere}\label{Rivieresplitting}
T. Rivière,
\newblock {\em A lower-epiperimetric inequality for area-minimizing surfaces},
\newblock in ``Communications on Pure and Applied Mathematics", \textbf{57} (12), 1673--1685, 2004.


\bibitem{Schwartz}\label{Schwartz}
L. Schwartz,
\newblock {\em Généralisation de la notion de fonction, de dérivation, de transformation de Fourier et
applications mathématiques et physiques},
\newblock in ``Annales de l'Université de Grenoble", \textbf{21}, 57--74, 1945.

\bibitem{Simontg}\label{Simontgcone}
L. Simon,
\newblock {\em Asymptotics for a class of nonlinear evolution equations, with applications to geometric
problems},
\newblock in ``Annals of Mathematics", \textbf{118} (3), 525--571, 1983.

 
\bibitem{S}\label{Simonbook}
L. Simon,
\newblock {\em Lectures on geometric measure theory}, Proceedings of the Centre for Mathematical Analysis, \textbf{3},
\newblock Australian National University, Canberra 1984.

\bibitem{Sim}\label{Simoncod1}
L. Simon,
\newblock {\em Rectifiability of the singular sets of multiplicity 1 minimal surfaces and energy minimizing maps}, proceedings of the Conference on Geometry and Topology (Cambridge, MA 23-25 April, 1993)
\newblock in ``Surveys in Differential Geometry", \textbf{2}, 246--305, 1995.


\bibitem{Simons}\label{Simonscod1}
J. Simons,
\newblock {\em Minimal varieties in Riemannian manifolds},
\newblock in ``Annals of Mathematics", \textbf{88} (1), 62--105, 1968.


\bibitem{Smirnov93}\label{Smirnov}
S. Smirnov,
\newblock {\em Decomposition of solenoidal vector charges into elementary solenoids, and the structure of normal one-dimensional flows},
\newblock in ``Algebra i Analiz", \textbf{5} (4), 206--238, 1993.


\bibitem{Solomon}\label{Solomon}
B. Solomon,
\newblock {\em A New Proof of the Closure Theorem for Integral Currents},
\newblock in ``Indiana University Mathematics Journal", \textbf{33} (3), 393--418, 1984.

\bibitem{Spadaroo}\label{Spadarocomplex}
E. Spadaro,
\newblock {\em Complex varieties and higher integrability of Dir-minimizing $Q$-valued functions},
\newblock in ``manuscripta mathematica", \textbf{132} (3-4), 415--429, 2010.


\bibitem{Spadarooo}\label{Spadaronote}
E. Spadaro,
\newblock {\em Regularity of higher codimension area minimizing integral currents},
\newblock in ``Publications of the Scuola Normale Superiore, CRM Series", \textbf{17}, 131--192, Edizioni della Normale, Pisa 2014.

\bibitem{STweiss}\label{SteinWeiss}
E. Stein and G. Weiss,
\newblock {\em Introduction to Fourier analysis on Euclidean spaces}, Princeton Mathematical Series, \textbf{32}, Princeton University Press, Princeton 1971.

\bibitem{Villani}\label{Villani}
C. Villani,
\newblock {\em Optimal Transport}, Grundlehren der mathematischen Wissenschaften, \textbf{338}, Springer, Berlin-Heidelberg 2009.


\bibitem{Whitetgcone}\label{Whitetgcone}
B. White,
\newblock {\em Tangent cones to two-dimensional area-minimizing integral currents are unique},
\newblock in ``Duke Mathematical Journal", \textbf{50} (1), 143--160, 1983.

\bibitem{Whitelavrentiev}\label{Whitelavrentiev}
B. White,
\newblock {\em The least area bounded by multiples of a curve},
\newblock in ``Proceedings of the American Mathematical Society", \textbf{90} (2), 230--232, 1984.

\bibitem{Whitecl}\label{Whiteclosure}
B. White,
\newblock {\em A new proof of the compactness theorem for integral currents},
\newblock in ``Commentarii Mathematici Helvetici", \textbf{64}, 207--220, 1989.

\bibitem{Whitesscl}\label{Whitebridge1}
B. White,
\newblock {\em The bridge principle for stable minimal surfaces},
\newblock in ``Calculus of Variations and Partial Differential Equations", \textbf{2} (4), 405--425, 1994.


\bibitem{Whitessscl}\label{Whitebridge2}
B. White,
\newblock {\em The bridge principle for unstable and for singular minimal surfaces},
\newblock in ``Communications in Analysis and Geometry", \textbf{2} (4), 513--532, 1994.
  

\bibitem{white_rect}\label{Whiterectifiabilityflatchains}
B. White,
\newblock {\em Rectifiability of flat chains},
\newblock in ``Annals of Mathematics", \textbf{150} (1), 165--184, 1999.

\bibitem{Whitneyb}\label{Whitneybook}
H. Whitney,
\newblock {\em Geometric integration theory}, 
\newblock Princeton University Press, Princeton 1957.

\bibitem{Xia}\label{Xia2003}
Q. Xia,
\newblock {\em Optimal paths related to transport problems},
\newblock in ``Communications in Contemporary Mathematics", \textbf{5} (2), 251--279, 2003.

\bibitem{xia2}\label{Xiaregularity}
Q. Xia,
\newblock {\em Interior regularity of optimal transport paths},
\newblock in ``Calculus of Variations and Partial Differential Equations", \textbf{20} (3), 283--299, 2004.


\end{thebibliography}
\end{document}